\documentclass[11pt]{article}
\usepackage{xcolor}   
\newcommand{\mathbbm}[1]{\mathbf{#1}}
\usepackage{paralist}
\usepackage{graphics} 
\usepackage{epsfig} 
\usepackage{epstopdf}
\usepackage[colorlinks=true]{hyperref}
\hypersetup{urlcolor=blue, citecolor=blue}
\usepackage[toc,page]{appendix}
\usepackage{chngcntr}

\usepackage{titlesec}
\usepackage[margin=1in]{geometry}

\usepackage{graphicx,xcolor}
\usepackage{amsthm,amsfonts,euscript,amsmath,comment,slashed,here,todonotes,amssymb, mathrsfs}
\usepackage[affil-it]{authblk}

\newtheorem{theorem}{Theorem}[section]
\newtheorem{lemma}[theorem]{Lemma}
\newtheorem{proposition}{Proposition}[section]
\newtheorem{corollary}{Corollary}[section]

\theoremstyle{definition}
\newtheorem{definition}[theorem]{Definition}
\newtheorem{example}[theorem]{Example}

\theoremstyle{remark}
\newtheorem{remark}[theorem]{Remark}

\numberwithin{equation}{section}

\newcommand\be{\begin{equation*}}
	\newcommand\ee{\end{equation*}}
\newcommand\bea{\begin{eqnarray}}
	\newcommand\eea{\end{eqnarray}}
\newcommand\bi{\begin{itemize}}
	\newcommand\ei{\end{itemize}}
\newcommand\ben{\begin{enumerate}}
	\newcommand\bena{\begin{enumerate}[(a)]}
		\newcommand\een{\end{enumerate}}
	
	\newcommand\bp{\begin{proof}}
		\newcommand\ep{\end{proof}}

	\newcommand{\R}{\ensuremath{\mathbb{R}}}

	\newcommand{\N}{\mathbb{N}}

	\allowdisplaybreaks
	
	\title{Local separation profiles for the stationary Prandtl system}
	\author{Ky Giao Duong\footnote{Department of Mathematics, NYU Abu-Dhabi, \url{gkd1@nyu.edu}} \quad Tej-Eddine Ghoul\footnote{Department of Mathematics, NYU Abu-Dhabi, \url{teg6@nyu.edu}}  \qquad Sameer Iyer\footnote{Department of Mathematics, University of California, Davis, Davis, CA 95616, USA, \url{sameer@math.ucdavis.edu}} \qquad Yasunori Maekawa\footnote{Department of Mathematics, Graduate School of Science, Kyoto University, Kyoto 606-8502, Japan, \url{maekawa.yasunori.3n@kyoto-u.ac.jp}}}
	
	\newcommand{\Bstab}{\mathfrak B_1}
	\newcommand{\cscale}{c_{\mathrm{sc}}}
\begin{document}
		
		\maketitle
		
		\begin{abstract}
			We construct solutions to the two-dimensional stationary Prandtl system under a constant adverse pressure gradient exhibiting a family of distinct separation laws. More precisely, for every integer $\ell\ge1$, we construct smooth initial data for which separation occurs at a finite location $x=x^\ast$, with
			\[
			\partial_y u(x,0)
			=\mathbf C_\ell(x^\ast-x)^{\ell/2}\bigl(1+o(1)\bigr),
			\qquad x\uparrow x^\ast,
			\]
			where $\mathbf C_\ell>0$ depends on the initial data.
			
			We determine the local asymptotic structure of the solution near separation through matched inner and outer expansions based on linearization around the ground state in von Mises variables. This description addresses Open Problem~5 posed by Oleinik and Samokhin~\cite{OleinikSamok-book-99}. We also prove that the square-root separation regime, corresponding to $\ell=1$, is dynamically stable under small admissible perturbations of the initial data in a suitable topology.
		\end{abstract}
		
		\tableofcontents
		
		\section{Introduction}
		A central problem in mathematical fluid mechanics is to justify the inviscid limit, namely the convergence of Navier--Stokes solutions to Euler solutions as the viscosity $\nu$ tends to zero. In the absence of physical boundaries, this convergence is classical for smooth data on the lifespan of the Euler solution; see Swann~\cite{Swann1971} and Kato~\cite{Kato1972}. Important developments include quantitative inviscid-limit estimates for two-dimensional flows with bounded vorticity~\cite{Chemin1996,Abidi_Danchin2004} and convergence results for vortex-patch solutions~\cite{ConstantinWu1995}. In the presence of a stationary solid wall, however, the no-slip condition for the viscous flow generally conflicts with the nonzero tangential velocity permitted by Euler's impermeability condition. This mismatch produces large velocity gradients near the wall, making the formation and stability of viscous boundary layers central to the analysis of the inviscid limit; see Maekawa--Mazzucato~\cite{MaekawaMazzucato2018}.
		
		In 1904, Prandtl \cite{Prandtl1904} proposed to resolve this mismatch by matching an outer Euler flow to a thin viscous layer of formal thickness $O(\sqrt\nu)$. Rescaling the normal variable yields the Prandtl equations, which describe the leading boundary-layer dynamics. Rigorous justifications of this expansion include the analytic setting \cite{Sam_Caf1998a,Sam_Cafl98b,W-W-Z17} and suitable data whose initial vorticity vanishes near the wall \cite{Maekawa2014}. In the time-dependent problem, Kato's criterion \cite{Kato1984} characterizes convergence to a smooth Euler flow in the energy norm through vanishing viscous dissipation in a thinner $O(\nu)$ boundary strip. We refer to \cite{MaekawaMazzucato2018} for a broader account of the inviscid limit and boundary-layer theory.
		
		For steady flows, Prandtl expansions have been justified over a short moving plate by Guo and Nguyen \cite{GuoNguyen2017}, and for a class of scaled layers, including the Blasius profile, at a motionless wall by Guo and Iyer \cite{GuoIyer2023}. An adverse pressure gradient presents a different difficulty: it can decelerate the flow until the wall shear vanishes, leading to boundary-layer separation. Understanding the local structure of the Prandtl solution at this degeneracy is the subject of the present paper. We study the two-dimensional stationary system
		\begin{equation}\label{equa-u-P-E}
			\left\{
			\begin{aligned}
				u\partial_xu+v\partial_yu-\partial_y^2u&=-\dfrac{dp_E}{dx},\\
				\partial_xu+\partial_yv&=0,\\
				u|_{y=0}=v|_{y=0}&=0,\\
				\lim_{y\to\infty}u(x,y)&=u_E(x),\\
				u|_{x=0}&=u_{\rm in}(y).
			\end{aligned}
			\right.
		\end{equation}
		where $x$ is the tangential variable, $y\ge0$ is the distance to the wall, and the outer Euler flow satisfies Bernoulli's law
		\begin{equation}\label{equa-u_E-P}
			u_Eu_E'+p_E'=0.
		\end{equation}
		Throughout the paper, we fix $x_0>0$ and take
		\begin{equation}\label{defi-u_E}
			u_E(x)=\sqrt{2(x_0-x)},\qquad 0\le x<x_0.
		\end{equation}
		Thus $p_E'=1$ by \eqref{equa-u_E-P}, and \eqref{equa-u-P-E} becomes
		\begin{equation}\label{equa-u-P-E-}
			\left\{
			\begin{aligned}
				u\partial_xu+v\partial_yu-\partial_y^2u&=-1,\\
				\partial_xu+\partial_yv&=0,\\
				u|_{y=0}=v|_{y=0}&=0,\\
				\lim_{y\to\infty}u(x,y)&=u_E(x),\\
				u|_{x=0}&=u_{\rm in}(y).
			\end{aligned}
			\right.
		\end{equation}
		Oleinik's local theory applies to positive profiles with nondegenerate wall shear and the boundary compatibility conditions; see \cite{Oleinik1963,Oleinik1966,Oleinik1966b,OleinikSamok-book-99} and the class $\mathcal K_\alpha$ in \eqref{Olenik class}. We study solutions for which the wall shear remains positive on $0\le x<x^*<x_0$ and satisfies $\partial_yu(x,0)\to0$ as $x\uparrow x^*$.  We call $x^*$ the separation point; thus separation occurs while the outer Euler trace remains positive.
		
		Recent advances in steady boundary-layer theory include global regularity up to the wall under favorable pressure gradients, established by Wang--Zhang~\cite{WangZhang2021Regularity}, and global stability of Prandtl expansions for a class including the Blasius profiles, proved by Iyer--Masmoudi~\cite{IyerMasmoudi2026}. Under an adverse pressure gradient, Goldstein's formal expansion \cite{Goldstein1948}, developed further by Stewartson~\cite{Stewartson1958}, predicts a singular local structure at separation. Its rigorous description is the subject of Open Problem~5 in Oleinik--Samokhin~\cite[p.~501]{OleinikSamok-book-99}. Matsui--Shirota~\cite{MatsuiShirota1984,MatsuiShirota1985} gave early rigorous existence results for separation. Their separation criterion allows $\partial_yu$ to vanish along a sequence approaching the wall, and should be distinguished from a sharp pointwise wall-shear asymptotic. Dalibard--Masmoudi~\cite{DalibardMasmoudi-IHES-19} established the sharp wall-shear law $\partial_yu(x,0)\sim C\sqrt{x^*-x}$ for well-prepared monotone $C^7$ data, together with a higher-order local approximation and quantitative remainder estimates. Shen--Wang--Zhang~\cite{Shen_Wang_Zhang2021} proved separation for a broader class of monotone Oleinik data under a smallness condition on the initial slope; their result obtains a space-time curve $y_\ast(x)$ such that $\partial_yu(x,y_\ast(x))\lesssim(x^*-x)^{1/4}$. Under an additional curvature bound, they also established two-sided estimates at selected points approaching separation; these do not prescribe a sharp wall-shear asymptotic. Iyer--Masmoudi~\cite{IyerMasmoudi2022Reversal} developed a mixed-type framework for reversed and recirculating Prandtl flows, see also the related work of Dalibard-Marbach-Rax, \cite{DMR26}. Related separation problems have been studied for compressible boundary layers by Zou--Wang~\cite{ZouWang2023} and for a magnetically damped model by Chen--Zhang--Luo~\cite{ChenZhangLuo2026}. For the classical system, these developments motivate the construction of distinct sharp separation laws and a stability theory for the square-root regime, together with quantitative descriptions of the associated local profiles.
		
		\bigskip
		We address Open Problem~5 in Oleinik--Samokhin~\cite[p.~501]{OleinikSamok-book-99} by constructing separating solutions and determining their local asymptotic structure. For $0<\alpha\le1$, let $C_b^{3,\alpha}(\R_+)$ denote the usual bounded H\"older space and set
		\[
		\mathcal O_\alpha=\{h\in C_b^{3,\alpha}(\R_+):h(0)=0,\ \lim_{y\to\infty}h(y)=0,\ h''(y)=O(y^2)\text{ as }y\downarrow0\}.
		\]
		The perturbation norm is
		\begin{equation}\label{definition-norm-E}
			\|h\|_{\mathcal E}:=
			\sup_{0<y\le1}\left(\frac{|h|}{y}+|h'|+\frac{|h''|}{y^2}\right)
			+\sup_{y\ge1}\sum_{k=0}^2|\partial_y^kh|+\|h\|_{C_b^{3,\alpha}}.
		\end{equation}
		Here $\mathcal K_1$ denotes the class \eqref{Olenik class} with $\alpha=1$. We define the universal separation profile by
		\begin{equation}\label{defi-pro-Q-sep}
			\mathcal Q_{\rm sep}(Y):=4\sqrt{Q\!\left(\mathcal Y_Q^{-1}(Y/2)\right)},
			\qquad \mathcal Y_Q(\zeta):=\int_0^\zeta\frac{d\eta}{\sqrt{Q(\eta)}},
		\end{equation}
		where $Q$ is the ground state defined in \eqref{ground-state-Q}.
		
		\begin{theorem}[Stable square-root separation]\label{theo-main-separate}
			For every $x_0>0$, there exists $\widehat u_{\rm in,1}\in C^\infty(\R_+)\cap\mathcal K_1$ such that the solution $u_1$ of \eqref{equa-u-P-E-} separates at $\widehat x_1^*\in(0,x_0/2)$. There exist $\widehat{\mathbf C}_1>0$ and a sufficiently small $\eta>0$ such that, for every fixed sufficiently small $y_0>0$, with $t=\widehat x_1^*-x$,
			\begin{align}
				u_1(x,y)&=\widehat{\mathbf C}_1^{\,2}t\,
				\mathcal Q_{\rm sep}\!\left(\frac{y}{\widehat{\mathbf C}_1t^{1/2}}\right)
				+\widetilde u_1(x,y),\label{decom-u_1-theo}\\*
				|\widetilde u_1(x,y)|&\lesssim t^{1/2+\eta}y+y^3,
				\qquad 0\le x\le\widehat x_1^*,\quad 0\le y\le y_0.\label{pointwise-physical-remainder-1}
			\end{align}
			For every $\alpha\in(0,1]$, there exists $\epsilon_0>0$ such that, whenever $h\in\mathcal O_\alpha$, $\|h\|_{\mathcal E}<\epsilon_0$, and $u_{\rm in}:=\widehat u_{\rm in,1}+h\in\mathcal K_\alpha$, the corresponding solution separates at $x^*[u_{\rm in}]\in(0,x_0)$ and satisfies \eqref{decom-u_1-theo}--\eqref{pointwise-physical-remainder-1} with $\widehat x_1^*$ and $\widehat{\mathbf C}_1$ replaced by $x^*[u_{\rm in}]$ and $\mathbf C[u_{\rm in}]>0$, respectively, for every fixed sufficiently small $y_0>0$. Moreover, $(x^*[u_{\rm in}],\mathbf C[u_{\rm in}])\to(\widehat x_1^*,\widehat{\mathbf C}_1)$ as $\|h\|_{\mathcal E}\to0$.
		\end{theorem}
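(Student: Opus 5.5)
We plan to follow the von Mises strategy suggested in the abstract. Set $\psi$ to be the stream function, $u=\partial_y\psi$, and introduce the unknown $w(x,\psi)=u^2$. Since $u_E^2=2(x_0-x)$, \eqref{equa-u-P-E-} becomes the degenerate parabolic equation
\[
\partial_x w-\sqrt{w}\,\partial_\psi^2 w=-2,\qquad w(x,0)=0,\qquad w(x,\infty)=2(x_0-x).
\]
Near the wall, $\partial_yu(x,0)=\tfrac12\partial_\psi w(x,0)$. Separation thus corresponds to the vanishing of the Neumann trace $\partial_\psi w(x,0)$.

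We then pass to self-similar variables adapted to $t=x^*-x$ by setting
\[
w(x,\psi)=t\,W(s,\zeta),\qquad \zeta=\psi/t^{3/4},\qquad s=-\log t .
\]
This gives an autonomous equation
\[
\partial_s W=\mathcal L W+\mathcal N(W),
\]
whose stationary solution is the ground state $Q$ of \eqref{ground-state-Q}. By construction $Q$ satisfies $Q'(0)=0$ and $Q\sim \zeta^{4/3}$-type behaviour. The change of variables back to $y$ through $dy=d\psi/\sqrt{w}$ is exactly what produces $\mathcal Y_Q$ and hence $\mathcal Q_{\rm sep}$ in \eqref{defi-pro-Q-sep}. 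The profile $\widehat{\mathbf C}_1^2t\,\mathcal Q_{\rm sep}(y/(\widehat{\mathbf C}_1t^{1/2}))$ is then the image of $tQ$ after the scaling symmetry of the equation, which introduces the free constant $\mathbf C$.

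The key steps, in order, are as follows.

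(i) Spectral analysis of the linearized operator $\mathcal L$ around $Q$ in a weighted space near $\zeta=0$. We identify finitely many unstable or neutral eigenvalues. These arise from the symmetries: the translation $x^*$ and the scaling $\mathbf C$. We also expect one genuinely unstable direction, associated with $\ell=1$ versus the higher laws $\ell\ge2$. We then prove a spectral gap $-\eta$ on the complement.

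(ii) Construction of $\widehat u_{\rm in,1}$. We choose smooth Oleinik data whose evolution, by Oleinik's local theory, reaches a time where $W$ is close to $Q$ in the inner region. Matching to the outer solution is handled by a cutoff at $\psi\sim 1$, where $w\approx u_E^2$ stays nondegenerate.

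(iii) Modulation and a bootstrap. We write $W=Q_{\mathbf C(s)}+\varepsilon$ with modulated $(x^*,\mathbf C)$, imposing orthogonality conditions that kill the neutral modes. Energy and weighted maximum-principle estimates, which are available because the equation is scalar parabolic, give $\|\varepsilon(s)\|\lesssim e^{-\eta s}$. Converting this bound back to physical variables yields \eqref{pointwise-physical-remainder-1}. The term $y^3$ comes from the outer and parabolic-boundary part, and $t^{1/2+\eta}y$ comes from the inner decay times the wall-shear scale $t^{1/2}$.

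(iv) Stability. If the neutral modes are exactly those absorbed by modulation and no further unstable mode persists for $\ell=1$, the bootstrap is open in the $\mathcal E$-topology. For $h\in\mathcal O_\alpha$ small, continuous dependence of Oleinik solutions on $[0,x^*-\delta]$ transfers the smallness to the bootstrap entry time. The conditions $h''=O(y^2)$ in the norm \eqref{definition-norm-E} preserve compatibility. If one unstable mode survives, we would instead use a Brouwer/shooting argument in a one-parameter family. The modulation parameters converge as $s\to\infty$, which gives $x^*[u_{\rm in}],\mathbf C[u_{\rm in}]$ and their continuity in $h$.

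The main obstacle is step (i) together with its use in (iii). The operator $\mathcal L$ has degenerate coefficient $\sqrt{Q}$ at $\zeta=0$ and unbounded drift at infinity, so coercivity requires carefully chosen weights compatible with the boundary condition $W(s,0)=0$. We would first try a Hardy-type weighted $L^2$ inequality near $\zeta=0$, combined with comparison functions built from $Q$ and $\partial_\zeta Q$ to obtain pointwise control. The nonlinearity $\sqrt{Q+\varepsilon}-\sqrt Q$ near the wall must then be dominated using $|\varepsilon|\ll Q$, which is itself part of the bootstrap.
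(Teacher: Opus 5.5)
Your leading-order picture is wrong, and the gap is at the foundation rather than in a technical step. In your variables $w=tW(s,\zeta)$, $\zeta=\psi/t^{3/4}$, $s=-\log t$, the equation reads $\partial_sW=\sqrt W\,\partial_\zeta^2W-2+\Lambda_\zeta W$. The ground state is \emph{not} stationary for this equation. By \eqref{ground-state-Q}, $Q$ solves $\sqrt Q\,Q''=2$ with $Q'(0)=1$, while $\Lambda Q>0$ (Lemma~\ref{lemma-Q}), so $tQ(\psi/t^{3/4})$ leaves an $O(1)$ error.

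The profile with the features you attribute to $Q$ ($W'(0)=0$, $W\sim\zeta^{4/3}$) is rather the singular steady state $a^2\zeta^{4/3}$. It has zero wall shear and is invariant under both the translation in $x^*$ and the scaling $w\mapsto w_\varpi$. Hence these symmetries produce no neutral modes, and linearizing there gives $\mathscr L_{0,1}$ with eigenvalues $\frac12-n$.

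More decisively, any self-similar ansatz $w\approx tW(\psi/t^{3/4})$ with $W'(0)>0$ gives $\partial_yu(x,0)=\frac12t^{1/4}W'(0)$, i.e. exponent $1/4$. Likewise, $tQ(\psi/t^{3/4})$ maps under the inverse von Mises transform to $u=\frac12t^{1/4}y+\frac12y^2$, not to $\widehat{\mathbf C}_1t^{1/2}y+\frac12y^2$ as in \eqref{decom-u_1-theo}. The stated profile is $w\approx\lambda Q(\psi/\lambda^{3/4})$ with $\lambda\asymp t^2$. It lives on a second scale, far below the self-similar one.

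The missing idea is this non-self-similar, type~II structure. The paper writes $w=\mu\vartheta(s,\xi)$ with $\mu\asymp t$ and $\vartheta=Q_b+\varepsilon$, where $Q_b(\xi)=bQ(\xi/b^{3/4})$ and $b\to0$, so the wall scale is $\lambda=\mu b$.

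Because $\Lambda Q_b\neq0$, the modulation term $(\beta-b_s/b)\Lambda_\xi Q_b\approx2\beta\Lambda_\xi Q_b$ is an $O(\sqrt b)$ forcing along the unstable eigenfunction $\varphi_{0,b,\beta}$ (eigenvalue $\approx\beta/2$). This forcing is of the same size as $\varepsilon$, so $\varepsilon$ cannot simply decay by a spectral gap. One must isolate the forced part $\mathbf n_1(\beta)\sqrt b\,\psi_{1,b,\beta}$, where $\psi_{1,b,\beta}=\varphi_{1,b,\beta}-(c_{1,0}/c_{0,0})\varphi_{0,b,\beta}$ is chosen so that the $T_0$ components cancel and the wall slope stays $b^{1/4}$. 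One then propagates the geometric and amplitude constraints with the two parameters $(b,\beta)$, whose Jacobian is invertible.

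This gives $b_s/b\to-\beta_\infty$, hence $\lambda\asymp t^2$ and the exponent $1/2$. It is also what makes the regime open. Your fallback of shooting would at best select a codimension-one set of data, which is incompatible with the stability statement.

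The required spectral input is not a Hardy inequality for a fixed operator. It is a matched inner--outer diagonalization of $\mathscr L_{b,\beta}$, uniform as $b\to0$, using the kernel iterates $T_j$ of $\mathscr H_0$ inside and the Laguerre modes outside. This yields eigenvalues $\beta(\frac12-i)+O(b^{1/4})$.

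Finally, pointwise control of $\varepsilon$ does not handle the quasilinear term $(\sqrt{Q_b+\varepsilon}-\sqrt{Q_b})\partial_\xi^2\varepsilon$. The paper bootstraps the residual $\mathcal F=\sqrt\vartheta\,\partial_\xi^2\vartheta-2=2(\partial_y^2u-1)$ through its own parabolic equation. This also propagates the wall compatibility needed for Oleinik continuation.
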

		
		\medskip
		Extending this construction yields a countable family of distinct separation regimes.
		\begin{theorem}[A countable family of separation laws]\label{theo-unstab}
			For every integer $\ell\ge2$ and every $x_0>0$, there exists $\widehat u_{{\rm in},\ell}\in C_b^{3,1}(\R_+)\cap C^\infty(\R_+)\cap\mathcal K_1$ such that the solution $u_\ell$ of \eqref{equa-u-P-E-} separates at $x_\ell^*\in(0,x_0)$. There exist $\mathbf C_\ell>0$ and a sufficiently small $\eta>0$ such that, for every fixed sufficiently small $y_0>0$, with $t=x_\ell^*-x$,
			\begin{align}
				u_\ell(x,y)&=\mathbf C_\ell^{\,2}t^\ell\,
				\mathcal Q_{\rm sep}\!\left(\frac{y}{\mathbf C_\ell t^{\ell/2}}\right)
				+\widetilde u_\ell(x,y),\label{decom-u-ell-theo}\\*
				|\widetilde u_\ell(x,y)|&\lesssim t^{\ell/2+(2\ell-1)\eta}y+y^{2\ell+1},
				\qquad 0\le x\le x_\ell^*,\quad 0\le y\le y_0.\label{pointwise-physical-remainder}
			\end{align}
		\end{theorem}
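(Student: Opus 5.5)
The plan is to work in von Mises variables, following the route the abstract indicates for Theorem~\ref{theo-main-separate}. We set $\psi$ to be the stream function and $w:=u^2$ as a function of $(x,\psi)$. Then the equation in \eqref{equa-u-P-E-} becomes the degenerate parabolic equation
\[
\partial_x w-\sqrt{w}\,\partial_\psi^2 w=-2,\qquad w|_{\psi=0}=0 .
\]
Separation corresponds to $\partial_\psi w(x,0)=2\partial_yu(x,0)\to0$. With $t=x^*-x$ we introduce the self-similar variables $s=-\log t$ and $\zeta=\psi/t^{3/2}$ (adjusted per regime), and write $w=t\,(Q(\zeta)+\varepsilon)$. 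Here $Q$ is the ground state \eqref{ground-state-Q}, which is a stationary solution of the rescaled equation.

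The linearization around $Q$ gives an operator $\mathcal L$ with discrete spectrum $\lambda_k$, whose eigenfunctions behave near $\zeta=0$ like $\zeta$ times a polynomial in $\zeta^{\cdot}$. The $\ell$-th separation law $\partial_yu(x,0)\sim \mathbf C_\ell t^{\ell/2}$ is meant to come from selecting the $\ell$-th mode as the dominant inner correction. Concretely:
\begin{itemize}
\item In the inner region $\psi\lesssim t^{3\ell/2}$ (after rescaling by $\mathbf C_\ell$), $w$ is approximated by $\mathbf C_\ell^{2}t^{\ell}Q$.
\item In the outer region the solution is a regular Taylor expansion in $\psi$, whose coefficients vanish at the appropriate orders. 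This produces the $y^{2\ell+1}$ term in \eqref{pointwise-physical-remainder}.
\end{itemize}
Translating back via $y=\int d\psi/\sqrt w$ yields exactly $\mathcal Q_{\rm sep}$ from \eqref{defi-pro-Q-sep}, since $u=\sqrt w$ and $dy=d\psi/u$ give $\mathcal Y_Q$.

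The proof then proceeds in three steps.
\begin{enumerate}
\item \textbf{Spectral analysis.} We analyze $\mathcal L$ in a weighted space adapted to the degeneracy $\sqrt Q\sim\zeta^{1/2}$ at the wall. We identify the $\ell$ nonnegative (unstable or neutral) directions, i.e.\ the eigenvalues above the $\ell$-th level, and prove a spectral gap $\eta$ below.
\item \textbf{Modulated decomposition.} We write $\varepsilon=\sum_{k<\ell}a_k(s)\phi_k+\widetilde\varepsilon$, with modulation parameters $x^*$ and $\mathbf C_\ell$. For $\ell=1$ the only unstable directions are those generated by the symmetries (shift of $x^*$, scaling $\mathbf C$). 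These are absorbed by modulation, which gives the stability statement of Theorem~\ref{theo-main-separate}. For $\ell\ge2$ there are $\ell-1$ genuinely unstable modes.
\item \textbf{Energy estimates and shooting.} We prove weighted energy estimates on $\widetilde\varepsilon$ giving decay $e^{-(2\ell-1)\eta s}$ relative to the leading profile. Pointwise weighted bounds near $\zeta=0$, obtained via maximum-principle arguments on the degenerate equation, convert this into the $t^{\ell/2+(2\ell-1)\eta}y$ bound.
\end{enumerate}

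The unstable coefficients $a_1,\dots,a_{\ell-1}$ are handled by a topological (Brouwer) shooting argument. We consider a finite-dimensional family of initial data $\widehat u_{\rm in,\ell}+\sum_k d_k h_k$, where the $h_k$ are smooth and chosen so that their projections onto the unstable modes at the time the solution enters the self-similar regime form a nondegenerate map. If no choice of $d$ kept the trajectory trapped, the exit map would give a retraction of the ball onto its boundary, which is impossible. Hence some choice keeps the trajectory trapped. Smoothness and membership in $\mathcal K_1$ are preserved because the perturbations are compactly supported away from $y=0$, or vanish to high order there. Oleinik's local theory guarantees that the solution exists up to $x^*$, and one checks $x^*<x_0$ by choosing the data close to separation (strongly decelerated).

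The main obstacle will be the linear theory of Step 1. The operator is degenerate at $\zeta=0$, with coefficient $\sqrt{Q}$, and has non-self-adjoint structure at infinity, where it must match the outer Taylor expansion. Establishing a spectral gap in a norm strong enough to control $\partial_\psi w$ at the wall, and compatible with the nonlinear term $\sqrt{w}$, is the crux. My first attempt would be to conjugate $\mathcal L$ to a Sturm--Liouville operator in the $y$-type variable $\mathcal Y_Q$, where the eigenfunctions become explicit polynomial-type functions. I would then run coercivity estimates in a weighted $L^2$ space with weight $Q^{-1/2}$ times a polynomial cutoff, localized to $\zeta\lesssim e^{\delta s}$ and glued to the outer region by a cutoff.
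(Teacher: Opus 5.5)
\textbf{There is a gap: the linear problem in your Step~1 is the wrong one.} This is a missing idea, not a technicality.

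\emph{Your scalings do not fit together.} Taking $w\sim t$ forces $\psi\sim t^{3/4}$. Your inner region $\psi\lesssim t^{3\ell/2}$ instead forces $w\sim\lambda\sim16\mathbf C_\ell^4t^{2\ell}$, not $t^\ell$ (which is the size of $u$). This reflects the real point: the laws $t^{\ell/2}$ are not self-similar, so no single-scale ansatz $w=t^k(Q+\varepsilon)$ can produce them.

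\emph{$Q$ is not stationary in either frame.} In the self-similar frame $w=t\,\vartheta(s,\psi/t^{3/4})$, $s=-\log t$, one has $\partial_s\vartheta=\sqrt\vartheta\,\partial_\xi^2\vartheta-2+\Lambda_\xi\vartheta$. Since $\Lambda Q>0$, $Q$ is not a stationary solution; the stationary profile is the singular $a^2\xi^{4/3}$, i.e.\ the terminal flow $u=y^2/2$. In the wall frame $\zeta=\psi/\lambda^{3/4}$, $Q$ is stationary only in the limit, because the drift $\mathfrak b_1=-\lambda_x$ tends to zero. There $\mathscr H_0=\sqrt Q\,\partial_\zeta^2+Q^{-1}$ has no confining term, hence no spectral gap.

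\emph{What the paper does instead.} It works in the self-similar frame and linearizes around $Q_b=bQ(\xi/b^{3/4})$, where $b=\lambda/\mu\to0$ is a second, non-self-similar parameter. The spectrum $\beta(\frac12-i)+O(b^{1/4})$ of $\mathscr L_{b,\beta}$ is obtained by matching the generalized kernel $T_j$ of $\mathscr H_0$ with the Laguerre eigenfunctions of $\mathscr L_{0,\beta}$ (Proposition~\ref{diagonalisation-L-b}). Only one eigenvalue is positive. The modes $1\le j<\ell$ decay, and are unstable only relative to the target amplitude $\sqrt b$.

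\emph{What selects the exponent.} Two conditions combine. The first is the wall constraint $c_{0,0}\varepsilon_0+c_{\ell,0}\varepsilon_\ell=0$, written through the adapted direction $\psi_{\ell,b}$ whose $T_0$ part cancels, so that $\partial_\xi\vartheta(s,0)=b^{1/4}(1+O(b^\eta))$. The second is $\varepsilon_\ell\approx\mathbf n_\ell\sqrt b$. By \eqref{equation-b-item-i} these force $b'/b\to-(2\ell-1)$, hence $\lambda=\mu b\sim t^{2\ell}$. Your scheme has no such parameter and so no mechanism producing the exponent $\ell/2$. Relatedly, the gap below the retained modes is of order $\beta$, not $\eta$; $\eta$ is only the bootstrap slack, giving $b^\eta=t^{(2\ell-1)\eta}$.

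\textbf{Several later steps would also fail as written.}

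\emph{Mode count.} Modulating both $x^*$ and $\mathbf C_\ell$ while shooting on $\ell-1$ modes is exactly the two-constraint scheme whose Jacobian has a cancelling leading term when $\ell\ge2$ (Section~\ref{subsection-sketch-unstable-regime} and Remark~\ref{remark-expected-codimension}). The implicit function theorem is then unavailable, and $\ell-1$ is only the conjectured codimension. The paper fixes $\beta=1$, so $x^*=e^{-s_0}$ is prescribed, and modulates only $b$. It shoots on $\ell$ parameters; the extra one is the defect $e_\ell=\varepsilon_\ell-\mathbf n_\ell\sqrt b$, which is outgoing after normalization by $b^{1/2+2\eta}$.

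\emph{Quasilinear term and compatibility.} Weighted $L^2$ estimates on $\widetilde\varepsilon$ do not by themselves control $\partial_\xi^2\varepsilon$ in $(\sqrt{Q_b+\varepsilon}-\sqrt{Q_b})\partial_\xi^2\varepsilon$. Nor do they propagate the corner compatibility needed for continuation. This needs either high-order weighted estimates or, as in the paper, the residual $\mathcal F=\sqrt\vartheta\,\partial_\xi^2\vartheta-2=2(\partial_y^2u-1)$. Its linearized operator $\mathscr L_{b,\beta}-\beta\operatorname{Id}$ shares the eigenbasis, and the paper combines it with pointwise barriers.

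\emph{Initial data.} If your data are not already in the trapped region, evolving them until they ``enter the self-similar regime'' is an uncontrolled step. The paper prepares the shooting data directly inside the trapped region at time $s_0$. It adds a nonlinear ground-state correction that restores wall compatibility for every parameter value.

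\emph{Existence up to $x^*$.} Oleinik's theorem gives only local existence. Reaching $x^*$ is an output of trapping, through the continuation criterion, not an input.
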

		
		\paragraph{Convention for the remainder bounds.}
		In the remainder estimates above, $y_0$ is fixed independently of the evolution time $s$ and preparation time $s_0$. The implicit constants may depend on the initial datum (hence on $s_0$) and $y_0$, but not on $x$ or $y$. Endpoint values are taken by continuous extension; the rescaled profile is interpreted as $y^2/2$ at separation.
		
		\paragraph{Relation to the classical separation expansion.}
		Motivated by Hartree's numerical evidence of a separation singularity~\cite{Hartree1939}, Goldstein~\cite{Goldstein1948} considered the formal expansions, with $t=x^*-x$,
		\begin{equation}\label{formal-expansion-G_S}
			u(x^*,y)\sim\tfrac12y^2+\sum_{k\ge3}c_ky^k,
			\qquad
			\partial_yu(x,0)\sim\sum_{k\ge1}\alpha_k t^{(k+1)/4}.
		\end{equation}
		Stewartson~\cite{Stewartson1958} showed that the pure-power ansatz generally requires logarithmic corrections, beginning at order $t^{3/2}\log t$ in the nondegenerate wall-shear expansion. His analysis also distinguishes the inner expansion from the terminal profile at fixed $y>0$.
		
		By \eqref{ground-state-Q}, the profile \eqref{defi-pro-Q-sep} satisfies $\mathcal Q_{\rm sep}(Y)=Y+Y^2/2$. Our bounds \eqref{pointwise-physical-remainder-1} and \eqref{pointwise-physical-remainder} imply, with $t=x_\ell^*-x$,
		\begin{equation}\label{our-expansion}
			u_\ell(x_\ell^*,y)=\tfrac12y^2+O(y^{2\ell+1}),
			\qquad
			\partial_yu_\ell(x,0)=\mathbf C_\ell t^{\ell/2}\bigl(1+O(t^{(2\ell-1)\eta})\bigr).
		\end{equation}
		The terminal estimate holds on every fixed sufficiently small interval $[0,y_0]$, with constants depending on the initial datum and $y_0$; the reference stable solution uses the hatted parameters. The~first expansion in \eqref{our-expansion} recovers the leading term of the first expansion in \eqref{formal-expansion-G_S}. Provided that Goldstein's expansion is valid through order $t^{\ell/2}$, the constructed solution realizes the coefficient pattern $\alpha_k=0$ for $1\le k<2\ell-1$ and $\alpha_{2\ell-1}=\mathbf C_\ell$. Leading laws associated with the even-indexed coefficients $\alpha_{2j}$ are not covered by the present analysis and may require a different mechanism.
		
		For $\ell=1$, we recover the square-root law proved by Dalibard--Masmoudi~\cite{DalibardMasmoudi-IHES-19} for well-prepared globally monotone $C^7$ data with high-order compatibility conditions. For $\ell\ge2$, Theorem~\ref{theo-unstab} constructs a countable family of higher-order laws by finite-dimensional tuning.
		
		The second part of Theorem~\ref{theo-main-separate} establishes nonlinear stability in the topology \eqref{definition-norm-E}, with continuous dependence of the separation point and leading coefficient on the initial datum. Perturbations require only $C_b^{3,\alpha}$ regularity, $0<\alpha\le1$, subject to \eqref{Olenik class}. These conditions retain positivity, nondegenerate initial wall shear, and compatibility, but allow profiles that are not globally monotone.
		
		Varying the preparation parameter $s_0$ allows the separation point to be prescribed near the inlet: for each fixed $\ell\ge1$ and $x_0>0$, there is $\widehat{\mathfrak x}_\ell(x_0)\in(0,x_0)$ such that every $x^*\in(0,\widehat{\mathfrak x}_\ell(x_0))$ is attained. For $\ell\ge2$, this follows from $x_\ell^*=e^{-s_0}$ in \eqref{similarity_variable_ell_gener}. For $\ell=1$, continuity of the prepared data in the norm \eqref{definition-norm-E} and continuous dependence of the separation point give a continuous positive map $s_0\mapsto x_1^*(s_0)$ tending to zero. The intermediate value theorem then gives the assertion; see Remark~\ref{remark-prescribed-stable-separation}. Stability preserves the square-root regime while allowing the separation point to vary.
		
		\paragraph{Novelties.}
		The formal defect hierarchy determines the candidate separation rates and profiles (Section~\ref{Section-formal-approach}).  Matching this hierarchy with the spectral expansion around $Q_b$ (Proposition~\ref{diagonalisation-L-b}) yields the adapted decomposition~\eqref{decom-varepvarphi-i-ell}: the parameter $b$ is selected by a geometric orthogonality condition, and a second modulation condition fixes the distinguished amplitude in the stable regime. Wall compatibility is imposed separately in the preparation of the data, while a topological shooting argument selects the unstable modes for $\ell\ge2$.  Pointwise estimates replace high-order weighted Sobolev estimates for the inner remainder: we construct scaled ground-state barriers, following comparison ideas from~\cite{Biernat_Seki2020,BDGJDE2025}, to obtain a dominant zeroth-order contribution, and freeze the diffusion coefficient at the evolving solution to avoid singular comparison coefficients. Semigroup estimates in the intermediate region and eigenfunction barriers in the outer region are coupled through the modulation equations to close the pointwise bootstrap (Section~\ref{subsection-sketch-stable-regime}). The quasilinear term \eqref{defi-B-quadratic-appendix-} is controlled through the residual
		\begin{equation}\label{defi-mathcal-F}
			\mathcal F(s,\xi):=\sqrt{\vartheta(s,\xi)}\,\partial_\xi^2\vartheta(s,\xi)-2,
		\end{equation}
		which corresponds to $2\partial_y^2u-2$ under \eqref{chang-varialbe}. Together with perturbation bounds and positivity, its estimates control $\partial_\xi^2\varepsilon$, propagate wall compatibility, and ensure continuation. The common eigenbasis and spectral shift \eqref{notation-Lhat-b-beta} couple the residual and perturbation estimates, closing the nonlinear bootstrap.
		
		The argument combines modulation with pointwise comparison estimates for a quasilinear operator that degenerates at the wall.  It is related to modulation--energy constructions for Type~II dynamics~\cite{Merle_Raphael2004,Merle_Raphael2005,Raphael_Schweyer2013,Raphael_Schweyer2014,Ghoul_Ibrahim_Nguyen2019,Ghoul_Ibrahim_Nguyen2018,Collot-et-alJAMS-2020,CGMN_CPAM_22} and to the modulation--comparison method used for Prandtl separation in~\cite{DalibardMasmoudi-IHES-19}.
		
		\begin{remark}[Comment on codimensional stability in Theorem~\ref{theo-unstab}]\label{remark-expected-codimension}
			The solutions constructed in Theorem~\ref{theo-unstab} are expected to lie on a finite-codimensional stable set.  A rigorous characterization of this set remains open because the nondegeneracy of the modulation Jacobian has not been established.  Related finite-codimensional constructions appear in~\cite{Merle_Raphael_Rodnianski2015,Raphael_Schweyer2014,Ghoul_Ibrahim_Nguyen2018}; stability and invariant-manifold results in related dispersive and parabolic problems include~\cite{Ghoul_Ibrahim_Nguyen2019,Collot2018Manifolds,Martel_Merle_Nakanishi_Raphael2016}.
			
			We now formulate the expected codimensional-stability statement. Fix $\ell\ge2$ and $\alpha\in(0,1]$, and define
			\[
			\mathcal X_\ell
			:=
			\widehat u_{{\rm in},\ell}+\mathcal O_\alpha
			\]
			as the affine perturbation space of the reference datum, endowed with the topology induced by the norm $\|\cdot\|_{\mathcal E}$. We expect that there exist a neighborhood $\mathcal N_\ell$ of $\widehat u_{{\rm in},\ell}$ in $\mathcal X_\ell$ and a local submanifold
			\[
			\mathcal M_\ell\subset\mathcal N_\ell
			\]
			of finite codimension, with $\widehat u_{{\rm in},\ell}\in\mathcal M_\ell$, such that every admissible initial datum
			\[
			u_{\rm in}\in\mathcal M_\ell
			\]
			generates a solution that separates according to the regime described in Theorem~\ref{theo-unstab}. The stability problem is therefore to identify the local geometry and regularity of $\mathcal M_\ell$.
			
			Motivated by the works cited above, we expect $\mathcal M_\ell$ to have codimension $\ell-1$. More precisely, there should exist an $(\ell-1)$-dimensional subspace $E_\ell^{\mathrm u}\subset\mathcal O_\alpha$ and a closed complementary subspace $\mathcal Z_\ell\subset\mathcal O_\alpha$ such that
			\[
			\mathcal O_\alpha
			=
			\mathcal Z_\ell\oplus E_\ell^{\mathrm u},
			\qquad
			\dim E_\ell^{\mathrm u}=\ell-1.
			\]
			Locally, the manifold should be represented as
			\[
			\mathcal M_\ell
			=
			\left\{
			\widehat u_{{\rm in},\ell}
			+h+\Phi_\ell(h):
			h\in\mathcal Z_\ell,\ \|h\|_{\mathcal E}<r
			\right\},
			\]
			where
			\[
			\Phi_\ell:\mathcal Z_\ell\longrightarrow E_\ell^{\mathrm u},
			\qquad
			\Phi_\ell(0)=0.
			\]
			For each sufficiently small $h\in\mathcal Z_\ell$, the correction $\Phi_\ell(h)$ would be selected so that the corresponding modulated solution remains trapped, suppressing the growth of the $\ell-1$ unstable components. The regularity of $\Phi_\ell$ would determine the regularity of this graph. Lipschitz regularity is proved for the energy-supercritical semilinear wave equation in~\cite{Collot2018Manifolds}, while a $C^1$ threshold manifold is constructed for the critical gKdV equation in~\cite{Martel_Merle_Nakanishi_Raphael2016}.
			\medskip
			
			The expected codimension-$\ell-1$ statement is not proved here.  For $\ell\ge2$, the leading contribution to the Jacobian governing the simultaneous propagation of the geometric and compatibility constraints cancels; the available estimates therefore do not establish its nondegeneracy.  The present argument fixes $\beta=1$ and uses $\ell$ shooting parameters.  Reducing this number to $\ell-1$, and proving continuity or Lipschitz regularity of the resulting selection map, requires a separate analysis, including uniqueness of the trapped correction in the prescribed modulation class; see Section~\ref{subsection-sketch-unstable-regime}.
		\end{remark}
		
		\newpage
		\subsection*{Notation}\label{subsec-notation}
		We use the following conventions.
		\paragraph{General conventions.}
		For a spatial variable $\zeta$,
		\begin{equation}\label{Lambda:op}
			\Lambda_\zeta f
			:=
			f-\frac34\zeta\partial_\zeta f.
		\end{equation}
		We omit the subscript of $\Lambda$ when clear and set
		\begin{align}
			\langle\xi\rangle&:=\sqrt{1+\xi^2},\label{notation-japanese-bracket}\\*
			\int_1^\xi h(\eta)\,d\eta&=-\int_\xi^1 h(\eta)\,d\eta, \qquad 0<\xi<1.\notag
		\end{align}
		$\mathbf 1_E$ and $\mathbbm 1_E$ denote indicators; $A\lesssim B$ means $A\le CB$ with $C>0$ and the stated parameter dependence.
		
		\paragraph{Ground state and rescaled profiles.}
		Let $Q>0$ satisfy (Lemma~\ref{lemma-Q})
		\begin{equation}\label{ground-state-Q}
			\sqrt Q\,Q''=2,\qquad Q(0)=0,\qquad Q'(0)=1.
		\end{equation}
		Set
		\begin{equation}\label{notation-constant-a}
			a:=\left(\frac{3}{\sqrt2}\right)^{2/3}.
		\end{equation}
		For $b>0$,
		\begin{equation}\label{defi-Q-b}
			Q_b(\xi)
			:=
			bQ\left(\frac{\xi}{b^{3/4}}\right).
		\end{equation}
		
		\paragraph{Reference scales and pointwise weights.}
		For integers $\ell\ge1$,
		\begin{align}
			\mathfrak I_\ell(s)&:=e^{(1-2\ell)s},\qquad \ell\ge1.\label{defi-I-ell}\\*
			\mathfrak f_\ell (\xi)&= \xi^{2/3} \langle \xi \rangle^{4/3+4\ell/3}.\label{defi-f-ell}
		\end{align}
		For $0<b<1$, $\eta>0$, and integers $\ell\ge1$, define the weighted norm on $(0,b^{-\eta})$ by
		\begin{equation}\label{definition-weighted-remainder-norm}
			\|f\|_{\mathfrak X_{b,\eta}^{(\ell)}}
			:=b^{1/4}\left\|\frac{f}{\xi}\right\|_{L^\infty(0,b^{3/4})}
			+\left\|\frac{f}{\xi^{2/3}\langle\xi\rangle^{4/3+4\ell/3}}\right\|_{L^\infty(b^{3/4},b^{-\eta})}.
		\end{equation}
		Write $\mathfrak X_{b,\eta}:=\mathfrak X_{b,\eta}^{(1)}$.
		
		For variable $\beta$ (see \eqref{relation-beta-mu-s}),
		\begin{equation}\label{stable-reference-scale}
			\Bstab(s)=e^{-s_0}\exp\left(-\int_{s_0}^s\beta(\sigma)\,d\sigma\right)
			=e^{-s_0}\frac{\mu(s)}{\mu(s_0)}.
		\end{equation}
		These reference scales satisfy $\mathfrak I_\ell'=-(2\ell-1)\mathfrak I_\ell$ and $\Bstab'=-\beta\Bstab$. The modulation equations determine $b$: for $\ell\ge2$, $b/\mathfrak I_\ell$ converges to a positive constant, while in the stable regime the corresponding ratio is $b/\Bstab$; see Sections~\ref{section-prove-theorem-stab} and~\ref{subsection-proof-existen-solu-ell=2}.
		Thresholds $A_i\ge1$, $\eta_i>0$, and $s_i\ge1$ are indexed by result;
		only the thresholds needed in a given statement are introduced.
		The preparation parameters are fixed first; their dependence may be suppressed
		in $A_i$ and $\eta_i$. Any auxiliary spatial cutoff $K$ is then fixed before
		choosing $A\ge A_i$, $0<\eta\le\eta_i$, and finally $s_0\ge s_i$.
		All further restrictions in a proof are absorbed into its thresholds, which
		are independent of the terminal time and uniform over the stated class of data.
		Joint applications require the maximum of the amplitude thresholds, the minimum
		of the exponent thresholds, and the maximum of the initial-time thresholds.
		These are evaluated at the parameters actually used; in particular, applications
		at $A$ and $2A$ require both sets of thresholds, without any monotonicity assumption.
		
		\paragraph{Linearized operators and semigroups.}
		For $b,\beta>0$,
		\begin{align}
			\mathscr L_{b,\beta}f&:= \sqrt{Q_b}\,\partial_\xi^2f + \frac{\partial_\xi^2Q_b}{2\sqrt{Q_b}}f + \beta\Lambda_\xi f,\label{defi-mathscr-L-}\\*
			\mathscr{L}_{0,\beta}f&:= a\xi^{2/3}\partial_\xi^2f + \frac{1}{a^2\xi^{4/3}}f + \beta\Lambda_\xi f.\label{defi-L_0_beta}
		\end{align}
		
		Residual operators:
		\begin{align}
			\widehat{\mathscr L}_{b,\beta}f&:=\sqrt{Q_b}\,\partial_\xi^2f+\frac{f}{Q_b} -\frac{3\beta}{4}\xi\partial_\xi f =\bigl(\mathscr L_{b,\beta}-\beta\operatorname{Id}\bigr)f,\label{notation-Lhat-b-beta}\\*
			\widehat{\mathscr L}_{0,1}f&:=a\xi^{2/3}\partial_\xi^2f+\frac{f}{a^2\xi^{4/3}} -\frac34\xi\partial_\xi f =\bigl(\mathscr L_{0,1}-\operatorname{Id}\bigr)f.\label{definition-Lhat-zero-one}
		\end{align}
		Here $\operatorname{Id}$ is the identity. Define (Section~\ref{section-spectral-L_0-beta})
		\begin{align}
			S_{\beta_0}(t)&:=e^{t\mathscr L_{0,\beta_0}}, \qquad \beta_0>0,\quad t>0,\label{defi-S-beta-0-t}\\*
			\widehat S_1(t)&:=e^{t\widehat{\mathscr L}_{0,1}}, \qquad t>0.\label{notation-semigroup-F}
		\end{align}
		
		In the inner variable $\zeta$, set
		\begin{equation}\label{defi-mathcal-beta}
			\mathcal L_{b,\beta}f
			:=\sqrt Q\,\partial_\zeta^2f+\frac{f}{Q}+b\beta\Lambda_\zeta f.
		\end{equation}
		For $\sigma>0$,
		\begin{align}
			Q_\sigma(\zeta)&:=\sigma Q\!\left(\frac{\zeta}{\sigma^{3/4}}\right),\label{definition-Q-sigma-general-p}\\*
			\mathscr H_\sigma f&:=\partial_\zeta^2f+\frac{f}{Q_\sigma^{3/2}}.\label{notation-H-sigma}
		\end{align}
		
		\paragraph{Weighted spaces and scalar products.}
		For $b,\beta>0$,
		\begin{align}
			P_b(\xi)&=\frac{3\beta}{4}\int_0^\xi \xi' Q_b(\xi')^{-1/2}\,d\xi'.\label{defi-P-b}\\*
			\rho_{b,\beta}(\xi)&=aQ_b(\xi)^{-1/2}e^{-P_b(\xi)}.\label{defi-rho-b-beta}
		\end{align}
		
		The $\beta$-dependence of $P_b$ is implicit. Set
		\begin{equation}\label{defi-rho-infty}
			\rho_{0,\beta}(\xi)
			:=
			\xi^{-2/3}
			\exp\left(
			-\frac{a^2\beta}{8}\xi^{4/3}
			\right).
		\end{equation}
		
		For real $f,g$ and $\rho>0$,
		\begin{equation*}
			\langle f,g\rangle_{L^2_\rho}
			:=\int_0^\infty f(\xi)g(\xi)\rho(\xi)\,d\xi,
			\qquad
			\|f\|_{L^2_\rho}^2:=\langle f,f\rangle_{L^2_\rho}.
		\end{equation*}
		Set
		\begin{equation}\label{definition-Hilbert-b-beta}
			L^2_{b, \beta} :=
			L^2\bigl((0,+\infty),  \rho_{b, \beta} (\xi) d\xi \bigr).
		\end{equation}
		
		Write $L^2_{b,\beta}=L^2_{\rho_{b,\beta}}$, $d\rho_{b,\beta}=\rho_{b,\beta}(\xi)\,d\xi$, and
		\begin{align}
			\|f\|_{H^1_{b,\beta}}^2&:= \int_0^\infty \left( |f|^2 + \sqrt{Q_b}\, |\partial_\xi f|^2 \right) d\rho_{b,\beta},\label{definition-H1-norm-b-beta}\\*
			H^1_{0;b,\beta}&:= \overline{C_c^\infty(0,+\infty)} ^{\,\|\cdot\|_{H^1_{b,\beta}}},\label{definition-H1-zero-b-beta}
		\end{align}
		
		\paragraph{Spectral coordinates and normalization.}
		Normalized eigenfunctions (Lemma~\ref{lemma-spectrum-L0-beta-xi}), $\beta>0$:
		\begin{equation}\label{defi-phi-beta-i}
			\phi_{\beta, n}(\xi)
			=
			\mathcal N_n\,
			a\xi^{2/3}
			L_n^{(1/4)}
			\left(
			\frac{\beta a^2\xi^{4/3}}{8}
			\right),
			\qquad n \in \N,
		\end{equation}
		
		Here $L_n^{(1/4)}$ is the degree-$n$ generalized Laguerre polynomial, and
		\begin{equation}\label{defi-N-beta-i}
			\mathcal N_n
			=
			\frac{2}{\sqrt3}a^{1/4}
			\left(\frac{\beta}{8}\right)^{5/8}
			\sqrt{
				\frac{n!}{\Gamma\!\left(n+\frac54\right)}
			}.
		\end{equation}
		
		We also write $\mathcal N_n(\beta)$. Matching coefficients:
		\begin{equation}\label{defi-c-i-j}
			c_{i,j}
			=
			\frac{2}{\sqrt3}a^{5/4}
			\left(\frac{1}{8}\right)^{5/8}
			\frac{(-1)^j}{(i-j)!}
			\sqrt{i!\Gamma\!\left(i+\frac54\right)},
			\qquad
			0\le j\le i.
		\end{equation}
		
		For the eigenfunctions $\varphi_{j,b,\beta}$ of Proposition~\ref{diagonalisation-L-b},
		\begin{align}
			\mathfrak F_{j,b,\beta}(\varepsilon)&:= \frac{ \left\langle \varepsilon, \varphi_{j,b,\beta} \right\rangle_{L^2_{\rho_{b,\beta}}} }{ \left\| \varphi_{j,b,\beta} \right\|_{L^2_{\rho_{b,\beta}}}^2 }.\label{defi-F-j}\\*
			\Pi_{j,b,\beta}(\varepsilon)&:= \mathfrak F_{j,b,\beta}(\varepsilon) \varphi_{j,b,\beta},\label{eigen-projection-Pi-j}
		\end{align}
		For the iterates $T_j$ of Lemma~\ref{lemma-T-i},
		\begin{equation}\label{notation-scaling-normalization}
			\cscale:=\frac{8}{a\Gamma(5/4)},
			\qquad T_0:=\cscale\Lambda Q.
		\end{equation}
		With \eqref{defi-c-i-j}, set
		\begin{align}
			\mathbf m_0(\beta)&= \frac{1}{\cscale c_{0,0}\beta^{5/8}},\label{defi-m-0}\\*
			\mathbf n_\ell(\beta)&:=\frac{2}{\cscale c_{\ell,0}\beta^{5/8}} =2\frac{c_{0,0}}{c_{\ell,0}}\mathbf m_0(\beta), \qquad \ell\geq1.\label{defi-n-ell-1}
		\end{align}
		$\mathfrak m_0$ denotes the lower bound in \eqref{definition-m0-F}. For $\beta=1$,
		\begin{equation}\label{notation-beta-one}
			\begin{gathered}
				\mathscr L_b:=\mathscr L_{b,1},\quad
				\rho_b:=\rho_{b,1},\quad
				\varphi_{j,b}:=\varphi_{j,b,1},\quad
				\mu_{j,b}:=\mu_{j,b,1},\\
				\mathfrak F_{j,b}:=\mathfrak F_{j,b,1},\quad
				\mathbf m_0:=\mathbf m_0(1),\quad
				\mathbf n_\ell:=\mathbf n_\ell(1).
			\end{gathered}
		\end{equation}
		
		\paragraph{Cutoff function.}
		Choose nonincreasing $\chi_0\in C_c^\infty(\mathbb R_+;[0,1])$ with
		\begin{equation}\label{defi-chi-0}
			\chi_0(z)=1
			\quad\text{for }0\leq z\leq 1,
			\qquad
			\chi_0(z)=0
			\quad\text{for }z\geq 2.
		\end{equation}
		
		\section{Formal   derivation of   separation dynamics}
		\label{Section-formal-approach}
		We derive the formal separation laws and near-wall profiles, then use spectral matching to identify the geometric decomposition underlying the construction. For a smooth solution $u$ of \eqref{equa-u-P-E-}, positive for $y>0$, introduce the von Mises variables
		\begin{equation}\label{defi-von-Mise}
			\psi(x,y):=\int_0^y u(x,y')\,dy',\qquad
			w(x,\psi):=u^2(x,y(x,\psi)).
		\end{equation}
		Then $w$ satisfies
		\begin{equation}\label{wvariable:1}
			\begin{cases}
				\partial_xw-\sqrt w\,\partial_\psi^2w=-2,\\
				w(x,0)=0,\quad w(0,\psi)=w_{\rm in}(\psi),\quad
				w(x,\psi)\to u_E^2(x)\quad(\psi\to\infty).
			\end{cases}
		\end{equation}
		Here $w_{\rm in}=u_{\rm in}^2\circ\psi_{\rm in}^{-1}$, where $\psi_{\rm in}(y)=\int_0^y u_{\rm in}(y')\,dy'$. The equation is invariant under the scaling
		\begin{equation*}
			w_\varpi(x,\psi)=\varpi^{4/3}w(\varpi^{-4/3}x,\varpi^{-1}\psi),\qquad \varpi>0.
		\end{equation*}
		Assume separation occurs at a finite point $\mathfrak X$. Since $\partial_yu(x,0)=\tfrac12\partial_\psi w(x,0)$, define the wall scale by $\lambda(x)^{1/4}=\partial_\psi w(x,0)$; thus $\lambda(x)\to0$ as $x\uparrow\mathfrak X$. Set
		\begin{equation}\label{renormalized-variables-lambda}
			w(x,\psi)=\lambda(x)v(\tau,\zeta),\qquad
			\zeta=\frac{\psi}{\lambda(x)^{3/4}},\qquad
			\tau=\tau_0+\int_0^x\frac{dx'}{\lambda(x')}.
		\end{equation}
		
		\subsection{Construction of an approximate solution}\label{subsection-formal-approximation}
		Motivated by the Type~II modulation schemes in~\cite{Raphael_Schweyer2013,Raphael_Schweyer2014,Merle_Raphael_Rodnianski2015}, we seek a slowly modulated orbit near $Q$, with uniformly small drift
		\begin{equation}\label{slowly-modulated-b-1}
			\mathfrak b_1=-\lambda_\tau/\lambda=-\lambda_x.
		\end{equation}
		Since $\lambda(x)\to0$ as $x\uparrow\mathfrak X$, boundedness of $\mathfrak b_1=-\lambda_x$ gives $0<\lambda(x)\lesssim\mathfrak X-x$, hence $\tau(x)\to+\infty$. Writing $\lambda(\tau)=\lambda(x(\tau))$, equation~\eqref{wvariable:1} becomes
		\begin{equation}\label{equa-v-b}
			\begin{cases}
				\partial_\tau v=\sqrt v\,\partial_\zeta^2v-2+\mathfrak b_1\Lambda_\zeta v,\\
				v(\tau,0)=0,\quad v(\tau_0,\zeta)=v_{\rm in}(\zeta),\quad
				\lambda v\to u_E^2(x(\tau))\quad(\zeta\to\infty).
			\end{cases}
		\end{equation}
		Here $v_{\rm in}(\zeta)=\lambda_0^{-1}w_{\rm in}(\lambda_0^{3/4}\zeta)$,
		$\lambda_0=(w'_{\rm in}(0))^4$, and $\Lambda_\zeta$ is defined in
		\eqref{Lambda:op}. The choice of $\lambda$ fixes the normalization
		$\partial_\zeta v(\tau,0)=\lambda(\tau)^{-1/4}\partial_\psi w(x(\tau),0)=1$.
		The formal inner construction retains the vanishing wall condition and this normalization; matching to the outer flow recovers the far-field condition.
		
		\medskip
		We focus on the regime $0<\mathfrak b_1(\tau)\ll1$, with $\mathfrak b_1(\tau)\to0$ as $\tau\to+\infty$. If $\mathfrak b_1\to\mathfrak b_*>0$, then $\lambda(x)\sim\mathfrak b_*(\mathfrak X-x)$, the self-similar scale with wall-shear exponent $1/4$. This exponent also appears in the upper bound of~\cite{Shen_Wang_Zhang2021}.
		
		\medskip
		\noindent 
		Fix a sufficiently large integer $L\geq2$. Linearizing about the ground state $Q$, the unique positive solution of \eqref{ground-state-Q}, we seek an approximation of the form
		\begin{equation*}
			v_{\rm ap}=Q+h,\qquad h=B+\mathcal S,\qquad
			B=\sum_{j=1}^L\mathfrak b_j\mathcal T_j,\qquad
			\mathcal S=\sum_{k=2}^{L+2}\mathcal S_k.
		\end{equation*}
		Successive inversion of the linearized operator generates the profiles $\mathcal T_j$, while the amplitudes $(\mathfrak b_1,\ldots,\mathfrak b_L)$ govern the modulation dynamics, compare with the modulation construction in~\cite{Merle_Raphael_Rodnianski2015}. To organize the higher-order corrections, set $\mathcal S_1=0$ and write
		\begin{equation*}
			\mathcal S_k=\mathfrak b_1\mathfrak b_{k-1}\mathcal P_{k-1}+\widetilde{\mathcal S}_k
			\quad(2\leq k\leq L+1),\qquad
			\mathcal S_{L+2}=\widetilde{\mathcal S}_{L+2}.
		\end{equation*}
		Substitution into \eqref{equa-v-b} gives the defect
		\begin{align*}
			\Psi_{\rm ap}
			&=\partial_\tau h-\mathscr H_0h-\mathfrak b_1\Lambda_\zeta Q-\mathfrak b_1\Lambda_\zeta h+\mathcal N(h),\\
			\mathcal N(h)
			&=-Q''\left(\sqrt{Q+h}-\sqrt Q-\frac{h}{2\sqrt Q}\right)
			-\left(\sqrt{Q+h}-\sqrt Q\right)\partial_\zeta^2h.
		\end{align*}
		Here $\mathscr H_0=\sqrt Q\,\partial_\zeta^2+1/Q$. For $|h|\ll Q$, Taylor expansion gives
		\begin{equation}\label{decomposition-N-formal}
			\mathcal N(h)
			=
			\sum_{k=2}^{L+2}\mathcal N_k
			+
			\mathcal N_{\geq L+3},
		\end{equation}
		where $\mathcal N_k$ collects the terms of weighted order $k$:
		\begin{align}
			\mathcal N_k
			={}&
			-
			Q''
			\sum_{m=2}^{k}
			\gamma_m Q^{\frac12-m}
			\sum_{\substack{
					i_1+\cdots+i_m=k\\
					i_\nu\geq1
			}}
			h_{i_1}\cdots h_{i_m}\notag\\*
			&-
			\sum_{m=1}^{k-1}
			\gamma_m Q^{\frac12-m}
			\sum_{\substack{
					i_1+\cdots+i_m+r=k\\
					i_\nu\geq1,\ r\geq1
			}}
			h_{i_1}\cdots h_{i_m}
			\partial_\zeta^2h_r.
			\label{definition-N-k-formal}
		\end{align}
		Here $\gamma_m=\binom{1/2}{m}$ and $h=\sum_{k=1}^{L+2}h_k$, with
		\begin{equation}\label{definition-h-k-formal}
			\begin{gathered}
				h_1=\mathfrak b_1\mathcal T_1,\qquad
				h_k=\mathfrak b_k\mathcal T_k+\mathcal S_k\quad(2\leq k\leq L),\\
				h_{L+1}=\mathcal S_{L+1},\qquad h_{L+2}=\mathcal S_{L+2}.
			\end{gathered}
		\end{equation}
		Assign weight $j$ to $\mathfrak b_j$ and weight one to $\partial_\tau$; $\mathcal N_{\geq L+3}$ collects terms of weighted order at least $L+3$. With $\mathfrak b_{L+1}=0$, define
		\begin{equation*}
			\operatorname{Mod}_j=(\mathfrak b_j)_\tau+(j-\tfrac12)(\mathfrak b_1\mathfrak b_j-\mathfrak b_{j+1}),\qquad 1\leq j\leq L.
		\end{equation*}
		Regrouping the defect yields
		\begin{align*}
			\Psi_{\rm ap}&={}\sum\nolimits_{j=1}^L\operatorname{Mod}_j\mathcal T_j
			-\mathfrak b_1[\mathscr H_0\mathcal T_1+\Lambda_\zeta Q]
			-\sum\nolimits_{j=2}^L \mathfrak b_j\bigl[\mathscr H_0\mathcal T_j-(j-\tfrac32)\mathcal T_{j-1}\bigr]\\*
			&\quad{}+\sum\nolimits_{j=1}^L \mathfrak b_1\mathfrak b_j\bigl[-\Lambda_\zeta\mathcal T_j-(j-\tfrac12)\mathcal T_j-\mathscr H_0\mathcal P_j\bigr]\\*
			&\quad{}+\sum\nolimits_{k=2}^{L+2}\bigl[-\mathscr H_0\widetilde{\mathcal S}_k+(\partial_\tau-\mathfrak b_1\Lambda_\zeta)\mathcal S_{k-1}+\mathcal N_k\bigr]
			+(\partial_\tau-\mathfrak b_1\Lambda_\zeta)\mathcal S_{L+2}+\mathcal N_{\geq L+3}.
		\end{align*}
		We cancel successive orders of $\Psi_{\rm ap}$ under the slow-modulation bounds
		\begin{equation}\label{asump-b-j}
			|\partial_\tau^m\mathfrak b_j|\leq C_{m,L}\mathfrak b_1^{j+m},\qquad
			m\geq0,\quad 1\leq j\leq L.
		\end{equation}
		These bounds, satisfied by the explicit orbits below, order the expansion in powers of $\mathfrak b_1$ and control its time derivatives. Using the inverse \eqref{inverse-H-0}, with zero value and derivative at the origin, set
		\begin{equation*}
			\left\{\begin{aligned}
				&\mathscr H_0\mathcal T_1=-\Lambda_\zeta Q,\qquad
				\mathscr H_0\mathcal T_{j+1}=\bigl(j-\tfrac12\bigr)\mathcal T_j,
				\qquad 1\leq j\leq L,\\
				&\mathcal P_j=\mathscr H_0^{-1}\left[-\Lambda_\zeta\mathcal T_j-\bigl(j-\tfrac12\bigr)\mathcal T_j\right],
				\qquad 1\leq j\leq L,\\
				&\widetilde{\mathcal S}_k=\mathscr H_0^{-1}\left[(\partial_\tau-\mathfrak b_1\Lambda_\zeta)\mathcal S_{k-1}+\mathcal N_k\right],
				\qquad 2\leq k\leq L+2.
			\end{aligned}\right.
		\end{equation*}
		Since $\mathcal N_k$ depends only on $h_1,\ldots,h_{k-1}$, this triangular recursion reduces the defect to
		\begin{equation}\label{simply-1}
			\Psi_{\rm ap}=\sum\nolimits_{j=1}^L\operatorname{Mod}_j\mathcal T_j
			+(\partial_\tau-\mathfrak b_1\Lambda_\zeta)\mathcal S_{L+2}+\mathcal N_{\geq L+3}.
		\end{equation}
		Under \eqref{asump-b-j}, induction in the Green representation \eqref{inverse-H-0}, using Lemma~\ref{lemma-Q}, gives the wall expansions
		\begin{alignat*}{2}
			\mathcal T_j(\zeta)&=\widetilde t_j\zeta^{1+3j/2}+O(\zeta^{3/2+3j/2}),
			&\qquad \widetilde t_j&=-\frac1{15}\prod\nolimits_{k=1}^{j-1}\frac{2(2k-1)}{(3k+3)(3k+5)},\\*
			\mathcal P_j(\zeta)&=\frac{j+2}{4(2j-1)}\mathcal T_{j+1}(\zeta)+O(\zeta^{3+3j/2}),
			&\qquad \widetilde{\mathcal S}_k&=O\bigl(\mathfrak b_1^k\zeta^{1+3k/2}\bigr),
		\end{alignat*}
		where $2\leq k\leq L+2$ in the last bound. 
		
		\medskip
		If the parameters satisfy \eqref{asump-b-j} and $\operatorname{Mod}_j=0$, $1\leq j\leq L$, then \eqref{simply-1} and the preceding estimates give $\Psi_{\rm ap}=O(\mathfrak b_1^{L+3}\zeta^{4+3L/2})$, uniformly on a fixed sufficiently small interval near the wall.
		
		The modulation system admits explicit orbits. Fix $1\leq\ell<L$ and seek $\mathfrak b_j^\ell=c_j\tau^{-j}$ for $1\leq j\leq\ell$, with $\mathfrak b_j^\ell=0$ for $j>\ell$. Substitution into $\operatorname{Mod}_j=0$ gives $-jc_j+(j-\tfrac12)(c_1c_j-c_{j+1})=0$. The terminal condition $c_{\ell+1}=0$, with $c_\ell\neq0$, yields
		\begin{equation}\label{for-mal-b-j-asymptotics}
			\mathfrak b_j^\ell(\tau)=\frac{c_j}{\tau^j},\qquad
			c_1=\frac{2\ell}{2\ell-1},\qquad
			c_{j+1}=-\frac{2(\ell-j)}{(2\ell-1)(2j-1)}c_j
			\quad(1\leq j<\ell).
		\end{equation}
		These orbits satisfy \eqref{asump-b-j}, with $0<\mathfrak b_1\ll1$ for $\tau\geq\tau_0\gg1$. Integrating $-\lambda_\tau/\lambda=c_1/\tau$ gives $\lambda=\kappa_\ell^{\rm form}\tau^{-2\ell/(2\ell-1)}$, $\kappa_\ell^{\rm form}>0$. Since $x_\tau=\lambda$ and $x(\tau_0)=0$, we obtain $\mathfrak X=\int_{\tau_0}^\infty\lambda\,d\tau<\infty$ and
		\begin{alignat*}{2}
			\mathfrak X-x&=(2\ell-1)\kappa_\ell^{\rm form}\tau^{-1/(2\ell-1)},
			&\qquad \lambda(x)&=\frac{(\mathfrak X-x)^{2\ell}}{(2\ell-1)^{2\ell}(\kappa_\ell^{\rm form})^{2\ell-1}},\\*
			\partial_yu(x,0)&=\tfrac12\lambda(x)^{1/4}=\mathbf C_\ell(\mathfrak X-x)^{\ell/2},
			&\qquad \mathbf C_\ell&=\frac1{2(2\ell-1)^{\ell/2}(\kappa_\ell^{\rm form})^{(2\ell-1)/4}}.
		\end{alignat*}
		\begin{remark}
			The orbit selects the separation exponent $\ell/2$, while $\kappa_\ell^{\rm form}$ determines its coefficient. Under $w\mapsto w_\varpi$, one has $\kappa_{\ell,\varpi}^{\rm form}=\varpi^{4/3}\kappa_\ell^{\rm form}$ and $\mathbf C_{\ell,\varpi}=\varpi^{-(2\ell-1)/3}\mathbf C_\ell$ for $\varpi>0$. Hence $\mathbf C_\ell$ depends on the reference scale fixed by the initial and matching data.
			
			The inner approximation satisfies the wall condition in \eqref{equa-v-b} and $\partial_\zeta v_{\rm ap}(\tau,0)=1$. The far-field condition $v\to u_E^2/\lambda$ requires a controlled transition to an outer profile.
		\end{remark}

		\subsection{The linearized problem}
		We now formulate the perturbation problem around a rescaled ground state. The rigorous construction determines the scale by modulation and recovers the wall shear a posteriori. Introduce a positive scale $\mu$, with $\mu(0)=\mu_0$, and set
		\begin{equation}\label{chang-varialbe}
			w(x,\psi)=\mu(x)\vartheta(s,\xi), \qquad \xi=\frac{\psi}{\mu(x)^{3/4}}, \qquad \frac{ds}{dx}=\frac1{\mu(x)}, \qquad s(0)=s_0\ge1,
		\end{equation}
		with $\mu(s)=\mu(x(s))$ and
		\begin{equation}\label{relation-beta-mu-s}
			\beta(s):=-\frac{\mu_s(s)}{\mu(s)}.
		\end{equation}
		Then \eqref{wvariable:1} becomes
		\begin{equation}\label{equa-vartheta}
			\begin{cases}
				\partial_s\vartheta=\sqrt\vartheta\,\partial_\xi^2\vartheta-2+\beta(s)\Lambda_\xi\vartheta,\\
				\vartheta(s,0)=0,\quad \vartheta(s_0,\xi)=\vartheta_{\rm in}(\xi),\quad
				\mu\vartheta\to u_E^2(x(s))\quad(\xi\to\infty).
			\end{cases}
		\end{equation}
		Here $U_\infty(s):=u_E^2(x(s))/\mu(s)$. The physical initial data are recovered from $w_{\rm in}(\psi)=\mu_0\vartheta_{\rm in}(\psi/\mu_0^{3/4})$, $y=\int_0^\psi w_{\rm in}(\psi')^{-1/2}\,d\psi'$, and $u_{\rm in}(y)=\sqrt{w_{\rm in}(\psi)}$.
		For $b=b(s)>0$, set $b_0=b(s_0)$ and decompose around the rescaled ground state \eqref{defi-Q-b}:
		\begin{equation}\label{defi-varepsilon-by-vartheta}
			\vartheta(s,\xi)
			=
			Q_{b(s)}(\xi)+\varepsilon(s,\xi).
		\end{equation}
		The parameter $b$ relates the inner and outer scales; formal matching with the wall scale $\lambda$ gives
		\begin{equation}\label{relation-lambda=mu-b}
			\lambda = \mu b.
		\end{equation}
		Substitution into \eqref{equa-vartheta} gives
		\begin{equation}\label{equa-varepsilon-appen-}
			\begin{cases}
				\partial_s\varepsilon=\mathscr L_{b,\beta}\varepsilon+\mathscr B_b(\varepsilon)+\Phi_{b,\beta},\\
				\varepsilon(s,0)=0,\quad \varepsilon(s_0,\xi)=\vartheta_{\rm in}(\xi)-Q_{b_0}(\xi),\quad
				\varepsilon+Q_b\to U_\infty(s)\quad(\xi\to\infty).
			\end{cases}
		\end{equation}
		Here $\mathscr L_{b,\beta}$ is defined in \eqref{defi-mathscr-L-}, and the nonlinear remainder is
		\begin{align}
			\mathscr B_b(\varepsilon)&:={}\sqrt{Q_b+\varepsilon}\,\partial_\xi^2(Q_b+\varepsilon)
			-\sqrt{Q_b}\,\partial_\xi^2Q_b-\sqrt{Q_b}\,\partial_\xi^2\varepsilon
			-\frac{\partial_\xi^2Q_b}{2\sqrt{Q_b}}\varepsilon
			\label{defi-B-quadratic-appendix-}\\*
			&={}\bigl(\sqrt{Q_b+\varepsilon}-\sqrt{Q_b}\bigr)\partial_\xi^2\varepsilon
			+\left(\sqrt{Q_b+\varepsilon}-\sqrt{Q_b}-\frac{\varepsilon}{2\sqrt{Q_b}}\right)\partial_\xi^2Q_b.\nonumber
		\end{align}
		The modulation term is
		\begin{equation}\label{Phi-simple-}
			\Phi_{b,\beta}
			:=
			\left(\beta-\frac{b_s}{b}\right)\Lambda_\xi Q_b.
		\end{equation}
		We seek $\beta(s)=\beta_0+o(1)$, with $\beta_0>0$. A constant $\beta$ fixes a self-similar reference frame, while the relative scale $b(s)$ encodes the non-self-similar dynamics.
		
		With $P_b$ and $\rho_{b,\beta}$ from \eqref{defi-P-b} and \eqref{defi-rho-b-beta}, the operator has divergence form
		\begin{equation*}
			\mathscr L_{b,\beta}f=\frac1{\rho_{b,\beta}}\partial_\xi
			\left(\rho_{b,\beta}\sqrt{Q_b}\,\partial_\xi f\right)
			+\left(\frac1{Q_b}+\beta\right)f.
		\end{equation*}
		As $b\downarrow0$ with $\beta$ fixed, the coefficients and weight converge locally away from zero to \eqref{defi-L_0_beta} and \eqref{defi-rho-infty}. Resolving the transition at $\xi\sim b^{3/4}$ requires matching the inner and outer expansions. We use the weighted space \eqref{definition-Hilbert-b-beta} and form domain \eqref{definition-H1-zero-b-beta}.
		
		\begin{proposition}[Perturbative diagonalization of $\mathscr L_{b,\beta}$]
			\label{diagonalisation-L-b}
			Fix an integer $\ell\geq1$ and $C_0>1$. There exists $b^*=b^*(\ell,C_0)\in(0,1)$ such that all the following conclusions hold for $0<b<b^*$ and $\beta\in[C_0^{-1},C_0]$. The operator $-\mathscr L_{b,\beta}$, initially defined on $C_c^\infty(0,\infty)$, is densely defined, symmetric, and bounded below in $L^2_{b,\beta}$. Its Friedrichs extension $A_{b,\beta}^{\rm F}$ has form domain $H^1_{0;b,\beta}$ and defines a self-adjoint realization $\mathscr L_{b,\beta}=-A_{b,\beta}^{\rm F}$ with compact resolvent.
			
			Its spectrum consists of simple eigenvalues
			\begin{equation}\label{ordered-spectrum-finite-b}
				\mu_{0,b,\beta}>\mu_{1,b,\beta}>\cdots>\mu_{i,b,\beta}>\cdots\longrightarrow-\infty.
			\end{equation}
			The corresponding real eigenfunctions $\varphi_{i,b,\beta}\in D(\mathscr L_{b,\beta})\subset H^1_{0;b,\beta}$ form an orthogonal basis of $L^2_{b,\beta}$, and $\varphi_{i,b,\beta}$ has exactly $i$ simple zeros in $(0,\infty)$. For $0\leq i\leq\ell+1$, fix their normalization by
			\begin{equation}\label{spectral-wall-normalization}
				\partial_\xi\varphi_{i,b,\beta}(0)=\frac{\cscale c_{i,0}}4\,\beta^{5/8}b^{-1/4}.
			\end{equation}
			These eigenvalues and eigenfunctions are $C^1$ in $(b,\beta)$, the latter with values in $C^2_{\rm loc}((0,\infty))$, and satisfy
			\begin{align}
				\mu_{i,b,\beta}&=\beta\left(\frac12-i\right)+\widetilde\mu_{i,b,\beta},
				\label{eigen-value-L-b}\\*
				\varphi_{i,b,\beta}(\xi)&=\sum_{j=0}^i c_{i,j}\beta^{j+5/8}b^{j+1/2}
				T_j\!\left(\frac{\xi}{b^{3/4}}\right)+\widetilde\varphi_{i,b,\beta}(\xi),
				\label{eigenfuc-L-b}
			\end{align}
			where $T_j$ are the iterates of Lemma~\ref{lemma-T-i}, normalized by \eqref{notation-scaling-normalization}, and the coefficients $c_{i,j}$ are defined in \eqref{defi-c-i-j}. Parameter derivatives below are taken at fixed $\xi$, and all bounds are uniform for $0\le i\le\ell+1$ and the indicated parameter range. The eigenvalue corrections satisfy
			\begin{equation*}
				|\widetilde\mu_{i,b,\beta}|+|b\partial_b\widetilde\mu_{i,b,\beta}|
				+|\partial_\beta\widetilde\mu_{i,b,\beta}|\lesssim_{\ell,C_0}b^{1/4}.
			\end{equation*}
			With the notation \eqref{notation-japanese-bracket}, the eigenfunction remainders satisfy, for $k=0,1,2$ and $\xi>0$,
			\begin{align*}
				&|(\xi\partial_\xi)^k\widetilde\varphi_{i,b,\beta}|
				+|(\xi\partial_\xi)^k(b\partial_b)\widetilde\varphi_{i,b,\beta}|
				+\langle\xi\rangle^{-4/3}|(\xi\partial_\xi)^k\partial_\beta\widetilde\varphi_{i,b,\beta}|\\*
				&\qquad\lesssim_{\ell,C_0}
				\xi^2\mathbf1_{\{\xi\leq b^{3/4}\}}
				+b^{1/4}\xi^{2/3}\langle\xi\rangle^{4/3+4i/3}\mathbf1_{\{\xi\geq b^{3/4}\}}.
			\end{align*}
			Moreover, $\|\varphi_{i,b,\beta}\|_{L^2_{b,\beta}}\to1$ as $b\downarrow0$, uniformly in $\beta$ and $0\le i\le\ell+1$. The next eigenvalue satisfies
			\begin{equation}\label{next-eigenvalue-finite-b}
				\mu_{\ell+1,b,\beta}=-\beta\left(\ell+\frac12\right)+O_{\ell,C_0}(b^{1/4}),
				\qquad \mu_{\ell,b,\beta}-\mu_{\ell+1,b,\beta}\ge\frac\beta2.
			\end{equation}
			Consequently, every $f\in H^1_{0;b,\beta}$ with $\langle f,\varphi_{j,b,\beta}\rangle_{L^2_{b,\beta}}=0$ for $0\le j\le\ell$ satisfies the quadratic-form estimate
			\begin{equation}\label{finite-b-spectral-gap}
				\begin{aligned}
					\int_0^\infty\left[\left(Q_b^{-1}+\beta\right)|f|^2-\sqrt{Q_b}\,|\partial_\xi f|^2\right]d\rho_{b,\beta}
					&\le\mu_{\ell+1,b,\beta}\|f\|_{L^2_{b,\beta}}^2.
				\end{aligned}
			\end{equation}
		\end{proposition}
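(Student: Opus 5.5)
We will prove the proposition in three stages: the functional-analytic setup, the limiting problem at $b=0$, and a matched inner--outer perturbation argument for small $b>0$.

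\emph{Functional setup.} Because $\mathscr L_{b,\beta}$ has the divergence form displayed above, it is symmetric on $C_c^\infty(0,\infty)$ with respect to $d\rho_{b,\beta}$. Its quadratic form is
\[
q(f)=\int_0^\infty \bigl(\sqrt{Q_b}\,|\partial_\xi f|^2-(Q_b^{-1}+\beta)|f|^2\bigr)\,d\rho_{b,\beta}.
\]
The singular potential $Q_b^{-1}\sim b^{1/2}/\xi$ near the wall has to be absorbed by the kinetic term. Near $0$ we have $\sqrt{Q_b}\,\rho_{b,\beta}\approx a$ and $\rho_{b,\beta}\approx a(b^{1/4}/\xi)^{1/2}$. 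A Hardy-type inequality on $(0,b^{3/4})$ then gives $\int Q_b^{-1}|f|^2\,d\rho\le \delta\int\sqrt{Q_b}|f'|^2\,d\rho+C_\delta\|f\|^2$; this is the same mechanism as in the scale-invariant limit $\mathscr L_{0,\beta}$, where the potential $a^{-2}\xi^{-4/3}$ is form-bounded relative to $a\xi^{2/3}\partial_\xi^2$. It follows that $q$ is bounded below and closable, and the Friedrichs extension has form domain $H^1_{0;b,\beta}$. For compact resolvent, the weight $e^{-P_b}$ with $P_b\sim \tfrac{a^2\beta}{8}\xi^{4/3}$ gives a confining effective potential after the Liouville (ground-state) transform, and Rellich's criterion applies. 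Simplicity of the eigenvalues and the count of $i$ zeros of $\varphi_{i,b,\beta}$ follow from Sturm--Liouville theory, since the endpoint $\xi=0$ is limit-point or selected by the Friedrichs condition and $\infty$ is limit-point.

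\emph{Limit operator at $b=0$.} Lemma~\ref{lemma-spectrum-L0-beta-xi} gives the spectrum of $\mathscr L_{0,\beta}$ explicitly: eigenvalues $\beta(\tfrac12-i)$ with eigenfunctions $\phi_{\beta,i}$ expressed through Laguerre polynomials $L_i^{(1/4)}$. Near the origin, $\phi_{\beta,i}\approx \mathcal N_i a\xi^{2/3}\sum_j(\cdot)\xi^{4j/3}$. In the inner region $\xi\sim b^{3/4}$, with $\zeta=\xi/b^{3/4}$, the operator becomes $b^{-1/4}\bigl(\sqrt Q\partial_\zeta^2+Q^{-1}\bigr)+\beta\Lambda_\zeta$. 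The eigenproblem is therefore solved order by order by iterated inversion of $\mathscr H_0$, which produces the $T_j$ of Lemma~\ref{lemma-T-i}. Their large-$\zeta$ asymptotics $T_j\sim\zeta^{2/3+4j/3}$ match term by term with the Laguerre expansion, and this matching fixes the coefficients $c_{i,j}$ in \eqref{defi-c-i-j} together with the prefactors $\beta^{j+5/8}b^{j+1/2}$. The wall normalization \eqref{spectral-wall-normalization} then follows from $T_0=\cscale\Lambda Q$ and $Q'(0)=1$.

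\emph{Perturbation and error bounds.} We would build the approximate eigenfunction $\varphi^{\rm app}_i=\sum_j c_{i,j}\beta^{j+5/8}b^{j+1/2}T_j(\xi/b^{3/4})$, interpolated through $\chi_0$ to $\phi_{\beta,i}$ in the outer region, and compute its residual $(\mathscr L_{b,\beta}-\beta(\tfrac12-i))\varphi^{\rm app}_i$. This residual should be $O(b^{1/4})$ in $L^2_{b,\beta}$, with pointwise weights matching the stated $\xi^2\mathbf 1_{\{\xi\le b^{3/4}\}}+b^{1/4}\xi^{2/3}\langle\xi\rangle^{4/3+4i/3}$. We then convert this into spectral information. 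Kato--Temple, or a Riesz projection on a circle of radius $\beta/4$ around $\beta(\tfrac12-i)$, yields $|\widetilde\mu_{i,b,\beta}|\lesssim b^{1/4}$, provided we know that no spurious eigenvalues appear; that follows from a min--max comparison with the $b=0$ quadratic form, whose difference is controlled on $\xi\gtrsim b^{3/4-}$ and by the Hardy bound inside. The eigenfunction remainder $\widetilde\varphi_i$ is obtained by solving the resolvent equation on the complement of $\varphi_i$. Pointwise bounds follow from Green's function (variation of constants) representations: in the inner region we use the fundamental system of $\mathscr H_0$ (Lemma~\ref{lemma-Q}, \eqref{inverse-H-0}), and in the outer region the Laguerre/Kummer fundamental solutions. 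The $\xi\partial_\xi$ derivatives come from the ODE itself. Differentiability in $(b,\beta)$ follows by differentiating the eigen-equation: $b\partial_b$ and $\partial_\beta$ of $\varphi_i$ solve the same resolvent equation with explicit forcing. The $\partial_\beta$ forcing contains $\Lambda_\xi\varphi$, which grows like $\langle\xi\rangle^{4/3}$ more, and this accounts for the weight $\langle\xi\rangle^{-4/3}$. Finally, \eqref{next-eigenvalue-finite-b} is the case $i=\ell+1$, and \eqref{finite-b-spectral-gap} is the spectral theorem applied on the orthogonal complement of $\mathrm{span}\{\varphi_0,\dots,\varphi_\ell\}$.

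\emph{Main obstacle.} The hardest step is the uniform-in-$b$ control across the transition layer $\xi\sim b^{3/4}$. There the singular potential, the degenerate diffusion and the weight all change scale, and the residual bound must be uniform in $\beta\in[C_0^{-1},C_0]$. The first attempt would be to run the matching with an intermediate cutoff at $\xi=b^{3/4-\kappa}$ and check that the mismatch between $T_j$ and the truncated Laguerre expansion at that point is $O(b^{1/4})$ in the weighted norm. If this fails, a Wronskian/shooting argument for the ODE would serve as a fallback, since it avoids losses from the cutoff.
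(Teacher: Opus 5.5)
Your quasimode step is fine, and easier than you expect: no cutoff or intermediate matching scale is needed. In the inner variable $\mathscr L_{b,\beta}=b^{-1}(\mathscr H_0+b\beta\Lambda_\zeta)$, not $b^{-1/4}(\cdots)$. With $d_j=c_{i,j}\beta^{j+5/8}b^{j+1/2}$, the recursion $c_{i,j+1}=-(i-j)c_{i,j}$ gives exactly $(\mathscr L_{b,\beta}-\beta(\tfrac12-i))\sum_j d_jT_j(\xi/b^{3/4})=\beta\sum_j d_j\Theta_j(\xi/b^{3/4})$, which is $O(b^{1/4})$ in $L^2_{b,\beta}$. Min--max over the span of the first $n+1$ quasimodes then gives an eigenvalue within $O(b^{1/4})$ of each $\beta(\tfrac12-i)$. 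It also gives the upper bounds $\lambda_n\le\beta(n-\tfrac12)+O(b^{1/4})$ for the ordered eigenvalues $\lambda_n$ of $-\mathscr L_{b,\beta}$.

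\emph{First gap: excluding spurious eigenvalues.} What is missing is the reverse inequality. It is what makes your Riesz projection rank one, what Kato--Temple needs, and what identifies the eigenvalue near $\beta(\tfrac12-i)$ as the $i$-th one. ``Min--max comparison with the $b=0$ form'' does not supply it as stated:
\begin{itemize}
\item The two forms act on different weighted spaces; the ratio $\rho_{b,\beta}/\rho_{0,\beta}$ degenerates at the wall and varies on the scale $b^{3/4}$. Their ``difference'' therefore needs a transplantation map.
\item Near the wall, the Hardy remainder of the inner form, the limiting operator, and any localization error at a scale $\delta$ all have the same homogeneity $\xi^{-4/3}$ relative to the weight. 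None of them is lower order.
\end{itemize}
Closing this would need an actual argument, for example logarithmic IMS cutoffs with a uniform weighted Hardy inequality, or compactness with tightness at the wall. The paper avoids the issue by oscillation theory. It shows each constructed $\varphi_{i,b,1}$ has exactly $i$ zeros: the $T_0$ term dominates on $(0,r_0]$, $W_{b,1}+\mu_{i,b,1}\ge1$ on the far tail, and $C^1$ convergence to the Laguerre functions counts zeros on compacts. Sturm comparison plus min--max then show that the eigenfunction of $\lambda_n$ has exactly $n$ zeros, which forces $\mu_{i,b,\beta}=-\lambda_i$ for $i\le\ell+1$.

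\emph{Second gap: the pointwise and parameter claims.} An $L^2$ resolvent argument gives only $\|\widetilde\varphi\|_{L^2_{b,\beta}}=O(b^{1/4})$. The proposition asks for $|\widetilde\varphi|\lesssim\xi^2$ on $\xi\le b^{3/4}$, a relative accuracy $b^{1/4}\xi\le b$ against the leading term $\asymp b^{-1/4}\xi$. In particular it forces $\partial_\xi\widetilde\varphi(0)=0$, i.e.\ the exact wall normalization \eqref{spectral-wall-normalization}, which no weighted $L^2$ estimate sees. Likewise, differentiating the eigenvalue equation in $b$ presupposes $C^1$ dependence at fixed $\xi$. A Riesz projection on a $b$-dependent space and form domain does not give this, and $\partial_b\mathscr L_{b,\beta}$ is singular at the wall.

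What produces these bounds is the route you list as a fallback, and it is the paper's proof. For each trial correction $\nu$, it solves the eigen-ODE exactly from the wall up to $\zeta_0=\xi_0/b^{3/4}$, by a contraction in $X^{p,q}_{\zeta_0}$ with constant $\lesssim b+\xi_0^{4/3}$. It solves exactly on $[\xi_0,\infty)$ with the polynomially growing right inverse $G_i$. It then selects $\nu_i(b)$ as the zero of the Wronskian $M_i(b,\nu)$ at a \emph{fixed} $\xi_0$, where $\partial_\nu M_i\approx I_i>0$. The implicit function theorem gives $|\nu_i|+|b\nu_i'|\lesssim b^{1/4}$ and all fixed-$\xi$ derivative bounds at once.

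Two smaller points:
\begin{itemize}
\item By \eqref{spectral-beta-scaling}, $\partial_\beta\widetilde\varphi=\beta^{-1}(\tfrac18+b\partial_b+\tfrac34\xi\partial_\xi)\widetilde\varphi$. So the factor $\langle\xi\rangle^{-4/3}$ is slack in the statement, not an effect of $\Lambda_\xi\varphi$ growing.
\item Near the wall $Q_b^{-1}\sim b^{-1/4}\xi^{-1}$ and $\rho_{b,\beta}\sim ab^{-1/8}\xi^{-1/2}$, not as you wrote.
\end{itemize}
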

		\begin{proof}
			The construction in Section~\ref{section-proof-L-b diagonal} uses matched asymptotics, following \cite{HadzicRaphael2019,Collot-et-alJAMS-2020,BDGJDE2025}. Section~\ref{subsection-spectral-ordering} identifies the spectral indices by Sturm comparison and proves the gap.
		\end{proof}
		
		\subsection{Geometric decomposition of the perturbation}\label{Toy-model}

		Fix an integer $\ell\ge1$ and $C_0>1$. For $\beta\in[C_0^{-1},C_0]$ and small $b$, self-adjointness and the distinct eigenvalues in Proposition~\ref{diagonalisation-L-b} give orthogonal eigenfunctions in \eqref{definition-Hilbert-b-beta}. With the coefficients and projections \eqref{defi-F-j}--\eqref{eigen-projection-Pi-j}, write
		\begin{equation}\label{decom-simple}
			\varepsilon=\sum_{j=0}^{\ell}\varepsilon_j\varphi_{j,b,\beta}+\varepsilon_-,
			\qquad \varepsilon_j=\mathfrak F_{j,b,\beta}(\varepsilon),\qquad
			\varepsilon_-=(\operatorname{Id}-\sum_{j=0}^{\ell}\Pi_{j,b,\beta})\varepsilon.
		\end{equation}
		Thus
		\begin{equation}\label{orthogonality-epsilon-minus}
			\langle\varepsilon_-,\varphi_{j,b,\beta}\rangle_{L^2_{\rho_{b,\beta}}}=0,
			\qquad 0\le j\le\ell.
		\end{equation}
		The formal wall constraint couples the zeroth and $\ell$th spectral coefficients.  We therefore replace the raw spectral decomposition~\eqref{decom-simple} by the adapted direction introduced below.
		
		\smallskip\noindent\textbf{\underline{Toy model.}}
		The changes of variables \eqref{renormalized-variables-lambda} and \eqref{chang-varialbe}, together with the formal scale relation \eqref{relation-lambda=mu-b}, give
		\begin{equation*}
			\tau_s=1/b,\qquad
			\mathfrak b_1=-\lambda_\tau/\lambda=\beta b-b_s.
		\end{equation*}
		Here $\mathfrak b_1$ is defined in \eqref{slowly-modulated-b-1}. Set $\beta=1$ and use the convention \eqref{notation-beta-one}. Along \eqref{for-mal-b-j-asymptotics}, $Y=1/b$ solves $Y_\tau-c_1Y/\tau=-1$, hence $Y=\tau/(c_1-1)+C\tau^{c_1}$. Since $\lambda=\kappa_\ell^{\rm form}\tau^{-c_1}$, the condition $\mu=\lambda Y\to0$ forces $C=0$. Therefore $b=1/((2\ell-1)\tau)$, $\mathfrak b_1^\ell=2\ell b$, and
		\begin{equation}\label{formal-b-s-b}
			b_s/b=1-2\ell,\qquad b(s)=b_0e^{(1-2\ell)(s-s_0)},\qquad b_0=b(s_0)>0.
		\end{equation}
		For constant $\beta>0$, the same calculation gives $b=[\beta(2\ell-1)\tau]^{-1}$. Reconstruction through \eqref{relation-lambda=mu-b} preserves $\lambda=\kappa_\ell^{\rm form}\tau^{-2\ell/(2\ell-1)}$ and the physical separation law; $\beta$ selects the renormalized frame. To identify the leading profile, consider the finite-mode ansatz
		\begin{equation}\label{formal-finite-mode-ansatz}
			\varepsilon=\varepsilon_+=\sum_{j=0}^{\ell}\varepsilon_j\varphi_{j,b},\qquad
			\varepsilon_-=0.
		\end{equation}
		This truncation is motivated by the spectral gap \eqref{finite-b-spectral-gap} on the orthogonal complement of the first $\ell+1$ eigenfunctions, where $\varepsilon_-$ lies; the full argument controls the forcing of, and coupling to, this stable component.
		By \eqref{eigenfuc-L-b} and \eqref{defi-m-0}, the forcing \eqref{Phi-simple-} has leading direction $\Lambda_\xi Q_b=\mathbf m_0\sqrt b\,\varphi_{0,b}+E_b$, where $E_b=-\mathbf m_0\sqrt b\,\widetilde\varphi_{0,b}$. The remainder bounds in Proposition~\ref{diagonalisation-L-b}, the weight \eqref{defi-rho-b-beta}, and $\|\varphi_{j,b}\|_{L^2_{\rho_b}}\asymp_\ell1$ give
		\begin{equation*}
			\|E_b\|_{L^2_{\rho_b}}+\max_{0\le j\le\ell}|\mathfrak F_{j,b}(E_b)|
			\lesssim_\ell b^{3/4}=o(\sqrt b).
		\end{equation*}
		Thus $E_b$ is smaller than the leading forcing by a factor $b^{1/4}$. Insert \eqref{formal-finite-mode-ansatz} into \eqref{equa-varepsilon-appen-} and take the $L^2_{\rho_b}$ inner products with $\varphi_{i,b}$. Discarding $E_b$ and the nonlinear terms, whose contributions are controlled by \eqref{remainder-ODE-item-i}, gives, by \eqref{formal-b-s-b},
		\begin{equation}\label{ode-matrix-V}
			\mathcal V'=\mathfrak A(s)\mathcal V+2\ell\mathbf m_0\sqrt b\,\mathbf e_0.
		\end{equation}
		Here $\mathcal V=(\varepsilon_0,\ldots,\varepsilon_\ell)^{\mathsf T}$ and $\mathbf e_j$ are the standard coordinate vectors. With $\varphi_j=\varphi_{j,b(s)}$ and $h_i=\|\varphi_i\|_{L^2_{\rho_b}}^2$, differentiation of the moving basis gives $\mathfrak A_{ij}=\mu_{j,b}\delta_{ij}-h_i^{-1}\langle\partial_s\varphi_j,\varphi_i\rangle_{L^2_{\rho_b}}$. Proposition~\ref{diagonalisation-L-b} yields
		\begin{align}
			\mathfrak A&=\mathfrak D_\ell+\mathfrak R,
			\qquad \mathfrak D_\ell=\operatorname{diag}(1/2-j)_{j=0}^{\ell},
			\label{decomposition-matrix-A}\\*
			\|\mathfrak R(s)\|&\lesssim_\ell b(s)^{1/4}\le K e^{-\omega_\ell s},
			\qquad \omega_\ell=(2\ell-1)/4.
			\label{small-off-diagonal-matrix}
		\end{align}
		The constant $K$ includes the fixed factor $b_0^{1/4}e^{\omega_\ell s_0}$.
		
		\smallskip\noindent\textbf{\underline{Matching asymptotics.}} Along \eqref{for-mal-b-j-asymptotics}, $\mathfrak b_1=2\ell b$ and $\mathfrak b_j/b^j=c_j(2\ell-1)^j$ for $1\le j\le\ell$, while $\mathfrak b_j=0$ for $j>\ell$. To compare with the finite-mode ansatz \eqref{formal-finite-mode-ansatz}, replace $v$ by the formal approximation $v_{\rm ap}$ of Subsection~\ref{subsection-formal-approximation}. With $\lambda=\mu b$ and $\zeta=\xi/b^{3/4}$, the corresponding perturbation is
		\begin{equation*}
			\varepsilon=b(v_{\rm ap}-Q)
			=b\sum_{j=1}^{\ell}\mathfrak b_j\mathcal T_j(\zeta)+b\mathcal S(\tau,\zeta).
		\end{equation*}
		This comparison yields two estimates.
		
		\noindent\emph{\underline{Pointwise behavior near the wall.}} The expansions in Subsection~\ref{subsection-formal-approximation} give $\mathcal T_j=O(\zeta^{1+3j/2})$ and $\mathcal S=O_\ell(b^2\zeta^4)$ on a fixed sufficiently small interval. Hence $v_{\rm ap}-Q=\mathfrak b_1\mathcal T_1+O_\ell(b^2\zeta^4)$, and $\mathcal T_1=-\zeta^{5/2}/15+O(\zeta^3)$ gives
		\begin{align}
			\varepsilon(s,\xi)&=-\frac{2\ell}{15}b^2\zeta^{5/2}
			+O_\ell(b^2\zeta^3+b^3\zeta^4),\notag\\*
			|\varepsilon(s,\xi)|&\lesssim_\ell b^2\zeta^{5/2}
			=b^{1/8}\xi^{5/2},\qquad 0\le\xi\lesssim b^{3/4}.
			\label{behavior_formal}
		\end{align}
		\noindent\emph{\underline{Size of the spectral coefficients.}} The recursions in Subsection~\ref{subsection-formal-approximation} and Lemma~\ref{lemma-T-i} give $\mathcal T_j=-p_jT_j/\cscale$, where $p_j=\prod_{k=1}^{j-1}(k-\tfrac12)$ and $p_1=1$. Since $c_{j,j}\ne0$, the leading eigenfunction expansions $\varphi_{i,b}^{[0]}:=\varphi_{i,b}-\widetilde\varphi_{i,b}=\sum_{j=0}^ic_{i,j}b^{j+1/2}T_j(\zeta)$ form a basis of $\operatorname{span}\{T_0,\ldots,T_\ell\}$. Define the matching coefficients by
		\begin{equation*}
			b\sum_{j=1}^{\ell}\mathfrak b_j\mathcal T_j
			=\sum_{i=0}^{\ell}\varepsilon_i^{\rm form}\varphi_{i,b}^{[0]}
			=\sum_{j=0}^{\ell}b^{j+1/2}
			\left(\sum_{i=j}^{\ell}c_{i,j}\varepsilon_i^{\rm form}\right)T_j.
		\end{equation*}
		Comparing the $T_0$ coefficient and then the $T_j$ coefficients gives
		\begin{equation*}
			\sum_{i=0}^{\ell}c_{i,0}\frac{\varepsilon_i^{\rm form}}{\sqrt b}=0,\qquad
			\sum_{i=j}^{\ell}c_{i,j}\frac{\varepsilon_i^{\rm form}}{\sqrt b}
			=-\frac{p_j}{\cscale}\frac{\mathfrak b_j}{b^j},\qquad 1\le j\le\ell.
		\end{equation*}
		The right-hand sides are bounded uniformly in $b$. Backward substitution from $j=\ell$ to $j=1$ bounds $\varepsilon_j^{\rm form}/\sqrt b$; the $T_0$ relation then bounds $\varepsilon_0^{\rm form}/\sqrt b$. The same scale appears in \eqref{asym-T}: $b^{j+1}(\xi/b^{3/4})^{2/3+4j/3}=\sqrt b\,\xi^{2/3+4j/3}$. Replacing $\varphi_{i,b}^{[0]}$ by $\varphi_{i,b}$ changes the matched hierarchy by $O_\ell(b^{3/4})$ in $L^2_{\rho_b}$, by Proposition~\ref{diagonalisation-L-b}. The leading hierarchy therefore selects the scale
		\begin{equation}\label{formal-bound-epsilon-j-b-half}
			|\varepsilon_j(s)|\lesssim_\ell\sqrt{b(s)},\qquad 0\le j\le\ell.
		\end{equation}
		We impose the scale \eqref{formal-bound-epsilon-j-b-half}, obtained by coefficient matching, on the toy system; the full construction controls the higher-order corrections and spectral remainders by bootstrap. This bound implies $\mathcal V\to0$ and excludes the intermediate homogeneous modes: when $\mathfrak R=0$, their ratio to $\sqrt b$ grows like $e^{(\ell-j)(s-s_0)}$ for $1\le j<\ell$. The next lemma gives the asymptotics for the perturbed system.
		
		\begin{lemma}[Asymptotics of the toy system]\label{lemma-asymptotic-integration-toy-system}
			Let $\ell\ge1$ be an integer, $K>0$, and $\mathbf m_0\in\mathbb R$, with the notation \eqref{decomposition-matrix-A}--\eqref{small-off-diagonal-matrix}. For $s_0\ge\max\{1,\omega_\ell^{-1}\log(2K/\omega_\ell)\}$ and $b_0>0$, let $b$ be given by \eqref{formal-b-s-b}. Suppose $\mathfrak R$ is continuous with $\|\mathfrak R(s)\|\le Ke^{-\omega_\ell s}$ in the induced Euclidean norm. If a $C^1$ solution of \eqref{ode-matrix-V} satisfies
			\begin{equation}\label{V-tends-zero-general-decomposition}
				\mathcal V(s)=O(\sqrt{b(s)}),\qquad \mathcal V(s)\to0\quad(s\to\infty),
			\end{equation}
			then, for a unique $a_\ell\in\mathbb R$ and every $s\ge s_0$,
			\begin{align}
				\mathcal V(s)&=\sqrt{b(s)}\,(-2\mathbf m_0\mathbf e_0+a_\ell\mathbf e_\ell)+\mathcal E(s),
				\label{general-asymptotic-expansion-V}\\*
				|\mathcal E(s)|&\le\frac{4K}{\omega_\ell}(|\mathbf m_0|+|a_\ell|)\sqrt{b(s)}\,e^{-\omega_\ell s}.
				\label{estimate-E-i-toy-system}
			\end{align}
			Moreover, $a_\ell=\lim_{s\to\infty}\mathcal V_\ell(s)/\sqrt{b(s)}$.
		\end{lemma}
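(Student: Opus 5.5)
Our strategy is to normalize by $\sqrt b$ and treat $\mathfrak R$ as an integrable perturbation of a constant diagonal system. Set $\mathcal W(s):=\mathcal V(s)/\sqrt{b(s)}$. Since $b_s/b=1-2\ell$ by \eqref{formal-b-s-b}, we have $(\sqrt b)_s/\sqrt b=\tfrac12-\ell$. Then \eqref{ode-matrix-V} becomes
\begin{equation*}
\mathcal W'=\bigl(\mathfrak D_\ell+(\ell-\tfrac12)\mathrm{Id}\bigr)\mathcal W+\mathfrak R(s)\mathcal W+2\ell\mathbf m_0\mathbf e_0,
\end{equation*}
and the shifted diagonal matrix $\mathfrak M:=\mathfrak D_\ell+(\ell-\tfrac12)\mathrm{Id}=\operatorname{diag}(\ell-j)_{j=0}^{\ell}$ has nonnegative entries. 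The $\ell$th entry is zero and all others are positive integers. For the unperturbed system ($\mathfrak R=0$), the bounded solutions are exactly $\mathcal W_j=0$ for $1\le j<\ell$ (any other value grows like $e^{(\ell-j)s}$), $\mathcal W_0=-2\ell\mathbf m_0/\ell=-2\mathbf m_0$, and $\mathcal W_\ell\equiv a_\ell$. This is the profile $-2\mathbf m_0\mathbf e_0+a_\ell\mathbf e_\ell$. The hypothesis $\mathcal V=O(\sqrt b)$ says precisely that $\mathcal W$ is bounded, and this is the selection mechanism. (The condition $\mathcal V\to0$ is automatic from boundedness of $\mathcal W$ and $b\to0$.)

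With $\mathcal W$ bounded, we write a Duhamel formula that integrates the expanding components backward from $+\infty$ and the neutral component forward/backward from $+\infty$. For $0\le j<\ell$,
\begin{equation*}
\mathcal W_j(s)=-\int_s^\infty e^{-(\ell-j)(\sigma-s)}\bigl[(\mathfrak R\mathcal W)_j(\sigma)+2\ell\mathbf m_0\delta_{j0}\bigr]d\sigma.
\end{equation*}
This identity is valid because $e^{-(\ell-j)s}\mathcal W_j(s)\to0$ by boundedness. For the neutral component, $\mathcal W_\ell'=(\mathfrak R\mathcal W)_\ell$, which is integrable since $\|\mathfrak R\|\le Ke^{-\omega_\ell s}$. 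Hence $a_\ell:=\lim_{s\to\infty}\mathcal W_\ell(s)$ exists and is unique, and $\mathcal W_\ell(s)=a_\ell-\int_s^\infty(\mathfrak R\mathcal W)_\ell$. This identifies $a_\ell=\lim\mathcal V_\ell/\sqrt b$. The inhomogeneous term in the $j=0$ integral produces exactly $-2\mathbf m_0$.

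Setting $\mathcal E/\sqrt b=:\mathcal W-\mathcal W^{(0)}$ with $\mathcal W^{(0)}=-2\mathbf m_0\mathbf e_0+a_\ell\mathbf e_\ell$, every component satisfies
\begin{equation*}
|\mathcal W-\mathcal W^{(0)}|(s)\le\int_s^\infty Ke^{-\omega_\ell\sigma}|\mathcal W(\sigma)|\,d\sigma,
\end{equation*}
using $e^{-(\ell-j)(\sigma-s)}\le1$ for $\sigma\ge s$. The main step, and the only delicate one, is closing this bound with the explicit constant $4K/\omega_\ell$. We let $M(s):=\sup_{\sigma\ge s}|\mathcal W(\sigma)-\mathcal W^{(0)}|e^{\omega_\ell\sigma}$, which is finite a priori only in the weaker sense that $\mathcal W$ is bounded. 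We then bound
\begin{equation*}
|\mathcal W-\mathcal W^{(0)}|(s)\le \frac{K}{\omega_\ell}e^{-\omega_\ell s}\bigl(|\mathcal W^{(0)}|+\sup_{\sigma\ge s}|\mathcal W-\mathcal W^{(0)}|\bigr).
\end{equation*}
The hypothesis on $s_0$ gives $Ke^{-\omega_\ell s}/\omega_\ell\le\tfrac12$, so absorbing the supremum on $[s,\infty)$ yields $\sup_{\sigma\ge s}|\mathcal W-\mathcal W^{(0)}|\le \tfrac{2K}{\omega_\ell}e^{-\omega_\ell s}|\mathcal W^{(0)}|$. One must account for the Euclidean norm across components: projecting the vector Duhamel formula componentwise and summing might cost a dimensional factor. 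To avoid this, I would apply the bound to the full vector using the contraction $\|\operatorname{diag}(e^{-(\ell-j)(\sigma-s)})\|\le1$, with the neutral row included. Finally, $|\mathcal W^{(0)}|\le2|\mathbf m_0|+|a_\ell|$, which gives $|\mathcal E|\le\frac{4K}{\omega_\ell}(|\mathbf m_0|+|a_\ell|)\sqrt b\,e^{-\omega_\ell s}$ after multiplying by $\sqrt b$.
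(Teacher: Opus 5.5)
Your proposal is correct and is essentially the paper's proof. Both normalize by $\sqrt b$ to get the diagonal part $\operatorname{diag}(\ell-j)_{j=0}^{\ell}$, obtain $a_\ell$ from integrability of the neutral component, write the backward variation-of-constants formula from $+\infty$ with $\|e^{-Dt}\|\le 1$, and absorb using $(K/\omega_\ell)e^{-\omega_\ell s_0}\le\tfrac12$. The only difference is bookkeeping: you absorb the tail supremum of $|\mathcal W-\mathcal W^{(0)}|$ rather than the global supremum of $|\mathcal W|$, which gives the same constant $\tfrac{2K}{\omega_\ell}(2|\mathbf m_0|+|a_\ell|)$; the weighted quantity $M(s)$ you define is never used.
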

		\begin{proof}
			Set $W=\mathcal V/\sqrt b$ and $D=\operatorname{diag}(\ell-j)_{j=0}^{\ell}$. By \eqref{V-tends-zero-general-decomposition}, $M:=\sup_{s\ge s_0}|W(s)|<\infty$, and $W'=DW+\mathfrak RW+2\ell\mathbf m_0\mathbf e_0$. Thus $|W_\ell'|\le KM e^{-\omega_\ell s}$ is integrable, giving a limit $a_\ell$. For $Z=W+2\mathbf m_0\mathbf e_0$, backward variation of constants and boundedness give
			\begin{equation*}
				Z(s)-a_\ell\mathbf e_\ell=-\int_s^\infty e^{-D(t-s)}\mathfrak R(t)W(t)\,dt,
				\qquad |Z(s)-a_\ell\mathbf e_\ell|\le\frac K{\omega_\ell}M e^{-\omega_\ell s}.
			\end{equation*}
			Indeed, all diagonal entries of $D$ except the last are positive, while $\|e^{-Dt}\|\le1$ for $t\ge0$, justifying the limit at infinity and the integral. The stated lower bound on $s_0$ gives $(K/\omega_\ell)e^{-\omega_\ell s_0}\le1/2$. Taking the supremum yields $M\le2|\mathbf m_0|+|a_\ell|+M/2$, hence $M\le4|\mathbf m_0|+2|a_\ell|$. Multiplication by $\sqrt b$ proves \eqref{general-asymptotic-expansion-V}--\eqref{estimate-E-i-toy-system}; the limit of $W_\ell$ gives uniqueness.
		\end{proof}
		
		\smallskip\noindent\textbf{\underline{Derivation of the admissible geometric decomposition.}}
		Lemma~\ref{lemma-asymptotic-integration-toy-system} leaves $a_\ell$ undetermined. The wall expansion selects it: by \eqref{eigenfuc-L-b}, $\varphi_{i,b,\beta}(b^{3/4}\zeta)=c_{i,0}\beta^{5/8}\sqrt b\,T_0(\zeta)+O(b^{3/2})$ for $0\le\zeta\le1$. Since $T_0=\cscale\Lambda Q$ vanishes linearly and \eqref{behavior_formal} gives $\varepsilon=O(b^2\zeta^{5/2})$, the order-$b$ contribution in the formal finite-mode ansatz must cancel. For $\beta=1$, this requires $-2\mathbf m_0c_{0,0}+a_\ell c_{\ell,0}=0$, hence $a_\ell=\mathbf n_\ell$ by \eqref{defi-n-ell-1}. Thus, the leading coefficient is predicted to satisfy
		\begin{equation}\label{esym-varp_ell}
			\varepsilon_\ell=\mathbf n_\ell\sqrt b+o(\sqrt b),
		\end{equation}
		This motivates the geometric decomposition
		\begin{equation}\label{decom-varepvarphi-i-ell}
			\varepsilon=\varepsilon_\ell\psi_{\ell,b,\beta}
			+\sum_{j=1}^{\ell-1}\varepsilon_j\varphi_{j,b,\beta}+\varepsilon_-,
		\end{equation}
		where
		\begin{equation}\label{definition-adapted-eigendirection-strategy}
			\psi_{\ell,b,\beta}=\varphi_{\ell,b,\beta}-\frac{c_{\ell,0}}{c_{0,0}}\varphi_{0,b,\beta}.
		\end{equation}
		In the full equation, we select $b$ by the exact constraint $c_{0,0}\varepsilon_0+c_{\ell,0}\varepsilon_\ell=0$. The formal analysis leaves the positive reference value of $\beta$ free.  We normalize $\beta$ near $1$; any fixed positive reference value yields the same physical separation laws.

		\section{Strategy of the proof}
		Section~\ref{Section-formal-approach} determines the candidate separation rates and the adapted decomposition~\eqref{decom-varepvarphi-i-ell}.  We now formulate the nonlinear construction in the renormalized variables $(s,\xi)$.  Writing $\vartheta=Q_b+\varepsilon$, we couple pointwise estimates for $\varepsilon$ with modulation equations for $(b,\beta)$.  If these quantities remain in the shrinking set for all $s\ge s_0$, the inverse von Mises transformation reconstructs a Prandtl solution with the asymptotics stated in Theorems~\ref{theo-main-separate} and~\ref{theo-unstab}.

		\subsection{The stable regime}\label{subsection-sketch-stable-regime}
		Let $\ell=1$. Following the modulation approach of \cite{BDGJDE2025,CGMN_CPAM_22}, we allow $\beta$ to vary near $1$, impose \eqref{esym-varp_ell} exactly, and use the adapted decomposition \eqref{decom-varepvarphi-i-ell}:
		\begin{align}
			\varepsilon_1&=\mathbf n_1(\beta)\sqrt b,\label{exact-stable-finite-mode}\\
			\varepsilon&=\varepsilon_1\psi_{1,b,\beta}+\varepsilon_-.\label{decomposition-stable-sketch}
		\end{align}
		Coupling these constraints to \eqref{equa-varepsilon-appen-} gives the problem for $(\varepsilon,b,\beta)$:
		\begin{equation}\label{defi-stable-problem}
			(\mathbf P_{\rm stab}):\qquad
			\left|\begin{array}{l}
				\varepsilon\text{ solves }\eqref{equa-varepsilon-appen-},\\
				c_{0,0}\mathfrak F_{0,b,\beta}(\varepsilon)+c_{1,0}\mathfrak F_{1,b,\beta}(\varepsilon)=0,\\
				\mathfrak F_{1,b,\beta}(\varepsilon)=\mathbf n_1(\beta)\sqrt b.
			\end{array}
			\right.
		\end{equation}
		We construct a global solution by a bootstrap in the shrinking set $\mathcal V_1[A,\eta]$, inspired by \cite{MZdm97,BDGJDE2025}. For $A\gg1$ and $\eta>0$ small, the following \textit{a priori estimates} close the argument.
		
		\paragraph{\underline{A priori estimates for the modulation parameters.}}
		Since $\mathbf n_1(\beta)\propto\beta^{-5/8}$ by \eqref{defi-n-ell-1}, differentiating the amplitude constraint gives $\varepsilon_1'/\varepsilon_1=\tfrac12b_s/b-\tfrac58\beta_s/\beta$. Applying $\Pi_{j,b,\beta}$, $j=0,1$, to \eqref{equa-varepsilon-appen-} yields \eqref{matrix-modulation-system-correct}. Dividing by the uniformly nonzero factor $\mathbf n_1(\beta)$ gives, with $X_1=b_s/b+\beta$ and $X_2=\beta_s/\beta$,
		\begin{equation*}
			\left[\begin{pmatrix}1/2&1\\0&1\end{pmatrix}+\mathcal E(s)\right]
			\binom{X_1}{X_2}=O(b^{1/4}),\qquad
			\|\mathcal E(s)\|\lesssim b^{1/4}+A^2b^\eta.
		\end{equation*}
		The implicit function theorem applied to \eqref{defi-G-modulation} selects $(b,\beta)$ locally; this system governs their dynamics. For $b^{1/4}+A^2b^\eta$ small, invertibility gives $|b_s/b+\beta|+|\beta_s|\lesssim b^{1/4}$. Exponential decay of $b$ makes these errors integrable, so for some $\beta_\infty>0$ close to $1$ and $b_\infty>0$,
		\begin{equation}\label{target-stable-modulation}
			\beta(s)\to\beta_\infty,\qquad b_s/b\to-\beta_\infty,\qquad
			b(s)\sim b_\infty e^{-\beta_\infty s}.
		\end{equation}
		
		\paragraph{\underline{A priori estimates for $\varepsilon_-$.}}
		The growth $Q_b(\xi)\sim a^2\xi^{4/3}$ and the polynomial tail of $\psi_{1,b,\beta}$ do not match the finite far-field value $U_\infty(s)$ in \eqref{equa-vartheta}; hence $\varepsilon_-$ cannot remain globally small relative to the local expansion. We restrict sharp bounds to $(0,b^{-\eta})$: they determine the local profile and modulation dynamics, while the exponential decay of $\rho_{b,\beta}$ suppresses polynomial tails in the spectral projections. Beyond $b^{-\eta}$, coarse profile and residual bounds control the weighted integrals and preserve positivity. The Oleinik solution and its continuation retain the prescribed condition $\mu\vartheta\to u_E^2(x(s))$.
		Writing $\varepsilon_+=\varepsilon_1\psi_{1,b,\beta}$, subtraction from \eqref{equa-varepsilon-appen-} gives
		\begin{align*}
			\partial_s\varepsilon_-&=\mathscr L_{b,\beta}\varepsilon_-+\mathscr B_b(\varepsilon)+\mathcal R_{1,b}(\varepsilon_+),\\
			\mathcal R_{1,b}(\varepsilon_+)&=-\partial_s\varepsilon_++\mathscr L_{b,\beta}\varepsilon_++(\beta-b_s/b)\Lambda_\xi Q_b,
		\end{align*}
		with $\mathscr B_b$ defined in \eqref{defi-B-quadratic-appendix-}. Our target is
		\begin{equation}\label{target-epsilon-minus-stable}
			\|\varepsilon_-(s)\|_{\mathfrak X_{b(s),\eta}}\lesssim b(s)^{1/2+\eta},
		\end{equation}
		where the norm is defined in \eqref{definition-weighted-remainder-norm}. Fix $K$ large and divide the spatial domain as follows.
		
		\smallskip\noindent\emph{Near the wall: $0<\xi<b^\eta$.}
		The higher modes in $\varepsilon_-$ retain $T_0=\cscale\Lambda Q$ terms in \eqref{eigenfuc-L-b}, so orthogonality gives no pointwise domination by $\varepsilon_+$. In $\psi_{1,b,\beta}=\varphi_{1,b,\beta}-(c_{1,0}/c_{0,0})\varphi_{0,b,\beta}$, these terms cancel, making $\varepsilon_+$ smaller than the target bounds by \eqref{outer-estimates-epsilon-plus}. It therefore suffices to bound $\varepsilon$ and subtract $\varepsilon_+$. To resolve $\xi\sim b^{3/4}$, use $Q_b(\xi)=bQ(\zeta)$ and set
		\begin{equation}\label{definition-inner-theta-zeta}
			\vartheta(s,\xi)=b(s)\theta(s,\zeta),\qquad \zeta=\xi/b(s)^{3/4}.
		\end{equation}
		Then $\varepsilon=b\widetilde\theta$, where $\widetilde\theta=\theta-Q$, and \eqref{equa-vartheta} gives
		\begin{equation*}
			b\partial_s\widetilde\theta=\mathscr H_0\widetilde\theta+(b\beta-b_s)\Lambda_\zeta(Q+\widetilde\theta)-\mathcal N(\widetilde\theta),\qquad \widetilde\theta(s,0)=0.
		\end{equation*}
		Here $b\beta-b_s=2\beta b+O(b^{5/4})$, and $\mathscr H_0=\sqrt Q\,\partial_\zeta^2+Q^{-1}$ and $\mathcal N$ are as in Section~\ref{subsection-formal-approximation}. The fundamental solutions $\Lambda Q,\widetilde Q_0$ in \eqref{wroskian-u-1-2} yield the profiles \eqref{equations-Z-Y-general-p} and the barriers
		\begin{equation*}
			\widetilde\theta^{\rm sub}=c_-\mathcal Y_\sigma-b^\eta\mathcal Z_\sigma,
			\qquad \widetilde\theta^{\rm sup}=c_+\mathcal Y_\sigma+b^\eta\mathcal Z_\sigma,
			\qquad c_\pm=2\beta b\pm b^{1+\eta/2}.
		\end{equation*}
		The profile $\mathcal Y_\sigma$ matches the forcing, while $\mathcal Z_\sigma>0$ supplies the comparison margin. For small $\sigma$, \eqref{near-origin-coercivity-general-p} gives the signs, and \eqref{definition-G1-general-p}--\eqref{definition-G2-general-p} handle the wall degeneracy. Proposition~\ref{propo-priori-near-0} bounds $\varepsilon$; subtracting \eqref{outer-estimates-epsilon-plus} bounds $\varepsilon_-$.
		
		\smallskip\noindent\emph{Intermediate region: $b^\eta\le\xi\le K$.}
		Since $b^\eta\gg b^{3/4}$, the limiting self-similar operator governs the leading dynamics:
		\begin{equation*}
			\partial_s\varepsilon_-=\mathscr L_{0,1}\varepsilon_-+(\beta-1)\Lambda_\xi\varepsilon_-+\widetilde{\mathcal B}_1(\varepsilon)+\widetilde{\mathcal R}_1(\varepsilon_+).
		\end{equation*}
		With the remainders \eqref{definition-B-one-beta-one}--\eqref{definition-R-one-beta-one} and $S_1(t)=e^{t\mathscr L_{0,1}}$, Duhamel's formula reads
		\begin{equation*}
			\varepsilon_-(s)=S_1(s-s_*)\varepsilon_-(s_*)+\int_{s_*}^{s}S_1(s-\tau)\bigl[(\beta-1)\Lambda_\xi\varepsilon_-+\widetilde{\mathcal B}_1(\varepsilon)+\widetilde{\mathcal R}_1(\varepsilon_+)\bigr](\tau)\,d\tau.
		\end{equation*}
		The kernel bounds in Lemmas~\ref{lem-simigroup-L-infty-beta=2} and~\ref{lem-semi-tle-K-0-xi-le-K}, the spectral decay \eqref{stable-semigroup-decay-beta-one}, and the comparison of moving and fixed projections \eqref{comparison-moving-fixed-beta-one} yield Proposition~\ref{propo-priori-b-eta-K}.
		
		\smallskip\noindent\emph{Outer region: $K\le\xi\le b^{-\eta}$.}
		Retaining $\beta$ in the limiting operator gives
		\begin{equation*}
			\partial_s\varepsilon_-=\mathscr L_{0,\beta}\varepsilon_-+\widetilde{\mathcal B}_1(\varepsilon)+\widetilde{\mathcal R}_1(\varepsilon_+).
		\end{equation*}
		For $K$ large, $\phi_{\beta,2}\asymp\xi^{10/3}>0$ and $\mathscr L_{0,\beta}\phi_{\beta,2}=-\tfrac32\beta\phi_{\beta,2}$; hence $\pm A^{-3}b^{1/2+\eta}\phi_{\beta,2}$ provide comparison barriers (Proposition~\ref{pro-priori-epsilon-xi-large}). Combining the three regional estimates gives \eqref{target-epsilon-minus-stable}.
		
		\smallskip\noindent\emph{Far field: $\xi\ge b^{-\eta}$.}
		Constant barriers for the full profile $\vartheta$ give, by Lemma~\ref{lem-comparison-vartheta},
		\begin{equation}\label{target-tail-stable}
			b^{-\eta/2}\lesssim\vartheta\lesssim b^{-1-\eta}.
		\end{equation}
		\textit{- \underline{The residual $\mathcal F$}.}
		The residual serves two purposes. On the one hand, its physical interpretation is
		\begin{equation*}
			\mathcal F=\sqrt\vartheta\,\partial_\xi^2\vartheta-2=2\bigl(\partial_y^2u-1\bigr).
		\end{equation*}
		Since $\xi\asymp_s y^2$ near the nondegenerate wall at each finite $s$, the bound $\mathcal F=O(\xi)$ propagates compatibility \eqref{condition-in-u-locally-exsist}, needed with positivity and regularity for continuation. On the other hand,
		\begin{equation*}
			(\sqrt\vartheta-\sqrt{Q_b})\partial_\xi^2\varepsilon
			=\left(1-\sqrt{\frac{Q_b}{\vartheta}}\right)
			\left[\mathcal F-2\left(\sqrt{\frac{\vartheta}{Q_b}}-1\right)\right]
		\end{equation*}
		controls the highest-derivative term in $\mathscr B_b(\varepsilon)$ pointwise. With $\vartheta=Q_b+\varepsilon$, equation \eqref{equa-mathcal-F} becomes
		\begin{equation*}
			\partial_s\mathcal F=(\mathscr L_{b,\beta}-\beta\operatorname{Id})\mathcal F+(\sqrt\vartheta-\sqrt{Q_b})\partial_\xi^2\mathcal F-\frac{\varepsilon\mathcal F}{Q_b\vartheta}+\frac{\mathcal F^2}{2\vartheta}.
		\end{equation*}
		The linearization shares the eigenfunctions of $\mathscr L_{b,\beta}$, with eigenvalues shifted by $-\beta$ and strictly negative limits $-\beta(j+\tfrac12)$, $j\ge0$. Spectral decay, local parabolic compactness, and comparison yield
		\begin{equation*}
			\|\mathcal F(s)\|_{\mathfrak X_{b(s),\eta}}\lesssim b^{1/2-\eta},
			\qquad -C\le\mathcal F(s,\xi)\le b^{-\eta/8}\quad(\xi\ge b^{-\eta}),
		\end{equation*}
		as proved in Lemma~\ref{lem-priori-F-K} and Section~\ref{section-pointwise-estimates-F}.
		
		\noindent 
		\emph{Closing the stable construction.}
		Strict improvement of the bootstrap bounds, together with the Oleinik continuation criterion, yields global trapping, stable under perturbations of the prepared data. Reconstruction yields $\partial_yu(x,0)=\tfrac12(\mu b)^{1/4}(1+O(b^\eta))$. Since both $\mu$ and $b$ decay like $e^{-\beta_\infty s}$ and $x^*-x=\int_s^\infty\mu(t)\,dt$, the wall shear has exponent $1/2$; see Section~\ref{section-prove-theorem-stab}.
		
		\subsection{The unstable regimes}\label{subsection-sketch-unstable-regime}
		For $\ell\ge2$, the key ingredients are analogous to those of the stable scheme in Section~\ref{subsection-sketch-stable-regime}. However, additional directions in the expansion about $Q_b$ are unstable on the shrinking scale and raise distinct difficulties that must be addressed.
		
		The natural two-parameter modulation scheme would impose
		\begin{equation*}
			c_{0,0}\mathfrak F_{0,b,\beta}(\varepsilon)+c_{\ell,0}\mathfrak F_{\ell,b,\beta}(\varepsilon)=0,
			\qquad \varepsilon_\ell=\mathbf n_\ell(\beta)\sqrt b.
		\end{equation*}
		However, the available estimates do not establish invertibility of the corresponding two-parameter map $G_\ell(b,\beta)$, since the leading term of its Jacobian determinant cancels. Thus, the stable modulation argument does not justify propagating both constraints simultaneously. We therefore propagate only the geometric constraint and fix $\beta=1$ throughout this regime. The resulting geometric decomposition reads
		\begin{equation*}
			\varepsilon=\varepsilon_\ell\psi_{\ell,b}
			+\sum_{j=1}^{\ell-1}\varepsilon_j\varphi_{j,b}+\varepsilon_-.
		\end{equation*}
		Here we omit the fixed $\beta$ index as in \eqref{notation-beta-one}. Coupling the geometric constraint to \eqref{equa-varepsilon-appen-} gives the problem for $(\varepsilon,b)$:
		\begin{equation}\label{defi-unstable-problem}
			(\mathbf P_{\rm unstab}):\qquad
			\left|\begin{array}{l}
				\varepsilon\text{ solves }\eqref{equa-varepsilon-appen-}\text{ with }\beta=1,\\
				c_{0,0}\mathfrak F_{0,b}(\varepsilon)+c_{\ell,0}\mathfrak F_{\ell,b}(\varepsilon)=0.
			\end{array}
			\right.
		\end{equation}
		Fixing $\beta=1$ and normalizing $\mu(s_0)=e^{-s_0}$ gives $\mu(s)=e^{-s}$; hence the preparation time determines the prospective separation point:
		\begin{equation*}
			x^*=e^{-s_0}.
		\end{equation*}
		
		The shrinking-set argument has three components: weighted pointwise control of $\varepsilon_-$ and $\mathcal F$, an integrable modulation estimate for $b$, and a finite-dimensional shooting argument for $(\varepsilon_1,\ldots,\varepsilon_{\ell-1},e_\ell)$.
		
		\smallskip\noindent
		\textit{-\underline{Control of the remainder $\varepsilon_-$ and the residual $\mathcal F$.}}
		The weighted estimates for $\varepsilon_-$ and $\mathcal F$ control $0<\xi\le b^{-\eta}$. In the far field $\xi\ge b^{-\eta}$, we estimate $\vartheta$ directly; the scalar bounds for $\mathcal F$ hold on the whole half-line. With $b=b(s)$, the estimates read
		\begin{align*}
			\|\varepsilon_-(s)\|_{\mathfrak X_{b,\eta}^{(\ell)}}
			&\lesssim b^{1/2+\eta},
			& b^{-\eta/2}&\lesssim\vartheta(s,\xi)\lesssim b^{-1/(2\ell-1)-\eta},\\
			\|\mathcal F(s)\|_{\mathfrak X_{b,\eta}^{(\ell)}}
			&\lesssim b^{1/2-\eta},
			& -C&\le\mathcal F(s,\xi)\le b^{-\eta/8}.
		\end{align*}
		\smallskip\noindent
		\textit{-\underline{Modulation equation for $b$.}}
		As in the stable regime, projecting onto $\varphi_{0,b}$ and $\varphi_{\ell,b}$ yields, by \eqref{dynamics-b-item-i},
		\begin{equation*}
			\left|\frac{b'}b+(2\ell-1)\right|
			\lesssim_{A,\ell} b^{2\eta}.
		\end{equation*}
		Thus, along a globally trapped solution, the modulation error is integrable and
		$b(s)e^{(2\ell-1)s}$ converges to $b_\infty>0$, which determines the leading asymptotic behavior of $b$.
		
		\smallskip\noindent
		\textit{-\underline{Finite-dimensional dynamics and transversality.}}
		Set $\gamma_\ell=2\ell-1$ and normalize the coefficients by $z_j=\varepsilon_j/(Ab^{1/2+\eta})$ for $1\le j<\ell$ and $z_\ell=e_\ell/(Ab^{1/2+2\eta})$. Equations \eqref{ODE-general-item-i}--\eqref{remainder-ODE-item-i}, together with the dynamics of $b$ from the previous step, yield
		\begin{equation}\label{system-z-j}
			\begin{aligned}
				z_j'&=(\ell-j+\gamma_\ell\eta+o(1))z_j+O_{A,\ell}(b^{1/4-\eta}),&&1\le j<\ell,\\
				z_\ell'&=(\ell+2\gamma_\ell\eta+o(1))z_\ell+O_{A,\ell}(b^{1/4-2\eta}).
			\end{aligned}
		\end{equation}
		For $A\ge A_7$, $0<\eta\le\eta_7$, and $s_0\ge s_7$, with the thresholds
		of Proposition~\ref{lemma-transversality-ell}, \eqref{system-z-j} shows that
		all $\ell$ normalized directions are unstable within the bootstrap bounds.
		This gives \emph{transversality}: every face $z_j=\pm1$ is strictly outgoing.
		
		Transversality, strict improvement of the remaining bootstrap bounds, and continuation reduce the infinite-dimensional construction to a finite-dimensional selection of initial amplitudes $(\varepsilon_1,\ldots,\varepsilon_\ell)$, parametrized by $\mathbf d=(d_1,\ldots,d_\ell)$, for which the corresponding solution remains in $\mathcal V_\ell[A,\eta]$ for all $s\ge s_0$.
		
		\medskip\noindent
		\textit{-\underline{Conclusion of the finite-dimensional problem.}}
		The reduced problem is to select suitable parameters $\mathbf d\in\mathcal D_{s_0}\subset[-2,2]^\ell$ such that the corresponding solution of \eqref{defi-unstable-problem} remains globally trapped. The existence proof uses a classical topological argument from blowup constructions; see \cite{MZdm97,BDGJDE2025}. We recall its key contradiction argument. If every prepared trajectory exited, transversality and continuous dependence would yield a continuous exit map from the normalized parameter cube $[-1,1]^\ell$ to its boundary, whose restriction to the boundary is the identity and hence has degree one. This contradicts the no-retraction theorem. Therefore, some datum remains globally trapped, and continuation gives a global renormalized solution; see Section~\ref{subsection-proof-existen-solu-ell=2}. Reconstruction using the shrinking-set bounds yields Theorem~\ref{theo-unstab}.
		
		\section{Existence of a solution in the stable regime}
		We now carry out the stable construction outlined in Section~\ref{subsection-sketch-stable-regime}. We construct a global renormalized solution of $(\mathbf P_{\rm stab})$ in \eqref{defi-stable-problem}.
		
		\subsection{Bootstrap bounds and the shrinking set}
		Fix $s_0>0$ and $s>s_0$. For $\tau\in[s_0,s]$, consider bounded profiles
		$\vartheta(\tau,\cdot)\in C^1([0,\infty))\cap C^2((0,\infty))$,
		with $\vartheta(\tau,0)=0$ and $\vartheta(\tau,\xi)>0$ for $\xi>0$.
		Let $b,\beta\in C^1([s_0,s])$, with $0<b<1$ and $\beta>0$,
		and define $\Bstab$ by the integral in \eqref{stable-reference-scale},
		with $\Bstab(s_0)=e^{-s_0}$. We impose the stable amplitude constraint
		\eqref{exact-stable-finite-mode} throughout $[s_0,s]$.
		
		\begin{definition}[{Stable shrinking set $\mathcal V_1[A,\eta]$}]
			For $A,\eta>0$, we write
			$(\vartheta,b,\beta)\in\mathcal V_1[A,\eta](s)$
			if the geometric decomposition \eqref{decomposition-stable-sketch},
			the orthogonality conditions \eqref{orthogonality-epsilon-minus},
			and the following bounds hold for every $\tau\in[s_0,s]$,
			with $b=b(\tau)$:
			
			\noindent\textup{(i)} Modulation parameters:
			$\tfrac12\le b/\Bstab(\tau)\le2$ and
			$|\beta(\tau)-1|\le Ae^{-\eta s_0}$.
			
			\noindent\textup{(ii)} Infinite-dimensional remainder and far field:
			$\|\varepsilon_-(\tau)\|_{\mathfrak X_{b,\eta}}
			\le A^2b^{1/2+\eta}$ and
			$A^{-1}b^{-\eta/2}\le\vartheta(\tau,\xi)\le Ab^{-1-\eta}$
			for $\xi\ge b^{-\eta}$, where the norm is defined in
			\eqref{definition-weighted-remainder-norm}.
			
			\noindent\textup{(iii)} Residual $\mathcal F$: 
			$-A\le\mathcal F(\tau,\xi)\le Ab^{-\eta/8}$ for $\xi\in\mathbb R_+$,
			and $\|\mathcal F(\tau)\|_{\mathfrak X_{b,\eta}}\le Ab^{1/2-\eta}$.
		\end{definition}
		\begin{remark}[{Design of the shrinking set $\mathcal V_1[A,\eta]$}]
			The three groups of bounds in $\mathcal V_1[A,\eta]$ are chosen to match the modulation, remainder, and residual equations.  Fix $A\gg1$, then $0<\eta\ll1$, and finally $s_0\gg1$.
			
			Item~\textup{(i)} keeps $\beta$ close to $1$ and compares $b$ with $\Bstab$, where $(\log\Bstab)'=-\beta$. For a globally trapped solution, Proposition~\ref{pro-finite-projections} gives $|\beta'|+|b'/b+\beta|\lesssim b^{1/4}$. Exponential decay of $b$ makes these errors integrable, so $\beta\to\beta_\infty>0$, $b'/b\to-\beta_\infty$, and $b/\Bstab$ converges to a positive limit.
			
			For item~\textup{(ii)}, recall \eqref{decomposition-stable-sketch} and the cancellation of the $T_0$ terms:
			\[
			\varepsilon=\varepsilon_++\varepsilon_-,\qquad
			\varepsilon_+=\mathbf n_1(\beta)b^{1/2}\psi,\qquad
			\psi:=\psi_{1,b,\beta}=c_{1,1}\beta^{13/8}b^{3/2}T_1(\xi/b^{3/4})+\widetilde\psi.
			\]
			Denote the displayed $T_1$ term by $\psi^{T_1}$. Proposition~\ref{diagonalisation-L-b}, Lemmas~\ref{lemma-Q}--\ref{lemma-T-i}, and the norm \eqref{definition-weighted-remainder-norm} give Table~\ref{table-shrinking-pointwise-sizes}: the first two rows are comparable sizes, and the remaining rows are upper bounds up to a constant, uniformly for $\beta$ near $1$.
			\par\smallskip\begingroup\centering
			\refstepcounter{table}\label{table-shrinking-pointwise-sizes}
			\resizebox{\linewidth}{!}{$
				\renewcommand{\arraystretch}{1.35}
				\begin{array}{|c|c|c|c|c|}
					\hline
					\text{Quantity}&0<\xi\le b^{3/4}&b^{3/4}\le\xi\le1&1\le\xi\le b^{-\eta}&\xi\ge b^{-\eta}\\ \hline
					Q_b&b^{1/4}\xi&\xi^{4/3}&\xi^{4/3}&\xi^{4/3}\\ \hline
					|\psi^{T_1}|&b^{-3/8}\xi^{5/2}&\xi^2&\xi^2&\xi^2\\ \hline
					|\psi|&\xi^2&\xi^2+b^{1/4}\xi^{2/3}&\xi^2+b^{1/4}\xi^{10/3}&\xi^2+b^{1/4}\xi^{10/3}\\ \hline
					|\varepsilon_+|&b^{1/2}\xi^2&
					b^{1/2}\xi^2+b^{3/4}\xi^{2/3}&
					b^{1/2}\xi^2+b^{3/4}\xi^{10/3}&
					b^{1/2}\xi^2+b^{3/4}\xi^{10/3}\\ \hline
					|\varepsilon_-|&A^2b^{1/4+\eta}\xi&
					A^2b^{1/2+\eta}\xi^{2/3}&A^2b^{1/2+\eta}\xi^{10/3}&
					\text{no information needed}\\ \hline
				\end{array}
				$}
			\smallskip
			
			\textbf{Table~\thetable.} Pointwise profile sizes and perturbation bounds.
			\par\endgroup\smallskip
			Table~\ref{table-shrinking-relative-sizes} gives the corresponding relative bounds; $E_\pm$ denote the positive envelopes listed for $|\varepsilon_\pm|$ in Table~\ref{table-shrinking-pointwise-sizes}.
			\par\smallskip\begingroup\centering
			\refstepcounter{table}\label{table-shrinking-relative-sizes}
			\resizebox{\linewidth}{!}{$
				\renewcommand{\arraystretch}{1.45}
				\begin{array}{|c|c|c|c|c|}
					\hline
					\text{Ratio}&0<\xi\le b^{3/4}&b^{3/4}\le\xi\le1&1\le\xi\le b^{-\eta}&\xi\ge b^{-\eta}\\ \hline
					\rule[-2.3ex]{0pt}{5.8ex}|\varepsilon_+|/Q_b
					&\lesssim b^{1/4}\xi\le b
					&\begin{gathered}\lesssim b^{1/2}\xi^{2/3}+b^{3/4}\xi^{-2/3}\\ \lesssim b^{1/4}\end{gathered}
					&\begin{gathered}\lesssim b^{1/2}\xi^{2/3}+b^{3/4}\xi^2\\
						\lesssim b^{1/2-2\eta/3}+b^{3/4-2\eta}\end{gathered}
					&\begin{gathered}\lesssim b^{1/2}\xi^{2/3}+b^{3/4}\xi^2\\
						\text{not uniformly small}\end{gathered}\\ \hline
					\rule[-2.3ex]{0pt}{5.8ex}|\varepsilon_-|/Q_b
					&\lesssim A^2b^\eta
					&\begin{gathered}\lesssim A^2b^{1/2+\eta}\xi^{-2/3}\\
						\lesssim A^2b^\eta\end{gathered}
					&\begin{gathered}\lesssim A^2b^{1/2+\eta}\xi^2\\
						\lesssim A^2b^{1/2-\eta}\end{gathered}
					&\text{no information needed}\\ \hline
					\rule[-2.3ex]{0pt}{5.8ex}E_-/E_+
					&\lesssim A^2b^{-1/4+\eta}\xi^{-1}
					&\begin{gathered}
						\displaystyle\lesssim
						\frac{A^2b^{1/2+\eta}\xi^{2/3}}
						{b^{1/2}\xi^2+b^{3/4}\xi^{2/3}}\\
						\lesssim A^2b^{-1/4+\eta}
					\end{gathered}
					&\begin{gathered}
						\lesssim A^2b^\eta\xi^{4/3}\\
						\lesssim A^2b^{-\eta/3}
					\end{gathered}
					&\text{no information needed}\\ \hline
				\end{array}
				$}
			\smallskip
			
			\textbf{Table~\thetable.} Relative perturbation bounds and comparison of envelopes.
			\par\endgroup\smallskip
			
			The envelope ratio in the last row is not an estimate for $|\varepsilon_-|/|\varepsilon_+|$ without a corresponding lower bound for $|\varepsilon_+|$. The bounds needed here are relative to $Q_b$: near the wall, $|\varepsilon_-|\lesssim A^2b^{1/4+\eta}\xi$ implies $\varepsilon_-(s,0)=0$ by continuity, while
			\[
			\sup_{0<\xi\le b^{-\eta}}\frac{|\varepsilon|}{Q_b}
			\lesssim b^{1/4}+A^2b^\eta\lesssim A^2b^\eta=o(1).
			\]
			Thus $\vartheta\asymp Q_b$ on this expanding region; in particular, $\vartheta\le2Q_b\lesssim b^{-4\eta/3}\le(A/2)b^{-1-\eta}$ there, extending the upper bound in~\textup{(ii)} to the whole half-line.
			
			For $\xi\ge b^{-\eta}$, we control $\vartheta$ directly. Its matching condition $\vartheta(s,\xi)\to U_\infty(s)$ in \eqref{equa-vartheta} requires cancellation of the growing profiles, and the eigenfunction remainder does not identify a uniform leading power at infinity. No separate pointwise bootstrap norm for $\varepsilon_-$ is imposed beyond $b^{-\eta}$: the identity $\varepsilon_-=\vartheta-Q_b-\varepsilon_+$ gives a sufficient polynomial bound for the weighted projections. Exponential decay of $\rho_{b,\beta}$ reduces its far-field contribution to the modulation projections to $A^Cb^{-C}e^{-cb^{-4\eta/3}}$. Together with the local bounds, this yields
			\[
			\|\varepsilon_-\|_{L^2_{\rho_{b,\beta}}}\lesssim A^2b^{1/2+\eta}
			=o(b^{1/2}),\qquad
			\|\varepsilon_+\|_{L^2_{\rho_{b,\beta}}}\asymp b^{1/2}.
			\]
			Hence the remainder is lower order in the weighted space and in pairings against uniformly bounded test functions.
			
			The weight in \eqref{definition-weighted-remainder-norm} reflects the first remaining spectral direction, $\varphi_{2,b,\beta}$, after removing modes $0$ and $1$ by \eqref{orthogonality-epsilon-minus}. Its leading matched expansion has wall size $b^{-1/4}\xi$ and outer powers $\xi^{2/3}$, $\xi^2$, and $\xi^{10/3}$, motivating the envelope $\mathfrak f_1(\xi)=\xi^{2/3}\langle\xi\rangle^{8/3}$. Finally, item~\textup{(iii)} controls curvature through the residual $\mathcal F=\sqrt\vartheta\,\partial_\xi^2\vartheta-2$. The same spatial norm is used for $\mathcal F$ because its linearized operator is $\mathscr L_{b,\beta}-\beta\operatorname{Id}$ and therefore has the same eigenfunctions as the perturbation operator, with every eigenvalue shifted by $-\beta$. Indeed, substituting $\vartheta=Q_b+\varepsilon$ into \eqref{equa-mathcal-F} gives
			\[
			\partial_s\mathcal F
			=(\mathscr L_{b,\beta}-\beta\operatorname{Id})\mathcal F
			+(\sqrt\vartheta-\sqrt{Q_b})\partial_\xi^2\mathcal F
			-\frac{\varepsilon\mathcal F}{Q_b\vartheta}
			+\frac{\mathcal F^2}{2\vartheta}.
			\]
			Thus the eigenvalues are shifted from $\mu_{j,b,\beta}$ to $\mu_{j,b,\beta}-\beta=-\beta(j+1/2)+O(b^{1/4})$. The slowest linear decay has scale $b^{1/2}$, motivating the slack in the bound $\|\mathcal F\|_{\mathfrak X_{b,\eta}}\le Ab^{1/2-\eta}$. Together with the global bounds in~\textup{(iii)}, this controls the quasilinear terms and supports continuation in the physical variables.
		\end{remark}

		\subsection{Construction of initial data}
		We construct initial data for $(\mathbf P_{\rm stab})$ that satisfy the compatibility conditions and lie strictly inside $\mathcal V_1[A,\eta](s_0)$.  Fix $x_0>0$ as in~\eqref{defi-u_E}, $0<\delta\le3/8$, $M_0\ge1$, and $s_0\ge1$.
		With $\chi_0$ from \eqref{defi-chi-0}, set
		\begin{align}
			\mu_0&=\widetilde b_0=e^{-s_0},\qquad \widetilde\beta_0=1,
			\label{defi-mu-0-l=1}\\*
			\chi_{\delta,M_0}(\xi)&:=\chi_0(M_0\widetilde b_0^\delta\xi),\qquad
			\chi_\delta:=\chi_{\delta,M_0},
			\label{defi-chi-delta}\\*
			R_\delta&:=M_0^{-1}\widetilde b_0^{-\delta}.
			\label{definition-initial-cutoff-radius}
		\end{align}
		Thus $\chi_\delta=1$ on $[0,R_\delta]$ and
		$\operatorname{supp}\chi_\delta\subset[0,2R_\delta]$.
		Define the inner correction and outer profile by
		\begin{align}
			\Psi_1(\xi)&:=\mathbf n_1(\widetilde\beta_0)\widetilde b_0^{1/2}
			\psi_{1,\widetilde b_0,\widetilde\beta_0}(\xi),\nonumber\\*
			\Psi_\infty^1(\xi)&:=U_\infty(s_0)
			-\bigl(U_\infty(s_0)-4a^2\widetilde b_0^{-4\delta/3}\bigr)
			e^{-(\xi-\widetilde b_0^{-\delta})/\Xi_1},
			\label{defi-U-infty-1}
		\end{align}
		where $\psi$ is given by \eqref{definition-adapted-eigendirection-strategy},
		$\Xi_1=\widetilde b_0^{-(1+4\delta/3)}$, and
		$U_\infty(s_0)=\mu_0^{-1}u_E^2(0)=2x_0\widetilde b_0^{-1}$.
		Set
		\begin{equation}\label{defi-initi-varthe-s-0}
			\Psi_1^0(s_0,\xi)=\chi_\delta(Q_{\widetilde b_0}+\Psi_1)
			+(1-\chi_\delta)\Psi_\infty^1.
		\end{equation}
		
		\begin{lemma}[Well-prepared initial data]\label{lemma-initial-data-l=1}
			Fix $x_0>0$ and $0<\delta\le3/8$.  There exists $M_1=M_1(x_0,\delta)\ge1$ such that, for every $M_0\ge M_1$, there exists $s_1=s_1(\delta,x_0,M_0)\ge1$ with the following property. For $s_0\ge s_1$, the datum
			\eqref{defi-initi-varthe-s-0} belongs to
			$C^\infty((0,\infty))\cap C^1([0,\infty))$, vanishes at the wall,
			and increases strictly to $U_\infty(s_0)$ as $\xi\to\infty$.
			
			\noindent\textup{(i)} \emph{Residual estimates.}
			The residual $\mathcal F_1:=\sqrt{\Psi_1^0}\,\partial_\xi^2\Psi_1^0-2$,
			extended by $\mathcal F_1(s_0,0)=0$, satisfies
			$\|\mathcal F_1(s_0)\|_{L^\infty(\mathbb R_+)}\lesssim_{x_0,M_0}1$ and
			\[
			|\mathcal F_1(s_0,\xi)|\lesssim
			\widetilde b_0^{1/4}\xi\,\mathbf1_{\{\xi\le\widetilde b_0^{3/4}\}}
			+\widetilde b_0^{1/2}\mathfrak f_1(\xi)\,
			\mathbf1_{\{\xi\ge\widetilde b_0^{3/4}\}},\qquad 0<\xi\le R_\delta,
			\]
			with $\mathfrak f_1$ from \eqref{defi-f-ell}.
			
			\noindent\textup{(ii)} \emph{Renormalization.}
			There exists $r_0>0$, independent of $s_0$, such that a unique pair
			$(b_0,\beta_0)$ with $|b_0/\widetilde b_0-1|+|\beta_0-1|<r_0$ satisfies
			\begin{gather*}
				\Psi_1^0(s_0,\xi)=Q_{b_0}(\xi)
				+\mathbf n_1(\beta_0)\sqrt{b_0}\,\psi_{1,b_0,\beta_0}(\xi)
				+\varepsilon_-(s_0,\xi),\\*
				\varepsilon_-(s_0)\perp
				\operatorname{span}\{\varphi_{0,b_0,\beta_0},\varphi_{1,b_0,\beta_0}\}
				\quad\text{in }L^2_{\rho_{b_0,\beta_0}}.
			\end{gather*}
			Moreover,
			\begin{gather}
				\widetilde b_0^{1/2}|b_0-\widetilde b_0|
				+\widetilde b_0|\beta_0-1|
				\lesssim e^{-\widetilde b_0^{-\delta}/16},
				\label{distance-b-s-0-tildeb-0}\\*
				\left\|\frac{\varepsilon_-(s_0,\cdot)}{\xi}\right\|_{L^\infty(0,b_0^{3/4})}
				+\left\|\frac{\varepsilon_-(s_0,\cdot)}{\xi^{2/3}}\right\|_{L^\infty(b_0^{3/4},R_\delta/8)}
				\lesssim e^{-\widetilde b_0^{-\delta}/16}.
				\label{initial-epsilon-minus-exponential}
			\end{gather}
			In \eqref{distance-b-s-0-tildeb-0}--\eqref{initial-epsilon-minus-exponential},
			the implicit constants are universal; dependence on $x_0,\delta,M_0$
			enters only through the threshold $s_1$.
		\end{lemma}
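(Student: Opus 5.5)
The plan is to verify the qualitative properties of $\Psi_1^0$ region by region, then obtain the renormalization in (ii) from an implicit function argument around $(\widetilde b_0,1)$, where the datum is an exact finite-mode profile up to exponentially small tails. I split $\mathbb R_+$ into four regions: the inner region $[0,R_\delta]$ where $\chi_\delta=1$, the transition $[R_\delta,2R_\delta]$, the gluing zone near $\widetilde b_0^{-\delta}$, and the far field.

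\emph{Inner region $[0,R_\delta]$.} Here $\Psi_1^0=Q_{\widetilde b_0}+\Psi_1$. Smoothness for $\xi>0$ and $C^1$ regularity up to the wall follow from Lemma~\ref{lemma-Q} and Proposition~\ref{diagonalisation-L-b}. The cancellation of the $T_0$ terms in $\psi_{1,\widetilde b_0,1}$ (see \eqref{definition-adapted-eigendirection-strategy} and \eqref{spectral-wall-normalization}) gives $|\Psi_1|\lesssim \widetilde b_0^{1/2}\xi^2$ near the wall, which preserves $\partial_\xi\Psi_1^0(0)>0$, the vanishing at the wall, and positivity. For the residual in (i), I expand
\[
\sqrt{Q_b+\Psi_1}\,\partial_\xi^2(Q_b+\Psi_1)-2=\sqrt{Q_b}\,\partial_\xi^2\Psi_1+\frac{\partial_\xi^2Q_b}{2\sqrt{Q_b}}\Psi_1+\mathscr B_b(\Psi_1).
\]
The linear part equals $\widehat{\mathscr L}_{b,1}\Psi_1+\tfrac34\xi\partial_\xi\Psi_1$. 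Since $\Psi_1$ is a combination of eigenfunctions, this is $O(\widetilde b_0^{1/2}\,|\varphi_j|+\widetilde b_0^{1/2}|\xi\partial_\xi\varphi_j|)$. These quantities are bounded by the envelopes $\widetilde b_0^{1/4}\xi$ and $\widetilde b_0^{1/2}\mathfrak f_1$ using the remainder bounds of Proposition~\ref{diagonalisation-L-b}, together with the wall expansion $T_1\sim\zeta^{5/2}$, the cancellation of $T_0$, and $\sqrt{Q}\,\zeta^{1/2}\sim\zeta$. The quadratic term $\mathscr B_b$ is smaller by the relative bound $|\Psi_1|/Q_b\lesssim\widetilde b_0^{1/4}$ from Table~\ref{table-shrinking-pointwise-sizes}.

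\emph{Transition and far field.} Beyond $R_\delta$ the profile $Q_b+\Psi_1$ is of size $a^2\xi^{4/3}$, and $\Psi_\infty^1$ is designed to agree with it to leading order near $\widetilde b_0^{-\delta}$: it equals $4a^2\widetilde b_0^{-4\delta/3}$ there, with slope $\sim U_\infty/\Xi_1$ small. The plan is to check, with $M_0$ large, that on $[R_\delta,2R_\delta]$ both glued pieces are increasing, comparable, and have bounded second derivatives. The $\chi_\delta'$ and $\chi_\delta''$ terms cost factors $R_\delta^{-1}$ and $R_\delta^{-2}$ against differences of size $\xi^{4/3}$, so they contribute $O(1)$ to $\mathcal F_1$. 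In the far field, $\Psi_\infty^1$ increases strictly to $U_\infty(s_0)$, and the bound
\[
\sqrt{U_\infty}\,U_\infty\,\Xi_1^{-2}=O\bigl(\widetilde b_0^{-3/2+2+8\delta/3}\bigr)
\]
keeps $\mathcal F_1$ bounded. Monotonicity near the junction requires $4a^2\widetilde b_0^{-4\delta/3}$ to exceed the inner profile on $[R_\delta,2R_\delta]$. This holds because $(2R_\delta)^{4/3}\ll\widetilde b_0^{-4\delta/3}$ for $M_0\ge M_1$, which is where $M_1$ is chosen.

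\emph{Renormalization (ii).} I define
\[
G(b,\beta):=\bigl(c_{0,0}\mathfrak F_{0,b,\beta}(\Psi_1^0-Q_b)+c_{1,0}\mathfrak F_{1,b,\beta}(\Psi_1^0-Q_b),\ \mathfrak F_{1,b,\beta}(\Psi_1^0-Q_b)-\mathbf n_1(\beta)\sqrt b\bigr).
\]
At $(\widetilde b_0,1)$, $G$ vanishes up to contributions from $\xi\ge R_\delta$. There the Gaussian-type weight $\rho_{b,\beta}\lesssim e^{-c\xi^{4/3}}$, paired with polynomially bounded data, yields $e^{-\widetilde b_0^{-\delta}/16}$ after adjusting the constants by taking $s_1$ large. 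I then compute the Jacobian in the rescaled variables $(b/\widetilde b_0,\beta)$. As in the stable modulation system, it is $\sqrt b$ times a perturbation of $\begin{pmatrix}1/2&1\\0&1\end{pmatrix}$-type matrices up to normalization, using $\partial_bQ_b\propto b^{-1}\Lambda Q_b=\mathbf m_0 b^{-1/2}\varphi_0+\dots$ and the $C^1$ dependence in Proposition~\ref{diagonalisation-L-b}. A quantitative inverse function theorem then gives a unique $(b_0,\beta_0)$ in an $r_0$-ball, with $\widetilde b_0^{1/2}|b_0-\widetilde b_0|+\widetilde b_0|\beta_0-1|\lesssim e^{-\widetilde b_0^{-\delta}/16}$; the stated powers account for the $\sqrt b$ scaling of the Jacobian. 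Finally, I write
\[
\varepsilon_-(s_0)=\Psi_1^0-Q_{b_0}-\mathbf n_1(\beta_0)\sqrt{b_0}\,\psi_{1,b_0,\beta_0}.
\]
On $[0,R_\delta/8]$ this difference is the parameter variation of $Q_b+\mathbf n_1\sqrt b\,\psi$ between the two parameter pairs. By the pointwise bounds on $b\partial_b$ and $\partial_\beta$ of the profiles, it is bounded by $|\Delta|\cdot(\widetilde b_0^{-1/4}\xi\ \text{or}\ \xi^{2/3}\langle\xi\rangle^{C})$. This gives \eqref{initial-epsilon-minus-exponential}, since polynomial losses $R_\delta^C$ are absorbed into the exponential.

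\emph{Main obstacle.} I expect the hardest step to be the nondegeneracy and uniform size of the Jacobian of $G$. Its entries involve $\partial_b$ of eigenfunctions normalized by \eqref{spectral-wall-normalization} with the singular factor $b^{-1/4}$, and the exponential smallness must beat all polynomial losses. I would first compute the leading Jacobian explicitly from \eqref{eigenfuc-L-b} at $b\to0$, and only then absorb the remainders using the $b^{1/4}$ bounds.
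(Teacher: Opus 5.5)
Your overall route matches the paper's: an inner region, gluing with $M_0$ large, a far field where $\mathcal F_1=-2+O(\widetilde b_0^{1/2+8\delta/3})$, a contraction in $(b/\widetilde b_0-1,\beta-1)$ whose defect is the Gaussian-weighted projection of the cutoff error supported in $[R_\delta,\infty)$, and a mean-value bound for $\varepsilon_-$ on $[0,R_\delta/8]$. The Jacobian is not the real difficulty. For your $G$, after scaling the columns by $\sqrt b$ and $b^{-1/2}$, the leading matrix is explicit and triangular: the $\beta$-derivative of the geometric component cancels at leading order because $\kappa c_{0,0}=c_{1,0}$. Its determinant is a nonzero multiple of $\mathbf m_0\mathbf n_1$.

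The gaps are in the residual bound (i) on $[0,R_\delta]$.

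\emph{The quadratic term near $\xi\sim\widetilde b_0^{3/4}$.} With $q=Q_{\widetilde b_0}$ and $r=\Psi_1/q$, one has exactly $\mathcal F_1=\sqrt{1+r}\,(\sqrt q\,\Psi_1''+\Psi_1/q)-\frac{\sqrt{1+r}+2}{(\sqrt{1+r}+1)^2}\,r^2$. So $\mathscr B_{\widetilde b_0}(\Psi_1)$ is not the linear part times a small factor: it contains a pure $r^2$ term that must fit under the envelope on its own. If you bound $r$ through Table~\ref{table-shrinking-pointwise-sizes}, i.e.\ with the crude remainder $\widetilde b_0^{1/4}\xi^{2/3}$ of Proposition~\ref{diagonalisation-L-b} just above $\widetilde b_0^{3/4}$, you get $r^2\lesssim\widetilde b_0^{3/2}\xi^{-4/3}$. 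This exceeds the target $\widetilde b_0^{1/2}\xi^{2/3}$ by the factor $\widetilde b_0\xi^{-2}$, i.e.\ by up to $\widetilde b_0^{-1/2}$ on $[\widetilde b_0^{3/4},\widetilde b_0^{1/2}]$. The fix is the refined inner expansion. Proposition~\ref{propo-esti-S-j-R-i} (the $X^{1,5/3}$ bound on $R_i$ together with \eqref{impro-beha-R_i}) makes the remainder $O(\widetilde b_0^{1/4}\xi^{5/3})$ on $[\widetilde b_0^{3/4},\xi_*]$. Hence $|(\xi\partial_\xi)^k\Psi_1|\lesssim\widetilde b_0^{1/2}\xi^2$ on a fixed interval $[0,\xi_*]$, after which $r^2\lesssim\widetilde b_0\xi^{4/3}$ is harmless. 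The ingredients you cite give this only for $\xi\le\widetilde b_0^{3/4}$.

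\emph{The global bound.} Your inner argument yields only the envelope $\widetilde b_0^{1/2}\mathfrak f_1$, not $\|\mathcal F_1(s_0)\|_{L^\infty}\lesssim_{x_0,M_0}1$. For $\delta=3/8$, the value used later, that envelope is of order $M_0^{-10/3}\widetilde b_0^{-3/4}$ at $R_\delta=M_0^{-1}\widetilde b_0^{-3/8}$. Bounding $\widehat{\mathscr L}_{\widetilde b_0,1}\Psi_1+\tfrac34\xi\partial_\xi\Psi_1$ term by term through the eigenvalue relations gives $\widetilde b_0^{1/2}\xi^2+\widetilde b_0^{3/4}\xi^{10/3}$, of order $\widetilde b_0^{-1/2}$ there. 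This discards the cancellation between $(\mu_{1,\widetilde b_0}-1)\Psi_1$ and $\tfrac34\xi\partial_\xi\Psi_1$ on the $\xi^2$ part, and loses the same factor on the spectral remainder.

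On $[\xi_*,R_\delta]$ you must estimate $\sqrt q\,\Psi_1''+\Psi_1/q$ directly. This operator lowers the growth by $\xi^{4/3}$ and gives $\lesssim\widetilde b_0^{1/2}\xi^{2/3}+\widetilde b_0^{3/4}\xi^{-2/3}\langle\xi\rangle^{8/3}\lesssim1$; this is exactly where $\delta\le3/8$ enters, while $|r|<1/2$ bounds the $r^2$ term there.

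Relatedly, on $[1,2R_\delta]$ one only has $|\Psi_1|/q+|\Psi_1'|/q'\lesssim\widetilde b_0^{1/4}+M_0^{-2}$, not $O(\widetilde b_0^{1/4})$. So $M_1$ must also be chosen to keep $Q_{\widetilde b_0}+\Psi_1$ positive and increasing up to $2R_\delta$, not only to make $4a^2\widetilde b_0^{-4\delta/3}$ dominate the inner profile. Note also that $4a^2\widetilde b_0^{-4\delta/3}$ exceeds the inner profile there by a factor of order $M_0^{4/3}$; it does not match it to leading order.
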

		
		\begin{proof}
			Set $q=Q_{\widetilde b_0}$, $Z=q+\Psi_1$, $V=\Psi_\infty^1$,
			$D=\xi\partial_\xi$, $\nu=\mathbf n_1(1)$, and $\kappa=c_{1,0}/c_{0,0}$.
			Unspecified spectral parameters are $(\widetilde b_0,1)$;
			all initial profiles are evaluated at $s_0$.
			
			\noindent\emph{Inner estimates.}
			With $\zeta=\xi/\widetilde b_0^{3/4}$, cancellation of the $T_0$ terms in
			\eqref{eigenfuc-L-b} gives
			\begin{equation}\label{expression-Psi-1}
				\Psi_1=\nu\widetilde b_0^{1/2}
				\bigl[c_{1,1}\widetilde b_0^{3/2}T_1(\zeta)
				+\widetilde\varphi_1-\kappa\widetilde\varphi_0\bigr].
			\end{equation}
			For a small fixed $\xi_*\in(0,1)$, Propositions~\ref{propo-esti-S-j-R-i}
			and~\ref{diagonalisation-L-b}, including \eqref{impro-beha-R_i}, yield
			\[
			|D^k\Psi_1|\lesssim
			\begin{cases}
				\widetilde b_0^{1/2}\xi^2,&0<\xi\le\xi_*,\\
				\widetilde b_0^{1/2}\xi^2+
				\widetilde b_0^{3/4}\xi^{2/3}\langle\xi\rangle^{8/3},&\xi\ge\xi_*,
			\end{cases}
			\qquad k=0,1,2.
			\]
			Indeed, the refined remainder on $[\widetilde b_0^{3/4},\xi_*]$ is
			$O(\widetilde b_0^{1/4}\xi^{5/3})$, whose contribution to
			\eqref{expression-Psi-1} is bounded by $\widetilde b_0^{1/2}\xi^2$.
			Below the inner scale, use $T_1,R_i=O(\zeta^{5/2})$ and the $S_j$ bounds.
			Lemma~\ref{lemma-Q} then gives, since $\delta\le3/8$,
			\[
			\frac{|\Psi_1|}{q}+\frac{|\Psi_1'|}{q'}
			\le C(\widetilde b_0^{1/4}+M_0^{-2})<\tfrac12,
			\qquad 0<\xi\le2R_\delta,
			\]
			for $M_0$ and then $s_1$ large. Hence $Z,Z'>0$;
			$\Psi_1(0)=\Psi_1'(0)=0$ also gives $Z\in C^1([0,\infty))$
			and $Z'(0)=\widetilde b_0^{1/4}$.
			
			On $(0,R_\delta]$, put $r=\Psi_1/q$ and
			$\mathscr A=\mathscr L_{\widetilde b_0,1}-\Lambda_\xi$.
			The eigenvalue equations and $\sqrt q\,q''=2$ give
			\begin{align*}
				\mathscr A\Psi_1&=(\mu_{1,\widetilde b_0}-\Lambda_\xi)\Psi_1
				+\nu\kappa\widetilde b_0^{1/2}
				(\mu_{1,\widetilde b_0}-\mu_{0,\widetilde b_0})\varphi_0,\\*
				\mathcal F_1&=\sqrt{1+r}\,\mathscr A\Psi_1
				-\frac{\sqrt{1+r}+2}{(\sqrt{1+r}+1)^2}\,r^2.
			\end{align*}
			Here $|\mu_{j,\widetilde b_0}|\lesssim1$ and
			$|\varphi_0|\lesssim\widetilde b_0^{-1/4}\xi$ below
			$\widetilde b_0^{3/4}$, whereas
			$|\varphi_0|\lesssim\xi^{2/3}(1+\widetilde b_0^{1/4}\langle\xi\rangle^{4/3})$
			above it. Together with $q\asymp\widetilde b_0^{1/4}\xi$ and
			$q\asymp\xi^{4/3}$ in the respective regions, these bounds control
			$|\mathscr A\Psi_1|+r^2$ by the right-hand side of (i).
			Since $|r|<1/2$, this proves the local residual estimate and its zero wall trace.
			
			\noindent\emph{Gluing and the global residual bound.}
			On $[R_\delta,2R_\delta]$, the preceding estimates and
			\eqref{defi-U-infty-1} give
			\begin{gather*}
				|Z^{(j)}|\lesssim_{M_0}\widetilde b_0^{(j-4/3)\delta}
				\quad(j=0,1,2),\qquad
				V=4a^2\widetilde b_0^{-4\delta/3}
				\bigl[1+O_{M_0,x_0}(\widetilde b_0^{5\delta/3})\bigr],\\*
				V^{(j)}=O_{M_0,x_0}(\widetilde b_0^{j-1+4j\delta/3})
				\quad(j=1,2).
			\end{gather*}
			Thus $Z<V$ there for $M_0$ and then $s_1$ large;
			on $[R_\delta,\infty)$, $0<V<U_\infty(s_0)$ and $V'>0$.
			Since $\chi_\delta'\le0$,
			\[
			(\Psi_1^0)'=\chi_\delta'(Z-V)+\chi_\delta Z'+(1-\chi_\delta)V'>0.
			\]
			Smooth gluing, the wall value, and $V\to U_\infty(s_0)$ prove the
			regularity and profile assertions. On the transition interval,
			$|\chi_\delta^{(j)}|\lesssim M_0^j\widetilde b_0^{j\delta}$ gives
			\[
			(\Psi_1^0)''=\chi_\delta''(Z-V)+2\chi_\delta'(Z'-V')
			+\chi_\delta Z''+(1-\chi_\delta)V'',\qquad
			|(\Psi_1^0)''|\lesssim_{M_0,x_0}\widetilde b_0^{2\delta/3}.
			\]
			As $\Psi_1^0\asymp_{M_0,x_0}\widetilde b_0^{-4\delta/3}$, this yields
			$|\mathcal F_1|\lesssim1$. For $\xi\ge2R_\delta$,
			$\Psi_1^0=V\le2x_0\widetilde b_0^{-1}$ and
			$V''=O(\widetilde b_0^{1+8\delta/3})$, so
			$\mathcal F_1=-2+O(\widetilde b_0^{1/2+8\delta/3})$.
			Finally, on $[\xi_*,R_\delta]$ the identity
			$\mathscr A\Psi_1=\sqrt q\,\Psi_1''+\Psi_1/q$ gives
			\[
			|\mathscr A\Psi_1|\lesssim
			\widetilde b_0^{1/2}\xi^{2/3}
			+\widetilde b_0^{3/4}\xi^{-2/3}\langle\xi\rangle^{8/3}\lesssim1,
			\]
			where the last bound uses $\delta\le3/8$.
			The exact residual identity and the local estimate below $\xi_*$
			complete the global bound in (i).
			
			\noindent\emph{Renormalization and the remainder.}
			Set $H_{b,\beta}=Q_b+\mathbf n_1(\beta)\sqrt b\,\psi_{1,b,\beta}$ and
			$\widetilde\varepsilon^0=(1-\chi_\delta)(V-Z)$, so that
			$\Psi_1^0=H_{\widetilde b_0,1}+\widetilde\varepsilon^0$.
			For $\mathcal R(b,\beta,\varepsilon)=H_{\widetilde b_0,1}+\varepsilon-H_{b,\beta}$,
			define
			$G=(\langle\mathcal R,\varphi_{1,b,\beta}\rangle_\rho,
			\langle\mathcal R,\varphi_{0,b,\beta}\rangle_\rho)^{\mathsf T}$,
			where $\rho=\rho_{b,\beta}$. The required orthogonality is
			$G(b_0,\beta_0,\widetilde\varepsilon^0)=0$.
			Since $\mathcal R(\widetilde b_0,1,0)=0$, derivatives of the test functions
			and weight do not contribute to the Jacobian at the reference datum.
			
			Use the scaled coordinates $b=\widetilde b_0(1+u)$, $\beta=1+v$, and
			$\mathcal G_{\widetilde b_0}(u,v)=\widetilde b_0^{-1/2}
			G(\widetilde b_0(1+u),1+v,\widetilde\varepsilon^0)$.
			The ground-state and spectral estimates give $C^1$ convergence of the
			map with $\widetilde\varepsilon^0=0$ on a fixed small ball.
			Differentiation under the integrals is justified by the uniform integrable
			majorant $C\xi^{-2/3}(1+\xi)^K e^{-c\xi^{4/3}}$.
			The tail and its parameter derivatives are exponentially small, since
			$\operatorname{supp}\widetilde\varepsilon^0\subset[R_\delta,\infty)$.
			Appendix~\ref{appen-jacobian} therefore gives
			\[
			D\mathcal G_{\widetilde b_0}(0,0)=\mathsf J+o(1),\qquad
			\mathsf J=\begin{pmatrix}-\nu/2&-\nu\\0&\kappa\nu\end{pmatrix},\qquad
			\det\mathsf J=-\nu\mathbf m_0(1)\ne0.
			\]
			Here $\kappa\nu=2\mathbf m_0(1)$, and the unnormalized projections
			have the same leading Jacobian because $\|\varphi_j\|_\rho^2=1+o(1)$.
			For $r_0$ small and $s_1$ large,
			$\sup_{|u|+|v|\le r_0}\|I-\mathsf J^{-1}D\mathcal G_{\widetilde b_0}\|\le1/2$,
			while $|\mathcal G_{\widetilde b_0}(0,0)|\lesssim
			\widetilde b_0^{-J}e^{-cR_\delta^{4/3}}$ for fixed $c,J>0$.
			Thus $(u,v)\mapsto(u,v)-\mathsf J^{-1}\mathcal G_{\widetilde b_0}(u,v)$
			is a contraction of this ball into itself, with a unique fixed point satisfying
			$|u|+|v|\lesssim\widetilde b_0^{-J}e^{-cR_\delta^{4/3}}$.
			Since $R_\delta^{4/3}=M_0^{-4/3}\widetilde b_0^{-4\delta/3}$,
			increasing $s_1(\delta,x_0,M_0)$ absorbs the polynomial losses and
			parameter-dependent prefactors, giving \eqref{distance-b-s-0-tildeb-0}
			with a universal implicit constant.
			Set $\varepsilon_-^0=\mathcal R(b_0,\beta_0,\widetilde\varepsilon^0)$.
			
			On $(0,R_\delta/8]$, the cutoff equals one, so
			$\varepsilon_-^0=H_{\widetilde b_0,1}-H_{b_0,\beta_0}$.
			Along the parameter segment $(b_\theta,\beta_\theta)$,
			$b_\theta\asymp b_0\asymp\widetilde b_0$ and $\beta_\theta\asymp1$.
			Proposition~\ref{diagonalisation-L-b} and
			Lemmas~\ref{lemma-Q} and~\ref{lemma-T-i} give
			\[
			|\partial_bH_{b_\theta,\beta_\theta}(\xi)|
			+|\partial_\beta H_{b_\theta,\beta_\theta}(\xi)|
			\lesssim\widetilde b_0^{-2}
			\begin{cases}
				\xi,&0<\xi\le b_0^{3/4},\\
				\xi^{2/3}\langle\xi\rangle^4,&\xi>b_0^{3/4}.
			\end{cases}
			\]
			The factor $\langle\xi\rangle^4$ accounts for the $\beta$-derivative
			of the spectral remainder. Let $\mathcal E_0$ denote the sum of norms in
			\eqref{initial-epsilon-minus-exponential}. Since
			$\langle\xi\rangle^4\lesssim\widetilde b_0^{-4\delta}$ on this interval,
			the mean value formula and the stronger Gaussian bound on $|u|+|v|$ above give
			\begin{align*}
				\mathcal E_0
				&\lesssim\widetilde b_0^{-2-4\delta}
				\bigl(|b_0-\widetilde b_0|+|\beta_0-1|\bigr)\\*
				&\le C_{x_0,M_0}\widetilde b_0^{-J-2-4\delta}
				e^{-cR_\delta^{4/3}}\le e^{-\widetilde b_0^{-\delta}/16},
			\end{align*}
			after increasing $s_1$. Indeed,
			$R_\delta^{4/3}\widetilde b_0^\delta
			=M_0^{-4/3}\widetilde b_0^{-\delta/3}\to\infty$,
			so the stronger exponential absorbs all polynomial losses and fixed prefactors.
			This proves \eqref{initial-epsilon-minus-exponential} with a universal implicit constant.
		\end{proof}
		\begin{remark}[Initial data for $(\mathbf P_{\rm stab})$]\label{remark-stable-initial-trapping}
			Fix $x_0>0$, $\delta=3/8$, and $M_0\ge M_1(x_0,\delta)$ as in
			Lemma~\ref{lemma-initial-data-l=1}. For $s_0\ge s_1(\delta,x_0,M_0)$, set
			$\mu_0=\widetilde b_0=e^{-s_0}$ as in \eqref{defi-mu-0-l=1}, let
			$\vartheta_{\rm in}=\Psi_1^0(s_0)$ be defined by
			\eqref{defi-initi-varthe-s-0}, and let $(b_0,\beta_0)$ be the unique pair
			selected in item~\textup{(ii)} of that lemma. Then the initial datum for
			$(\mathbf P_{\rm stab})$ is
			\[
			(\varepsilon_{\rm in},b_0,\beta_0),
			\qquad \varepsilon_{\rm in}=\vartheta_{\rm in}-Q_{b_0},
			\]
			or, equivalently, $(\vartheta_{\rm in},\mu_0)$ for
			\eqref{equa-vartheta}. Set $\eta_{20}:=\delta/16$.
			The estimates of Lemma~\ref{lemma-initial-data-l=1} give thresholds
			$A_{20}=A_{20}(M_0,x_0)\ge2$ and
			$s_{20}=s_{20}(A,\eta,M_0,x_0)\ge s_1$ such that, for
			$A\ge A_{20}$, $0<\eta\le\eta_{20}$, and $s_0\ge s_{20}$,
			\[
			(\vartheta_{\rm in},b_0,\beta_0)\in\mathcal V_1[A,\eta](s_0)
			\]
			with all defining inequalities strict. The threshold $s_{20}$ includes
			the dependence on $\eta$ needed for this inclusion.
			
			The corresponding physical initial profile is recovered by the inverse
			von Mises transformation
			\[
			w_{\rm in}(\psi)=\mu_0\vartheta_{\rm in}
			\!\left(\frac{\psi}{\mu_0^{3/4}}\right),\qquad
			y_{\rm in}(\psi)=\int_0^\psi\frac{d\psi'}{\sqrt{w_{\rm in}(\psi')}},
			\qquad
			u_{\rm in}(y_{\rm in}(\psi))=\sqrt{w_{\rm in}(\psi)}.
			\]
			Varying $s_0\ge s_1$ therefore produces a family of well-prepared
			physical initial data.
		\end{remark}
		
		\subsection{Propagation of the modulation parameters}
		We implement the local continuation argument outlined in
		Section~\ref{subsection-sketch-stable-regime}: first verify Oleinik's
		compatibility condition for the prepared data, then select the modulation
		parameters and continue them while the solution remains in the shrinking set.
		
		Let $X>0$ and $0<\alpha\leq1$. Assume that the outer data satisfy
		\eqref{equa-u_E-P}, with $u_E,p_E'\in C^1([0,X])$ and $u_E>0$.
		The corner compatibility condition is
		\begin{equation}\label{condition-in-u-locally-exsist}
			u_0''(y)-p_E'(0)=O(y^2)\qquad(y\to0^+).
		\end{equation}
		The corresponding Oleinik class is
		\begin{equation}\label{Olenik class}
			\mathcal K_\alpha:=\left\{u_0\in C_b^{2,\alpha}(\mathbb R_+):
			\begin{array}{l}
				u_0(0)=0,\quad u_0'(0)>0,\quad u_0(y)>0\ (y>0),\\
				u_0(y)\to u_E(0)\ (y\to\infty),\quad
				\text{\eqref{condition-in-u-locally-exsist} holds}
			\end{array}\right\}.
		\end{equation}
		We recall the local theory from \cite[Theorems~2.1.1 and~2.1.17]{OleinikSamok-book-99}.
		\begin{theorem}[Oleinik and Samokhin]\label{Olenik-thm-2.1.1}
			Under the preceding assumptions, every $u_0\in\mathcal K_\alpha$ generates
			a unique local Oleinik solution $(u,v)$ of \eqref{equa-u-P-E} on
			$[0,x_*)\times\mathbb R_+$ for some $0<x_*\leq X$.
			On every compact $x$-strip $[0,\bar x]\times\mathbb R_+$, with
			$\bar x<x_*$, the functions $u,u_y,u_{yy}$ are bounded and continuous,
			while $u_x,v,v_y$ are locally bounded and continuous.
			Moreover, $u(x,y)>0$ for $y>0$, and there exist $m_{\bar x},y_{\bar x}>0$
			such that $u_y\geq m_{\bar x}$ on $[0,\bar x]\times[0,y_{\bar x}]$.
		\end{theorem}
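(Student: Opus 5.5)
Theorem~\ref{Olenik-thm-2.1.1} is the classical local well-posedness result of Oleinik, quoted from \cite[Theorems~2.1.1 and~2.1.17]{OleinikSamok-book-99}, so I will not rebuild it from scratch. Instead I will check that our setting satisfies its hypotheses and then sketch the standard argument behind it, so that later continuation arguments can refer to its structure. The first step is the von Mises transformation \eqref{defi-von-Mise}. Since $u_0'(0)>0$ and $u_0>0$ for $y>0$, the map $y\mapsto\psi$ is a valid change of variables near the wall, and the problem becomes the degenerate parabolic equation \eqref{wvariable:1} for $w=u^2$. Here $w\asymp\psi$ near $\psi=0$, $w(x,0)=0$, and the data $w_{\rm in}$ are positive for $\psi>0$. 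The compatibility condition \eqref{condition-in-u-locally-exsist} says exactly that $\sqrt{w_{\rm in}}\,\partial_\psi^2w_{\rm in}+2p_E'(0)\cdot(\ldots)$ vanishes to the appropriate order at $\psi=0$. This is the condition that makes $\partial_x w$ bounded at the corner $x=0$, $\psi=0$.

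Second, I would construct the solution by the Oleinik line method, i.e. Rothe discretization in $x$. On each step $x_k=kh$, solve the elliptic ODE
\[
\frac{w^k-w^{k-1}}{h}-\sqrt{w^k}\,\partial_\psi^2w^k=-2p_E'(x_k),
\]
with $w^k(0)=0$ and $w^k(\infty)=u_E^2(x_k)$, using monotone iteration between sub- and supersolutions. The key uniform bounds come from the maximum principle applied to the difference quotients $(w^k-w^{k-1})/h$ and to $\partial_\psi w^k$ near the wall. These bounds need the compatibility condition and $u_E,p_E'\in C^1$ on the initial step, and they hold only up to some $x_*$ that depends on the initial wall shear and the $C^{2,\alpha}$ norm. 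A lower barrier of the form $c\,\psi(1-K\psi)$ on a strip then gives $\partial_\psi w\ge m>0$ uniformly on $[0,\bar x]$. Translated back, this is $u_y\ge m_{\bar x}$ on $[0,\bar x]\times[0,y_{\bar x}]$.

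Third, pass to the limit $h\to0$ using the uniform bounds and compactness. Then return to physical variables: the bound $u_y\ge m_{\bar x}$ makes the inverse von Mises map bi-Lipschitz near the wall. This yields continuity and boundedness of $u,u_y,u_{yy}$, while $u_x$ and $v=-\int_0^y u_x$ are locally bounded. Uniqueness comes from an $L^1$-contraction or comparison argument for \eqref{wvariable:1}, applied to the difference of two solutions weighted by $1/\sqrt{w}$.

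The main obstacle is the uniform bound on $\partial_x w$, which is equivalent to bounding $\partial_\psi^2 w$ near the degenerate wall where $\sqrt{w}\to0$. This is precisely where the compatibility condition and the $C^{2,\alpha}$ regularity are used. Since this part is classical, in the paper I would simply cite \cite{OleinikSamok-book-99} and verify the hypotheses. That verification reduces to checking that $u_0\in\mathcal K_\alpha$, that $u_E>0$ on $[0,X]$, and that \eqref{equa-u_E-P} holds. Under \eqref{defi-u_E} with $X<x_0$, each of these is immediate.
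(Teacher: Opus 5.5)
Your proposal matches the paper, which gives no proof of its own and simply quotes \cite[Theorems~2.1.1 and~2.1.17]{OleinikSamok-book-99} under the stated hypotheses ($u_E,p_E'\in C^1([0,X])$, $u_E>0$, Bernoulli's law \eqref{equa-u_E-P}, and $u_0\in\mathcal K_\alpha$), so citing the result and checking these hypotheses is exactly right. Your supplementary sketch is not needed and is inaccurate in places, though none of this affects the citation-based argument: for $h=c\psi(1-K\psi)$ one has $\partial_xh-\sqrt h\,\partial_\psi^2h=2cK\sqrt h>-2p_E'$ when $p_E'>0$, so it is not a lower barrier in the adverse-gradient case, and $y\mapsto\psi\asymp y^2$ is not bi-Lipschitz at the wall.
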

		For $p_E'=1$ and $u_E$ from \eqref{defi-u_E}, we also use the classical
		continuation criterion: before $x_0$, a finite maximal endpoint requires
		loss of wall nondegeneracy or interior positivity; see
		\cite[Section~1.1]{DalibardMasmoudi-IHES-19}.
		Thus continuation holds through $X<x_0$ if, uniformly for $x<X$,
		$u_y(x,0)\geq c>0$ and $\inf_{y\geq r}u(x,y)\geq c_r>0$ for every $r>0$.
		
		The weighted curvature term in \eqref{definition-norm-E} preserves the
		corner compatibility condition: if $|u_0''(y)-1|\le C_0y^2$ for $0<y\le1$,
		then
		\begin{equation*}
			|(u_0+h)''(y)-1|\le(C_0+\|h\|_{\mathcal E})y^2.
		\end{equation*}
		No limit of $h''(y)/y^2$ at zero is required. The following lemma gives
		the local persistence and dependence properties used below.
		
		\begin{lemma}[Local weighted bounds and dependence]\label{lem-local-weighted-dependence}
			Let $u$ be an Oleinik solution with initial datum in $\mathcal K_\alpha$,
			and let $[0,X]$, $X<x_0$, be a compact strip before its maximal endpoint.
			In von Mises variables, set
			\begin{equation}\label{physical-residual-local}
				R:=w_x=\sqrt w\,w_{\psi\psi}-2
				=2u_{yy}(x,y(x,\psi))-2.
			\end{equation}
			Then $|R(x,\psi)|\le C_X\min\{\psi,1\}$ on this strip, and
			\begin{equation}\label{local-weighted-time-profile}
				\sup_{\psi>0}\frac{|w(x,\psi)-w(t,\psi)|}{\min\{\psi,1\}}
				\le C_X|x-t|\qquad(0\le x,t\le X).
			\end{equation}
			For every $L>0$ and $0\le t<X$,
			\begin{equation}\label{local-residual-upper-semicontinuity}
				\limsup_{x\downarrow t}\sup_{0<\psi\le L}\frac{|R(x,\psi)|}{\psi}
				\le\sup_{0<\psi\le L}\frac{|R(t,\psi)|}{\psi}.
			\end{equation}
			Global upper and lower bounds for $R(t,\cdot)$ persist for a short time
			with any prescribed positive slack.
			
			If $u_{0,n},u_0\in\mathcal K_\alpha$, $u_{0,n}-u_0\in\mathcal O_\alpha$,
			and $\|u_{0,n}-u_0\|_{\mathcal E}\to0$, then the corresponding solutions
			exist on $[0,X]$ for all sufficiently large $n$, and
			\begin{equation}\label{local-weighted-data-dependence}
				\sup_{0\le x\le X,\,\psi>0}
				\frac{|w_n(x,\psi)-w(x,\psi)|}{\min\{\psi,1\}}\longrightarrow0.
			\end{equation}
			Their residuals converge uniformly on $[0,X]\times[r,L]$ whenever
			$0<r<L<\infty$.
		\end{lemma}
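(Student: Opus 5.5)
The plan is to work throughout in von Mises variables on the compact strip $[0,X]$, where Theorem~\ref{Olenik-thm-2.1.1} gives $u,u_y,u_{yy}$ bounded and continuous and a uniform wall lower bound $u_y\ge m_X$ on $[0,X]\times[0,y_X]$. Then $w=u^2$ satisfies $w\asymp\psi$ for $\psi\le\psi_X$, and $w\ge c>0$ for $\psi\ge\psi_X$ by interior positivity together with $w\to u_E^2\ge u_E^2(X)>0$.

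\emph{Step 1: the residual bound.} First, $R=2u_{yy}-2$ is bounded because $u_{yy}$ is bounded. Near the wall, I would propagate the corner compatibility \eqref{condition-in-u-locally-exsist}. Differentiating \eqref{wvariable:1} in $x$ gives
\[
\partial_xR=\sqrt w\,\partial_\psi^2R+\frac{R\,(R+2)}{2w},\qquad R(x,0)=0,
\]
using $w_x=R$ and $w_{\psi\psi}=(R+2)/\sqrt w$. This is a linear parabolic equation for $R$ with a zeroth-order coefficient of size $O(1/\psi)$ near the wall. A comparison argument with barriers $C e^{Kx}\Phi(\psi)$ then yields $|R|\le C_X\psi$ for $\psi\le1$, where $\Phi(\psi)=\psi-\psi^{3/2}$ (or $\psi+\psi^{3/2}$ with a cutoff), chosen so that $\sqrt\psi\,\Phi''\sim-\psi^{-1/2}$ dominates the $R/w$ term. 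The initial bound $|R(0,\psi)|\lesssim\psi$ holds because $u_0''-1=O(y^2)$ and $\psi\asymp y^2$. Since $w_x=R$, integrating in $x$ immediately gives \eqref{local-weighted-time-profile}.

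\emph{Step 2: upper semicontinuity and persistence of global bounds.} I would repeat the comparison of Step~1 on $[t,t+h]$, now starting from the bound $\sup|R(t)|/\psi=:m$. The barrier $(m+\epsilon)\psi\,e^{K(x-t)}$ plus a small correction, with $\psi$ localized on $(0,L]$, shows that the ratio exceeds $m+\epsilon$ by at most $O(h)$ plus a boundary contribution at $\psi=L$. That boundary term is controlled by the continuity of $R$ away from the wall. This yields \eqref{local-residual-upper-semicontinuity}. The persistence of global upper and lower bounds follows the same way from constant barriers and the maximum principle for the $R$ equation, since the quadratic term is bounded on a short interval.

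\emph{Step 3: continuous dependence.} For data converging in $\|\cdot\|_{\mathcal E}$, I would first use the weighted norm to check that $u_{0,n}$ has a uniform Oleinik-class bound and a uniform compatibility constant (as noted before the lemma). Then $w_{0,n}$ converges to $w_0$ in the $\min\{\psi,1\}$-weighted sup norm, and the $\psi\asymp y^2$ change of variables costs only uniform constants. Uniform a priori bounds from Steps~1--2, applied to both solutions, prevent degeneration on $[0,X]$ for large $n$; this gives existence on $[0,X]$ via the continuation criterion once \eqref{local-weighted-data-dependence} is known up to any earlier time. For the difference $d=w_n-w$, I would use
\[
\partial_xd=\sqrt{w_n}\,\partial_\psi^2d+(\sqrt{w_n}-\sqrt w)\,w_{\psi\psi},
\]
where the last term is bounded by $|d|\,|R+2|/(\sqrt{w_n w}+w)\lesssim|d|/\psi$ near the wall. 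Comparison with the barrier $\delta_n e^{Kx}\Phi(\psi)$, extended by a constant away from the wall, gives \eqref{local-weighted-data-dependence}. A continuity argument in $x$ closes the existence claim. Finally, convergence of residuals on $[0,X]\times[r,L]$ follows from interior parabolic Schauder estimates, since the equation is uniformly parabolic there, together with Arzel\`a--Ascoli compactness and uniqueness of the limit.

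The main obstacle is the degenerate wall, $\sqrt w\sim\sqrt\psi$, combined with singular zeroth-order coefficients $\sim1/\psi$. The choice of the barrier exponent $3/2$ must be checked so that the barrier's diffusion term beats these coefficients; if $\psi^{3/2}$ fails, I would try $\psi\log(1/\psi)$-type corrections.
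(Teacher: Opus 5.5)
Your architecture matches the paper's: the differentiated equation for $R$, the barrier $\Phi=\psi-k\psi^{3/2}$ (exactly the paper's $H$), integration of $w_x=R$, the difference equation with a coefficient of size $1/\psi$, and local Schauder compactness. Two small corrections: $\sqrt w\,\Phi''\approx-\tfrac34k\sqrt c$ is of order one, not $\psi^{-1/2}$, so it beats $c_R\Phi=O(1)$ only because $k$ is large; and $\psi+\psi^{3/2}$ has the wrong sign.

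The genuine gap is the comparison step, which you treat as routine. In $R_x=\sqrt w\,R_{\psi\psi}+c_RR$ the coefficient $c_R=(R+2)/(2w)\sim1/\psi$ is positive and unbounded at the wall. At a positive interior maximum of $z=R-Ce^{Kx}\Phi$, the supersolution property only gives $c_Rz\ge Ce^{Kx}(K\Phi+1)$, which is no contradiction. No weight $e^{-\lambda x}$ absorbs an unbounded $c_R$, so nothing excludes a positive maximum near $\psi=0$. Dividing by $\Phi$ would make the zeroth-order coefficient negative, but the quotient must then be controlled at the wall. That requires $\limsup_{\psi\to0}R/\psi\le Ce^{Kx}$, which is the claim itself. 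The same objection applies to the wall comparisons in Step~2 and, for $d=w_n-w$, in Step~3. So the real obstacle is the maximum principle at the degenerate wall, not the barrier exponent.

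The paper resolves this by dividing the comparison differences by a positive $g$ equal to $\sqrt\psi$ near the wall and to $1$ far away. The new zeroth-order coefficient is $c_R-\sqrt w/(4\psi^2)\le C\psi^{-1}-c_1\psi^{-3/2}$, which is bounded above. The maximum principle then works after an exponential change in $x$, with a penalty $\epsilon e^{M_1x}(1+\psi^2)$ at infinity. The price is that the quotient must vanish at the wall, i.e.\ $R=o(\sqrt\psi)$. Continuity of $u_{yy}$ gives only $R=o(1)$, so the paper imports the needed decay from Oleinik's estimate $|w_x|\le Cw^{3/4}$. For $d$ this is automatic, since $d=O(\psi)$.

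A divisor with $g(0)>0$ would also work, e.g.\ $g=1+N\sqrt\psi$ near the wall with $N$ large. Where $N\sqrt\psi\le1$ and $w\ge c\psi$, one has $\sqrt w\,g''/g\le-\tfrac{N\sqrt c}{8}\psi^{-1}$, and then only $z(x,0)=0$ is used. Some such device has to be supplied.

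Smaller repairs are also needed:
\begin{itemize}
\item In Step~2, the barrier $(m+\epsilon)\Phi$ is positive and dominates $(m+\epsilon/2)\psi$ only on a short interval $(0,r]$. Localize there and use continuity of $R$ on $[r,L]$, rather than placing the lateral boundary at $L$.
\item Your lower bound for $w$ away from the wall needs $w\to u_E^2$ uniformly on $[0,X]$. The paper proves this with barriers for $w-u_E^2$.
\item In Step~3, the constants of Steps~1--2 for $w_n$ presuppose its nondegeneracy. The argument must therefore be run as a bootstrap on $w_n\ge w/2$, with coefficients taken from the reference solution.
\end{itemize}
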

		\begin{proof}
			\emph{Propagation of compatibility.}
			Outer matching is uniform on the fixed strip. Indeed,
			$v=w-u_E^2$ satisfies $v_x-\sqrt w\,v_{\psi\psi}=0$; on $\psi\ge r$,
			compare it with $\pm(\epsilon+Ce^{Mx-(\psi-r)})$, first choosing $r$
			so that $|v(0,\psi)|\le\epsilon$ there, and then $C,M$ large enough
			for the boundary ordering and the differential inequalities.
			Together with positivity and wall nondegeneracy, this gives
			$w\ge c_X\min\{\psi,1\}$, while $c\psi\le w\le C\psi$ near zero
			and $R$ is bounded.
			The Oleinik construction also gives $|w_x|\le Cw^{3/4}$; see
			\cite[Lemma~2.1.13, with exponent $1/4$, and Theorem~2.1.14]{OleinikSamok-book-99}.
			Thus $R/\sqrt\psi\to0$ uniformly at the wall. Interior differentiation of
			\eqref{wvariable:1} gives
			\begin{equation}\label{physical-residual-local-equation}
				R_x=aR_{\psi\psi}+c_RR,\qquad
				a=\sqrt w,\qquad c_R=\frac{R+2}{2w}.
			\end{equation}
			Here $a\asymp\sqrt\psi$ and $|c_R|\le C/\psi$ near zero; both
			coefficients are bounded away from the wall, with $a$ bounded below there.
			Choose $k$ large and then $r>0$ small. On $(0,r]$,
			$H(\psi)=\psi-k\psi^{3/2}$ satisfies
			\begin{equation}\label{local-wall-comparison-weight}
				aH''+c_RH\le-\tfrac34k\sqrt c+C\le-1,
				\qquad \tfrac12\psi\le H\le\psi.
			\end{equation}
			Extend $H$ smoothly and positively, constant for large $\psi$, with
			$H\asymp\min\{\psi,1\}$. Then $aH''+c_RH\le MH$ globally for some
			$M<\infty$. Initial compatibility gives $|R(0)|\le BH$, and comparison
			yields $|R(x)|\le Be^{Mx}H$.
			
			This comparison does not assume the desired linear wall bound.
			Divide the comparison differences by a positive smooth function $g$
			equal to $\sqrt\psi$ near zero and to $1$ for large $\psi$.
			They tend to zero at the wall by the preceding Oleinik estimate.
			Their zeroth-order coefficient near zero is
			\begin{equation*}
				c_R+a\frac{g''}{g}
				=c_R-\frac{a}{4\psi^2}
				\le C\psi^{-1}-c_1\psi^{-3/2},
			\end{equation*}
			which is bounded above. The maximum principle therefore applies after
			an exponential change in $x$. At infinity, subtract
			$\epsilon e^{M_1x}(1+\psi^2)$ and let $\epsilon\downarrow0$.
			This proves the bound for $R$ and, by integration,
			\eqref{local-weighted-time-profile}.
			
			For \eqref{local-residual-upper-semicontinuity}, put
			$B_0=\sup_{0<\psi\le L}|R(t,\psi)|/\psi$. Given $\epsilon>0$,
			reduce $r<L$ so that $(B_0+\epsilon)H\ge(B_0+\epsilon/2)\psi$ on
			$(0,r]$. The initial ordering is strict, and continuity of $R(x,r)$
			preserves it at $\psi=r$ for a short time after $t$.
			By \eqref{local-wall-comparison-weight} and the same maximum principle,
			$|R(x,\psi)|\le(B_0+\epsilon)H(\psi)$ there. On $[r,L]$, ordinary
			parabolic continuity completes the assertion. For the global bounds,
			the bound $R=O(\psi)$ controls a fixed wall neighborhood. On its
			complement, $c_R$ is bounded; constant-in-space barriers with an
			exponential factor preserve the upper and lower bounds with any
			positive slack. Continuous changes of the thresholds are allowed.
			
			\emph{Dependence on the datum.}
			At the initial time,
			\begin{equation}\label{initial-weighted-von-mises-convergence}
				\sup_{\psi>0}\frac{|w_{0,n}(\psi)-w_0(\psi)|}{\min\{\psi,1\}}
				\longrightarrow0.
			\end{equation}
			Indeed, near zero $u_{0,n},u_0\asymp y$, and their inverse coordinates
			differ by $O(\|u_{0,n}-u_0\|_{\mathcal E}\sqrt\psi)$.
			On each compact interval away from zero the inverse maps converge
			uniformly. In the tail, uniform convergence of $u_{0,n}$ and their
			common positive limit make both squared profiles uniformly close to
			$u_E(0)^2$. No decay rate for the perturbations is needed.
			
			For $z=w_n-w$, subtraction gives
			\begin{equation}\label{physical-profile-difference-equation}
				z_x-\sqrt{w_n}\,z_{\psi\psi}=c_nz,\qquad
				c_n=\frac{R+2}{\sqrt w(\sqrt{w_n}+\sqrt w)}.
			\end{equation}
			As long as $w_n\ge w/2$, we have $|c_n|\le C/\psi$ near zero and
			bounded coefficients away from zero, uniformly in $n$. A uniform upper
			bound for $w_n$ follows from the maximum principle for
			\eqref{wvariable:1}. Construct $H$ as above, using the reference strip
			and these uniform bounds. Comparison gives
			\begin{equation}\label{weighted-profile-difference-estimate}
				\sup_{\psi>0}\frac{|z(x,\psi)|}{H(\psi)}
				\le e^{Mx}\sup_{\psi>0}\frac{|z(0,\psi)|}{H(\psi)}.
			\end{equation}
			The same division by $g$ justifies comparison at the wall, since
			$z=O(\psi)$; no estimate for $(R_n-R)/\psi$ is used.
			The reference solution satisfies $w\ge c_XH$ on $[0,X]$.
			For large $n$, \eqref{initial-weighted-von-mises-convergence} makes the
			right-hand side of \eqref{weighted-profile-difference-estimate} less
			than $c_X/4$. Weighted time continuity for each solution starts the
			comparison, and this strict improvement prevents loss of $w_n\ge w/2$.
			It preserves the wall shear and positivity away from the wall, so the
			continuation criterion extends each such solution through $X$.
			This proves \eqref{local-weighted-data-dependence}.
			
			On compact $\psi$-intervals away from zero, the equations are uniformly
			parabolic. Differentiation of the initial inverse maps gives convergence
			of $w_{0,n}$ in $C^2$ there and uniform $C^{2,\alpha}$ bounds.
			Local parabolic H\"older estimates followed by Schauder estimates give
			uniform bounds for $w_{n,x}$ and $w_{n,\psi\psi}$ on smaller intervals,
			including $x=0$. Compactness with a smaller H\"older exponent, and
			\eqref{local-weighted-data-dependence}, identify the limits with
			$w_x,w_{\psi\psi}$, proving the residual convergence.
		\end{proof}
		
		\noindent\emph{Admissibility of the initial data.}
		With $\mu_0=\widetilde b_0$ from \eqref{defi-mu-0-l=1}, set
		$w_0(\psi):=\mu_0\Psi_1^0(s_0,\psi/\mu_0^{3/4})$.
		The inverse von Mises transformation gives
		\begin{equation}\label{von-Mise-initial data}
			Y_0(\psi):=\int_0^\psi\frac{dt}{\sqrt{w_0(t)}},\qquad
			u_{\rm in}(Y_0(\psi))=\sqrt{w_0(\psi)}.
		\end{equation}
		Lemma~\ref{lemma-initial-data-l=1} makes $Y_0$ an increasing bijection of
		$\mathbb R_+$ and gives $u_{\rm in}(0)=0$, $u_{\rm in}>0$ for $y>0$,
		and $u_{\rm in}(y)\to u_E(0)$ as $y\to\infty$.
		The cancellation in \eqref{expression-Psi-1} yields
		$\mathfrak c_0:=w_0'(0)=(\mu_0\widetilde b_0)^{1/4}
		=\widetilde b_0^{1/2}$ and
		\begin{equation*}
			w_0(\psi)=\mathfrak c_0\psi+O(\psi^{3/2}),\qquad
			Y_0(\psi)=2\sqrt{\psi/\mathfrak c_0}\,(1+o(1)),\qquad
			\psi=\tfrac14\mathfrak c_0y^2(1+o(1)).
		\end{equation*}
		Writing $u_0=u_{\rm in}$ and $\xi=\psi/\mu_0^{3/4}$, differentiation of
		\eqref{von-Mise-initial data} gives
		\begin{equation*}
			u_0'(Y_0(\psi))=\tfrac12w_0'(\psi),\qquad
			u_0''(Y_0(\psi))=\tfrac12\sqrt{w_0(\psi)}\,w_0''(\psi)
			=1+\tfrac12\mathcal F_1(s_0,\xi).
		\end{equation*}
		Hence $u_0'(0)=\mathfrak c_0/2>0$, and the local residual estimate gives
		$u_0''(y)-1=O_{s_0}(y^2)$, as required by
		\eqref{condition-in-u-locally-exsist}.
		To check the remaining H\"older regularity, differentiate the exact residual
		identity in the proof of Lemma~\ref{lemma-initial-data-l=1} with
		$D=\xi\partial_\xi$. The eigenvalue identity for $\mathscr A\Psi_1$ uses
		only $D^k\Psi_1$ ($k\leq2$) and $D\varphi_0$, whose stated bounds give
		\begin{equation*}
			|\mathcal F_1|+|D\mathcal F_1|\lesssim
			\widetilde b_0^{1/4}\xi\quad(0<\xi\leq\widetilde b_0^{3/4}),\qquad
			u_0'''(y)=\frac{u_0(y)}{2\mu_0^{3/4}}
			\partial_\xi\mathcal F_1(s_0,\xi)=O_{s_0}(y).
		\end{equation*}
		Away from the wall, smooth gluing and the exponential tail in
		\eqref{defi-U-infty-1} give bounded derivatives through order three.
		In fact, the prepared datum is smooth at the wall. Write
		$Q(\zeta)=\zeta a_0(\sqrt\zeta)$, where $a_0$ is analytic and $a_0(0)=1$,
		as follows from \eqref{ground-state-Q}. For a Dirichlet eigenfunction,
		$\varphi(\zeta)=\zeta g(\sqrt\zeta)$ reduces its ODE to a regular singular
		equation with indicial roots $0,-2$; its regular branch is analytic.
		The inverse von Mises map is analytic in $\sqrt\psi$ with nonzero derivative
		at zero. Hence $u_0$ is analytic there; smooth gluing and the exponential tail
		give bounded fourth derivatives globally. Thus
		$u_0\in C_b^{3,1}\cap C^\infty\cap\mathcal K_1$, and
		Theorem~\ref{Olenik-thm-2.1.1} gives a local Oleinik solution.
		
		\begin{lemma}[Normalization]\label{lemma-normalization-propagation-flows}
			There exists $s_2=s_2(\delta,x_0,M_0)\ge s_1$ such that the following
			holds for $s_0\ge s_2$. Let $(b_0,\beta_0)$ be given by
			Lemma~\ref{lemma-initial-data-l=1}, and let $(u,v)$ be the local Oleinik
			solution with datum \eqref{von-Mise-initial data}. There is
			$\widetilde x_1>0$ and a locally unique triple
			$\widehat\mu\in C^2([0,\widetilde x_1])$,
			$(\widehat b,\widehat\beta)\in C^1([0,\widetilde x_1])$, with
			$\widehat\mu,\widehat b>0$ and initial value $(\mu_0,b_0,\beta_0)$.
			Under \eqref{chang-varialbe}, its pullback $(\mu,b,\beta)$ satisfies
			\eqref{relation-beta-mu-s} and, for some $s_{\rm loc}>s_0$,
			\begin{equation*}
				\vartheta=Q_b+\mathbf n_1(\beta)\sqrt b\,\psi_{1,b,\beta}
				+\varepsilon_-,\qquad
				\mathfrak F_{j,b,\beta}(\varepsilon_-)=0\quad(j=0,1),\qquad
				s\in[s_0,s_{\rm loc}].
			\end{equation*}
			Here the adapted direction and projections are defined in
			\eqref{definition-adapted-eigendirection-strategy} and \eqref{defi-F-j}.
		\end{lemma}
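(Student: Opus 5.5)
The plan is to set this up as an implicit function problem in the physical variable $x$, solved for $(\widehat b,\widehat\beta)$ with $\widehat\mu$ determined by an ODE. For a candidate $C^1$ function $\widehat\beta$ on $[0,\widetilde x_1]$ we define $\widehat\mu$ by $\widehat\mu'(x)=-\widehat\beta(x)$ and $\widehat\mu(0)=\mu_0$. This is the physical form of \eqref{relation-beta-mu-s}, since $\mu_s=\mu\mu_x$. Then $\widehat\mu\in C^2$ and $\widehat\mu>0$ for short $x$. For each $x$, the profile $\vartheta(x,\xi)=w(x,\widehat\mu^{3/4}\xi)/\widehat\mu$ is given. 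The constraints are the two scalar equations
\[
\mathcal G(x,b,\beta,\widehat\mu):=\Bigl(\mathfrak F_{0,b,\beta}(\vartheta-H_{b,\beta}),\ \mathfrak F_{1,b,\beta}(\vartheta-H_{b,\beta})\Bigr)=0,
\]
with $H_{b,\beta}=Q_b+\mathbf n_1(\beta)\sqrt b\,\psi_{1,b,\beta}$. Because $\psi_{1,b,\beta}=\varphi_1-\kappa\varphi_0$, vanishing of both projections of $\varepsilon_-$ is exactly the decomposition in the statement. At $x=0$ the pair $(b_0,\beta_0)$ solves $\mathcal G=0$, by Lemma~\ref{lemma-initial-data-l=1}(ii).

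Next I need the Jacobian in $(b,\beta)$ at $x=0$. This is the same computation as in the proof of Lemma~\ref{lemma-initial-data-l=1}, and Appendix~\ref{appen-jacobian} gives $D\mathcal G=\widetilde b_0^{1/2}(\mathsf J+o(1))$ with $\det\mathsf J\ne0$. There is one new point compared with the initial-data lemma. There the reference remainder $\widetilde\varepsilon^0$ was supported far out, while here $\varepsilon_-(s_0)$ is only small. Differentiating the weight and the test functions therefore produces terms of the form $\langle\varepsilon_-,\partial_{b,\beta}(\rho\varphi_j)\rangle$. By \eqref{initial-epsilon-minus-exponential} and the Gaussian decay of $\rho_{b,\beta}$, these are $O(e^{-c\widetilde b_0^{-\delta}})$ times polynomial factors. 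For $s_0\ge s_2$ this is $o(\widetilde b_0^{1/2})$, so invertibility persists.

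Regularity in $x$ is the main obstacle. Continuity of $\mathcal G$, and the $C^1$ dependence on $(b,\beta)$, come from Proposition~\ref{diagonalisation-L-b} together with the integrable majorant $C\xi^{-2/3}(1+\xi)^Ke^{-c\xi^{4/3}}$ used before. To get $\widehat b,\widehat\beta\in C^1$ in $x$, I will differentiate $\mathcal G$ in $x$. Then $w_x=R$ appears, and Lemma~\ref{lem-local-weighted-dependence} gives $|R|\le C_X\min\{\psi,1\}$, which is integrable against the weight. The $\widehat\mu$-dependence enters through the chain rule, which brings in $\xi\partial_\xi w$. Since the projections are weighted integrals, I will move the $\xi$-derivative onto the test function by integration by parts; the boundary terms vanish because $w=O(\psi)$ and $\rho\sim\xi^{-1/2}$ near the wall.

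This makes the problem a fixed point for $(\widehat b,\widehat\beta)$. I will handle it by differentiating the constraint in $x$, which gives a $2\times2$ linear system for $(\widehat b',\widehat\beta')$ whose right-hand side depends continuously on $x$ and on $(\widehat b,\widehat\beta,\widehat\mu)$. The matrix is invertible near the initial value by the Jacobian bound above. The coupled system of ODEs for $(\widehat\mu,\widehat b,\widehat\beta)$ has continuous right-hand side, locally Lipschitz in the unknowns by the $C^1$-in-parameter statement of Proposition~\ref{diagonalisation-L-b}. Cauchy--Lipschitz therefore gives a unique local solution, and integrating back shows that $\mathcal G=0$ is preserved.

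The $C^1$-in-$x$ regularity of the $\vartheta$-pairing is where I expect care is needed: the near-wall weight $\xi^{-1/2}$ combined with $\xi\partial_\xi w$ must be handled as described. Finally, pulling back via $ds/dx=1/\widehat\mu$ gives an interval $[s_0,s_{\rm loc}]$ and the pulled-back triple $(\mu,b,\beta)$ satisfying \eqref{relation-beta-mu-s}.
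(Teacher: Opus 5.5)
Your constraint map is the paper's $G$ from \eqref{defi-G-modulation} in different form: since $\mathfrak F_{1,b,\beta}(\psi_{1,b,\beta})=1$ and $\mathfrak F_{0,b,\beta}(\psi_{1,b,\beta})=-\kappa$, the two systems are equivalent. Your treatment of the base-point Jacobian and of the $x$- and $\sigma$-derivatives (through $w_x$, and by moving $\Lambda_\xi$ onto the kernel) also matches the paper.

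The gap is in your last step. The vector field of your $3\times3$ system contains $\bigl(D_{(b,\beta)}\mathcal G\bigr)^{-1}$, $\partial_x\mathcal G$ and $\partial_\sigma\mathcal G$, all evaluated at the unknowns. For it to be locally Lipschitz in $(\widehat b,\widehat\beta)$, the Jacobian $D_{(b,\beta)}\mathcal G$ must be Lipschitz in $(b,\beta)$. That requires second parameter derivatives of $\varphi_{j,b,\beta}$, $\psi_{1,b,\beta}$ and $\rho_{b,\beta}$, with integrable majorants. Proposition~\ref{diagonalisation-L-b} gives only $C^1$ dependence, so this Jacobian is merely continuous, and the appeal to Cauchy--Lipschitz is not justified. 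Peano's theorem would still give some solution along which $\mathcal G=0$ is preserved, but not the local uniqueness the lemma asserts.

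The repair is not to differentiate the constraint. Since $\mathcal G$ is jointly $C^1$ in $(x,\sigma,b,\beta)$ near $(0,\mu_0,b_0,\beta_0)$ and its $(b,\beta)$-Jacobian is invertible there, the $C^1$ implicit function theorem, one-sided in $x$, gives locally unique $C^1$ maps $(b,\beta)=(\mathcal B,\mathcal C)(x,\sigma)$. Then $\widehat\mu'=-\mathcal C(x,\widehat\mu)$, $\widehat\mu(0)=\mu_0$, is a scalar ODE whose right-hand side is $C^1$, hence locally Lipschitz, in $\sigma$. This yields a unique $\widehat\mu\in C^2$, and $\widehat b=\mathcal B(x,\widehat\mu)$, $\widehat\beta=\mathcal C(x,\widehat\mu)$ are $C^1$. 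Local uniqueness of the triple follows from implicit-function uniqueness together with uniqueness for the scalar ODE. This is the paper's argument, and it uses only the one parameter derivative that is available.
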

		\begin{proof}
			For a trial scale $\sigma>0$, write
			$\Theta_{x,\sigma}(\xi):=\sigma^{-1}w(x,\sigma^{3/4}\xi)$ and
			$E:=\Theta_{x,\sigma}-Q_b$. Define
			\begin{equation}\label{defi-G-modulation}
				G(x,\sigma,b,\beta):=
				\begin{pmatrix}
					\mathfrak F_{1,b,\beta}(E)-\mathbf n_1(\beta)\sqrt b\\
					\mathfrak F_{0,b,\beta}(E)+2\mathbf m_0(\beta)\sqrt b
				\end{pmatrix}.
			\end{equation}
			The coefficients are given by \eqref{defi-m-0}--\eqref{defi-n-ell-1}.
			Their relation $(c_{1,0}/c_{0,0})\mathbf n_1=2\mathbf m_0$ and spectral
			orthogonality make $G=0$ equivalent to the required decomposition.
			In particular, $G(0,\mu_0,b_0,\beta_0)=0$.
			
			We justify differentiation of this map before applying the implicit
			function theorem. Put
			$K_j:=\varphi_{j,b,\beta}\rho_{b,\beta}/
			\|\varphi_{j,b,\beta}\|_{L^2_{\rho_{b,\beta}}}^2$ for $j=0,1$.
			Proposition~\ref{diagonalisation-L-b}, Lemma~\ref{lemma-Q}, and
			\eqref{defi-rho-b-beta} yield
			\begin{equation}\label{projection-kernel-parameter-bound}
				|K_j|+|b\partial_bK_j|+|\partial_\beta K_j|
				\leq C\xi^{-2/3}(1+\xi)^M e^{-c\xi^{4/3}},
				\qquad 0<b\ll1,\quad \tfrac12\leq\beta\leq\tfrac32.
			\end{equation}
			Here $C,M,c>0$ are fixed. The profile and spectral bounds control the behavior at zero;
			parameter differentiation of the density adds only polynomial factors at
			infinity. At fixed $\psi$, the von Mises equation gives
			$w_\psi=2u_y$ and $w_x=2u_{yy}-2$, both bounded on compact $x$-strips.
			Also $\partial_\sigma\Theta_{x,\sigma}
			=-\sigma^{-1}\Lambda_\xi\Theta_{x,\sigma}$.
			These identities and \eqref{projection-kernel-parameter-bound} provide
			integrable bounds for the first derivatives, so $G$ is $C^1$ in a
			one-sided neighborhood of the base point.
			
			Appendix~\ref{appen-jacobian} computes the leading Jacobian at a pure
			modulated profile. The initial remainder contributes $o(b^{-1/2})$ to the
			$b$-column and $o(b^{1/2})$ to the $\beta$-column, by
			Lemma~\ref{lemma-initial-data-l=1} and
			\eqref{projection-kernel-parameter-bound}. Thus, at the base point,
			\begin{equation*}
				\det D_{(b,\beta)}G=-\frac{\mathbf n_1(\beta)\mathbf m_0(\beta)}{\beta}+o(1).
			\end{equation*}
			Choose $s_2\ge s_1$ so that this determinant is nonzero for $s_0\ge s_2$. The one-sided $C^1$ implicit
			function theorem therefore yields locally unique $C^1$ functions
			$(\mathcal B,\mathcal C)(x,\sigma)$ with
			$G(x,\sigma,\mathcal B,\mathcal C)=0$ and
			$(\mathcal B,\mathcal C)(0,\mu_0)=(b_0,\beta_0)$.
			
			Solve $\widehat\mu_x=-\mathcal C(x,\widehat\mu)$ with
			$\widehat\mu(0)=\mu_0$, and set
			$\widehat b=\mathcal B(x,\widehat\mu)$,
			$\widehat\beta=\mathcal C(x,\widehat\mu)$.
			Local Lipschitz continuity in $\sigma$ gives a unique solution;
			after shortening the interval, $\widehat\mu,\widehat b>0$.
			Since the right-hand side is $C^1$, $\widehat\mu$ is $C^2$ and
			$\widehat b,\widehat\beta$ are $C^1$.
			Define $s(x)=s_0+\int_0^x\widehat\mu(t)^{-1}\,dt$ and pull the parameters
			back by its inverse. Then $x_s=\mu$, $\mu_s=-\beta\mu$, and $G=0$
			gives the stated decomposition. Uniqueness follows from the implicit
			function theorem and uniqueness of the scalar initial-value problem.
		\end{proof}
		
		\begin{corollary}[Local persistence and parameter continuity]\label{cor-local-trapping-continuity}
			At a finite regular time, inequalities defining
			$\mathcal V_1[A,\eta]$, or the non-discrete part of
			$\mathcal V_\ell[A,\eta]$, with a positive margin in each normalized
			bound persist on a short subsequent interval.
			If the discrete inequalities are also strict, the solution remains
			in the corresponding shrinking set on that interval.
			For data converging in \eqref{definition-norm-E}, use common initial
			scale and time and the locally unique normalization near the reference.
			The normalized parameters and finite-mode coefficients converge uniformly on each
			finite renormalized interval on which the reference normalization
			exists and its Jacobian is invertible.
		\end{corollary}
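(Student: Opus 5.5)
The corollary has two parts, and I will prove them in order: local persistence of strict inequalities, then convergence of parameters for converging data. Both reduce to Lemma~\ref{lem-local-weighted-dependence} and the implicit-function construction in Lemma~\ref{lemma-normalization-propagation-flows}, transported to renormalized variables.

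\emph{Step 1 (continuity of normalized quantities in $s$).}
Fix a regular time $s_1$, i.e.\ one where the Oleinik solution exists and the normalization $(\mu,b,\beta)$ of Lemma~\ref{lemma-normalization-propagation-flows} is defined with invertible Jacobian. Since $x_s=\mu>0$, a short $s$-interval corresponds to a short $x$-interval. The parameters $b,\beta$ are $C^1$ in $s$, so item~(i) of $\mathcal V_1[A,\eta]$ is continuous. Each other defining quantity is continuous in $s$:
\begin{itemize}
\item The finite-mode coefficients are weighted integrals against kernels obeying \eqref{projection-kernel-parameter-bound}. The profile $\vartheta$ varies continuously in the weighted norm by \eqref{local-weighted-time-profile}, pulled back through \eqref{chang-varialbe}.
\item For $\|\varepsilon_-\|_{\mathfrak X_{b,\eta}}$, write $\varepsilon_-=\vartheta-Q_b-\varepsilon_+$. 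Then \eqref{local-weighted-time-profile} gives continuity of $\varepsilon_-/\min\{\xi,1\}$ uniformly. This suffices on the bounded interval $(0,b^{-\eta}]$, where the weights are comparable to $\xi$ near zero and bounded below elsewhere. The moving endpoint $b^{-\eta}$ moves continuously.
\item For the residual $\mathcal F$, which is $R$ in physical scaling, \eqref{local-residual-upper-semicontinuity} gives upper semicontinuity of the near-wall weighted sup. Uniform parabolic continuity handles compact sets away from the wall. The final sentence of the first part of Lemma~\ref{lem-local-weighted-dependence} preserves the global bounds $-A\le\mathcal F\le Ab^{-\eta/8}$ with any slack.
\item The far-field bounds on $\vartheta$ for $\xi\ge b^{-\eta}$ persist by continuity of $w$ and the uniform outer matching established in that proof.
\end{itemize}
A strict margin at $s_1$ is therefore retained on $[s_1,s_1+\tau]$. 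For $\mathcal V_\ell$, the discrete (shooting) inequalities are separate conditions. Their strictness is an extra hypothesis, and once assumed the same continuity argument applies to them.

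\emph{Step 2 (dependence on data).}
Take $u_{0,n}\to u_0$ in \eqref{definition-norm-E}, with the common initial scale $\mu_0$ and initial time $s_0$. Lemma~\ref{lem-local-weighted-dependence} gives existence on the reference strip $[0,X]$ together with the convergence \eqref{local-weighted-data-dependence}. The residuals converge on compact sets away from the wall.

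The map $G$ of \eqref{defi-G-modulation} depends on the solution only through weighted integrals. With the kernel bound \eqref{projection-kernel-parameter-bound} and dominated convergence, $G_n\to G$ in $C^1$, uniformly for $x\in[0,X]$ and for $(\sigma,b,\beta)$ in a neighborhood of the reference curve. The $\sigma$-derivative uses $\partial_\sigma\Theta=-\sigma^{-1}\Lambda_\xi\Theta$, and $\xi\partial_\xi w_n$ is controlled through $w_\psi=2u_y$.

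The reference Jacobian is invertible along the finite interval, and by compactness it is uniformly invertible. A uniform quantitative implicit function theorem then gives solutions $(\mathcal B_n,\mathcal C_n)$ converging uniformly to $(\mathcal B,\mathcal C)$. Solving $\widehat\mu_x=-\mathcal C_n(x,\widehat\mu)$ and applying Gronwall gives $\widehat\mu_n\to\widehat\mu$, and hence convergence of $s_n(x)$. Pulling back yields uniform convergence of $b_n,\beta_n$ and of the finite-mode coefficients on each finite renormalized interval.

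\emph{Main obstacle.} The hardest point is uniformity in the implicit function step. One must show that the $C^1$ convergence of $G_n$ holds including the $b$-column, which is singular of size $b^{-1/2}$. This needs the rescaling $b=\bar b(1+u)$ used in Lemma~\ref{lemma-initial-data-l=1}. After that rescaling, the weighted convergence $\sup|w_n-w|/\min\{\psi,1\}\to0$ controls the integrals near the wall, and exponential decay of $\rho_{b,\beta}$ controls the tail.
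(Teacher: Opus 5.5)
Your overall route is the paper's. Step~1 matches it almost line by line: weighted time continuity \eqref{local-weighted-time-profile} for $\varepsilon_-$, upper semicontinuity \eqref{local-residual-upper-semicontinuity} at the wall plus parabolic continuity away from it for $\mathcal F$, persistence of the global residual bounds, and continuity of $w$ for the far field. The skeleton of Step~2 also matches: convergence of $G_n$ via \eqref{projection-kernel-parameter-bound}, the implicit function theorem, continuous dependence for $\widehat\mu_x=-\mathcal C_n(x,\widehat\mu)$, and pull-back.

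One step, however, is not supported by what you cite. For the $\sigma$-derivative you write $\partial_\sigma\Theta=-\sigma^{-1}\Lambda_\xi\Theta$, say that $\xi\partial_\xi w_n$ is controlled through $w_\psi=2u_y$, and apply dominated convergence. That needs a bound on $\partial_\psi w_n=2u_{n,y}$ near the wall that is uniform in $n$. Lemma~\ref{lem-local-weighted-dependence} gives no such bound. It only provides the weighted $C^0$ convergence \eqref{local-weighted-data-dependence} and residual convergence on compact sets away from the wall. The paper avoids differentiating the profile by integrating by parts onto the kernel:
\[
\partial_\sigma\int_0^\infty\Theta_{x,\sigma}K_j\,d\xi
=-\sigma^{-1}\int_0^\infty\Theta_{x,\sigma}\bigl(K_j+\tfrac34(\xi K_j)'\bigr)\,d\xi .
\]
Then only $\Theta_{x,\sigma}$ itself enters, and \eqref{local-weighted-data-dependence} gives bounds and convergence of $\partial_\sigma G_n$ uniformly on compact parameter sets. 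Alternatively, your Gronwall step only needs the limit $\mathcal C$ to be Lipschitz in $\sigma$, since each $\mathcal C_n$ is already $C^1$ for fixed $n$. With that observation the claimed $\sigma$-convergence can simply be dropped.

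Three smaller points:
\begin{itemize}
\item The ``main obstacle'' you identify is not one here. On a finite renormalized interval $b$ is bounded away from zero, so the $b^{-1/2}$ size of the $b$-column is a fixed constant. Uniform invertibility then follows from compactness of the reference curve. The rescaling $b=\widetilde b_0(1+u)$ was needed in Lemma~\ref{lemma-initial-data-l=1} only because those estimates had to be uniform as $s_0\to\infty$.
\item Apply \eqref{local-weighted-data-dependence} on a physical strip slightly longer than the one corresponding to the chosen renormalized interval, since the perturbed time changes $x_n(s)$ differ from the reference one.
\item Treat $\ell\ge2$ separately. There $\mu=e^{-s}$ and the time change are fixed, and only the scalar constraint of Remark~\ref{remark-unstable-continuation} is solved, so there is no scale ODE.
\end{itemize}
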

		\begin{proof}
			Use $\mathcal F(s,\xi)=R(x(s),\mu(s)^{3/4}\xi)$.
			Equation~\eqref{local-weighted-time-profile} and continuity of the
			normalization give weighted continuity of $\varepsilon_-$ at the wall.
			Equation~\eqref{local-residual-upper-semicontinuity} preserves the residual
			bound with any strict margin. Away from zero, parabolic continuity
			handles compact regions and the moving endpoints $b^{3/4}$ and
			$b^{-\eta}$. Each term in $\|\mathcal F\|_{\mathfrak X_{b,\eta}}$
			is consequently upper semicontinuous from the right, as is their sum.
			The global residual bounds persist by Lemma~\ref{lem-local-weighted-dependence}.
			The far-field profile bounds follow from uniform continuity of $w$ in
			time, its prescribed limit at infinity, and continuity of the scale.
			All other inequalities involve continuous parameters and finite modes.
			
			For dependence, apply \eqref{local-weighted-data-dependence} on a
			slightly longer physical strip than the one corresponding to the chosen
			renormalized interval. The bounds \eqref{projection-kernel-parameter-bound}
			give uniform convergence of $G_n$ and its $(b,\beta)$ derivatives on
			compact parameter neighborhoods. For the scale derivative, integrate
			by parts:
			\begin{equation*}
				\partial_\sigma\!\int_0^\infty\Theta_{x,\sigma}K_j\,d\xi
				=-\sigma^{-1}\!\int_0^\infty
				\Theta_{x,\sigma}\bigl(K_j+\tfrac34(\xi K_j)'\bigr)\,d\xi.
			\end{equation*}
			The boundary terms vanish. The eigenfunction and density estimates
			make this kernel integrable against the profiles, uniformly on those
			neighborhoods. The implicit function theorem gives convergent maps
			$(\mathcal B_n,\mathcal C_n)$, locally uniformly Lipschitz in $\sigma$.
			Continuous dependence for $\widehat\mu_x=-\mathcal C_n(x,\widehat\mu)$
			and for the inverse time change yields convergence of
			$(\mu_n,b_n,\beta_n)$. A finite covering gives the whole interval.
			For $\ell\ge2$, use the scalar constraint of
			Remark~\ref{remark-unstable-continuation}; the scale and time change
			are fixed. The finite-mode coefficients converge by their integral
			formulas. No continuity of the physical flow in the full norm
			$\mathcal E$ is used.
		\end{proof}
		
		The next result rules out a finite breakdown of this normalized flow
		while the shrinking-set bounds hold.
		\begin{lemma}[Propagation of the modulation decomposition]\label{lem-modulation}
			Fix $A\geq1$ and $0<\eta<\delta/8$, with $0<\delta\leq3/8$ as in
			Lemma~\ref{lemma-initial-data-l=1}. There exists
			$s_3=s_3(A,\eta,\delta,M_0,x_0)\ge s_2$ such that, for $s_0\ge s_3$, if
			a normalized Oleinik solution, with $\mu(s_0)=e^{-s_0}$, satisfies
			$\bigl(\vartheta(s),b(s),\beta(s)\bigr)\in\mathcal V_1[A,\eta](s)$
			for $s_0\leq s<\mathfrak s^*<\infty$,
			then the solution and its modulation decomposition extend beyond
			$\mathfrak s^*$. The continued parameters have the regularity of
			Lemma~\ref{lemma-normalization-propagation-flows} and are locally unique.
		\end{lemma}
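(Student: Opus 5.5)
The plan is to reduce the statement to two separate continuation claims. The first is that the physical Oleinik solution extends beyond $x(\mathfrak s^*)$. The second is that the implicit-function normalization of Lemma~\ref{lemma-normalization-propagation-flows} can be restarted at $\mathfrak s^*$.

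\emph{Step 1: the endpoint is finite in $x$ and stays below $x_0$.} Item (i) of $\mathcal V_1[A,\eta]$ gives $|\beta-1|\le Ae^{-\eta s_0}$. Hence $\mu(s)=e^{-s_0}\exp(-\int_{s_0}^s\beta)$ satisfies
\[
x(\mathfrak s^*)=\int_{s_0}^{\mathfrak s^*}\mu\,ds\le 2e^{-s_0}<x_0,
\]
once $s_0\ge s_3$. Moreover $\mu$ and $b$ have positive limits at $\mathfrak s^*$, since $\mathfrak s^*<\infty$. Set $X:=x(\mathfrak s^*)$.

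\emph{Step 2: wall nondegeneracy and interior positivity.} I would verify both conditions of the continuation criterion quoted after Theorem~\ref{Olenik-thm-2.1.1}, uniformly on $[0,X)$.

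For the wall shear, use $u_y(x,0)=\tfrac12\partial_\psi w(x,0)=\tfrac12\mu^{1/4}\partial_\xi\vartheta(s,0)$. The wall bound in the $\mathfrak X_{b,\eta}$ norm gives
\[
|\varepsilon_-|\le A^2b^{1/4+\eta}\xi .
\]
Together with the cancellation $\partial_\xi\psi_{1,b,\beta}(0)=0$ from \eqref{definition-adapted-eigendirection-strategy} and \eqref{spectral-wall-normalization}, this yields $\partial_\xi\vartheta(s,0)=b^{1/4}(1+O(A^2b^\eta))$. That is bounded below, because $b\ge\tfrac12\Bstab(\mathfrak s^*)>0$.

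For interior positivity, the remark following the definition of $\mathcal V_1$ shows $\vartheta\asymp Q_b$ on $(0,b^{-\eta}]$. Item (ii) gives $\vartheta\ge A^{-1}b^{-\eta/2}$ beyond $b^{-\eta}$. Translating back, $w\ge c\min\{\psi,1\}$ uniformly for $x<X$. Since the map $\psi\mapsto y$ is controlled by $w^{-1/2}$, this gives $\inf_{y\ge r}u\ge c_r$.

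Item (iii) bounds $\mathcal F=R$ globally. So $u_{yy}$ stays bounded, and $R=O(\psi)$ near the wall propagates the compatibility condition \eqref{condition-in-u-locally-exsist} up to $X$.

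\emph{Step 3: restart the physical solution.} I would prefer to avoid a closure argument for the limiting profile, and to apply Lemma~\ref{lem-local-weighted-dependence} on strips $[0,X']$ with $X'<X$. The uniform constants from Step 2 make \eqref{local-weighted-time-profile} uniform, so $w(x,\cdot)$ is Cauchy as $x\uparrow X$ in the $\min\{\psi,1\}$-weighted sup norm. Interior parabolic Schauder estimates give a limit profile $w(X,\cdot)$ that is positive, nondegenerate at the wall, and has bounded residual.

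The continuation criterion then extends the Oleinik solution to some $X''>X$. If this limiting step fails, the fallback is to invoke the criterion directly: its hypotheses are exactly the uniform bounds verified in Step 2.

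\emph{Step 4: restart the normalization.} At $\mathfrak s^*$ I would re-run the implicit function theorem for $G$ in \eqref{defi-G-modulation}, with base point $(X,\mu(\mathfrak s^*),b(\mathfrak s^*),\beta(\mathfrak s^*))$. The parameters have limits because $|b_s/b+\beta|+|\beta_s|\lesssim b^{1/4}$; this follows by projecting the equation as in the sketch, or simply from boundedness of $\beta$ and $b_s/b$ on bounded intervals. $G$ vanishes at the limit by continuity.

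The main obstacle is the Jacobian. At the base point the remainder is no longer exponentially small as in Lemma~\ref{lemma-initial-data-l=1}; it is only bounded by the bootstrap. I would show that the $b$- and $\beta$-derivatives of the projections of $\varepsilon_-$ contribute $O(A^2b^{\eta})$ relative to the leading matrix. Differentiating the kernel $K_j$ using \eqref{projection-kernel-parameter-bound} yields polynomial weights. These are absorbed by the $\mathfrak X_{b,\eta}$ bound on $(0,b^{-\eta})$, while the far field is handled by the Gaussian factor $e^{-cb^{-4\eta/3}}$ against $\vartheta\le Ab^{-1-\eta}$.

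With this, $\det D_{(b,\beta)}G=-\mathbf n_1\mathbf m_0/\beta+O(A^2b^\eta)\neq0$ for $s_0\ge s_3$. The implicit function theorem and the ODE for $\widehat\mu$ then give locally unique $C^1$ continuation of $(\mu,b,\beta)$, with $\mu\in C^2$, past $\mathfrak s^*$. Uniqueness of this continuation coincides with the previous branch on the overlap.
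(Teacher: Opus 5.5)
Your proposal is correct and follows the paper's proof in both parts. The physical solution is continued through $x(\mathfrak s^*)<x_0$ by the continuation criterion, using the shrinking-set wall-shear and interior-positivity bounds, and the normalization is restarted by the implicit function theorem after showing that the $\varepsilon_-$ contributions to $D_{(b,\beta)}G$ are $O(A^2b^\eta)$, via the split at $b^{-\eta}$ and the Gaussian tail against $\vartheta\le Ab^{-1-\eta}$.

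The remaining differences are minor, with two caveats. First, the paper gets the limits of $(\widehat b,\widehat\beta)$ by differentiating $G=0$ and inverting the uniformly invertible Jacobian; your appeal to the modulation ODEs also works, but your alternative ``boundedness of $b_s/b$'' is not supplied by the shrinking set alone. Second, the paper invokes the continuation criterion directly rather than building a trace by interior Schauder estimates; that construction would not give the up-to-the-wall regularity needed to restart Oleinik's theorem, so your fallback is the right argument.
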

		\begin{proof}
			\noindent\emph{Continuation of the physical solution.}
			Choose $s_3\ge s_2$ so that $A e^{-\eta s_3}\leq1/2$ and $2e^{-s_3}<x_0$.
			By \eqref{relation-beta-mu-s}, \eqref{stable-reference-scale}, and the
			shrinking-set bounds,
			\begin{equation*}
				\mu_0e^{-3(s-s_0)/2}\leq\mu(s)\leq\mu_0e^{-(s-s_0)/2},\qquad
				\tfrac12\mu(s)\leq b(s)\leq2\mu(s),\qquad
				x(s)=\int_{s_0}^s\mu(t)\,dt<2\mu_0.
			\end{equation*}
			Thus $\mu$ and $b$ stay bounded away from zero on this finite interval,
			and $x^*:=\lim_{s\uparrow\mathfrak s^*}x(s)<x_0$ for $s_0\ge s_3$.
			The cancellation in the adapted direction gives
			$\partial_\xi\psi_{1,b,\beta}(0)=0$.
			Using the near-wall bound for $\varepsilon_-$ and $Q_b'(0)=b^{1/4}$,
			we obtain
			\begin{equation*}
				\vartheta_\xi(s,0)=b^{1/4}\bigl(1+O(A^2b^\eta)\bigr),\qquad
				u_y(x(s),0)=\tfrac12(\mu b)^{1/4}\bigl(1+O(A^2b^\eta)\bigr)
				\geq c_{\mathfrak s^*}>0.
			\end{equation*}
			The profile estimates and the two regional bounds for $\varepsilon_-$
			also give
			\begin{equation*}
				\frac{|\vartheta-Q_b|}{Q_b}
				\lesssim A^2b^\eta+b^{1/4}\quad(0<\xi\leq b^{-\eta}),\qquad
				\vartheta\geq A^{-1}b^{-\eta/2}\quad(\xi\geq b^{-\eta}).
			\end{equation*}
			For the first estimate, on $b^{3/4}\leq\xi\leq b^{-\eta}$ the remainder
			contributes at most $C A^2b^{1/2+\eta}\xi^{-2/3}\langle\xi\rangle^{8/3}$;
			the spectral contribution is controlled as in the inner estimates of
			Lemma~\ref{lemma-initial-data-l=1}. The restriction on $\eta$ makes both
			uniformly small. In particular, $\vartheta\geq Q_b/2$ in this region.
			
			The global bounds on $\vartheta$ and $\mathcal F$ imply
			\begin{equation*}
				\|u(x(s))\|_\infty^2\leq A\mu b^{-1-\eta},\qquad
				\|u_{yy}(x(s))\|_\infty
				\leq1+\tfrac A2(1+b^{-\eta/8}).
			\end{equation*}
			Both are uniform up to $\mathfrak s^*$. The wall shear lower bound and
			the curvature bound give $u(x,y)\geq c y$ on a fixed wall strip.
			For each $r>0$, this bounds $\psi(x,r)=\int_0^r u(x,y)\,dy$ away from
			zero; the preceding lower bounds for $\vartheta$ then give
			$\inf_{s<\mathfrak s^*,\,y\geq r}u(x(s),y)>0$.
			The continuation criterion following Theorem~\ref{Olenik-thm-2.1.1}
			therefore extends the physical solution through $x^*$.
			
			\noindent\emph{Uniform invertibility and continuation of the parameters.}
			At a normalized profile, write
			$H_{b,\beta}:=Q_b+\mathbf n_1(\beta)\sqrt b\,\psi_{1,b,\beta}$ and
			$R:=\Theta_{x,\sigma}-H_{b,\beta}=\varepsilon_-$.
			The components of \eqref{defi-G-modulation} are $\int RK_j$,
			ordered as $j=1,0$. At fixed $(x,\sigma)$, for $p=b,\beta$,
			\begin{equation*}
				\partial_pG_j=-\int_0^\infty(\partial_pH_{b,\beta})K_j\,d\xi
				+\int_0^\infty\varepsilon_-\partial_pK_j\,d\xi.
			\end{equation*}
			Split the second integral at $b^{3/4}$ and $b^{-\eta}$.
			The two local bounds for $\varepsilon_-$ and
			\eqref{projection-kernel-parameter-bound} give
			\begin{equation*}
				\int_0^{b^{-\eta}}|\varepsilon_-|
				\bigl(|b\partial_bK_j|+|\partial_\beta K_j|\bigr)\,d\xi
				\lesssim A^2b^{1/2+\eta}.
			\end{equation*}
			Beyond $b^{-\eta}$, use
			$|\varepsilon_-|\leq\vartheta+|H_{b,\beta}|$ and
			$\vartheta\leq A b^{-1-\eta}$: the weighted integral is bounded by
			$C_A b^{-J}e^{-c b^{-4\eta/3}}$ for a fixed $J>0$, and hence is smaller
			than every power of $b$. Scaling the two columns by $\sqrt b$ and $b^{-1/2}$, respectively,
			gives, uniformly along the trapped flow,
			\begin{equation*}
				D_{(b,\beta)}G
				\begin{pmatrix}\sqrt b&0\\0&b^{-1/2}\end{pmatrix}
				=\begin{pmatrix}
					-\mathbf n_1(\beta)/2&-\mathbf n_1(\beta)/\beta\\
					0&2\mathbf m_0(\beta)/\beta
				\end{pmatrix}+O(A^2b^\eta)+o_{b\to0}(1).
			\end{equation*}
			The last error comes from the spectral asymptotics and is uniform for
			$\beta\in[1/2,3/2]$. Since $b\leq2e^{-s_0}$, the determinant stays
			bounded away from zero for $s_0\ge s_3$, after increasing $s_3$.
			
			Finally, $\widehat\mu_x=-\widehat\beta$ gives a positive limit of
			$\widehat\mu$ at $x^*$. The extended physical solution and
			\eqref{projection-kernel-parameter-bound} bound $G_x$ and $G_\sigma$
			on the resulting compact parameter set. Differentiating $G=0$ yields
			\begin{equation*}
				D_{(b,\beta)}G
				\begin{pmatrix}\widehat b_x\\\widehat\beta_x\end{pmatrix}
				=-G_x+\widehat\beta G_\sigma.
			\end{equation*}
			Its uniformly bounded inverse gives bounded derivatives and limits of
			$\widehat b,\widehat\beta$ at $x^*$, where $G=0$ by continuity.
			The implicit function theorem and the scalar equation for
			$\widehat\mu$ now extend all parameters uniquely through $x^*$.
			Since $\widehat\mu(x^*)>0$, the change of time extends the normalized
			solution beyond $\mathfrak s^*$ and preserves its orthogonality conditions.
		\end{proof}
		
		\begin{remark}[Continuation at a finite endpoint]
			The finite endpoint in Lemma~\ref{lem-modulation} is a regular continuation
			point: the shrinking bounds keep the wall shear and interior profile positive,
			and bound $u$ and $u_{yy}$ uniformly. The continuation criterion cited above
			therefore supplies an admissible trace with Euler value $u_E(x^*)$ and the
			propagated wall compatibility condition. This argument uses that criterion;
			uniform $C_b^{2,1}$ control is not inferred from interior parabolic regularity.
		\end{remark}
		
		\subsection{A priori estimates for the stable shrinking set}
		The following strict improvements close the stable bootstrap.
		\begin{proposition}[Bootstrap improvement]\label{propo-priori-ell-1}
			Fix $\delta=3/8$ in the prepared datum of Lemma~\ref{lemma-initial-data-l=1}.
			There exist $A_4=A_4(M_0,x_0)\ge\max\{4,A_{20}\}$,
			$\eta_4(A)\in(0,\eta_{20}]$, and
			$s_4=s_4(A,\eta,M_0,x_0)\ge s_3$ with the following property.
			Let $A\ge A_4$, $0<\eta\le\eta_4(A)$, and $s_0\ge s_4$.
			Suppose that $\vartheta$ solves \eqref{equa-vartheta}, that
			\[
			(\vartheta,b,\beta)(s)\in\mathcal V_1[A,\eta](s)
			\qquad\text{for every }s\in[s_0,\bar s],
			\]
			where $\bar s>s_0$, and that $\vartheta(s_0)=\Psi_1^0$, with
			$\Psi_1^0$ defined in \eqref{defi-initi-varthe-s-0}.
			Then the following improved estimates hold on $[s_0,\bar s]$.
			\begin{itemize}
				\item[(i)] \emph{Estimates for the modulation parameters.}
				\begin{equation*}
					\frac34
					\le
					b(s)\Bstab^{-1}(s)
					\le
					\frac32,\qquad
					|\beta(s)-1|
					\lesssim
					b^{1/4}(s_0),
				\end{equation*}
				where $b_0=b(s_0)$ is selected by Lemma~\ref{lemma-initial-data-l=1}. 
				\item[(ii)] \emph{Profile bounds.}
				For $\xi\ge0$, $\vartheta(s,\xi)\le(A/2)b^{-1-\eta}$; for
				$\xi\ge b^{-\eta}$, $\vartheta(s,\xi)\ge(2/A)b^{-\eta/2}$.
				
				\item[(iii)] \emph{Estimate for $\varepsilon_-$.}
				\[
				\|\varepsilon_-(s)\|_{\mathfrak X_{b,\eta}}\le Ab^{1/2+\eta}.
				\]
				
				\item[(iv)] \emph{Estimate for $\mathcal F$.}
				\[
				-\frac A2\le\mathcal F(s,\xi)\le\frac A2b^{-\eta/8},\qquad
				\|\mathcal F(s)\|_{\mathfrak X_{b,\eta}}\le\frac A2b^{1/2-\eta}.
				\]
			\end{itemize}
		\end{proposition}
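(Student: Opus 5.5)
The proof follows the four-part road map of Section~\ref{subsection-sketch-stable-regime}, and I would prove the items in the order (i), (iii), (iv), (ii). Modulation comes first because every regional estimate uses $b\asymp\Bstab$ and $\beta\approx1$ as input.

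\emph{Step 1 (modulation, item (i)).} Differentiate the two constraints in \eqref{defi-stable-problem} in $s$ and insert \eqref{equa-varepsilon-appen-}. Project onto $\varphi_{0,b,\beta}$ and $\varphi_{1,b,\beta}$ via \eqref{defi-F-j}, and use self-adjointness to move $\mathscr L_{b,\beta}$ onto the eigenfunctions. This yields the $2\times2$ system for $X_1=b_s/b+\beta$ and $X_2=\beta_s/\beta$. Its matrix is $\begin{pmatrix}1/2&1\\0&1\end{pmatrix}$ plus errors; the Jacobian computation of Lemma~\ref{lem-modulation} shows these errors are $O(A^2b^\eta)$. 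The right-hand side is $O(b^{1/4})$; it comes from the eigenvalue corrections $\widetilde\mu$, from the moving-basis terms $\langle\partial_s\varphi_j,\varphi_i\rangle$, and from $E_b$. Two further contributions must be checked. The nonlinear term $\mathscr B_b(\varepsilon)$ is estimated pointwise through the $\mathcal F$-identity and the bounds of $\mathcal V_1$. The far field beyond $b^{-\eta}$ is exponentially small by the weight $\rho_{b,\beta}$. Together these give $|X_1|+|X_2|\lesssim b^{1/4}$. Integrating, and using $b\le2\Bstab\lesssim e^{-s/2}$, gives $|\beta-\beta_0|\lesssim b_0^{1/4}$. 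The estimate \eqref{distance-b-s-0-tildeb-0} then gives $|\beta-1|\lesssim b_0^{1/4}$. Moreover $\log(b/\Bstab)$ changes by $O(b_0^{1/4})$ from its initial value, which is $1+O(e^{-cb_0^{-\delta}})$. Hence $b/\Bstab\in[3/4,3/2]$ once $s_0\ge s_4$.

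\emph{Step 2 ($\varepsilon_-$, item (iii)).} I would prove the three regional propositions cited in the sketch and then combine them.
\begin{enumerate}
\item Near the wall ($\xi<b^\eta$), work in the inner variables \eqref{definition-inner-theta-zeta}. Compare $\widetilde\theta$ with the barriers $c_\pm\mathcal Y_\sigma\pm b^\eta\mathcal Z_\sigma$. Freeze the diffusion coefficient at $\sqrt\theta$ so that the comparison operator has no singular coefficients. Boundary data at $\zeta=b^{\eta-3/4}$ are supplied by the intermediate estimate. Subtracting $\varepsilon_+$, which is smaller thanks to the $T_0$ cancellation in $\psi_{1,b,\beta}$, gives $|\varepsilon_-|\le\tfrac A2 b^{1/4+\eta}\xi$.
\item In the intermediate region ($b^\eta\le\xi\le K$), use Duhamel's formula for $S_1$ together with the kernel bounds. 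Apply the spectral decay on the complement of modes $0,1$, whose rate is about $e^{-3s/2}$ against the target $b^{1/2+\eta}\sim e^{-(1/2+\eta)s}$. Compare moving and fixed projections. Then $(\beta-1)\Lambda\varepsilon_-$ costs only $O(b_0^{1/4})$, and the sources $\widetilde{\mathcal B}_1$ and $\widetilde{\mathcal R}_1$ are $O(b^{1/2+\eta}\cdot b^{c})$.
\item In the outer region ($K\le\xi\le b^{-\eta}$), use the barrier $\pm A^{-3}b^{1/2+\eta}\phi_{\beta,2}$ and the decay rate $-\tfrac32\beta<-(\tfrac12+\eta)\beta$. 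The boundary values come from the intermediate estimate, and at $b^{-\eta}$ they come from the profile bounds.
\end{enumerate}
The initial data are exponentially small by \eqref{initial-epsilon-minus-exponential}.

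\emph{Step 3 ($\mathcal F$, item (iv), and profiles, item (ii)).} The $\mathcal F$ equation has linear part $\widehat{\mathscr L}_{b,\beta}$, whose eigenvalues are all $\le-\beta/2+O(b^{1/4})$. The same three-region scheme therefore applies, now without projections. The quadratic terms are controlled by the bootstrap bounds. I would use semigroup decay at rate $b^{1/2}$ against the slack $b^{-\eta}$. For the global bounds $-A/2\le\mathcal F\le\frac A2 b^{-\eta/8}$, I would use constant barriers in the far field, combined with part (i) of Lemma~\ref{lemma-initial-data-l=1} at $s_0$. For item (ii), on $\xi\le b^{-\eta}$ the relation $\vartheta\le2Q_b$ gives the upper bound. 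On $\xi\ge b^{-\eta}$, apply the comparison Lemma~\ref{lem-comparison-vartheta} with constant barriers. The source $-2$ and the dilation term are dominated because $U_\infty$ grows like $b^{-1}$ up to the factor $\mu/b$, and the lower bound at the moving boundary comes from $\vartheta\asymp Q_b\sim b^{-4\eta/3}\gg b^{-\eta/2}$.

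\emph{Main obstacle.} I expect the hardest part to be the near-wall comparison in Step 2. The operator is quasilinear and degenerates at $\zeta=0$, the forcing $2\beta b\Lambda Q$ must be matched exactly by $\mathcal Y_\sigma$ up to $b^{1+\eta/2}$, and the coefficient $c_R\sim\zeta^{-1}$ has to be absorbed. My first attempt would be to check the sign conditions \eqref{near-origin-coercivity-general-p} for $\mathcal Z_\sigma$ with $\sigma$ slightly different from $1$. I would then divide by a $\sqrt\zeta$-type weight, as in Lemma~\ref{lem-local-weighted-dependence}, so that the maximum principle applies at the wall.
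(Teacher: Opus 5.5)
Your modulation step and item~(ii) follow the paper, and the three-region architecture for $\varepsilon_-$ is the right one. However, the outer-region comparison cannot be closed as you set it up. At $\xi=b^{-\eta}$ the profile bounds of $\mathcal V_1$ only give $A^{-1}b^{-\eta/2}\le\vartheta\le Ab^{-1-\eta}$. So $\varepsilon_-=\vartheta-Q_b-\varepsilon_+$ is known only to lie between roughly $-a^2b^{-4\eta/3}$ and $Ab^{-1-\eta}$, whereas your barrier there is $A^{-3}b^{1/2+\eta}\phi_{\beta,2}(b^{-\eta})\asymp A^{-3}b^{1/2-7\eta/3}$. The bootstrap bound $A^2b^{1/2+\eta}\mathfrak f_1$ does not help either: it exceeds the barrier by a factor of order $A^5$.

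The missing idea is to compare on enlarged intervals, different for the two barriers. For the upper barrier the paper goes out to $\Xi_+=C_*b^{-\nu_+}$ with $\nu_+=(9+12\eta)/20$, chosen so that $\frac{10}{3}\nu_+=\frac32+2\eta$. Then $F(\Xi_+)\asymp A^{-3}C_*^{10/3}b^{-1-\eta}$ beats the crude global bound $\vartheta\le Ab^{-1-\eta}$. On $[K,\Xi_+]$ one still has $Q_b+\varepsilon_++F>0$ and $|\widetilde{\mathcal B}_1(\varepsilon_+\pm F)|+|\widetilde{\mathcal R}_1|=o(F)$. For the lower barrier, $Q_b+\varepsilon_+-F$ changes sign near $\xi\asymp b^{-1/4-\eta/2}$. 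The paper therefore stops at the $C^1$ moving endpoint $\Xi_-$ where $Q_b+\varepsilon_+-F=1$. There $\varepsilon_-+F=\vartheta-1\ge b^{-\eta/2}-1>0$ by the far-field lower bound; this is where Lemma~\ref{lem-comparison-vartheta} enters item~(iii). The initial ordering must then be checked on $[K,\Xi_\pm(s_0)]$; for $\Xi_+$ this crosses the gluing region of the prepared datum, which is where $\delta=3/8$ is used. The same enlargement is needed for the outer $\mathcal F$ barrier $b^{1/2-\eta}\phi_{\beta,2}$. The paper runs it up to $b^{-3/20}$, where it is $\gtrsim b^{-\eta}\gg C_0+b^{-\eta/8}\ge|\mathcal F|$. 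Knowing $\vartheta\asymp\xi^{4/3}$ on that interval in turn relies on the enlarged $\varepsilon_-$ trapping.

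Second, your plan for $\mathcal F$ on compact sets (``semigroup decay'', ``quadratic terms controlled by the bootstrap bounds'') does not work as stated. For $\varepsilon$, the quasilinear term $(\sqrt\vartheta-\sqrt{Q_b})\partial_\xi^2\varepsilon$ becomes a pointwise quantity through $\partial_\xi^2\varepsilon=[\mathcal F-2(\sqrt{1+z}-1)]/(\sqrt{Q_b}\sqrt{1+z})$, with $z=\varepsilon/Q_b$. The $\mathcal F$ equation, however, contains $(\sqrt\vartheta-a\xi^{2/3})\partial_\xi^2\mathcal F$, and $\mathcal V_1$ gives no control of $\partial_\xi^2\mathcal F$. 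Nor does it give any regularity that would allow two integrations by parts onto the kernel, which would in any case cost a non-integrable $t^{-1}$. The paper instead proves Lemma~\ref{lem-priori-F-K} by contradiction. It normalizes by $b(\sigma_n)^{1/2-\eta}$ and extracts a limit using uniform interior parabolic H\"older estimates. It identifies that limit, through a weighted energy argument, with the Friedrichs evolution $\widehat S_1$. It then concludes from the $e^{-t/2}$ decay, or from a zero initial trace, that the limit vanishes.

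Finally, in the near-wall comparison for $\widetilde\theta$, choosing $\sigma$ close to $1$ fails. The frozen-diffusion residual $\mathcal G_1$ contains $\widetilde\theta/Q$, of size up to $A^2b^\eta$, while the barrier's coercive term near the wall is only $\tfrac14\sigma^{-1/8}b^\eta$. The paper takes $\sigma=A^{-24}$, so this term becomes $\tfrac14A^3b^\eta$ and the resulting bound $|\widetilde\theta|\lesssim A^{-6}b^\eta\Lambda Q$ is a genuine improvement.
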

		\begin{proof}
			Fix $K\ge\max\{K_{14},K_{16},K_{17},2\}$ once and for all.
			With $C_0$ from \eqref{definition-m0-F}, choose
			\[
			\begin{aligned}
				A_4&\ge\max\{4,A_{20},A_{14}(K),A_{16}(K),A_{17}(K),2C_0\},\\*
				\eta_4(A)&\le\min\{\eta_{20},\eta_{14}(A,K),\eta_{16}(A,K),\eta_{17}(A,K)\},\\*
				s_4&\ge\max\{s_3,s_{14},s_{16},s_{17},s_{20}\}.
			\end{aligned}
			\]
			The subsidiary thresholds in these results are included through their stated
			dependencies, with $\delta=3/8$. The fixed choice of $K$ is suppressed in $s_4$.
			We outline the argument, referring to the subsequent sections for the estimates.
			
			\medskip
			\noindent
			\emph{Proof of (i).}
			Set $r_b=b_s/b+\beta$. For $s_0\ge s_4$, the modulation estimate and
			$\beta\in[3/4,5/4]$ give $b_s/b\le-1/2$. Hence
			\[
			\int_{s_0}^s b(\sigma)^{1/4}\,d\sigma\le8b(s_0)^{1/4}.
			\]
			Since $\partial_s\log(b/\Bstab)=r_b$ and the prepared datum satisfies
			$b(s_0)e^{s_0}=1+o(1)$, integration gives
			$3/4\le b/\Bstab\le3/2$ after increasing $s_4$.
			Also $|\beta(s)-1|\lesssim b(s_0)^{1/4}$, which strictly improves
			$|\beta-1|\le Ae^{-\eta s_0}$ when $\eta<1/4$.
			\medskip\noindent
			\emph{Proof of (ii).}
			This is an immediate consequence of the comparison principle established in
			Lemma~\ref{lem-comparison-vartheta}.
			
			\medskip
			\noindent
			\emph{Proof of (iii).}
			Proposition~\ref{lem-improved-bootstrap-epsilon-negative-pointwise} gives the stated norm bound, which strictly improves $A^2b^{1/2+\eta}$ for $A\ge2$. Its proof combines the inner comparison estimate, after subtracting \eqref{outer-estimates-epsilon-plus}, with the intermediate and outer estimates of Propositions~\ref{propo-priori-b-eta-K} and~\ref{pro-priori-epsilon-xi-large}.
			
			\medskip\noindent
			\emph{Proof of (iv).}
			Lemmas~\ref{lem-priori-F-K}, \ref{lem-esti-F-0-1/K}, and~\ref{lem-esti-F-outer-region} bound each weighted component by $(A/4)b^{1/2-\eta}$ after increasing $A$; summing gives the norm bound. Lemma~\ref{lem-esti-F-le-A} supplies the global bounds.
			
			\medskip
			\noindent 
			This completes the proof of the proposition.
		\end{proof}
		
		\begin{proposition}[Bootstrap improvement for perturbed data]\label{propo-stable-perturbed-data}
			Fix the preparation parameters with $\delta=3/8$ and the spatial cutoff $K$
			chosen in the proof of Proposition~\ref{propo-priori-ell-1}.
			There exist $A_{21}=A_{21}(M_0,x_0)\ge A_4$,
			$\eta_{21}(A)\in(0,\min\{\eta_4(A),\eta_4(2A),1/16\}]$, and
			\[
			s_{21}=s_{21}(A,\eta,M_0,x_0)
			\ge\max\{s_4(A,\eta,M_0,x_0),s_4(2A,\eta,M_0,x_0)\}
			\]
			such that the following holds for $A\ge A_{21}$,
			$0<\eta\le\eta_{21}(A)$, and $s_0\ge s_{21}$.
			Let $\widehat u_0$ be the physical datum associated with
			$\Psi_1^0(s_0)$ by \eqref{von-Mise-initial data}, and fix $0<\alpha\le1$.
			There exists $\gamma=\gamma(s_0)>0$, also depending on the fixed parameters,
			such that, for every
			\begin{equation*}
				h\in\mathcal O_\alpha,\qquad \|h\|_{\mathcal E}<\gamma,
				\qquad u_0=\widehat u_0+h\in\mathcal K_\alpha,
			\end{equation*}
			the initial profile at scale $\mu_0=e^{-s_0}$ admits a unique nearby stable
			normalization $(b_0^h,\beta_0^h)$ and lies strictly in
			$\mathcal V_1[A,\eta](s_0)$.
			If a corresponding classical solution of \eqref{equa-vartheta} and
			\eqref{defi-stable-problem} remains in $\mathcal V_1[2A,\eta](s)$
			on $[s_0,\bar s]$, with $\bar s>s_0$, then all conclusions of
			Proposition~\ref{propo-priori-ell-1} hold with $A$ replaced by $2A$
			and $b(s_0)=b_0^h$.
			The estimates are uniform over this perturbation ball and independent of
			$\bar s$.
		\end{proposition}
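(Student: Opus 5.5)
The plan has two parts. First, I show that the perturbed datum starts strictly inside the shrinking set. Second, I rerun the a priori estimates of Proposition~\ref{propo-priori-ell-1}, with $A$ replaced by $2A$.

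\emph{Initial normalization.} Let $w_0^h$ denote the von Mises profile of $u_0=\widehat u_0+h$. By \eqref{initial-weighted-von-mises-convergence} in Lemma~\ref{lem-local-weighted-dependence},
\[
\sup_{\psi>0}\frac{|w_0^h-\widehat w_0|}{\min\{\psi,1\}}\to0
\qquad\text{as }\|h\|_{\mathcal E}\to0 .
\]
Rescaling at the fixed scale $\mu_0=e^{-s_0}$ gives
\[
|\vartheta^h_{\rm in}-\Psi_1^0|\le C(s_0)\,\omega(\|h\|_{\mathcal E})\min\{\xi,1\},
\]
where $\omega$ is a modulus of continuity, since $s_0$ is now fixed. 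I then solve $G(0,\mu_0,b,\beta)=0$ from \eqref{defi-G-modulation} for this datum. The kernel bounds \eqref{projection-kernel-parameter-bound} show that $G$ and $D_{(b,\beta)}G$ move by $O_{s_0}(\omega)$. The Jacobian at $(b_0,\beta_0)$ is invertible; this was used in Lemma~\ref{lemma-normalization-propagation-flows}. The quantitative implicit function theorem therefore gives a unique $(b_0^h,\beta_0^h)$ within $O_{s_0}(\omega)$ of $(b_0,\beta_0)$.

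\emph{Strict inclusion in $\mathcal V_1[A,\eta](s_0)$.} The new remainder is
\[
\varepsilon_-^h=\vartheta_{\rm in}^h-H_{b_0^h,\beta_0^h}.
\]
It differs from $\varepsilon_-(s_0)$ by
\[
O_{s_0}(\omega)\bigl(\min\{\xi,1\}+\xi^{2/3}\langle\xi\rangle^4\bigr).
\]
Thus $\|\varepsilon_-^h\|_{\mathfrak X_{b,\eta}}$ stays strictly below $A^2b^{1/2+\eta}$ once $\gamma(s_0)$ is small, because the reference satisfies the exponentially small bound \eqref{initial-epsilon-minus-exponential}. The far-field bounds on $\vartheta$ persist with slack: uniform closeness gives this on compact sets, and for large $\xi$ both profiles are near $U_\infty(s_0)$. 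The residual is the delicate item. Under the von Mises map,
\[
\mathcal F^h=2(u_0''-1).
\]
The $\mathcal E$-norm controls $h''/y^2$ near the wall and $h''$ globally, so
\[
|\mathcal F^h-\mathcal F_1|\lesssim_{s_0}\|h\|_{\mathcal E}\,y^2\asymp\|h\|_{\mathcal E}\,\xi \quad\text{near the wall},
\]
and the difference is $O(\|h\|_{\mathcal E})$ away from it. Here one also accounts for the change of coordinate $y\mapsto\psi$, which is Lipschitz close. This keeps the weighted norm of $\mathcal F^h$ and the global bounds $-A\le\mathcal F\le Ab^{-\eta/8}$ strict. Item (i) holds with slack since $b_0^h/\Bstab(s_0)=b_0^he^{s_0}\approx1$.

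\emph{Bootstrap with $2A$.} I would not redo any estimate. Propositions~\ref{propo-priori-b-eta-K}, \ref{pro-priori-epsilon-xi-large} and \ref{lem-improved-bootstrap-epsilon-negative-pointwise}, Lemma~\ref{lem-comparison-vartheta}, and the $\mathcal F$ lemmas use the initial datum only through its membership in the shrinking set at $s_0$, plus smallness of initial quantities that enter Duhamel or comparison arguments. The main obstacle is checking this: some of those proofs may exploit the specific structure of $\Psi_1^0$, such as exponentially small initial $\varepsilon_-$ or the exact residual profile of Lemma~\ref{lemma-initial-data-l=1}(i). For each such place, I would replace the reference bound by the perturbed one, which is worse by $O_{s_0}(\gamma)$, and absorb the loss by shrinking $\gamma(s_0)$. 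This works because any initial contribution propagated by the semigroup or barriers is bounded by $C_{s_0}\gamma\,b(s)^{1/2+\eta}$-type quantities, and the decay rates do not depend on the datum.

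Running these proofs at level $2A$ requires the thresholds at both $A$ and $2A$. This is why $\eta_{21}\le\min\{\eta_4(A),\eta_4(2A)\}$ and $s_{21}\ge\max\{s_4(A,\cdot),s_4(2A,\cdot)\}$. The improved conclusions then hold with $2A$ and $b(s_0)=b_0^h$. They are uniform over the ball because every constant depends only on the fixed parameters and $s_0$, not on $\bar s$ or on $h$.
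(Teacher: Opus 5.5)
Your proposal is correct and follows essentially the paper's route: bound the perturbed profile and residual by $C_{s_0}\|h\|_{\mathcal E}\min\{\xi,1\}$, renormalize by the implicit function theorem, and then rerun Proposition~\ref{propo-priori-ell-1} at level $2A$ after checking each place where the prepared datum enters. The paper carries out the check you flag as the main obstacle by fixing $C_{s_0}\gamma(s_0)\le\exp(-\widetilde b_0^{-\delta}/32)$, so the prepared estimates survive with a weaker exponential; this also keeps the zero initial trace in the compactness proof of Lemma~\ref{lem-priori-F-K} valid as $s_0\to\infty$ with $h$ varying. It further replaces the monotonicity used in Lemma~\ref{lem-comparison-vartheta} by closeness to the reference datum, and verifies the outer upper ordering up to $\Xi_+(s_0)$, beyond the gluing radius.
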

		\begin{proof}
			Write $\widetilde b_0=\mu_0=e^{-s_0}$ and denote the normalized parameters
			of the reference datum by $(\widehat b_0,\widehat\beta_0)$.
			All constants denoted by $C_{s_0}$ may depend on the fixed preparation
			parameters, but not on $h$ or the terminal time.
			All further lower bounds on $A$ are absorbed into $A_{21}$ before choosing
			$\eta_{21}$ and $s_{21}$. The latter include the requirements of the
			estimates below at both $A$ and $2A$; the perturbation radius is chosen last.
			We also impose $4e^{-s_{21}}<x_0$ for the physical reconstruction.
			
			\emph{Initial profiles in the von Mises variables.}
			Put $\psi_h(y)=\int_0^yu_0(z)\,dz$ and $y_h=\psi_h^{-1}$, with hats
			for the reference datum. At the fixed scale $\mu_0$, the exact identities are
			\begin{equation}\label{perturbed-initial-von-mises}
				\vartheta_0^h(\xi)=\mu_0^{-1}u_0\!\left(y_h(\mu_0^{3/4}\xi)\right)^2,
				\qquad
				\mathcal F_0^h(\xi)=2u_0''\!\left(y_h(\mu_0^{3/4}\xi)\right)-2.
			\end{equation}
			Near the wall, $u_0,\widehat u_0\asymp y$ uniformly for small
			$\|h\|_{\mathcal E}$, and $|\psi_h(y)-\widehat\psi(y)|\le
			C\|h\|_{\mathcal E}y^2$. Inverting gives
			$|y_h(\psi)-\widehat y(\psi)|\le C_{s_0}\|h\|_{\mathcal E}\sqrt\psi$.
			Use $|h''(y)|\le\|h\|_{\mathcal E}y^2$ and
			$\widehat u_0'''(y)=O_{s_0}(y)$ in the residual identity.
			Away from the wall, positivity and smallness of $\|h\|_{\mathcal E}$ give
			$|y_h(\psi)-\widehat y(\psi)|\le C_{s_0}\|h\|_{\mathcal E}(1+\psi)$;
			the two inverse coordinates are comparable, so the exponential decay of
			the reference derivatives absorbs this coordinate
			shift. Thus, with $\widehat\vartheta_0=\Psi_1^0(s_0)$ and its residual
			$\widehat{\mathcal F}_0$,
			\begin{equation}\label{perturbed-initial-profile-residual-bound}
				|\vartheta_0^h-\widehat\vartheta_0|
				+|\mathcal F_0^h-\widehat{\mathcal F}_0|
				\le C_{s_0}\|h\|_{\mathcal E}\min\{\xi,1\}.
			\end{equation}
			The projection kernels in \eqref{projection-kernel-parameter-bound} and
			the implicit-function argument of Lemma~\ref{lemma-normalization-propagation-flows}
			give a unique nearby normalized pair, with
			\begin{equation}\label{perturbed-initial-normalization-bound}
				\left|\frac{b_0^h}{\widehat b_0}-1\right|
				+|\beta_0^h-\widehat\beta_0|\le C_{s_0}\|h\|_{\mathcal E}.
			\end{equation}
			Indeed, \eqref{perturbed-initial-profile-residual-bound} controls the change
			of the two constraint functionals, and their parameter Jacobian is
			invertible at the reference datum.
			
			\emph{Comparison with Lemma~\ref{lemma-initial-data-l=1}.}
			Set $\varepsilon_0^h=\vartheta_0^h-Q_{b_0^h}$ and let
			$\varepsilon_{-,0}^h$ be its stable remainder.
			The parameter bounds in Proposition~\ref{diagonalisation-L-b}, including
			the cancellation in the adapted direction, give, after enlarging $C_{s_0}$,
			\begin{equation}\label{perturbed-initial-remainder-difference}
				|\varepsilon_{-,0}^h-\widehat\varepsilon_{-,0}|
				\le d_h\begin{cases}\xi,&0<\xi\le1,\\ \xi^{14/3},&\xi\ge1,
				\end{cases}
				\qquad d_h:=C_{s_0}\|h\|_{\mathcal E}.
			\end{equation}
			For the fixed projection $P_-$ of Section~\ref{section-Pointwise-Duhamel},
			the same estimates give
			$|P_-(\varepsilon_0^h-\widehat\varepsilon_0)|\le d_h\mathfrak f_1$,
			after a further enlargement of $C_{s_0}$.
			Choose $\gamma(s_0)$ within the normalization neighborhood and so small that
			\begin{equation}\label{choice-perturbation-radius-stable}
				C_{s_0}\gamma(s_0)\le
				\exp\!\left(-\widetilde b_0^{-\delta}/32\right).
			\end{equation}
			On $[0,R_\delta/8]$, all polynomial losses in
			\eqref{perturbed-initial-remainder-difference} are absorbed by weakening
			this exponential to $e^{-\widetilde b_0^{-\delta}/64}$.
			Consequently the normalization and remainder estimates
			\eqref{distance-b-s-0-tildeb-0}--\eqref{initial-epsilon-minus-exponential}
			hold for $(b_0^h,\beta_0^h,\varepsilon_{-,0}^h)$ with this weaker exponential.
			The residual estimates in Lemma~\ref{lemma-initial-data-l=1}(i) also
			persist, with a fixed enlarged constant, by
			\eqref{perturbed-initial-profile-residual-bound}.
			In particular, these bounds and the initial profile comparisons below put
			the normalized datum strictly in $\mathcal V_1[A,\eta](s_0)$.
			
			\emph{Initial terms in Section~\ref{section-Pointwise-Duhamel}.}
			For each fixed $L$, Lemma~\ref{lem-simigroup-L-infty-beta=2} gives
			\begin{equation*}
				|S_1(t)P_-(\varepsilon_0^h-\widehat\varepsilon_0)|
				\lesssim_L d_h\mathfrak f_1\qquad(0\le t\le L).
			\end{equation*}
			Thus the additional initial term in \eqref{Duhamel-fixed-beta-one} is
			smaller than every power of $b_0^h$, including after division by
			$(b_0^h)^{1/2+\eta}$. For the residual, put $q=1/2-\eta$.
			Equation~\eqref{perturbed-initial-profile-residual-bound} gives
			\begin{equation*}
				(b_0^h)^{-q}\|\mathcal F_0^h-\widehat{\mathcal F}_0\|_{L^2_{\rho_{0,1}}}
				\lesssim d_h(b_0^h)^{-q}\longrightarrow0\quad(s_0\to\infty),
			\end{equation*}
			uniformly over the chosen perturbation balls; the same holds locally
			uniformly in $\xi$. Hence the zero initial trace in the compactness proof
			of Lemma~\ref{lem-priori-F-K} remains valid even when $s_0\to\infty$
			and $h$ varies with $s_0$. Its weighted tails, the restart estimate, and
			the forcing bounds use only the shrinking-set bounds and the modulation
			equations. Propositions~\ref{pro-finite-projections} and
			\ref{propo-priori-b-eta-K} and Lemma~\ref{lem-priori-F-K} therefore apply
			with the same uniform improvements at level $2A$.
			
			\emph{Initial comparison inequalities in Sections~8--9.}
			All barriers in this paragraph are taken at level $2A$.
			For Lemma~\ref{lem-comparison-vartheta}, the reference datum is increasing
			and $\widehat\vartheta_0((b_0^h)^{-\eta})\gtrsim(b_0^h)^{-4\eta/3}$.
			Equation~\eqref{perturbed-initial-profile-residual-bound} consequently gives
			\begin{equation*}
				\vartheta_0^h\ge2(b_0^h)^{-\eta/2}\quad(\xi\ge(b_0^h)^{-\eta}),
				\qquad \sup_\xi\vartheta_0^h\lesssim\widetilde b_0^{-1}
				\ll2(b_0^h)^{-1-\eta}.
			\end{equation*}
			This supplies the initial orderings without a monotonicity assumption on
			the perturbed datum. The inner profile ordering follows from the preceding
			remainder estimate and \eqref{outer-estimates-epsilon-plus}, exactly as in
			Proposition~\ref{propo-priori-near-0}.
			For the outer upper ordering of Proposition~\ref{pro-priori-epsilon-xi-large},
			the initial interval extends to
			$\Xi_+(s_0)=C_*(b_0^h)^{-\nu_+}$, $\nu_+=(9+12\eta)/20$.
			The change in the remainder, divided by its barrier
			$(2A)^{-3}(b_0^h)^{1/2+\eta}\phi_{\beta_0^h,2}$, is at most
			\begin{equation*}
				C_{A,K}d_h(b_0^h)^{-1/2-\eta-4\nu_+/3}=o(1).
			\end{equation*}
			The estimates proving the reference initial upper ordering hold on a fixed
			enlargement of this interval, including the gluing region, with a fixed
			margin; changing its parameters by
			\eqref{perturbed-initial-normalization-bound} preserves that margin.
			The lower ordering holds up to $\Xi_-(s_0)<R_\delta/8$ by the exponential
			remainder estimate.
			Finally, the residual bounds from Lemma~\ref{lemma-initial-data-l=1}(i)
			give the initial orderings for Lemmas~\ref{lem-esti-F-le-A},
			\ref{lem-esti-F-0-1/K}, and~\ref{lem-esti-F-outer-region}: the global
			bound is uniform, the inner bound is smaller than
			$(b_0^h)^{1-\eta}\mathcal Z_{1/2}$ by a factor $O((b_0^h)^\eta)$,
			and the outer bound is smaller than
			$(b_0^h)^{1/2-\eta}\phi_{\beta_0^h,2}$ by the same factor on
			$[K,(b_0^h)^{-3/20}]\subset[0,R_\delta]$.
			The wall compatibility follows directly from
			\eqref{perturbed-initial-von-mises} and $u_0''(y)-1=O(y^2)$.
			
			These are all uses of the prepared datum in the initial terms and
			comparison orderings of Sections~7--9. The subsequent estimates depend
			only on these inequalities, the modulation equations, and trapping in
			$\mathcal V_1[2A,\eta]$. Repeating the proof of
			Proposition~\ref{propo-priori-ell-1} therefore gives its strict improvements
			at level $2A$, uniformly in $h$ and independently of $\bar s$.
		\end{proof}
		
		\subsection{Completion of the stable construction and proof of Theorem~\ref{theo-main-separate}}\label{section-prove-theorem-stab}
		
		Fix $\delta=3/8$, $M_0\ge M_1(x_0,\delta)$, and $A\ge A_{21}(M_0,x_0)$.
		Choose $0<\eta\le\eta_{21}(A)$ and
		$s_0\ge s_{21}(A,\eta,M_0,x_0)$ as in
		Proposition~\ref{propo-stable-perturbed-data}.
		These choices include the estimates at both $A$ and $2A$ and, since
		$\eta\le1/16$, the bootstrap exponent restriction in
		Lemma~\ref{lemma-physical-reconstruction} with $\ell=1$.
		The level $2A$ is used for stability. Set $\widetilde b_0=\mu_0=e^{-s_0}$.
		The initial datum is $w_{\rm in}(\psi)=\mu_0\Psi_1^0(s_0,\psi/\mu_0^{3/4})$;
		its inverse von Mises transform is the admissible profile of Section~4.3:
		\begin{equation}\label{defi-u-IN-final-proof}
			u_{\rm in}(y_{\rm in}(\psi))=\sqrt{w_{\rm in}(\psi)},\qquad
			y_{\rm in}(\psi)=\int_0^\psi w_{\rm in}(t)^{-1/2}\,dt.
		\end{equation}
		Lemma~\ref{lemma-initial-data-l=1}, the tail profile \eqref{defi-U-infty-1},
		and the inverse transformation give
		\begin{equation*}
			u_{\rm in}(y)=
			\begin{cases}
				\frac12\widetilde b_0^{1/2}y+\frac12y^2+O(y^4),
				&0\le y\le\widetilde b_0^{5/8},\\
				\frac{a^3}{9}y^2+o_{\widetilde b_0\to0}(y^2),
				&\widetilde b_0^{1/2}\ll y\ll\widetilde b_0^{1/8},\\
				2a\widetilde b_0^{1/4}+x_0\widetilde b_0^{3/4}y
				+o_{\widetilde b_0\to0}(\widetilde b_0^{3/4}y),
				&M_0^{-1}\le y\le1,\\
				\sqrt{2x_0}+o_{y\to\infty}(y^{-z}),
				&y\to\infty,\quad\text{for every }z>0.
			\end{cases}
		\end{equation*}
		In the first range, $\xi\lesssim\widetilde b_0^{-1/4}y^2$, so the local
		residual bound gives $|u_{\rm in}''-1|\lesssim y^2$ and hence the stated
		$O(y^4)$ remainder. These ranges describe the limit $\widetilde b_0\downarrow0$.
		For every fixed $y_0>0$, the wall compatibility
		\eqref{condition-in-u-locally-exsist}, verified in Section~4.3, also gives
		\begin{equation*}
			u_{\rm in}(y)=\tfrac12\widetilde b_0^{1/2}y+\tfrac12y^2
			+O_{u_{\rm in},y_0}(y^4),\qquad 0\le y\le y_0.
		\end{equation*}
		The asymptotics \eqref{asymptotics-mu-s-final-proof}--\eqref{asymptotics-b-s-final-proof}
		below yield $\widehat{\mathbf C}_1\sqrt{\widehat x_1^*}
		=\tfrac12\widetilde b_0^{1/2}+O(\widetilde b_0^{3/4})$ and
		$\widehat x_1^*\asymp\widetilde b_0$. Thus
		\eqref{pointwise-physical-remainder-1} holds at $x=0$ for $\eta<1/4$;
		Lemma~\ref{lemma-physical-reconstruction} extends it uniformly in $x$.
		The constants may depend on the fixed datum and $y_0$, hence on $s_0$.
		
		\medskip\noindent
		\emph{Global continuation in the renormalized time.}
		The strict initial bounds and
		Corollary~\ref{cor-local-trapping-continuity} give $\widetilde s>s_0$ such that
		$(\vartheta,b,\beta)\in\mathcal V_1[A,\eta](s)$ for $s\in[s_0,\widetilde s]$.
		Define the maximal trapping time by
		\begin{equation*}
			s_{\max}:=\sup\bigl\{S>s_0:
			(\vartheta,b,\beta)\in\mathcal V_1[A,\eta](s)
			\text{ for every }s\in[s_0,S)\bigr\}.
		\end{equation*}
		If $s_{\max}<\infty$, Proposition~\ref{propo-priori-ell-1} gives strict
		margins in every bootstrap inequality up to $s_{\max}$.
		Lemma~\ref{lem-modulation} supplies an admissible trace and extends the
		solution and its modulation parameters beyond this time.
		Corollary~\ref{cor-local-trapping-continuity} preserves the strict bounds
		on a short extension, contradicting maximality. Hence
		$(\vartheta,b,\beta)\in\mathcal V_1[A,\eta](s)$ for every $s\ge s_0$.
		
		\medskip\noindent
		\emph{Reconstruction of the physical solution.}
		Set
		\begin{equation*}
			x(s)=\int_{s_0}^s\mu(\sigma)\,d\sigma,\qquad
			w(x(s),\psi)=\mu(s)\vartheta\!\left(s,\frac{\psi}{\mu(s)^{3/4}}\right).
		\end{equation*}
		Since $x_s=\mu>0$ and $w>0$ for $\psi>0$, the inverse von Mises map is
		\begin{equation*}
			y(x,\psi)=\int_0^\psi\frac{d\psi'}{\sqrt{w(x,\psi')}},\qquad
			u(x,y(x,\psi))=\sqrt{w(x,\psi)}.
		\end{equation*}
		The wall bounds make the integral finite at zero, and boundedness of $w$
		on each finite time strip gives $y(x,\psi)\to\infty$ as $\psi\to\infty$.
		These transformations recover the continued Oleinik solution, including
		its prescribed far-field value $u_E(x)$.
		
		\medskip\noindent
		\emph{Existence of a finite separation point.}
		Write $r_b=b_s/b+\beta$. Proposition~\ref{pro-finite-projections} gives
		$|\beta_s|+|r_b|\lesssim b^{1/4}$; our choice of $s_0$ ensures
		$\beta\in[3/4,5/4]$ and $b_s/b\le-1/2$. Thus
		$b(\sigma)\le b(s)e^{-(\sigma-s)/2}$ for $\sigma\ge s$, and
		\begin{equation*}
			\beta_\infty:=\beta(s_0)+\int_{s_0}^\infty\beta_s(\sigma)\,d\sigma
			=1+O(b(s_0)^{1/4})>0.
		\end{equation*}
		With $r_\beta=\beta-\beta_\infty$, this yields
		\begin{equation*}
			|r_\beta(s)|+|r_b(s)|+
			\int_s^\infty(|r_\beta|+|r_b|)\,d\sigma\lesssim b(s)^{1/4}.
		\end{equation*}
		Integrating $\mu_s/\mu=-\beta$ and $b_s/b=-\beta+r_b$, we obtain
		\begin{align}
			\mu(s)&=\mu_\infty e^{-\beta_\infty s}(1+O(b(s)^{1/4})),&
			\mu_\infty&=\mu_0e^{\beta_\infty s_0}
			\exp\!\left(-\int_{s_0}^\infty r_\beta\,d\sigma\right)>0,
			\label{asymptotics-mu-s-final-proof}\\*
			b(s)&=b_\infty e^{-\beta_\infty s}(1+O(b(s)^{1/4})),&
			b_\infty&=b(s_0)e^{\beta_\infty s_0}
			\exp\!\left(\int_{s_0}^\infty(r_b-r_\beta)\,d\sigma\right)>0.
			\label{asymptotics-b-s-final-proof}
		\end{align}
		The separation point satisfies
		$0<x^*:=\int_{s_0}^\infty\mu(s)\,ds\le\frac43e^{-s_0}<x_0/2$
		by the choice of $s_{21}$. For $t=x^*-x(s)$,
		\begin{equation*}
			t=\frac{\mu_\infty}{\beta_\infty}e^{-\beta_\infty s}
			(1+O(b(s)^{1/4})),\qquad b(s)\asymp t.
		\end{equation*}
		Consequently,
		\begin{equation*}
			\mu(x)=\beta_\infty t(1+O(t^{1/4})),\qquad
			b(x)=\frac{\beta_\infty b_\infty}{\mu_\infty}t(1+O(t^{1/4})),\qquad
			\lambda_{\rm mod}:=\mu b=
			\frac{\beta_\infty^2b_\infty}{\mu_\infty}t^2(1+O(t^{1/4})).
		\end{equation*}
		The near-wall bounds and cancellation in the adapted direction give
		$\vartheta_\xi(s,0)=b^{1/4}(1+O_A(b^\eta))$. Since
		$u_y(x,0)=\frac12\mu^{1/4}\vartheta_\xi(s,0)$,
		\begin{equation*}
			u_y(x,0)=\mathbf C_{\rm sep}t^{1/2}(1+O(t^\eta)),\qquad
			\mathbf C_{\rm sep}:=\frac12
			\left(\frac{\beta_\infty^2b_\infty}{\mu_\infty}\right)^{1/4}>0.
		\end{equation*}
		The wall shear stays positive for $x<x^*$ and vanishes as $x\uparrow x^*$,
		so separation occurs at $x^*$. The exact wall scale satisfies
		$(2u_y(x,0))^4=\lambda_{\rm mod}(1+O(b^\eta))$.
		Lemma~\ref{lemma-physical-reconstruction} gives the local profile and both
		physical remainder bounds, with a smaller exponent in the theorem if necessary.
		
		\medskip\noindent
		\paragraph{Stability.}
		Use $\mathcal O_\alpha$ and the norm \eqref{definition-norm-E}.
		It suffices to treat $0<\alpha<1$: the endpoint $\alpha=1$ follows from
		$C_b^{3,1}\hookrightarrow C_b^{3,1/2}$ and the corresponding norm bound.
		Write $\widehat u_0=u_{\rm in}$ for the reference datum, with separation
		point $\widehat x^*$, coefficient $\widehat{\mathbf C}_{\rm sep}$, and
		initial time $\widehat s_0=s_0$. For $\gamma>0$, set
		\begin{equation*}
			\mathfrak B_{\alpha,\gamma}:=
			\{h\in\mathcal O_\alpha:\|h\|_{\mathcal E}<\gamma,\ 
			\widehat u_0+h\in\mathcal K_\alpha\}.
		\end{equation*}
		Fix $\mu_0=e^{-\widehat s_0}$ and take $\gamma$ from
		Proposition~\ref{propo-stable-perturbed-data}, with reference datum
		$\widehat u_0$. Its proof controls the inverse von Mises transformation,
		the initial normalization, and every initial estimate from
		Lemma~\ref{lemma-initial-data-l=1} needed in Sections~7--9.
		The normalized data lie strictly in $\mathcal V_1[A,\eta](\widehat s_0)$,
		and the bootstrap improvements hold at level $2A$, uniformly in
		$h\in\mathfrak B_{\alpha,\gamma}$ and independently of the terminal time.
		
		Corollary~\ref{cor-local-trapping-continuity} and the strict initial estimates give
		$\widetilde s_h>\widehat s_0$ with
		$(\vartheta_h,b_h,\beta_h)\in\mathcal V_1[A,\eta](s)$ for
		$\widehat s_0\le s\le\widetilde s_h$. Here $\vartheta_h$ is the
		renormalized profile associated with $u_h$, and its perturbation has the
		unique decomposition
		\begin{equation*}
			\vartheta_h=Q_{b_h}+\varepsilon_h,\qquad
			\varepsilon_h=\mathbf n_1(\beta_h)\sqrt{b_h}
			\left(\varphi_{1,b_h,\beta_h}
			-\frac{c_{1,0}}{c_{0,0}}\varphi_{0,b_h,\beta_h}\right)+\varepsilon_{h,-}.
		\end{equation*}
		The initial choice of $\eta,\widehat s_0$ covers the thresholds at both
		$A$ and $2A$. Since $\mathcal V_1[A,\eta]\subset\mathcal V_1[2A,\eta]$,
		Proposition~\ref{propo-stable-perturbed-data} and the preceding continuation argument yield
		\begin{equation*}
			(\vartheta_h,b_h,\beta_h)\in\mathcal V_1[2A,\eta](s),\qquad
			s\ge\widehat s_0,\quad h\in\mathfrak B_{\alpha,\gamma}.
		\end{equation*}
		The scale-ratio bounds, unchanged by replacing $A$ with $2A$, retain the
		strict margins supplied by the modulation estimates.
		Consequently $u_h$ separates at
		$x_h^*=\int_{\widehat s_0}^\infty\mu_h(s)\,ds\in(0,x_0)$.
		The same integration and physical reconstruction give the asserted profile and
		$u_{h,y}(x,0)=\mathbf C_{\rm sep}(h)(x_h^*-x)^{1/2}
		(1+O((x_h^*-x)^\eta))$, where
		\begin{equation}\label{defi-separation-constant-stability}
			\mathbf C_{\rm sep}(h)=\frac12
			\left(\frac{\beta_{\infty,h}^2b_{\infty,h}}{\mu_{\infty,h}}\right)^{1/4}>0.
		\end{equation}
		
		\medskip\noindent
		\paragraph{Continuity of the separation parameters.}
		Let $h_n,h\in\mathfrak B_{\alpha,\gamma}$ with
		$\|h_n-h\|_{\mathcal E}\to0$.
		Lemma~\ref{lem-local-weighted-dependence} and
		Corollary~\ref{cor-local-trapping-continuity} give, for each fixed $\mathfrak s>\widehat s_0$,
		\begin{equation}\label{finite-time-continuity-stability}
			\sup_{\widehat s_0\le s\le \mathfrak s}
			\bigl(|\mu_{h_n}(s)-\mu_h(s)|+|b_{h_n}(s)-b_h(s)|
			+|\beta_{h_n}(s)-\beta_h(s)|\bigr)\longrightarrow0.
		\end{equation}
		Global trapping gives $\mu_h(s)\le Ce^{-(s-\widehat s_0)/2}$ uniformly
		in $h$, hence
		\begin{equation}\label{uniform-tail-mu-stability}
			\sup_{h\in\mathfrak B_{\alpha,\gamma}}\int_{\mathfrak s}^\infty\mu_h(s)\,ds
			\longrightarrow0\qquad(\mathfrak s\to\infty).
		\end{equation}
		Combining \eqref{finite-time-continuity-stability} and
		\eqref{uniform-tail-mu-stability} yields $x_{h_n}^*\to x_h^*$.
		Moreover, $|\beta_h'|+|r_{b,h}|\le Ce^{-(s-\widehat s_0)/8}$ uniformly
		in the ball. Thus $\beta_{\infty,h_n}\to\beta_{\infty,h}$, and the tails
		of $r_{b,h}$ and $r_{\beta,h}:=\beta_h-\beta_{\infty,h}$ are uniformly
		integrable. On finite intervals, use
		\begin{equation*}
			\int_{\widehat s_0}^{\mathfrak s} r_{b,h}(s)\,ds
			=\log\frac{b_h(\mathfrak s)}{b_h(\widehat s_0)}
			+\int_{\widehat s_0}^{\mathfrak s}\beta_h(s)\,ds.
		\end{equation*}
		Finite-time continuity and these tail bounds give convergence of both
		integrals in \eqref{asymptotics-mu-s-final-proof}--\eqref{asymptotics-b-s-final-proof}.
		Hence $(\mu_{\infty,h_n},b_{\infty,h_n})\to(\mu_{\infty,h},b_{\infty,h})$,
		and \eqref{defi-separation-constant-stability} gives
		$\mathbf C_{\rm sep}(h_n)\to\mathbf C_{\rm sep}(h)$.
		Taking $h\to0$, we obtain
		\begin{equation*}
			x_h^*\to\widehat x^*,\qquad
			\mathbf C_{\rm sep}(h)\to\widehat{\mathbf C}_{\rm sep}.
		\end{equation*}
		This completes the proof of Theorem~\ref{theo-main-separate}.
		
		\begin{remark}[Prescribing a small separation point]\label{remark-prescribed-stable-separation}
			Fix the preparation parameters $\delta=3/8$, $M_0$, $A$, and $\eta$ as above,
			and set $s_{\rm p}:=s_{21}(A,\eta,M_0,x_0)$, so that the construction
			applies for every $s_0\ge s_{\rm p}$.
			For $s_0\ge s_{\rm p}$, denote the datum \eqref{defi-u-IN-final-proof} by
			$u_{\rm in}^{[s_0]}$ and its separation point by $X(s_0)$.
			We first verify that, for every finite $\sigma\ge s_{\rm p}$ and
			$0<\alpha\le1$,
			\begin{equation}\label{continuity-prepared-physical-data}
				u_{\rm in}^{[s_0]}-u_{\rm in}^{[\sigma]}\in\mathcal O_\alpha,
				\qquad
				\|u_{\rm in}^{[s_0]}-u_{\rm in}^{[\sigma]}\|_{\mathcal E}\longrightarrow0
				\quad(s_0\to\sigma).
			\end{equation}
			Work on a compact interval $I\subset[s_{\rm p},\infty)$ of preparation times.
			Here $\widetilde b_0=e^{-s_0}$ stays bounded away from zero.
			The parameter dependence in Proposition~\ref{diagonalisation-L-b}, the
			eigenfunction ODE, and the smooth cutoff give continuous dependence in
			the $C^{3,\alpha}$ norm on compact intervals away from the wall. At the wall, use the
			regular analytic branches described in Section~4.3. The branch normalized
			by its wall derivative depends continuously on the parameters by the
			convergent power series for the regular singular ODE; the remaining scalar
			factor is continuous by evaluating at a fixed positive point and using
			Proposition~\ref{diagonalisation-L-b}. With $r=\sqrt\psi$, write
			\begin{equation*}
				w_{\rm in}^{[s_0]}(r^2)=r^2a_{s_0}(r),\qquad
				a_{s_0}(0)=e^{-s_0/2}>0,\qquad
				y_{\rm in}^{[s_0]}(r^2)=\int_0^r\frac{2\,dt}{\sqrt{a_{s_0}(t)}}.
			\end{equation*}
			The last map has a nonzero derivative at zero, uniformly on $I$;
			its inverse therefore gives continuous dependence of
			$u_{\rm in}^{[s_0]}$ in $C^{3,\alpha}$ on compact $y$-intervals, including
			the wall and the endpoint $\alpha=1$.
			The exponential tail \eqref{defi-U-infty-1} and the inverse von Mises
			transformation give constants $C_I,c_I,Y_I>0$ such that
			\begin{equation*}
				\bigl\|u_{\rm in}^{[s_0]}-\sqrt{2x_0}\bigr\|_{C_b^{3,\alpha}([R,\infty))}
				\le C_Ie^{-c_IR}\qquad(s_0\in I,\ R\ge Y_I).
			\end{equation*}
			Combining these bounds gives
			$\|u_{\rm in}^{[s_0]}-u_{\rm in}^{[\sigma]}\|_{C_b^{3,\alpha}}\to0$.
			The analytic wall expansion and the common compatibility conditions give
			$\partial_y^2u_{\rm in}^{[s_0]}(y)=1+y^2\mathcal R(s_0,y)$ for $0\le y\le1$,
			with $\mathcal R$ continuous on $I\times[0,1]$.
			For $h=u_{\rm in}^{[s_0]}-u_{\rm in}^{[\sigma]}$, we have $h(0)=0$,
			$h(y)\to0$ as $y\to\infty$, and
			$h''(y)=y^2(\mathcal R(s_0,y)-\mathcal R(\sigma,y))$.
			Thus $h\in\mathcal O_\alpha$. Since $|h(y)|/y\le\|h'\|_\infty$,
			the definition \eqref{definition-norm-E} yields
			\begin{equation*}
				\|h\|_{\mathcal E}\lesssim
				\|h\|_{C_b^{3,\alpha}}+
				\sup_{0\le y\le1}|\mathcal R(s_0,y)-\mathcal R(\sigma,y)|
				\longrightarrow0\qquad(s_0\to\sigma),
			\end{equation*}
			which proves \eqref{continuity-prepared-physical-data}.
			
			Apply the continuity of the separation point proved above with
			$u_{\rm in}^{[\sigma]}$ as reference datum. Since every prepared datum is
			admissible, \eqref{continuity-prepared-physical-data} gives
			$X(s_0)\to X(\sigma)$; hence $X$ is continuous on $[s_{\rm p},\infty)$.
			Moreover, $\mu(s_0)=e^{-s_0}$, $\mu_s=-\beta\mu$, and
			$3/4\le\beta\le5/4$ imply
			\begin{equation}\label{bounds-prepared-separation-point}
				\frac45e^{-s_0}\le X(s_0)=\int_{s_0}^\infty\mu(s)\,ds
				\le\frac43e^{-s_0}<\frac{x_0}{2}.
			\end{equation}
			In particular, $X$ is positive and tends to zero as $s_0\to\infty$.
			Set $\widehat{\mathfrak x}_1(x_0)=X(s_{\rm p})$.
			For any $x^*\in(0,\widehat{\mathfrak x}_1(x_0))$, choose
			$\sigma>s_{\rm p}$ with $X(\sigma)<x^*$ and apply the intermediate value
			theorem on $[s_{\rm p},\sigma]$. This gives a prepared datum separating
			at $x^*$ with the behavior in Theorem~\ref{theo-main-separate}.
			No monotonicity of $X$ is required.
		\end{remark}
		
		\section{Existence of solutions in the unstable regimes}
		We construct the higher-order regimes by combining the unstable shrinking set with a finite-dimensional exit argument for $(\mathbf P_{\rm unstab})$ in \eqref{defi-unstable-problem}.
		Fix $\ell\ge2$ and $\beta=1$. Since $\mu_s=-\mu$ and $x_s=\mu$,
		the prospective separation point satisfies $x+\mu=x^*$; after translating $s$,
		\begin{equation}\label{similarity_variable_ell_gener}
			w(x,\psi)=(x^*-x)\vartheta(s,\xi),\qquad
			\xi=\psi/(x^*-x)^{3/4},\qquad s=-\log(x^*-x).
		\end{equation}
		We use \eqref{notation-beta-one} and the adapted direction
		\eqref{definition-adapted-eigendirection-strategy} to write
		\begin{equation}\label{decomposition-unstable-regime}
			\vartheta=Q_b+\varepsilon,\qquad
			\varepsilon=\varepsilon_\ell\psi_{\ell,b}
			+\sum_{j=1}^{\ell-1}\varepsilon_j\varphi_{j,b}+\varepsilon_-.
		\end{equation}
		The geometric constraint is encoded by this decomposition and
		\begin{equation}\label{orthogonality-epsilon-minus-unstable}
			\langle\varepsilon_-,\varphi_{j,b}\rangle_{L^2_{\rho_b}}=0,
			\qquad 0\le j\le\ell.
		\end{equation}
		The shooting variables control the lower amplitudes and the deviation
		$\varepsilon_\ell-\mathbf n_\ell\sqrt b$, as in Section~3.
		\begin{remark}[Choice of the renormalization rate]\label{remark-reason-choice-beta-1}
			As in Section~\ref{subsection-sketch-unstable-regime}, we fix $\beta=1$ and
			use the scalar geometric constraint to select $b$. The lower amplitudes and
			the defect $\varepsilon_\ell-\mathbf n_\ell\sqrt b$ are tuned by shooting,
			since their normalized dynamics are unstable.
		\end{remark}
		
		\subsection{Definition of the unstable shrinking set}
		\begin{definition}\label{defi-shinkingset-V-ell}
			Let $A\ge1$, $0<\eta<1$, $s\ge1$, and $b>0$. Consider a bounded profile
			$\vartheta\in C^1([0,\infty))\cap C^2((0,\infty))$ with
			$\vartheta(0)=0$, $\vartheta>0$ on $(0,\infty)$, and
			\eqref{decomposition-unstable-regime}--\eqref{orthogonality-epsilon-minus-unstable}.
			We write $(\vartheta,b)\in\mathcal V_\ell[A,\eta](s)$ if the following bounds hold,
			using $\mathfrak I_\ell$, $\mathfrak X_{b,\eta}^{(\ell)}$, and $\mathbf n_\ell$ from
			\eqref{defi-I-ell}, \eqref{definition-weighted-remainder-norm}, and \eqref{notation-beta-one}.
			
			\noindent\textup{(i)} \emph{Scale and discrete modes:}
			$\frac12\le b\mathfrak I_\ell^{-1}(s)\le2$, and
			\begin{equation}\label{compatibility-condi-epsilon-ell}
				|\varepsilon_\ell-\mathbf n_\ell b^{1/2}|\le Ab^{1/2+2\eta},\qquad
				|\varepsilon_j|\le Ab^{1/2+\eta}\quad(1\le j<\ell).
			\end{equation}
			\noindent\textup{(ii)} \emph{Profile and remainder:}
			\begin{gather}
				A^{-1}b^{-\eta/2}\le\vartheta(\xi)\le Ab^{-1/(2\ell-1)-\eta}
				\quad(\xi\ge b^{-\eta}),\label{far-field-vartheta-V-ell}\\*
				\|\varepsilon_-\|_{\mathfrak X_{b,\eta}^{(\ell)}}\le A^2b^{1/2+\eta}.\notag
			\end{gather}
			\noindent\textup{(iii)} \emph{Residual:}
			$-A\le\mathcal F(\xi)\le Ab^{-\eta/8}$ for $\xi\in\mathbb R_+$, with
			$\mathcal F$ defined in \eqref{defi-mathcal-F}, and
			\begin{equation}\label{esti-V-ell-F}
				\|\mathcal F\|_{\mathfrak X_{b,\eta}^{(\ell)}}\le Ab^{1/2-\eta}.
			\end{equation}
		\end{definition}

		\subsection{Preparation of the initial data}\label{subsec-initial-data-ell}
		Fix $\ell\ge2$, $A\ge1$, $x_0>0$, $0<\eta\ll\delta<3/[4(2\ell-1)]$,
		and $\mathbf d\in[-2,2]^\ell$. Set
		$b_0^\dagger=\mathfrak I_\ell(s_0)=e^{-(2\ell-1)s_0}$,
		$\mu_0^\dagger=e^{-s_0}=(b_0^\dagger)^{1/(2\ell-1)}$.
		Thus $x^*=e^{-s_0}<x_0$ once $s_0>\log(1/x_0)$. Write $\kappa_\ell=c_{\ell,0}/c_{0,0}$ and
		\begin{equation*}
			\alpha_\ell^\dagger=\mathbf n_\ell\sqrt{b_0^\dagger}
			+Ad_\ell(b_0^\dagger)^{1/2+2\eta},\qquad
			\alpha_j^\dagger=Ad_j(b_0^\dagger)^{1/2+\eta}\quad(1\le j<\ell).
		\end{equation*}
		To preserve wall compatibility for arbitrary lower-mode parameters, we use
		a nonlinear correction from the ground-state family, with the normalization
		\eqref{notation-scaling-normalization}. Define
		\begin{equation}\label{unstable-wall-correction-definition}
			\begin{gathered}
				\omega_{\mathbf d}=\cscale(b_0^\dagger)^{-1/2}\sum_{j=1}^{\ell-1}c_{j,0}\alpha_j^\dagger,\qquad
				\mathscr C_{\mathbf d}=Q_{b_0^\sharp}-Q_{b_0^\dagger}
				-\omega_{\mathbf d}\Lambda Q_{b_0^\dagger},\\
				(\mathfrak F_{0,b_0^\dagger}+\kappa_\ell\mathfrak F_{\ell,b_0^\dagger})
				(\mathscr C_{\mathbf d})=0.
			\end{gathered}
		\end{equation}
		Here $b_0^\sharp$ is the locally unique root near $b_0^\dagger$; its existence
		and estimates are proved below. Set
		\begin{equation*}
			\Psi_{\ell,\mathbf d}=
			\alpha_\ell^\dagger\psi_{\ell,b_0^\dagger}
			+\sum_{j=1}^{\ell-1}\alpha_j^\dagger\varphi_{j,b_0^\dagger}
			+\mathscr C_{\mathbf d}.
		\end{equation*}
		The first two terms use the spectral notation \eqref{notation-beta-one}.
		Define the outer scales and profile by
		\begin{gather*}
			R_0=(b_0^\dagger)^{-\delta},\qquad
			\Xi_\ell=(b_0^\dagger)^{-1/(2\ell-1)-4\delta/3},\qquad
			U_\infty^\ell=2x_0(\mu_0^\dagger)^{-1}=2x_0e^{s_0},\\*
			\Psi_\infty^\ell(\xi)=U_\infty^\ell-
			\bigl(U_\infty^\ell-4a^2R_0^{4/3}\bigr)e^{-(\xi-R_0)/\Xi_\ell}.
		\end{gather*}
		Finally, prescribe
		\begin{equation}\label{defi-initi-vartheta-s-0-ell}
			\Psi_{\ell,\mathbf d}^0=
			\chi_{\ell,\delta}(Q_{b_0^\dagger}+\Psi_{\ell,\mathbf d})
			+(1-\chi_{\ell,\delta})\Psi_\infty^\ell,
		\end{equation}
		where
		\begin{equation}\label{defi_chi_ell-}
			\chi_{\ell,\delta}(\xi):=\chi_0\!\left(M_0(b_0^\dagger)^\delta\xi\right),
		\end{equation}
		and $\chi_0$ is defined in \eqref{defi-chi-0}.
		Thus, with $R_{\ell,\delta}:=M_0^{-1}(b_0^\dagger)^{-\delta}$,
		the cutoff equals one on $[0,R_{\ell,\delta}]$ and vanishes
		for $\xi\ge2R_{\ell,\delta}$. Moreover, for $j=1,2$,
		\[
		|\partial_\xi^j\chi_{\ell,\delta}|\le C_jR_{\ell,\delta}^{-j}
		=C_jM_0^j(b_0^\dagger)^{j\delta},\qquad
		\operatorname{supp}\partial_\xi^j\chi_{\ell,\delta}
		\subset[R_{\ell,\delta},2R_{\ell,\delta}].
		\]
		We prove the following result.
		\begin{lemma}[Preparation of the initial data]\label{lemma-initial-data-ell}
			Fix $\ell\ge2$, $x_0>0$, and
			$\delta=3(\ell+1)/[2(2\ell-1)(2\ell+3)]$.
			There is $M_5=M_5(\ell,x_0)\ge1$ such that, for each fixed $M_0\ge M_5$,
			there are thresholds $A_5=A_5(\ell,M_0,x_0)\ge1$,
			$\eta_5=\eta_5(\ell)\in(0,\delta/16]$, and
			$s_5=s_5(A,\eta,\ell,M_0,x_0)\ge1$ with the following property.
			For $A\ge A_5$, $0<\eta\le\eta_5$, and $s_0\ge s_5$, the datum
			\eqref{defi-initi-vartheta-s-0-ell} is strictly increasing, vanishes at zero,
			and belongs to $C^\infty((0,\infty))\cap C^1([0,\infty))$, uniformly for
			$\mathbf d\in[-2,2]^\ell$.
			
			\noindent\textup{(i)} Its residual
			\begin{equation*}
				\mathcal F_\ell(s_0,\xi)=\sqrt{\vartheta(s_0,\xi)}\,\partial_\xi^2\vartheta(s_0,\xi)-2
			\end{equation*}
			extends continuously by zero at the wall and satisfies
			\begin{equation}\label{global-bound-F-ell-initial}
				\|\mathcal F_\ell(s_0)\|_\infty\le C_{\rm in},
			\end{equation}
			where $C_{\rm in}=C_{\rm in}(\ell,M_0,x_0)\ge2$ is independent of
			$A,\eta,s_0,\mathbf d$, with the cutoff $\chi_0$ fixed.
			For $0<\xi\le R_0$, it also satisfies
			\begin{equation*}
				|\mathcal F_\ell(s_0,\xi)|\lesssim_\ell
				(b_0^\dagger)^{1/4}\xi\,\mathbf1_{\{\xi\le(b_0^\dagger)^{3/4}\}}
				+(b_0^\dagger)^{1/2}\mathfrak f_\ell(\xi)
				\mathbf1_{\{\xi\ge(b_0^\dagger)^{3/4}\}}.
			\end{equation*}
			
			\noindent\textup{(ii)} There is a locally unique $b_0$ near $b_0^\dagger$,
			with $|b_0-b_0^\dagger|\lesssim_N(b_0^\dagger)^N$ for every fixed $N$,
			such that the initial profile has decomposition
			\eqref{decomposition-unstable-regime}--\eqref{orthogonality-epsilon-minus-unstable}.
			Its coefficients satisfy
			\begin{equation*}
				\varepsilon_j(s_0)=Ad_jb_0^{1/2+\eta}+E_j^0\quad(j<\ell),\qquad
				\varepsilon_\ell(s_0)=\mathbf n_\ell\sqrt b_0+Ad_\ell b_0^{1/2+2\eta}+E_\ell^0,
			\end{equation*}
			where $\|E_j^0\|_{C^1([-2,2]^\ell)}\lesssim_{A,\ell}(b_0^\dagger)^{3/4+2\eta}$.
			The remainder obeys
			\begin{equation}\label{unstable-initial-remainder-corrected}
				|\varepsilon_-(s_0,\xi)|\lesssim_{A,\ell}
				\begin{cases}
					b_0^{1/4+2\eta}\xi,&0<\xi\le b_0^{3/4},\\
					b_0^{3/4+2\eta}\bigl(\xi^{1/3}+\mathfrak f_\ell(\xi)\bigr),
					&b_0^{3/4}\le\xi\le R_0/(2M_0).
				\end{cases}
			\end{equation}
			Moreover, $0<\vartheta(s_0,\xi)\le2U_\infty^\ell$ for $\xi>0$, and
			$\vartheta(s_0,\xi)\ge(a^2/8)b_0^{-4\eta/3}$ for $\xi\ge b_0^{-\eta}/4$.
			
			\noindent\textup{(iii)} Define
			\begin{equation*}
				\widehat{\mathcal V}_A(s)=
				\prod_{j=1}^{\ell-1}[-Ab^{1/2+\eta},Ab^{1/2+\eta}]
				\times[\mathbf n_\ell\sqrt b-Ab^{1/2+2\eta},\mathbf n_\ell\sqrt b+Ab^{1/2+2\eta}].
			\end{equation*}
			For this choice of $A_5$, all non-discrete bounds hold strictly, so
			\begin{equation}\label{equivalence-initial-shrinking-box}
				(\vartheta,b)(s_0)\in\mathcal V_\ell[A,\eta](s_0)
				\quad\Longleftrightarrow\quad
				(\varepsilon_1,\ldots,\varepsilon_\ell)(s_0)\in\widehat{\mathcal V}_A(s_0).
			\end{equation}
			There is a set $\mathcal D_{s_0}\subset[-2,2]^\ell$, homeomorphic to
			$[-1,1]^\ell$, such that
			\begin{equation}\label{equivalence-d-in-D-initial}
				(\vartheta,b)(s_0)\in\mathcal V_\ell[A,\eta](s_0)
				\quad\Longleftrightarrow\quad \mathbf d\in\mathcal D_{s_0}.
			\end{equation}
		\end{lemma}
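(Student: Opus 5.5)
I will follow the template of Lemma~\ref{lemma-initial-data-l=1}, treating $\Psi_{\ell,\mathbf d}$ as a small perturbation of $Q_{b_0^\dagger}$ on $[0,2R_{\ell,\delta}]$ and gluing it to the explicit monotone tail $\Psi_\infty^\ell$. The new ingredient is the wall correction $\mathscr C_{\mathbf d}$.

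\emph{Step 1: the wall correction.} By \eqref{eigenfuc-L-b}, each $\varphi_{j,b_0^\dagger}$ with $1\le j<\ell$ carries a $T_0=\cscale\Lambda Q$ component of size $c_{j,0}\sqrt{b_0^\dagger}$. This component gives a nonzero wall slope, and hence would spoil compatibility. The term $-\omega_{\mathbf d}\Lambda Q_{b_0^\dagger}$ is designed to cancel exactly this component, while $Q_{b_0^\sharp}-Q_{b_0^\dagger}$ restores an exact ground-state structure. I solve the scalar equation in \eqref{unstable-wall-correction-definition} for $b_0^\sharp$ by the implicit function theorem. Its derivative in $b^\sharp$ at $b_0^\dagger$ is $(\mathfrak F_0+\kappa_\ell\mathfrak F_\ell)(\partial_bQ_b)$. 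Using $b\partial_bQ_b=\Lambda Q_b$ and $\Lambda Q_b=\mathbf m_0\sqrt b\,\varphi_{0,b}+E_b$, this equals $\mathbf m_0 b^{-1/2}(1+O(b^{1/4}))\neq0$.

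The resulting root satisfies $b_0^\sharp/b_0^\dagger-1=\omega_{\mathbf d}+O(\omega_{\mathbf d}^2)$, with $\omega_{\mathbf d}=O(A(b_0^\dagger)^{\eta})$. Consequently $\mathscr C_{\mathbf d}=O(\omega_{\mathbf d}^2)\,\partial_b^2$-terms, and the total wall slope of $Q_{b_0^\dagger}+\Psi_{\ell,\mathbf d}$ is controlled. This lets me verify the inner estimates exactly as in \eqref{expression-Psi-1}: $|\Psi_{\ell,\mathbf d}|/Q_{b_0^\dagger}+|\Psi'|/Q'\le C(A(b_0^\dagger)^{\eta}+M_0^{-2\cdot})<1/2$ on $[0,2R_{\ell,\delta}]$. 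Here the choice $\delta=3(\ell+1)/[2(2\ell-1)(2\ell+3)]$ is used to absorb the outer growth $\xi^{2/3+4\ell/3}$ of $\varphi_\ell$ against $Q\sim\xi^{4/3}$ up to $R_0$.

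\emph{Step 2: monotonicity and the residual.} Positivity, monotonicity and $C^1$ regularity at the wall follow as before. Near the wall, regularity at the wall uses the analytic branches of Section~4.3. On the gluing interval I check $Z<V$, which makes the datum strictly increasing.

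For (i), I use the same exact identity $\mathcal F=\sqrt{1+r}\,\mathscr A\Psi-\frac{\sqrt{1+r}+2}{(\sqrt{1+r}+1)^2}r^2$, with $\mathscr A=\mathscr L_{b,1}-\Lambda$. The eigenvalue equations make $\mathscr A\varphi_j=(\mu_j-\Lambda)\varphi_j$. The identities $\mathscr A(Q_{b^\sharp}-Q_{b^\dagger})$ and $\mathscr A\Lambda Q_b$ are explicit through the scaling of $\sqrt Q Q''=2$. The bounds of Proposition~\ref{diagonalisation-L-b} then give the envelope $\mathfrak f_\ell$. The global bound \eqref{global-bound-F-ell-initial} comes from the gluing computation, whose constants depend only on $\ell,M_0,x_0$.

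\emph{Step 3: renormalization and the box.} For (ii), I solve the single scalar constraint $(\mathfrak F_{0,b}+\kappa_\ell\mathfrak F_{\ell,b})(\vartheta-Q_b-\dots)=0$ in $b$ near $b_0^\dagger$. The data differ from an exact decomposition at $b_0^\dagger$ only through $(1-\chi)$ terms supported beyond $R_{\ell,\delta}$ and through $\mathscr C_{\mathbf d}$, which is annihilated by the constraint by construction. The Gaussian weight therefore gives $|b_0-b_0^\dagger|\lesssim e^{-cR^{4/3}}\lesssim_N (b_0^\dagger)^N$.

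The coefficients $\varepsilon_j(s_0)$ are then the $\alpha_j^\dagger$ plus projections of $\mathscr C_{\mathbf d}$ and of the basis change $b_0^\dagger\to b_0$. The $\mathscr C_{\mathbf d}$ contribution is quadratic, $O(\omega^2\sqrt b)=O(A^2b^{1/2+2\eta})$. I have to check that it is $\lesssim(b_0^\dagger)^{3/4+2\eta}$; this is the delicate point. I expect it to require that the $\Lambda Q$-projection of the quadratic part onto modes $1,\dots,\ell$ gains $b^{1/4}$ from the remainder $E_b$, since the leading $\Lambda Q$ direction lies in $\varphi_0$. The $C^1$ dependence in $\mathbf d$ follows by differentiating the implicit function. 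The bound \eqref{unstable-initial-remainder-corrected} on $\varepsilon_-$ follows from the same expansion together with the mean-value argument of Lemma~\ref{lemma-initial-data-l=1}.

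Finally, for (iii), all non-discrete bounds hold strictly by (i)--(ii) for $A\ge A_5$, which gives \eqref{equivalence-initial-shrinking-box}. The map $\mathbf d\mapsto$ normalized coefficients $\bigl(\varepsilon_j/(Ab_0^{1/2+\eta}),\,(\varepsilon_\ell-\mathbf n_\ell\sqrt{b_0})/(Ab_0^{1/2+2\eta})\bigr)$ is $\mathrm{Id}+O(b_0^{1/4})$ in $C^1$ on $[-2,2]^\ell$. It is therefore a homeomorphism onto a set containing $[-1,1]^\ell$, and $\mathcal D_{s_0}$ is the preimage of $[-1,1]^\ell$.

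The main obstacle is Step~3's control of $E_j^0$, that is, showing that the nonlinear wall correction does not feed the lower modes at order $b^{1/2+\eta}$. I would try first to expand $Q_{b^\sharp}$ to second order and project using Appendix~\ref{appen-jacobian}-type computations.
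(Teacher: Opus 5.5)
Your outline follows the paper's proof. The step you leave open is the one that carries the content, and the paper closes it more directly than you expect. Write $b_*=b_0^\dagger$, $\varrho=b_0^\sharp/b_*$, $\omega=\omega_{\mathbf d}$. The leading term $a^2\xi^{4/3}$ of $Q_b$ does not depend on $b$, so Lemma~\ref{lemma-Q} gives, for $\xi\ge b_*^{3/4}$,
\[
\mathscr C_{\mathbf d}=\tfrac a4\sqrt{b_*}\,\bigl(\sqrt\varrho-1-\tfrac\omega2\bigr)\xi^{2/3}+O\bigl(b_*^{3/4}\omega^2\xi^{1/3}\bigr).
\]
The constraint that defines $b_0^\sharp$, together with the fact that $(\mathfrak F_{0,b_*}+\kappa_\ell\mathfrak F_{\ell,b_*})[\xi^{2/3}]$ tends to a nonzero constant, forces $\sqrt\varrho-1-\frac\omega2=O(b_*^{1/4}\omega^2)$. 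This gives the pointwise bound \eqref{unstable-wall-correction-bounds}: $|D^m\mathscr C_{\mathbf d}|\lesssim_{A,\ell}b_*^{3/4+2\eta}(\xi^{1/3}+\xi^{2/3})$ above the inner scale and $b_*^{1/4+2\eta}\xi$ below it. Implicit differentiation gives the same bounds for $\partial_{d_k}\mathscr C_{\mathbf d}$. Every spectral moment is then $O(b_*^{3/4+2\eta})$, which is the $C^1$ bound on $E_j^0$. Before the cutoff, $\varepsilon_-(s_0)$ equals $\mathscr C_{\mathbf d}-\sum_{j=0}^\ell\mathfrak F_{j,b_*}(\mathscr C_{\mathbf d})\varphi_{j,b_*}$ up to super-polynomially small errors, so \eqref{unstable-initial-remainder-corrected} follows at once.

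Your own mechanism also works for the moments, but you should state why it holds. The outer expansion of $\mathscr C_{\mathbf d}$ starts at $\sqrt b\,\xi^{2/3}$ because $a^2\xi^{4/3}$ is $b$-independent. Since $\xi^{2/3}$ is the leading part of $\varphi_{0,b}$, this gives $\mathfrak F_{j,b}[\xi^{2/3}]=O(b^{1/4})$ for $1\le j\le\ell$, and the constraint then fixes the $\varphi_0$ moment. It does not by itself give the pointwise remainder bound. With only $|\mathscr C_{\mathbf d}|\lesssim b^{1/2+2\eta}\xi^{2/3}$, you must also verify $\xi^{2/3}-\mathfrak F_{0,b}[\xi^{2/3}]\varphi_{0,b}=O(b^{1/4}(\xi^{1/3}+\mathfrak f_\ell))$ on $[b^{3/4},R_0/(2M_0)]$; ``the same expansion'' hides this step. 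Pinning the coefficient through the constraint, as the paper does, handles the moments, the $d_k$ derivatives and the pointwise bound together.

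One caution for Step~2: do not center the residual at $Q_{b_0^\dagger}$ and bound the two terms of the identity separately, as in \eqref{expression-Psi-1}. Near the wall, $\Psi_{\ell,\mathbf d}=Q_{b_0^\sharp}-Q_{b_0^\dagger}+g_{\mathbf d}$ has nonzero slope, so $\Psi_{\ell,\mathbf d}/Q_{b_0^\dagger}\to\varrho^{1/4}-1\ne0$ at $\xi=0$. Both $\sqrt{1+r}\,\mathscr A\Psi$ and the $r^2$ term then tend to the same nonzero constant of order $\omega^2$. The vanishing of $\mathcal F_\ell$ at the wall, and the bound $b_*^{1/4}\xi$, come only from their exact cancellation.

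Center at $Q_{b_0^\sharp}$ instead. Near the wall the uncut profile is $Q_{b_0^\sharp}+g_{\mathbf d}$, where $g_{\mathbf d}=\alpha_\ell^\dagger\psi_{\ell,b_*}+\sum_{j<\ell}\alpha_j^\dagger(\varphi_{j,b_*}-\cscale c_{j,0}b_*^{-1/2}\Lambda Q_{b_*})$ has no $T_0$ terms. Hence $|D^mg_{\mathbf d}|\lesssim b_*^2\zeta^{5/2}$ for $\zeta\le1$, and rationalizing with $\sqrt{Q_{b_0^\sharp}}\,Q_{b_0^\sharp}''=2$ gives $|\mathcal F_\ell|\lesssim b_*^{1/4}\xi$.

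For the same reason, a nonzero wall slope is not what would spoil compatibility, since $Q_{b_0^\sharp}$ has one too. What spoils it is a linearized $\Lambda Q$ component, whose quadratic error $Q_{b_0^\sharp}$ removes.
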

		\begin{proof}
			\emph{Compatibility correction.} Abbreviate $b_*=b_0^\dagger$,
			$\omega=\omega_{\mathbf d}$, and
			$\mathfrak J_*:=\mathfrak F_{0,b_*}+\kappa_\ell\mathfrak F_{\ell,b_*}$.
			The scalar equation in \eqref{unstable-wall-correction-definition} is
			$\mathfrak J_*[Q_{b_*r}-Q_{b_*}-\omega\Lambda Q_{b_*}]=0$, where
			$r=b_0^\sharp/b_*$. At $(r,\omega)=(1,0)$ its $r$ derivative is
			$\mathfrak J_*[\Lambda Q_{b_*}]=\mathbf m_0\sqrt{b_*}+O_\ell(b_*^{3/4})$,
			by \eqref{projections-Lambda-Q}. The implicit function theorem therefore
			gives a unique root for $|\omega|\ll1$, uniformly for small $b_*$.
			Taylor expansion in $r$, using Lemma~\ref{lemma-Q} and
			$\partial_rQ_{b_*r}|_{r=1}=\Lambda Q_{b_*}$, gives $r=1+\omega+O(\omega^2)$.
			In the outer expansion, the coefficient of $\xi^{2/3}$ is
			$(a/4)\sqrt{b_*}(\sqrt r-1-\omega/2)$; all other terms in the correction
			are $O_\ell(b_*^{3/4}\omega^2\xi^{1/3})$ for $\xi\ge b_*^{3/4}$.
			Since $\mathfrak J_*[\xi^{2/3}]$ tends to a nonzero constant, the scalar
			constraint improves this coefficient to $O_\ell(b_*^{3/4}\omega^2)$.
			Consequently, for $D=\xi\partial_\xi$ and $0\le m\le2$,
			\begin{equation}\label{unstable-wall-correction-bounds}
				|D^m\mathscr C_{\mathbf d}|+
				\sum_{k=1}^\ell|D^m\partial_{d_k}\mathscr C_{\mathbf d}|
				\lesssim_{A,\ell}
				\begin{cases}
					b_*^{1/4+2\eta}\xi,&0<\xi\le b_*^{3/4},\\
					b_*^{3/4+2\eta}(\xi^{1/3}+\xi^{2/3}),&\xi\ge b_*^{3/4}.
				\end{cases}
			\end{equation}
			The inner estimate follows directly from Taylor's formula in $r$;
			the differentiated estimates follow by implicit differentiation and
			$|\omega|+|\partial_{d_k}\omega|\lesssim_{A,\ell}b_*^\eta$.
			In particular, every spectral moment of this correction, including its
			$d_k$ derivatives, is $O_{A,\ell}(b_*^{3/4+2\eta})$.
			
			The corrected profile satisfies wall compatibility exactly.
			Indeed, near zero the uncut profile equals $Q_{b_0^\sharp}+g_{\mathbf d}$, where
			\begin{equation*}
				g_{\mathbf d}=
				\alpha_\ell^\dagger\psi_{\ell,b_*}
				+\sum_{j=1}^{\ell-1}\alpha_j^\dagger
				\bigl(\varphi_{j,b_*}-\cscale c_{j,0}b_*^{-1/2}\Lambda Q_{b_*}\bigr).
			\end{equation*}
			The $T_0$ term cancels in each bracket. The regular Dirichlet eigenfunction
			ODE, or \eqref{phi-i-int}, then gives
			$|D^m g_{\mathbf d}|\lesssim_\ell b_*^2\zeta^{5/2}$ for $m\le2$,
			$\zeta=\xi/b_*^{3/4}\le1$, after taking $Ab_*^\eta\le1$.
			Since $\sqrt{Q_{b_0^\sharp}}Q_{b_0^\sharp}''=2$, rationalizing the residual
			gives $|\mathcal F_\ell|\lesssim_\ell b_*\zeta=b_*^{1/4}\xi$ there.
			For $b_*^{3/4}\le\xi\le R_0/M_0$, the differentiated spectral bounds give
			$|\mathcal F_\ell|\lesssim_\ell b_*^{1/2}\mathfrak f_\ell$.
			They also give $g_{\mathbf d}/Q_{b_0^\sharp}=o(1)$ and the corresponding
			small derivative error before the cutoff, because
			$1/2-2(2\ell-1)\delta/3>0$.
			
			\emph{Gluing and initial estimates.} Thus the inner profile is positive
			and increasing. On the support of $\chi_{\ell,\delta}'$, it is smaller than
			$\Psi_\infty^\ell$, whose derivative is positive. Since
			$\chi_{\ell,\delta}'\le0$, the gluing is increasing as well.
			Write $Z=Q_{b_*}+\Psi_{\ell,\mathbf d}$, $V=\Psi_\infty^\ell$,
			and $\chi=\chi_{\ell,\delta}$. On the transition interval
			$[R_0/M_0,2R_0/M_0]$, the profile estimates and the cutoff derivative bounds give
			\[
			(\Psi_{\ell,\mathbf d}^0)''
			=\chi Z''+(1-\chi)V''+2\chi'(Z'-V')+\chi''(Z-V),\qquad
			|(\Psi_{\ell,\mathbf d}^0)''|\lesssim_{\ell,M_0,x_0}R_0^{-2/3}.
			\]
			Since $\Psi_{\ell,\mathbf d}^0\asymp_{\ell,M_0,x_0}R_0^{4/3}$ there,
			the residual is uniformly bounded. Beyond the transition,
			$\Psi_{\ell,\mathbf d}^0=V\le U_\infty^\ell$ and
			$|V''|\lesssim_{x_0}b_*^{1/(2\ell-1)+8\delta/3}$, which give the same bound
			for $s_0\ge s_5$, after increasing $s_5$.
			On the transition and beyond, $b_*^{1/2}\mathfrak f_\ell(\xi)\gg1$ for
			$\xi\ge R_0/M_0$, since $\delta(2+4\ell/3)>1/2$.
			This proves (i), including its bound up to $R_0$.
			The profile comparisons also give (ii)'s upper and lower bounds;
			at $\xi=b_0^{-\eta}/4$, the leading coefficient is
			$a^2 4^{-4/3}>a^2/8$.
			
			\emph{Scalar normalization.} Before the cutoff,
			$\mathfrak J_*[\Psi_{\ell,\mathbf d}]=0$ by
			\eqref{unstable-wall-correction-definition} and orthogonality.
			The cutoff error $\widetilde\varepsilon_{\ell,\mathbf d}^0
			:=(1-\chi_{\ell,\delta})(\Psi_\infty^\ell-Q_{b_*}-\Psi_{\ell,\mathbf d})$ satisfies
			\begin{equation}\label{support-varep-ell-d-s-0}
				\operatorname{supp}\widetilde\varepsilon_{\ell,\mathbf d}^0
				\subset\{\xi\ge b_*^{-\delta}/M_0\}.
			\end{equation}
			For trial $b$ near $b_*$, define
			\begin{equation*}
				G(b,\mathbf d):=(\mathfrak F_{0,b}+\kappa_\ell\mathfrak F_{\ell,b})
				[\Psi_{\ell,\mathbf d}^0-Q_b].
			\end{equation*}
			The kernel estimates \eqref{projection-kernel-parameter-bound} apply to
			these finitely many modes. They give $G\in C^1$ and, uniformly in the cube,
			\begin{equation*}
				\partial_bG(b_*,\mathbf d)=-\mathbf m_0b_*^{-1/2}
				\bigl(1+O_{A,\ell}(b_*^\eta+b_*^{1/4})\bigr).
			\end{equation*}
			Here the lower modes contribute $O_{A,\ell}(b_*^\eta)$ relatively;
			\eqref{unstable-wall-correction-bounds} contributes a smaller term.
			The cutoff depends on the fixed preparation scale $b_*$, not on
			$\mathbf d$ or the trial scale $b$. Thus the Gaussian kernel bounds and
			\eqref{support-varep-ell-d-s-0} give
			$\|G(b_*,\cdot)\|_{C^1}\lesssim_{A,\ell,M_0,x_0}b_*^{-C}e^{-c_{M_0} b_*^{-4\delta/3}}$.
			The parameter-dependent implicit function theorem, with this uniform
			Jacobian bound, therefore gives
			\begin{equation}\label{distance-b0-bdagger-quantitative}
				\|b_0(\cdot)-b_*\|_{C^1([-2,2]^\ell)}\lesssim_N b_*^N
				\qquad\text{for every fixed }N>0.
			\end{equation}
			Projecting at $b_0$ now yields the required decomposition, and
			\eqref{unstable-wall-correction-bounds} gives
			\begin{equation}\label{C1-bound-Ej-d}
				\|E_j^0(\cdot,s_0)\|_{C^1([-2,2]^\ell)}
				\lesssim_{A,\ell}b_*^{3/4+2\eta},\qquad1\le j\le\ell.
			\end{equation}
			Subtracting these finite projections from the correction proves
			\eqref{unstable-initial-remainder-corrected}; the cutoff and modulation
			errors are smaller than any power before $R_0/(2M_0)$.
			At the wall, the finite projections cost at most
			$C_{A,\ell}b_*^{1/2+2\eta}\xi$, which is smaller than the stated bound.
			This proves (ii).
			
			\emph{The initial parameter box.} Choose $A_5>2C_{\rm in}$ for the global residual bound.
			The other normalized initial bounds tend to zero as $s_0\to\infty$ by
			\eqref{unstable-initial-remainder-corrected}, with a gain of at least $b_0^\eta$.
			Thus every non-discrete component lies strictly inside its bootstrap bounds, proving
			\eqref{equivalence-initial-shrinking-box}. Define
			\begin{equation*}
				\Gamma_{s_0}(\mathbf d)=
				\left(\frac{\varepsilon_1}{Ab_0^{1/2+\eta}},\ldots,
				\frac{\varepsilon_{\ell-1}}{Ab_0^{1/2+\eta}},
				\frac{\varepsilon_\ell-\mathbf n_\ell\sqrt{b_0}}{Ab_0^{1/2+2\eta}}\right).
			\end{equation*}
			Equations \eqref{distance-b0-bdagger-quantitative} and \eqref{C1-bound-Ej-d}
			give $\Gamma_{s_0}=\operatorname{Id}+\mathbf r_{s_0}$ with
			$\|\mathbf r_{s_0}\|_{C^1}\lesssim_{A,\ell}b_*^{1/4}\to0$.
			Take the derivative norm induced by $|\cdot|_\infty$ and increase $s_5$
			so that both $\|\mathbf r_{s_0}\|_\infty$ and
			$\|D\mathbf r_{s_0}\|_\infty$ are at most $1/4$.
			Then $\Gamma_{s_0}$ is injective, and for each $\mathbf y\in[-1,1]^\ell$
			the contraction $\mathbf d\mapsto\mathbf y-\mathbf r_{s_0}(\mathbf d)$
			has a unique fixed point in $[-3/2,3/2]^\ell$.
			Its inverse $\mathcal H_{s_0}$ is Lipschitz, and
			$\mathcal D_{s_0}:=\mathcal H_{s_0}([-1,1]^\ell) =\Gamma_{s_0}^{-1}([-1,1]^\ell)$
			is homeomorphic to the cube. This proves \eqref{equivalence-d-in-D-initial}.
		\end{proof}
		
		\begin{remark}[Propagation of the solution and the geometric decomposition]\label{remark-unstable-continuation}
			Fix the preparation parameters of Lemma~\ref{lemma-initial-data-ell}.
			There are thresholds $A_{22}=A_{22}(\ell,M_0,x_0)\ge A_5$,
			$\eta_{22}(A,\ell)\in(0,\eta_5]$, and
			$s_{22}=s_{22}(A,\eta,\ell,M_0,x_0)\ge s_5$ such that the following
			holds for $A\ge A_{22}$, $0<\eta\le\eta_{22}(A,\ell)$, and $s_0\ge s_{22}$.
			For $\mathbf d\in[-2,2]^\ell$, define the physical initial datum by
			\[
			\begin{gathered}
				w_{0,\mathbf d}(\psi):=\mu_0^\dagger\Psi_{\ell,\mathbf d}^0
				\!\left(s_0,\frac{\psi}{(\mu_0^\dagger)^{3/4}}\right),\qquad
				y_{0,\mathbf d}(\psi):=\int_0^\psi\frac{d\psi'}{\sqrt{w_{0,\mathbf d}(\psi')}},\\
				u_{\rm in,\mathbf d}(y_{0,\mathbf d}(\psi)):=\sqrt{w_{0,\mathbf d}(\psi)}.
			\end{gathered}
			\]
			Lemma~\ref{lemma-initial-data-ell} gives positivity and the prescribed Euler
			trace. Moreover, $\mathcal F_\ell(s_0,\xi)=O(\xi)$ and $\psi\asymp y^2$
			yield $u_{\rm in,\mathbf d}''(y)-1=O(y^2)$. Together with the regular-branch
			argument of Section~4.3, this proves $u_{\rm in,\mathbf d}\in\mathcal K_1$
			for every $\mathbf d$. Theorem~\ref{Olenik-thm-2.1.1} therefore gives a
			local solution on $[0,\mathfrak x^*)$ for some $\mathfrak x^*>0$.
			
			For fixed $s_0$, the family $\mathbf d\mapsto u_{\rm in,\mathbf d}$
			is continuous in \eqref{definition-norm-E}. Near the wall, the regular
			branch and the inverse map give
			$u_{\rm in,\mathbf d}''(y)-1=y^2\mathcal R(\mathbf d,y)$ with
			$\mathcal R$ smooth in both variables. On compact intervals away from
			zero this follows from the smooth gluing formulas, and the common
			exponential tail gives uniform convergence of derivatives at infinity.
			Lemma~\ref{lem-local-weighted-dependence} therefore applies to this family.
			
			With $\beta=1$, set $\mu(s)=e^{-s}$ and $x(s)=e^{-s_0}-e^{-s}$.
			As in Lemma~\ref{lemma-normalization-propagation-flows}, select $b$ by
			$G_\ell(b):=(\mathfrak F_{0,b}+\kappa_\ell\mathfrak F_{\ell,b})[\vartheta-Q_b]=0$.
			At the prepared datum, and along the trapped flow, the spectral and
			projection-kernel estimates give, with $\vartheta$ fixed,
			\[
			\partial_bG_\ell=-\mathbf m_0b^{-1/2}
			\bigl(1+O_{A,\ell}(b^\eta+b^{1/4})\bigr)\ne0.
			\]
			Here $s_{22}$ is chosen so that the relative error is at most $1/2$
			throughout the shrinking set and $e^{-s_{22}}<x_0$.
			The implicit function theorem thus gives a locally unique $C^1$ parameter
			$b(s)$ and the decomposition \eqref{decomposition-unstable-regime}--\eqref{orthogonality-epsilon-minus-unstable}
			on $[s_0,\widetilde s)$ for some $\widetilde s>s_0$.
			
			The continuation argument of Lemma~\ref{lem-modulation} applies as long as
			$(\vartheta(s),b(s))\in\mathcal V_\ell[A,\eta](s)$.
			Indeed, if this holds on $[s_0,\mathfrak s^*)$ with $\mathfrak s^*<\infty$,
			then, with the preceding parameter choices,
			\begin{align*}
				\mu(s)&=e^{-s},\qquad b(s)\asymp e^{-(2\ell-1)s},\qquad
				x(s)<e^{-s_0}<x_0,\\
				u_y(x(s),0)&=\tfrac12(\mu b)^{1/4}
				\bigl(1+O_{A,\ell}(b^\eta)\bigr)\ge c_{\mathfrak s^*}>0,\\
				u_{yy}(x(s),y)-1&=O_{\mathfrak s^*}(y^2)\quad(y\to0^+),\qquad
				\|u_{yy}(x(s))\|_\infty\le1+\tfrac A2(1+b^{-\eta/8}),\\
				\|u(x(s))\|_\infty&\le C_{\mathfrak s^*},\qquad
				\inf_{s_0\le s<\mathfrak s^*,\,y\ge r}u(x(s),y)\ge c_{\mathfrak s^*,r}>0\quad(r>0).
			\end{align*}
			The propagated compatibility follows from the near-wall residual bound
			and $\psi(x,y)\asymp_{\mathfrak s^*}y^2$.
			These estimates continue the physical solution through
			$x(\mathfrak s^*)=e^{-s_0}-e^{-\mathfrak s^*}$; the nonzero Jacobian then
			continues its modulation decomposition to $[s_0,\mathfrak s^*+\epsilon)$
			for some $\epsilon>0$. Thus no finite breakdown occurs while the solution
			remains in the shrinking set. It remains to choose $\mathbf d$ so that
			$(\vartheta,b)(s)\in\mathcal V_\ell[A,\eta](s)$ for every $s\ge s_0$;
			the finite-dimensional shooting argument below provides this choice and
			hence the separation regime of Theorem~\ref{theo-unstab}.
		\end{remark}
		
		\subsection{Reduction to a finite-dimensional problem}\label{section-reduction-finite-ell=2}
		The following bootstrap improvements are proved in Section~\ref{section-prioei-ell-2}.
		\begin{proposition}[A priori estimates]\label{propo-priori-ell-2}
			Fix the preparation parameters of Lemma~\ref{lemma-initial-data-ell}.
			There are thresholds $A_6=A_6(\ell,M_0,x_0)\ge A_5$,
			$\eta_6(A,\ell)\in(0,\eta_5]$, and
			$s_6=s_6(A,\eta,\ell,M_0,x_0)\ge\max\{s_5,s_{19}\}$ with the following property.
			Let $A\ge A_6$, $0<\eta\le\eta_6$, $s_0\ge s_6$, and
			$\mathbf d\in[-2,2]^\ell$. Suppose that a solution $\vartheta$ of
			\eqref{equa-vartheta}, with initial datum
			$\vartheta(s_0)=\Psi_{\ell,\mathbf d}^0(s_0)$ from
			\eqref{defi-initi-vartheta-s-0-ell}, satisfies
			$(\vartheta,b)(s)\in\mathcal V_\ell[A,\eta](s)$ on $[s_0,\bar s]$, where
			$\bar s>s_0$. Then, at every $s\in[s_0,\bar s]$, the following bounds hold
			with $b=b(s)$ and all profiles evaluated at time $s$.
			
			\noindent\textup{(i)} \emph{Scale and discrete dynamics:}
			\[
			\frac34\le b\mathfrak I_\ell(s)^{-1}\le\frac32,\qquad
			\varepsilon_j'=(\tfrac12-j)\varepsilon_j+O_\ell(b^{3/4})
			\quad(1\le j\le\ell).
			\]
			\noindent\textup{(ii)} \emph{Far-field profile:}
			\[
			\frac2A b^{-\eta/2}\le\vartheta(\xi)\le\frac A2 b^{-1/(2\ell-1)-\eta}
			\quad(\xi\ge b^{-\eta}).
			\]
			\noindent\textup{(iii)} \emph{Remainder:}
			\[
			\|\varepsilon_-\|_{\mathfrak X_{b,\eta}^{(\ell)}}\le\frac{A^2}{2}b^{1/2+\eta}.
			\]
			\noindent\textup{(iv)} \emph{Residual:}
			$-A/2\le\mathcal F(\xi)\le(A/2)b^{-\eta/8}$ for $\xi\in\mathbb R_+$, and
			\[
			\|\mathcal F\|_{\mathfrak X_{b,\eta}^{(\ell)}}\le\frac A2 b^{1/2-\eta}.
			\]
			The constant in $O_\ell(\cdot)$ is uniform in $s_0$, $\bar s$, and $\mathbf d$.
		\end{proposition}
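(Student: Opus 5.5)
The plan follows the stable bootstrap of Proposition~\ref{propo-priori-ell-1}, now with $\beta\equiv1$, the scalar geometric constraint, and the weights $\mathfrak f_\ell$. I would proceed in the order (i), (iii), (iv), (ii). Each estimate is derived under the trapping hypothesis $(\vartheta,b)\in\mathcal V_\ell[A,\eta]$ on $[s_0,\bar s]$, and all constants are kept uniform in $\mathbf d$ through the uniform bounds of Lemma~\ref{lemma-initial-data-ell}.

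\emph{Step 1: modulation and discrete modes.} Differentiate the constraint $G_\ell(b)=(\mathfrak F_{0,b}+\kappa_\ell\mathfrak F_{\ell,b})[\vartheta-Q_b]=0$ in $s$. Substitute \eqref{equa-varepsilon-appen-} with $\beta=1$, and project onto $\varphi_{0,b}$ and $\varphi_{\ell,b}$. Three facts simplify the projections: $\mathscr L_b$ is self-adjoint, $\varepsilon_-$ is orthogonal to the first $\ell+1$ eigenfunctions, and $\Lambda Q_b=\mathbf m_0\sqrt b\,\varphi_{0,b}+E_b$.

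The result is the modulation equation with main term $\partial_bG_\ell=-\mathbf m_0b^{-1/2}(1+o(1))$ from Remark~\ref{remark-unstable-continuation}. Two error sources must be bounded: the nonlinear term $\mathscr B_b(\varepsilon)$ and the moving-basis term $\langle\partial_s\varphi_j,\varphi_i\rangle$. For the nonlinearity, use the residual identity for $(\sqrt\vartheta-\sqrt{Q_b})\partial_\xi^2\varepsilon$ together with the bounds on $\varepsilon_-$ and $\mathcal F$ in $\mathfrak X^{(\ell)}_{b,\eta}$. The weight $\rho_b$ kills the far field $\xi\ge b^{-\eta}$ exponentially, via \eqref{projection-kernel-parameter-bound}. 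Proposition~\ref{diagonalisation-L-b} controls the basis derivative. Together these give $|b_s/b+(2\ell-1)|\lesssim_A b^{2\eta}$; integrating over $s$ yields $3/4\le b\mathfrak I_\ell^{-1}\le3/2$, since $b(s_0)\mathfrak I_\ell(s_0)^{-1}=1+O(b_0^N)$. Projecting onto $\varphi_{j,b}$ for $1\le j\le\ell$ then gives $\varepsilon_j'=\mu_{j,b}\varepsilon_j+(\text{errors})$. Here $\mu_{j,b}=\tfrac12-j+O(b^{1/4})$ and $|\varepsilon_j|\lesssim\sqrt b$, which produces the stated $O_\ell(b^{3/4})$ remainder. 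One caution: the nonlinear projections are $O(A^Cb^{1/2+\eta}\cdot b^{1/4})$, and I would need to check that these are absorbed with a constant independent of $A$.

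\emph{Step 2: the remainder $\varepsilon_-$.} I would use the same three-region scheme as in the stable case.
\begin{itemize}
\item \emph{Near the wall, $\xi<b^\eta$:} inner comparison with the scaled barriers $c_\pm\mathcal Y_\sigma\pm b^\eta\mathcal Z_\sigma$, where now $c_\pm=2\ell b\pm b^{1+\eta/2}$, because $b-b_s=2\ell b+O(b^{1+2\eta})$. Subtracting the finite modes is harmless: the adapted direction cancels the $T_0$ terms, and the lower modes $\varepsilon_j\varphi_{j,b}$ carry the factor $Ab^{1/2+\eta}$.
\item \emph{Intermediate region, $b^\eta\le\xi\le K$:} the Duhamel formula with $S_1(t)$, restricted to modes $\ge\ell+1$. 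Decay there is at rate $\ell+\tfrac12$, which beats the forcing size $b^{1/2+\eta}\sim e^{-(2\ell-1)(1/2+\eta)s}$ as long as $\eta$ is small.
\item \emph{Outer region:} barriers $\pm A^{-3}b^{1/2+\eta}\phi_{1,\ell+1}$, which are positive and comparable to $\xi^{2/3+4(\ell+1)/3}$ for $\xi\ge K$; this matches $\mathfrak f_\ell$.
\end{itemize}
The gain from $A^2$ to $A^2/2$ comes from the fact that the forcing $\mathcal R(\varepsilon_+)$ and the quadratic terms are $O(A b^{1/2+\eta})$, not $O(A^2 b^{1/2+\eta})$.

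\emph{Step 3: the residual and the far field.} Treat $\mathcal F$ through its equation with linear part $\mathscr L_b-\operatorname{Id}$. The whole spectrum of that operator is negative, so the compactness and comparison arguments of Lemmas~\ref{lem-priori-F-K}--\ref{lem-esti-F-le-A} carry over with weights $\mathfrak f_\ell$. For the far field, apply Lemma~\ref{lem-comparison-vartheta} with constant barriers. The upper barrier is now $b^{-1/(2\ell-1)-\eta}\sim e^{s}b^{-\eta}$, matching $U_\infty=2x_0e^{s}$, and the initial orderings come from Lemma~\ref{lemma-initial-data-ell}(ii).

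\emph{Main obstacle.} I expect the hardest step to be the inner wall comparison in Step 2. For $\ell\ge2$ the lower modes $\varepsilon_j\varphi_{j,b}$ with $j<\ell$ carry $T_0$ components of size $Ab^{1/2+\eta}\cdot\sqrt b\,\zeta$, and these do not cancel. The barrier margin $b^{1+\eta}\mathcal Z_\sigma$ is only barely larger than this. I would first try to absorb these components using the $C^1$ wall correction built from the ground-state family, as in \eqref{unstable-wall-correction-definition}; this amounts to renormalizing the effective scale $b$ at each time. If that fails, the fallback is to weaken the inner exponent and accept a loss of $b^{\eta}$.
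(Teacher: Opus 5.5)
Your Step~1, the intermediate Duhamel step, the outer barrier built on $\phi_{1,\ell+1}$, and the constant far-field barriers essentially match the paper. For the outer barrier you still need to choose the upper endpoint $C_*b^{-k_\ell}$ where the barrier reaches the far-field size $b^{-1/(2\ell-1)-\eta}$, with $k_\ell<\delta$; this requirement is what dictates $\delta$ in Lemma~\ref{lemma-initial-data-ell}. The gaps are in the two places where you say the stable arguments carry over.

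\emph{Inner comparison for $\varepsilon$.} Your barriers $c_\pm\mathcal Y_\sigma\pm b^\eta\mathcal Z_\sigma$ must order the full $\widetilde\theta=\varepsilon/b$. The lower modes put $\cscale b^{-1/2}\sum_{1\le j<\ell}c_{j,0}\varepsilon_j\,\Lambda Q$ into $\widetilde\theta$, of size up to $C_\ell Ab^\eta\Lambda Q$. This term is already present at $s=s_0$: near the wall the prepared datum is $Q_{b_0^\sharp}+g_{\mathbf d}$, so $\widetilde\theta(s_0)\approx\omega_{\mathbf d}\Lambda Q$ with $\omega_{\mathbf d}=\cscale A(b_0^\dagger)^\eta\sum_{j<\ell}c_{j,0}d_j$. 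Since $\mathcal Z_\sigma\lesssim\sigma^{1/4}\Lambda Q$ by \eqref{global-bounds-Z-general-p}, this term exceeds your barrier by a factor $\gtrsim A\sigma^{-1/4}$; it is not ``barely'' below it. So the ordering fails both initially and at $\zeta=R(s)$.

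Neither fallback repairs this. Re-modulating the wall scale at each time introduces a second parameter whose dynamics you do not control. Accepting a loss of $b^\eta$ destroys the required bound $\tfrac{A^2}{2}b^{1/2+\eta}$.

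The missing idea is to place the barrier amplitude strictly between $A$ and $A^2$. Your remark that the lower modes are only $O(A)$ is the right observation, but the barrier must be built to exploit it. With $\sigma=A^{-2}$ and $m=A^{9/4}b^\eta$ one has $A^{5/4}b^\eta\Lambda Q\lesssim m\mathcal Z_\sigma\lesssim A^{7/4}b^\eta\Lambda Q$. This dominates the $O_\ell(A)$ contributions at $s_0$ and at $\zeta=R$. Near the wall, the frozen-diffusion term $m\sqrt Q\,Q_\sigma^{-3/2}\mathcal Z_\sigma\sim\tfrac14A^{5/2}b^\eta$ still beats the $O(A^2b^\eta)$ bound on $\widetilde\theta/Q$ from \eqref{tilde-theta-bootstrap-inner}. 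Comparison then gives $|\widetilde\theta|\lesssim A^{7/4}b^\eta\Lambda Q$, and the improvement follows from $C_\ell(A^{7/4}+A)<A^2/4$.

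\emph{Residual.} Negativity of the spectrum of $\mathscr L_b-\operatorname{Id}$ is not sufficient for $\ell\ge2$, because $b$ now decays at rate $\gamma_\ell=2\ell-1$.

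On $[K^{-1},K]$, the proof of Lemma~\ref{lem-priori-F-K} needs the slowest decay rate $\tfrac12$ of $\widehat{\mathscr L}_{0,1}$ to beat the growth rate of $b^{-(1/2-\eta)}$. That growth rate is now about $(\tfrac12-\eta)\gamma_\ell\ge\tfrac32-3\eta$, so the case $T_n\to\infty$ of that contradiction argument fails. The paper replaces decay by an elliptic identification. On compact sets $\varepsilon=O(\sqrt b)=o(b^{1/2-\eta})$, and the identity $\varepsilon''=\mathcal F/\sqrt\vartheta+2(\vartheta^{-1/2}-Q_b^{-1/2})$ forces every locally uniform limit of $b^{-(1/2-\eta)}\mathcal F$ to vanish. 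The initial-trace comparison covers times near $s_0$.

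Near the wall, the barrier $b^{1-\eta}\mathcal Z_\sigma$ of Lemma~\ref{lem-esti-F-0-1/K} now has leading time/drift contribution $[(\ell-1)-\gamma_\ell\eta]\,b\,\mathcal Z_\sigma>0$; in the stable case it was $-\eta\beta b\mathcal Z_\sigma$. The coercive term $D_\sigma\mathcal Z_\sigma\lesssim-\langle\zeta\rangle^{-4/3}$ cannot absorb this once $\zeta\gg b^{-1/2}$, which happens well before $R_K=K^{-1}b^{-3/4}$. The barrier must be corrected to $\mathcal Z_\sigma+D_\ell b\mathcal Y_\sigma$, whose contribution $-D_\ell b\Lambda Q$ restores the sign.
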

		The next proposition shows that, under the a priori estimates of Proposition~\ref{propo-priori-ell-2}, a trapped solution can exit only through a discrete-mode face, and that every such crossing is outward.
		
		\begin{proposition}[Transversality]
			\label{lemma-transversality-ell}
			Fix the preparation parameters of Lemma~\ref{lemma-initial-data-ell}.
			There exists
			$
			A_7=A_7(\ell,M_0,x_0)\ge\max\{A_6,A_{22}\}
			$
			such that, for every \(A\geq A_7\), there exists
			$
			\eta_7=\eta_7(A,\ell)\in(0,\min\{\eta_6(A,\ell),\eta_{22}(A,\ell),1/16\}]
			$
			such  that  for every
			\(\eta\in(0,\eta_7]\), there exists
			$
			s_7=s_7(A,\eta,\ell,M_0,x_0)\ge\max\{s_6,s_{22}\}
			$
			such that, for every \(s_0\geq s_7\) and every
			\(\mathbf d\in[-2,2]^\ell\), the following statement holds:
			Assume that $\vartheta$ is a solution of \eqref{equa-vartheta} with initial datum
			$
			\vartheta(s_0)
			=
			\Psi_{\ell,\mathbf d}^0(s_0)
			$, where $\Psi_{\ell,\mathbf d}^0(s_0)$ is defined in \eqref{defi-initi-vartheta-s-0-ell}. Assume further that there exists $s^*\ge s_0$ such that
			$(\vartheta,b)(s) \in \mathcal V_\ell[A,\eta](s), \qquad s\in[s_0,s^*]$,
			and
			\[
			(\vartheta,b)(s^*)
			\in
			\partial\mathcal V_\ell[A,\eta](s^*).
			\]
			Then,  we have 
			\[
			\bigl(
			\varepsilon_1,\ldots,\varepsilon_\ell
			\bigr)(s^*)
			\in
			\partial\widehat{\mathcal V}_A(s^*).
			\]
			When $s^*=s_0$, derivatives at the crossing are understood as right derivatives.
			Moreover, the crossing is transverse: there exists \(\kappa>0\)
			such that, for every continuation of the solution to
			\([s^*,s^*+\kappa]\),
			$(\vartheta,b)(s) \notin \mathcal V_\ell[A,\eta](s), \qquad s\in(s^*,s^*+\kappa]$.
		\end{proposition}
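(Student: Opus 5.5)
We split the argument into two parts: first identify which inequality is saturated at $s^*$, then show that the discrete components cross their faces outward. For the first part we invoke Proposition~\ref{propo-priori-ell-2}. Since $(\vartheta,b)\in\mathcal V_\ell[A,\eta]$ on $[s_0,s^*]$ and $A\ge A_7\ge A_6$, $\eta\le\eta_6$, $s_0\ge s_6$, every non-discrete inequality holds at $s^*$ with a strict margin. This covers the scale ratio (improved to $[3/4,3/2]$), the far-field profile bounds, the bound $\|\varepsilon_-\|_{\mathfrak X^{(\ell)}_{b,\eta}}\le \tfrac{A^2}{2}b^{1/2+\eta}$, and both residual bounds. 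When $s^*=s_0$ we use instead the strict initial bounds of Lemma~\ref{lemma-initial-data-ell}(iii). Hence membership in $\partial\mathcal V_\ell[A,\eta](s^*)$ can only come from equality in \eqref{compatibility-condi-epsilon-ell}, that is, $(\varepsilon_1,\ldots,\varepsilon_\ell)(s^*)\in\partial\widehat{\mathcal V}_A(s^*)$.

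For transversality we pass to the normalized variables $z_j=\varepsilon_j/(Ab^{1/2+\eta})$ for $1\le j<\ell$ and $z_\ell=(\varepsilon_\ell-\mathbf n_\ell\sqrt b)/(Ab^{1/2+2\eta})$, and derive \eqref{system-z-j}. We use Proposition~\ref{propo-priori-ell-2}(i), namely $\varepsilon_j'=(\tfrac12-j)\varepsilon_j+O_\ell(b^{3/4})$, together with the modulation estimate $|b'/b+(2\ell-1)|\lesssim_{A,\ell}b^{2\eta}$ from \eqref{dynamics-b-item-i}. For $j<\ell$ this gives
\[
z_j'=\bigl(\tfrac12-j-(\tfrac12+\eta)\tfrac{b'}{b}\bigr)z_j+O_\ell(A^{-1}b^{1/4-\eta})
=(\ell-j+(2\ell-1)\eta+O(b^{2\eta}))z_j+O(b^{1/4-\eta}).
\]
For $z_\ell$ we first note that $(\mathbf n_\ell\sqrt b)'=\tfrac12\tfrac{b'}{b}\mathbf n_\ell\sqrt b=(\tfrac12-\ell)\mathbf n_\ell\sqrt b+O(b^{1/2+2\eta})$. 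This matches the linear term $(\tfrac12-\ell)\mathbf n_\ell\sqrt b$ coming from the $\varepsilon_\ell$ equation, and that cancellation is exactly why $\mathbf n_\ell\sqrt b$ is the right center. The deviation $e_\ell$ then obeys $e_\ell'=(\tfrac12-\ell)e_\ell+O(b^{1/2+2\eta})$, where the error comes from the modulation correction multiplying $\mathbf n_\ell\sqrt b$. Dividing by $Ab^{1/2+2\eta}$ gives $z_\ell'=(\ell+2(2\ell-1)\eta+o(1))z_\ell+O(A^{-1})+O(b^{1/4-2\eta})$.

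\textbf{This is the main obstacle.} The inhomogeneous term of size $b^{1/2+2\eta}$ coming from $(\mathbf n_\ell\sqrt b)'$ is only $O(A^{-1})$ after normalization, not $o(1)$. We handle it by taking $A_7$ large: we need $C_\ell/A<\ell/2$, where $C_\ell$ is the implicit constant in the modulation error. That constant must be controlled independently of $A$, which requires checking that the $b^{2\eta}$ bound in \eqref{dynamics-b-item-i} has an $A$-independent leading constant, or that its $A$-dependence is absorbed by taking $\eta$ small and then $s_0$ large. We would verify this first. If it fails, we would sharpen the modulation estimate using the exact geometric constraint, which should give $|b'/b+(2\ell-1)|\lesssim b^{2\eta}$ with the $A$-dependence carried only by higher-order terms.

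With this in hand, at a face $z_j(s^*)=\pm1$ we get $\pm z_j'(s^*)\ge \tfrac12(\ell-j)>0$ for $s_0\ge s_7$, since $b^{1/4-2\eta}$ is small once $\eta<1/16$. The saturated coordinate therefore strictly exceeds $1$ in absolute value on $(s^*,s^*+\kappa]$ for some $\kappa>0$, by continuity of $z_j'$, which follows from the $C^1$ regularity of $b$ and of the projections. This holds for any continuation, and $(\vartheta,b)(s)\notin\mathcal V_\ell[A,\eta](s)$ there. At $s^*=s_0$ the same computation applies to right derivatives.
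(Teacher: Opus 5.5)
Your first step (all non-discrete bounds are strict by Proposition~\ref{propo-priori-ell-2}, or by Lemma~\ref{lemma-initial-data-ell}\textup{(iii)} when $s^*=s_0$) and your computation for $z_j$, $1\le j<\ell$, agree with the paper. The gap is in the $z_\ell$ equation. Write $b'/b=-\gamma_\ell+r_b$ with $\gamma_\ell=2\ell-1$. Then $\varepsilon_\ell'=(\tfrac12-\ell)\varepsilon_\ell+O(b^{3/4})$ gives exactly
\[
e_\ell'=\bigl(\tfrac12-\ell\bigr)e_\ell-\tfrac12\mathbf n_\ell\sqrt b\,r_b+O(b^{3/4}),
\]
and the middle term is not an error term. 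Subtracting the two projected equations for $\varepsilon_\ell$ gives the exact relation \eqref{equation-b-item-i}. Since $\kappa_\ell^{-1}\mathbf m_0=\tfrac12\mathbf n_\ell$, it reads $\tfrac12\mathbf n_\ell\sqrt b\,r_b=-\ell e_\ell+O(b^{3/4})$.

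So $r_b$ is driven by $e_\ell$ at leading order. On the face $|e_\ell|=Ab^{1/2+2\eta}$ one has $|r_b|\asymp_\ell Ab^{2\eta}$, which is why \eqref{dynamics-b-item-i} carries the factor $A$. After dividing by $Ab^{1/2+2\eta}$, your ``inhomogeneous'' term has size $\ell|z_\ell|$, i.e.\ $O_\ell(1)$ rather than $O(A^{-1})$. Enlarging $A$ does not help, and no sharpening of the modulation estimate can push the $A$-dependence to higher order.

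Your linear coefficient is also miscomputed. From $e_\ell'=(\tfrac12-\ell)e_\ell+\cdots$ one gets
\[
z_\ell'=\bigl[(\tfrac12-\ell)+(\tfrac12+2\eta)\gamma_\ell+o(1)\bigr]z_\ell+\cdots=\bigl(2\gamma_\ell\eta+o(1)\bigr)z_\ell+\cdots,
\]
not $(\ell+2\gamma_\ell\eta)z_\ell$. A rate $2\gamma_\ell\eta$ perturbed by an $O_\ell(1)$ term of undetermined sign does not show that the faces $z_\ell=\pm1$ are outgoing.

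The missing idea is to substitute the relation instead of bounding $|r_b|$. The middle term then becomes $+\ell e_\ell+O(b^{3/4})$, so $e_\ell'=\tfrac12e_\ell+O(b^{3/4})$. Equivalently, and this is what the paper does, use the $\varphi_{0,b}$-projected equation, the second line of \eqref{ODE-general-item-i}. There the contribution $\kappa_\ell^{-1}\mathbf m_0\sqrt b\,b'/b$ cancels exactly against $(\mathbf n_\ell\sqrt b)'=\tfrac12(b'/b)\mathbf n_\ell\sqrt b$. This leaves $e_\ell'=\tfrac12e_\ell+\mathcal R_0$ with $|\mathcal R_0|\lesssim_\ell b^{3/4}$. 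Normalizing gives the rate $\tfrac12+(\tfrac12+2\eta)\gamma_\ell=\ell+2\gamma_\ell\eta$ with error $O_{A,\ell}(b^{1/4-2\eta})$, and your face argument then goes through. At $z_\ell=\pm1$ you get $\pm z_\ell'\ge\ell/2$; your stated bound $\tfrac12(\ell-j)$ degenerates at $j=\ell$.
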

		\begin{proof}
			If $s^*>s_0$, Proposition~\ref{propo-priori-ell-2} gives strict
			improvements of all non-discrete bounds at $s^*$; if $s^*=s_0$,
			their strictness follows from Lemma~\ref{lemma-initial-data-ell}\textup{(iii)}.
			Thus a boundary contact can occur only in the finite-dimensional coordinates.
			Use the projected equations in Section~\ref{subsection-proof-item-i-ell},
			with right derivatives at $s_0$.
			Put $\gamma_\ell=2\ell-1$, $e_\ell=\varepsilon_\ell-\mathbf n_\ell\sqrt b$,
			and $\alpha_\ell=\kappa_\ell^{-1}\mathbf m_0=\mathbf n_\ell/2$.
			The two equations for $\varepsilon_\ell$ give
			\[
			\frac{b_s}{b}=1-\frac{\ell\varepsilon_\ell}{\alpha_\ell\sqrt b}
			+O_{A,\ell}(b^{1/4}),\qquad
			e_\ell'=\frac12e_\ell+O_{A,\ell}(b^{3/4}).
			\]
			On the shrinking set $b_s/b=-\gamma_\ell+o(1)$. Thus the normalized
			coordinates $z_j=\varepsilon_j/(A b^{1/2+\eta})$, $1\le j<\ell$, and
			$z_\ell=e_\ell/(A b^{1/2+2\eta})$ satisfy
			\[
			z_j'=(\ell-j+\gamma_\ell\eta+o(1))z_j
			+O_{A,\ell}(b^{1/4-\eta}),\qquad 1\le j<\ell,
			\]
			\[
			z_\ell'=(\ell+2\gamma_\ell\eta+o(1))z_\ell
			+O_{A,\ell}(b^{1/4-2\eta}).
			\]
			Since $\eta\le\eta_7\le1/16$, both error powers are positive.
			Enlarge $s_7$ so that these errors are small for $s_0\ge s_7$. At a face $z_j=1$
			we have $z_j'>0$, and at $z_j=-1$ we have $z_j'<0$. This also applies at
			corners and at the initial time using right derivatives. Hence every contact
			with the boundary is strictly outward. The other bounds cannot be saturated
			by their strict improvements. This proves the stated transversality, including at $s^*=s_0$.
		\end{proof}
		
		\subsection{Completion of the unstable construction and proof of Theorem~\ref{theo-unstab}}\label{subsection-proof-existen-solu-ell=2}
		
		We first choose $\mathbf d^*\in\mathcal D_{s_0}$ so that the solution of \eqref{equa-vartheta} with initial datum $\vartheta_{\mathbf d^*}(s_0)=\Psi_{\ell,\mathbf d^*}^0(s_0)$ remains trapped for all renormalized times:
		\begin{equation}\label{trapping-V-ell-proof}
			\bigl(\vartheta_{\mathbf d^*},b_{\mathbf d^*}\bigr)(s)
			\in
			\mathcal V_\ell[A,\eta](s),
			\qquad
			s\geq s_0.
		\end{equation}
		
		\medskip
		\noindent
		\emph{Proof of \eqref{trapping-V-ell-proof}.}
		Fix $M_0\ge M_5(\ell,x_0)$ and $A\ge A_7$ as in
		Proposition~\ref{lemma-transversality-ell}. Choose
		$0<\eta\le\min\{\eta_7(A,\ell),\eta_{23}(\ell)/2\}$, with
		$\eta_{23}$ from Lemma~\ref{lemma-physical-reconstruction}, and take
		$s_0\ge s_7(A,\eta,\ell,M_0,x_0)$. Then
		Lemma~\ref{lemma-initial-data-ell},
		Proposition~\ref{propo-priori-ell-2}, and
		Proposition~\ref{lemma-transversality-ell} apply, as does the continuation
		statement of Remark~\ref{remark-unstable-continuation}. Recall from
		Lemma~\ref{lemma-initial-data-ell} that
		$\mathcal D_{s_0}\subset[-2,2]^\ell$
		is homeomorphic to the closed cube \([-1,1]^\ell\), and that
		\begin{equation*}
			\bigl(\vartheta_{\mathbf d},b_{\mathbf d}\bigr)(s_0)
			\in
			\mathcal V_\ell[A,\eta](s_0)
			\quad\Longleftrightarrow\quad
			\mathbf d\in\mathcal D_{s_0}.
		\end{equation*}
		
		For \(\mathbf d\in\mathcal D_{s_0}\), let
		\(\vartheta_{\mathbf d}\) denote the corresponding solution and define
		its maximal trapping time by
		\begin{equation*}
			\begin{aligned}
				s_*(\mathbf d)
				:=
				\sup\Bigl\{
				S\geq s_0:
				\bigl(\vartheta_{\mathbf d},b_{\mathbf d}\bigr)(s)
				\in
				\mathcal V_\ell[A,\eta](s)
				\text{ for every }s\in[s_0,S]
				\Bigr\}.
			\end{aligned}
		\end{equation*}
		Corollary~\ref{cor-local-trapping-continuity} ensures that
		\(s_*(\mathbf d)>s_0\) whenever
		\(\mathbf d\) belongs to the interior of \(\mathcal D_{s_0}\).
		
		Assume, by contradiction, that
		\begin{equation}\label{all-exit-finite-assumption}
			s_*(\mathbf d)<+\infty,
			\qquad
			\text{for every }\mathbf d\in\mathcal D_{s_0}.
		\end{equation}
		By continuity of the trajectory and the definition of the exit time,
		\begin{equation*}
			\bigl(
			\vartheta_{\mathbf d},
			b_{\mathbf d}
			\bigr)\bigl(s_*(\mathbf d)\bigr)
			\in
			\partial\mathcal V_\ell[A,\eta]
			\bigl(s_*(\mathbf d)\bigr).
		\end{equation*}
		None of the non-discrete bounds can be saturated at the exit time:
		this follows from Proposition~\ref{propo-priori-ell-2} when $s_*(\mathbf d)>s_0$,
		and from Lemma~\ref{lemma-initial-data-ell}\textup{(iii)} when $s_*(\mathbf d)=s_0$.
		Consequently, Proposition~\ref{lemma-transversality-ell} yields
		\begin{equation}\label{finite-dimensional-boundary-at-exit}
			\bigl(
			\varepsilon_{1,\mathbf d},
			\ldots,
			\varepsilon_{\ell,\mathbf d}
			\bigr)
			\bigl(s_*(\mathbf d)\bigr)
			\in
			\partial\widehat{\mathcal V}_A
			\bigl(s_*(\mathbf d)\bigr).
		\end{equation}
		
		We therefore define the normalized exit map
		$\Pi: \mathcal D_{s_0} \longrightarrow \partial[-1,1]^\ell$
		by
		\begin{equation*}
			\begin{aligned}
				&\Pi(\mathbf d) := \Bigg( \frac{ \varepsilon_{1,\mathbf d}(s_*(\mathbf d)) }{ A b_{\mathbf d}(s_*(\mathbf d))^{1/2+\eta} }, \ldots, \frac{ \varepsilon_{\ell-1,\mathbf d}(s_*(\mathbf d)) }{ A b_{\mathbf d}(s_*(\mathbf d))^{1/2+\eta} },\qquad \frac{ \varepsilon_{\ell,\mathbf d}(s_*(\mathbf d)) - \mathbf n_\ell b_{\mathbf d}(s_*(\mathbf d))^{1/2} }{ A b_{\mathbf d}(s_*(\mathbf d))^{1/2+2\eta} } \Bigg).
			\end{aligned}
		\end{equation*}
		The map \(\Pi\) is well defined by
		\eqref{finite-dimensional-boundary-at-exit}. Moreover, it is
		continuous. Let $\mathbf d_n\to\mathbf d$ and $t_*=s_*(\mathbf d)<\infty$.
		Remark~\ref{remark-unstable-continuation} and the nonzero normalization
		Jacobian extend the reference normalized flow slightly beyond $t_*$.
		Corollary~\ref{cor-local-trapping-continuity} gives uniform convergence
		of its finite-mode coordinates on this larger interval. On each compact
		subinterval before $t_*$, the reference coordinates are strictly inside
		their box: a boundary contact would force immediate exit by
		Proposition~\ref{lemma-transversality-ell}. Nearby trajectories cannot
		exit there, since every exit must occur through a discrete face.
		Thus $\liminf_n s_*(\mathbf d_n)\ge t_*$; for $t_*=s_0$ this is automatic.
		An active coordinate crosses strictly outward at $t_*$, and hence lies
		outside its box at $t_*+\epsilon$ for every sufficiently small
		$\epsilon>0$. The nearby coordinate does too, so
		$\limsup_n s_*(\mathbf d_n)\le t_*+\epsilon$.
		Letting $\epsilon\downarrow0$ proves continuity of the exit time,
		including at $s_0$, and hence of $\Pi$.
		
		Let
		$\mathcal H_{s_0}: [-1,1]^\ell \longrightarrow \mathcal D_{s_0}$
		be the homeomorphism constructed in the proof of
		Lemma~\ref{lemma-initial-data-ell}. It satisfies
		\begin{equation}\label{Gamma-H-identity-s0}
			\Gamma_{s_0}
			\bigl(
			\mathcal H_{s_0}(\mathbf y)
			\bigr)
			=
			\mathbf y,
			\qquad
			\mathbf y\in[-1,1]^\ell.
		\end{equation}
		If \(\mathbf y\in\partial[-1,1]^\ell\), then
		\(\mathcal H_{s_0}(\mathbf y)\in\partial\mathcal D_{s_0}\), and the
		corresponding initial finite-dimensional coordinates already lie on
		the boundary of \(\widehat{\mathcal V}_A(s_0)\), while all non-discrete
		bounds are strict by Lemma~\ref{lemma-initial-data-ell}\textup{(iii)}.
		Applying Proposition~\ref{lemma-transversality-ell} with $s^*=s_0$,
		the trajectory leaves the shrinking set immediately after \(s_0\). Hence
		\[
		s_*
		\bigl(
		\mathcal H_{s_0}(\mathbf y)
		\bigr)
		=
		s_0,
		\qquad
		\mathbf y\in\partial[-1,1]^\ell.
		\]
		Combining this identity with \eqref{Gamma-H-identity-s0}, we obtain
		\begin{equation*}
			\Pi
			\bigl(
			\mathcal H_{s_0}(\mathbf y)
			\bigr)
			=
			\mathbf y,
			\qquad
			\mathbf y\in\partial[-1,1]^\ell.
		\end{equation*}
		
		Thus, the continuous map
		\[
		\widetilde\Pi
		:=
		\Pi\circ\mathcal H_{s_0}
		:
		[-1,1]^\ell
		\longrightarrow
		\partial[-1,1]^\ell
		\]
		restricts to the identity on \(\partial[-1,1]^\ell\). This gives a
		continuous retraction of the closed cube onto its boundary, in
		contradiction with the no-retraction theorem; see
		\cite[Theorem~12.8, p.~136]{Smoller1994}.
		
		Therefore, assumption \eqref{all-exit-finite-assumption} is false.
		There exists
		$\mathbf d^*\in\mathcal D_{s_0}$
		such that
		$s_*(\mathbf d^*)=+\infty$.
		The propagation lemma excludes a finite maximal existence time as
		long as the trajectory remains in the shrinking set. Consequently,
		the solution exists for every \(s\geq s_0\) and satisfies
		\eqref{trapping-V-ell-proof}.
		
		\medskip
		\noindent
		\emph{Construction of the physical initial datum.}
		
		From now on, we fix the parameter \(\mathbf d^*\) selected above and
		write, for simplicity,
		$\vartheta=\vartheta_{\mathbf d^*}, \qquad b=b_{\mathbf d^*}$.
		Since \(\beta\equiv1\), the scaling parameter satisfies
		\[
		\frac{\mu_s}{\mu}=-1,
		\qquad
		\mu(s_0)=\mu_0^\dagger=e^{-s_0}.
		\]
		Therefore,
		\begin{equation}\label{exact-mu-unstable}
			\mu(s)=e^{-s},
			\qquad
			s\geq s_0.
		\end{equation}
		
		The initial datum in the physical von Mises variables is defined by
		\begin{equation}\label{defi-w-IN-ell-final}
			w_{\rm in}(\psi)
			:=
			w(0,\psi)
			=
			\mu_0^\dagger
			\Psi_{\ell,\mathbf d^*}^0
			\left(
			s_0,
			\frac{\psi}{(\mu_0^\dagger)^{3/4}}
			\right).
		\end{equation}
		We then set
		\begin{equation*}
			y_{\rm in}(\psi)
			:=
			\int_0^\psi
			\frac{\mathrm d\psi'}{
				\sqrt{w_{\rm in}(\psi')}
			},
		\end{equation*}
		and define the corresponding physical initial velocity
		\(\widehat u_{0,\ell}\) by
		\begin{equation}\label{defi-u-IN-ell-final}
			\widehat u_{0,\ell}
			\bigl(
			y_{\rm in}(\psi)
			\bigr)
			=
			\sqrt{w_{\rm in}(\psi)}.
		\end{equation}
		
		By Lemma~\ref{lemma-initial-data-ell},
		\(\Psi_{\ell,\mathbf d^*}^0\) is positive and increasing. Moreover,
		\[
		\lim_{\xi\to+\infty}
		\Psi_{\ell,\mathbf d^*}^0(s_0,\xi)
		=
		U_\infty^\ell(s_0)
		=
		2x_0(\mu_0^\dagger)^{-1}.
		\]
		It follows that
		$\lim_{\psi\to+\infty} w_{\rm in}(\psi) = 2x_0$.
		Consequently,
		\[
		y_{\rm in}(\psi)
		\sim
		\frac{\psi}{\sqrt{2x_0}}
		\qquad
		\text{as }\psi\to+\infty,
		\]
		and the map
		$\psi\longmapsto y_{\rm in}(\psi)$
		is a \(C^1\)-diffeomorphism from \((0,+\infty)\) onto itself.
		
		Let
		$b_0:=b(s_0)$.
		The boundary estimate contained in the shrinking set gives
		\begin{equation*}
			\partial_\xi\vartheta(s_0,0)
			=
			b_0^{1/4}
			\left(
			1+
			O_A(b_0^\eta)
			\right).
		\end{equation*}
		Hence
		\begin{equation*}
			\mathfrak c_{0,\ell}
			:=
			\partial_\psi w_{\rm in}(0)
			=
			(\mu_0^\dagger)^{1/4}
			\partial_\xi\vartheta(s_0,0)
			=
			(\mu_0^\dagger b_0)^{1/4}
			\left(
			1+
			O_A(b_0^\eta)
			\right)
			>0.
		\end{equation*}
		In particular,
		$w_{\rm in}(\psi) = \mathfrak c_{0,\ell}\psi + O(\psi^{3/2}) \qquad \text{as }\psi\to0^+$,
		and therefore
		\[
		y_{\rm in}(\psi)
		=
		\frac{2}{\sqrt{\mathfrak c_{0,\ell}}}
		\psi^{1/2}
		\bigl(1+o(1)\bigr).
		\]
		Equivalently,
		\begin{equation}\label{psi-y-origin-ell}
			\psi
			=
			\frac{\mathfrak c_{0,\ell}}{4}
			y^2
			\bigl(1+o(1)\bigr)
			\qquad
			\text{as }y\to0^+.
		\end{equation}
		
		Differentiating \eqref{defi-u-IN-ell-final}, we obtain
		\begin{align*}
			\partial_y\widehat u_{0,\ell} \bigl( y_{\rm in}(\psi) \bigr)&= \frac12 \partial_\psi w_{\rm in}(\psi),\\*
			\partial_y^2\widehat u_{0,\ell} \bigl( y_{\rm in}(\psi) \bigr)&= \frac12 \sqrt{w_{\rm in}(\psi)} \,\partial_{\psi\psi}^2w_{\rm in}(\psi).
		\end{align*}
		Since the defect is invariant under the scaling
		\eqref{defi-w-IN-ell-final},
		\begin{equation*}
			\sqrt{w_{\rm in}(\psi)}
			\,\partial_{\psi\psi}^2w_{\rm in}(\psi)
			-2
			=
			\mathcal F_\ell
			\left(
			s_0,
			\frac{\psi}{(\mu_0^\dagger)^{3/4}}
			\right).
		\end{equation*}
		The near-origin estimate in item~{\rm(i)} of
		Lemma~\ref{lemma-initial-data-ell}, together with
		\eqref{psi-y-origin-ell}, gives
		\[
		\partial_y^2\widehat u_{0,\ell}(y)-1
		=
		O(y^2)
		\qquad
		\text{as }y\to0^+.
		\]
		Thus,
		\begin{equation}\label{initial-u-expansion-ell}
			\widehat u_{0,\ell}(y)
			=
			\frac{\mathfrak c_{0,\ell}}{2}y
			+
			\frac12y^2
			+
			O(y^4)
			\qquad
			\text{as }y\to0^+.
		\end{equation}
		Moreover,
		$\widehat u_{0,\ell}(y) = \sqrt{2x_0} + o_{y\to+\infty}(y^{-z}) \qquad \text{for every }z>0$.
		The analytic regular-branch argument used for the stable initial datum
		applies to the corrected profile as well. Thus
		$\widehat u_{0,\ell}\in C_b^{3,1}\cap C^\infty\cap\mathcal K_1$;
		\eqref{initial-u-expansion-ell} verifies its wall compatibility.
		
		\medskip
		\noindent
		\emph{Reconstruction of the physical solution.}
		
		Define
		\begin{equation*}
			x(s)
			:=
			\int_{s_0}^s
			\mu(\sigma)\,\mathrm d\sigma.
		\end{equation*}
		Using \eqref{exact-mu-unstable}, we obtain the exact formula
		$x(s) = e^{-s_0}-e^{-s}$.
		We therefore set
		$x_\ell^* := e^{-s_0} = \mu_0^\dagger$.
		Then
		\begin{equation}\label{exact-distance-x-star-ell}
			x_\ell^*-x(s)
			=
			e^{-s}
			=
			\mu(s).
		\end{equation}
		After increasing $s_7$, we have
		$0<x_\ell^*<x_0$.
		
		For \(s\geq s_0\), define
		\begin{equation}\label{reconstruction-w-ell-final}
			w\bigl(x(s),\psi\bigr)
			=
			\mu(s)
			\vartheta
			\left(
			s,
			\frac{\psi}{\mu(s)^{3/4}}
			\right).
		\end{equation}
		The global trapping property \eqref{trapping-V-ell-proof} implies
		$w(x,\psi)>0, \qquad 0\leq x<x_\ell^*, \quad \psi>0$.
		We may therefore introduce the inverse von Mises transformation
		\begin{equation}\label{inverse-von-Mises-ell-final}
			y(x,\psi)
			=
			\int_0^\psi
			\frac{\mathrm d\psi'}{\sqrt{w(x,\psi')}},
			\qquad
			u\bigl(x,y(x,\psi)\bigr)
			=
			\sqrt{w(x,\psi)}.
		\end{equation}
		It follows that \(u\) solves the original Prandtl equation on
		\[
		[0,x_\ell^*)\times\mathbb R_+,
		\]
		with initial datum
		$u(0,y)=\widehat u_{0,\ell}(y)$.
		
		\medskip
		\noindent
		\emph{Asymptotic behavior at the separation point.}
		
		Write
		$\mathfrak I_\ell(s) = e^{-\gamma_\ell s}$.
		The modulation equation for \(b\), combined with the improved
		estimates of Proposition~\ref{propo-priori-ell-2}, gives
		\begin{equation}\label{modulation-b-final-ell}
			\left|
			\frac{b_s(s)}{b(s)}
			+
			\gamma_\ell
			\right|
			\lesssim
			b(s)^{2\eta}.
		\end{equation}
		Since
		\[
		b(s)\asymp e^{-\gamma_\ell s},
		\]
		the right-hand side of \eqref{modulation-b-final-ell} is integrable.
		Set
		\[
		r_b(s)
		:=
		\frac{b_s(s)}{b(s)}
		+
		\gamma_\ell.
		\]
		Then
		$|r_b(s)| \lesssim e^{-2\eta\gamma_\ell s}$.
		Consequently, the limit
		\begin{equation*}
			b_{\ell,\infty}
			:=
			b(s_0)e^{\gamma_\ell s_0}
			\exp\left(
			\int_{s_0}^{+\infty}
			r_b(\sigma)\,\mathrm d\sigma
			\right)
			>0
		\end{equation*}
		is well defined, and
		\begin{equation*}
			b(s)
			=
			b_{\ell,\infty}
			e^{-\gamma_\ell s}
			\left(
			1+
			O\left(
			e^{-2\eta\gamma_\ell s}
			\right)
			\right).
		\end{equation*}
		Using \eqref{exact-distance-x-star-ell}, we obtain
		\begin{equation*}
			b(x)
			=
			b_{\ell,\infty}
			(x_\ell^*-x)^{\gamma_\ell}
			\left(
			1+
			O\left(
			(x_\ell^*-x)^{2\eta\gamma_\ell}
			\right)
			\right).
		\end{equation*}
		
		Define
		$\lambda(x) := \mu(x)b(x)$.
		Since
		$\mu(x)=x_\ell^*-x$,
		it follows that
		\begin{equation*}
			\lambda(x)
			=
			b_{\ell,\infty}
			(x_\ell^*-x)^{1+\gamma_\ell}
			\left(
			1+
			O\left(
			(x_\ell^*-x)^{2\eta\gamma_\ell}
			\right)
			\right).
		\end{equation*}
		
		It remains to translate this estimate into the behavior of the wall
		shear. From \eqref{inverse-von-Mises-ell-final},
		\begin{equation*}
			\partial_yu(x,0)
			=
			\frac12
			\partial_\psi w(x,0).
		\end{equation*}
		Moreover, \eqref{reconstruction-w-ell-final} gives
		\begin{equation}\label{wall-w-derivative-ell}
			\partial_\psi w(x(s),0)
			=
			\mu(s)^{1/4}
			\partial_\xi\vartheta(s,0).
		\end{equation}
		The boundary estimate in the shrinking set yields
		\begin{equation}\label{boundary-vartheta-derivative-ell}
			\partial_\xi\vartheta(s,0)
			=
			b(s)^{1/4}
			\left(
			1+
			O_A\left(
			b(s)^\eta
			\right)
			\right).
		\end{equation}
		Combining
		\eqref{wall-w-derivative-ell} and
		\eqref{boundary-vartheta-derivative-ell}, we obtain
		\begin{equation*}
			\partial_\psi w(x,0)
			=
			\lambda(x)^{1/4}
			\left(
			1+
			O\left(
			(x_\ell^*-x)^{\eta\gamma_\ell}
			\right)
			\right).
		\end{equation*}
		Therefore,
		\begin{equation*}
			\partial_yu(x,0)
			=
			\mathbf C_\ell
			(x_\ell^*-x)^{(1+\gamma_\ell)/4}
			\left(
			1+
			O\left(
			(x_\ell^*-x)^{\eta\gamma_\ell}
			\right)
			\right),
			\qquad
			x\to(x_\ell^*)_-,
		\end{equation*}
		where
		\begin{equation*}
			\mathbf C_\ell
			:=
			\frac12
			b_{\ell,\infty}^{1/4}
			>0.
		\end{equation*}
		In particular,
		\[
		\partial_yu(x,0)>0,
		\qquad
		0\leq x<x_\ell^*,
		\]
		and
		\[
		\lim_{x\to(x_\ell^*)_-}
		\partial_yu(x,0)
		=
		0.
		\]
		Thus, the solution separates at the finite point \(x_\ell^*\).
		The local expansion and both spatial remainder bounds follow from Lemma~\ref{lemma-physical-reconstruction}. This completes the proof of Theorem~\ref{theo-unstab}.
		
		\subsection{Reconstruction of the physical separation profile}
		\label{revision-physical-reconstruction}
		\begin{lemma}[Reconstruction of the physical profile]\label{lemma-physical-reconstruction}
			Let $\ell\ge1$, $\gamma=2\ell-1$, $p=2+4\ell/3$, and set
			$\eta_{23}(\ell):=\min\{1/4,1/(2\gamma)\}$.
			Let $u=u_\ell$ be a globally trapped solution constructed above, with
			bootstrap exponent $0<\eta_0<\eta_{23}(\ell)$.
			Choose $0<\eta_*<\eta_0$ with $\eta_0+(p+4/3)\eta_*<1/4$. For $t=x_\ell^*-x$, the remainder $\widetilde u=u-\mathbf C_\ell t^{\ell/2}y-y^2/2$ in \eqref{decom-u_1-theo} and \eqref{decom-u-ell-theo} satisfies, for some $t_0>0$ and every fixed sufficiently small $y_0>0$,
			\begin{align}
				|\widetilde u(x,y)|&\lesssim t^{\ell/2+\gamma\eta_*}y,
				&&0<t<t_0,\quad 0\le y\le t^{1/4-\gamma\eta_*/3},\label{shrinking-physical-remainder}\\*
				|\widetilde u(x,y)|&\lesssim t^{\ell/2+\gamma\eta_*}y+y^{2\ell+1},
				&&0\le x\le x_\ell^*,\quad 0\le y\le y_0.\notag
			\end{align}
			The cutoff $y_0$ may be prescribed independently of $s_0$; the implicit constants may depend on the initial datum and $y_0$, but not on $x$ or $y$.
			The terminal profile is continuous on every bounded $y$-interval and satisfies $u(x_\ell^*,y)=y^2/2+O(y^{2\ell+1})$ as $y\downarrow0$.
		\end{lemma}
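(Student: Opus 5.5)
We will pass from the renormalized shrinking-set bounds to the physical variables, splitting the wall region into an inner zone $\xi\le b^{-\eta_*}$ (where the trapped bounds give $\vartheta=Q_b+\varepsilon_++\varepsilon_-$ with explicit control) and a complementary zone handled by the residual $\mathcal F$ and wall compatibility. First, the wall and modulation asymptotics: from the final steps of the stable and unstable constructions we have $\mu(s)\asymp t$, $b\asymp t^{\gamma}$ (with $t=e^{-s}$ exactly for $\ell\ge2$, and $b\asymp t$, $\mu\asymp t$ for $\ell=1$), and $\partial_\psi w(x,0)=\lambda^{1/4}(1+O(t^{\gamma\eta_0}))$ with $\lambda=\mu b$, so $\tfrac12\lambda^{1/4}=\mathbf C_\ell t^{\ell/2}(1+O(t^{\gamma\eta_*}))$. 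Write $w=\mu\vartheta(s,\psi/\mu^{3/4})$ and compare it with the model $w_{\rm mod}(x,\psi):=\lambda Q(\psi/\lambda^{3/4})$. By construction of $\mathcal Q_{\rm sep}$ via $\mathcal Y_Q$, the inverse von Mises transform of $w_{\rm mod}$ is exactly $\mathbf C_\ell^2t^\ell\mathcal Q_{\rm sep}(y/(\mathbf C_\ell t^{\ell/2}))=\mathbf C_\ell t^{\ell/2}y+y^2/2$, up to replacing $\lambda^{1/4}/2$ by $\mathbf C_\ell t^{\ell/2}$, which costs $O(t^{\ell/2+\gamma\eta_*}y)$.

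Second, the shrinking region \eqref{shrinking-physical-remainder}. For $\xi\le b^{-\eta_*}$, the table bounds of the shrinking set give $|\varepsilon|/Q_b\lesssim A^2b^{\eta_0}+b^{1/4}$ on $\xi\le1$. On $1\le\xi\le b^{-\eta_*}$ the ratio is at most $b^{1/2}\xi^{2/3}+b^{3/4}\xi^2+A^2b^{1/2+\eta_0}\mathfrak f_\ell(\xi)\xi^{-4/3}$, which stays below $b^{\eta_0}$ provided $\eta_0+(p+4/3)\eta_*<1/4$; this is exactly the hypothesis. Hence $w=w_{\rm mod}(1+O(t^{\gamma\eta_*}))$ in this range. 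Feeding this into $y=\int_0^\psi w^{-1/2}$ and $u=\sqrt w$, we get $u=u_{\rm mod}(1+O(t^{\gamma\eta_*}))$ at the same $\psi$. The displacement in $y$ is also relative, $y=y_{\rm mod}(1+O(t^{\gamma\eta_*}))$. We will need the sharper statement that the error is $O(t^{\ell/2+\gamma\eta_*}y)$ rather than $O(t^{\gamma\eta_*}u)$. For this we use that near $\xi\lesssim b^{3/4}$ the error is linear, since $\varepsilon_-=O(b^{1/4+\eta}\xi)$ and $\varepsilon_+=O(b^{1/2}\xi^2)$. Farther out, $u_{\rm mod}\approx y^2/2$, and there we use the residual: $\mathcal F=2(u_{yy}-1)$, with $\|\mathcal F\|_{\mathfrak X}\lesssim b^{1/2-\eta}$, gives $|u_{yy}-1|$ small. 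Integrating twice from the wall, using $u(x,0)=0$ and $u_y(x,0)=\mathbf C_\ell t^{\ell/2}(1+O(t^{\gamma\eta_*}))$, yields $\widetilde u=O(t^{\ell/2+\gamma\eta_*}y)+\int_0^y\!\int_0^{y'}\tfrac12\mathcal F$. The physical edge $y\sim t^{1/4-\gamma\eta_*/3}$ corresponds to $\psi\sim\mu^{3/4}b^{-\eta_*}$, which is where this bookkeeping is consistent.

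Third, the full strip $0\le y\le y_0$. On the outer part, where $y\ge t^{1/4-\gamma\eta_*/3}$, we need $|u-\mathbf C_\ell t^{\ell/2}y-y^2/2|\lesssim y^{2\ell+1}+t^{\ell/2+\gamma\eta_*}y$. Here $t^{\ell/2}y\lesssim y^{2\ell+1}$ essentially, since $t\lesssim y^{4+}$. It then suffices to show $u_{yy}-1=O(y^{2\ell-1})$ uniformly in $x$ near $x^*$ on $y\le y_0$. We get this from the residual bound $|\mathcal F(s,\xi)|\lesssim b^{1/2-\eta}\xi^{2/3}\langle\xi\rangle^{4/3+4\ell/3}$, converted to physical variables with $\xi=\psi/\mu^{3/4}$ and $\psi\asymp y^3$ (since $u\approx y^2/2$). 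The resulting power count $b^{1/2}\mu^{-(2+4\ell/3)\cdot3/4}\psi^{2+4\ell/3}$ gives $t^{\ell-\ell}y^{6+4\ell}$-type terms, which are more than enough. For $\xi\ge b^{-\eta}$, i.e. $\psi\ge\mu^{3/4}b^{-\eta}$, we instead use the local-in-$x$ residual bound $|R|\le C\min\{\psi,1\}$ of Lemma~\ref{lem-local-weighted-dependence} on strips before $x^*$, combined with the trapped global bound, together with the observation that $\mathcal F$ is controlled on each fixed $\psi$-interval uniformly as $s\to\infty$. Integrating twice from the edge of the shrinking region, where the previous step supplies matching values of $\widetilde u$ and $\widetilde u_y$, finishes this step. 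Away from the separation point ($t\ge t_0$) the bound is trivial by compactness and wall compatibility.

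Finally, the terminal profile: the estimates are uniform in $x$, so $u(x,y)$ converges as $x\uparrow x^*$ locally uniformly on bounded $y$-intervals. Letting $t\to0$ gives $u(x^*,y)=y^2/2+O(y^{2\ell+1})$. \textbf{Main obstacle:} step three, converting the weighted renormalized residual bound into the clean physical power $y^{2\ell-1}$ uniformly as $t\to0$ beyond the expanding region. The $\mathfrak X$-norm only covers $\xi\le b^{-\eta}$, so one must check that $b^{-\eta}\mu^{3/4}\to0$ and that in the remaining fixed-$y$ band $\mathcal F$ decays uniformly. I would first try to obtain this from the outer eigenfunction barrier estimates for $\mathcal F$ with weight $\phi_{\beta,2}$, which is what fixes the exponent $2\ell+1$ through $\mathfrak f_\ell$.
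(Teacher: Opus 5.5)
\paragraph{Verdict.} The proposal has a genuine gap in its third step, and the terminal-profile step does not follow as written.

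\paragraph{The shrinking zone.} Your second step is essentially the paper's argument there: integrate $u_{yy}=1+\mathcal F/2$ twice from the wall. Two things need fixing. You still need the continuity argument showing that $y\le t^{1/4-\gamma\eta_*/3}$ keeps $\xi(x,y)\le Mt^{-\gamma\eta_*}<b^{-\eta_0}$. Also, the hypothesis $\eta_0+(p+4/3)\eta_*<1/4$ is not what bounds $|\varepsilon|/Q_b$; it is what makes $y^2\sup|\mathcal F|/(t^{\ell/2+\gamma\eta_*}y)=O(t^{\gamma[1/4-\eta_0-(p+4/3)\eta_*]})$.

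\paragraph{The gap in the third step.} Consider the band $t^{1/4-O(\eta)}\lesssim y\le y_0$. This lies beyond $\xi\sim b^{-\eta}$, and also beyond the ranges of the extended outer comparisons ($\xi\le cb^{-r_{\rm out}}$) and of the residual barriers ($\xi\le R(s)$); in physical variables all of these ranges shrink to $\psi=0$ as $t\to0$. On this band the bootstrap gives only $-C\le\mathcal F\le b^{-\eta/8}$, which is not even bounded as $t\to0$. Nothing you cite yields $u_{yy}-1=O(y^{2\ell-1})$ uniformly there:
\begin{itemize}
\item The bound $|R|\le C_X\min\{\psi,1\}$ of Lemma~\ref{lem-local-weighted-dependence} holds only on strips $[0,X]$ strictly before the maximal endpoint, and $C_X$ degenerates as $X\uparrow x^*$.
\item No estimate makes $\mathcal F$ small on fixed $\psi$-intervals as $s\to\infty$.
\item Your conversion of the weighted bound is miscounted: $b^{1/2}\mu^{-3p/4}\psi^{p}\asymp t^{-2}y^{6+4\ell}$. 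This is useful only inside the region $y\lesssim t^{1/4-O(\eta)}$ that you have already controlled.
\end{itemize}
Taken literally on all of $[0,y_0]$, the claim is also false for $\ell\ge2$: near the wall $u_{yy}-1$ is generically of order $t^{\ell-1}y^2$, since $u_{yyyy}(x,0)=\tfrac12\partial_x(u_y(x,0)^2)$. In short, second-order pointwise control at fixed $y>0$ near $x^*$ is not available from the bootstrap.

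\paragraph{The missing idea.} Estimate $w$ itself, at zeroth order, and transport in $x$ along fixed $\psi$.
\begin{itemize}
\item Since $\partial_tw=-\mathcal F$ and $|\mathcal F|\lesssim b^{-\eta_0/8}\asymp t^{-q}$ with $q=\gamma\eta_0/8<1$, one has $|w(t,\psi)-w(\tau,\psi)|\lesssim\tau^{1-q}$.
\item Set $m=p/2$ and $\sigma=(1-q)/m<3/4$. For $\psi$ in the band, go back to the earlier time $\tau=\psi^{1/\sigma}$. At that time $\psi$ still lies in the range of \eqref{outer-trapping-epsilon-minus} and \eqref{ell-outer-trapping-extended}, where $|w-w_Q|\lesssim\psi^m$ with $w_Q=\mu Q_b(\xi)$.
\item The transport error is $\tau^{1-q}=\psi^m$, and $|w_Q(\tau,\psi)-w_Q(t,\psi)|\lesssim\tau^\ell\psi^{2/3}\lesssim\psi^m$.
\item This gives $|w-w_Q|\lesssim\psi^m$ on $\mu^{3/4}\le\psi\le\psi_0$, uniformly in $t$.
\item Now compare $Y=\int_0^\psi w^{-1/2}$ with $Y_Q$, and $\sqrt w$ with $\sqrt{w_Q}=\alpha_{\rm mod}Y_Q+Y_Q^2/2$. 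Using $Y\asymp\psi^{1/3}$, the error is $y^{3m-2}=y^{2\ell+1}$.
\end{itemize}
This, not a bound on $u_{yy}$, is where the exponent $2\ell+1$ comes from.

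\paragraph{The terminal trace.} Your last step does not follow: a uniform bound on $\widetilde u$ does not make $u(x,\cdot)$ Cauchy as $x\uparrow x^*$. The same time-integrability is what produces the trace. You need:
\begin{itemize}
\item the uniform limit $w_*$, which follows from $|\partial_tw|\lesssim t^{-q}$;
\item a positive lower bound for $w_*$ away from $\psi=0$;
\item the bound $w\gtrsim\psi^{4/3}$ near zero (from $Q(r)\ge a^2r^{4/3}$), to pass to the limit in $Y(t,\cdot)$ by dominated convergence;
\item convergence of the inverse maps.
\end{itemize}
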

		\begin{proof}
			For $Z=\mathcal Y_Q(r)$, set $U_Q(Z)=\sqrt{Q(r)}$.
			The ground-state equation gives $U_Q'(Z)=Q'(r)/2$ and
			$U_Q''(Z)=\sqrt{Q(r)}Q''(r)/2=1$, with
			$U_Q(0)=0$ and $U_Q'(0)=1/2$. Hence
			\[
			U_Q(Z)=\frac Z2+\frac{Z^2}{2},\qquad
			\mathcal Q_{\rm sep}(Y)=4U_Q(Y/2)=Y+\frac{Y^2}{2}.
			\]
			Thus the remainder in the statement agrees with that in
			\eqref{decom-u_1-theo} and \eqref{decom-u-ell-theo}.
			Choose $0<t_0<\min\{1,x_\ell^*\}$, to be decreased below.
			
			Equations \eqref{asymptotics-mu-s-final-proof}--\eqref{asymptotics-b-s-final-proof} and \eqref{modulation-b-final-ell}, together with $dx/ds=\mu$, give
			\begin{equation*}
				\mu\asymp t,\qquad b\asymp t^\gamma,\qquad
				\alpha_{\rm mod}:=\tfrac12(\mu b)^{1/4}
				=\mathbf C_\ell t^{\ell/2}+O(t^{\ell/2+\gamma\eta_0}).
			\end{equation*}
			Since $\varepsilon(s,0)=0$ and $\varepsilon(s,\cdot)$ is $C^1$ at the wall, the bound $|\varepsilon(s,\xi)|\lesssim b^{1/4+\eta_0}\xi$ for $\xi\le b^{3/4}$ yields
			\begin{equation*}
				|\partial_\xi\varepsilon(s,0)|
				=\left|\lim_{\xi\downarrow0}\frac{\varepsilon(s,\xi)}{\xi}\right|
				\lesssim b^{1/4+\eta_0}.
			\end{equation*}
			Using $u_y=\tfrac12\mu^{1/4}\partial_\xi\vartheta$ and $Q_b'(0)=b^{1/4}$, we obtain $\alpha:=u_y(x,0)=\alpha_{\rm mod}+O(t^{\ell/2+\gamma\eta_0})$.
			
			\emph{The shrinking interval.} For each fixed large $M$, the regional residual estimates imply
			\begin{equation*}
				|\mathcal F(s,\xi)|\lesssim b^{1/2-\eta_0}(1+\xi)^p
				\lesssim t^{\gamma(1/2-\eta_0-p\eta_*)}=o(1),
				\qquad 0\le\xi\le Mt^{-\gamma\eta_*},
			\end{equation*}
			since $Mt^{-\gamma\eta_*}<b^{-\eta_0}$ for small $t$. While $\xi(x,y)$ stays in this range, $u_{yy}=1+\mathcal F/2$ gives $\alpha y+y^2/4\le u\le\alpha y+3y^2/4$ and hence $\psi\le\alpha y^2/2+y^3/4$. Thus, for $y\le t^{1/4-\gamma\eta_*/3}$,
			\begin{equation*}
				\xi(x,y)\lesssim t^{\ell/2-3/4}y^2+t^{-3/4}y^3
				\lesssim t^{\gamma/4-2\gamma\eta_*/3}+t^{-\gamma\eta_*}
				\le M_*t^{-\gamma\eta_*},
			\end{equation*}
			where $M_*$ is independent of $M$. Fix $M>2M_*$ and then take $t_0$ sufficiently small. A first point at which $\xi=Mt^{-\gamma\eta_*}$ would contradict the preceding bound, so the whole physical interval lies in the controlled region. The identity
			\begin{equation*}
				u(x,y)=\alpha y+y^2/2+\tfrac12\int_0^y(y-z)\mathcal F(s,\xi(x,z))\,dz
			\end{equation*}
			gives the first estimate: after division by $t^{\ell/2+\gamma\eta_*}y$, the integral term is $O(t^{\gamma[1/4-\eta_0-(p+4/3)\eta_*]})$, and the wall-shear error gains $t^{\gamma(\eta_0-\eta_*)}$.
			
			\emph{Extension to a fixed interval.} Write $w(t,\psi)=w(x_\ell^*-t,\psi)$, $\xi=\psi/\mu^{3/4}$, and $w_Q(t,\psi)=\mu Q_b(\xi)$. The spectral expansion \eqref{eigenfuc-L-b}, the weight \eqref{defi-f-ell}, and the comparison bounds give
			\begin{equation*}
				|\varepsilon|\lesssim
				\begin{cases}
					b^{1/4+\eta_0}\xi,&0<\xi\le b^{3/4},\\
					b^{1/2}\xi^2+b^{1/2+\eta_0}\xi^{2/3},&b^{3/4}\le\xi\le1,\\
					b^{1/2}\xi^{p-4/3}+b^{1/2+\eta_0}\xi^p,&1\le\xi\le cb^{-r_{\rm out}},
				\end{cases}
				\qquad r_{\rm out}=\frac{3(1+2\eta_0)}{4(2\ell+1)}.
			\end{equation*}
			Indeed, the $T_0$ terms cancel in the adapted direction. For $j\ge1$ and $\xi\le b^{3/4}$, Lemma~\ref{lemma-T-i} gives
			\begin{equation*}
				b^{j+1/2}\left|T_j(\xi/b^{3/4})\right|
				\lesssim b^{-1/4-j/8}\xi^{1+3j/2}\le b^{j-1}\xi^2\le\xi^2.
			\end{equation*}
			The eigenfunction remainders are also $O(\xi^2)$ there. The adapted contribution is therefore $O(b^{1/2}\xi^2)$, whereas the lower modes and $\varepsilon_-$ are bounded by $Cb^{1/4+\eta_0}\xi$. For $\xi\ge b^{3/4}$, the adapted finite sum has powers $\xi^{2/3+4j/3}$, $1\le j\le\ell$. The lower modes have coefficients $O(b^{1/2+\eta_0})$; their finite sums are $O(\xi^{2/3})$ for $\xi\le1$ and $O(\xi^{p-4/3})$ for $\xi\ge1$. The spectral remainder $b^{3/4}\mathfrak f_\ell$ is absorbed into $b^{1/2+\eta_0}\mathfrak f_\ell$. The last regional estimate uses the extended outer comparisons \eqref{outer-trapping-epsilon-minus} and \eqref{ell-outer-trapping-extended}, which hold up to $cb^{-r_{\rm out}}$. For the perturbed stable data, the same comparisons follow from the strict initial orderings propagated in Section~\ref{section-prove-theorem-stab}.
			
			The global bounds \eqref{definition-m0-F} and Section~\ref{subsection-unstable-residual-estimates} yield $|\mathcal F|\lesssim b^{-\eta_0/8}$. At fixed $\psi$, the von Mises equation is $\partial_tw=-\mathcal F$, so, with $q=\gamma\eta_0/8<1$,
			\begin{equation*}
				|w(t,\psi)-w(\tau,\psi)|\lesssim\int_t^\tau v^{-q}\,dv
				\le\frac{\tau^{1-q}}{1-q}\quad(0<t\le\tau),\qquad
				m:=p/2,\quad \sigma:=(1-q)/m.
			\end{equation*}
			Set $r_{\rm in}:=(3/4-\sigma)/\gamma$. Since
			\begin{equation*}
				0<\sigma<\frac34,\qquad
				r_{\rm in}=\frac{3(2+\eta_0)}{8(2\ell+3)},\qquad
				r_{\rm out}-r_{\rm in}
				=\frac{3[4+(6\ell+11)\eta_0]}{8(2\ell+1)(2\ell+3)}>0,
			\end{equation*}
			we have $\xi b^{r_{\rm out}}\lesssim t^{\gamma(r_{\rm out}-r_{\rm in})}\to0$ for $\mu^{3/4}\le\psi\le t^\sigma$. Hence the extended comparison applies on this interval. Substituting $\xi=\psi/\mu^{3/4}$ in the last regional estimate gives
			\begin{align*}
				|w-w_Q|&\lesssim b^{1/2}\mu^{1/2-\ell}\psi^{p-4/3}
				+b^{1/2+\eta_0}\mu^{-1/2-\ell}\psi^{2m}\\*
				&\lesssim\psi^{p-4/3}+t^{-1+\gamma\eta_0}\psi^{2m}
				\lesssim\psi^m.
			\end{align*}
			Here $\gamma/2+1/2-\ell=0$, $p-4/3-m=\gamma/3>0$, and
			\begin{equation*}
				t^{-1+\gamma\eta_0}\psi^m
				\le t^{-1+\gamma\eta_0+\sigma m}
				=t^{7\gamma\eta_0/8}\le1.
			\end{equation*}
			Write $\lambda=\mu b\asymp t^{2\ell}$. Lemma~\ref{lemma-Q} yields
			\begin{equation*}
				|w_Q(v,\psi)-a^2\psi^{4/3}|
				\lesssim\lambda(v)^{1/2}\psi^{2/3}
				\lesssim v^\ell\psi^{2/3},\qquad \psi\ge\lambda(v)^{3/4}.
			\end{equation*}
			Choose $\psi_0<1$ sufficiently small, with $\psi_0^{1/\sigma}<t_0$. If $t^\sigma<\psi\le\psi_0$, set $\tau=\psi^{1/\sigma}>t$. Since $\sigma<3/4$, the preceding perturbation estimate applies at time $\tau$, with $\mu(\tau)^{3/4}\le\psi=\tau^\sigma$. Moreover, $\psi\ge\lambda(v)^{3/4}$ for $v=t,\tau$. At this fixed $\psi$,
			\begin{align*}
				|w(t,\psi)-w_Q(t,\psi)|
				&\le |w(t,\psi)-w(\tau,\psi)|
				+|w(\tau,\psi)-w_Q(\tau,\psi)|\\*
				&\quad+|w_Q(\tau,\psi)-a^2\psi^{4/3}|
				+|w_Q(t,\psi)-a^2\psi^{4/3}|\\*
				&\lesssim\tau^{1-q}+\psi^m+(\tau^\ell+t^\ell)\psi^{2/3}
				\lesssim\psi^m,
			\end{align*}
			because $\tau^{1-q}=\psi^m$, $t^\ell\le\tau^\ell$, and
			\begin{equation*}
				\frac\ell\sigma+\frac23-m=\frac{m(\ell-1+q)}{1-q}+\frac23>0.
			\end{equation*}
			We have thus proved $|w-w_Q|\lesssim\psi^m$ on $\mu^{3/4}\le\psi\le\psi_0$.
			Since $m>4/3$, taking $\psi_0$ smaller gives $w\asymp w_Q\asymp\psi^{4/3}$ there. For $\xi\le b^{3/4}$, the first regional bound gives $|\varepsilon|/Q_b\lesssim b^{\eta_0}$; for $b^{3/4}\le\xi\le1$, the second gives
			\begin{equation*}
				\frac{|\varepsilon|}{Q_b}
				\lesssim b^{1/2}\xi^{2/3}+b^{1/2+\eta_0}\xi^{-2/3}
				\lesssim b^{\eta_0}.
			\end{equation*}
			Consequently $w\asymp w_Q$ throughout $0<\psi\le\psi_0$.
			
			\emph{The inverse transformation.} Set
			\begin{equation*}
				Y(t,\psi)=\int_0^\psi\frac{dz}{\sqrt{w(t,z)}},\qquad
				Y_Q(t,\psi)=\int_0^\psi\frac{dz}{\sqrt{w_Q(t,z)}}.
			\end{equation*}
			Then $y=Y\asymp Y_Q$ and $u(x,y)=\sqrt w$. The ground-state equation \eqref{ground-state-Q} gives
			\begin{equation*}
				\sqrt{w_Q}=\alpha_{\rm mod}Y_Q+Y_Q^2/2,
			\end{equation*}
			since this physical profile has second derivative $1$, value zero at the wall, and wall slope $\alpha_{\rm mod}$. Hence
			\begin{equation*}
				u(x,y)-\alpha_{\rm mod}y-y^2/2
				=\sqrt w-\sqrt{w_Q}+(Y_Q-y)\bigl(\alpha_{\rm mod}+(Y_Q+y)/2\bigr).
			\end{equation*}
			We estimate the two differences using
			\begin{align*}
				|Y-Y_Q|&\lesssim\mu^{1/4}\int_0^\xi
				\frac{|\varepsilon(s,z)|}{Q_b(z)^{3/2}}\,dz,\\*
				|\sqrt w-\sqrt{w_Q}|&\lesssim\mu^{1/2}
				\frac{|\varepsilon(s,\xi)|}{\sqrt{Q_b(\xi)}}.
			\end{align*}
			Put $D(t):=\mu^{1/4}b^{1/4+\eta_0}\asymp t^{\ell/2+\gamma\eta_0}$.
			For $\xi\le b^{3/4}$, we have $Q_b\asymp b^{1/4}\xi$ and
			\begin{align*}
				|Y-Y_Q|&\lesssim\mu^{1/4}b^{-1/8+\eta_0}
				\int_0^\xi z^{-1/2}\,dz
				\lesssim\mu^{1/4}b^{-1/8+\eta_0}\xi^{1/2},\\*
				|\sqrt w-\sqrt{w_Q}|&\lesssim\mu^{1/2}b^{1/8+\eta_0}\xi^{1/2}.
			\end{align*}
			Since $Y\asymp Y_Q\asymp\mu^{1/4}b^{-1/8}\xi^{1/2}\lesssim\alpha_{\rm mod}$, both contributions to the remainder are bounded by $CD(t)Y$.
			
			For $b^{3/4}\le\xi\le1$, use $Q_b\asymp\xi^{4/3}$ and split the integral at $b^{3/4}$:
			\begin{align*}
				|Y-Y_Q|&\lesssim\mu^{1/4}\left(b^{1/4+\eta_0}
				+b^{1/2}\int_{b^{3/4}}^\xi dz
				+b^{1/2+\eta_0}\int_{b^{3/4}}^\xi z^{-4/3}\,dz\right)\\*
				&\lesssim\mu^{1/4}(b^{1/4+\eta_0}+b^{1/2}\xi),\\*
				|\sqrt w-\sqrt{w_Q}|&\lesssim\mu^{1/2}
				(b^{1/2}\xi^{4/3}+b^{1/2+\eta_0}).
			\end{align*}
			Here $\int_{b^{3/4}}^\xi z^{-4/3}\,dz
			=3(b^{-1/4}-\xi^{-1/3})\le3b^{-1/4}$, and
			$Y\asymp Y_Q\asymp\mu^{1/4}\xi^{1/3}$, with $\alpha_{\rm mod}\lesssim Y$.
			Dividing the square-root error by $D(t)Y$ gives the bounded factors
			$b^{1/4-\eta_0}\xi\le1$ and $b^{1/4}\xi^{-1/3}\le1$.
			The inverse-map contribution is at most $C D(t)Y(1+b^{1/4-\eta_0}\xi)$.
			Thus
			\begin{equation*}
				|u(x,Y)-\alpha_{\rm mod}Y-Y^2/2|\lesssim D(t)Y\quad(\psi\le\mu^{3/4}),
				\qquad |Y-Y_Q|_{\psi=\mu^{3/4}}\lesssim D(t).
			\end{equation*}
			For $\mu^{3/4}\le\psi\le\psi_0$, we use $|w-w_Q|\lesssim\psi^m$ and $w\asymp w_Q\asymp\psi^{4/3}$ to obtain
			\begin{equation*}
				|Y-Y_Q|\lesssim D(t)+\int_{\mu^{3/4}}^\psi z^{m-2}\,dz
				\lesssim D(t)+\psi^{m-1}.
			\end{equation*}
			Here $m>1$ and $\int_{\mu^{3/4}}^\psi z^{m-2}\,dz
			=(\psi^{m-1}-\mu^{3(m-1)/4})/(m-1)$. Moreover,
			$|\sqrt w-\sqrt{w_Q}|\lesssim\psi^{m-2/3}$ and
			$Y\asymp Y_Q\asymp\psi^{1/3}$: the contribution of $[0,\mu^{3/4}]$ is comparable to $\mu^{1/4}$, and the remaining integral is comparable to $\int_{\mu^{3/4}}^\psi z^{-2/3}\,dz$.
			Since $\alpha_{\rm mod}\lesssim y$, the preceding decomposition gives
			\begin{equation*}
				|u-\alpha_{\rm mod}y-y^2/2|
				\lesssim\psi^{m-2/3}+y\bigl(D(t)+\psi^{m-1}\bigr)
				\lesssim D(t)y+y^{2\ell+1},
			\end{equation*}
			where $3m-2=2\ell+1$.
			Since $\sigma<3/4$ and $\mu(t)\asymp t$, we may decrease $\psi_0$ and then
			$t_0$, keeping $\psi_0^{1/\sigma}<t_0$, so that
			$\mu(t)^{3/4}<\psi_0/2$ for $0<t<t_0$.
			Then $Y(t,\psi_0)\gtrsim\psi_0^{1/3}$ uniformly in $t$, so the estimate holds on $0\le y\le r$ for some $r=r(u_{\rm in})>0$, possibly depending on $s_0$. Replacing $\alpha_{\rm mod}$ by $\mathbf C_\ell t^{\ell/2}$ and using $\eta_*<\eta_0$ gives the claimed remainder there.
			
			\emph{Trace at separation.} Since $q<1$, the global bound $|\partial_tw|\lesssim t^{-q}$ gives a bounded continuous limit $w_*$, with $w(t,\cdot)\to w_*$ uniformly as $t\downarrow0$. The far-field lower bound in the shrinking set and $\mathcal F\ge-A$ imply, for every fixed $\psi>0$ and sufficiently small $\tau>0$,
			\begin{equation*}
				w_*(\psi)\ge w(\tau,\psi)-A\tau
				\ge c\tau^{1-\gamma\eta_0/2}-A\tau>0.
			\end{equation*}
			Indeed, $\mu(\tau)^{3/4}b(\tau)^{-\eta_0}\asymp\tau^{3/4-\gamma\eta_0}\to0$, so the far-field bound applies at this fixed $\psi$. Choosing $\tau$ uniformly for $\psi\in[\delta,L]$, with $0<\delta<L<\infty$, shows that $w_*$ and $w(t,\cdot)$ are bounded below there for small $t$.
			For $r>0$, the first integral in Lemma~\ref{lemma-Q} gives
			\[
			\partial_r(Q^{3/4})
			=\tfrac34Q^{-1/4}\sqrt{1+8\sqrt Q}\ge\frac3{\sqrt2},
			\qquad Q(r)\ge a^2r^{4/3},
			\]
			where the second inequality follows by integration from $Q(0)=0$.
			Consequently, $w_Q(t,\psi)=\lambda Q(\psi/\lambda^{3/4})\ge a^2\psi^{4/3}$,
			with $\lambda=\mu b$.
			Since $w\asymp w_Q$ on $0<\psi\le\psi_0$, this gives the uniform integrable
			bound $w^{-1/2}\lesssim\psi^{-2/3}$ near zero.
			Dominated convergence gives, for every $L<\infty$,
			\begin{equation*}
				\sup_{0\le\psi\le L}|Y(t,\psi)-Y_*(\psi)|
				\le\int_0^L\left|w(t,z)^{-1/2}-w_*(z)^{-1/2}\right|\,dz\longrightarrow0,
				\qquad Y_*(\psi):=\int_0^\psi\frac{dz}{\sqrt{w_*(z)}}.
			\end{equation*}
			The function $Y_*$ is continuous and strictly increasing, with $Y_*(0)=0$ and
			$Y_*(\psi)\ge\psi/\sqrt{\|w_*\|_\infty}\to\infty$.
			Its inverse is therefore defined on $\mathbb R_+$.
			For a fixed $y_0$, choose $L$ such that $Y_*(L)>y_0+1$.
			The preceding uniform convergence and strict monotonicity imply
			$Y(t,\cdot)^{-1}\to Y_*^{-1}$ uniformly on $[0,y_0]$.
			Together with the uniform convergence of $w$, this yields
			\begin{equation*}
				u(x_\ell^*-t,y)\longrightarrow
				u(x_\ell^*,y):=\sqrt{w_*(Y_*^{-1}(y))}
				\quad\text{uniformly on }[0,y_0].
			\end{equation*}
			Passing to the limit in the local remainder estimate gives
			$u(x_\ell^*,y)=y^2/2+O(y^{2\ell+1})$ as $y\downarrow0$.
			
			Now prescribe $y_0>0$ independently of $s_0$ and reduce $r$ so that $r\le y_0$. Comparison in \eqref{wvariable:1} gives $0\le u\le\max\{\|u_{\rm in}\|_\infty,\sqrt{2x_0}\}$, hence
			\begin{equation*}
				|\widetilde u(x,y)|\lesssim_{u_{\rm in},y_0}r^{-(2\ell+1)}y^{2\ell+1},
				\qquad r\le y\le y_0,\quad 0\le x\le x_\ell^*.
			\end{equation*}
			On $[0,r]$, the local estimate covers $0<t<t_0$. For the remaining times, fix $0<t_1<\min\{t_0,x_\ell^*\}$; Theorem~\ref{Olenik-thm-2.1.1} bounds $u_y$ on $[0,x_\ell^*-t_1]\times[0,r]$, giving $|\widetilde u(x,y)|\lesssim y\lesssim_{t_1}t^{\ell/2+\gamma\eta_*}y$. Combining these bounds proves the estimate on the prescribed interval, with constants depending on the datum and $y_0$, uniformly in $x$ and $y$. In particular, their dependence on $s_0$ includes that of $r^{-(2\ell+1)}$. Renaming $\eta_*$ as the theorem exponent completes the proof.
		\end{proof}
		\section{Modulation equations}
		We derive the stable modulation laws by differentiating the two constraints in \eqref{defi-stable-problem} and projecting \eqref{equa-varepsilon-appen-} onto the first two eigenfunctions.
		
		\begin{proposition}[Dynamics of the modulation parameters]\label{pro-finite-projections}
			There exist $A_8\ge1$, $\eta_8(A)>0$ and $s_8(A,\eta)\ge1$ such that the following holds for $A\ge A_8$, $0<\eta\le\eta_8(A)$ and $s_0\ge s_8(A,\eta)$. Let $(\vartheta,b,\beta)$ be a classical solution in the stable regime satisfying \eqref{equa-vartheta} and \eqref{defi-stable-problem}, with $b,\beta\in C^1$ and $(\vartheta,b,\beta)(s)\in\mathcal V_1[A,\eta](s)$ on $[s_0,\bar s]$. Then, for $s_0<s<\bar s$,
			\begin{equation}\label{ode-beta-b}
				|\beta'(s)|+\left|\frac{b'(s)}{b(s)}+\beta(s)\right|\lesssim b(s)^{1/4}.
			\end{equation}
		\end{proposition}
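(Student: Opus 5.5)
The plan is to differentiate the two constraints in \eqref{defi-stable-problem} in $s$ and combine them with the $L^2_{\rho_{b,\beta}}$ projections of \eqref{equa-varepsilon-appen-} onto $\varphi_{0,b,\beta}$ and $\varphi_{1,b,\beta}$. This yields a $2\times2$ linear system for the unknowns $X_1=b_s/b+\beta$ and $X_2=\beta_s/\beta$, already sketched in Section~\ref{subsection-sketch-stable-regime}.

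\emph{Step 1: the differentiated constraints.} For $j=0,1$ we compute
\[
\frac{d}{ds}\langle\varepsilon,\varphi_{j}\rangle_{\rho}=\langle\partial_s\varepsilon,\varphi_j\rangle_\rho+b_s\langle\varepsilon,\partial_b(\varphi_j\rho)\rangle+\beta_s\langle\varepsilon,\partial_\beta(\varphi_j\rho)\rangle,
\]
and similarly for the normalizations $\|\varphi_j\|^2_\rho$. The equation then replaces $\partial_s\varepsilon$ by $\mathscr L_{b,\beta}\varepsilon+\mathscr B_b(\varepsilon)+\Phi_{b,\beta}$.

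Self-adjointness gives $\langle\mathscr L_{b,\beta}\varepsilon,\varphi_j\rangle=\mu_{j,b,\beta}\langle\varepsilon,\varphi_j\rangle$. The constraints fix $\mathfrak F_1=\mathbf n_1(\beta)\sqrt b$ and $\mathfrak F_0=-2\mathbf m_0(\beta)\sqrt b$, so this term is explicit. By \eqref{eigen-value-L-b} its main part $\beta(\tfrac12-j)\mathfrak F_j$ is matched by the left-hand side $\tfrac12(b_s/b)\mathfrak F_j-\tfrac58(\beta_s/\beta)\mathfrak F_j$.

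For the forcing, we decompose $\Lambda_\xi Q_b=\mathbf m_0\sqrt b\,\varphi_{0,b,\beta}+E_b$ with $|\mathfrak F_j(E_b)|\lesssim b^{3/4}$. The $j=0$ projection then carries $(\beta-b_s/b)\mathbf m_0\sqrt b=(2\beta-X_1)\mathbf m_0\sqrt b$. Since $\mathfrak F_0=-2\mathbf m_0\sqrt b$, the $\beta$ contributions cancel against $\mu_0\mathfrak F_0$, leaving a combination of $X_1,X_2$ times $\sqrt b$.

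Dividing both equations by $\sqrt b\,\mathbf n_1(\beta)$ (respectively $\sqrt b\,\mathbf m_0(\beta)$) gives
\[
\Bigl[\begin{pmatrix}1/2&1\\0&1\end{pmatrix}+\mathcal E\Bigr]\binom{X_1}{X_2}=O(b^{1/4})+\text{(error terms)}.
\]
The matrix entries involving $X_2$ come from $\partial_\beta$ of $\mathbf n_1,\mathbf m_0$ and of the eigenbasis. The $O(b^{1/4})$ on the right collects $\widetilde\mu_{j}$, $E_b$, and the $\widetilde\varphi$ corrections, all controlled by Proposition~\ref{diagonalisation-L-b}.

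\emph{Step 2: the error terms.} These are:
\begin{itemize}
\item[(a)] Pairings of $\varepsilon_-$ with $\partial_b(\varphi_j\rho)$ and $\partial_\beta(\varphi_j\rho)$. These contribute to $\mathcal E$ at relative size $A^2b^\eta$. Here I use \eqref{projection-kernel-parameter-bound}, the $\mathfrak X_{b,\eta}$ bound from $\mathcal V_1$ on $(0,b^{-\eta})$, and beyond $b^{-\eta}$ the polynomial bound $|\varepsilon_-|\le\vartheta+|H_{b,\beta}|\lesssim Ab^{-1-\eta}$ against the Gaussian weight, which gives $O(b^{N})$.
\item[(b)] The moving-basis terms for $\varepsilon_+$. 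Their size follows from $b\partial_b$ and $\partial_\beta$ of $\widetilde\varphi$, and they are absorbed in $\mathcal E=O(b^{1/4})$.
\item[(c)] The nonlinear pairing $\langle\mathscr B_b(\varepsilon),\varphi_j\rangle_\rho$, which must be $O(b^{3/4})$, i.e.\ $o(\sqrt b)\cdot b^{1/4}$.
\end{itemize}

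\emph{Step 3: the nonlinear pairing (main obstacle).} The term $(\sqrt{\vartheta}-\sqrt{Q_b})\partial_\xi^2\varepsilon$ involves second derivatives that are not directly controlled. Two routes are available:
\begin{itemize}
\item[(1)] Use the identity from Section~\ref{subsection-sketch-stable-regime} expressing it through $\mathcal F$ and $\sqrt{\vartheta/Q_b}-1$. This gives the pointwise bound $|\varepsilon|/Q_b\cdot(|\mathcal F|+|\varepsilon|/Q_b)$, combined with $\|\mathcal F\|_{\mathfrak X_{b,\eta}}\le Ab^{1/2-\eta}$ and $|\varepsilon|/Q_b\lesssim b^{1/4}+A^2b^\eta$.
\item[(2)] Integrate by parts against $\varphi_j\rho$.
\end{itemize}
I would use (1): near the wall $Q_b\asymp b^{1/4}\xi$, and the weight $\xi^{-2/3}$-type density keeps the integrals finite. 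A naive count gives only $b^{1/2+\eta}\cdot b^{1/2-\eta}$ scale products, which is sufficient: $\sqrt b\cdot b^{1/4}$ is needed and $b^{1-\eta}A^C$ suffices for small $\eta$. The quadratic part $(\sqrt{Q_b+\varepsilon}-\sqrt{Q_b}-\varepsilon/2\sqrt{Q_b})\partial^2_\xi Q_b\lesssim \varepsilon^2Q_b^{-3/2}Q_b''$ is handled similarly, with care at $\xi\lesssim b^{3/4}$, where $\varepsilon_+=O(b^{1/2}\xi^2)$ helps.

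\emph{Step 4: inversion.} With $\|\mathcal E\|\lesssim b^{1/4}+A^2b^\eta$ small for $s_0\ge s_8(A,\eta)$, the matrix is invertible with bounded inverse. This yields $|X_1|+|X_2|\lesssim b^{1/4}$, which is \eqref{ode-beta-b} since $\beta\in[1/2,3/2]$.
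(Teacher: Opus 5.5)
Your architecture (differentiate the two constraints, project \eqref{equa-varepsilon-appen-} onto $\varphi_{0,b,\beta},\varphi_{1,b,\beta}$, reduce to a perturbation of $\begin{pmatrix}1/2&1\\0&1\end{pmatrix}$ acting on $(X_1,X_2)$, and control $\mathscr B_b$ through $\mathcal F$) is the paper's. However, Step~2(a) has a genuine gap.

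You put $b_s\langle\varepsilon_-,\partial_b(\varphi_j\rho)\rangle$ into $\mathcal E$, but $b_s/b=X_1-\beta$ with $\beta\approx1$. So the piece $-\beta\langle\varepsilon_-,b\partial_b(\varphi_j\rho)\rangle$ is a source term, not a matrix perturbation. The tools you cite, \eqref{projection-kernel-parameter-bound} and the $\mathfrak X_{b,\eta}$ bound, only give $|b\partial_b K_j|\lesssim\xi^{-2/3}(1+\xi)^Me^{-c\xi^{4/3}}$, an $O(1)$ profile. With them this source is $O(A^2b^{1/2+\eta})$, with its mass at $\xi\sim1$. After division by $\sqrt b$ your right-hand side is therefore $O(A^2b^\eta)$, which dominates $b^{1/4}$ because $\eta\le\eta_8(A)$ is small. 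The argument as written yields only $|X_1|+|X_2|\lesssim A^2b^\eta$. That is weaker than \eqref{ode-beta-b}, and it does not close the $\beta$-bootstrap in Proposition~\ref{propo-priori-ell-1}(i): integrating gives $A^2b_0^\eta/\eta$, not a quantity below $Ae^{-\eta s_0}$.

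The missing ingredient is that the $b$-derivative of the projection kernel at fixed $\xi$ is small in the weighted norm:
\[
\|b\partial_b\varphi_{j,b,\beta}+\varphi_{j,b,\beta}\,b\partial_b\log\rho_{b,\beta}\|_{L^2_{\rho_{b,\beta}}}\lesssim b^{1/4}.
\]
This holds for three reasons:
\begin{itemize}
\item By \eqref{leading-mode-b-derivative}, $b\partial_b[b^{k+1/2}T_k(\xi/b^{3/4})]=b^{k+1/2}\Theta_k(\xi/b^{3/4})$, which removes the leading outer powers.
\item Proposition~\ref{diagonalisation-L-b} controls $b\partial_b\widetilde\varphi_j$.
\item In the explicit formula for $b\partial_b\log\rho$, the term $-\Lambda_\xi Q_b/(2Q_b)$ is $O(b^{1/2}\xi^{-2/3})$ for $\xi\ge b^{3/4}$, and the integral term is $O(b^{1/2}\xi^{2/3})$. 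The inner zone contributes only $O(b^{1/4})$ in $L^2_\rho$.
\end{itemize}
Cauchy--Schwarz with $\|\varepsilon_-\|_{L^2_\rho}\lesssim A^2b^{1/2+\eta}$ then makes the source $O(A^2b^{3/4+\eta})$.

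Only the $\beta$-derivative may be $O(1)$, because it multiplies the unknown $X_2$. There one has $\beta\partial_\beta\log\rho=-P_b$, which is not small, and $\|\beta\partial_\beta\varphi_j-P_b\varphi_j\|_{L^2_\rho}\lesssim1$; that half of your (a) is fine.

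The same split governs (b):
\begin{itemize}
\item The $b\partial_b\psi$ term is a source and must be $O(b^{1/4})$. This includes the $T_1$ part, handled via $\Theta_1$, not only $\widetilde\varphi$.
\item By contrast, $\beta\partial_\beta\psi=\tfrac{13}{8}\psi$ up to $O(b^{1/4})$ in $L^2_\rho$. This $O(1)$ part produces the entries $1=-\tfrac58+\tfrac{13}{8}$ of the matrix.
\end{itemize}
To isolate it, use $\langle\psi,\varphi_1\rangle_\rho=\|\varphi_1\|_\rho^2$ and $\langle\psi,\varphi_0\rangle_\rho=-\kappa\|\varphi_0\|_\rho^2$, with $\kappa=c_{1,0}/c_{0,0}$. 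These identities convert the $\varepsilon_+$-pairings with the moving weight, whose individual terms involve the non-small $P_b$, into $-\varepsilon_1\langle\partial_s\psi,\varphi_j\rangle_\rho$.

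Finally, two corrections to Step~3. Near the wall, where $|\varepsilon|/Q_b\lesssim A^2b^\eta$, the quadratic term contributes $A^4b^{3/4+2\eta}$, not $b^{1-\eta}$; this is still acceptable. You also need the Gaussian-tail argument for $\mathscr B_b$ on $\xi\ge b^{-\eta}$.
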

		\begin{proof}
			All estimates are uniform on $(s_0,\bar s)$. Write $\kappa=c_{1,0}/c_{0,0}$, $\rho=\rho_{b,\beta}$, $\varphi_i=\varphi_{i,b,\beta}$ and $h_i=\|\varphi_i\|_{L^2_\rho}^2$ for $i=0,1$, and let $\psi=\psi_{1,b,\beta}$ be the adapted direction in \eqref{definition-adapted-eigendirection-strategy}. Proposition~\ref{diagonalisation-L-b} gives
			\begin{equation}\label{uniform-bounds-norm-eigenfunctions}
				c_*\le h_i\le C_*,\qquad \langle\varphi_0,\varphi_1\rangle_{L^2_\rho}=0,
			\end{equation}
			for small $b$ and $\beta\in[1/2,3/2]$. The stable constraints and \eqref{defi-m-0}--\eqref{defi-n-ell-1} give
			\begin{align}
				\varepsilon&=\varepsilon_1\psi+\varepsilon_-,\qquad
				\langle\varepsilon_-,\varphi_i\rangle_{L^2_\rho}=0,\label{decomposition-epsilon-proof-modulation}\\*
				\varepsilon_1&=\mathbf n_1(\beta)b^{1/2}=2\kappa^{-1}\mathbf m_0(\beta)b^{1/2},\label{compatibility-epsilon-one}\\*
				\frac{\varepsilon_1'}{\varepsilon_1}&=\frac12\frac{b'}b-\frac58\frac{\beta'}\beta,\label{derivative-epsilon-one}\\*
				|\mathbf n_1(\beta)|&\asymp|\mathbf m_0(\beta)|\asymp1.\label{n-one-uniform-nonzero}
			\end{align}
			
			\emph{Moving spectral projections.} Equations \eqref{uniform-bounds-norm-eigenfunctions} and \eqref{decomposition-epsilon-proof-modulation} imply
			\begin{equation}\label{projection-identities-epsilon-one}
				\varepsilon_1=h_1^{-1}\langle\varepsilon,\varphi_1\rangle_{L^2_\rho}
				=-\kappa^{-1}h_0^{-1}\langle\varepsilon,\varphi_0\rangle_{L^2_\rho}.
			\end{equation}
			Set $D_s^\rho f=\partial_sf+f\partial_s\log\rho$. Differentiating \eqref{projection-identities-epsilon-one}, using the equation and self-adjointness, gives
			\begin{align*}
				\varepsilon_1'&=\mu_{1,b,\beta}\varepsilon_1+h_1^{-1}\langle\mathscr B_b(\varepsilon)+\Phi_{b,\beta},\varphi_1\rangle_{L^2_\rho}+\mathcal K_1,\\*
				\varepsilon_1'&=\mu_{0,b,\beta}\varepsilon_1-\kappa^{-1}h_0^{-1}\langle\mathscr B_b(\varepsilon)+\Phi_{b,\beta},\varphi_0\rangle_{L^2_\rho}+\mathcal K_0,\\*
				h_1\mathcal K_1&=\langle\varepsilon_-,D_s^\rho\varphi_1\rangle_{L^2_\rho}-\varepsilon_1\langle\partial_s\psi,\varphi_1\rangle_{L^2_\rho},\\*
				\kappa h_0\mathcal K_0&=\varepsilon_1\langle\partial_s\psi,\varphi_0\rangle_{L^2_\rho}-\langle\varepsilon_-,D_s^\rho\varphi_0\rangle_{L^2_\rho}.
			\end{align*}
			Here $\Phi_{b,\beta}$ and $\mathscr B_b$ are defined in \eqref{Phi-simple-} and \eqref{defi-B-quadratic-appendix-}. The eigenfunction expansion \eqref{eigenfuc-L-b} gives $\Lambda_\xi Q_b=\mathbf m_0(\beta)b^{1/2}(\varphi_0-\widetilde\varphi_0)$, hence
			\begin{equation}\label{projections-Lambda-Q}
				\langle\Lambda_\xi Q_b,\varphi_1\rangle_{L^2_\rho}=O(b^{3/4}),\qquad
				\langle\Lambda_\xi Q_b,\varphi_0\rangle_{L^2_\rho}=\mathbf m_0(\beta)b^{1/2}h_0+O(b^{3/4}).
			\end{equation}
			We estimate the resulting system in terms of
			\begin{equation}\label{definition-X-one-X-two}
				X_1=\frac{b'}b+\beta,\qquad X_2=\frac{\beta'}\beta;
			\end{equation}
			in particular, $\beta-b'/b=2\beta-X_1$ and $|b'/b|\lesssim1+|X_1|$.
			
			\emph{Nonlinear projections.} We first prove
			\begin{equation}\label{nonlinear-projection-bound}
				|\langle\mathscr B_b(\varepsilon),\varphi_i\rangle_{L^2_\rho}|\lesssim b^{3/4},\qquad i=0,1.
			\end{equation}
			Put $z=\varepsilon/Q_b$. Since $\sqrt{Q_b}\,Q_b''=2$ and $z\ge-1$,
			\begin{equation}\label{quadratic-first-nonlinearity}
				\left|\left(\sqrt\vartheta-\sqrt{Q_b}-\frac{\varepsilon}{2\sqrt{Q_b}}\right)Q_b''\right|
				=2|\sqrt{1+z}-1-z/2|\lesssim z^2.
			\end{equation}
			The residual \eqref{defi-mathcal-F} also gives
			\begin{equation}\label{second-nonlinearity-via-F}
				(\sqrt\vartheta-\sqrt{Q_b})\varepsilon''
				=\bigl(1-(1+z)^{-1/2}\bigr)\bigl[\mathcal F-2(\sqrt{1+z}-1)\bigr].
			\end{equation}
			Split the half-line into $\Omega_1=(0,b^{3/4})$, $\Omega_2=(b^{3/4},b^{-\eta})$ and $\Omega_3=(b^{-\eta},\infty)$. The $T_0$ cancellation in $\psi$, the profile estimates and the shrinking bounds give $|z|\lesssim A^2b^\eta=o(1)$ on $\Omega_1\cup\Omega_2$, for $\eta\le1/10$. Thus \eqref{quadratic-first-nonlinearity}--\eqref{second-nonlinearity-via-F} imply $|\mathscr B_b(\varepsilon)|\lesssim z^2+|z||\mathcal F|$ there. On $\Omega_1$ we use $|\mathcal F|\le Ab^{1/4-\eta}\xi$, $|\varphi_i|\lesssim b^{-1/4}\xi$ and $\rho\lesssim b^{-1/8}\xi^{-1/2}$. On $\Omega_2$, after taking $A^2b^\eta\le1$, the bounds give $|z|\lesssim b^{1/2}\xi^{-2/3}\langle\xi\rangle^{8/3}$ and $|\varphi_i|\rho\lesssim\langle\xi\rangle^{4/3}e^{-c\xi^{4/3}}$. Consequently, for some fixed $M$,
			\begin{align*}
				\int_{\Omega_1}|\mathscr B_b(\varepsilon)||\varphi_i|\rho\,d\xi
				&\lesssim A^4b^{3/4+2\eta}+A^3b^{7/4},\\*
				\int_{\Omega_2}|\mathscr B_b(\varepsilon)||\varphi_i|\rho\,d\xi
				&\lesssim b\int_{b^{3/4}}^\infty\xi^{-4/3}\langle\xi\rangle^Me^{-c\xi^{4/3}}\,d\xi+Ab^{1-\eta}
				\lesssim b^{3/4}+Ab^{1-\eta}.
			\end{align*}
			Both are $O(b^{3/4})$ for $s_0\ge s_8$. On $\Omega_3$, the lower bound for $\vartheta$, the global bounds for $\vartheta,\mathcal F$, and \eqref{second-nonlinearity-via-F} give polynomial growth in $\xi$ and $b^{-1}$; the density then bounds the remaining integral by $A^Cb^{-C}e^{-cb^{-4\eta/3}}=O(b^2)$. This proves \eqref{nonlinear-projection-bound}.
			
			\emph{Motion of the basis and weight.} We use $P_b$ and $\rho$ from \eqref{defi-P-b}--\eqref{defi-rho-b-beta}. Since $\partial_sQ_b=(b'/b)\Lambda_\xi Q_b$, differentiation gives
			\begin{equation}\label{correct-derivative-log-weight}
				\partial_s\log\rho=-X_2P_b+\frac{b'}{2b}
				\left[\frac{3\beta}{4}\int_0^\xi yQ_b(y)^{-3/2}\Lambda_yQ_b(y)\,dy
				-\frac{\Lambda_\xi Q_b}{Q_b}\right].
			\end{equation}
			The profile asymptotics imply
			\begin{equation}\label{bound-D-b}
				\left|\frac{\Lambda_\xi Q_b}{Q_b}\right|\lesssim\min\{1,b^{1/2}\xi^{-2/3}\}.
			\end{equation}
			The spectral bounds, \eqref{correct-derivative-log-weight}--\eqref{bound-D-b}, and the shrinking bounds yield the following estimates, with all norms taken in $L^2_\rho$:
			\begin{alignat*}{2}
				\|b\partial_b\psi\|+\|(\beta\partial_\beta-13/8)\psi\|&\lesssim b^{1/4},&\qquad \|\varepsilon_-\|&\lesssim A^2b^{1/2+\eta},\\*
				\|b\partial_b\varphi_i+\varphi_i b\partial_b\log\rho\|&\lesssim b^{1/4},&\qquad \|\beta\partial_\beta\varphi_i-P_b\varphi_i\|&\lesssim1.
			\end{alignat*}
			Indeed, \eqref{eigenfuc-L-b} gives $\psi=c_{1,1}\beta^{13/8}b^{3/2}T_1(\xi/b^{3/4})+\widetilde\varphi_1-\kappa\widetilde\varphi_0$. The operator $b\partial_b$ cancels the leading outer powers of each $b^{j+1/2}T_j(\xi/b^{3/4})$, while $\beta\partial_\beta-13/8$ cancels the displayed $T_1$ term. Lemma~\ref{lemma-T-i} and Proposition~\ref{diagonalisation-L-b} bound the remaining terms by $O(b^{1/4})$ in $L^2_\rho$. For the weight terms, $P_b\lesssim\xi^{4/3}$ and the integral in \eqref{correct-derivative-log-weight} is $O(b^{1/2}\xi^{2/3})$ by \eqref{bound-D-b}. The estimate for $\varepsilon_-$ follows by integration over $\Omega_1,\Omega_2$; the tail is exponentially small.
			
			Since $\beta\partial_\beta\log\rho=-P_b$, Cauchy--Schwarz now gives
			\begin{equation*}
				|\langle\varepsilon_-,D_s^\rho\varphi_i\rangle_{L^2_\rho}|
				\lesssim A^2b^{1/2+\eta}\left(|X_2|+b^{1/4}|b'/b|\right).
			\end{equation*}
			Using $\partial_s\psi=(b'/b)b\partial_b\psi+X_2\beta\partial_\beta\psi$ in the exact expressions for $\mathcal K_i$, and taking $A^2b^\eta\le1$, we obtain
			\begin{equation*}
				\left|\mathcal K_i+\frac{13}{8}X_2\varepsilon_1\right|
				\lesssim b^{3/4}|b'/b|+(b^{3/4}+A^2b^{1/2+\eta})|X_2|.
			\end{equation*}
			
			\emph{Closing the modulation system.} Insert these bounds, \eqref{projections-Lambda-Q}, \eqref{nonlinear-projection-bound}, and $\mu_{i,b,\beta}=\beta(1/2-i)+O(b^{1/4})$ into the projected equations. This gives
			\begin{equation}\label{system-b_beta}
				\begin{aligned}
					\varepsilon_1'&=-\frac\beta2\varepsilon_1-\frac{13}{8}X_2\varepsilon_1+\mathcal R_1,\\
					\varepsilon_1'&=\frac\beta2\varepsilon_1-\frac{13}{8}X_2\varepsilon_1
					+\kappa^{-1}\mathbf m_0(\beta)b^{1/2}(b'/b-\beta)+\mathcal R_0,
				\end{aligned}
			\end{equation}
			where the total remainders satisfy
			\begin{equation}\label{final-remainders-modulation}
				|\mathcal R_0|+|\mathcal R_1|\lesssim b^{3/4}
				+(b^{3/4}+A^2b^{1/2+\eta})(|X_1|+|X_2|).
			\end{equation}
			By \eqref{derivative-epsilon-one}, the first equation in \eqref{system-b_beta} becomes $\frac12\varepsilon_1X_1+\varepsilon_1X_2=\mathcal R_1$, and the second becomes
			\begin{equation}\label{second-modulation-equation-before-cancellation}
				\left(\frac{\varepsilon_1}{2}-\kappa^{-1}\mathbf m_0(\beta)b^{1/2}\right)(b'/b-\beta)
				+\varepsilon_1X_2=\mathcal R_0.
			\end{equation}
			The first coefficient in \eqref{second-modulation-equation-before-cancellation} vanishes by \eqref{compatibility-epsilon-one}, leaving $\varepsilon_1X_2=\mathcal R_0$. All derivative terms in the remainders are linear in $(X_1,X_2)$, so \eqref{final-remainders-modulation} yields
			\begin{equation}\label{matrix-modulation-system-correct}
				b^{1/2}\left[\mathbf n_1(\beta)\begin{pmatrix}1/2&1\\0&1\end{pmatrix}+\mathcal E(s)\right]
				\binom{X_1}{X_2}=O(b^{3/4}),\qquad
				\|\mathcal E(s)\|\lesssim b^{1/4}+A^2b^\eta.
			\end{equation}
			The principal determinant is $\mathbf n_1(\beta)^2/2$, bounded away from zero by \eqref{n-one-uniform-nonzero}. Choose $\eta_8\le1/10$ and then $s_8(A,\eta)$ so large that the preceding estimates hold and $\sup_{[s_0,\bar s]}(b^{1/4}+A^2b^\eta)\ll1$. The matrix in \eqref{matrix-modulation-system-correct} is uniformly invertible, giving $|X_1|+|X_2|\lesssim b^{1/4}$. Together with \eqref{definition-X-one-X-two} and $\beta\sim1$, this proves \eqref{ode-beta-b}.
		\end{proof}
		
		\section{Pointwise estimates via the Duhamel representation}\label{section-Pointwise-Duhamel}
		We next implement the intermediate-region argument from Section~\ref{subsection-sketch-stable-regime}.  A fixed spectral projection controls $\varepsilon_-$ on $[b^\eta,K]$, while the spectral gap of $\widehat{\mathscr L}_{0,1}$ controls $\mathcal F$ on compact subsets of $(0,\infty)$. We use $\mathfrak f_1$, $\rho_{0,1}$, and $S_1$ from \eqref{defi-f-ell}, \eqref{defi-rho-infty}, and \eqref{defi-S-beta-0-t}.
		
		\begin{proposition}[Pointwise improvement for the stable remainder]\label{propo-priori-b-eta-K}
			Fix $K\ge1$ and the initial-data parameters of Lemma~\ref{lemma-initial-data-l=1}.
			There exist $A_9\ge A_8$, $\eta_9(A,\delta)\in(0,\delta/16]$, and
			$s_9=s_9(A,K,\eta,\delta,M_0,x_0)\ge\max\{s_1,s_8,s_{18}\}$
			such that the following holds for $A\ge A_9$, $0<\eta\le\eta_9(A,\delta)$,
			and $s_0\ge s_9$. Let $(\vartheta,b,\beta)$ be a classical solution of \eqref{equa-vartheta} and \eqref{defi-stable-problem}, with datum \eqref{defi-initi-varthe-s-0}, remaining in $\mathcal V_1[A,\eta](s)$ for $s\in[s_0,\bar s]$. Then
			\begin{equation*}
				|\varepsilon_-(s,\xi)|\le A^{-26}b(s)^{1/2+\eta}\mathfrak f_1(\xi),\qquad s_0\le s\le\bar s,\quad b(s)^\eta\le\xi\le K.
			\end{equation*}
			The threshold $s_9$ is independent of $\bar s$.
		\end{proposition}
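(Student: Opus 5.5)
We follow the intermediate-region scheme sketched in Section~\ref{subsection-sketch-stable-regime}: on $[b^\eta,K]$ we replace $\mathscr L_{b,\beta}$ by the limiting self-similar operator $\mathscr L_{0,1}$ and run a Duhamel argument with a fixed spectral projection. Write
\[
\partial_s\varepsilon_-=\mathscr L_{0,1}\varepsilon_-+(\beta-1)\Lambda_\xi\varepsilon_-+\widetilde{\mathcal B}_1(\varepsilon)+\widetilde{\mathcal R}_1(\varepsilon_+).
\]
Here $\widetilde{\mathcal R}_1$ collects three contributions: the coefficient difference $(\mathscr L_{b,\beta}-\mathscr L_{0,\beta})\varepsilon_-$, the residual $\mathcal R_{1,b}(\varepsilon_+)$, and the modulation forcing $(\beta-b_s/b)\Lambda_\xi Q_b$ minus its projections. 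Let $P_-$ be the $L^2_{\rho_{0,1}}$ projection onto the complement of $\phi_{1,0},\phi_{1,1}$. By the spectral decay of $S_1(t)$ on this complement, with rate $e^{-3t/2}$ from $\mu=-3/2$, we have $\|S_1(t)P_-f\|\lesssim e^{-3t/2}\|f\|$. The bound $-3/2$ beats the target growth $b^{1/2+\eta}\sim e^{-(1/2+\eta)s}$ by a full unit of exponential rate. This margin is what produces the small constant $A^{-26}$.

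\textbf{Step 1 (fixed vs.\ moving projection).} We compare $\varepsilon_-$, which is orthogonal to the moving $\varphi_{j,b,\beta}$, with $P_-\varepsilon_-$. By Proposition~\ref{diagonalisation-L-b}, $\varphi_{j,b,\beta}\to\phi_{\beta,j}$ with errors $b^{1/4}\mathfrak f_1$ on $\xi\ge b^{3/4}$. Also $|\beta-1|\lesssim b_0^{1/4}$ by Proposition~\ref{pro-finite-projections}. Together these give $|\langle\varepsilon_-,\phi_{1,j}\rangle_{\rho_{0,1}}|\lesssim A^2b^{1/2+\eta}(b^{1/4}+b_0^{1/4})$. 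The finite-mode part of $\varepsilon_-$ with respect to the fixed basis is therefore a negligible multiple of $\mathfrak f_1$.

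\textbf{Step 2 (restarted Duhamel).} Fix $L$ large and apply Duhamel on $[s-L,s]$, or on $[s_0,s]$ when $s-s_0<L$. For the linear term, the kernel bounds of Lemmas~\ref{lem-simigroup-L-infty-beta=2} and~\ref{lem-semi-tle-K-0-xi-le-K} give
\[
|S_1(L)P_-\varepsilon_-(s-L)|\lesssim e^{-3L/2}A^2b(s-L)^{1/2+\eta}\mathfrak f_1\lesssim e^{-L(1-\eta)}A^2b(s)^{1/2+\eta}\mathfrak f_1,
\]
which is at most $A^{-27}b^{1/2+\eta}\mathfrak f_1$ once $L\sim 30\log A$. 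These kernel estimates require the input norm over the full half-line. Near the wall we use the $\mathfrak X_{b,\eta}$ bounds, and beyond $b^{-\eta}$ the Gaussian weight makes the contribution tiny. When $s-s_0<L$, the initial datum \eqref{initial-epsilon-minus-exponential} is exponentially small, and this case is harmless.

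\textbf{Step 3 (forcing on a window of fixed length $L$).} For the forcing terms we only need the bounds $\lesssim_{A,L}b^{1/2+\eta}\cdot b^{\kappa}$ for some $\kappa>0$; smallness then comes from $s_0$ large. The contributions are as follows.
\begin{itemize}
\item The term $(\beta-1)\Lambda_\xi\varepsilon_-$ costs $b_0^{1/4}A^2$.
\item The quadratic term $\widetilde{\mathcal B}_1$ is controlled pointwise through \eqref{second-nonlinearity-via-F}. It is of size $|z|^2+|z||\mathcal F|$ with $|z|\lesssim A^2b^\eta$ and $|\mathcal F|\lesssim Ab^{1/2-\eta}\mathfrak f_1$. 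On $[b^\eta,K]$ we have $Q_b\asymp\xi^{4/3}$, so this contributes $A^3b^{1/2}\cdot b^{\eta}$ times a weight. The dangerous piece is $|z|\,|\mathcal F|\sim A^3 b^{1/2}$, which has no extra $b^\eta$ gain. I would use the sharper relative bound $|z|\lesssim b^{1/4}$ (from Table~\ref{table-shrinking-relative-sizes}) on $\xi\ge b^\eta$, which costs $\xi^{-2/3}\le b^{-2\eta/3}$ and gives $b^{3/4-\eta}$.
\item The residual $\widetilde{\mathcal R}_1(\varepsilon_+)$ is $O(b^{3/4})\mathfrak f_1$. This follows from the eigenvalue relation for $\psi_{1,b,\beta}$, the bound $|X_1|+|X_2|\lesssim b^{1/4}$ from Proposition~\ref{pro-finite-projections}, and the cancellation in the modulation equations.
\end{itemize}

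\textbf{Main obstacle.} The hardest part is the region near $\xi=b^\eta$, together with the singular coefficient difference. The term $\mathscr L_{b,\beta}-\mathscr L_{0,\beta}$ involves $\sqrt{Q_b}-a\xi^{2/3}$ and $Q_b^{-1}-a^{-2}\xi^{-4/3}$, which are of relative size $b^{1/2}\xi^{-2/3}$. These act on $\partial_\xi^2\varepsilon_-$, for which we lack pointwise bounds. I would first move the second derivatives onto the kernel in the Duhamel integral, using the divergence form and the residual $\mathcal F$ to express $\sqrt{\vartheta}\,\partial_\xi^2\varepsilon$. I would then check that the semigroup kernel's $\xi^{-2/3}$ singularity integrated against $b^{1/2}\xi^{-2/3}\mathfrak f_1$ gives at most $b^{1/2-2\eta/3}$. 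The Duhamel formula also uses data on $\xi<b^\eta$, which are fed by the near-wall comparison bound, Proposition~\ref{propo-priori-near-0}.

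Summing the linear term and the forcing terms gives $|\varepsilon_-|\le A^{-26}b^{1/2+\eta}\mathfrak f_1$ on $[b^\eta,K]$, after choosing $L=L(A)$, then $\eta$ small, then $s_0$ large. None of these choices depends on $\bar s$.
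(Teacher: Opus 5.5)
Your architecture is the paper's: the fixed projection $P_-$ onto the complement of $\operatorname{span}\{\phi_{1,0},\phi_{1,1}\}$, Duhamel restarted on windows of length $L$, the gap $e^{-3t/2}$ against the growth $b^{-1/2-\eta}$, and $b^{\kappa}$-smallness of the forcing on each window. Projecting $\varepsilon_-$ rather than the full $\varepsilon$, as the paper does, is an acceptable variation given Lemma~\ref{lem-esti-tilde-R-1-ep-plus-stable}(ii).

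The genuine gap is how the part of the forcing supported near the wall reaches the window $[b^\eta,K]$. The Duhamel integral sees the source on all of $(0,\infty)$, not only on $\xi\ge b^\eta$.
\begin{itemize}
\item For $\zeta\le b^{3/4}$ the source is of size $b^{1/4+\eta}\zeta^{-1/3}$. This comes from $(Q_b^{-1}-a^{-2}\zeta^{-4/3})\varepsilon_-$ and is not even in $L^2_{\rho_{0,1}}$.
\item On $[b^{3/4},b^\eta]$, take the $\zeta$-dependent bound $|z|\lesssim b^{1/2}\zeta^{2/3}+(b^{3/4}+A^2b^{1/2+\eta})\zeta^{-2/3}$. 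The available bounds then still only give $|z|\,|\mathcal F|\lesssim A^3b$. Near $\zeta\sim b^{3/4}$ this is $A^3b^{-\eta}$ times the target envelope $b^{1/2+\eta}\zeta^{2/3}$.
\end{itemize}
So no envelope estimate of the type \eqref{esti-Semi-f-} can close. Your improvement $|z|\lesssim b^{1/4}$ holds only for $\xi\ge b^\eta$ and does not touch this.

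The missing idea is the separated-region kernel bound. For $0<t\le L$, $\xi\ge b^\eta$ and $\zeta$ below a fixed fraction of $\xi$, one has $G_1(t,\xi,\zeta)\rho_{0,1}(\zeta)\lesssim_L\xi^{-1}$. Hence a source supported in $[0,\kappa b^\eta]$ enters only through its $L^1$ mass. The $\mathcal V_1$ bounds make that mass $O(b^{3/4})$, which yields $b^{3/4-5\eta/3}\xi^{2/3}$ on $[b^\eta,K]$. This is Lemma~\ref{lem-semi-tle-K-0-xi-le-K}, including its projected form \eqref{projected-localized-forcing-bounds}. Because the source is singular, the Duhamel formula must also be taken in the $V'$-valued sense of Lemma~\ref{lemma-weak-duhamel-singular-source}.

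Your proposed remedy, feeding $\xi<b^\eta$ with Proposition~\ref{propo-priori-near-0}, is circular. That proposition takes its boundary ordering at $\xi=b^\eta$ from the present statement with $K=1$. It is also unnecessary: the bootstrap bounds suffice, both for the source via its $L^1$ mass and for the linear term via $\|P_-\varepsilon_-\|_{L^2_{\rho_{0,1}}}\lesssim A^2b^{1/2+\eta}$ and the spectral gap.

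Finally, do not move two derivatives onto the kernel, since that produces a non-integrable $t^{-1}$ singularity. Instead write $(\mathscr L_{b,\beta}-\mathscr L_{0,\beta})\varepsilon_-+\mathscr B_b(\varepsilon)=\widetilde{\mathcal B}_1(\varepsilon)-(\mathscr L_{b,\beta}-\mathscr L_{0,\beta})\varepsilon_+$. The identity \eqref{exact-identity-B-one-stable} expresses $\widetilde{\mathcal B}_1(\varepsilon)$ algebraically through $\mathcal F$ and $z$, with no derivative of $\varepsilon_-$. The $\varepsilon_+$ term is explicit from the spectral expansion.
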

		
		\begin{proof}
			Set $q=1/2+\eta$ and choose
			$\eta_9(A,\delta)\le\min\{\delta/16,1/16,\eta_8(A),\eta_{18}(A)\}$.
			With the stated choice of $s_9$, Proposition~\ref{pro-finite-projections}
			and Lemma~\ref{lem-esti-tilde-R-1-ep-plus-stable} apply.
			Estimate \eqref{ode-beta-b} and the prepared initial parameters give, with $\omega_0:=|\beta(s_0)-1|+b(s_0)^{1/4}$,
			\begin{equation}\label{uniform-beta-minus-one}
				\sup_{[s_0,\bar s]}|\beta-1|\lesssim\omega_0\longrightarrow0\quad(s_0\to\infty).
			\end{equation}
			Choose $\omega_*>0$ with $(1/2+\eta)\omega_*<\eta/2$. Increasing $s_9$ gives $|b'/b+1|\le\omega_*$ and hence, for either exponent $p=1/2\pm\eta$,
			\begin{equation}\label{comparison-b-beta-one}
				b(\tau)^p\le b(s)^p e^{p(1+\omega_*)(s-\tau)},\qquad s_0\le\tau\le s\le\bar s.
			\end{equation}
			
			\emph{Equations and fixed projection.} From \eqref{equa-varepsilon-appen-},
			\begin{equation*}
				\partial_s\varepsilon_-=\mathscr L_{0,1}\varepsilon_-+(\beta-1)\Lambda_\xi\varepsilon_-+\widetilde{\mathcal B}_1(\varepsilon)+\widetilde{\mathcal R}_1(\varepsilon_+),
			\end{equation*}
			where, using \eqref{defi-mathscr-L-}, \eqref{defi-L_0_beta}, and \eqref{defi-B-quadratic-appendix-},
			\begin{align}
				\widetilde{\mathcal B}_1(\varepsilon)&=(\mathscr L_{b,\beta}-\mathscr L_{0,\beta})\varepsilon+\mathscr B_b(\varepsilon),\label{definition-B-one-beta-one}\\*
				\widetilde{\mathcal R}_1(\varepsilon_+)&=-\partial_s\varepsilon_++\mathscr L_{0,\beta}\varepsilon_++(\beta-b'/b)\Lambda_\xi Q_b.\label{definition-R-one-beta-one}
			\end{align}
			In particular, \eqref{definition-R-one-beta-one} equals $\mathcal R_{1,b}+(\mathscr L_{0,\beta}-\mathscr L_{b,\beta})\varepsilon_+$, with $\mathcal R_{1,b}$ from \eqref{definition-R-one-b-stable}. We instead project the full perturbation equation, which cancels its leading forcing exactly.
			
			Write $\phi_j:=\phi_{1,j}$, with \eqref{defi-phi-beta-i}, and let $P_-$ be the orthogonal projection in $L^2_{\rho_{0,1}}$ onto $E_{\rm fd}^{\perp}$, where $E_{\rm fd}=\operatorname{span}\{\phi_0,\phi_1\}=\operatorname{span}\{\xi^{2/3},\xi^2\}$. Set $\widehat\varepsilon_-:=P_-\varepsilon$. Since $\Lambda_\xi E_{\rm fd}\subset E_{\rm fd}$, both $\mathscr L_{0,\beta}$ and $\Lambda_\xi$ preserve this space, so
			\begin{equation}\label{equation-fixed-stable-beta-one}
				\partial_s\widehat\varepsilon_-=\mathscr L_{0,1}\widehat\varepsilon_-+P_-\bigl[(\beta-1)\Lambda_\xi\widehat\varepsilon_-+\widetilde{\mathcal B}_1(\varepsilon)+\Phi_{b,\beta}\bigr],
			\end{equation}
			with $\Phi_{b,\beta}$ defined in \eqref{Phi-simple-}. We use the weak Duhamel formula of Lemma~\ref{lemma-weak-duhamel-singular-source}, with the weighted space $H=L^2_{\rho_{0,1}}$ and form domain $V$ defined there. On every finite trapping interval, $b$ is bounded away from zero. Near the wall, the profile bounds and $\vartheta''=(2+\mathcal F)/\sqrt\vartheta$ give $\varepsilon=O(\xi)$ and $\varepsilon'=O(1)$, uniformly on that interval. Since $\phi_j=O(\xi^{2/3})$ and $\phi_j'=O(\xi^{-1/3})$, the projected unknown $u:=\widehat\varepsilon_-$ satisfies $u=O(\xi^{2/3})$ and $u'=O(\xi^{-1/3})$. At infinity, the global bounds for $\vartheta,\mathcal F$ bound $\vartheta''$ and, by interpolation, $\vartheta'$; subtracting the profiles leaves polynomial bounds for $u,u'$. Thus $u\in L^\infty([s_*,s];V)\cap C([s_*,s];H)$, with its classical initial trace. The source bound in Lemma~\ref{lem-esti-tilde-R-1-ep-plus-stable}(i) places $\widetilde{\mathcal B}_1+\Phi_{b,\beta}$ in $L^\infty([s_*,s];V')$, and $\Lambda_\xi:V\to V'$ is bounded. Density therefore extends the perturbation equation from interior tests to $V$, and projection gives \eqref{equation-fixed-stable-beta-one} in $V'$. The cited lemma yields
			\begin{equation}\label{Duhamel-fixed-beta-one}
				\widehat\varepsilon_-(s)=S_1(s-s_*)\widehat\varepsilon_-(s_*)+\int_{s_*}^s S_1(s-\tau)P_-\bigl[(\beta-1)\Lambda_\xi\widehat\varepsilon_-+\widetilde{\mathcal B}_1+\Phi_{b,\beta}\bigr](\tau)\,d\tau.
			\end{equation}
			The integral is defined using the extension $S_1(t):V'\to H$. Its pointwise kernel representation follows by spatial cutoff approximation as in Lemma~\ref{lemma-weak-duhamel-singular-source}; the bounds below provide the required integrable majorants on $b(s)^\eta\le\xi\le K$.
			
			The expansion \eqref{eigenfuc-L-b}, the $T_0$ cancellation in \eqref{definition-adapted-eigendirection-strategy}, and Lemma~\ref{lemma-T-i} show that the leading part of $\varepsilon_+$ is a multiple of $\xi^2$, with error $O(b^{3/4}\mathfrak f_1)$. Moving orthogonality then yields
			\begin{align}
				\|\widehat\varepsilon_-(s)\|_{L^2_{\rho_{0,1}}}&\lesssim_A b(s)^q,\label{rough-L2-fixed-beta-one}\\*
				|\widehat\varepsilon_--\varepsilon_-|&\lesssim_{A,K}\bigl(|\beta-1|b^q+b^{3/4}\bigr)\mathfrak f_1,
				\qquad b^\eta\le\xi\le K.\label{comparison-moving-fixed-beta-one}
			\end{align}
			Indeed, $\widehat\varepsilon_--\varepsilon_-=P_-\varepsilon_+-(I-P_-)\varepsilon_-$. Pairing the eigenfunction and density expansions against the shrinking-set bounds gives $|\langle\varepsilon_-,\phi_j\rangle_{\rho_{0,1}}|\lesssim_A(|\beta-1|+b^{1/4})b^q$ for $j=0,1$. On $\xi\le b^{-\eta}$, these bounds also give $|\widehat\varepsilon_-|\lesssim_A b^q\mathfrak f_1$; beyond this cutoff, the coarse profile bounds give only polynomial growth. Their Gaussian-weighted tails are $O(b^N)$ for every fixed $N$, which proves \eqref{rough-L2-fixed-beta-one} without a global pointwise bootstrap bound.
			
			\emph{Bounds on a fixed time interval.} Fix $L\ge1$ and put $\sigma_\eta:=1/4-8\eta/3>0$. For $0<s-\tau\le L$ and $b(s)^\eta\le\xi\le K$, the kernel bounds \eqref{Semi-hit-Derivative} and Lemma~\ref{lem-semi-tle-K-0-xi-le-K} give
			\begin{align}
				|S_1(s-\tau)P_-\Lambda_\xi\widehat\varepsilon_-(\tau)(\xi)|
				&\lesssim_{A,K,L}(s-\tau)^{-1/2}b(s)^q\mathfrak f_1(\xi),\label{short-time-Lambda-beta-one}\\*
				|S_1(s-\tau)P_-[\widetilde{\mathcal B}_1+\Phi_{b,\beta}](\tau)(\xi)|
				&\lesssim_{A,K,L}b(s)^{q+\sigma_\eta}\mathfrak f_1(\xi).\label{short-time-forcing-beta-one}
			\end{align}
			Here and below constants may also depend on the fixed initial-data parameters and $\eta$. We give the details needed to apply these estimates. For \eqref{short-time-Lambda-beta-one}, split $\widehat\varepsilon_-$ at $b(\tau)^{-\eta}$: the inner part has the polynomial envelope just proved, and integration by parts in the differentiated kernel makes the outer contribution $O(b(\tau)^N)\xi^{2/3}$. The two subtracted finite-mode coefficients are bounded by transferring $\Lambda_\xi$ to its weighted adjoint on $\phi_0,\phi_1$, whose growth is polynomial. No derivative bound on $\varepsilon_-$ is required.
			
			For \eqref{short-time-forcing-beta-one}, Lemma~\ref{lem-esti-tilde-R-1-ep-plus-stable}(i) and the envelope lemma bound $S_1(t)|\widetilde{\mathcal B}_1|$ by $Cb^{3/4-5\eta/3}\xi^{2/3}$. The same inner, intermediate, and far-field integrals give $|\langle\widetilde{\mathcal B}_1,\phi_j\rangle_{\rho_{0,1}}|\lesssim_A b^{3/4-\eta}$, so subtracting the first two modes preserves this estimate. These moments are well defined even though the inner envelope need not lie in $L^2_{\rho_{0,1}}$.
			
			For the remaining forcing, Lemma~\ref{lemma-Q} gives $|\Lambda_\xi Q_b-\frac a8b^{1/2}\xi^{2/3}|\lesssim b^{3/4}\xi^{1/3}$ on $(0,\infty)$. The leading monomial belongs to $E_{\rm fd}$ and $|\beta-b'/b|\lesssim1$, hence
			\begin{equation*}
				|P_-\Phi_{b,\beta}|\lesssim b^{3/4}(\xi^{1/3}+\mathfrak f_1),\qquad
				\|P_-\Phi_{b,\beta}\|_{L^2_{\rho_{0,1}}}\lesssim b^{3/4}.
			\end{equation*}
			The positive kernel satisfies $S_1(t)(\xi^{1/3})\le e^{3t/4}\xi^{1/3}$: apply comparison on truncated intervals to the positive solution $e^{3t/4}\xi^{1/3}$ and pass to the Friedrichs semigroup. Thus its contribution is $O_{K,L}(b^{3/4-\eta/3}\xi^{2/3})$ on $[b^\eta,K]$, which is smaller than the bound in \eqref{short-time-forcing-beta-one}. Finally, \eqref{comparison-b-beta-one} makes $b(\tau)$ and $b(s)$ comparable on this interval; replacing the lower cutoff by a fixed multiple changes only the constants in the kernel estimates.
			
			By \eqref{uniform-beta-minus-one}, integration of \eqref{short-time-Lambda-beta-one}--\eqref{short-time-forcing-beta-one} gives
			\begin{equation}\label{recent-forcing-beta-one-term}
				\left|\int_{s_*}^s S_1(s-\tau)P_-\bigl[(\beta-1)\Lambda_\xi\widehat\varepsilon_-+\widetilde{\mathcal B}_1+\Phi_{b,\beta}\bigr](\tau)\,d\tau\right|
				\lesssim_{A,K,L}(\omega_0+b(s_0)^{\sigma_\eta})b(s)^q\mathfrak f_1,
			\end{equation}
			whenever $0\le s-s_*\le L$. For $s_*=s_0$, \eqref{initial-epsilon-minus-exponential} and the preceding estimate of $P_-\varepsilon_+$ also give $|S_1(s-s_0)\widehat\varepsilon_-(s_0)|\lesssim_{A,K,L}b(s_0)^{3/4}\mathfrak f_1$. The initial tail starts at $R_\delta/8\to\infty$ and is absorbed by the Gaussian kernel. Consequently, \eqref{Duhamel-fixed-beta-one} and \eqref{comparison-moving-fixed-beta-one} prove the desired improvement for $s-s_0\le L$, after increasing $s_9$ with $L$ fixed.
			
			\emph{Restart and spectral decay.} Lemma~\ref{lemma-spectrum-L0-beta-xi} gives eigenvalues $1/2-j$. The kernel at time $1$, followed by the spectral gap on $E_{\rm fd}^{\perp}$, therefore yields
			\begin{equation}\label{stable-semigroup-decay-beta-one}
				|S_1(t)P_-f(\xi)|\lesssim_K e^{-3t/2}\xi^{2/3}\|f\|_{L^2_{\rho_{0,1}}},\qquad t\ge1,\quad0<\xi\le K.
			\end{equation}
			For $s-s_0\ge L$, take $s_*=s-L$ in \eqref{Duhamel-fixed-beta-one}. Equations \eqref{rough-L2-fixed-beta-one}, \eqref{comparison-b-beta-one}, and \eqref{stable-semigroup-decay-beta-one} bound the initial term by $C_{A,K}e^{-(3/2-q(1+\omega_*))L}b(s)^q\mathfrak f_1$. Together with \eqref{recent-forcing-beta-one-term} and \eqref{comparison-moving-fixed-beta-one}, this gives
			\begin{equation*}
				\frac{|\varepsilon_-(s,\xi)|}{b(s)^q\mathfrak f_1(\xi)}
				\le C_{A,K}e^{-(3/2-q(1+\omega_*))L}
				+C_{A,K,L}(\omega_0+b(s_0)^{\sigma_\eta}).
			\end{equation*}
			The first constant is independent of $L$ and $3/2-q(1+\omega_*)>0$. Choose $L$ so that the first term is at most $A^{-26}/2$, then $s_9$ so that the second term and the short-time bound are at most $A^{-26}/2$. This proves the proposition.
		\end{proof}
		
		We next estimate the residual \eqref{defi-mathcal-F}. Differentiating its definition and using \eqref{equa-vartheta} gives the exact equation
		\begin{equation}\label{equa-mathcal-F}
			\partial_s\mathcal F=\sqrt\vartheta\,\partial_\xi^2\mathcal F-\tfrac34\beta\xi\partial_\xi\mathcal F
			+\frac{\mathcal F}{\vartheta}+\frac{\mathcal F^2}{2\vartheta}.
		\end{equation}
		The principal error relative to \eqref{definition-Lhat-zero-one} is $(\sqrt\vartheta-a\xi^{2/3})\partial_\xi^2\mathcal F$. We use local parabolic compactness to pass to the limiting equation, since $\mathcal V_1$ controls $\mathcal F$ but not its second derivative.
		
		\begin{lemma}[Improved estimate for $\mathcal F$ on compact sets]\label{lem-priori-F-K}
			Under the assumptions of Proposition~\ref{propo-priori-b-eta-K}, there exists
			$s_{10}=s_{10}(A,K,\eta,\delta,M_0,x_0)\ge s_9$ such that, for $s_0\ge s_{10}$,
			the same solution satisfies
			\begin{equation}\label{improved-F-compact}
				|\mathcal F(s,\xi)|\le A^{-26}b(s)^{1/2-\eta},\qquad s_0\le s\le\bar s,\quad K^{-1}\le\xi\le K.
			\end{equation}
		\end{lemma}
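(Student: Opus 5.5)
The bound \eqref{improved-F-compact} will follow the structure of Proposition~\ref{propo-priori-b-eta-K}, with $\widehat{\mathscr L}_{0,1}$ and $\widehat S_1$ replacing $\mathscr L_{0,1}$ and $S_1$. The key advantage is that \emph{all} eigenvalues of $\widehat{\mathscr L}_{0,1}=\mathscr L_{0,1}-\operatorname{Id}$ are $-(j+\tfrac12)$, so no finite modes need to be projected out. Set $q=1/2-\eta$. On $[K^{-1}/2,2K]$ we rewrite \eqref{equa-mathcal-F} as
\[
\partial_s\mathcal F=\widehat{\mathscr L}_{0,1}\mathcal F+\mathcal G,
\]
\[
\mathcal G=(\sqrt\vartheta-a\xi^{2/3})\partial_\xi^2\mathcal F-\tfrac34(\beta-1)\xi\partial_\xi\mathcal F+\Big(\frac1\vartheta-\frac1{a^2\xi^{4/3}}\Big)\mathcal F+\frac{\mathcal F^2}{2\vartheta}.
\]
On the compact set, the shrinking-set bounds give $|\sqrt\vartheta-a\xi^{2/3}|+|1/\vartheta-a^{-2}\xi^{-4/3}|\lesssim_{A,K}b^{1/4}+A^2b^{\eta}$. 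Proposition~\ref{pro-finite-projections} with \eqref{uniform-beta-minus-one} gives $|\beta-1|\lesssim\omega_0$. The membership in $\mathcal V_1$ gives $|\mathcal F|\lesssim_{A,K}b^{q}$, so the quadratic term is $O(b^{2q})$.

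The difficulty, as noted before the lemma, is that $\mathcal V_1$ gives no control of $\partial_\xi^2\mathcal F$ or $\partial_\xi\mathcal F$. We therefore argue by contradiction and compactness rather than by a direct Duhamel estimate. Suppose \eqref{improved-F-compact} fails along sequences $s_0^n\to\infty$, with solutions in the shrinking set and points $(s_n,\xi_n)$ where $|\mathcal F|>A^{-26}b(s_n)^{q}$. Fix $L$ (chosen below) and define $G_n(t,\xi)=b(s_n)^{-q}\mathcal F(s_n+t,\xi)$ for $t\in[-\min(L,s_n-s_0^n),0]$. By \eqref{comparison-b-beta-one}, $G_n$ is bounded by $C_{A,K}e^{qL(1+\omega_*)}$ on compact subsets of $(0,\infty)$, and its weighted $L^2_{\rho_{0,1}}$ norm is bounded as well, using the $\mathfrak X_{b,\eta}$ envelope on $(0,b^{-\eta})$ and the global bound $-A\le\mathcal F\le Ab^{-\eta/8}$ against the Gaussian weight in the far field.

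The equation for $G_n$ is uniformly parabolic on compact sets. Its diffusion coefficient $\sqrt\vartheta$ is uniformly Hölder (use $\vartheta'$ bounds from $\vartheta''=(2+\mathcal F)/\sqrt\vartheta$ and the time derivative from \eqref{equa-vartheta}), and its lower-order coefficients are bounded. Krylov--Safonov Hölder estimates, followed by Schauder estimates, give local $C^{2+\alpha,1+\alpha/2}$ compactness. A subsequence therefore converges locally to a solution $G$ of $\partial_tG=\widehat{\mathscr L}_{0,1}G$ on $(-L,0]\times(0,\infty)$. The perturbative coefficients vanish in the limit, and $|G(0,\xi_\infty)|\ge A^{-26}$ at a limit point $\xi_\infty\in[K^{-1},K]$. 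The weighted $L^2$ bounds pass to the limit by Fatou.

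There are two cases. If $s_n-s_0^n\to\infty$, the limit is defined on $[-L,0]$ with $\|G(-L)\|_{L^2_{\rho_{0,1}}}\lesssim_A e^{qL(1+\omega_*)}$. The spectral decay of $\widehat S_1$ at rate $e^{-t/2}$, combined with the time-one kernel bound as in \eqref{stable-semigroup-decay-beta-one}, gives
\[
|G(0,\xi)|\lesssim_{K}e^{-(1/2-q(1+\omega_*))L}C_A .
\]
The exponent $1/2-q(1+\omega_*)\approx\eta$ is positive because $\omega_*$ is small relative to $\eta$. Choosing $L$ large contradicts $|G(0,\xi_\infty)|\ge A^{-26}$. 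To identify $G$ with its Duhamel representation, we use Lemma~\ref{lemma-weak-duhamel-singular-source} in the weighted space, together with uniqueness for the Friedrichs realization. If instead $s_n-s_0^n$ stays bounded, the initial residual satisfies $b(s_0)^{-q}|\mathcal F(s_0)|\lesssim b(s_0)^{\eta}\mathfrak f_1\to0$ locally and in $L^2_{\rho_{0,1}}$ by Lemma~\ref{lemma-initial-data-l=1}(i), so the limit has zero initial trace and hence $G\equiv0$, again a contradiction.

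I expect the main obstacle to be the passage to the limit near the wall and at infinity. One must show the limit inherits the weighted $L^2_{\rho_{0,1}}$ bound and the Dirichlet/form-domain boundary behaviour at $\xi=0$, so that the spectral decay of the Friedrichs semigroup applies to it. I would handle this with the inner envelope $|\mathcal F|\lesssim b^{q}\cdot b^{-1/4}\xi$, which stays uniformly $O(\xi^{2/3})$-small relative to the weight, together with the exponentially small far-field tails, giving uniform integrability.
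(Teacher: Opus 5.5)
Your strategy matches the paper's: contradiction, normalization $G_n=b_n(\sigma_n)^{-q}\mathcal F_n(\sigma_n+\cdot)$, local parabolic compactness, a limit solving $\partial_t\mathcal F_*=\widehat{\mathscr L}_{0,1}\mathcal F_*$, and $e^{-t/2}$ decay of $\widehat S_1$ beating the growth $e^{q(1+\omega_*)L}$. The long-time case works once the Friedrichs identification below is supplied; define $G_n$ on $[-L-1,0]$ so that $t=-L$ is interior.

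\textbf{The gap: control near the initial time.} The genuine gap is the case where $T_n=s_n-s_0^n$ stays bounded. Krylov--Safonov and Schauder estimates are interior, so they give compactness only on cylinders separated from the initial time. If $T_n\to0$, there is no limit on a nondegenerate interval, and nothing in your argument forces $G_n(0,\xi_n)\to0$. If $T_n\to T>0$, local convergence of the initial data does not by itself identify the initial trace of the limit. You need an estimate uniform in $n$ up to the initial time. The paper obtains it by comparison on $[0,T_n]\times J$, with $J=(j_-,j_+)\Subset(0,\infty)$. It uses the uniform coefficient bounds $a_n\le\Lambda$, $|d_n|\le D$, $c_n\le C_0$, and the explicit supersolution
\[
W_n=e^{C_0t}\Bigl[\delta_n+Me^{(\Lambda\lambda^2+D\lambda)t}\bigl(e^{-\lambda(\xi-j_-)}+e^{-\lambda(j_+-\xi)}\bigr)\Bigr].
\]
Here $\delta_n$ is the supremum on $J$ of the normalized initial residual, which tends to zero by Lemma~\ref{lemma-initial-data-l=1}(i) and comparability of $b$ over bounded times. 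Choosing $\lambda$ in terms of $t$ gives \eqref{residual-uniform-initial-trace}. This settles $T_n\to0$ directly. Combined with the uniform Gaussian-weighted tails, it also gives a zero $L^2_{\rho_{0,1}}$ initial trace when $T_n\to T>0$. Hölder estimates up to the initial time, using smoothness of the prepared datum, could substitute. You would then have to check that the normalized initial residual is uniformly small in a H\"older norm on compacts.

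\textbf{The Friedrichs identification.} You leave this step open, and the mechanism you suggest is not sufficient. Lemma~\ref{lemma-weak-duhamel-singular-source} requires $u\in L^\infty(J;V)$, that is, weighted $H^1$ control of the limit near the wall. A pointwise $O(\xi^{2/3})$ envelope plus uniform integrability does not supply this. The paper writes $\mathcal F_*=zv$ with $z=a\xi^{2/3}$, so $v$ is bounded at $z=0$. It then runs a cutoff energy identity in $L^2(z^{3/2}e^{-z^2/8}\,dz)$, where the cutoff error at $z=r$ is $O(r^{-2}\int_0^{2r}z^{3/2}\,dz)=O(r^{1/2})$. This gives $v\in L^2(t_1,t_2;V_m)$, uniqueness, and $\mathcal F_*(t)=\widehat S_1(t-t_1)\mathcal F_*(t_1)$. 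Your envelope is the right input, but it has to enter through this energy estimate, which excludes the singular branch $\mathcal F\sim\xi^{1/3}$.
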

		
		\begin{proof}
			The inherited hypotheses give $A\ge A_9\ge A_8$ and
			$0<\eta\le\eta_9(A,\delta)$. It suffices to treat $K>1$;
			for $K=1$, enlarge the spatial interval and include its threshold in $s_{10}$.
			Set $q=1/2-\eta>0$ and $I_K=[K^{-1},K]$, and fix $\omega_*$ as in \eqref{comparison-b-beta-one}. The shrinking-set bounds and Gaussian decay of \eqref{defi-rho-infty} imply
			\begin{equation}\label{rough-L2-F-beta-one}
				\|\mathcal F(s)\|_{L^2_{\rho_{0,1}}}\lesssim_A b(s)^q.
			\end{equation}
			Indeed, $b^{-1/4}\xi\le\xi^{2/3}$ for $\xi\le b^{3/4}$, so $|\mathcal F|\le Ab^q\mathfrak f_1$ on $(0,b^{-\eta})$. The far-field bound contributes $O(b^N)$ for every fixed $N$. We will also use the uniform envelope
			\begin{equation*}
				b^{-q}|\mathcal F(s,\xi)|\le A\left[\mathfrak f_1(\xi)
				+\mathbf1_{\{\xi\ge1\}}\xi^{q/\eta+1/8}\right],
			\end{equation*}
			whose right-hand side belongs to $L^2_{\rho_{0,1}}$: on $\xi\ge b^{-\eta}$, use $b^{-q-\eta/8}\le\xi^{q/\eta+1/8}$. For $\widehat S_1$ in \eqref{notation-semigroup-F}, the eigenvalues are $-1/2-j$, hence its $L^2_{\rho_{0,1}}$ norm is at most $e^{-t/2}$. Combining this with the kernel bound from $L^2_{\rho_{0,1}}$ to $L^\infty(I_K)$ at time $1$ gives
			\begin{equation}\label{semigroup-decay-F-beta-one}
				\|\widehat S_1(t)f\|_{L^\infty(I_K)}\lesssim_K e^{-t/2}\|f\|_{L^2_{\rho_{0,1}}},\qquad t\ge1.
			\end{equation}
			
			Suppose that \eqref{improved-F-compact} fails for arbitrarily large $s_0$. Choose solutions $(\vartheta_n,b_n,\beta_n)$ on $[s_{0,n},\bar s_n]$, with $s_{0,n}\to\infty$, and points $(\sigma_n,\xi_n)$ of failure. Passing to a subsequence, assume $\xi_n\to\xi_*\in I_K$, and set
			\begin{equation*}
				T_n=\sigma_n-s_{0,n},\qquad B_n=b_n(\sigma_n),\qquad
				G_n(t,\xi)=B_n^{-q}\mathcal F_n(\sigma_n+t,\xi),\quad -T_n\le t\le0.
			\end{equation*}
			Thus $|G_n(0,\xi_n)|>A^{-26}$. On $[-\min\{L,T_n\},0]$, \eqref{comparison-b-beta-one} and the preceding envelope give local bounds and uniformly small $L^2_{\rho_{0,1}}$ tails at zero and infinity, for each fixed $L$.
			
			\emph{Local compactness.} Equation \eqref{equa-mathcal-F} gives the exact normalized equation
			\begin{equation*}
				\partial_tG_n=a_n\partial_\xi^2G_n+d_n\partial_\xi G_n+c_nG_n,
				\qquad
				a_n=\sqrt{\vartheta_n},\quad d_n=-\tfrac34\beta_n\xi,\quad
				c_n=\frac{1+\mathcal F_n/2}{\vartheta_n},
			\end{equation*}
			where the coefficients are evaluated at $\sigma_n+t$. On every fixed $J\Subset(0,\infty)$, the decomposition and shrinking-set bounds give $\vartheta_n\to a^2\xi^{4/3}$ and $\mathcal F_n\to0$ uniformly in time. Consequently,
			\begin{equation*}
				\partial_\xi^2\vartheta_n=\frac{2+\mathcal F_n}{\sqrt{\vartheta_n}}
				\longrightarrow\frac{2}{a\xi^{2/3}}
				=\partial_\xi^2(a^2\xi^{4/3}).
			\end{equation*}
			Interpolation on a slightly larger spatial interval also gives convergence of the first derivatives. Thus $\vartheta_n\to a^2\xi^{4/3}$ in $C^2(J)$, uniformly in time, and \eqref{uniform-beta-minus-one} gives $\beta_n\to1$. In particular, for constants independent of $n$,
			\begin{equation*}
				0<\lambda_J\le a_n\le\Lambda_J,\qquad
				|\partial_\xi a_n|+|d_n|+|c_n|\le C_J.
			\end{equation*}
			Writing $a_nG_{n,\xi\xi}=(a_nG_{n,\xi})_\xi-(\partial_\xi a_n)G_{n,\xi}$, interior parabolic H\"older estimates apply with uniform constants; see \cite[Section~1]{Aronson1968}. On cylinders separated from the initial time, they give local uniform compactness, including the terminal slice $t=0$ by estimates on backward cylinders. Every resulting limit, denoted by $\mathcal F_*$, satisfies
			\begin{equation*}
				\partial_t\mathcal F_*=\widehat{\mathscr L}_{0,1}\mathcal F_*,\qquad
				|\mathcal F_*(t,\xi)|\lesssim_{A,L}\mathfrak f_1(\xi)
			\end{equation*}
			on each time interval of length at most $L$ under consideration. Indeed, the $C^2$ convergence of $a_n$ permits transfer of the two spatial derivatives to $a_n$ times a test function, while $d_n\to-3\xi/4$ and $c_n\to a^{-2}\xi^{-4/3}$. The pointwise bound follows because every fixed spatial interval eventually lies below $b_n^{-\eta}$. The uniform weighted tails also give strong $L^2_{\rho_{0,1}}$ convergence at each time where local uniform convergence holds.
			
			\emph{The Friedrichs evolution.} To identify the realization at the degenerate endpoint, set
			\begin{equation*}
				z=a\xi^{2/3},\qquad \mathcal F_*(t,\xi)=zv(t,z),\qquad
				m(z)=z^{3/2}e^{-z^2/8}.
			\end{equation*}
			The conjugation in Lemma~\ref{lemma-spectrum-L-z} gives
			\begin{equation}\label{residual-limit-ground-state-equation}
				\partial_tv=2\partial_z^2v+\left(\frac3z-\frac z2\right)\partial_zv-\frac12v
				=2m^{-1}\partial_z(m\partial_zv)-\frac12v,
			\end{equation}
			and the change of variables gives
			\begin{equation*}
				\|\mathcal F_*(t)\|_{L^2_{\rho_{0,1}}}^2
				=\frac{3}{2\sqrt a}\int_0^\infty |v(t,z)|^2m(z)\,dz.
			\end{equation*}
			Define
			\begin{equation*}
				H_m=L^2(m\,dz),\qquad
				V_m=\overline{C_c^\infty(0,\infty)}^{\,\|\cdot\|_{V_m}},\qquad
				\|v\|_{V_m}^2=\int_0^\infty(|v|^2+|v_z|^2)m\,dz.
			\end{equation*}
			Interior regularity for the limiting equation justifies the following local energy computation. For $\chi\in C_c^\infty(0,\infty)$, multiplication of
			\eqref{residual-limit-ground-state-equation} by $\chi^2vm$ and integration by parts yield
			\begin{equation}\label{residual-limit-ground-state-energy}
				\frac12\frac d{dt}\int\chi^2v^2m
				+2\int\chi^2|\partial_zv|^2m
				+\frac12\int\chi^2v^2m
				=-4\int\chi\chi'v\partial_zv\,m.
			\end{equation}
			The right-hand side is absorbed by Young's inequality, up to
			$C\int|\chi'|^2v^2m$. Since $\mathcal F_*=O(\xi^{2/3})=O(z)$, $v$ is bounded at zero. For a cutoff at $z=r$, the error is therefore
			$O(r^{-2}\int_0^{2r}z^{3/2}\,dz)=O(r^{1/2})$; at infinity it tends to zero because $v$ has at most polynomial growth whereas $m$ has Gaussian decay. Integrating over any compact time interval $[t_1,t_2]$ away from the initial endpoint and removing both cutoffs gives $v_z\in L^2((t_1,t_2)\times\mathbb R_+,m\,dt\,dz)$. The same cutoffs approximate $v$ in the form norm after time integration, so $v\in L^2(t_1,t_2;V_m)$. By density, its equation extends to tests $w\in V_m$ as
			\begin{equation*}
				\langle\partial_tv,w\rangle_{V_m',V_m}
				=-2\int v_zw_zm\,dz-\tfrac12\int vwm\,dz.
			\end{equation*}
			Thus $\partial_tv\in L^2(t_1,t_2;V_m')$ and $v\in C([t_1,t_2];H_m)$. Testing the equation for the difference of two such solutions by that difference proves uniqueness. The closed form is exactly the ground-state conjugate of the Friedrichs form in Lemma~\ref{lemma-spectrum-L-z}; hence
			\begin{equation*}
				\mathcal F_*(t)=\widehat S_1(t-t_1)\mathcal F_*(t_1),\qquad t_1\le t\le t_2.
			\end{equation*}
			
			\emph{Control at the initial time.} Suppose $T_n\le L$ and put $h_n(t,\xi)=G_n(t-T_n,\xi)$ for $0\le t\le T_n$. Lemma~\ref{lemma-initial-data-l=1} and \eqref{comparison-b-beta-one} give $h_n(0)\to0$ locally uniformly and strongly in $L^2_{\rho_{0,1}}$: before the cutoff the normalized bound gains $b_n(s_{0,n})^\eta$, and beyond $R_\delta\to\infty$ the Gaussian weight absorbs the normalization.
			
			Fix $I\Subset J=(j_-,j_+)\Subset(0,\infty)$ and let $d_0=\operatorname{dist}(I,\partial J)>0$. On $[0,T_n]\times J$, the translated coefficients satisfy $a_n\le\Lambda$, $|d_n|\le D$, and $c_n\le C_0$, with $C_0\ge0$, while $|h_n|\le M$ uniformly in $n$. Set $\delta_n=\|h_n(0)\|_{L^\infty(J)}\to0$. For each $\lambda>0$, the function
			\begin{equation*}
				W_n(t,\xi)=e^{C_0t}\left[\delta_n+
				M e^{(\Lambda\lambda^2+D\lambda)t}
				\left(e^{-\lambda(\xi-j_-)}+e^{-\lambda(j_+-\xi)}\right)\right]
			\end{equation*}
			is a supersolution of the translated linear equation and dominates $\pm h_n$ on its parabolic boundary. Indeed, for either exponential the remaining coefficient is $(\Lambda-a_n)\lambda^2+D\lambda\pm d_n\lambda+C_0-c_n\ge0$. Comparison and the choice $\lambda=d_0/(2\Lambda t)$ at each fixed target time give
			\begin{equation}\label{residual-uniform-initial-trace}
				\|h_n(t)\|_{L^\infty(I)}
				\le e^{C_0t}\delta_n+C_1e^{-c_1/t},\qquad 0<t\le\min\{1,T_n\},
			\end{equation}
			where $C_1,c_1>0$ are independent of $n$. For a limiting interval of positive length, this estimate on arbitrary compact spatial intervals, together with the uniform weighted tails, gives a zero initial trace in $L^2_{\rho_{0,1}}$.
			
			\emph{Contradiction and decay.} After extraction, there are three cases. If $T_n\to0$, \eqref{residual-uniform-initial-trace} at $t=T_n$, or the prepared bound when $T_n=0$, gives $\|G_n(0)\|_{L^\infty(I_K)}\to0$, contradicting the choice of $(\sigma_n,\xi_n)$. If $T_n\to T\in(0,\infty)$, compactness gives a limit on $(-T,0]$ with $|\mathcal F_*(0,\xi_*)|\ge A^{-26}$. Its initial trace at $-T$ is zero by \eqref{residual-uniform-initial-trace}; letting $t_1\downarrow-T$ in the semigroup identity proves $\mathcal F_*\equiv0$, again a contradiction.
			
			Finally, if $T_n\to\infty$, diagonal extraction on bounded backward time intervals gives a limit on $(-\infty,0]$, with the same nonzero value at $(0,\xi_*)$. For every $L\ge1$, \eqref{comparison-b-beta-one} and \eqref{rough-L2-F-beta-one} give $\|\mathcal F_*(-L)\|_{L^2_{\rho_{0,1}}}\le C_Ae^{q(1+\omega_*)L}$. Therefore \eqref{semigroup-decay-F-beta-one} yields
			\begin{equation*}
				\|\mathcal F_*(0)\|_{L^\infty(I_K)}
				=\|\widehat S_1(L)\mathcal F_*(-L)\|_{L^\infty(I_K)}
				\le C_{A,K}e^{-[1/2-q(1+\omega_*)]L}
				\le C_{A,K}e^{-\eta L/2}.
			\end{equation*}
			Here $C_{A,K}$ is independent of $L$. Letting $L\to\infty$ contradicts the same lower bound at $(0,\xi_*)$, proving \eqref{improved-F-compact} uniformly on the trapping interval.
		\end{proof}
		\section{Comparison principles}\label{Section-comparison-varep}
		We construct sub- and supersolutions to obtain pointwise estimates for
		$\vartheta$ and $\varepsilon_-$ using parabolic comparison; see
		\cite[Appendix~F]{Souplet19-book} for the general principle.
		The degeneracy at $\xi=0$ requires barriers compatible with the singular wall behavior and comparison arguments on truncated cylinders, as justified below.
		
		\subsection{Comparison principle for the far-field estimates}
		For bounded continuous $\beta$, the operator associated with \eqref{equa-vartheta}, using \eqref{Lambda:op}, is
		\begin{equation}\label{definition-operator-P-beta}
			\mathscr P_\beta(f):=\partial_s f-\sqrt f\,\partial_\xi^2f-\beta(s)\Lambda_\xi f+2
			=\partial_s f-\sqrt f\,\partial_\xi^2f+\tfrac34\beta\xi\partial_\xi f-\beta f+2.
		\end{equation}
		
		For the rest of this section, fix the stable initial-data choice $\delta=3/8$, and let $(\vartheta,b,\beta)$ be a classical solution of \eqref{equa-vartheta} and \eqref{defi-stable-problem}, with datum \eqref{defi-initi-varthe-s-0}, remaining in $\mathcal V_1[A,\eta](s)$ on $[s_0,\bar s]$.
		Take $A\ge A_8$ and $0<\eta\le\min\{1/32,\eta_8(A)\}$.
		Assume $s_0\ge\max\{s_1,s_8\}$, with $\delta=3/8$; each result below
		states its additional threshold, independently of $\bar s$. By \eqref{ode-beta-b},
		\begin{equation}\label{preliminary-beta-b-comparison}
			r_b:=b'/b+\beta=O(b^{1/4}),\qquad |\beta'|\lesssim b^{1/4},\qquad
			\tfrac12\le\beta\le\tfrac32,\qquad b'/b\le-\tfrac14.
		\end{equation}
		
		\begin{lemma}[Pointwise comparison bounds]\label{lem-comparison-vartheta}
			There exist $A_{11}\ge\max\{4,A_8\}$ and
			$\eta_{11}(A)\in(0,\min\{1/32,\eta_8(A)\}]$ such that, under the
			preceding assumptions, for $A\ge A_{11}$ and $0<\eta\le\eta_{11}(A)$,
			there is a threshold
			$s_{11}=s_{11}(A,\eta,M_0,x_0)\ge\max\{s_1,s_8\}$ such that,
			for $s_0\ge s_{11}$ and $s_0\le s\le\bar s$,
			\begin{alignat}{2}
				\vartheta(s,\xi)&\ge b(s)^{-\eta/2}
				&\qquad&\xi\ge b(s)^{-\eta},
				\label{lower-comparison-vartheta}\\*
				\vartheta(s,\xi)&\le2b(s)^{-1-\eta}
				&\qquad&\xi\ge0.\nonumber
			\end{alignat}
		\end{lemma}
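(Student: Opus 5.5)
The plan is to compare $\vartheta$ with spatially constant, time-dependent barriers for the operator $\mathscr P_\beta$ of \eqref{definition-operator-P-beta}. For $f=f(s)$ independent of $\xi$ we have $\mathscr P_\beta(f)=f'-\beta f+2$.

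\emph{Upper bound.} Take $\bar f(s)=2b(s)^{-1-\eta}$. By \eqref{preliminary-beta-b-comparison},
\[
\bar f'=-(1+\eta)\tfrac{b'}{b}\bar f=(1+\eta)(\beta-r_b)\bar f,
\]
so
\[
\mathscr P_\beta(\bar f)=\bigl(\eta\beta-(1+\eta)r_b\bigr)\bar f+2 .
\]
Since $\beta\ge1/2$ and $|r_b|\lesssim b^{1/4}\le b(s_0)^{1/4}$, this is positive once $s_0\ge s_{11}$ is chosen so that $C b(s_0)^{1/4}\le\eta/4$. Hence $\bar f$ is a strict supersolution.

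The initial ordering comes from Lemma~\ref{lemma-initial-data-l=1}. The datum is increasing with limit $U_\infty(s_0)=2x_0e^{s_0}$, and $b_0\asymp e^{-s_0}$, so $\vartheta(s_0)\le 2x_0e^{s_0}\ll 2b_0^{-1-\eta}$ for $s_0$ large. At the wall, $\vartheta=0<\bar f$. At infinity, $\vartheta\to U_\infty(s)=u_E^2(x(s))/\mu(s)$, and I expect $U_\infty(s)\lesssim \mu^{-1}\lesssim b^{-1}$ because item~(i) compares $b$ with $\Bstab$ and $\mu$ is proportional to $\Bstab$.

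The comparison itself is done on truncated cylinders $[s_0,s]\times[0,R]$. At the first touching point the difference $\bar f-\vartheta$ has a nonpositive second derivative there, and since $\sqrt{\bar f}$ is only a coefficient, strict supersolutionity gives a contradiction. The boundary $R$ is sent to infinity using the uniform limit at infinity together with the slack $\bar f\ge 2U_\infty$.

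\emph{Lower bound.} This is the main obstacle, because the region $\xi\ge b^{-\eta}$ moves and a constant barrier cannot be used down to the wall. I would work on the domain $\{\xi\ge b(s)^{-\eta}\}$ with barrier $\underline f(s)=b(s)^{-\eta/2}$. One computes
\[
\underline f'=\tfrac{\eta}{2}(\beta-r_b)\underline f,
\qquad
\mathscr P_\beta(\underline f)=\bigl(\tfrac{\eta}{2}\beta-\beta-\tfrac{\eta}{2}r_b\bigr)\underline f+2 .
\]
Since $\beta\ge1/2$, this is $\le -\tfrac14\underline f+2<0$ provided $\underline f\ge 16$, i.e. $b^{-\eta/2}\ge16$, which holds for $s_0$ large. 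So $\underline f$ is a strict subsolution.

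The parabolic boundary of the moving domain has two parts:
\begin{itemize}
\item[(a)] The initial slice. Lemma~\ref{lemma-initial-data-l=1} gives $\vartheta(s_0,\xi)\gtrsim \xi^{4/3}\gtrsim b_0^{-4\eta/3}\gg b_0^{-\eta/2}$ for $\xi\ge b_0^{-\eta}$, since the datum is increasing and equals about $Q_{b_0}$ near $b_0^{-\eta}$.
\item[(b)] The lateral curve $\xi=b(s)^{-\eta}$. Here the shrinking-set bound gives $|\varepsilon|/Q_b\lesssim A^2b^\eta$ (Remark after the definition of $\mathcal V_1$), so $\vartheta\ge\tfrac12Q_b\ge c a^2 b^{-4\eta/3}>b^{-\eta/2}$.
\end{itemize}
At spatial infinity, $\vartheta\to U_\infty\gtrsim b^{-1}\cdot u_E^2$, and $u_E^2(x(s))\ge x_0$ because $x(s)<x_0/2$. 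This exceeds $b^{-\eta/2}$.

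Since the lateral boundary is moving with $\partial_s(b^{-\eta})>0$, the domain is shrinking. A first interior contact point $(s_1,\xi_1)$ then lies at positive distance from the lateral boundary and from infinity. There $\partial_s(\vartheta-\underline f)\le0$, $\partial_\xi(\vartheta-\underline f)=0$ and $\partial_\xi^2(\vartheta-\underline f)\ge0$, which contradicts the strict inequality $\mathscr P_\beta(\underline f)<0=\mathscr P_\beta(\vartheta)$.

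The delicate points are, first, justifying behaviour at infinity uniformly in finite time, via the Oleinik far-field matching in Lemma~\ref{lem-local-weighted-dependence}, and second, ensuring the thresholds $A_{11}$, $\eta_{11}$, $s_{11}$ absorb the constants: $A^2b^\eta\le1/2$ and $b^{-\eta/3}\gg1$.
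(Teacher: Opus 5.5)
Your proposal is correct and follows the paper's proof essentially verbatim. It uses the same spatially constant barriers, $b^{-\eta/2}$ on the moving exterior domain and $2b^{-1-\eta}$ on the half-line, the same residual computations through $r_b=b'/b+\beta$, and the same initial, lateral and wall orderings (monotone prepared datum, $\vartheta\ge\tfrac12 Q_b$ at $\xi=b^{-\eta}$, $\vartheta(s,0)=0$); your treatment of spatial infinity via $U_\infty(s)=u_E^2(x(s))/\mu(s)$ with $\mu=\Bstab\asymp b$, and your remark that the exterior domain shrinks, make explicit what the paper leaves to ``the comparison principle.'' One wording slip: at a first touching point $\bar f-\vartheta$ attains a minimum, so it is $\partial_\xi^2\vartheta=\partial_\xi^2(\vartheta-\bar f)$ that is nonpositive, which is the sign your contradiction actually uses.
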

		
		\begin{proof}
			Set $\xi_b(s)=b(s)^{-\eta}$ and consider the barriers
			$h_-=b^{-\eta/2}$ and $h_+=2b^{-1-\eta}$ on the respective
			parabolic domains
			\[
			\mathcal D_-:=\{(s,\xi):s_0<s<\bar s,\ \xi>\xi_b(s)\},
			\qquad
			\mathcal D_+:=(s_0,\bar s)\times(0,\infty).
			\]
			By \eqref{preliminary-beta-b-comparison}, increasing $s_{11}$
			if necessary,
			\begin{align*}
				\mathscr P_\beta(h_-)
				&=\bigl[-(1-\eta/2)\beta-(\eta/2)r_b\bigr]h_-+2<0
				&&\text{in }\mathcal D_-,\\
				\mathscr P_\beta(h_+)
				&=\bigl[\eta\beta-(1+\eta)r_b\bigr]h_++2>0
				&&\text{in }\mathcal D_+.
			\end{align*}
			
			We next verify the ordering on the parabolic boundaries.
			On the lateral boundary $\xi=\xi_b(s)$ of $\mathcal D_-$,
			the ground-state asymptotics and shrinking bounds give
			\[
			\vartheta(s,\xi_b(s))
			\ge\tfrac12a^2b(s)^{-4\eta/3}\ge2h_-(s).
			\]
			Monotonicity of the prepared datum in
			Lemma~\ref{lemma-initial-data-l=1} then gives the initial lower
			ordering on $[\xi_b(s_0),\infty)$. The same lemma yields
			$\sup_\xi\vartheta(s_0,\xi)\lesssim\widetilde b_0^{-1}
			\ll h_+(s_0)$, while $\vartheta(s,0)=0<h_+(s)$ gives the
			upper ordering at $\xi=0$.
			Finally, the comparison principle yields the desired bounds,
			completing the proof.
		\end{proof}
		
		\subsection{Comparison in the inner region}
		We use $\mathfrak f_1$ from \eqref{defi-f-ell}. The adapted direction in \eqref{definition-adapted-eigendirection-strategy}, the expansion \eqref{eigenfuc-L-b}, and Lemma~\ref{lemma-T-i} give
		\begin{equation}\label{outer-estimates-epsilon-plus}
			\sum_{k=0}^2| (\xi\partial_\xi)^k\varepsilon_+|
			\lesssim
			\begin{cases}
				b^{3/4}\xi,&0\le\xi\le b^{3/4},\\
				b^{1/2}\xi^2+b^{3/4}\mathfrak f_1(\xi),&\xi\ge b^{3/4}.
			\end{cases}
		\end{equation}
		In particular, on $[b^{3/4},b^\eta]$ its ratio to $b^{1/2+\eta}\xi^{2/3}$ is $O(b^{\eta/3}+b^{1/4-\eta})$. Thus, it is enough to estimate for  $\varepsilon$.
		\begin{proposition}[Pointwise estimate in the inner region]\label{propo-priori-near-0}
			There exist $A_{12}\ge\max\{A_9,A_{11}\}$, $\eta_{12}(A)\in(0,1/32]$, and
			$s_{12}=s_{12}(A,\eta,M_0,x_0)\ge\max\{s_9,s_{11},s_{18}\}$,
			with $s_9$ evaluated at $K=1$ and $\delta=3/8$, independent of $\bar s$,
			such that, under the standing assumptions, for
			$A\ge A_{12}$, $0<\eta\le\eta_{12}(A)$ and $s_0\ge s_{12}$,
			the following estimate holds for $s_0\le s\le\bar s$:
			\begin{equation}\label{a-priori-estimate-epsilon-near-zero}
				|\varepsilon(s,\xi)|\le\frac A4
				\begin{cases}
					b^{1/4+\eta}\xi,&0\le\xi\le b^{3/4},\\
					b^{1/2+\eta}\xi^{2/3},&b^{3/4}\le\xi\le b^\eta.
				\end{cases}
			\end{equation}
		\end{proposition}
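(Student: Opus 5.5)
We prove \eqref{a-priori-estimate-epsilon-near-zero} by comparison in the inner variable \eqref{definition-inner-theta-zeta}, following the sketch of Section~\ref{subsection-sketch-stable-regime}. Write $\vartheta=b\theta(s,\zeta)$ with $\zeta=\xi/b^{3/4}$, so that $\varepsilon=b\widetilde\theta$ and $\widetilde\theta=\theta-Q$ solves
\[
b\partial_s\widetilde\theta=\mathscr H_0\widetilde\theta+(b\beta-b_s)\Lambda_\zeta(Q+\widetilde\theta)-\mathcal N(\widetilde\theta),\qquad \widetilde\theta(s,0)=0.
\]
By \eqref{preliminary-beta-b-comparison}, $b\beta-b_s=2\beta b+O(b^{5/4})$. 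The target bound, in inner variables, reads $|\widetilde\theta|\le\frac A4 b^{\eta}\zeta$ for $\zeta\le1$ and $|\widetilde\theta|\le \frac A4 b^{\eta}\zeta^{2/3}$ for $1\le\zeta\le b^{\eta-3/4}$.

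Rather than linearizing, we apply comparison directly to the quasilinear operator $\mathscr P_\beta$ of \eqref{definition-operator-P-beta}, with the diffusion coefficient frozen at the evolving solution, $\sqrt{\vartheta}$. The difference of two solutions then satisfies a linear equation with coefficient $\sqrt\vartheta\asymp\sqrt{Q_b}$. The zeroth-order term $(\partial_\xi^2\vartheta^{\pm})/(\sqrt\vartheta+\sqrt{\vartheta^\pm})$ is of size $1/Q_b$ and is bounded near the wall by the weight function $g$ argument of Lemma~\ref{lem-local-weighted-dependence}.

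The barriers are $\vartheta^{\pm}=b(Q+\widetilde\theta^{\pm})$ with
\[
\widetilde\theta^{\rm sub}=c_-\mathcal Y_\sigma-b^\eta\mathcal Z_\sigma,\qquad \widetilde\theta^{\rm sup}=c_+\mathcal Y_\sigma+b^\eta\mathcal Z_\sigma,\qquad c_\pm=2\beta b\pm b^{1+\eta/2}.
\]
Here $\mathcal Y_\sigma$ solves $\mathscr H_\sigma\mathcal Y_\sigma=-\Lambda Q$ and absorbs the forcing. The profile $\mathcal Z_\sigma>0$ is built from $\Lambda Q$ and $\widetilde Q_0$ with $\sigma$ small, so that $\mathscr H_0\mathcal Z_\sigma\le -c\,\mathcal Z_\sigma/Q^{3/2}$. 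Inserting these into $\mathscr P_\beta$ gives a principal part $\mp b^{\eta}$ times a coercive term, by \eqref{near-origin-coercivity-general-p}. The remaining errors are controlled as follows. The modulation error $O(b^{5/4})\Lambda Q$ and the term $b\partial_s$ of the barriers are $O(b^{1+\eta})$ relative to $b^\eta\mathcal Z_\sigma$. The nonlinear term $\mathcal N$ is quadratic, of size $b^{2\eta}$. The wall degeneracy is handled by \eqref{definition-G1-general-p}--\eqref{definition-G2-general-p}. Since $\mathcal Z_\sigma\asymp\zeta$ near $0$ and $\asymp\zeta^{2/3}$ at infinity, the barriers produce exactly the two-regime envelope in the statement.

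The parabolic boundary has three parts.
\begin{itemize}
\item At $\xi=0$ all functions vanish, which is handled by the weighted maximum principle.
\item On the lateral boundary $\xi=b^\eta$, the ordering comes from Proposition~\ref{propo-priori-b-eta-K} with $K=1$, which is where $s_9$ enters. That result gives $|\varepsilon_-|\le A^{-26}b^{1/2+\eta}\mathfrak f_1$ there. Adding $\varepsilon_+$ through \eqref{outer-estimates-epsilon-plus} keeps $\varepsilon$ strictly inside the barrier gap $b^{1+\eta}\mathcal Z_\sigma\asymp b^{1/2+\eta}\xi^{2/3}$, once $A$ is large.
\item At $s=s_0$, the ordering follows from \eqref{initial-epsilon-minus-exponential} together with \eqref{outer-estimates-epsilon-plus}. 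The leading part $c_\pm\mathcal Y_\sigma$ is compatible with $\varepsilon_+$ to order $b^{1+\eta/2}$.
\end{itemize}
Comparison on truncated cylinders then yields $\vartheta^-\le\vartheta\le\vartheta^+$. Subtracting $Q_b$ gives $|\varepsilon|\le b\,(2\beta b|\mathcal Y_\sigma|+Cb^{\eta}\mathcal Z_\sigma)$. Since $b^2|\mathcal Y_\sigma|\lesssim b^{1/2}\xi^2$ is smaller than the target on this region, this is at most $\frac A4$ times the stated bound.

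The main obstacle is the coercivity computation for the barrier near the wall. Both $\mathcal Y_\sigma$ and the comparison difference vanish linearly there, while the coefficient $1/Q$ is singular. So one must check that $\mathscr H_\sigma$ acting on $\mathcal Z_\sigma$ keeps a definite sign uniformly down to $\zeta=0$, and that the frozen-coefficient difference $\sqrt\vartheta-\sqrt{\vartheta^\pm}$ does not spoil it. For this I would first verify the expansions $\mathcal Z_\sigma=\zeta+O(\zeta^{3/2})$ and $\mathscr H_\sigma\mathcal Z_\sigma\le-c\zeta^{-1/2}$ for $\zeta\le1$.
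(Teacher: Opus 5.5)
Your outline has the same architecture as the paper's proof: inner variables, barriers $c_\pm\mathcal Y_\sigma\pm b^\eta\mathcal Z_\sigma$, lateral ordering from Proposition~\ref{propo-priori-b-eta-K} at $K=1$, and initial ordering from \eqref{initial-epsilon-minus-exponential}. However, it omits the one quantitative point that makes the comparison close, and it is inconsistent about which residual the barriers are tested in.

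You freeze the diffusion at $\sqrt\vartheta$ and then hope that $\sqrt\vartheta-\sqrt{\vartheta^\pm}$ does not spoil the sign. With frozen diffusion it can. The residual of $Q+f_\pm$ then contains $(\sqrt{Q+\widetilde\theta}-\sqrt Q)\,Q''=\widetilde\theta/Q+O((\widetilde\theta/Q)^2)$, cf.\ \eqref{decomposition-G1-general-p}. The bootstrap bound \eqref{tilde-theta-bootstrap-inner} only gives $|\widetilde\theta/Q|\lesssim A^2b^\eta$. This term has no sign and is $A^2$ times larger than your margin $b^\eta$; your $b^{2\eta}$ bound covers only barrier-quadratic terms, not this one.

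The paper resolves this by taking $\sigma=A^{-24}$ and splitting the domain.
\begin{itemize}
\item For $\zeta$ below a small $A$-dependent threshold, it keeps the frozen residual \eqref{definition-G1-general-p}. That residual contains $-\sqrt Q\,Q_\sigma^{-3/2}f_\pm\approx\mp\tfrac14\sigma^{-1/8}b^\eta=\mp\tfrac14A^3b^\eta$ by \eqref{near-origin-coercivity-general-p}, which beats $A^2b^\eta$.
\item For $\zeta\gtrsim1$, that term is only of order $\sigma^{1/2}b^\eta\zeta^{-2/3}$ and loses to $A^2b^\eta\zeta^{-2/3}$. There the paper switches to the fully nonlinear residual \eqref{definition-G2-general-p}, in which $\widetilde\theta$ does not occur. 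The coercivity is $D_\sigma\mathcal Z_\sigma\le-c_A\langle\zeta\rangle^{-4/3}$ together with the signed term $(\gamma-c_\pm)\Lambda Q$, and the comparison coefficient $(Q+f_\pm)''/(\sqrt{Q+\widetilde\theta}+\sqrt{Q+f_\pm})$ is bounded.
\end{itemize}
At a maximum point one uses whichever identity applies there.

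If instead you test the barriers in $\mathscr P_\beta$ on all of $(0,R)$, as your first paragraph suggests, the $\widetilde\theta/Q$ term never appears. The $\zeta^{-1}$ comparison coefficient can then be handled by dividing the differences by $\sqrt\zeta$, as in Lemma~\ref{lem-esti-F-0-1/K}. That variant also closes, even with a fixed $\sigma\in(0,1)$. In that case coercivity comes from $D_\sigma\mathcal Z_\sigma<0$ throughout, and no frozen-coefficient difference enters the residual. You have to commit to one scheme.

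Second, the barrier profile as you describe it does not work. Every combination of $\Lambda Q$ and $\widetilde Q_0$ lies in $\ker\mathscr H_0$, so it gives zero margin; moreover $\widetilde Q_0(0)=-4\ne0$. Your inequality $\mathscr H_0\mathcal Z_\sigma\le-c\,\mathcal Z_\sigma/Q^{3/2}$, with $\mathscr H_0=\sqrt Q\,\partial_\zeta^2+Q^{-1}$, is incompatible with your own claim $\mathcal Z_\sigma\asymp\zeta$ at the wall. It forces $\mathcal Z_\sigma''\lesssim-\zeta^{-1}$ near $0$, and hence a logarithmically divergent wall slope.

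The right profile is the scaled ground-state mode $\mathcal Z_\sigma=\Lambda Q_\sigma$ of \eqref{equations-Z-Y-general-p}.
\begin{itemize}
\item It satisfies $\mathscr H_\sigma\mathcal Z_\sigma=0$ exactly, with $\mathscr H_\sigma$ as in \eqref{notation-H-sigma}. So your planned check $\mathscr H_\sigma\mathcal Z_\sigma\le-c\zeta^{-1/2}$ is false as stated.
\item It satisfies $\mathscr H_0\mathcal Z_\sigma=D_\sigma\mathcal Z_\sigma<0$ because $Q_\sigma<Q$. This quantity is bounded and negative at $\zeta=0$ and of order $-\zeta^{-4/3}$ at infinity.
\item Correspondingly, one needs $\mathscr H_\sigma\mathcal Y_\sigma=-\Lambda Q/\sqrt Q$, not $-\Lambda Q$. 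Only then does $\sqrt Q\,(c_\pm\mathcal Y_\sigma)''$ produce the term $-c_\pm\Lambda Q$ that offsets the drift forcing $\gamma\Lambda Q$.
\end{itemize}
With these corrections, your boundary orderings and the final conversion via $b^2|\mathcal Y_\sigma|\lesssim b^{1/2}\xi^2$ and $\mathcal Z_\sigma\asymp_\sigma\Lambda Q$ go through.
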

		\begin{proof}
			Choose $A_{12}\ge\max\{A_9,A_{11}\}$ sufficiently large and
			$\eta_{12}(A)\le\min\{\eta_9(A,3/8),\eta_{11}(A),\eta_{18}(A)\}$.
			All subsequent increases are absorbed into these thresholds.
			
			\emph{Inner equation and profiles.} Use the variables \eqref{definition-inner-theta-zeta}, put $\widetilde\theta=\theta-Q$, and set $R(s)=b^{\eta-3/4}$. The exact transformed equation is
			\begin{equation}\label{equa-theta}
				b\partial_s\theta=\sqrt\theta\,\partial_\zeta^2\theta-2+\gamma\Lambda_\zeta\theta,
				\qquad \gamma=b\beta-b'=2\beta b+O(b^{5/4}).
			\end{equation}
			Since $\varepsilon(s,\xi)=b\widetilde\theta(s,\xi/b^{3/4})$, the shrinking-set bounds, \eqref{outer-estimates-epsilon-plus}, and Lemma~\ref{lemma-Q} imply
			\begin{equation}\label{tilde-theta-bootstrap-inner}
				|\widetilde\theta|\lesssim A^2b^\eta\Lambda Q,
				\qquad |\widetilde\theta/Q|\lesssim A^2b^\eta,
				\qquad \tfrac12Q\le Q+\widetilde\theta\le\tfrac32Q
				\quad(0<\zeta\le R).
			\end{equation}
			
			Fix $\sigma=A^{-24}$ and use $Q_\sigma$ and $\mathscr H_\sigma$ from \eqref{definition-Q-sigma-general-p} and \eqref{notation-H-sigma}. Define
			\begin{equation}\label{equations-Z-Y-general-p}
				\mathcal Z_\sigma:=\Lambda Q_\sigma>0,\qquad
				\mathcal Y_\sigma:=\mathscr H_\sigma^{-1}(-\Lambda Q/\sqrt Q),\qquad
				\mathscr H_\sigma\mathcal Z_\sigma=0,
			\end{equation}
			where the inverse has zero value and derivative at $\zeta=0$. Scaling and the ground-state expansions yield
			\begin{alignat*}{3}
				\mathcal Z_\sigma&=\tfrac14\sigma^{1/4}\zeta+O_\sigma(\zeta^{3/2}),&\qquad \mathcal Y_\sigma&=-\tfrac1{15}\zeta^{5/2}+O_\sigma(\zeta^3)&\qquad & (\zeta\to0),\\*
				\mathcal Z_\sigma&=\tfrac a8\sigma^{1/2}\zeta^{2/3}+O_\sigma(\zeta^{1/3}),&\qquad \mathcal Y_\sigma&=-\tfrac9{160}\zeta^2+O_\sigma(\zeta^{5/3})&\qquad & (\zeta\to\infty).
			\end{alignat*}
			For $\mathcal Y_\sigma$, the leading forcing in \eqref{equations-Z-Y-general-p} is $-\zeta^{1/2}/4$ at zero and $-1/8$ at infinity. Substitution gives the displayed coefficients, while the zero-data inverse
			\begin{equation*}
				\mathcal Y_\sigma(\zeta)=-\mathcal Z_\sigma(\zeta)\int_0^\zeta\frac1{\mathcal Z_\sigma(t)^2}
				\int_0^t\frac{\mathcal Z_\sigma(r)\Lambda Q(r)}{\sqrt{Q(r)}}\,dr\,dt
			\end{equation*}
			controls the remainders and their first two Euler derivatives. In particular,
			\begin{equation}\label{global-bounds-Z-general-p}
				c\sigma^{1/2}\Lambda Q\le\mathcal Z_\sigma\le C\sigma^{1/4}\Lambda Q,
				\qquad
				|\mathcal Y_\sigma|+|\Lambda\mathcal Y_\sigma|
				\lesssim_A\zeta^{5/2}\mathbf1_{\{\zeta\le1\}}+\zeta^2\mathbf1_{\{\zeta\ge1\}}.
			\end{equation}
			
			Set $c_\pm=2\beta b\pm b^{1+\eta/2}$ and $f_\pm=c_\pm\mathcal Y_\sigma\pm b^\eta\mathcal Z_\sigma$. These will be the upper and lower barriers for $\widetilde\theta$. Since
			\begin{equation*}
				\sup_{0<\zeta\le R}\frac{|c_\pm\mathcal Y_\sigma|}{b^\eta\mathcal Z_\sigma}
				\lesssim_A b^{1-\eta}+b^{1-\eta}R^{4/3}
				=O_A(b^{\eta/3})\longrightarrow0,
			\end{equation*}
			we have, after increasing $s_{12}$,
			\begin{equation}\label{sign-barriers-general-p}
				\tfrac34b^\eta\mathcal Z_\sigma\le f_+\le\tfrac54b^\eta\mathcal Z_\sigma,
				\qquad
				-\tfrac54b^\eta\mathcal Z_\sigma\le f_-\le-\tfrac34b^\eta\mathcal Z_\sigma,
				\qquad Q+f_\pm\ge Q/2.
			\end{equation}
			
			\emph{Residual signs.} To avoid a singular comparison coefficient near $\zeta=0$, freeze the diffusion there and use the nonlinear operator away from this boundary:
			\begin{align}
				\mathcal G_1[f]&:=\sqrt{Q+\widetilde\theta}\,(Q+f)''-2+\gamma\Lambda(Q+f)-b\partial_sf,\label{definition-G1-general-p}\\*
				\mathcal G_2[f]&:=\sqrt{Q+f}\,(Q+f)''-2+\gamma\Lambda(Q+f)-b\partial_sf.\label{definition-G2-general-p}
			\end{align}
			Both vanish on $\widetilde\theta$ by \eqref{equa-theta}. For $f=c\mathcal Y_\sigma+m\mathcal Z_\sigma$, direct expansion gives
			\begin{align}
				\mathcal G_1[f]&=(\gamma-c)\Lambda Q+\widetilde\theta/Q-\sqrt Q\,Q_\sigma^{-3/2}f
				+\mathcal N_1(\widetilde\theta,f)+\gamma\Lambda f-b\partial_sf,\label{decomposition-G1-general-p}\\*
				\mathcal G_2[f]&=(\gamma-c)\Lambda Q+D_\sigma f+\mathcal N_2(f)+\gamma\Lambda f-b\partial_sf,\label{decomposition-G2-general-p}
			\end{align}
			where $D_\sigma=\sqrt Q(Q^{-3/2}-Q_\sigma^{-3/2})$ and $\mathcal N_2(f)=\mathcal N_1(f,f)$, with the exact identity
			\begin{equation}\label{definition-N1-general-p}
				\mathcal N_1(h,f)=\frac{hf''}{\sqrt{Q+h}+\sqrt Q}
				-\frac{h^2}{Q(\sqrt{Q+h}+\sqrt Q)^2}.
			\end{equation}
			We prove, for a fixed sufficiently small $\zeta_{\rm in}=\zeta_{\rm in}(A)>0$,
			\begin{alignat}{2}
				\mathcal G_1[f_-]&>0>\mathcal G_1[f_+]&\qquad & (0<\zeta\le\zeta_{\rm in}),\label{G1-sign-general-p}\\*
				\mathcal G_2[f_-]&>0>\mathcal G_2[f_+]&\qquad & (\zeta_{\rm in}\le\zeta\le R).\label{G2-sign-general-p}
			\end{alignat}
			Near zero, the profile expansions give
			\begin{equation}\label{near-origin-coercivity-general-p}
				\sqrt Q\,Q_\sigma^{-3/2}\mathcal Z_\sigma
				=\tfrac14\sigma^{-1/8}+O_A(\zeta^{1/2})
				=\tfrac14A^3+O_A(\zeta^{1/2}).
			\end{equation}
			Choose $\zeta_{\rm in}<\sigma^{3/4}$ so that this quantity is at least $A^3/8$. The term $-\sqrt Q\,Q_\sigma^{-3/2}f_\pm$ then has the required sign and magnitude at least $3A^3b^\eta/32$, whereas \eqref{tilde-theta-bootstrap-inner} gives $|\widetilde\theta/Q|\lesssim A^2b^\eta$. From \eqref{definition-N1-general-p}, the profile bounds, and \eqref{preliminary-beta-b-comparison}, the remaining nonlinear and time-dependent errors are bounded by $C_A(b^{2\eta}+b^{1+\eta}+b^2)$. Finally, $\gamma-c_+\le-\frac12b^{1+\eta/2}$ and $\gamma-c_-\ge\frac12b^{1+\eta/2}$, since $\eta<1/2$. Choosing $A$ first and then $s_0$ proves \eqref{G1-sign-general-p} by \eqref{decomposition-G1-general-p}.
			
			For the second region, $\partial_\sigma Q_\sigma=\Lambda Q(\zeta/\sigma^{3/4})>0$ gives $Q_\sigma<Q$ and $D_\sigma<0$. At infinity,
			\begin{equation*}
				-D_\sigma\mathcal Z_\sigma
				=\frac{3\sigma^{1/2}(1-\sigma^{1/2})}{64a^2}\zeta^{-4/3}
				\bigl(1+O_A(\zeta^{-1/3})\bigr).
			\end{equation*}
			Continuity and positivity therefore imply $-D_\sigma\mathcal Z_\sigma\ge c_A\langle\zeta\rangle^{-4/3}$ for $\zeta\ge\zeta_{\rm in}$. Equations \eqref{sign-barriers-general-p} and \eqref{decomposition-G2-general-p} give a signed contribution comparable to $b^\eta\langle\zeta\rangle^{-4/3}+b^{1+\eta/2}\Lambda Q$. The error satisfies
			\begin{equation}\label{G2-errors-general-p}
				|\mathcal N_2(f_\pm)|+|\gamma\Lambda f_\pm-b\partial_sf_\pm|
				\lesssim_A b^{2\eta}\langle\zeta\rangle^{-4/3}+b^{1+\eta}\Lambda Q+b^2\langle\zeta\rangle^2.
			\end{equation}
			Indeed, \eqref{definition-N1-general-p} bounds the quadratic terms by $C_A(b^{2\eta}\zeta^{-4/3}+b^{1+\eta}+b^2\zeta^{4/3})$ for $\zeta\ge1$, and differentiation of $c_\pm,b^\eta$ gives the remaining terms. On the fixed interval $[\zeta_{\rm in},1]$ the same estimate follows with constants depending on $A$. The first two terms in \eqref{G2-errors-general-p} gain $b^\eta$ and $b^{\eta/2}$; the last gains $b^{5\eta/6}$ for $1\le\zeta\le R$, since $b^{1-\eta/2}R^{4/3}=b^{5\eta/6}$. This proves \eqref{G2-sign-general-p}.
			
			\emph{Boundary ordering and comparison.} All three functions vanish at $\zeta=0$. At $s=s_0$, Lemma~\ref{lemma-initial-data-l=1}, \eqref{initial-epsilon-minus-exponential}, and \eqref{outer-estimates-epsilon-plus} give
			\begin{equation*}
				\sup_{0<\zeta\le R(s_0)}\frac{|\widetilde\theta(s_0,\zeta)|}{b(s_0)^\eta\mathcal Z_\sigma(\zeta)}
				\lesssim_A b(s_0)^{1/2-\eta}+b(s_0)^{\eta/3}\longrightarrow0.
			\end{equation*}
			Apply Proposition~\ref{propo-priori-b-eta-K} with $K=1$;
			its threshold is included in $s_{12}$.
			At $\zeta=R(s)$, corresponding to $\xi=b(s)^\eta$, that proposition
			and \eqref{outer-estimates-epsilon-plus} give:
			\begin{equation}\label{Duhamel-arbitrary-L-general-p}
				\frac{|\widetilde\theta(s,R(s))|}{b^\eta R(s)^{2/3}}
				\lesssim A^{-26}+b^{\eta/3}+b^{1/4-\eta}.
			\end{equation}
			Since $\mathcal Z_\sigma(R)\gtrsim A^{-12}R^{2/3}$, choose $A$ large and then $s_{12}$ large so that \eqref{Duhamel-arbitrary-L-general-p} and the initial estimate give $f_-\le\widetilde\theta\le f_+$ on the full moving parabolic boundary.
			
			For completeness, take the maximum of $e^{-M\tau(s)}(\widetilde\theta-f_+)$ on a truncated moving cylinder, where $\tau(s)=\int_{s_0}^s b(t)^{-1}\,dt$. At a positive maximum, the spatial first derivative vanishes, the second derivative is nonpositive, and $b\partial_s(\widetilde\theta-f_+)\ge M(\widetilde\theta-f_+)$. If the maximum lies below $\zeta_{\rm in}$, subtraction in \eqref{definition-G1-general-p} bounds the positive residual difference by $(\gamma-M)(\widetilde\theta-f_+)$. Above $\zeta_{\rm in}$, subtraction in \eqref{definition-G2-general-p} adds the coefficient
			\begin{equation*}
				\frac{(Q+f_+)''}{\sqrt{Q+\widetilde\theta}+\sqrt{Q+f_+}},
			\end{equation*}
			which is uniformly bounded there by the profile bounds and $Q+\widetilde\theta,Q+f_+\ge Q/2$. Both alternatives contradict \eqref{G1-sign-general-p}--\eqref{G2-sign-general-p} when $M>C_A+2$. For the lower bound, use $f_--\widetilde\theta$ and the same actual diffusion $\sqrt{Q+\widetilde\theta}$; the corresponding quotient contains $(Q+f_-)''$ and has the same bound. Thus $f_-\le\widetilde\theta\le f_+$ throughout $0\le\zeta\le R(s)$.
			
			The comparison principle, together with \eqref{sign-barriers-general-p}
			and \eqref{global-bounds-Z-general-p}, yields
			$|\widetilde\theta|\le CA^{-6}b^\eta\Lambda Q$.
			Finally, returning to $\xi$ and using $\Lambda Q\asymp\zeta$ for
			$0<\zeta\le1$ and $\Lambda Q\asymp\zeta^{2/3}$ for $\zeta\ge1$,
			we obtain \eqref{a-priori-estimate-epsilon-near-zero} after increasing
			$A_{12}$ if necessary, completing the proof.
		\end{proof}
		
		\subsection{Comparison in the outer region}
		We estimate $\varepsilon_-$ on $[K,cb^{-1/4-\eta/2}]$, which contains $[K,b^{-\eta}]$ for $b$ small. The constant $K$ is the one used in Proposition~\ref{propo-priori-b-eta-K}, which controls $[b^\eta,K]$.
		Let $\phi_{\beta,2}$ be the normalized Laguerre eigenfunction in \eqref{defi-phi-beta-i}--\eqref{defi-N-beta-i}. Lemma~\ref{lemma-spectrum-L0-beta-xi} gives $\mathscr L_{0,\beta}\phi_{\beta,2}=-\frac32\beta\phi_{\beta,2}$. Its leading coefficient is positive, so for all sufficiently large $K$, uniformly in $\beta\in[1/2,3/2]$,
		\begin{equation}\label{bounds-phi-2-beta-outer}
			\phi_{\beta,2}(\xi)\asymp\xi^{10/3},\qquad
			|\partial_\beta\phi_{\beta,2}|+\sum_{k=1}^2| (\xi\partial_\xi)^k\phi_{\beta,2}|\lesssim\phi_{\beta,2}
			\quad(\xi\ge K).
		\end{equation}
		Set $p=1/2+\eta$ and $F(s,\xi)=A^{-3}b(s)^p\phi_{\beta(s),2}(\xi)$.
		We use the outer barriers $\varepsilon_-^{\rm sup}=F$ and
		$\varepsilon_-^{\rm sub}=-F$.
		
		\begin{proposition}[Pointwise estimate in the outer region]\label{pro-priori-epsilon-xi-large}
			There exist $K_{13}\ge1$, $A_{13}(K)\ge\max\{A_9,A_{11}\}$,
			$\eta_{13}(A,K)\in(0,1/32]$, and
			$s_{13}=s_{13}(A,K,\eta,M_0,x_0)\ge\max\{s_9,s_{11},s_{18}\}$,
			independent of $\bar s$, with $s_9$ evaluated at $\delta=3/8$,
			such that, under the standing assumptions, for
			$K\ge K_{13}$, $A\ge A_{13}(K)$, $0<\eta\le\eta_{13}(A,K)$
			and $s_0\ge s_{13}$, there is a constant $c=c(A,K)>0$, independent of $s_0$ and $\bar s$, such that
			\begin{equation}\label{outer-trapping-epsilon-minus}
				-F(s,\xi)\le\varepsilon_-(s,\xi)\le F(s,\xi),\qquad
				s_0\le s\le\bar s,\quad K\le\xi\le cb(s)^{-1/4-\eta/2}.
			\end{equation}
			In particular, the estimate holds on $K\le\xi\le b(s)^{-\eta}$.
		\end{proposition}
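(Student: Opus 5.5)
The plan is a comparison argument for $\varepsilon_-$ on the moving domain
\[
\mathcal D:=\{(s,\xi):\ s_0<s<\bar s,\ K<\xi<\Xi(s)\},\qquad \Xi(s):=c\,b(s)^{-1/4-\eta/2},
\]
with the barriers $\pm F$. Writing $\varepsilon=\varepsilon_++\varepsilon_-$ and subtracting from \eqref{equa-varepsilon-appen-}, $\varepsilon_-$ solves
\[
\partial_s\varepsilon_-=\mathscr L_{0,\beta}\varepsilon_-+\widetilde{\mathcal B}_1(\varepsilon)+\widetilde{\mathcal R}_1(\varepsilon_+),
\]
with the remainders \eqref{definition-B-one-beta-one}--\eqref{definition-R-one-beta-one}. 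I would not linearize the quasilinear part. Instead I would write the equation as a linear parabolic equation with the actual diffusion $\sqrt\vartheta$ and a zeroth-order coefficient given by a difference quotient. This is the same trick used in Proposition~\ref{propo-priori-near-0}. It requires $\vartheta\asymp Q_b\asymp\xi^{4/3}$ on $\mathcal D$, which holds because $|\varepsilon|/Q_b\lesssim b^{1/2}\xi^{2/3}+b^{3/4}\xi^2+A^2b^{1/2+\eta}\xi^2$. At $\xi=\Xi$ this is $O(c^2)$ plus $O(A^2c^2 b^{\eta/3})$, so $c=c(A,K)$ small makes the whole ratio small.

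\emph{Step 1 (supersolution inequality).} Using $\mathscr L_{0,\beta}\phi_{\beta,2}=-\tfrac32\beta\phi_{\beta,2}$, $b'/b=-\beta+O(b^{1/4})$, $|\beta'|\lesssim b^{1/4}$ and \eqref{bounds-phi-2-beta-outer}, one gets
\[
\partial_sF-\mathscr L_{0,\beta}F=\Bigl(\tfrac32\beta-p\beta+O(b^{1/4})\Bigr)F\ge\tfrac12 F .
\]
This leaves a coercive margin of order $A^{-3}b^p\xi^{10/3}$. Everything else must be absorbed into it.

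\emph{Step 2 (error terms).}
\begin{itemize}
\item The diffusion error $(\sqrt\vartheta-a\xi^{2/3})\partial_\xi^2F$ is bounded by $(|\varepsilon|/Q_b+b^{1/2}\xi^{-2/3})F$, hence by a small multiple of $F$ on $\mathcal D$.
\item The potential difference $(Q_b^{-1}-a^{-2}\xi^{-4/3})$ is $O(b^{1/2}\xi^{-2})$ and is harmless.
\item The nonlinear term $\mathscr B_b(\varepsilon)$ is handled through \eqref{second-nonlinearity-via-F}. It is quadratic in $z=\varepsilon/Q_b$ and linear in $z\mathcal F$.
\item The forcing $\widetilde{\mathcal R}_1(\varepsilon_+)$ is the delicate one. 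The leading $\xi^2$ and $\xi^{2/3}$ parts of $\varepsilon_+$ and of $(\beta-b'/b)\Lambda_\xi Q_b$ must cancel to leading order. This uses the modulation equations of Proposition~\ref{pro-finite-projections}, exactly as in the orthogonal projection computation, so that what remains is $O(b^{3/4}\xi^{10/3})+O(b^{1/2+1/4}\xi^2)$. Relative to $A^{-3}b^{1/2+\eta}\xi^{10/3}$, this is $A^3b^{1/4-\eta}\to0$.
\item The terms involving $\varepsilon_-$ itself, such as $z^2$ and $z\mathcal F$, use the bootstrap bound $A^2b^{1/2+\eta}\mathfrak f_1$ and the bound $\|\mathcal F\|\le Ab^{1/2-\eta}\mathfrak f_1$. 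Their ratio to $F$ is at most of size $A^{5}b^{1/2-\eta}\xi^{4/3}\cdot\xi^{2/3}$ up to constants, which is small up to $\Xi(s)$. This is precisely what dictates the exponent $1/4+\eta/2$ in $\Xi$.
\end{itemize}

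\emph{Step 3 (boundary ordering).}
\begin{itemize}
\item At $\xi=K$, Proposition~\ref{propo-priori-b-eta-K} gives $|\varepsilon_-|\le A^{-26}b^p\mathfrak f_1(K)<F$ once $A$ is large relative to $K$.
\item At the moving outer edge $\xi=\Xi(s)$, there is no bootstrap information beyond $b^{-\eta}$. Instead I would use $|\varepsilon_-|\le|\vartheta-Q_b|+|\varepsilon_+|$ together with the relative smallness of $\varepsilon$ from the first paragraph. This is not obviously below $F$. I would extend the comparison with a slightly larger inner barrier, or use the choice of $c$ so that the edge bound $b^{1/2}\Xi^2+b^{3/4}\Xi^{10/3}$ is compared with $A^{-3}b^{1/2+\eta}\Xi^{10/3}$. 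That needs $\Xi^{-4/3}\ll A^{-3}b^{\eta}$, and this is where the exponent $1/4+\eta/2$ should come from.
\item At $s=s_0$, \eqref{initial-epsilon-minus-exponential} gives the ordering up to $R_\delta/8$, which exceeds $\Xi(s_0)$ by the choice $\delta=3/8$.
\end{itemize}

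The main obstacle is this outer-edge ordering together with the forcing cancellation, that is, closing the exponent bookkeeping at $\xi\sim\Xi(s)$. I would try these first.

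Once both are in place, a maximum principle on the truncated moving cylinder applies. Use the exponential weight $e^{-Ms}$ and the difference-quotient coefficient, which is bounded by $C\xi^{-4/3}$. Then $|\varepsilon_-|\le F$ on $\mathcal D$, and the inclusion $[K,b^{-\eta}]\subset[K,\Xi(s)]$ for small $b$ gives the last claim.
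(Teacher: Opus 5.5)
You run both comparisons on the target interval $K<\xi<\Xi(s)=c\,b^{-1/4-\eta/2}$, most of which lies beyond $b^{-\eta}$. On that part the argument has no input.

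The norm $\mathfrak X_{b,\eta}$ only covers $(0,b^{-\eta})$. Beyond it the shrinking set gives only $A^{-1}b^{-\eta/2}\le\vartheta\le Ab^{-1-\eta}$ and $-A\le\mathcal F\le Ab^{-\eta/8}$. Hence two of your steps are not justified on $[b^{-\eta},\Xi]$:
the claim $\vartheta\asymp Q_b$ on $\mathcal D$, and the treatment of $z^2$ and $z\mathcal F$ through $|\varepsilon_-|\le A^2b^{1/2+\eta}\mathfrak f_1$ and $\|\mathcal F\|_{\mathfrak X_{b,\eta}}\le Ab^{1/2-\eta}$. In fact $\vartheta\asymp\xi^{4/3}$ on such enlarged intervals is a \emph{consequence} of this proposition, and it is used that way in Lemma~\ref{lem-esti-F-outer-region}. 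Even granting those bounds, your ratio $A^5b^{1/2-\eta}\xi^2$ equals $A^5c^2b^{-2\eta}$ at $\xi=\Xi$, which is not small.

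The decisive failure is the lateral ordering at $\xi=\Xi(s)$. Your edge bound $|\varepsilon_-|\le|\vartheta-Q_b|+|\varepsilon_+|$ is circular, since $\vartheta-Q_b$ contains $\varepsilon_-$. With only the crude bounds, neither ordering can be derived at this scale. For the upper barrier, $F(s,\Xi)\asymp A^{-3}c^{10/3}b^{-1/3-2\eta/3}$ is much smaller than $Ab^{-1-\eta}$ for every fixed $c$. For the lower barrier with small $c$, the profile $Q_b+\varepsilon_+-F\asymp c^{4/3}b^{-1/3-2\eta/3}$ is much larger than the available lower bound $b^{-\eta/2}$ on $\vartheta$.

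The paper avoids both problems with two ideas.

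First, the barriers are the full profiles $Q_b+\varepsilon_+\pm F$, compared with $\vartheta$ through the quasilinear operator: $\mathscr P(f)=\mathscr P_\beta(Q_b+\varepsilon_++f)$ with $\mathscr P(\varepsilon_-)=0$. The nonlinearity is then evaluated only at the explicit functions $h_\pm=\varepsilon_+\pm F$. Rationalization gives $|\widetilde{\mathcal B}_1(h_\pm)|/F\lesssim b^{1/2-\eta}+b^{1/2}\xi^{-2/3}+b^p\xi^{2/3}$, with no information on $\varepsilon_-$ or $\mathcal F$ needed. The difference quotient enters only the linear equation for $\vartheta-(Q_b+\varepsilon_+\pm F)$. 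Its coefficients are bounded on finite moving cylinders because both profiles are positive and $\xi\ge K$, so no relative smallness of $\varepsilon$ is required.

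Second, the two comparisons use different domains, each chosen so that the lateral ordering follows from the crude far-field bounds.
For the upper bound, take $\Xi_+=C_*b^{-\nu_+}$ with $\nu_+=(9+12\eta)/20$. Since $\frac{10}{3}\nu_+=\frac32+2\eta$, $F(s,\Xi_+)$ dominates $Ab^{-1-\eta}\ge\vartheta$ once $C_*$ is large. Meanwhile $b^p\Xi_+^{2/3}=O(b^{1/5+3\eta/5})$ keeps the residual signed, and the initial ordering beyond $R_\delta$ is checked directly on the glued datum.
For the lower bound, stop at the $C^1$ curve $\Xi_-(s)$ on which $g=Q_b+\varepsilon_+-F$ equals $1$. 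Then $g\ge1$ inside, and $\vartheta(s,\Xi_-)\ge b^{-\eta/2}>1$ by \eqref{lower-comparison-vartheta}.

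From $b^{2p/3}g(s,b^{-p/2}y)=a^2y^{4/3}-A^{-3}d_2(\beta)y^{10/3}+o(1)$, where $d_2(\beta)>0$ is the leading coefficient of $\phi_{\beta,2}$, one gets $\Xi_-\asymp A^{3/2}b^{-p/2}$. So the exponent $1/4+\eta/2=p/2$ comes from the sign change of the lower barrier, not from nonlinear bookkeeping. The stated bound on $[K,cb^{-1/4-\eta/2}]$ then follows by restriction, since $\Xi_-<\Xi_+$.
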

		\begin{proof}
			Choose $K_{13}$ large enough for \eqref{bounds-phi-2-beta-outer}
			and the positivity argument below, and $A_{13}(K)\ge\max\{A_9,A_{11}\}$
			large enough for the ordering at $\xi=K$.
			Take
			\[
			\eta_{13}(A,K)\le\min\{\eta_9(A,3/8),\eta_{11}(A),\eta_{18}(A)\}.
			\]
			All subsequent increases are absorbed into these thresholds.
			
			\emph{Residual inequalities.} For $h=\varepsilon_++f$, define
			\begin{equation}\label{definition-P-outer}
				\mathscr P(f):=\partial_sf-\mathscr L_{0,\beta}f-\widetilde{\mathcal B}_1(\varepsilon_++f)-\widetilde{\mathcal R}_1(\varepsilon_+)
				=\mathscr P_\beta(Q_b+\varepsilon_++f).
			\end{equation}
			Here the two remainders are defined in \eqref{definition-B-one-beta-one}--\eqref{definition-R-one-beta-one}; the last identity follows from $\partial_sQ_b=(b'/b)\Lambda Q_b$ and $\sqrt{Q_b}Q_b''=2$. In particular, $\mathscr P(\varepsilon_-)=0$. The eigenvalue equation, including the time dependence of $\beta$, gives
			\begin{equation}\label{coercive-linear-F-outer}
				(\partial_s-\mathscr L_{0,\beta})F
				=\bigl[(1-\eta)\beta+pr_b\bigr]F+A^{-3}b^p\beta'\partial_\beta\phi_{\beta,2}
				\ge\tfrac14F.
			\end{equation}
			For the complete modulation remainder, use its exact relation to \eqref{definition-R-one-b-stable}:
			\begin{equation}\label{estimate-Rtilde-outer}
				\widetilde{\mathcal R}_1=\mathcal R_{1,b}+(\mathscr L_{0,\beta}-\mathscr L_{b,\beta})\varepsilon_+,
				\qquad |\widetilde{\mathcal R}_1|\lesssim b^{3/4}\xi^{10/3}\quad(\xi\ge K).
			\end{equation}
			Indeed, \eqref{global-estimate-R-one-b-stable} controls the first term; the scaling \eqref{spectral-beta-scaling} and bounds \eqref{prof-estimate-tilde-varphi} control the $\beta$ derivatives without an additional spatial weight. Lemma~\ref{lemma-Q} gives $|Q_b^{-1}-a^{-2}\xi^{-4/3}|\lesssim b^{1/2}\xi^{-2}$ and $|\sqrt{Q_b}-a\xi^{2/3}|\lesssim b^{1/2}$; combined with \eqref{outer-estimates-epsilon-plus}, these bound the second term by $C(b+b^{5/4}\xi^{4/3})$. This verifies \eqref{estimate-Rtilde-outer} for the actual remainder used here.
			
			If $Q_b+h>0$, rationalization gives the exact quadratic remainder $-h^2/[Q_b(\sqrt{Q_b+h}+\sqrt{Q_b})^2]$. Thus \eqref{definition-B-one-beta-one} implies
			\begin{equation}\label{general-Btilde-bound-outer}
				|\widetilde{\mathcal B}_1(h)|\lesssim
				b^{1/2}\xi^{-2}|h|+\bigl(b^{1/2}+\xi^{-2/3}|h|\bigr)|h''|+\xi^{-8/3}|h|^2.
			\end{equation}
			Choose $\nu_+=(9+12\eta)/20$ and $\Xi_+(s)=C_*b(s)^{-\nu_+}$, with $C_*>1$ fixed below. Since $\eta\le1/32$, we have $\eta<\nu_+<1/2$. For $h_\pm=\varepsilon_+\pm F$, \eqref{outer-estimates-epsilon-plus} and \eqref{bounds-phi-2-beta-outer} give $|h_\pm|+\xi^2|h_\pm''|\lesssim b^{1/2}\xi^2+F$, because $b^{3/4}\xi^{10/3}\lesssim A^3b^{1/4-\eta}F$. Dividing \eqref{general-Btilde-bound-outer} by $F$ therefore gives, on $K\le\xi\le\Xi_+$ wherever $Q_b+h_\pm>0$,
			\begin{equation*}
				\frac{|\widetilde{\mathcal B}_1(h_\pm)|}{F}
				\lesssim_{A,K}b^{1/2-\eta}+b^{1/2}\xi^{-2/3}+b^p\xi^{2/3}
				\lesssim_{A,K,C_*}b^{1/2-\eta}+b^{1/5+3\eta/5}.
			\end{equation*}
			The ratio in \eqref{estimate-Rtilde-outer} is $O(A^3b^{1/4-\eta})$. These positive powers, together with \eqref{coercive-linear-F-outer}, imply
			\begin{equation}\label{residual-sign-sub-outer}
				\mathscr P(F)\ge F/8>0,\qquad \mathscr P(-F)\le-F/8<0
			\end{equation}
			on the respective positivity domains, after increasing $s_{13}$.
			
			\emph{Upper comparison and initial ordering.} On $K\le\xi\le\Xi_+$, the leading finite-mode term satisfies $b^{1/2}\xi^2=o(Q_b)$ and its remainder is $o(F)$, since
			$b^{1/2}\Xi_+^{2/3}=O(b^{1/5-2\eta/5})$ and $b^{1/4-\eta}\to0$. Hence $Q_b+\varepsilon_++F\ge(Q_b+F)/4>0$. At $\xi=K$, Proposition~\ref{propo-priori-b-eta-K} gives $|\varepsilon_-(s,K)|\le A^{-26}b^p\mathfrak f_1(K)\le F(s,K)/2$, using \eqref{bounds-phi-2-beta-outer} and $A$ large.
			
			We verify the initial ordering across the gluing region, with $R_\delta$ from \eqref{definition-initial-cutoff-radius}. On $K\le\xi\le R_\delta$, \eqref{defi-initi-varthe-s-0} has no cutoff error; the differences between $(b_0,\beta_0)$ and $(\widetilde b_0,1)$ are smaller than every power by \eqref{distance-b-s-0-tildeb-0}. The parameter derivatives in Proposition~\ref{diagonalisation-L-b} grow at most polynomially in this range, so $|\varepsilon_-(s_0,\xi)|\le F(s_0,\xi)/2$ for $s_0\ge s_{13}$.
			
			For $R_\delta\le\xi\le\Xi_+(s_0)$, the choice $\delta=3/8$ and $\nu_+<1/2$ give, directly from \eqref{defi-U-infty-1},
			\begin{equation*}
				0<\Psi_\infty^1(\xi)\lesssim b_0^{-1/2}+b_0^{1/2}\xi\lesssim b_0^{-1/2}.
			\end{equation*}
			The inner profile is also $O(b_0^{-1/2})$ on $[R_\delta,2R_\delta]$, by \eqref{outer-estimates-epsilon-plus} and $Q_{b_0}\asymp\xi^{4/3}$. Thus the convex gluing formula gives $\Psi_1^0\lesssim b_0^{-1/2}$ throughout this outer range, and
			\begin{equation*}
				\frac{\varepsilon_-(s_0,\xi)}{F(s_0,\xi)}
				\lesssim A^3\bigl(b_0^{-1-\eta}\xi^{-10/3}+b_0^{-\eta}\xi^{-4/3}+b_0^{1/4-\eta}\bigr)
				\lesssim A^3\bigl(b_0^{1/4-\eta}+b_0^{1/2-\eta}\bigr)\to0.
			\end{equation*}
			Here constants may depend on $M_0,x_0,C_*$. This proves $\varepsilon_-(s_0,\xi)\le F(s_0,\xi)/2$ on the entire initial interval $[K,\Xi_+(s_0)]$.
			
			At the moving right boundary, the global shrinking-set bound and \eqref{outer-estimates-epsilon-plus} yield
			\begin{equation*}
				|\varepsilon_-|\lesssim Ab^{-1-\eta}+\xi^{4/3}+b^{1/2}\xi^2+b^{3/4}\xi^{10/3}.
			\end{equation*}
			At $\xi=\Xi_+$, the ratio of the first term to $F$ is $O(A^4C_*^{-10/3})$, because $(10/3)\nu_+=3/2+2\eta$. The other ratios tend to zero, with powers $2\nu_+-p$, $4\nu_+/3-\eta$, and $1/4-\eta$. Choose $C_*$ large and then $s_{13}$ large, obtaining $\varepsilon_-(s,\Xi_+(s))\le F(s,\Xi_+(s))/2$. On every finite time interval the two positive profiles have bounded comparison coefficients in this bounded moving cylinder. Applying the comparison principle through \eqref{definition-P-outer} and \eqref{residual-sign-sub-outer} gives
			\begin{equation}\label{upper-bound-epsilon-minus-outer}
				\varepsilon_-(s,\xi)\le F(s,\xi),\qquad K\le\xi\le\Xi_+(s).
			\end{equation}
			
			\emph{Positivity domain and lower comparison.} Set $g=Q_b+\varepsilon_+-F$. We construct a $C^1$ endpoint $\Xi_-(s)$ such that
			\begin{equation}\label{properties-Xi-minus-outer}
				g(s,\xi)\ge1\quad(K\le\xi\le\Xi_-(s)),\qquad g(s,\Xi_-(s))=1,
				\qquad 2b^{-\eta}\le\Xi_-\le\Xi_+/2.
			\end{equation}
			The leading coefficient of $\phi_{\beta,2}$ is $d_2(\beta)=\mathcal N_2(\beta)a(\beta a^2/8)^2/2>0$, using the normalization in \eqref{defi-N-beta-i}. With $\xi=b^{-p/2}y$, the profile and eigenfunction estimates give, locally uniformly in $C^1((0,\infty))$,
			\begin{equation}\label{limit-normalized-g-outer}
				b^{2p/3}g(s,b^{-p/2}y)=a^2y^{4/3}-A^{-3}d_2(\beta)y^{10/3}+O_A(b^{1/4-\eta}).
			\end{equation}
			For example, the term $b^{1/2}\xi^2$ becomes $b^{1/3-\eta/3}y^2$, and the finite-mode remainder becomes $O(b^{1/4-\eta}y^{10/3})$. The limiting polynomial has the unique positive zero $y_0(\beta)=aA^{3/2}/\sqrt{d_2(\beta)}$, with derivative $-2a^2y_0^{1/3}<0$. The parameter-dependent implicit function theorem, applied to $b^{2p/3}g=b^{2p/3}$, gives a $C^1$ root $y_*(s)$ near $y_0(\beta(s))$; put $\Xi_-=b^{-p/2}y_*$.
			
			To justify positivity on the full interval, first choose a small fixed $c_A>0$. On $K\le\xi\le c_Ab^{-p/2}$, $F\le Q_b/4$ and $|\varepsilon_+|=o(Q_b)$, so $g\ge Q_b/2\ge2$ for $K$ large. On the remaining scaled interval, \eqref{limit-normalized-g-outer} is positive away from the root and strictly decreasing near it; hence $g\ge1$ up to $\Xi_-$. Finally, $p/2-\eta=1/4-\eta/2>0$ and $\nu_+-p/2=1/5+\eta/10>0$, proving \eqref{properties-Xi-minus-outer}.
			
			On $\xi=K$, the Duhamel estimate gives $\varepsilon_-\ge-F/2$. Since $p/2<\delta=3/8$, we have $\Xi_-(s_0)<R_\delta/8$ for $s_0\ge s_{13}$, so \eqref{initial-epsilon-minus-exponential} gives the same lower ordering throughout $[K,\Xi_-(s_0)]$. At the moving boundary, \eqref{lower-comparison-vartheta} and \eqref{properties-Xi-minus-outer} imply
			\begin{equation*}
				\varepsilon_-(s,\Xi_-)+F(s,\Xi_-)=\vartheta(s,\Xi_-)-1\ge b^{-\eta/2}-1>0.
			\end{equation*}
			The lower profile $g$ is at least $1$, so comparison applies on $K<\xi<\Xi_-(s)$ and gives $-F\le\varepsilon_-$. By \eqref{limit-normalized-g-outer}, $\Xi_-(s)\ge cb(s)^{-1/4-\eta/2}$ for a fixed $c=c(A,K)>0$. Together with \eqref{upper-bound-epsilon-minus-outer} and $\Xi_-<\Xi_+$, this proves \eqref{outer-trapping-epsilon-minus}. Since $1/4-\eta/2>0$, its interval contains $[K,b^{-\eta}]$ after increasing $s_{13}$.
		\end{proof}
		
		\begin{proposition}[A priori estimates for $\varepsilon_-$]\label{lem-improved-bootstrap-epsilon-negative-pointwise}
			There exist $K_{14}\ge1$, $A_{14}(K)\ge1$,
			$\eta_{14}(A,K)\in(0,1/32]$, and
			$s_{14}=s_{14}(A,K,\eta,M_0,x_0)\ge\max\{s_{12},s_{13}\}$,
			independent of $\bar s$, such that, under the standing assumptions, for
			$K\ge K_{14}$, $A\ge A_{14}(K)$, $0<\eta\le\eta_{14}(A,K)$
			and $s_0\ge s_{14}$,
			\begin{equation*}
				\|\varepsilon_-(s)\|_{\mathfrak X_{b(s),\eta}}\le Ab(s)^{1/2+\eta},\qquad s_0\le s\le\bar s.
			\end{equation*}
		\end{proposition}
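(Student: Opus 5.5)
The plan is to assemble the three regional estimates already established and read off each of the two components of the norm $\mathfrak X_{b,\eta}$ in \eqref{definition-weighted-remainder-norm}. The thresholds are fixed first: $K_{14}:=\max\{K_{13},1\}$. For $K\ge K_{14}$ take $A_{14}(K)\ge\max\{A_{12},A_{13}(K),A_9\}$, enlarged below. Then $\eta_{14}\le\min\{\eta_{12},\eta_{13},\eta_9\}$, and finally $s_{14}\ge\max\{s_{12},s_{13},s_9(A,K,\dots)\}$. This ensures that Propositions~\ref{propo-priori-near-0}, \ref{propo-priori-b-eta-K} (with the same $K$) and \ref{pro-priori-epsilon-xi-large} all apply on $[s_0,\bar s]$.

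\emph{Inner part, $0<\xi\le b^{3/4}$.} Proposition~\ref{propo-priori-near-0} bounds the full perturbation: $|\varepsilon|\le (A/4)b^{1/4+\eta}\xi$. By \eqref{outer-estimates-epsilon-plus}, $|\varepsilon_+|\lesssim b^{3/4}\xi$, and this is at most $b^{1/4+\eta}\xi$ once $s_0$ is large. Since $\varepsilon_-=\varepsilon-\varepsilon_+$, we get $b^{1/4}\|\varepsilon_-/\xi\|_{L^\infty(0,b^{3/4})}\le (A/4+1)b^{1/2+\eta}$.

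\emph{Weighted part, $b^{3/4}\le\xi\le b^{-\eta}$.} Split this interval into three pieces and bound each against the envelope $\mathfrak f_1=\xi^{2/3}\langle\xi\rangle^{8/3}$.
\begin{itemize}
\item On $[b^{3/4},b^\eta]$: Proposition~\ref{propo-priori-near-0} gives $|\varepsilon|\le (A/4)b^{1/2+\eta}\xi^{2/3}$. The remark after \eqref{outer-estimates-epsilon-plus} shows $|\varepsilon_+|/(b^{1/2+\eta}\xi^{2/3})=O(b^{\eta/3}+b^{1/4-\eta})$. Since $\xi^{2/3}\le\mathfrak f_1$, this piece contributes at most $(A/4+o(1))b^{1/2+\eta}$.
\item On $[b^\eta,K]$: Proposition~\ref{propo-priori-b-eta-K} gives directly $|\varepsilon_-|\le A^{-26}b^{1/2+\eta}\mathfrak f_1$.
\item On $[K,b^{-\eta}]$: Proposition~\ref{pro-priori-epsilon-xi-large} gives $|\varepsilon_-|\le A^{-3}b^{1/2+\eta}\phi_{\beta,2}$, with $\phi_{\beta,2}\asymp\xi^{10/3}\asymp\mathfrak f_1$ uniformly for $\beta\in[1/2,3/2]$ by \eqref{bounds-phi-2-beta-outer}. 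This yields $CA^{-3}b^{1/2+\eta}$, where $C$ is independent of $A$.
\end{itemize}

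Summing the four contributions gives $\|\varepsilon_-\|_{\mathfrak X_{b,\eta}}\le\bigl(A/4+A/4+A^{-26}+CA^{-3}+o(1)\bigr)b^{1/2+\eta}\le A b^{1/2+\eta}$ for $A$ large and then $s_0$ large.

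The only delicate point is bookkeeping rather than analysis. The constants in Proposition~\ref{propo-priori-near-0} and \eqref{outer-estimates-epsilon-plus} must not depend on $A$ in a way that defeats the factor $1/4$. The outer constant $C$ from \eqref{bounds-phi-2-beta-outer} must be $A$-independent, so that $A^{-3}$ absorbs it. Finally, the implicit constant in the $\varepsilon_+$ bound, which depends only on spectral data, must be beaten by a positive power of $b$; this is achieved by enlarging $s_{14}$ after $A$ and $\eta$ are fixed.
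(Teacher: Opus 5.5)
Your proposal is correct and follows essentially the same route as the paper. You subtract the $\varepsilon_+$ bound \eqref{outer-estimates-epsilon-plus} from the inner comparison estimate on $[0,b^\eta]$, invoke Proposition~\ref{propo-priori-b-eta-K} on $[b^\eta,K]$, use the outer barrier $A^{-3}b^{1/2+\eta}\phi_{\beta,2}\lesssim A^{-3}b^{1/2+\eta}\mathfrak f_1$ on $[K,b^{-\eta}]$, and then choose $A$ large and finally $s_0$ large (your explicit inclusion of $A_9,\eta_9,s_9$ is redundant, since these are already built into $A_{13},\eta_{13},s_{13}$, but harmless).
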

		\begin{proof}
			Take $K_{14}=K_{13}$, $A_{14}(K)\ge\max\{A_{12},A_{13}(K)\}$,
			and $\eta_{14}(A,K)\le\min\{\eta_{12}(A),\eta_{13}(A,K)\}$.
			On $[0,b^\eta]$, subtract \eqref{outer-estimates-epsilon-plus} from \eqref{a-priori-estimate-epsilon-near-zero}; its normalized contribution is $O(b^{1/2-\eta}+b^{\eta/3}+b^{1/4-\eta})$, giving constant $A/2$ in each region. Proposition~\ref{propo-priori-b-eta-K} controls $[b^\eta,K]$, while \eqref{outer-trapping-epsilon-minus} and \eqref{bounds-phi-2-beta-outer} control $[K,b^{-\eta}]$ by $CA^{-3}b^{1/2+\eta}\mathfrak f_1$. Increase $A_{14}(K)$ so that $CA^{-3}\le A/2$ for $A\ge A_{14}(K)$,
			and then increase $s_{14}$. Each term in \eqref{definition-weighted-remainder-norm} is at most $(A/2)b^{1/2+\eta}$; their sum gives the claim.
		\end{proof}
		\section{Pointwise a priori estimates for \texorpdfstring{$\mathcal F$}{F}}\label{section-pointwise-estimates-F}
		We improve the residual bounds in Proposition~\ref{propo-priori-ell-1}(iv), using constant barriers globally and profile barriers near the wall and in the outer region. Throughout, assume the stable hypotheses of Section~\ref{Section-comparison-varep}, including the prepared datum with $\delta=3/8$ and a solution remaining in $\mathcal V_1[A,\eta]$ on $[s_0,\bar s]$. We use $\mathfrak f_1$ from \eqref{defi-f-ell} and the modulation bounds \eqref{preliminary-beta-b-comparison}.
		
		\subsection{Evolution equation and global bounds}
		Set $\mathbbm F(s,\zeta)=\mathcal F(s,b^{3/4}\zeta)$. The rescalings \eqref{chang-varialbe} and \eqref{definition-inner-theta-zeta} preserve the residual:
		\begin{equation*}
			\sqrt w\,\partial_\psi^2w-2=\mathcal F(s,\xi)=\mathbbm F(s,\zeta)
			=\sqrt\theta\,\partial_\zeta^2\theta-2.
		\end{equation*}
		Since $\partial_s\vartheta=\mathcal F+\beta\Lambda_\xi\vartheta$, differentiating \eqref{defi-mathcal-F} and using
		\begin{equation*}
			\frac{\vartheta''}{2\sqrt\vartheta}\Lambda_\xi\vartheta
			+\sqrt\vartheta\,(\Lambda_\xi\vartheta)''=-\tfrac34\xi\partial_\xi\mathcal F
		\end{equation*}
		gives the first equation below; the same calculation in the inner variables gives the second, with $\gamma=b\beta-b'$ from \eqref{equa-theta}:
		\begin{align}
			\partial_s\mathcal F&=\sqrt\vartheta\,\partial_\xi^2\mathcal F
			-\tfrac34\beta\xi\partial_\xi\mathcal F
			+\frac{\mathcal F(\mathcal F+2)}{2\vartheta},\label{equation-F-outer-exact}\\*
			b\partial_s\mathbbm F&=\sqrt\theta\,\partial_\zeta^2\mathbbm F
			-\tfrac34\gamma\zeta\partial_\zeta\mathbbm F
			+\frac{\mathbbm F(\mathbbm F+2)}{2\theta}.\label{equa-mathbbm-F}
		\end{align}
		These identities hold in the positive interior, where parabolic regularity justifies the differentiations. The wall value $\mathcal F(s,0)=0$ follows from the shrinking-set bound.
		
		\begin{lemma}[Global residual bounds]\label{lem-esti-F-le-A}
			There exist $A_{15}\ge A_{11}$, $\eta_{15}(A)\in(0,\eta_{11}(A)]$, and
			$s_{15}=s_{15}(A,\eta,M_0,x_0)\ge s_{11}$, independent of $\bar s$,
			such that, under the standing assumptions, for $A\ge A_{15}$,
			$0<\eta\le\eta_{15}(A)$ and $s_0\ge s_{15}$, uniformly on
			$[s_0,\bar s]\times\mathbb R_+$,
			\begin{equation}\label{definition-m0-F}
				\mathfrak m_0:=\min\bigl\{-2,\inf_{\xi\ge0}\mathcal F(s_0,\xi)\bigr\},
				\qquad -C_0\le\mathfrak m_0\le\mathcal F(s,\xi)\le b(s)^{-\eta/8},
			\end{equation}
			where $C_0=C_0(M_0,x_0)\ge2$ is independent of $A,\eta,s_0$ and $\bar s$.
		\end{lemma}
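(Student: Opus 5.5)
The plan is to apply the maximum principle to the exact residual equation \eqref{equation-F-outer-exact} on $(s_0,\bar s)\times(0,\infty)$, using spatially constant barriers. The equation has no zeroth-order linear term apart from the reaction $\mathcal F(\mathcal F+2)/(2\vartheta)$. This reaction vanishes at $\mathcal F=0$ and at $\mathcal F=-2$, is negative on $(-2,0)$, and is positive when $\mathcal F<-2$ or $\mathcal F>0$.

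\emph{Lower bound.} The constant $m=\mathfrak m_0\le-2$ satisfies $\mathcal F(m)=\partial_s m-\sqrt\vartheta m''+\tfrac34\beta\xi m'-\frac{m(m+2)}{2\vartheta}=-\frac{m(m+2)}{2\vartheta}\le0$. Hence $m$ is a subsolution. The ordering holds initially by definition of $\mathfrak m_0$, and at the wall because $\mathcal F(s,0)=0\ge m$. I will run comparison for $\mathcal F-m$ using the linearized coefficient $(\mathcal F+m+2)/(2\vartheta)$ at a negative minimum. The cleanest version is a direct argument at a first negative minimum of $\mathcal F-m+\epsilon e^{s}(1+\xi^2)$. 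At such a point the diffusion and drift terms have the right sign, and the reaction gives $\mathcal F(\mathcal F+2)>0$ because $\mathcal F<m\le-2$. This contradicts $\partial_s\le0$. Two issues need handling. At infinity, $\mathcal F$ is bounded by the bootstrap assumption $-A\le\mathcal F\le Ab^{-\eta/8}$, which justifies the penalization. Near the wall, $\vartheta\to0$, but the minimum cannot sit there since $\mathcal F=O(\xi)$. The constant $C_0$ comes from $\|\mathcal F_1(s_0)\|_\infty\lesssim_{x_0,M_0}1$ in Lemma~\ref{lemma-initial-data-l=1}(i).

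\emph{Upper bound.} Here a constant does not work because the reaction is positive for $\mathcal F>0$. The barrier is $h(s)=b(s)^{-\eta/8}$, which grows in $s$: by \eqref{preliminary-beta-b-comparison}, $h'=-\frac\eta8\frac{b'}{b}h\ge\frac{\eta}{32}h$. The supersolution inequality requires $h'\ge h(h+2)/(2\vartheta)$ wherever $\mathcal F$ could touch $h$, which means a lower bound $\vartheta\gtrsim\eta^{-1}h$. I will split the domain into three regions.

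\begin{enumerate}
\item In the inner and intermediate region $0<\xi\le b^{-\eta}$, the bootstrap norm bound $\|\mathcal F\|_{\mathfrak X_{b,\eta}}\le Ab^{1/2-\eta}$ already gives $|\mathcal F|\le Ab^{1/2-\eta}\mathfrak f_1\le Ab^{1/2-4\eta}\ll h$. So there is no contact.
\item In the far field $\xi\ge b^{-\eta}$, Lemma~\ref{lem-comparison-vartheta} gives $\vartheta\ge b^{-\eta/2}$. The reaction is then at most $h(h+2)b^{\eta/2}/2\lesssim b^{\eta/4}h\ll\frac{\eta}{32}h$ for $s_0$ large.
\item I will therefore apply comparison on the moving domain $\{\xi>b^{-\eta}\}$ with the barrier $h$. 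The lateral boundary is ordered by the first region, and the initial ordering follows from $|\mathcal F_1(s_0)|\lesssim1\le b_0^{-\eta/8}$.
\end{enumerate}

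The contradiction argument at a first touching point, with the same $\epsilon(1+\xi^2)$ penalization, closes the upper bound.

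The main obstacle is making the comparison rigorous near the degenerate wall and on the moving lateral boundary, and checking that the growth rate of $h$ (only $\sim\eta h$) beats the reaction. The latter forces the ordering of choices: $A$ and $\eta$ first, then $s_{15}$, so that $b^{\eta/4}\ll\eta$.
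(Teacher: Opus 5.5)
Your proposal is correct and follows essentially the same route as the paper. The lower bound uses a constant barrier together with the favorable sign of the reaction $\mathcal F(\mathcal F+2)/(2\vartheta)$ wherever $\mathcal F<-2$. The upper bound uses the barrier $b^{-\eta/8}$ on the moving far-field domain $\{\xi>b^{-\eta}\}$, combining $\vartheta\ge b^{-\eta/2}$ from Lemma~\ref{lem-comparison-vartheta}, the growth rate $\frac{\eta}{8}(-b'/b)\ge\frac{\eta}{32}$, and the bootstrap smallness of $\mathcal F$ on $[0,b^{-\eta}]$ to order the lateral boundary. Two cosmetic points do not affect the argument: the penalization must grow like $e^{Ms}$ with $M$ larger than $2\sup\sqrt\vartheta$ on the finite strip, not like $e^{s}$, and the interior bound is $Ab^{1/2-13\eta/3}$ rather than $Ab^{1/2-4\eta}$.
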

		\begin{proof}
			Take $A_{15}\ge A_{11}$ and $\eta_{15}(A)\le\eta_{11}(A)$;
			all further lower bounds on $s_0$ are absorbed into $s_{15}$.
			For comparison in \eqref{equation-F-outer-exact}, write
			\begin{equation}\label{definition-comparison-operator-F}
				\mathscr P_{\mathcal F}[f]:=\partial_sf-\sqrt\vartheta\,\partial_\xi^2f
				+\tfrac34\beta\xi\partial_\xi f-\frac{f(f+2)}{2\vartheta},
				\qquad \mathscr P_{\mathcal F}[\mathcal F]=0.
			\end{equation}
			For a constant $h<\mathfrak m_0\le-2$, we have $\mathscr P_{\mathcal F}[h]=-h(h+2)/(2\vartheta)<0$, with strict initial and wall ordering. The difference $h-\mathcal F$ has zeroth-order coefficient $-(h+\mathcal F+2)/(2\vartheta)$. Its negative part is bounded on each finite time strip: near the wall the numerator is negative, and away from the wall the solution is uniformly positive, using the far-field lower bound in $\mathcal V_1$. The diffusion and the difference are bounded there as well. The comparison argument recalled in Section~\ref{Section-comparison-varep}, with a growing spatial penalty on the half-line, gives $h\le\mathcal F$. Letting $h\uparrow\mathfrak m_0$ proves the lower bound. Lemma~\ref{lemma-initial-data-l=1} supplies $\|\mathcal F(s_0)\|_\infty\le C_0$, hence $\mathfrak m_0\ge-C_0$.
			
			For the upper bound, the shrinking-set estimates give
			\begin{equation*}
				\sup_{0\le\xi\le b^{-\eta}}|\mathcal F(s,\xi)|
				\lesssim_A b^{1/2-13\eta/3}=o(1),
				\qquad \eta\le1/32.
			\end{equation*}
			On $\xi\ge b^{-\eta}$, use $H(s)=b^{-\eta/8}$ and $r_b=b'/b+\beta$. Lemma~\ref{lem-comparison-vartheta} gives $\vartheta\ge b^{-\eta/2}$, so
			\begin{equation*}
				\frac{\mathscr P_{\mathcal F}[H]}{H}
				=\tfrac\eta8(\beta-r_b)-\frac{H+2}{2\vartheta}
				\ge\tfrac\eta{32}-Cb^{3\eta/8}>0.
			\end{equation*}
			The initial bound $C_0$ and the preceding estimate at the moving left boundary are at most $H/2$ for $s_0\ge s_{15}$. On this exterior domain the linear difference coefficient has bounded negative part, by the lower bound for $\vartheta$ and the global bootstrap bound for $\mathcal F$. Comparison on finite time strips, again with a spatial penalty at infinity, yields $\mathcal F\le H$ and proves \eqref{definition-m0-F}.
		\end{proof}
		
		It remains to improve the weighted bounds on $[0,b^{-\eta}]$. Lemma~\ref{lem-priori-F-K} already controls $[K^{-1},K]$ through \eqref{improved-F-compact}. We therefore treat the two complementary regions.
		
		\subsection{Comparison near the wall}
		Fix $\sigma=1/2$ and use $\mathcal Z_\sigma=\Lambda Q_\sigma$ from \eqref{equations-Z-Y-general-p}. Put $R_K(s)=K^{-1}b^{-3/4}$ and $m(s)=b^{1-\eta}$.
		
		\begin{lemma}[Inner residual bound]\label{lem-esti-F-0-1/K}
			There exist $K_{16}\ge1$, $A_{16}(K)\ge1$,
			$\eta_{16}(A,K)\in(0,1/32]$, and
			$s_{16}=s_{16}(A,K,\eta,M_0,x_0)\ge\max\{s_{10},s_{12},s_{15}\}$,
			independent of $\bar s$, with $s_{10}$ evaluated at $\delta=3/8$,
			such that, under the standing assumptions, for
			$K\ge K_{16}$, $A\ge A_{16}(K)$, $0<\eta\le\eta_{16}(A,K)$
			and $s_0\ge s_{16}$,
			\begin{equation}\label{trapping-F-inner}
				-m(s)\mathcal Z_\sigma(\zeta)\le\mathbbm F(s,\zeta)
				\le m(s)\mathcal Z_\sigma(\zeta),
				\qquad s_0\le s\le\bar s,\quad 0\le\zeta\le R_K(s).
			\end{equation}
			Consequently,
			\begin{equation}\label{inner-estimate-F-final}
				|\mathcal F(s,\xi)|\lesssim
				\begin{cases}
					b^{1/4-\eta}\xi,&0\le\xi\le b^{3/4},\\
					b^{1/2-\eta}\xi^{2/3},&b^{3/4}\le\xi\le K^{-1}.
				\end{cases}
			\end{equation}
		\end{lemma}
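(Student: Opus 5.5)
The plan is to run a comparison argument for the inner residual equation \eqref{equa-mathbbm-F} on the moving cylinder $\{s_0<s<\bar s,\ 0<\zeta<R_K(s)\}$, with barriers $\pm m\mathcal Z_\sigma$, $m=b^{1-\eta}$, $\sigma=1/2$. Write
\[
\mathscr P_{\mathbbm F}[f]:=b\partial_sf-\sqrt\theta\,\partial_\zeta^2f+\tfrac34\gamma\zeta\partial_\zeta f-\frac{f(f+2)}{2\theta},
\]
so that $\mathscr P_{\mathbbm F}[\mathbbm F]=0$. The diffusion $\sqrt\theta$ is kept frozen at the actual solution, as in the proof of Proposition~\ref{propo-priori-near-0}, so that no singular comparison coefficient appears.

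The main step is the sign $\mathscr P_{\mathbbm F}[m\mathcal Z_\sigma]>0$. Since $\mathscr H_\sigma\mathcal Z_\sigma=0$, we have $\partial_\zeta^2\mathcal Z_\sigma=-\mathcal Z_\sigma/Q_\sigma^{3/2}$. Hence
\[
\frac{\mathscr P_{\mathbbm F}[m\mathcal Z_\sigma]}{m}
=\frac{\sqrt\theta\,\mathcal Z_\sigma}{Q_\sigma^{3/2}}-\frac{\mathcal Z_\sigma}{\theta}
+\Bigl[\tfrac{bm'}{m}\mathcal Z_\sigma+\tfrac34\gamma\zeta\mathcal Z_\sigma'-\frac{m\mathcal Z_\sigma^2}{2\theta}\Bigr].
\]
By \eqref{tilde-theta-bootstrap-inner} we have $\theta=Q(1+O(A^2b^\eta))$, so the first two terms equal
\[
\mathcal Z_\sigma\sqrt Q\,(Q_\sigma^{-3/2}-Q^{-3/2})=-D_\sigma\mathcal Z_\sigma,
\]
up to a relative error $O(A^2b^\eta)$. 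From the proof of Proposition~\ref{propo-priori-near-0}, this quantity is positive and coercive:
\begin{itemize}
\item it is $\gtrsim\langle\zeta\rangle^{-4/3}$ for $\zeta\ge\zeta_{\rm in}$;
\item near zero, \eqref{near-origin-coercivity-general-p} gives $\sqrt Q\,Q_\sigma^{-3/2}\mathcal Z_\sigma\approx\frac14\sigma^{-1/8}$, while $\mathcal Z_\sigma/Q\approx\frac14\sigma^{1/4}$, so the difference is a positive constant.
\end{itemize}
For the bracketed terms, $bm'/m=O(b)$ and $\gamma=O(b)$, while $m\mathcal Z_\sigma/\theta\lesssim b^{1-\eta}$. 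These are bounded by $Cb^{1-\eta}\langle\zeta\rangle^{4/3}\cdot\langle\zeta\rangle^{-4/3}\mathcal Z_\sigma$. On $\zeta\le R_K=K^{-1}b^{-3/4}$ this is at most $CK^{-4/3}\langle\zeta\rangle^{-4/3}\mathcal Z_\sigma$, which is absorbed by the coercive term once $K$ is large; this fixes $K_{16}$. The lower barrier is handled the same way. The quadratic term $-f^2/(2\theta)$ has the favorable sign only for the upper barrier, but it is small, so it is absorbed in both cases.

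The boundary orderings come from three sources.
\begin{itemize}
\item At $\zeta=0$, all functions vanish, and the wall bound in $\mathcal V_1$ gives $|\mathbbm F|=O(\zeta)$.
\item At $s=s_0$, Lemma~\ref{lemma-initial-data-l=1}(i) gives $|\mathcal F|\lesssim b_0^{1/4}\xi+b_0^{1/2}\mathfrak f_1$. In the $\zeta$ variable this is $\lesssim b_0\Lambda Q$, which is a factor $b_0^\eta$ below $m\mathcal Z_\sigma$.
\item At the right boundary $\zeta=R_K$, i.e. $\xi=K^{-1}$, Lemma~\ref{lem-priori-F-K} gives $|\mathcal F|\le A^{-26}b^{1/2-\eta}$. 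The barrier there is $m\mathcal Z_\sigma(R_K)\asymp b^{1-\eta}R_K^{2/3}\asymp K^{-2/3}b^{1/2-\eta}$, so the ordering holds once $A$ is large depending on $K$; this fixes $A_{16}(K)$.
\end{itemize}

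The comparison itself is run as in Proposition~\ref{propo-priori-near-0}. We take a positive maximum of $e^{-M\tau(s)}(\mathbbm F-m\mathcal Z_\sigma)$ on the truncated moving cylinder, where $\tau(s)=\int_{s_0}^s b^{-1}$. The zeroth-order coefficient of the difference, $(\mathbbm F+f+2)/(2\theta)$, is singular like $\zeta^{-1}$ near the wall. This is the main obstacle. To handle it I would divide by the weight $g=\sqrt\zeta$ near zero, as in the proof of Lemma~\ref{lem-local-weighted-dependence}, which produces the favorable term $-\sqrt\theta/(4\zeta^2)$. Alternatively, I would use that both functions are $O(\zeta)$ and exploit the positive margin of the strict residual inequality relative to $\mathcal Z_\sigma\sim\zeta$.

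Finally, \eqref{inner-estimate-F-final} follows from \eqref{trapping-F-inner} together with $\mathcal Z_\sigma\asymp\Lambda Q\asymp\zeta$ for $\zeta\le1$ and $\asymp\zeta^{2/3}$ for $\zeta\ge1$. For $\xi\le b^{3/4}$ this gives $b^{1-\eta}\zeta=b^{1/4-\eta}\xi$, and for $b^{3/4}\le\xi\le K^{-1}$ it gives $b^{1-\eta}(\xi b^{-3/4})^{2/3}=b^{1/2-\eta}\xi^{2/3}$.
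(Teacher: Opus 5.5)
Your overall architecture matches the paper's: barriers $\pm b^{1-\eta}\mathcal Z_{1/2}$ on $[0,R_K]$, the same three boundary orderings, and division by $\sqrt\zeta$ at the wall. However, the sign computation has a genuine gap on the large-$\zeta$ part of the cylinder.

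You absorb the bracket $\frac{bm'}{m}\mathcal Z_\sigma+\frac34\gamma\zeta\mathcal Z_\sigma'-\frac{m\mathcal Z_\sigma^2}{2\theta}$ into $-D_\sigma\mathcal Z_\sigma\gtrsim\langle\zeta\rangle^{-4/3}$. But your bound $CK^{-4/3}\langle\zeta\rangle^{-4/3}\mathcal Z_\sigma$ carries a factor $\mathcal Z_\sigma\asymp\zeta^{2/3}$ that the coercive term lacks; also $b^{1-\eta}R_K^{4/3}=K^{-4/3}b^{-\eta}$, not $K^{-4/3}$.

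Each linear term of the bracket is of size $\asymp b\zeta^{2/3}$, which beats $\zeta^{-4/3}$ once $\zeta\gg b^{-1/2}$. Since $R_K=K^{-1}b^{-3/4}$, the ratio $b\zeta^2$ reaches $K^{-2}b^{-1/2}$, so no choice of $K$ helps. The quadratic term, of size $\asymp b^{1-\eta}$ there, likewise exceeds $\zeta^{-4/3}\approx K^{4/3}b$ near $\zeta=R_K$.

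The missing idea is that in this range the time and drift terms are not errors: they are the source of coercivity. Use $\zeta\mathcal Z_\sigma'=\frac23\mathcal Z_\sigma+O(\zeta^{1/3})$, $\gamma=2\beta b+O(b^{5/4})$ and $b'=-\beta b+O(b^{5/4})$. Then their leading parts $\beta b\mathcal Z_\sigma$ and $-(1-\eta)\beta b\mathcal Z_\sigma$ cancel down to $\eta\beta b\mathcal Z_\sigma$, which has the favorable sign for both barriers. This is exactly why the amplitude is $b^{1-\eta}$: with $m=b$ only $O(b^{5/4}\mathcal Z_\sigma+b\zeta^{1/3})$ would remain, with no sign.

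The combined coercivity $\langle\zeta\rangle^{-4/3}+b\Lambda Q\gtrsim b^{2/3}$ then absorbs the quadratic term, because $m/b^{2/3}=b^{1/3-\eta}\to0$. Only below an $\eta$-dependent threshold in $\zeta$ are the $O(b)$ time and drift terms absorbed directly by $-D_\sigma\mathcal Z_\sigma$.

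A second, related gap concerns the coefficient error. The quantity $-D_\sigma\mathcal Z_\sigma\asymp\zeta^{-4/3}$ is itself produced by a cancellation, since $Q$ and $Q_\sigma$ share the leading term $a^2\zeta^{4/3}$. By contrast, $\sqrt\theta\,Q_\sigma^{-3/2}\mathcal Z_\sigma$ and $\mathcal Z_\sigma/\theta$ are each $\asymp\zeta^{-2/3}$. Your uniform relative bound $\theta=Q(1+O(A^2b^\eta))$ therefore gives an error $A^2b^\eta\zeta^{-2/3}$. This is small neither against $\zeta^{-4/3}$ nor against $b^{2/3}$ near $\zeta\sim b^{-1/2}$, so the claim "equal to $-D_\sigma\mathcal Z_\sigma$ up to relative error $O(A^2b^\eta)$" fails.

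You need the structured bound $|\theta-Q|\lesssim A^2b^\eta\Lambda Q+b\zeta^2$ on all of $[0,R_K]$. Note that \eqref{tilde-theta-bootstrap-inner} was only proved up to $\zeta=b^{\eta-3/4}$. The resulting relative error $A^2b^\eta\zeta^{-2/3}+b\zeta^{2/3}$, combined with $b\zeta^{4/3}\le K^{-4/3}$, gives $|(\sqrt\theta-\sqrt Q)\mathcal Z_\sigma''+(\theta^{-1}-Q^{-1})\mathcal Z_\sigma|\lesssim(A^2b^\eta+K^{-4/3})\langle\zeta\rangle^{-4/3}$. This is where $K$ large is really used.

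A minor point: the quadratic term $-f^2/(2\theta)\le0$ is favorable for the lower barrier, not the upper one.
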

		\begin{proof}
			Take $A_{16}(K)\ge\max\{A_{12},A_{15}\}$ and
			$\eta_{16}(A,K)\le\min\{\eta_{12}(A),\eta_{15}(A)\}$.
			The choices of $K_{16}$ and the remaining increases are specified below.
			
			\emph{Coercivity and coefficient errors.} The profile identities used in Proposition~\ref{propo-priori-near-0}, with $D_\sigma$ from \eqref{decomposition-G2-general-p}, give
			\begin{equation}\label{coercivity-D-sigma-F}
				\mathscr H_\sigma\mathcal Z_\sigma=0,\qquad
				\mathcal Z_\sigma\asymp\Lambda Q,\qquad
				D_\sigma\mathcal Z_\sigma\le-c_\sigma\langle\zeta\rangle^{-4/3}.
			\end{equation}
			Indeed, $Q_\sigma<Q$, the product tends to $-\sigma^{1/4}(\sigma^{-3/8}-1)/4<0$ at zero, and its expansion at infinity in that proof is a negative multiple of $\zeta^{-4/3}$. Continuity gives the global inequality.
			
			Write $\theta=Q+\widetilde\theta$. The shrinking-set bound for $\varepsilon_-$ and \eqref{outer-estimates-epsilon-plus} imply
			\begin{equation}\label{tilde-theta-estimate-F-inner}
				|\widetilde\theta|\lesssim A^2b^\eta\Lambda Q+b\zeta^2,
				\qquad \tfrac12Q\le\theta\le\tfrac32Q
				\quad(0<\zeta\le R_K).
			\end{equation}
			For example, the relative error is $O(A^2b^\eta)$ below $1$ and $O(A^2b^\eta\zeta^{-2/3}+b\zeta^{2/3})$ above $1$; its supremum tends to zero. Define
			\begin{equation*}
				\mathcal R_\theta[f]:=(\sqrt\theta-\sqrt Q)f''+(\theta^{-1}-Q^{-1})f.
			\end{equation*}
			Since $\mathcal Z_\sigma''=-Q_\sigma^{-3/2}\mathcal Z_\sigma$, rationalization yields
			\begin{equation}\label{R-theta-A-estimate}
				|\mathcal R_\theta[\mathcal Z_\sigma]|
				\lesssim_\sigma (A^2b^\eta+K^{-4/3})\langle\zeta\rangle^{-4/3}.
			\end{equation}
			For $\zeta\ge1$, multiply the relative error above by $O(\zeta^{-2/3})$ and use $b\zeta^{4/3}\le bR_K^{4/3}=K^{-4/3}$. For $\zeta\le1$, the two factors multiplying the relative error are bounded, giving the same estimate.
			
			\emph{Barrier residuals.} Factoring $m$ out of the time and drift terms gives
			\begin{equation*}
				\mathcal T_\sigma:=-\tfrac34\gamma\zeta\mathcal Z_\sigma'-(1-\eta)b'\mathcal Z_\sigma
				=-\eta\beta b\mathcal Z_\sigma
				+O_\sigma(b\zeta^{1/3}+b^{5/4}\mathcal Z_\sigma)
				\quad(\zeta\ge1).
			\end{equation*}
			Here $\zeta\mathcal Z_\sigma'=\frac23\mathcal Z_\sigma+O_\sigma(\zeta^{1/3})$ and \eqref{preliminary-beta-b-comparison} were used. Thus $\mathcal T_\sigma\le-c_{\eta,\sigma}b\mathcal Z_\sigma$ above a fixed threshold depending on $\eta$; on the remaining bounded interval, its $O_{\eta,\sigma}(b)$ contribution is absorbed by \eqref{coercivity-D-sigma-F}. Choose $K_{16}$ so that the $K^{-4/3}$ term in
			\eqref{R-theta-A-estimate} is absorbed by \eqref{coercivity-D-sigma-F},
			and then increase $s_{16}$, obtaining
			\begin{equation}\label{linear-coercivity-F-barrier}
				D_\sigma\mathcal Z_\sigma+\mathcal R_\theta[\mathcal Z_\sigma]+\mathcal T_\sigma
				\le-c_{\eta,\sigma}\bigl(\langle\zeta\rangle^{-4/3}+b\Lambda Q\bigr).
			\end{equation}
			Set $\mathbbm F^\pm=\pm m\mathcal Z_\sigma$ and write the residual of \eqref{equa-mathbbm-F} as
			\begin{equation*}
				\mathbbm G[f]:=\sqrt\theta\,f''+\frac{f(f+2)}{2\theta}
				-\tfrac34\gamma\zeta f'-b\partial_sf.
			\end{equation*}
			The exact computation is
			\begin{equation}\label{residual-barriers-F}
				\mathbbm G[\mathbbm F^\pm]
				=\pm m\bigl(D_\sigma\mathcal Z_\sigma+\mathcal R_\theta[\mathcal Z_\sigma]+\mathcal T_\sigma\bigr)
				+\frac{m^2\mathcal Z_\sigma^2}{2\theta}.
			\end{equation}
			The last term is $O(m^2)$ by \eqref{tilde-theta-estimate-F-inner} and the profile bounds. Since $\langle\zeta\rangle^{-4/3}+b\Lambda Q\gtrsim b^{2/3}$ and $m/b^{2/3}=b^{1/3-\eta}\to0$, \eqref{linear-coercivity-F-barrier}--\eqref{residual-barriers-F} give
			\begin{equation}\label{barrier-signs-F}
				\mathbbm G[\mathbbm F^-]>0>\mathbbm G[\mathbbm F^+]
				\qquad(0<\zeta<R_K).
			\end{equation}
			
			\emph{Boundary ordering and the degenerate wall.} With $b_0=b(s_0)$, Lemma~\ref{lemma-initial-data-l=1} gives $|\mathbbm F(s_0,\zeta)|\lesssim b_0\Lambda Q=o(m(s_0)\mathcal Z_\sigma)$ on $[0,R_K(s_0)]$. At the right boundary, \eqref{improved-F-compact} gives
			\begin{equation*}
				|\mathbbm F(s,R_K)|\le A^{-26}b^{1/2-\eta},\qquad
				m\mathcal Z_\sigma(R_K)\gtrsim K^{-2/3}b^{1/2-\eta}.
			\end{equation*}
			Thus the ordering holds there after increasing $A_{16}(K)$. At the wall, all three functions vanish and are $O(\zeta)$ on each finite time strip.
			
			To justify comparison despite the singular reaction coefficient, apply the maximum argument to $(\mathbbm F-\mathbbm F^+)/\sqrt\zeta$ and $(\mathbbm F^--\mathbbm F)/\sqrt\zeta$. Both extend continuously by zero at the wall. Their evolution coefficients of order zero are
			\begin{equation*}
				\frac{\mathbbm F+\mathbbm F^\pm+2}{2\theta}
				-\frac{\sqrt\theta}{4\zeta^2}-\frac38\gamma.
			\end{equation*}
			These are bounded above: near zero the negative term has order $-\zeta^{-3/2}$ and dominates the $O(\zeta^{-1})$ reaction term; away from zero, \eqref{tilde-theta-estimate-F-inner} bounds the coefficients on every finite moving cylinder. An exponential factor in $\tau(s)=\int_{s_0}^s b(t)^{-1}\,dt$ therefore excludes a positive interior maximum, using \eqref{barrier-signs-F}. This proves \eqref{trapping-F-inner}. Finally, $\mathcal Z_\sigma\asymp\zeta$ below $1$ and $\mathcal Z_\sigma\asymp\zeta^{2/3}$ above $1$, so the change $\xi=b^{3/4}\zeta$ gives \eqref{inner-estimate-F-final}.
		\end{proof}
		
		\subsection{Comparison in the outer region}
		Use the normalized eigenfunction $\phi_{\beta,2}$ from \eqref{defi-phi-beta-i}, with the uniform bounds \eqref{bounds-phi-2-beta-outer}. We keep its dependence on $\beta$; Lemma~\ref{lemma-spectrum-L0-beta-xi} gives $(\mathscr L_{0,\beta}-\beta\operatorname{Id})\phi_{\beta,2}=-\frac52\beta\phi_{\beta,2}$.
		
		\begin{lemma}[Outer residual bound]\label{lem-esti-F-outer-region}
			There exist $K_{17}\ge1$, $A_{17}(K)\ge1$,
			$\eta_{17}(A,K)\in(0,1/32]$, and
			$s_{17}=s_{17}(A,K,\eta,M_0,x_0)\ge\max\{s_{10},s_{13},s_{15}\}$,
			independent of $\bar s$, with $s_{10}$ evaluated at $\delta=3/8$,
			such that, under the standing assumptions, for
			$K\ge K_{17}$, $A\ge A_{17}(K)$, $0<\eta\le\eta_{17}(A,K)$
			and $s_0\ge s_{17}$,
			\begin{equation}\label{outer-pointwise-estimate-F}
				|\mathcal F(s,\xi)|\le\frac A4b(s)^{1/2-\eta}\mathfrak f_1(\xi),
				\qquad s_0\le s\le\bar s,\quad K\le\xi\le b(s)^{-\eta}.
			\end{equation}
		\end{lemma}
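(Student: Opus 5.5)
The plan is to mirror the outer comparison of Proposition~\ref{pro-priori-epsilon-xi-large}, now for the residual equation \eqref{equation-F-outer-exact}. I would use the barriers $\mathcal F^\pm=\pm G$ with
\[
G(s,\xi)=A^{-1}b(s)^{1/2-\eta}\phi_{\beta(s),2}(\xi).
\]
By \eqref{bounds-phi-2-beta-outer}, $\phi_{\beta,2}\asymp\xi^{10/3}\asymp\mathfrak f_1$ for $\xi\ge K$. So trapping $|\mathcal F|\le G$ on $[K,b^{-\eta}]$ gives \eqref{outer-pointwise-estimate-F} after enlarging $A$.

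\emph{Linear coercivity.} I write the exact operator \eqref{definition-comparison-operator-F} as the limiting part $\partial_s-(\mathscr L_{0,\beta}-\beta\operatorname{Id})$ plus errors. The eigenvalue identity $(\mathscr L_{0,\beta}-\beta)\phi_{\beta,2}=-\tfrac52\beta\phi_{\beta,2}$, together with $b_s/b=-\beta+O(b^{1/4})$ and $|\beta'|\lesssim b^{1/4}$, gives
\[
(\partial_s-\mathscr L_{0,\beta}+\beta)G=\bigl[\tfrac52\beta-(\tfrac12-\eta)\beta+O(b^{1/4})\bigr]G\ge\tfrac32G .
\]
This is the margin.

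\emph{Coefficient errors.} There are three of them.
\begin{itemize}
\item The diffusion error is $(\sqrt\vartheta-a\xi^{2/3})\partial_\xi^2 G$. On $[K,b^{-\eta}]$ we have $|\vartheta-a^2\xi^{4/3}|\lesssim b^{1/2}\xi^{2/3}+|\varepsilon|$, and the shrinking set with \eqref{outer-estimates-epsilon-plus} gives $|\varepsilon|/Q_b\lesssim A^2b^{1/2-\eta}+b^{1/4}$. Since $\xi^2|\partial_\xi^2\phi|\lesssim\phi$, this error is $o(G)$.
\item The reaction error is $\bigl(\vartheta^{-1}-a^{-2}\xi^{-4/3}\bigr)G$. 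It is bounded by $\xi^{-4/3}\,o(1)\,G$.
\item The quadratic term $\mathcal F^2/(2\vartheta)$ evaluated at $\pm G$ is $\lesssim\xi^{-4/3}G\cdot G\lesssim A^{-1}b^{1/2-\eta}\xi^{2}G$. Since $\xi\le b^{-\eta}$, this is $O(b^{1/2-3\eta})G=o(G)$.
\end{itemize}
Hence $\mathscr P_{\mathcal F}[G]>0>\mathscr P_{\mathcal F}[-G]$ on the outer domain once $s_0\ge s_{17}$. Note that $\mathcal F$ is not a source here: its equation is homogeneous, so no forcing term needs to be controlled.

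\emph{Boundary ordering.} The three boundary pieces are handled as follows.
\begin{itemize}
\item At $\xi=K$, Lemma~\ref{lem-priori-F-K} gives $|\mathcal F|\le A^{-26}b^{1/2-\eta}\le G/2$ for $A$ large.
\item At the initial time, Lemma~\ref{lemma-initial-data-l=1}(i) gives $|\mathcal F(s_0)|\lesssim b_0^{1/2}\mathfrak f_1$ on $[K,R_\delta]$, which is $\lesssim b_0^{\eta}G$. To stay inside $[0,R_\delta]$ I would run the comparison only up to a moving edge $\Xi(s)=b^{-3/20}$, matching the paper's reference to $[K,(b_0^h)^{-3/20}]\subset[0,R_\delta]$. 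This interval still contains $[K,b^{-\eta}]$.
\item At the moving right edge $\xi=\Xi(s)$, I use the global bound \eqref{definition-m0-F}, namely $|\mathcal F|\le \max\{C_0,b^{-\eta/8}\}$. Against this, $G(\Xi)\gtrsim A^{-1}b^{1/2-\eta-1/2}=A^{-1}b^{-\eta}$, which dominates for small $b$.
\end{itemize}
I expect this right-edge choice to be the main obstacle. The edge must be large enough that $G$ beats the crude global bound, yet small enough that the coefficient errors above stay $o(1)$ and the initial ordering holds before the gluing region. I would first try $\Xi=b^{-3/20}$ and check the exponents: $b^{1/2-\eta}\Xi^{10/3}=b^{-\eta}\gg b^{-\eta/8}$ holds, and $\vartheta\asymp Q_b$ on $[K,\Xi]$ holds by the Table bounds extended via \eqref{outer-trapping-epsilon-minus}.

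Finally, I apply the comparison principle on the bounded moving cylinder. The difference equation has the zeroth-order coefficient $(\mathcal F+G+2)/(2\vartheta)$, which is bounded there, and an exponential time factor handles it.
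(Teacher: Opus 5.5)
Your proposal is correct and follows the paper's proof essentially step by step. It uses the same barrier $b^{1/2-\eta}\phi_{\beta,2}$ (your extra factor $A^{-1}$ only shifts constants), the same moving edge $b^{-3/20}$, the same splitting into the shifted limiting operator, coefficient errors and $\mathcal F^2/(2\vartheta)$, and the same three boundary orderings. One adjustment: your coefficient and quadratic error bounds must hold on the whole comparison interval $[K,b^{-3/20}]$, not just $[K,b^{-\eta}]$. There \eqref{outer-trapping-epsilon-minus} gives relative errors $O_A(b^{1/5})$ and $O(b^{1/5-\eta})$ in place of your $O(b^{1/2-3\eta})$, which are still $o(1)$.
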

		\begin{proof}
			Take $K_{17}=K_{13}$, $A_{17}(K)\ge\max\{A_{13}(K),A_{15}\}$,
			and $\eta_{17}(A,K)\le\min\{\eta_{13}(A,K),\eta_{15}(A)\}$.
			All subsequent increases are absorbed into these thresholds.
			Set $q=1/2-\eta$, $R(s)=b^{-3/20}$, and $H(s,\xi)=b^q\phi_{\beta(s),2}(\xi)$. Since $\eta<3/20$, the comparison interval $[K,R(s)]$ contains the target interval.
			
			\emph{Control on the enlarged interval.} The upper bound \eqref{upper-bound-epsilon-minus-outer} and the lower comparison in the proof of Proposition~\ref{pro-priori-epsilon-xi-large} hold up to $\Xi_-\asymp b^{-1/4-\eta/2}$, by \eqref{limit-normalized-g-outer}. Thus they both apply up to $R(s)$ for $s_0\ge s_{17}$. Together with \eqref{outer-estimates-epsilon-plus}, they give
			\begin{equation*}
				|\varepsilon|\lesssim b^{1/2}\xi^2+A^{-3}b^{1/2+\eta}\xi^{10/3}
				\qquad(K\le\xi\le R),
			\end{equation*}
			where the $b^{3/4}\mathfrak f_1$ term is absorbed because $\eta<1/4$. Lemma~\ref{lemma-Q} then implies
			\begin{equation}\label{relative-coefficient-error-simplified}
				\frac{|\vartheta-a^2\xi^{4/3}|}{\xi^{4/3}}
				\lesssim b^{1/2}\xi^{-2/3}+b^{1/2}\xi^{2/3}+A^{-3}b^{1/2+\eta}\xi^2
				\lesssim_A b^{1/5}.
			\end{equation}
			In particular, $\vartheta\asymp\xi^{4/3}$ on the enlarged interval. For
			\begin{equation*}
				\mathcal E_b[f]:=(\sqrt\vartheta-a\xi^{2/3})f''
				+(\vartheta^{-1}-a^{-2}\xi^{-4/3})f,
			\end{equation*}
			the derivative bounds in \eqref{bounds-phi-2-beta-outer} and \eqref{relative-coefficient-error-simplified} yield
			\begin{equation}\label{linear-coefficient-error-F}
				|\mathcal E_b[H]|\lesssim_A b^{1/5}H,\qquad
				\frac{H}{\vartheta}\lesssim b^q\xi^2\le Cb^{1/5-\eta}.
			\end{equation}
			
			\emph{Residual signs.} The operator \eqref{definition-comparison-operator-F} gives the exact formula
			\begin{equation*}
				\mathscr P_{\mathcal F}[\pm H]
				=\pm\bigl([(2+\eta)\beta+qr_b]H
				+b^q\beta'\partial_\beta\phi_{\beta,2}-\mathcal E_b[H]\bigr)
				-\frac{H^2}{2\vartheta}.
			\end{equation*}
			The principal coefficient is bounded below by $3/4$ for $s_0\ge s_{17}$. Equations \eqref{preliminary-beta-b-comparison}, \eqref{bounds-phi-2-beta-outer}, and \eqref{linear-coefficient-error-F} bound the remaining terms relative to $H$ by $C_A(b^{1/4}+b^{1/5}+b^{1/5-\eta})=o(1)$. Hence
			\begin{equation}\label{sign-residual-outer-F}
				\mathscr P_{\mathcal F}[-H]<0<\mathscr P_{\mathcal F}[H]
				\qquad(K<\xi<R(s)).
			\end{equation}
			
			\emph{Boundary ordering.} At $\xi=K$, \eqref{improved-F-compact} and $\phi_{\beta,2}(K)\gtrsim\mathfrak f_1(K)$ give $|\mathcal F|\le H/2$ after increasing $A_{17}(K)$. At the initial time, $R(s_0)<R_\delta$ from \eqref{definition-initial-cutoff-radius}, since $3/20<\delta=3/8$. Lemma~\ref{lemma-initial-data-l=1} therefore gives $|\mathcal F(s_0,\xi)|\lesssim b_0^{1/2}\mathfrak f_1(\xi)=o(H(s_0,\xi))$ on $[K,R(s_0)]$. Finally,
			\begin{equation*}
				H(s,R(s))\gtrsim b^qR^{10/3}=b^{-\eta}
				\gg b^{-\eta/8}+C_0\ge|\mathcal F(s,R(s))|,
			\end{equation*}
			by \eqref{definition-m0-F}. All coefficients in the linear difference equations are bounded on each finite moving cylinder, since $\vartheta\asymp\xi^{4/3}$ and $\xi\ge K$. Comparison using \eqref{sign-residual-outer-F} gives $|\mathcal F|\le H$ throughout $[K,R(s)]$. Since $\phi_{\beta,2}\lesssim\mathfrak f_1$, increasing $A_{17}(K)$
			if necessary proves \eqref{outer-pointwise-estimate-F}.
		\end{proof}
		
		Equations \eqref{inner-estimate-F-final}, \eqref{improved-F-compact}, and \eqref{outer-pointwise-estimate-F} bound each term of $\|\mathcal F\|_{\mathfrak X_{b,\eta}}$ by $(A/4)b^{1/2-\eta}$ after increasing $A_{16}(K)$ and $A_{17}(K)$ if necessary; on $[K^{-1},K]$, use $\mathfrak f_1\gtrsim K^{-2/3}$. Their sum gives the norm bound in Proposition~\ref{propo-priori-ell-1}(iv), while $A\ge2C_0$ in \eqref{definition-m0-F} gives the global improvements.
		\section{Derivation of a priori estimates in the unstable regimes}\label{section-prioei-ell-2}
		We prove Proposition~\ref{propo-priori-ell-2}, using the stable arguments of Sections~\ref{Section-comparison-varep}--\ref{section-pointwise-estimates-F} and recording the changes caused by the lower modes and the faster modulation. Throughout, $\ell\ge2$, $\beta=1$, and the solution has the prepared datum of Lemma~\ref{lemma-initial-data-ell} and remains in $\mathcal V_\ell[A,\eta]$ on $[s_0,\bar s]$. Use \eqref{notation-beta-one}, \eqref{defi-f-ell}, and the decomposition \eqref{definition-epsilon-plus-general-ell}. Write $\gamma_\ell=2\ell-1$ and $p_\ell=2+4\ell/3$.
		Fix the auxiliary cutoff $K=K_6(\ell,M_0,x_0)$ large enough for the
		comparison arguments below. All subsequent amplitude requirements are
		included in $A_6\ge A_5$, and we choose
		\[
		\eta_6(A,\ell)\le\min\{\eta_5(\ell),\eta_{19}(A,\ell),1/(32\ell)\},
		\qquad s_6\ge\max\{s_5,s_{19}\}.
		\]
		Further restrictions on $\eta$ and $s_0$ in this section are absorbed into
		$\eta_6$ and $s_6$, respectively. The fixed $K$ is suppressed in these
		thresholds, which are independent of $\bar s$.
		
		\subsection{Modulation equations: item \emph{(i)}}\label{subsection-proof-item-i-ell}
		Put $\kappa_\ell=c_{\ell,0}/c_{0,0}$, $h_j=\|\varphi_{j,b}\|_{L^2_{\rho_b}}^2\asymp_\ell1$, and use the adapted direction \eqref{definition-adapted-eigendirection-strategy}. Orthogonality gives $\varepsilon_j=h_j^{-1}\langle\varepsilon,\varphi_{j,b}\rangle_{\rho_b}$ for $1\le j\le\ell$ and $\varepsilon_\ell=-\kappa_\ell^{-1}h_0^{-1}\langle\varepsilon,\varphi_{0,b}\rangle_{\rho_b}$. Set $\widehat\varphi_k=\varphi_{k,b}$ for $k<\ell$, $\widehat\varphi_\ell=\psi_{\ell,b}$, and $D_s^\rho f=\partial_sf+f\partial_s\log\rho_b$. Differentiating the projections in \eqref{equa-varepsilon-appen-} gives
		\begin{align*}
			\varepsilon_j'&=\mu_{j,b}\varepsilon_j+h_j^{-1}\langle\mathscr B_b(\varepsilon)+\Phi_{b,1},\varphi_{j,b}\rangle_{\rho_b}+\mathcal K_j,\quad 1\le j\le\ell,\\*
			\varepsilon_\ell'&=\mu_{0,b}\varepsilon_\ell-\kappa_\ell^{-1}h_0^{-1}\langle\mathscr B_b(\varepsilon)+\Phi_{b,1},\varphi_{0,b}\rangle_{\rho_b}+\mathcal K_0,\\*
			h_j\mathcal K_j&=-\sum_{k=1}^\ell\varepsilon_k\langle\partial_s\widehat\varphi_k,\varphi_{j,b}\rangle_{\rho_b}
			+\langle\varepsilon_-,D_s^\rho\varphi_{j,b}\rangle_{\rho_b},\quad 1\le j\le\ell,\\*
			\kappa_\ell h_0\mathcal K_0&=\sum_{k=1}^\ell\varepsilon_k\langle\partial_s\widehat\varphi_k,\varphi_{0,b}\rangle_{\rho_b}
			-\langle\varepsilon_-,D_s^\rho\varphi_{0,b}\rangle_{\rho_b}.
		\end{align*}
		The nonlinear calculation \eqref{quadratic-first-nonlinearity}--\eqref{second-nonlinearity-via-F} applies unchanged. Split at $b^{3/4}$ and $b^{-\eta}$: the inner integral is $O_\ell(A^4b^{3/4+2\eta}+A^3b^{7/4})$, the intermediate integral is $O_\ell(b^{3/4}+Ab^{1-\eta})$, and the Gaussian-weighted tail is smaller than every power of $b$. Here the lower modes contribute $O_\ell(Ab^{1/2+\eta}\xi^{2/3})$ for $\xi\le1$, which is covered by the same relative-error bound after taking $Ab^\eta\le1$. Thus the nonlinear projections are $O_\ell(b^{3/4})$.
		
		The differentiated expansions in Proposition~\ref{diagonalisation-L-b} and \eqref{correct-derivative-log-weight}, with $\beta'=0$, give
		\begin{equation*}
			\|b\partial_b\widehat\varphi_k\|_{L^2_{\rho_b}}
			+\|b\partial_b\varphi_{j,b}+\varphi_{j,b}b\partial_b\log\rho_b\|_{L^2_{\rho_b}}
			\lesssim_\ell b^{1/4},\qquad
			\|\varepsilon_-\|_{L^2_{\rho_b}}\lesssim_\ell A^2b^{1/2+\eta}.
		\end{equation*}
		Indeed, $b\partial_b$ cancels the leading outer powers of the kernel iterates; the next powers and the inner contribution are $O_\ell(b^{1/4})$ in the weighted norm. Consequently $|\mathcal K_j|\lesssim_\ell b^{3/4}|b'/b|$ once $A^2b^\eta\le1$. Moreover, \eqref{projections-Lambda-Q} holds for every $1\le j\le\ell$, with $O_\ell(b^{3/4})$ in the nonzero modes. Using \eqref{eigen-value-L-b}, we obtain
		\begin{equation}\label{ODE-general-item-i}
			\begin{aligned}
				\varepsilon_\ell'&=(\tfrac12-\ell)\varepsilon_\ell+\mathcal R_\ell,
				&\varepsilon_j'&=(\tfrac12-j)\varepsilon_j+\mathcal R_j\quad(1\le j<\ell),\\
				\varepsilon_\ell'&=\tfrac12\varepsilon_\ell-\kappa_\ell^{-1}\mathbf m_0\sqrt b\,(1-b'/b)+\mathcal R_0,
			\end{aligned}
		\end{equation}
		where
		\begin{equation}\label{remainder-ODE-item-i}
			|\mathcal R_j|\lesssim_\ell b^{3/4}(1+|b'/b|),\qquad 0\le j\le\ell.
		\end{equation}
		Since $\mathbf n_\ell=2\kappa_\ell^{-1}\mathbf m_0\ne0$, subtracting the two equations for $\varepsilon_\ell$ gives the exact relation
		\begin{equation}\label{equation-b-item-i}
			\kappa_\ell^{-1}\mathbf m_0\sqrt b\,[b'/b+\gamma_\ell]
			=-\ell(\varepsilon_\ell-\mathbf n_\ell\sqrt b)+\mathcal R_\ell-\mathcal R_0.
		\end{equation}
		The compatibility bound \eqref{compatibility-condi-epsilon-ell} has exponent $1/2+2\eta$. Dividing by $\sqrt b$, taking $2\eta<1/4$, and absorbing the $O_\ell(b^{1/4}|b'/b|)$ term therefore yield
		\begin{align}
			|b'/b+\gamma_\ell|&\lesssim_\ell A b^{2\eta},\label{dynamics-b-item-i}\\*
			\varepsilon_j'&=(\tfrac12-j)\varepsilon_j+O_\ell(b^{3/4}),\qquad1\le j\le\ell.\label{dynamics-modes-item-i}
		\end{align}
		Finally, $\partial_s\log(b/\mathfrak I_\ell)=O_\ell(Ab^{2\eta})$. Since $b\asymp\mathfrak I_\ell$ in the shrinking set, its integral from $s_0$ is $O_{\ell,\eta}(Ab_0^{2\eta})$. Lemma~\ref{lemma-initial-data-ell} gives $b_0/\mathfrak I_\ell(s_0)=1+O(b_0^9)$; hence $3/4\le b/\mathfrak I_\ell\le5/4$ for $s_0\ge s_6$, proving item~(i).
		
		\subsection{Profile bounds: item \emph{(ii)}}
		Write $r_b=b'/b+\gamma_\ell$. For a spatially constant $h=cb^{-p}$, the operator \eqref{definition-operator-P-beta} satisfies $-\mathscr P_1[h]=(1-p\gamma_\ell+pr_b)h-2$. Thus
		\begin{equation*}
			h_-:=\tfrac2A b^{-\eta/2},\qquad h_+:=\tfrac A2b^{-1/\gamma_\ell-\eta},
			\qquad \mathscr P_1[h_-]<0<\mathscr P_1[h_+]
		\end{equation*}
		for $\eta\gamma_\ell$ small and $s_0\ge s_6$. At $\xi=b^{-\eta}$, the spectral and shrinking-set bounds give $\vartheta\sim a^2b^{-4\eta/3}$, which lies between $h_-$ and $h_+$ when $\eta<3/\gamma_\ell$. The prepared datum is increasing, is bounded above by $C_{\ell,x_0}b_0^{-1/\gamma_\ell}$, and is bounded below by $c_\ell b_0^{-4\eta/3}$ on $\xi\ge b_0^{-\eta}$. These estimates give the initial ordering. The comparison argument of Section~\ref{Section-comparison-varep}, with a spatial penalty at infinity, applies on each finite moving cylinder: the differences and diffusion are bounded there, and the comparison profiles have zero second derivative. It gives $h_-\le\vartheta\le h_+$ on $\xi\ge b^{-\eta}$, proving item~(ii).
		
		\subsection{The infinite-dimensional remainder: item \emph{(iii)}}\label{subsection-unstable-remainder-estimates}
		\emph{Intermediate region.} Set $q=1/2+\eta$ and let $P_\ell$ be the orthogonal projection in $L^2_{\rho_{0,1}}$ onto the complement of $\operatorname{span}\{\phi_{1,0},\ldots,\phi_{1,\ell}\}$. Apply the fixed projection to the full perturbation, writing $\widehat\varepsilon_-=P_\ell\varepsilon$. With $S_1$ from \eqref{defi-S-beta-0-t}, its exact equation is
		\begin{equation*}
			\partial_s\widehat\varepsilon_-=\mathscr L_{0,1}\widehat\varepsilon_-+
			P_\ell[\widetilde{\mathcal B}_\ell(\varepsilon)+\Phi_{b,1}].
		\end{equation*}
		Here $\widetilde{\mathcal B}_\ell$ is given by \eqref{definition-B-one-stable-corrected} with $\beta=1$. The finite-interval form regularity is the same as in Proposition~\ref{propo-priori-b-eta-K}, and Lemma~\ref{lem-unstable-forcing-estimates} gives the required singular-source envelope. Lemma~\ref{lemma-weak-duhamel-singular-source}, with $P=P_\ell$ and $c=0$, therefore justifies the weak Duhamel formula and its pointwise kernel representation. The eigenfunction and density expansions, paired with the moving orthogonality conditions, give
		\begin{equation*}
			\|\widehat\varepsilon_-\|_{L^2_{\rho_{0,1}}}\lesssim_{A,\ell}b^q,
			\qquad |\widehat\varepsilon_--\varepsilon_-|\lesssim_{A,K,\ell}b^{3/4-\eta/3}\mathfrak f_\ell
			\quad(b^\eta\le\xi\le K).
		\end{equation*}
		To see the second bound, the leading outer polynomials of every finite mode belong to the range of $I-P_\ell$; the next $T_0$ term contributes $O_{A,\ell}(b^{3/4}\xi^{1/3})$, and the remaining errors are $O_{A,\ell}(b^{3/4}\mathfrak f_\ell)$. The change of orthogonality contributes $O_{A,\ell}(b^{q+1/4}\mathfrak f_\ell)$. Integration of these bounds proves the weighted estimate; the tail beyond $b^{-\eta}$ is exponentially small.
		
		For a fixed $L\ge1$, Lemmas~\ref{lem-unstable-forcing-estimates} and~\ref{lem-semi-tle-K-0-xi-le-K} give, as in \eqref{short-time-forcing-beta-one},
		\begin{equation*}
			|S_1(t)P_\ell[\widetilde{\mathcal B}_\ell+\Phi_{b,1}](\tau,\xi)|
			\lesssim_{A,K,L,\ell,\eta}b(s)^{q+\sigma_\eta}\mathfrak f_\ell(\xi),
			\quad 0<t=s-\tau\le L,\qquad \sigma_\eta=\tfrac14-\tfrac83\eta>0.
		\end{equation*}
		The finitely many subtracted nonlinear moments are $O_{A,\ell,\eta}(b^{3/4-\eta})$ by the same three-region integration. For $\Phi_{b,1}$, $P_\ell$ removes the leading multiple of $\sqrt b\,\xi^{2/3}$; Lemma~\ref{lemma-Q} leaves $O_\ell(b^{3/4}(\xi^{1/3}+\mathfrak f_\ell))$, whose kernel image has the stated bound. The comparability of $b(\tau)$ and $b(s)$ on fixed time intervals follows from \eqref{dynamics-b-item-i}.
		
		On the range of $P_\ell$, Lemma~\ref{lemma-spectrum-L0-beta-xi} gives decay $e^{-(\ell+1/2)t}$; smoothing for one unit of time therefore gives
		\begin{equation*}
			|S_1(t)P_\ell f(\xi)|\lesssim_{K,\ell}e^{-(\ell+1/2)t}\xi^{2/3}\|f\|_{L^2_{\rho_{0,1}}},\qquad t\ge1,\quad0<\xi\le K.
		\end{equation*}
		Choose $\omega_*>0$ with $q\omega_*<1/4$ and enlarge $s_6$ so that $|r_b|\le\omega_*$. If $\gamma_\ell\eta<1/4$, the decay rate after division by $b^q$ is at least $1-\gamma_\ell\eta-q\omega_*>1/2$. Duhamel's formula restarted at $s-L$ consequently bounds the normalized remainder by $C_{A,K,\ell}e^{-L/2}+C_{A,K,L,\ell}b(s_0)^{\sigma_\eta}$. For $s-s_0\le L$, \eqref{unstable-initial-remainder-corrected} gives a prepared remainder
		$O_{A,\ell}(b_0^{3/4+2\eta}(\xi^{1/3}+\mathfrak f_\ell))$ on
		$b_0^\eta\le\xi$ before the cutoff; its inner part has $L^1$ norm
		$O_{A,\ell}(b_0^{7/4+2\eta})$ and is controlled by the separated kernel bound.
		The finite-mode projection error is $O_{A,\ell}(b_0^{3/4}(\xi^{1/3}+\mathfrak f_\ell))$, and the remote initial tail is suppressed by the Gaussian kernel. Choosing $L$ first and then $s_0$ proves
		\begin{equation}\label{ell-estimate-epsilon-minus-b-eta-K}
			|\varepsilon_-(s,\xi)|\le A^{-26}b^q\mathfrak f_\ell(\xi),\qquad b^\eta\le\xi\le K.
		\end{equation}
		
		\emph{Inner region.} Use \eqref{definition-inner-theta-zeta} with $\theta=Q+\widetilde\theta$, $R=b^{\eta-3/4}$, and $\gamma=b-b'=2\ell b+O_\ell(Ab^{1+2\eta})$. The $T_0$ cancellation holds in $\psi_{\ell,b}$, but the lower modes retain a contribution of size $Ab^\eta\Lambda Q$ to $\widetilde\theta$. Thus \eqref{tilde-theta-bootstrap-inner} remains valid, whereas the boundary estimate in \eqref{Duhamel-arbitrary-L-general-p} has an additional $O_\ell(A)$ term. To accommodate it, use the profiles \eqref{equations-Z-Y-general-p} with
		\begin{equation*}
			\sigma=A^{-2},\qquad m=A^{9/4}b^\eta,\qquad
			c_\pm=2\ell b\pm b^{1+\eta/2},\qquad
			f_\pm=c_\pm\mathcal Y_\sigma\pm m\mathcal Z_\sigma.
		\end{equation*}
		By \eqref{global-bounds-Z-general-p}, $m\mathcal Z_\sigma\gtrsim A^{5/4}b^\eta\Lambda Q$ and $m\mathcal Z_\sigma\lesssim A^{7/4}b^\eta\Lambda Q$, while $|c_\pm\mathcal Y_\sigma|/(m\mathcal Z_\sigma)=O_{A,\ell}(b^{\eta/3})$. The two operator decompositions \eqref{decomposition-G1-general-p}--\eqref{decomposition-G2-general-p} now give the barrier signs as follows. Near zero, the signed frozen-diffusion term has size
		\begin{equation*}
			m\sqrt Q\,Q_\sigma^{-3/2}\mathcal Z_\sigma\sim\tfrac14A^{5/2}b^\eta,
		\end{equation*}
		which dominates the $O_\ell(A^2b^\eta)$ actual-profile error. Away from zero, $D_\sigma\mathcal Z_\sigma<0$ and $\pm(\gamma-c_\pm)\le-\frac12b^{1+\eta/2}$ give coercivity comparable to $b^\eta\langle\zeta\rangle^{-4/3}+b^{1+\eta/2}\Lambda Q$, with constants depending on $A,\ell$. The remaining errors are bounded by
		\begin{equation*}
			C_{A,\ell}\bigl(b^{2\eta}\langle\zeta\rangle^{-4/3}+b^{1+\eta}\Lambda Q+b^2\langle\zeta\rangle^2\bigr);
		\end{equation*}
		relative to that coercivity, they gain $b^\eta$, $b^{\eta/2}$, and $b^{5\eta/6}$, respectively. These are precisely the estimates in \eqref{G2-errors-general-p}, with the modified coefficients.
		
		At $s=s_0$ and at $\zeta=R(s)$, the prepared data, \eqref{ell-estimate-epsilon-minus-b-eta-K}, and the finite-mode expansion give $|\widetilde\theta|\le C_\ell(A+o(1))b^\eta\Lambda Q$. This lies between $f_-$ and $f_+$ when $A$ is large. All functions vanish at the wall. The frozen/nonlinear comparison argument in Proposition~\ref{propo-priori-near-0} therefore yields $|\widetilde\theta|\lesssim_\ell A^{7/4}b^\eta\Lambda Q$ throughout $[0,R]$. On returning to $\xi$ and subtracting $\varepsilon_+$, its lower-mode part costs $O_\ell(A)$ in the same normalized bounds, while its adapted part costs $o(1)$. Taking $A$ large so that $C_\ell(A^{7/4}+A)<A^2/4$ bounds each term of the norm, restricted to $0<\xi\le b^\eta$, by $(A^2/4)b^{1/2+\eta}$.
		
		\emph{Outer region.} Let $\phi=(-1)^{\ell+1}\phi_{1,\ell+1}$, using \eqref{defi-phi-beta-i}. For $K$ sufficiently large, $\phi\asymp_\ell\mathfrak f_\ell\asymp_\ell\xi^{p_\ell}>0$ on $\xi\ge K$, its first two Euler derivatives are $O_\ell(\phi)$, and $\mathscr L_{0,1}\phi=-(\ell+1/2)\phi$. Set $F=b^q\phi$. Then
		\begin{equation*}
			(\partial_s-\mathscr L_{0,1})F=(1-\gamma_\ell\eta+qr_b)F\ge\tfrac12F.
		\end{equation*}
		Use \eqref{definition-P-outer} with $\beta=1$ and the unstable finite-mode part. The exact remainder is $\widetilde{\mathcal R}_\ell=\mathcal R_{\ell,b}+(\mathscr L_{0,1}-\mathscr L_{b,1})\varepsilon_+$. By \eqref{global-estimate-R-general-ell} and the differentiated spectral bounds,
		\begin{equation*}
			|\widetilde{\mathcal R}_\ell|\lesssim_{A,\ell}b^{3/4}\mathfrak f_\ell,
			\qquad |\varepsilon_+|+\xi^2|\varepsilon_+''|
			\lesssim_\ell b^{1/2}\xi^{p_\ell-4/3}+C_{A,\ell}b^{3/4}\mathfrak f_\ell
			\quad(\xi\ge K).
		\end{equation*}
		Choose the upper comparison endpoint $\Xi_+=C_*b^{-k_\ell}$, where
		\begin{equation*}
			k_\ell=\frac{3(2\ell+1)}{4\gamma_\ell(2\ell+3)}+\frac{3\eta}{2\ell+3},
			\qquad q-k_\ell p_\ell=-\frac1{\gamma_\ell}-\eta.
		\end{equation*}
		The rationalized estimate \eqref{general-Btilde-bound-outer} applies to $h_\pm=\varepsilon_+\pm F$ wherever $Q_b+h_\pm>0$. Dividing it by $F$, the quadratic terms have sizes $b^q\xi^{(4\ell-2)/3}$, $b^{1/2}\xi^{(4\ell-6)/3}$, and $b^{1/2-\eta}\xi^{(4\ell-10)/3}$. Their maxima on $[K,\Xi_+]$, together with the linear errors, give
		\begin{equation*}
			\frac{|\widetilde{\mathcal B}_\ell(h_\pm)|+|\widetilde{\mathcal R}_\ell|}{F}
			\lesssim_{A,K,\ell,C_*}b^{1/4-\eta}+b^{1/2-\eta}
			+b^{[1+(5-2\ell)\eta]/(2\ell+3)}=o(1).
		\end{equation*}
		Thus $\mathscr P(-F)<0<\mathscr P(F)$ on the respective positive-profile domains. The expansion also gives $Q_b+\varepsilon_++F>0$ throughout $[K,\Xi_+]$ for $\eta$ small.
		
		At $\xi=K$, \eqref{ell-estimate-epsilon-minus-b-eta-K} gives the boundary ordering. At $\Xi_+$, choosing $C_*$ large makes $F\ge\vartheta+|\varepsilon_+|$, by item~(ii), the identity for $q-k_\ell p_\ell$, and the preceding finite-mode bound. The initial ordering holds on the entire interval: the choice of $\delta$ in Lemma~\ref{lemma-initial-data-ell} satisfies
		\begin{equation*}
			\frac{3(2\ell+1)}{4\gamma_\ell(2\ell+3)}<\delta
			=\frac{3(\ell+1)}{2\gamma_\ell(2\ell+3)}<\frac{3}{4\gamma_\ell},
		\end{equation*}
		More precisely,
		$\delta-k_\ell=\frac{3}{2\ell+3}\bigl(\frac{1}{4\gamma_\ell}-\eta\bigr)>0$.
		Since $b_0/b_0^\dagger\to1$, increasing $s_6$ gives
		$\Xi_+(s_0)<R_{\ell,\delta}/2=(b_0^\dagger)^{-\delta}/(2M_0)$.
		Thus \eqref{unstable-initial-remainder-corrected} applies throughout
		$[K,\Xi_+(s_0)]$, where there is no cutoff error, and yields
		$|\varepsilon_-(s_0)|/F\lesssim_{A,K,\ell}b_0^{1/4+\eta}=o(1)$. Comparison proves $\varepsilon_-\le F$ up to $\Xi_+$.
		
		For the lower bound, put $g=Q_b+\varepsilon_+-F$ and $r_\ell=3(1+2\eta)/[4(2\ell+1)]$. If $c_\phi>0$ is the leading coefficient of $\phi$, then
		\begin{equation*}
			b^{4r_\ell/3}g(s,b^{-r_\ell}y)\longrightarrow a^2y^{4/3}-c_\phi y^{p_\ell}
		\end{equation*}
		in $C^1$ on compact subsets of $y>0$. Indeed, the finite-mode relative error gains $b^{[1-\gamma_\ell\eta]/(2\ell+1)}$, and its remainder gains $b^{1/4-\eta}$. The positive zero of the limit is simple. The same compact/large-scale argument as in the lower comparison of Proposition~\ref{pro-priori-epsilon-xi-large} therefore gives a $C^1$ endpoint $\Xi_-\asymp b^{-r_\ell}$ with $g(s,\Xi_-)=1$ and $g\ge1$ on $[K,\Xi_-]$. For small $\eta$, $\eta<r_\ell<k_\ell<\delta$. Item~(ii) gives $\vartheta(s,\Xi_-)>1$, while the initial and left boundary ordering were just proved. Comparison yields $\varepsilon_-\ge-F$ up to $\Xi_-$. Consequently, for a fixed $c_\ell>0$ independent of $s_0$ and $\bar s$,
		\begin{equation}\label{ell-outer-trapping-extended}
			|\varepsilon_-(s,\xi)|\le b(s)^{1/2+\eta}\phi(\xi)
			\lesssim_\ell b(s)^{1/2+\eta}\xi^{p_\ell},\qquad
			K\le\xi\le c_\ell b(s)^{-r_\ell},\quad s_0\le s\le\bar s.
		\end{equation}
		Since $\Xi_->b^{-\eta}$ and $\phi\lesssim_\ell\mathfrak f_\ell$, the inner, intermediate, and outer bounds make each term of $\|\varepsilon_-\|_{\mathfrak X_{b,\eta}^{(\ell)}}$ at most $(A^2/4)b^{1/2+\eta}$ for $A$ large. Summing proves item~(iii).
		
		\subsection{The residual: item \emph{(iv)}}\label{subsection-unstable-residual-estimates}
		\emph{Global and compact bounds.} Use the exact equations \eqref{equation-F-outer-exact}--\eqref{equa-mathbbm-F}. Since $C_{\rm in}\ge2$, the constant lower barriers in Lemma~\ref{lem-esti-F-le-A}, together with \eqref{global-bound-F-ell-initial}, give $\mathcal F\ge-C_{\rm in}$. On $\xi\le b^{-\eta}$, the shrinking-set bound is $|\mathcal F|\lesssim_A b^{1/2-(p_\ell+1)\eta}=o(1)$. On the complementary region, $H=b^{-\eta/8}$ satisfies
		\begin{equation*}
			\frac{\mathscr P_{\mathcal F}[H]}H
			=\tfrac\eta8(\gamma_\ell-r_b)-\frac{H+2}{2\vartheta}
			\ge\tfrac{\eta\gamma_\ell}{16}-C_A b^{3\eta/8}>0,
		\end{equation*}
		using item~(ii). The same initial and moving-boundary comparisons give $-C_{\rm in}\le\mathcal F\le b^{-\eta/8}$ globally. Since $A\ge A_6\ge A_5>2C_{\rm in}$, these bounds imply the global improvement in item~(iv).
		
		For every fixed $K$, we also have $|\mathcal F|\le A^{-26}b^{1/2-\eta}$ on $[K^{-1},K]$ after increasing $s_6$. Here one cannot use the stable residual decay rate alone, since $b$ decays faster. Instead, on every fixed compact spatial interval, the decomposition and shrinking bounds give $\varepsilon/b^{1/2-\eta}\to0$ uniformly as $s_0\to\infty$. Normalize on time intervals of bounded length by the terminal value of $b^{1/2-\eta}$. As in Lemma~\ref{lem-priori-F-K}, the residual equation is uniformly parabolic there, $\vartheta\to a^2\xi^{4/3}$ in $C^2$, and the normalized residuals are locally H\"older compact away from the initial time. The exact identity
		\begin{equation*}
			\varepsilon''=\frac{\mathcal F}{\sqrt\vartheta}
			+2\left(\frac1{\sqrt\vartheta}-\frac1{\sqrt{Q_b}}\right)
		\end{equation*}
		identifies every such limit as zero: its normalized left side tends to zero in distributions, and the second term does so locally uniformly.
		To include times approaching the initial time, let $T_n=\sigma_n-s_{0,n}\to0$ and set
		$h_n(t,\xi)=b_n(\sigma_n)^{-(1/2-\eta)}\mathcal F_n(s_{0,n}+t,\xi)$ for $0\le t\le T_n$.
		For $I\Subset J\Subset(0,\infty)$, Lemma~\ref{lemma-initial-data-ell} and
		$b_n(\sigma_n)\asymp b_n(s_{0,n})$ give
		$\delta_n:=\|h_n(0)\|_{L^\infty(J)}\to0$.
		The shrinking bounds give $\|h_n\|_{L^\infty([0,T_n]\times J)}\le C_J$, and the normalized equations have the uniform local coefficient bounds used in Lemma~\ref{lem-priori-F-K}. Thus the comparison argument proving \eqref{residual-uniform-initial-trace} gives
		\begin{equation*}
			\|h_n(t)\|_{L^\infty(I)}
			\le e^{C_0t}\delta_n+C_1e^{-c_1/t},\qquad 0<t\le\min\{1,T_n\},
		\end{equation*}
		with constants independent of $n$. At $t=T_n$, this tends to zero; if $T_n=0$, the initial estimate suffices.
		Thus a sequence of points violating the asserted compact bound would have a nonzero limit, a contradiction.
		
		\emph{Inner residual barrier.} Fix $\sigma=1/2$ and abbreviate $Z=\mathcal Z_\sigma$, $Y=\mathcal Y_\sigma$ from \eqref{equations-Z-Y-general-p}. On $0\le\zeta\le R_K=K^{-1}b^{-3/4}$, the spectral and shrinking bounds give
		\begin{equation*}
			|\theta-Q|\lesssim_\ell A^2b^\eta\Lambda Q+b\zeta^2,\qquad \theta\asymp Q.
		\end{equation*}
		The stable barrier $b^{1-\eta}Z$ requires a correction: its leading time/drift contribution is $[(\ell-1)-\gamma_\ell\eta]bZ$, which is positive for small $\eta$. Choose a fixed $D_\ell>0$ sufficiently large, and use
		\begin{equation*}
			m=b^{1-\eta},\qquad J=Z+D_\ell bY,\qquad \mathbbm F^\pm=\pm mJ.
		\end{equation*}
		Since $|Y|/Z\lesssim\langle\zeta\rangle^{4/3}$ and $bR_K^{4/3}=K^{-4/3}$, taking $K$ large gives $Z/2\le J\le Z$. Write $\mathcal R_\theta[f]=(\sqrt\theta-\sqrt Q)f''+(\theta^{-1}-Q^{-1})f$, as in Section~\ref{section-pointwise-estimates-F}. After division by $m$, the linear residual of the upper barrier is exactly
		\begin{align*}
			\mathcal A_\ell&={}D_\sigma Z+\mathcal R_\theta[Z]-\tfrac34\gamma\zeta Z'-(1-\eta)b'Z\\*
			&\quad{}+D_\ell b\bigl[-\Lambda Q+D_\sigma Y+\mathcal R_\theta[Y]-\tfrac34\gamma\zeta Y'-(2-\eta)b'Y\bigr].
		\end{align*}
		The first two terms are at most $-c\langle\zeta\rangle^{-4/3}$ by \eqref{coercivity-D-sigma-F} and \eqref{R-theta-A-estimate}, after taking $K,s_6$ large. For large $\zeta$, $\zeta Z'=2Z/3+O(\zeta^{1/3})$, so the remaining terms in the first line are bounded above by $C_\ell b\Lambda Q$. In the second line, $D_\sigma Y=O(1)=o(\Lambda Q)$, while the coefficient and time/drift errors, after multiplication by $b$, are bounded by $C_\ell(A^2b^\eta+K^{-4/3})b\Lambda Q$. Thus $-D_\ell b\Lambda Q$ dominates the positive drift when $D_\ell$ is chosen first; on any remaining bounded interval all $O_\ell(b)$ errors are absorbed by the first coercive term. It follows that
		\begin{equation*}
			\mathcal A_\ell\le-c_\ell(\langle\zeta\rangle^{-4/3}+b\Lambda Q),\qquad
			\mathbbm G[\mathbbm F^\pm]=\pm m\mathcal A_\ell+\frac{m^2J^2}{2\theta},
		\end{equation*}
		where $\mathbbm G$ is the residual operator used in \eqref{residual-barriers-F}. Since $J^2/\theta\lesssim1$, $\langle\zeta\rangle^{-4/3}+b\Lambda Q\gtrsim b^{2/3}$, and $m=o(b^{2/3})$ for $\eta<1/3$, these residuals have the required opposite signs.
		
		The initial residual is $O_\ell(b_0\Lambda Q)=o(mJ)$ before the cutoff. At $R_K$, the compact estimate just proved is smaller than $mJ\asymp K^{-2/3}b^{1/2-\eta}$ for $A$ large. At zero all three functions vanish linearly. Dividing the comparison differences by $\sqrt\zeta$, exactly as in Lemma~\ref{lem-esti-F-0-1/K}, bounds the singular zeroth-order coefficient above and justifies comparison at the wall. Hence $|\mathbbm F|\le mJ\lesssim b^{1-\eta}\Lambda Q$, which gives
		\begin{equation*}
			|\mathcal F|\lesssim_\ell b^{1/4-\eta}\xi\quad(0\le\xi\le b^{3/4}),\qquad
			|\mathcal F|\lesssim_\ell b^{1/2-\eta}\xi^{2/3}\quad(b^{3/4}\le\xi\le K^{-1}).
		\end{equation*}
		
		\emph{Outer residual barrier.} Use the same positive tail $\phi=(-1)^{\ell+1}\phi_{1,\ell+1}$ as above, and put $H=b^{1/2-\eta}\phi$, $R=b^{-3/[4(2\ell+3)]}$. For small $\eta$, $b^{-\eta}<R<\Xi_-$.
		Since $\delta-3/[4(2\ell+3)]=9/[4(2\ell-1)(2\ell+3)]>0$,
		we also have $R(s_0)<R_{\ell,\delta}/2$ after increasing $s_6$. The two-sided outer estimate for $\varepsilon_-$ therefore applies throughout $[K,R]$. Together with the finite-mode expansion, it gives
		\begin{equation*}
			\frac{|\vartheta-a^2\xi^{4/3}|}{\xi^{4/3}}\lesssim_{A,\ell}b^{1/(2\ell+3)},
			\qquad \vartheta\asymp\xi^{4/3},\qquad H/\vartheta\lesssim_\ell b^{1/(2\ell+3)-\eta}.
		\end{equation*}
		For $\mathcal E_b[f]=(\sqrt\vartheta-a\xi^{2/3})f''+(\vartheta^{-1}-a^{-2}\xi^{-4/3})f$, the first estimate gives $|\mathcal E_b[H]|\lesssim_{A,\ell}b^{1/(2\ell+3)} H$. Since $(\mathscr L_{0,1}-\operatorname{Id})\phi=-(\ell+3/2)\phi$, the exact residuals are
		\begin{equation*}
			\mathscr P_{\mathcal F}[\pm H]
			=\pm\bigl([2+\gamma_\ell\eta+(\tfrac12-\eta)r_b]H-\mathcal E_b[H]\bigr)-\frac{H^2}{2\vartheta}.
		\end{equation*}
		They have the desired signs for $\eta<1/(2\ell+3)$ and $s_0\ge s_6$. The compact estimate orders the barriers at $K$; the prepared residual is $O_\ell(b_0^{1/2}\mathfrak f_\ell)=o(H)$ at $s_0$; and at $R$, $H\asymp_\ell b^{-\eta}\gg C_{\rm in}+b^{-\eta/8}\ge|\mathcal F|$. Comparison yields $|\mathcal F|\le H\lesssim_\ell b^{1/2-\eta}\mathfrak f_\ell$ on $[K,R]$. Taking $A$ large, the inner, compact, and outer bounds make each term of $\|\mathcal F\|_{\mathfrak X_{b,\eta}^{(\ell)}}$ at most $(A/4)b^{1/2-\eta}$. Their sum and the global bounds prove item~(iv).
		
		\section{Spectral analysis of the linearized operator}\label{section-proof-L-b diagonal}
		We prove Proposition~\ref{diagonalisation-L-b} by an inner--outer matching argument of the type used in~\cite[Section~2]{Collot-et-alJAMS-2020}. The kernel estimates give an inner expansion around $Q$, while the limiting eigenfunctions generate the outer expansion; matching their values and derivatives determines the eigenvalue correction. Set $N=\ell+1$ and carry out the construction for $0\le i\le N$. We work on a slightly larger compact $\beta$-interval than $[C_0^{-1},C_0]$, so that parameter derivatives are defined at its endpoints. All constants and smallness thresholds may depend on $\ell,C_0$. We use the definitions of $Q_b$, $\Lambda$, and the operators in \eqref{defi-Q-b}, \eqref{Lambda:op}, \eqref{defi-mathscr-L-}--\eqref{defi-L_0_beta}.
		
		The dependence on $\beta$ can be recovered from the case $\beta=1$. Indeed, with $\mathcal D_\beta f(\xi)=f(\beta^{3/4}\xi)$,
		\begin{equation}\label{spectral-beta-scaling}
			\mathscr L_{b,\beta}\mathcal D_\beta=\beta\mathcal D_\beta\mathscr L_{\beta b,1},\qquad
			\mu_{i,b,\beta}=\beta\mu_{i,\beta b,1},\qquad
			\varphi_{i,b,\beta}=\beta^{1/8}\mathcal D_\beta\varphi_{i,\beta b,1}.
		\end{equation}
		Indeed, $Q_{\beta b}(\beta^{3/4}\xi)=\beta Q_b(\xi)$ and, with $y=\beta^{3/4}\xi$, $d\rho_{b,\beta}(\xi)=\beta^{-1/4}d\rho_{\beta b,1}(y)$. Hence $\beta^{1/8}\mathcal D_\beta$ is unitary between the corresponding weighted spaces and maps their closed form domains onto one another. The operator identity therefore holds also for the Friedrichs realizations. We use a single dimensionless correction $\nu$, so the eigenvalue is $\beta(\frac12-i+\nu)$; its correction is $\widetilde\mu=\beta\nu$.
		
		\subsection{The ground state and the inner kernel}\label{section-app-T-i}
		\begin{lemma}[Ground state]\label{lemma-Q}
			The problem \eqref{ground-state-Q} has a unique positive solution $Q\in C^1([0,\infty))\cap C^\infty((0,\infty))$. With $a$ as in \eqref{notation-constant-a},
			\begin{alignat*}{2}
				Q(r)&=r+\tfrac83r^{3/2}+O(r^2) &\qquad &(r\to0),\\*
				Q(r)&=a^2r^{4/3}+\tfrac a4r^{2/3}-\tfrac{a^2}{18}r^{1/3}+O(1) &\qquad &(r\to\infty).
			\end{alignat*}
			Moreover, $\Lambda Q>0$. These expansions remain valid after any fixed number of Euler derivatives $D_r=r\partial_r$.
		\end{lemma}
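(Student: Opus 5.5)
The plan is to reduce the second-order ODE $\sqrt Q\,Q''=2$ to a first-order problem via an energy-type first integral, and read off existence, uniqueness, asymptotics and $\Lambda Q>0$ from it. Multiplying by $Q'/\sqrt Q$ gives
\[
\tfrac12\bigl((Q')^2\bigr)'=\frac{2Q'}{\sqrt Q}=4\bigl(\sqrt Q\bigr)',
\qquad\text{hence}\qquad (Q')^2=1+8\sqrt Q,
\]
using $Q(0)=0$ and $Q'(0)=1$. This matches the identity $\partial_r(Q^{3/4})=\tfrac34Q^{-1/4}\sqrt{1+8\sqrt Q}$ quoted later in the paper.

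\textbf{Existence and uniqueness.} Since $Q''=2/\sqrt Q>0$, any positive solution has $Q'\ge1>0$. So $Q$ solves the separable equation $Q'=\sqrt{1+8\sqrt Q}$ with $Q(0)=0$, whose right side is positive. Hence
\[
r=\int_0^{Q(r)}\frac{dq}{\sqrt{1+8\sqrt q}},
\]
which defines a unique increasing bijection $Q:[0,\infty)\to[0,\infty)$. It is $C^1$ up to $0$, smooth for $r>0$, and satisfies the original equation (differentiate the first integral back). Conversely, any positive solution satisfies the first integral, which gives uniqueness.

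\textbf{Asymptotics at zero.} Substitute $q=t^2$, so the quadrature becomes $r=\int_0^{\sqrt Q}2t(1+8t)^{-1/2}\,dt$, an analytic function of $\sqrt Q$. Inverting the resulting power series gives $Q=r+c\,r^{3/2}+O(r^2)$, with the coefficient $c$ fixed by matching. This analytic-in-$\sqrt r$ structure is also the basis for the Euler-derivative claim near $0$: applying $D_r$ to a convergent series in $\sqrt r$ keeps the same form.

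\textbf{Asymptotics at infinity.} Write $Q'=2\sqrt2\,Q^{1/4}(1+\tfrac18Q^{-1/2})^{1/2}$ and expand in powers of $Q^{-1/2}$. Integrating $dQ/Q^{1/4}(1+\dots)$ gives
\[
\tfrac{4}{3}Q^{3/4}+\alpha Q^{1/4}+\beta' Q^{-1/4}+\dots=2\sqrt2\,r+C.
\]
Inverting yields $Q=a^2r^{4/3}+\dots$. The constant $a^{3/2}=3/\sqrt2$ agrees with \eqref{notation-constant-a}, since $a^2r^{4/3}$ solves $\sqrt Q\,Q''=2$ exactly: check $a\cdot a^2\cdot\tfrac49=2$.

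\textbf{The main obstacle.} The subleading terms $\tfrac a4r^{2/3}$ and $-\tfrac{a^2}{18}r^{1/3}$ need care. The integration constant $C$ only affects the order $r^{1/3}$ coefficient through $Q^{3/4}\sim r$, producing a shift $\propto r^{1/3}$. So the $r^{1/3}$ coefficient is determined by the global quantity
\[
\lim_{R\to\infty}\Bigl(\int_0^R\frac{dq}{\sqrt{1+8\sqrt q}}-\text{(divergent part)}\Bigr).
\]
I would compute this constant explicitly via the substitution $q=t^2$: the integral of $2t/\sqrt{1+8t}$ is elementary, a polynomial in $\sqrt{1+8t}$. That gives an exact implicit formula for $r$ in terms of $u=\sqrt{1+8\sqrt Q}$, which I would then invert asymptotically. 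Euler derivatives at infinity follow by differentiating this exact implicit relation, since each $D_r$ preserves expansions in powers of $r^{-1/3}$.

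\textbf{Positivity of $\Lambda Q$.} Here $\Lambda Q=Q-\tfrac34rQ'$. Define $g(r)=Q-\tfrac34rQ'$, so $g(0)=0$ and
\[
g'=\tfrac14Q'-\tfrac34rQ''=\tfrac14Q'-\frac{3r}{2\sqrt Q}.
\]
Rather than track $g'$, it is cleaner to use $\Lambda Q>0\iff (Q^{3/4})'<Q^{3/4}/r$, i.e. $r\mapsto Q^{3/4}/r$ is decreasing. This holds because $Q^{3/4}$ is concave with value $0$ at $0$. Concavity follows from
\[
(Q^{3/4})'=\tfrac34\sqrt{Q^{-1/2}+8},
\]
which is decreasing in $Q$, hence in $r$. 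Strictness gives $\Lambda Q>0$.
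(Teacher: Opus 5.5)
Your proposal is correct and follows essentially the paper's argument: the first integral $(Q')^2=1+8\sqrt Q$, the quadrature that becomes the explicit relation $48r=Z^3-3Z+2$ with $Z=\sqrt{1+8\sqrt Q}$ after the substitution $q=t^2$, and analytic inversion in $r^{1/2}$ near $0$ and in $r^{-1/3}$ near infinity, which also gives the Euler-derivative statement. Your coefficients $c$ and the $r^{1/3}$ term are left uncomputed; carrying out the expansion gives $c=\tfrac83$, and with $L=(48r)^{1/3}$ one gets $Z=L+L^{-1}-\tfrac23L^{-2}+O(L^{-3})$, so $Q=(Z^2-1)^2/64$ yields the stated $a^2,\ \tfrac a4,\ -\tfrac{a^2}{18}$. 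The one real difference is $\Lambda Q>0$: you use strict concavity of $Q^{3/4}$ (since $(Q^{3/4})'=\tfrac34\sqrt{Q^{-1/2}+8}$ decreases), whereas the paper computes $(\Lambda Q)'=(Z-1)/(16\sqrt Q)>0$ with $\Lambda Q(0)=0$. Both arguments are valid; the paper's also shows that $\Lambda Q$ is increasing.
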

		\begin{proof}
			The first integral is $(Q')^2=1+8\sqrt Q$. Its positive branch gives the strictly increasing relation
			\begin{align}
				r&=\int_0^{Q(r)}\frac{dt}{\sqrt{1+8\sqrt t}},\notag\\*
				48r&=Z^3-3Z+2,\qquad Z=\sqrt{1+8\sqrt Q}.\label{implicit-Q-Z}
			\end{align}
			The integral maps $[0,\infty)$ onto itself, proving existence and uniqueness. Expanding \eqref{implicit-Q-Z} at $Z=1$ gives the first expansion; at infinity it gives
			\begin{equation*}
				r=\frac{\sqrt2}{3}Q^{3/4}-\frac{\sqrt2}{16}Q^{1/4}+\frac1{24}+O(r^{-1/3}),
			\end{equation*}
			whose inversion gives the second. The analytic implicit function theorem in $r^{1/2}$ and $r^{-1/3}$ justifies the differentiated remainders. Finally, $(\Lambda Q)'=(Z-1)/(16\sqrt Q)>0$ and $\Lambda Q(0)=0$.
		\end{proof}
		In particular, for $k\ge2$ at zero and $k\ge0$ at infinity,
		\begin{alignat*}{2}
			Q^{(k)}(r)&=\frac83\frac{\Gamma(5/2)}{\Gamma(5/2-k)}r^{3/2-k}+O(r^{2-k})&\qquad &(r\to0),\\*
			Q^{(k)}(r)&=a^2\frac{\Gamma(7/3)}{\Gamma(7/3-k)}r^{4/3-k}+O(r^{2/3-k})&\qquad &(r\to\infty).
		\end{alignat*}
		
		Set $\mathscr H_0=\mathcal L_{b,\beta}-b\beta\Lambda=\sqrt Q(\partial_r^2+Q^{-3/2})$, using \eqref{defi-mathcal-beta}. Differentiating the ground-state equation along its scaling gives $\mathscr H_0(\Lambda Q)=0$. A second kernel element and its Wronskian are
		\begin{align}
			\widetilde Q_0(r)&=-\Lambda Q(r)\int_r^\infty\frac{dt}{(\Lambda Q(t))^2},\notag\\*
			\mathcal W[\Lambda Q,\widetilde Q_0]&=(\Lambda Q)\widetilde Q_0'-(\Lambda Q)'\widetilde Q_0=1.\label{wroskian-u-1-2}
		\end{align}
		Lemma~\ref{lemma-Q} gives
		\begin{alignat*}{3}
			\Lambda Q&=\tfrac14r+O(r^{3/2}),&\qquad \widetilde Q_0&=-4+O(r^{1/2})&\qquad &(r\to0),\\*
			\Lambda Q&=\tfrac a8r^{2/3}-\tfrac{a^2}{24}r^{1/3}+O(1),&\qquad \widetilde Q_0&=-\tfrac{24}a r^{1/3}+O(1)&\qquad &(r\to\infty).
		\end{alignat*}
		The inverse with zero value and derivative at the origin is
		\begin{align}
			\mathscr H_0^{-1}f(r)&=\widetilde Q_0(r)\int_0^r\frac{\Lambda Q\,f}{\sqrt Q}\,dt-\Lambda Q(r)\int_0^r\frac{\widetilde Q_0 f}{\sqrt Q}\,dt\label{inverse-H-0}\\*
			&=\Lambda Q(r)\int_0^r\frac1{(\Lambda Q(t))^2}\int_0^t\frac{\Lambda Q(s)f(s)}{\sqrt{Q(s)}}\,ds\,dt.\label{secon-defi-H_0-inverse}
		\end{align}
		The Wronskian \eqref{wroskian-u-1-2} verifies $\mathscr H_0\mathscr H_0^{-1}f=f$, and \eqref{secon-defi-H_0-inverse} proves positivity preservation.
		
		\begin{lemma}[Iterated kernel]\label{lemma-T-i}
			Let $T_0=\cscale\Lambda Q$, with \eqref{notation-scaling-normalization}, and $T_{j+1}=\mathscr H_0^{-1}T_j$. Then $T_j>0$, with the following expansions at zero and infinity, respectively:
			\begin{equation}\label{asym-T}
				T_j(r)=t_jr^{1+3j/2}(1+O(r^{1/2})),\qquad
				T_j(r)=\mathsf C_jr^{2/3+4j/3}(1+O(r^{-1/3})),
			\end{equation}
			Here the constants are
			\begin{equation}\label{defi-C_i}
				\mathsf C_j=\frac{(9/(16a))^j}{j!\Gamma(j+5/4)},\qquad
				t_j=\frac{2\Gamma(5/3)}{a\Gamma(5/4)}\frac{(4/9)^j}{j!\Gamma(j+5/3)},\qquad j\ge0.
			\end{equation}
			Both remainders satisfy the same bounds after any fixed number of Euler derivatives. In particular, $\Theta_j:=\Lambda T_j-(\frac12-j)T_j$ is $O(r^{1+3j/2})$ at zero and $O(r^{1/3+4j/3})$ at infinity.
		\end{lemma}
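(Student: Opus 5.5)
Lemma~\ref{lemma-T-i} will be proved by induction on $j$, using the representation \eqref{secon-defi-H_0-inverse} and the kernel asymptotics recorded after \eqref{wroskian-u-1-2}. For $j=0$ everything follows from Lemma~\ref{lemma-Q}: $T_0=\cscale\Lambda Q>0$, $\Lambda Q=\tfrac14 r+O(r^{3/2})$ at zero and $\Lambda Q=\tfrac a8 r^{2/3}+O(r^{1/3})$ at infinity. It then remains to check $\cscale/4=t_0$ and $\cscale a/8=\mathsf C_0$. The second is immediate from $\cscale=8/(a\Gamma(5/4))$. The first is a Gamma-function identity that I expect to have to match against \eqref{defi-C_i}; if it fails, that identifies a normalization issue in the stated $t_j$ and not in the argument. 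Positivity for all $j$ is immediate from \eqref{secon-defi-H_0-inverse}, since $\Lambda Q>0$ and the integrand $\Lambda Q\,T_j/\sqrt Q$ is positive.

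For the inductive step at the wall, assume $T_j=t_jr^{1+3j/2}(1+O(r^{1/2}))$. Since $\Lambda Q\sim r/4$ and $\sqrt Q\sim r^{1/2}$, the inner integrand is $\tfrac14 t_j r^{3/2+3j/2}(1+O(r^{1/2}))$. Integrating gives $\tfrac14 t_j r^{5/2+3j/2}/(5/2+3j/2)$. Dividing by $(\Lambda Q)^2\sim r^2/16$ and integrating again yields $4t_j r^{3/2+3j/2}/\big((5/2+3j/2)(3/2+3j/2)\big)$. Multiplying by $\Lambda Q\sim r/4$ gives $t_{j+1}=t_j/\big((\tfrac52+\tfrac{3j}2)(\tfrac32+\tfrac{3j}2)\big)=\tfrac49 t_j/\big((j+1)(j+\tfrac53)\big)$, which matches the ratio $t_{j+1}/t_j$ in \eqref{defi-C_i}. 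The relative errors $O(r^{1/2})$ pass through each integration because all exponents are positive, so no boundary constants appear.

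At infinity the same scheme applies, but the lower limits now contribute constants. With $\Lambda Q\sim\tfrac a8 r^{2/3}$, $\sqrt Q\sim a r^{2/3}$ and $T_j\sim\mathsf C_j r^{2/3+4j/3}$, the inner integral is $\tfrac18\mathsf C_j r^{5/3+4j/3}/(5/3+4j/3)+O(1+r^{4/3+4j/3})$. Here the bounded contribution of $[0,1]$ is absorbed since the main power grows. Dividing by $(\Lambda Q)^2$ and integrating, the outer integrand behaves like $r^{1/3+4j/3}$, so the integral grows like $r^{4/3+4j/3}$ and the constant from $[0,1]$ is again lower order. Multiplying by $\Lambda Q$, the leading coefficient is
\[
\mathsf C_{j+1}=\mathsf C_j\cdot\frac{8}{a}\cdot\frac{1}{(5/3+4j/3)(4/3+4j/3)}=\frac{9}{16a}\cdot\frac{\mathsf C_j}{(j+1)(j+5/4)},
\]
which is the stated recursion. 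The remainder: the next-order terms of $\Lambda Q$ and $\sqrt Q$ give relative corrections $O(r^{-1/3})$, and the boundary constants give relative $O(r^{-4/3-4j/3})\le O(r^{-1/3})$, so $1+O(r^{-1/3})$ propagates.

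The Euler-derivative statements come from differentiating the formula instead of the expansion. Write $D(\Lambda Q\, G)=(D\Lambda Q)G+\Lambda Q\,rG'$, with $G'$ given explicitly by the inner integral divided by $(\Lambda Q)^2$. Then apply the inductive hypothesis to $D^kT_j$ together with Lemma~\ref{lemma-Q} for $D^k$ of $Q$ and $\Lambda Q$. Each derivative of a term $c\,r^\alpha(1+O(r^{\mp\cdot}))$ then keeps the same form, because the remainders have Euler-derivative bounds.

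For $\Theta_j$, the leading terms cancel exactly: $\Lambda r^{\alpha}=(1-\tfrac34\alpha)r^\alpha$. At infinity $\alpha=\tfrac23+\tfrac{4j}3$ gives $1-\tfrac34\alpha=\tfrac12-j$, so $\Theta_j$ is the Euler-differentiated remainder, $O(r^{1/3+4j/3})$. At zero no cancellation is needed, because $\Theta_j=O(T_j)=O(r^{1+3j/2})$ directly.

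The main obstacle is bookkeeping the lower-limit constants and the cross terms between the inner and outer expansions of $\Lambda Q$, to make sure the relative error really is $O(r^{-1/3})$ uniformly in the induction. For this I would split the integrals at $r=1$ and treat $[0,1]$ as bounded constants.
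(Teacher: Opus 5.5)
Your proposal is correct and is essentially the paper's argument. Both proceed by induction on $j$, inserting the endpoint expansions of $Q$ and $\Lambda Q$ into the integral formula for $\mathscr H_0^{-1}$ to get $t_{j+1}=4t_j/((3j+3)(3j+5))$ and $\mathsf C_{j+1}=9\mathsf C_j/(16a(j+1)(j+5/4))$. Positivity comes from \eqref{secon-defi-H_0-inverse} and the $\Theta_j$ bound from homogeneity.

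You differ in two minor ways. You run the asymptotics through the nested form \eqref{secon-defi-H_0-inverse} rather than the Wronskian form \eqref{inverse-H-0}, which avoids cancellation between two comparable terms at infinity. You also get the Euler-derivative bounds by differentiating the formula rather than via $\mathscr H_0T_{j+1}=T_j$.

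Two small corrections. The base-case check does hold, since $t_0=2/(a\Gamma(5/4))=\cscale/4$. At infinity, your intermediate factor should be $\frac1a=\frac18\cdot\frac{64}{a^2}\cdot\frac a8$, not $\frac8a$: you dropped the $\frac18$ from the inner integral. The recursion you finally state is nonetheless correct.
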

		\begin{proof}
			Insert the endpoint expansions of $\Lambda Q,\widetilde Q_0,Q$ into \eqref{inverse-H-0}. The leading coefficients obey
			\begin{alignat*}{2}
				t_{j+1}&=\frac{4t_j}{(3j+3)(3j+5)},&\qquad t_0&=\frac{\cscale}{4},\\*
				\mathsf C_{j+1}&=\frac{9\mathsf C_j}{16a(j+1)(j+5/4)},&\qquad \mathsf C_0&=\frac1{\Gamma(5/4)}.
			\end{alignat*}
			These recurrences give \eqref{defi-C_i} and \eqref{asym-T}. Differentiate the integral once and then use $\mathscr H_0T_{j+1}=T_j$ for the derivative bounds; further derivatives follow from the differentiated equation. Positivity follows from \eqref{secon-defi-H_0-inverse}, and the leading term of $\Theta_j$ cancels by homogeneity.
		\end{proof}
		
		\subsection{Inner expansion}\label{sub-inner expansion}
		Write $\zeta=\xi/b^{3/4}$ and $\zeta_0=\xi_0/b^{3/4}$, where $0<\xi_0\ll1$. For functions in $C^1([0,\zeta_0])\cap C^2((0,\zeta_0])$, use
		\begin{equation}\label{defi-norm-X-a-1-2}
			\|f\|_{X^{p,q}_{\zeta_0}}:=\sup_{0<\zeta\le\zeta_0}
			\frac{\sum_{m=0}^2|D_\zeta^mf(\zeta)|}{\zeta^p\langle\zeta\rangle^{q-p}}.
		\end{equation}
		These are Banach spaces for $p\ge1$; the bracket is defined in \eqref{notation-japanese-bracket}.
		\begin{lemma}[Inner inverse]\label{lemma_continued-H_0_on_X-a-12}
			For $p\ge1$ and $q>4/3$, uniformly in $\zeta_0$,
			\begin{align}
				\|f\|_{X^{p,q-4/3}}&\le\langle\zeta_0\rangle^{4/3}\|f\|_{X^{p,q}},\label{X-a-to-a-2-f}\\*
				\|\mathscr H_0^{-1}f\|_{X^{p,q}}+\|\mathscr H_0^{-1}\Lambda f\|_{X^{p,q}}&\lesssim\|f\|_{X^{p,q-4/3}}.\label{norm-H-1-f-Lambda-f}
			\end{align}
		\end{lemma}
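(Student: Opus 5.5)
The estimate \eqref{X-a-to-a-2-f} is pure weight comparison, and \eqref{norm-H-1-f-Lambda-f} will follow from the explicit Green formula \eqref{secon-defi-H_0-inverse} combined with the endpoint asymptotics of Lemma~\ref{lemma-Q}.

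For \eqref{X-a-to-a-2-f}, the ratio of the two weights is
\[
\frac{\zeta^p\langle\zeta\rangle^{q-p}}{\zeta^p\langle\zeta\rangle^{q-4/3-p}}=\langle\zeta\rangle^{4/3}\le\langle\zeta_0\rangle^{4/3},\qquad 0<\zeta\le\zeta_0,
\]
so the bound holds with constant one and nothing more is needed.

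For the first term in \eqref{norm-H-1-f-Lambda-f}, set $M=\|f\|_{X^{p,q-4/3}}$, so that $|f|\le M\zeta^p\langle\zeta\rangle^{q-4/3-p}$. I will use the weights $\Lambda Q\asymp\zeta\langle\zeta\rangle^{-1/3}$ and $Q\asymp\zeta\langle\zeta\rangle^{1/3}$ from Lemma~\ref{lemma-Q} and its kernel asymptotics. The inner integrand $\Lambda Q f/\sqrt Q$ is then bounded by $M s^{p+1/2}\langle s\rangle^{q-p-11/6}$. Its integral from $0$ to $t$ is at most $M t^{p+3/2}\langle t\rangle^{q-p-4/3}$. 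This step requires a positive exponent at infinity, i.e.\ $q-1/3>0$ after combining, which is guaranteed by $q>4/3$; at zero it uses $p\ge1$. Dividing by $(\Lambda Q)^2\asymp t^2\langle t\rangle^{-2/3}$ gives $M t^{p-1/2}\langle t\rangle^{q-p-2/3}$. Integrating once more gives $t^{p+1/2}\langle t\rangle^{q-p-2/3}$, with a positive exponent at infinity since $q-2/3>0$. Multiplying by $\Lambda Q$ yields $M\zeta^{p+3/2}\langle\zeta\rangle^{q-p-3/2}$, and this is at most $M\zeta^p\langle\zeta\rangle^{q-p}$. All implicit constants depend only on $p,q$, not on $\zeta_0$, because each integral starts at $0$.

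Euler derivatives will be handled through the variation-of-parameters form \eqref{inverse-H-0}. Writing $u=\mathscr H_0^{-1}f$, one has
\[
D u=D\widetilde Q_0\int_0^\zeta\frac{\Lambda Q\,f}{\sqrt Q}-D\Lambda Q\int_0^\zeta\frac{\widetilde Q_0\, f}{\sqrt Q},
\]
since the boundary terms cancel by the Wronskian identity. Here $D\widetilde Q_0$ and $D\Lambda Q$ obey the same endpoint bounds as $\widetilde Q_0$ and $\Lambda Q$ by Lemma~\ref{lemma-Q}, with $\widetilde Q_0\asymp\langle\zeta\rangle^{1/3}$. Repeating the weight bookkeeping therefore gives the same bound for $Du$. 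For $D^2u$, I will use the equation $u''=f/\sqrt Q-u/Q^{3/2}$. This gives $D^2u=Du+\zeta^2(f/\sqrt Q-u/Q^{3/2})$, and both new terms are bounded by the target weight, since $\zeta^2/\sqrt Q\lesssim\zeta^{3/2}\langle\zeta\rangle^{-1/6}$ and $\zeta^2/Q^{3/2}\lesssim\zeta^{1/2}\langle\zeta\rangle^{-1/2}$.

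The term $\mathscr H_0^{-1}\Lambda f$ is the main obstacle, because $\Lambda f=f-\frac34\zeta f'$ involves a derivative of $f$. Pointwise this is harmless: $|\Lambda f|\lesssim\sum_{m\le1}|D^mf|\le 2M\zeta^p\langle\zeta\rangle^{q-4/3-p}$, since the norm \eqref{defi-norm-X-a-1-2} controls $D f$. So $\Lambda f$ lies in the same space as $f$ with comparable norm, and the previous bound applies directly. No integration by parts is needed, because the $X$ norm already includes two Euler derivatives.

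Finally, the value and first derivative at zero vanish, as a consequence of $p\ge1$ together with $u=O(\zeta^{p+3/2})$. This confirms that $\mathscr H_0^{-1}$ maps into $C^1([0,\zeta_0])\cap C^2((0,\zeta_0])$, as required for the spaces in \eqref{defi-norm-X-a-1-2}.
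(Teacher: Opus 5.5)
Your proposal is correct and follows essentially the paper's argument. The Green formula with the endpoint weights of $\Lambda Q$, $\widetilde Q_0$, $Q$ bounds $u$ and $Du$ by $\zeta^{p+3/2}$ near zero and $\zeta^q$ at infinity. The equation $u''=f/\sqrt Q-u/Q^{3/2}$ then handles $D^2u$. For $\Lambda f$, only the weighted supremum of $f$ and $Df$ enters. Strictly, $\Lambda f$ need not lie in $X^{p,q-4/3}$, since that would need $D^3f$; what you use is that your bound for $\mathscr H_0^{-1}$ depends only on the pointwise envelope of the source.

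One bookkeeping slip needs fixing. After the first integration the envelope is $Mt^{p+3/2}\langle t\rangle^{q-p-11/6}$, growing like $t^{q-1/3}$. After dividing by $(\Lambda Q)^2$ and integrating again it is $M\zeta^{p+1/2}\langle\zeta\rangle^{q-p-7/6}$, growing like $\zeta^{q-2/3}$; both match the conditions you state. Your displayed exponents $q-p-4/3$ and $q-p-2/3$ would instead give $\zeta^{q+1/2}$ after multiplying by $\Lambda Q$. With the corrected exponents you obtain the stated final bound $M\zeta^{p+3/2}\langle\zeta\rangle^{q-p-3/2}$.
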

		\begin{proof}
			The embedding follows from \eqref{defi-norm-X-a-1-2}. For $g=\mathscr H_0^{-1}f$, split \eqref{inverse-H-0} at $1$: $|g|+|\zeta g'|$ is bounded by $\zeta^{p+3/2}$ near zero and $\zeta^q$ at infinity, times the source norm. The equation $g''=f/\sqrt Q-g/Q^{3/2}$ then gives
			\begin{equation*}
				|g(\zeta)|+|\zeta g'(\zeta)|+|\zeta^2g''(\zeta)|\lesssim
				\zeta^p\langle\zeta\rangle^{q-p}\|f\|_{X^{p,q-4/3}}.
			\end{equation*}
			For $\Lambda f$, only $f$ and $D_\zeta f$ enter the same argument, so no derivative is lost.
		\end{proof}
		\begin{example}\label{examp-T-j+1-norm-X}
			Lemma~\ref{lemma-T-i}, inserted into \eqref{defi-norm-X-a-1-2}, gives, for $q_j=2+4j/3$,
			\begin{equation*}
				\|T_{j+1}\|_{X^{2,q_j}}\lesssim1,\qquad \|\Theta_j\|_{X^{1,1/3+4j/3}}\lesssim1.
			\end{equation*}
		\end{example}
		
		\begin{proposition}[Inner eigenfunction]\label{propo-esti-S-j-R-i}
			There exist $\xi_{\rm int},\nu_{\rm int}\in(0,1)$, depending only on
			$\ell$ and the fixed compact $\beta$-range, such that, for
			$0<\xi_0\le\xi_{\rm int}$, $0<b\le\xi_0^{4/3}$ and
			$|\nu|\le\nu_{\rm int}$, there is an inner eigenfunction,
			smooth for $\zeta>0$ and $C^1$ at zero, satisfying
			\begin{equation}\label{eigen-equa-inner}
				\mathcal L_{b,\beta}\phi_{i,\mathrm{int}}=\beta b(\tfrac12-i+\nu)\phi_{i,\mathrm{int}},
			\end{equation}
			with the expansion
			\begin{equation}\label{phi-i-int}
				\phi_{i,\mathrm{int}}
				=\sum_{j=0}^i c_{i,j}\beta^{j+5/8}b^{j+1/2}T_j
				+\nu\sum_{j=0}^i b^{j+3/2}(c_{i,j}\beta^{j+13/8}T_{j+1}+S_j)+b^{3/2}R_i.
			\end{equation}
			Here $c_{i,j}$ is given by \eqref{defi-c-i-j}. With $q_j=2+4j/3$, its corrections satisfy
			\begin{alignat}{2}
				\|S_j\|_{X^{2,q_j}}+\|\partial_\nu S_j\|_{X^{2,q_j}}+\|\partial_\beta S_j\|_{X^{2,q_j}}
				&\lesssim\xi_0^{4/3},&\qquad \|\partial_bS_j\|_{X^{2,q_j+4/3}}&\lesssim1,\label{prop12-S-bounds}\\*
				\|R_i\|_{X^{1,5/3}}+\|\partial_\beta R_i\|_{X^{1,5/3}}
				&\lesssim1,&\qquad \|\partial_bR_i\|_{X^{1,3}}+b^{-1}\|\partial_\nu R_i\|_{X^{1,3}}&\lesssim1.\label{prop12-R-bounds}
			\end{alignat}
			All parameter derivatives are at fixed $\zeta$. The solutions depend $C^1$ on the parameters on every common compact subinterval of their domains. On $0<\zeta<1$, the local refinement is
			\begin{equation}\label{impro-beha-R_i}
				\sum_{m=0}^2\bigl(|D_\zeta^mR_i|+|D_\zeta^m\partial_bR_i|+|D_\zeta^m\partial_\beta R_i|
				+b^{-1}|D_\zeta^m\partial_\nu R_i|\bigr)\lesssim\zeta^{5/2}.
			\end{equation}
			The pointwise bounds underlying \eqref{prop12-S-bounds}--\eqref{impro-beha-R_i} also hold with three Euler derivatives.
		\end{proposition}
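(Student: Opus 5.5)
The plan is to construct $\phi_{i,\mathrm{int}}$ by a fixed-point argument in the spaces $X^{p,q}_{\zeta_0}$, inverting $\mathscr H_0$ with Lemma~\ref{lemma_continued-H_0_on_X-a-12}. Since $\mathcal L_{b,\beta}=\mathscr H_0+b\beta\Lambda$, the eigenvalue equation \eqref{eigen-equa-inner} can be written as
\[
\mathscr H_0\phi=b\beta\bigl[(\tfrac12-i+\nu)\phi-\Lambda\phi\bigr].
\]
For the $\nu=0$ leading part, the ansatz $\sum_j c_{i,j}\beta^{j+5/8}b^{j+1/2}T_j$ is checked directly. Because $\mathscr H_0T_{j+1}=T_j$ and $\mathscr H_0 T_0=0$, matching powers of $b$ reduces the equation to the recurrence
\[
c_{i,j+1}=c_{i,j}\bigl(-\Lambda T_j+(\tfrac12-i)T_j\bigr)\text{-coefficient}.
\]
The leading homogeneous part of $\Lambda T_j$ is $(\frac12-j)T_j$, so the $T_j$-coefficient of $(\frac12-i)T_j-\Lambda T_j$ is $(j-i)$. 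This gives
\[
c_{i,j+1}\,\mathsf{norm}=(j-i)c_{i,j},
\]
which terminates at $j=i$ and matches \eqref{defi-c-i-j} through the normalization of the $T_j$. The leftovers are the $\Theta_j$ terms. By Example~\ref{examp-T-j+1-norm-X} they lie in $X^{1,1/3+4j/3}$, and each carries an extra factor $b^{j+3/2}$.

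Next the $\nu$-dependence is extracted. The term $\nu b\beta\phi$ on the right, inverted once, produces $\nu c_{i,j}\beta^{j+13/8}b^{j+3/2}T_{j+1}$. The corrections $S_j$ absorb the $\Lambda$ and $\Theta$ interactions of these new terms. I will define each $S_j$ as $\mathscr H_0^{-1}$ of the corresponding residual times $b\beta$. Estimate \eqref{norm-H-1-f-Lambda-f} raises the growth index by $4/3$, while \eqref{X-a-to-a-2-f} costs $\langle\zeta_0\rangle^{4/3}\lesssim b^{-1}\xi_0^{4/3}$. The net factor $b\langle\zeta_0\rangle^{4/3}\lesssim\xi_0^{4/3}$ is small, which gives the $\xi_0^{4/3}$ bound in \eqref{prop12-S-bounds}.

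The remainder $R_i$ then solves
\[
R_i=\mathscr H_0^{-1}\bigl(b\beta[(\tfrac12-i+\nu)-\Lambda]R_i\bigr)+\mathscr H_0^{-1}(\text{explicit }\Theta\text{-sources}).
\]
I will solve this by contraction in $X^{1,5/3}_{\zeta_0}$. The linear map has norm $\lesssim b\langle\zeta_0\rangle^{4/3}\lesssim\xi_0^{4/3}<1/2$ once $\xi_{\rm int}$ is small. No derivatives are lost because $\Lambda$ appears only inside $\mathscr H_0^{-1}$, as the lemma allows.

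The zero value and derivative of $\mathscr H_0^{-1}$ at the origin give $C^1$ regularity at the wall. The refined bound \eqref{impro-beha-R_i} comes from rerunning the fixed point on $(0,1)$ with weight $\zeta^{5/2}$. This works because the sources there are $O(\zeta^{5/2})$: the leading $T_0$ behaviour $\zeta$ is killed by the $\Theta_0$ cancellation, leaving $r^{3/2}$ corrections, and then one application of $\mathscr H_0^{-1}$ adds $3/2$. Parameter derivatives follow by differentiating the fixed-point equation and reapplying the contraction. $\partial_\nu$ brings a factor $b$, and $\partial_b$ loses one factor $b^{-1}\lesssim\langle\zeta_0\rangle^{4/3}$. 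This explains why those derivatives are placed in $X^{1,3}$ and $X^{2,q_j+4/3}$.

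The main obstacle I expect is bookkeeping the growth indices so that the iteration closes. The exponent $q=5/3$ for $R_i$ must dominate every $\Theta$-source after inversion, namely $1/3+4j/3+4/3$ weighted by $b^{j}$, uniformly up to $\zeta_0=\xi_0b^{-3/4}$. I would verify this by converting each $b^{j}\zeta^{4j/3}$ into $\xi_0^{4j/3}$ at the endpoint.
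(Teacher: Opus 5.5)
Your construction follows the paper's proof: the same ansatz, the exact recurrence $c_{i,j+1}=(j-i)c_{i,j}$, the leftover $b^{3/2}\sum_j d_jb^j\Theta_j$ with $d_j=c_{i,j}\beta^{j+13/8}$, the contraction $K=\beta b\,\mathscr H_0^{-1}(\tfrac12-i+\nu-\Lambda)$ with norm $\lesssim b\langle\zeta_0\rangle^{4/3}\lesssim\xi_0^{4/3}$ on each $X^{p,q}_{\zeta_0}$, and boundedness of $b^{-1}K$ from $X^{p,q}$ to $X^{p,q+4/3}$ for the $b$-derivatives.

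The step that fails as written is your justification of \eqref{impro-beha-R_i}. There is no cancellation of $\Theta_0$ at the wall. The homogeneity cancellation in $\Theta_j=\Lambda T_j-(\tfrac12-j)T_j$ occurs only as $\zeta\to\infty$. Near $\zeta=0$ one has $T_j=t_j\zeta^{1+3j/2}(1+O(\zeta^{1/2}))$, and $\Lambda$ multiplies this power by $\tfrac14-\tfrac98j\neq\tfrac12-j$. Hence $\Theta_j=-\tfrac{2+j}{8}t_j\zeta^{1+3j/2}+O(\zeta^{3/2+3j/2})$, and in particular $\Theta_0=-\tfrac{\cscale}{16}\zeta+O(\zeta^{3/2})$. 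The source of the $R_i$ equation is therefore genuinely of order $\zeta$. Your own count, $\zeta^{3/2}$ plus a gain of $3/2$, would in any case give $\zeta^3$, not $\zeta^{5/2}$.

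The correct argument needs no cancellation. Since $R_i\in X^{1,5/3}$ and $\Theta_j=O(\zeta^{1+3j/2})$, the source $\beta b(\tfrac12-i+\nu-\Lambda)R_i-F_i$ is $O(\zeta)$ on $(0,1)$. In \eqref{secon-defi-H_0-inverse}, $\Lambda Q\sim\zeta/4$ and $Q\sim\zeta$ turn an $O(\zeta)$ source into $R_i=O(\zeta^{5/2})$ with $R_i'=O(\zeta^{3/2})$. The equation then gives $R_i''=Q^{-1/2}(\text{source})-Q^{-3/2}R_i=O(\zeta^{1/2})$. The differentiated equations for $\partial_bR_i$, $\partial_\beta R_i$ and $b^{-1}\partial_\nu R_i$ again have $O(\zeta)$ sources, so the same one-step bootstrap gives the rest of \eqref{impro-beha-R_i}.

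There are three smaller points.

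First, $S_j$ must be the fixed point $S_j=K(S_j+d_jT_{j+1})$, not a single application of $\mathscr H_0^{-1}$. Otherwise the leftover $\nu b^{j+5/2}\beta(\tfrac12-i+\nu-\Lambda)S_j$, which grows like the $S_j$ hierarchy, would have to go into $b^{3/2}R_i$. In general it does not fit in $X^{1,5/3}$ uniformly in $b$.

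Second, the statement also asserts the bounds with three Euler derivatives, which contraction in $X^{p,q}$ does not give. Differentiate $g''=Q^{-1/2}\bigl(\beta b(\tfrac12-i+\nu-\Lambda)g+f\bigr)-Q^{-3/2}g$ once. The new right-hand side involves at most two derivatives of $g$ and one of the forcing, and the weights are preserved because $b\zeta^{4/3}\le b\langle\zeta_0\rangle^{4/3}$ for $\zeta\ge1$.

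Third, $C^1$ dependence on $b$ at fixed $\zeta$ must handle the moving endpoint $\zeta_0=\xi_0b^{-3/4}$. The paper runs the contraction uniformly on a slightly larger common interval and then restricts.
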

		\begin{proof}
			Set $\mathscr D_{i,\nu}=\frac12-i+\nu-\Lambda$, $K=\beta b\mathscr H_0^{-1}\mathscr D_{i,\nu}$, $d_j=c_{i,j}\beta^{j+13/8}$, and $F_i=\sum_{j=0}^id_jb^j\Theta_j$. For $\mathcal E_i=\mathscr H_0-\beta b\mathscr D_{i,\nu}$, substitution of \eqref{phi-i-int} gives
			\begin{align*}
				\mathcal E_i\phi_{i,\mathrm{int}}
				&={}\nu\sum_{j=0}^ib^{j+3/2}\bigl[\mathscr H_0S_j-\beta b\mathscr D_{i,\nu}(S_j+d_jT_{j+1})\bigr]\\*
				&\quad{}+b^{3/2}\bigl[\mathscr H_0R_i-\beta b\mathscr D_{i,\nu}R_i+F_i\bigr].
			\end{align*}
			Indeed, $\mathscr H_0T_{j+1}=T_j$ and $c_{i,j+1}=-(i-j)c_{i,j}$ cancel the consecutive kernel terms, and the $\nu T_j$ terms cancel against the $T_{j+1}$ correction. Thus \eqref{eigen-equa-inner} follows once
			\begin{align}
				\mathscr H_0S_j&=\beta b\mathscr D_{i,\nu}(S_j+d_jT_{j+1}),\notag\\*
				\mathscr H_0R_i&=\beta b\mathscr D_{i,\nu}R_i-F_i.\label{equa-R_i}
			\end{align}
			Using \eqref{inverse-H-0}, these equations become
			\begin{align}
				S_j&=K(S_j+d_jT_{j+1}),\label{equa-S_j-H-0-inverse}\\*
				R_i&=KR_i-\mathscr H_0^{-1}F_i.\label{R-fixed-point-corrected}
			\end{align}
			Put $\chi=b\langle\zeta_0\rangle^{4/3}\lesssim b+\xi_0^{4/3}$. By \eqref{X-a-to-a-2-f}--\eqref{norm-H-1-f-Lambda-f}, on each space used below,
			\begin{equation*}
				\|Kf\|_{X^{p,q}}\le C\chi\|f\|_{X^{p,q}}\le\tfrac12\|f\|_{X^{p,q}},
			\end{equation*}
			by fixing $\nu_{\rm int}<1$ and choosing $\xi_{\rm int}$ small enough,
			since $b\le\xi_0^{4/3}$ and $\xi_0\le\xi_{\rm int}$. Example~\ref{examp-T-j+1-norm-X} also gives
			\begin{equation*}
				\|F_i\|_{X^{1,1/3}}+\|\partial_\beta F_i\|_{X^{1,1/3}}\lesssim\sum_{j=0}^i\chi^j\lesssim1,
				\qquad \|\partial_bF_i\|_{X^{1,5/3}}\lesssim\sum_{j=1}^ij\chi^{j-1}\lesssim1.
			\end{equation*}
			Hence the fixed points exist uniquely and satisfy
			\begin{align}
				\|S_j\|_{X^{2,q_j}}&\lesssim\chi\|T_{j+1}\|_{X^{2,q_j}}\lesssim\xi_0^{4/3},\notag\\*
				\|R_i\|_{X^{1,5/3}}&\lesssim\|F_i\|_{X^{1,1/3}}\lesssim1.\label{R-bound-corrected}
			\end{align}
			The parameter estimates follow from the identities
			\begin{align*}
				(I-K)\partial_bS_j&=b^{-1}K(S_j+d_jT_{j+1}),&
				(I-K)\partial_bR_i&=b^{-1}KR_i-\mathscr H_0^{-1}\partial_bF_i,\\*
				(I-K)\partial_\nu S_j&=\beta b\mathscr H_0^{-1}(S_j+d_jT_{j+1}),&
				(I-K)\partial_\nu R_i&=\beta b\mathscr H_0^{-1}R_i,\\*
				(I-K)\partial_\beta S_j&=\beta^{-1}K(S_j+d_jT_{j+1})+K\bigl((\partial_\beta d_j)T_{j+1}\bigr),&
				(I-K)\partial_\beta R_i&=\beta^{-1}KR_i-\mathscr H_0^{-1}\partial_\beta F_i.
			\end{align*}
			The map $b^{-1}K$ sends $X^{p,q}$ to $X^{p,q+4/3}$ with bounded norm, while $\beta b\mathscr H_0^{-1}$ has norm $O(b)$ between these spaces and norm $O(\chi)$ on $X^{p,q}$. Applying $(I-K)^{-1}$ on the indicated target spaces proves \eqref{prop12-S-bounds}--\eqref{prop12-R-bounds}. Uniform contraction on a slightly larger common interval, followed by restriction, justifies $C^1$ parameter dependence despite the moving endpoint $\zeta_0$.
			
			Near zero, \eqref{R-bound-corrected} makes the forcing in \eqref{equa-R_i} $O(\zeta)$. Formula \eqref{inverse-H-0} gives $R_i=O(\zeta^{5/2})$, $R_i'=O(\zeta^{3/2})$, and \eqref{equa-R_i} gives $R_i''=O(\zeta^{1/2})$. The same argument for the differentiated equations proves \eqref{impro-beha-R_i}. To obtain three Euler derivatives, write each equation as $g''=Q^{-1/2}(\beta b\mathscr D_{i,\nu}g+f)-Q^{-3/2}g$ and differentiate once. The new right-hand side involves at most two derivatives of $g$ and one derivative of the already controlled forcing. The coefficient bounds from Lemma~\ref{lemma-Q} and $b\zeta^{4/3}\lesssim\chi$ for $\zeta\ge1$ preserve the same weights. This applies also to the parameter equations above.
		\end{proof}
		
		\subsection{Outer expansion}\label{sub-outer expansion}
		By \eqref{spectral-beta-scaling}, take $\beta=1$. Put $h_i=\frac12-i$, $p_i=\frac23+\frac43i$, $\phi_i=\phi_{1,i}$ from \eqref{defi-phi-beta-i}, and $\mathcal A_i=\mathscr L_{0,1}-h_i$. The constants \eqref{defi-C_i} give $\phi_i=\sum_{j=0}^ic_{i,j}\mathsf C_j\xi^{2/3+4j/3}$; substitution, or the Laguerre equation, verifies $\mathcal A_i\phi_i=0$. A second solution $\check\phi_i$ has $\mathcal W[\phi_i,\check\phi_i]=e^{P_0}$, where $P_0=a^2\xi^{4/3}/8$, and
		\begin{equation*}
			\check\phi_i=-\frac3{A_i^0}\xi^{1/3}+O(\xi^{2/3})\quad(\xi\to0),\qquad
			\check\phi_i=\frac6{a^2A_i^\infty}e^{P_0}\xi^{-p_i-1/3}(1+O(\xi^{-4/3}))\quad(\xi\to\infty),
		\end{equation*}
		with $A_i^0=c_{i,0}\mathsf C_0$ and $A_i^\infty=c_{i,i}\mathsf C_i$. Fix $\check\phi_i$ independently of $b,\nu,\xi_0$, for instance by setting its value to zero at a fixed point where $\phi_i\ne0$ and fixing its derivative by the Wronskian. Reduction of order gives the endpoint expansions; continuation by the regular ODE defines $\check\phi_i$ across the zeros of $\phi_i$. The integral of $e^{P_0}/\phi_i^2$ is not taken across a zero.
		
		The right inverse selecting polynomial growth at infinity is
		\begin{equation}\label{resol-L-ext}
			G_if(\xi)=-\phi_i(\xi)\int_1^\xi\frac{\check\phi_i(t)f(t)}{at^{2/3}e^{P_0(t)}}\,dt
			-\check\phi_i(\xi)\int_\xi^\infty\frac{\phi_i(t)f(t)}{at^{2/3}e^{P_0(t)}}\,dt.
		\end{equation}
		In particular, $\widetilde\phi_i=G_i\phi_i$ satisfies $\mathcal A_i\widetilde\phi_i=\phi_i$. Set
		$I_i:=\int_0^\infty\phi_i^2/(a\xi^{2/3}e^{P_0})\,d\xi=1/a$, using the normalization of $\phi_i$. Then
		\begin{equation*}
			\widetilde\phi_i=\frac{3I_i}{A_i^0}\xi^{1/3}+O(\xi^{2/3})\quad(\xi\to0),\qquad
			\widetilde\phi_i=-\frac43A_i^\infty\xi^{p_i}\log\xi+O(\xi^{p_i})\quad(\xi\to\infty).
		\end{equation*}
		These expansions and their derivatives follow directly from \eqref{resol-L-ext}.
		
		For $q>p_i$, let $Z_q$ consist of continuous functions and $Y_q$ of $C^2$ functions on $[\xi_0,\infty)$ with the respective finite norms
		\begin{align*}
			\|f\|_{Z_q}&=\sup_{\xi_0\le\xi\le1}\xi^{-1/3}|f|+\sup_{\xi\ge1}\xi^{-q}|f|,\\*
			\|g\|_{Y_q}&=\sup_{\xi_0\le\xi\le1}\xi^{-1/3}(|g|+|\xi g'|+|\xi^2g''|)
			+\sup_{\xi\ge1}\xi^{-q}(|g|+|\xi g'|+|\xi^{2/3}g''|).
		\end{align*}
		\begin{lemma}[Outer inverse]\label{lem-continuity-L-ext-inverse}
			For $q>p_i$, $G_i$ satisfies $\mathcal A_iG_if=f$ and $\|G_if\|_{Y_q}\lesssim\|f\|_{Z_q}$, uniformly for $0<\xi_0\le1$.
		\end{lemma}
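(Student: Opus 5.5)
We work directly with the explicit formula \eqref{resol-L-ext}. The identity $\mathcal A_iG_if=f$ is the standard variation-of-parameters computation. Write $\mathcal A_i$ in divergence form, $\mathcal A_i g=\frac{e^{P_0}}{\xi^{2/3}}\,\partial_\xi\bigl(a\xi^{2/3}e^{-P_0}\cdot\xi^{2/3}\cdot\xi^{-2/3}g'\bigr)+\dots$. More precisely, the weight $\rho_{0,1}$ symmetrizes $\mathscr L_{0,1}$, so the Wronskian of two solutions of $\mathcal A_ig=0$ is proportional to $e^{P_0}$, matching $\mathcal W[\phi_i,\check\phi_i]=e^{P_0}$. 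Differentiating \eqref{resol-L-ext} twice, the boundary terms cancel in the first derivative. The second derivative then produces $f$ divided by the leading coefficient $a\xi^{2/3}$, times the Wronskian factor, which returns exactly $f$. This holds on any interval where the integrals converge absolutely. Convergence at infinity of $\int_\xi^\infty \phi_i f\,e^{-P_0}t^{-2/3}$ is immediate from the Gaussian factor and $|f|\lesssim t^q$.

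For the bound, split into the regions $\xi\ge1$ and $\xi_0\le\xi\le1$ and estimate the two terms of \eqref{resol-L-ext} separately, using the endpoint expansions of $\phi_i$ and $\check\phi_i$.

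For $\xi\ge 1$, the first term is $\phi_i(\xi)\int_1^\xi \check\phi_i f e^{-P_0}t^{-2/3}\,dt$. Its integrand is $\lesssim t^{-p_i-1/3-2/3+q}\|f\|_{Z_q}$, because the exponentials cancel. Since $q>p_i$, the integral is $\lesssim \xi^{q-p_i}$ (with a logarithm only when $q-p_i=0$, which is excluded), and multiplying by $\phi_i\asymp\xi^{p_i}$ gives $\xi^{q}$. This is exactly where $q>p_i$ is used. The second term is $\check\phi_i(\xi)\int_\xi^\infty\phi_i f e^{-P_0}t^{-2/3}$. We bound the tail integral by Laplace-type decay, $\int_\xi^\infty t^{p_i+q-2/3}e^{-a^2t^{4/3}/8}\,dt\lesssim \xi^{p_i+q-1}e^{-P_0(\xi)}$. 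Multiplying by $\check\phi_i\lesssim e^{P_0}\xi^{-p_i-1/3}$ then gives $\xi^{q-4/3}\le\xi^q$.

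For $\xi_0\le\xi\le 1$, the first term carries $\phi_i\lesssim\xi^{2/3}$ and an integral from $\xi$ to $1$ of $|\check\phi_i f|t^{-2/3}\lesssim t^{1/3+1/3-2/3}=O(1)$, giving $\xi^{2/3}\le\xi^{1/3}$. The second term carries $\check\phi_i\lesssim\xi^{1/3}$ and a convergent integral over $(\xi,\infty)$. Near zero its integrand is $\lesssim t^{2/3+1/3-2/3}$, which is integrable, so we obtain $\xi^{1/3}$. All constants are independent of $\xi_0$, because the zero-side bounds hold down to $\xi=0$.

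It remains to control the derivatives. Differentiating \eqref{resol-L-ext}, the boundary terms cancel, so $g'$ has the same form with $\phi_i,\check\phi_i$ replaced by their derivatives. The Euler-derivative endpoint expansions give $|\xi\phi_i'|\lesssim|\phi_i|$ and $|\xi\check\phi_i'|\lesssim\xi^{1/3}$ near zero. At infinity the only change is $|\check\phi_i'|\lesssim\xi^{1/3}e^{P_0}\xi^{-p_i-1/3}$, which costs a factor $\xi^{4/3}$ against the gain $\xi^{-4/3}$ already found, so $|\xi g'|\lesssim\xi^q$ still holds. For $g''$ we do not differentiate again. Instead we solve the equation: $a\xi^{2/3}g''=f+h_ig-\frac{1}{a^2\xi^{4/3}}g+\frac34\xi g'-g$, and the previous bounds give $|\xi^{2/3}g''|\lesssim\xi^q$ for $\xi\ge1$ and $|\xi^2g''|\lesssim\xi^{1/3}$ near zero.

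The main obstacle is the first term at infinity. Its treatment relies on $\check\phi_i$ being fixed independently of $\xi_0$ and on the integral of $\check\phi_i f$ not crossing a zero of $\phi_i$ in a harmful way. Since $\check\phi_i$ is defined by the regular ODE, it is bounded on compacts and no division by $\phi_i$ occurs, so this should go through.
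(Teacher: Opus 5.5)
Your proof is correct and follows essentially the same route as the paper. You bound the two integrals in \eqref{resol-L-ext} separately on $[1,\infty)$ (using $q>p_i$ for the first and the Gaussian tail for the second) and on $[\xi_0,1]$ uniformly in $\xi_0$, use the cancellation of boundary terms in $g'$, and read off $g''$ from $\mathcal A_ig=f$. Two cosmetic points do not affect the argument: your first divergence-form expression is garbled (it should be $\xi^{2/3}e^{P_0}\partial_\xi(ae^{-P_0}g')$ plus zeroth-order terms), and $|\xi\phi_i'|\lesssim|\phi_i|$ is better stated as $|\xi\phi_i'|\lesssim\xi^{2/3}$ on $(0,1]$, since $\phi_i$ may vanish there.
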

		\begin{proof}
			Let $M=\|f\|_{Z_q}$ and denote the two integrals in \eqref{resol-L-ext} by $I_1(\xi)$ and $I_2(\xi)$, in that order. For $\xi\ge1$, the fundamental-solution bounds and $q>p_i$ give
			\begin{equation*}
				|I_1(\xi)|\lesssim M\int_1^\xi t^{q-p_i-1}\,dt\lesssim M\xi^{q-p_i},\qquad
				|I_2(\xi)|\lesssim M e^{-P_0(\xi)}\xi^{p_i+q-1}.
			\end{equation*}
			Writing $g=G_if$, the endpoint terms cancel in $g'=-\phi_i'I_1-\check\phi_i'I_2$. Since $|\phi_i'|\lesssim\xi^{p_i-1}$ and $|\check\phi_i'|\lesssim e^{P_0}\xi^{-p_i}$, it follows that $|g|+|\xi g'|\lesssim M\xi^q$. Differentiating once more, the Wronskian gives exactly $\mathcal A_i g=f$, hence $|g''|\lesssim M\xi^{q-2/3}$. On $[\xi_0,1]$, the endpoint expansions give bounded $I_1,I_2$, uniformly in $\xi_0$, and therefore $|g|+|\xi g'|\lesssim M\xi^{1/3}$; the equation gives $|\xi^2g''|\lesssim M\xi^{1/3}$. These are the asserted $Y_q$ bounds. Only continuity and the weighted bound on $f$ have been used, so the term $L_bg$ causes no derivative loss.
		\end{proof}
		
		\begin{proposition}[Outer eigenfunction]\label{propo-outer-eigenfunction}
			Fix $0<\xi_0\le1$, $0<\epsilon<1/3$, and put $q=p_i+\epsilon$.
			There exist $\nu_{\rm out}=\nu_{\rm out}(\ell,\epsilon)\in(0,1)$ and
			$b_{\rm out}=b_{\rm out}(\ell,\epsilon,\xi_0)\in(0,1)$ such that, for
			$|\nu|\le\nu_{\rm out}$ and $0<b\le b_{\rm out}$, there is a solution of
			$(\mathscr L_{b,1}-h_i-\nu)\phi_{i,\mathrm{out}}=0$ on $[\xi_0,\infty)$ with
			\begin{equation}\label{phi-out-expansion}
				\phi_{i,\mathrm{out}}=\phi_i+\nu(\widetilde\phi_i+\widetilde R_{i,1})+\widetilde R_{i,2},\qquad
				\|\widetilde R_{i,1}\|_{Y_q}\lesssim|\nu|,\quad
				\|\widetilde R_{i,2}\|_{Y_q}\lesssim_{\xi_0}\sqrt b.
			\end{equation}
			The solution is $C^1$ in $(b,\nu)$ with values in $Y_q$. Moreover, $\partial_b\widetilde R_{i,1}=0$, $\|\partial_\nu\widetilde R_{i,1}\|_{Y_q}\lesssim1$, and
			$\|\partial_\nu\widetilde R_{i,2}\|_{Y_q}+\|b\partial_b\widetilde R_{i,2}\|_{Y_q}\lesssim_{\xi_0}\sqrt b$.
		\end{proposition}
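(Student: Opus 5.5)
We will construct $\phi_{i,\mathrm{out}}$ as a perturbation of $\phi_i$ by a fixed-point argument in $Y_q$, using the outer inverse $G_i$ of Lemma~\ref{lem-continuity-L-ext-inverse}.

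\textbf{Reformulation.} Write $L_b:=\mathscr L_{b,1}-\mathscr L_{0,1}$. It consists of the coefficient differences
\[
(\sqrt{Q_b}-a\xi^{2/3})\partial_\xi^2
\qquad\text{and}\qquad
\Bigl(\frac{\partial_\xi^2Q_b}{2\sqrt{Q_b}}-\frac1{a^2\xi^{4/3}}\Bigr).
\]
The eigen equation is $\mathcal A_i\phi=\nu\phi-L_b\phi$. Substituting the ansatz \eqref{phi-out-expansion} and using $\mathcal A_i\phi_i=0$ and $\mathcal A_i\widetilde\phi_i=\phi_i$, we impose
\[
\mathcal A_i\widetilde R_{i,1}=\nu(\widetilde\phi_i+\widetilde R_{i,1}),
\qquad
\mathcal A_i\widetilde R_{i,2}=\nu\widetilde R_{i,2}-L_b\bigl(\phi_i+\nu(\widetilde\phi_i+\widetilde R_{i,1})+\widetilde R_{i,2}\bigr).
\]
Applying $G_i$ turns these into the fixed-point problems
\[
\widetilde R_{i,1}=\nu G_i(\widetilde\phi_i+\widetilde R_{i,1}),
\qquad
\widetilde R_{i,2}=G_i\bigl[\nu\widetilde R_{i,2}-L_b(\cdots)\bigr].
\]
The first problem does not involve $b$, so $\partial_b\widetilde R_{i,1}=0$ is automatic.

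\textbf{The $\nu$-problem.} We have $\widetilde\phi_i\in Y_q$ for $q>p_i$: the $\xi^{p_i}\log\xi$ growth is absorbed by $\xi^{\epsilon}$, and near $\xi_0$ the bound $\xi^{1/3}$ is uniform. We also have $Y_q\hookrightarrow Z_q$ with norm $\lesssim1$. Hence $\|\nu G_i\|_{Y_q\to Y_q}\lesssim|\nu|$, and for $\nu_{\rm out}$ small the Neumann series gives $\widetilde R_{i,1}$ with $\|\widetilde R_{i,1}\|_{Y_q}\lesssim|\nu|$. Since the map is affine in $\nu$ through a contraction, $\partial_\nu\widetilde R_{i,1}=(I-\nu G_i)^{-1}G_i(\widetilde\phi_i+\widetilde R_{i,1})$ is bounded.

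\textbf{The $b$-problem, which is the main obstacle.} The key estimate is $\|L_bg\|_{Z_q}\lesssim_{\xi_0}\sqrt b\,\|g\|_{Y_q}$ for $\xi\ge\xi_0$. To obtain it, rescale Lemma~\ref{lemma-Q}, which gives $Q_b(\xi)=a^2\xi^{4/3}+\tfrac a4 b^{1/2}\xi^{2/3}+O(b^{3/4}\xi^{1/3})$ for $\xi\gtrsim b^{3/4}$. This yields
\[
|\sqrt{Q_b}-a\xi^{2/3}|\lesssim b^{1/2},
\qquad
\Bigl|\frac{\partial_\xi^2Q_b}{2\sqrt{Q_b}}-\frac1{a^2\xi^{4/3}}\Bigr|\lesssim b^{1/2}\xi^{-2}.
\]
Here $\xi_0\ge b^{3/4}$ once $b\le b_{\rm out}(\xi_0)$. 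We then check the weights of $L_bg$ against those of $Z_q$.

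For $\xi\ge1$: $b^{1/2}|g''|\lesssim b^{1/2}\xi^{q-2/3}\le b^{1/2}\xi^q$, and $b^{1/2}\xi^{-2}|g|\lesssim b^{1/2}\xi^{q}$.

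For $\xi_0\le\xi\le1$: $|g''|\lesssim\xi^{1/3-2}$, which costs $\xi_0^{-2}$. This is the source of the $\xi_0$-dependence of $b_{\rm out}$ and of the constant.

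The forcing $L_b\phi_i$ and $\nu L_b(\widetilde\phi_i+\widetilde R_{i,1})$ are thus $O_{\xi_0}(\sqrt b)$ in $Z_q$. The map $G_i(\nu\,\cdot\,-L_b\,\cdot)$ contracts on $Y_q$ for $|\nu|,b$ small, so $\|\widetilde R_{i,2}\|_{Y_q}\lesssim_{\xi_0}\sqrt b$. The hardest point is ensuring $Q_b$ stays in its outer asymptotic regime uniformly on $[\xi_0,\infty)$, including derivatives, without losing a derivative. This works because Lemma~\ref{lem-continuity-L-ext-inverse} needs only weighted $C^0$ bounds on $f$, while $L_bg$ involves $g''$ only through the $Y_q$ norm.

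\textbf{Parameter derivatives.} Differentiate the fixed-point identity. For $\partial_\nu\widetilde R_{i,2}$, the source is $G_i[\widetilde R_{i,2}-\nu L_b\partial_\nu(\widetilde\phi_i+\widetilde R_{i,1})]$, which is $O(\sqrt b)$. For $b\partial_b\widetilde R_{i,2}$, the operator $b\partial_b L_b$ has coefficients of the same size $O(b^{1/2})$, by the Euler-derivative-stable expansions of Lemma~\ref{lemma-Q} and since $b\partial_bQ_b=Q_b-\tfrac34\xi Q_b'=\Lambda_\xi Q_b$ (an identity in $\xi$). Inverting $I-G_i(\nu-L_b)$ then gives the stated bounds, and the implicit function theorem in the Banach space $Y_q$ yields $C^1$ dependence.
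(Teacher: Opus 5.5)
Your proposal is correct and follows the paper's proof essentially step for step. It uses the same operator $L_b=\mathscr L_{b,1}-\mathscr L_{0,1}$, the same two fixed-point problems inverted with $G_i$ from Lemma~\ref{lem-continuity-L-ext-inverse}, the same key bound $\|L_bg\|_{Z_q}+\|(b\partial_bL_b)g\|_{Z_q}\lesssim_{\xi_0}\sqrt b\,\|g\|_{Y_q}$ from Lemma~\ref{lemma-Q}, and contraction plus differentiation of the fixed-point identities for the parameter bounds. The paper additionally derives sharper large-$\xi$ derivative bounds for $\phi_{i,\mathrm{out}}-\phi_i$, which it uses later but which are not part of this statement.

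One harmless slip: differentiating the $\widetilde R_{i,2}$ equation in $\nu$ gives the source $G_i\bigl[\widetilde R_{i,2}-L_b(\widetilde\phi_i+\widetilde R_{i,1}+\nu\partial_\nu\widetilde R_{i,1})\bigr]$, not only the $\nu L_b\partial_\nu\widetilde R_{i,1}$ term you wrote. The missing piece is also $O_{\xi_0}(\sqrt b)$ in $Z_q$, so your conclusion stands.
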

		\begin{proof}
			Write $L_b=\mathscr L_{b,1}-\mathscr L_{0,1}=(\sqrt{Q_b}-a\xi^{2/3})\partial_\xi^2+(Q_b^{-1}-a^{-2}\xi^{-4/3})$. Lemma~\ref{lemma-Q} gives, for $k=0,1$,
			\begin{equation*}
				|(b\partial_b)^k(\sqrt{Q_b}-a\xi^{2/3})|+|(b\partial_b)^k(Q_b^{-1}-a^{-2}\xi^{-4/3})|
				\lesssim_{\xi_0}\sqrt b,\qquad \xi\ge\xi_0.
			\end{equation*}
			For $d_b=\sqrt{Q_b}-a\xi^{2/3}$ and $e_b=Q_b^{-1}-a^{-2}\xi^{-4/3}$, the differentiated expansions also give, for $0\le m\le3$, $k=0,1$, and $\xi\ge1$,
			\begin{equation}\label{outer-coefficient-symbol-bounds}
				|\partial_\xi^m(b\partial_b)^kd_b|\lesssim\sqrt b\,\xi^{-m},\qquad
				|\partial_\xi^m(b\partial_b)^ke_b|\lesssim\sqrt b\,\xi^{-2-m}.
			\end{equation}
			On $[\xi_0,1]$ these derivatives are bounded by $C_{\xi_0}\sqrt b$. In particular, $\|L_bg\|_{Z_q}+\|(b\partial_bL_b)g\|_{Z_q}\lesssim_{\xi_0}\sqrt b\|g\|_{Y_q}$. We solve
			\begin{align}
				\widetilde R_{i,1}&=\nu G_i(\widetilde\phi_i+\widetilde R_{i,1}),\label{equa-R-1-tilde}\\*
				\widetilde R_{i,2}&=G_i\bigl[(\nu-L_b)\widetilde R_{i,2}-L_b\{\phi_i+\nu(\widetilde\phi_i+\widetilde R_{i,1})\}\bigr].\label{equa-R-2-tilde}
			\end{align}
			Lemma~\ref{lem-continuity-L-ext-inverse} bounds the two operator norms by
			$C|\nu|$ and $C|\nu|+C_{\xi_0}\sqrt b$, respectively, with constants
			depending also on $\ell,\epsilon$. Choose $\nu_{\rm out}$ so that
			$C\nu_{\rm out}\le1/4$, then $b_{\rm out}$ so that
			$C_{\xi_0}\sqrt{b_{\rm out}}\le1/4$.
			Both maps are contractions, giving the fixed points and their differentiated estimates. To verify the eigenvalue equation explicitly, put $\mathcal E_i^{\rm out}=\mathcal A_i+L_b-\nu$ and use $\mathcal A_i\widetilde\phi_i=\phi_i$:
			\begin{align*}
				\mathcal E_i^{\rm out}\phi_{i,\mathrm{out}}
				&={}\nu[\mathcal A_i\widetilde R_{i,1}-\nu(\widetilde\phi_i+\widetilde R_{i,1})]\\*
				&\quad{}+(\mathcal A_i-\nu)\widetilde R_{i,2}+L_b\phi_{i,\mathrm{out}}=0,
			\end{align*}
			where the two cancellations are exactly \eqref{equa-R-1-tilde}--\eqref{equa-R-2-tilde} after applying $\mathcal A_i$.
			
			We need sharper spatial derivatives at infinity. Write $E=\phi_{i,\mathrm{out}}-\phi_i$ and $\kappa=|\nu|+\sqrt b$. The preceding bounds give $|E|+|\xi E'|\lesssim\kappa\xi^q$. Its equation is $E''-P_b'E'=H$, where
			\begin{equation*}
				H=Q_b^{-1/2}\bigl[(h_i+\nu-1-Q_b^{-1})E+(\nu-L_b)\phi_i\bigr].
			\end{equation*}
			Equation \eqref{outer-coefficient-symbol-bounds} gives $|H|\lesssim\kappa\xi^{q-2/3}$ and $|H'|\lesssim\kappa\xi^{q-5/3}$. Put $\mathcal J_bH=e^{P_b(\xi)}\int_\xi^\infty e^{-P_b(t)}H(t)\,dt$. Since $P_b'\asymp\xi^{1/3}$, a source bounded by $M\xi^\gamma$ has $|\mathcal J_bH|\lesssim_\gamma M\xi^{\gamma-1/3}$ on $\xi\ge1$. The polynomial bound on $E'$ implies $E'=-\mathcal J_bH$. Integration by parts gives
			\begin{equation*}
				\mathcal J_bH=\frac{H}{P_b'}+\mathcal J_b\left(\frac{H}{P_b'}\right)',\qquad
				E''=-P_b'\mathcal J_b J,\qquad J:=\left(\frac{H}{P_b'}\right)'.
			\end{equation*}
			Thus $J=O(\kappa\xi^{q-2})$ and $E''=O(\kappa\xi^{q-2})$. Substituting this improved estimate into $H''$ gives $H''=O(\kappa\xi^{q-8/3})$ and $J'=O(\kappa\xi^{q-3})$. A second integration by parts yields
			\begin{equation*}
				E'''=-P_b''\mathcal J_bJ-(P_b')^2\mathcal J_b\left(\frac J{P_b'}\right)'=O(\kappa\xi^{q-3}).
			\end{equation*}
			For parameter derivatives, with $\mathcal E_i^{\rm out}=\mathscr L_{b,1}-h_i-\nu$, differentiation gives
			\begin{equation*}
				\mathcal E_i^{\rm out}\partial_\nu E=\phi_i+E,\qquad
				\mathcal E_i^{\rm out}(b\partial_bE)=-(b\partial_bL_b)(\phi_i+E).
			\end{equation*}
			Their $Y_q$ bounds are already available. The preceding argument for the second derivative applies to each equation: differentiating its source once uses at most $E'''$, just proved. Consequently, for $m\le2$ and $\xi\ge1$,
			\begin{equation}\label{outer-sharp-derivatives}
				|D_\xi^mE|\lesssim\kappa\xi^q,\qquad
				|D_\xi^m\partial_\nu E|\lesssim\xi^q,\qquad
				|D_\xi^m b\partial_bE|\lesssim\sqrt b\,\xi^q.
			\end{equation}
			Here $P_b$ is the weight potential from \eqref{defi-P-b}, at $\beta=1$.
		\end{proof}
		
		\subsection{Matching and self-adjoint realization}\label{subsection_matching_asymptotic}
		Continue with $\beta=1$, fix $\epsilon=1/6$ in the outer construction, and put $U(b,\nu;\xi)=\phi_{i,\mathrm{int}}(\xi/b^{3/4})$. Choose $\xi_0$ so small that every $\phi_i$, $0\le i\le N$, is nonzero on $(0,2\xi_0]$. The matching equation is the scalar determinant
		\begin{equation}\label{matching-condition}
			M_i(b,\nu)=U'(\xi_0)\phi_{i,\mathrm{out}}(\xi_0)-U(\xi_0)\phi_{i,\mathrm{out}}'(\xi_0)=0.
		\end{equation}
		The leading sum in \eqref{phi-i-int} converges to $\phi_i$ in $C^1$ near $\xi_0$, with error $O_{\xi_0}(b^{1/4})$, by \eqref{asym-T}. Also, $\partial_\nu U=O(\xi_0^2)+O_{\xi_0}(b^{1/4})$ and $\partial_\nu U'=O(\xi_0)+O_{\xi_0}(b^{1/4})$. The Wronskian identity
		\begin{equation*}
			\phi_i'\widetilde\phi_i-\phi_i\widetilde\phi_i'
			=e^{P_0(\xi)}\int_\xi^\infty\frac{\phi_i(t)^2}{at^{2/3}e^{P_0(t)}}\,dt
			=I_i+O(\xi^{4/3})
		\end{equation*}
		therefore makes the derivative of the matching determinant nonzero. For the $b$ derivative at fixed $\xi$, the leading terms satisfy the exact cancellation
		\begin{equation}\label{leading-mode-b-derivative}
			b\partial_b\left[b^{j+1/2}T_j(\xi/b^{3/4})\right]
			=b^{j+1/2}\Theta_j(\xi/b^{3/4}).
		\end{equation}
		At $\xi=\xi_0$, this is $O_{\xi_0}(b^{1/4})$, also after one spatial derivative. The $S_j$ terms contribute $O_{\xi_0}(|\nu|)$; the bounds for $R_i$ and $b\partial_bR_i$ contribute $O_{\xi_0}(b^{1/4})$. Thus \eqref{prop12-S-bounds}--\eqref{prop12-R-bounds} and the outer estimates give
		\begin{equation}\label{matching-estimates}
			\begin{aligned}
				M_i(b,0)&=O_{\xi_0}(b^{1/4}),&
				\partial_\nu M_i(b,\nu)&=I_i+O(\xi_0^{4/3})+O_{\xi_0}(|\nu|+b^{1/4}),\\
				b\partial_bM_i(b,\nu)&=O_{\xi_0}(b^{1/4}+|\nu|).&&
			\end{aligned}
		\end{equation}
		Choose $\xi_0\le\xi_{\rm int}$ first. Since there are only finitely many indices $i\le N$ and $I_i>0$, one can then fix $\delta\in(0,\min\{\nu_{\rm int},\nu_{\rm out}\})$ and a common upper bound on $b\le\min\{\xi_0^{4/3},b_{\rm out}\}$ such that $\partial_\nu M_i\ge I_i/2$ for $|\nu|\le\delta$. Decrease that bound on $b$ until $|M_i(b,0)|<I_i\delta/4$. Then $M_i(b,-\delta)<0<M_i(b,\delta)$, so there is exactly one root $\nu_i(b)$ in this interval, with $|\nu_i(b)|\le2|M_i(b,0)|/I_i\lesssim b^{1/4}$. The implicit function theorem and \eqref{matching-estimates} give
		\begin{equation*}
			b\nu_i'(b)=-\frac{b\partial_bM_i(b,\nu_i(b))}{\partial_\nu M_i(b,\nu_i(b))}=O(b^{1/4}).
		\end{equation*}
		
		At this zero, set $A_i(b)=U(\xi_0)/\phi_{i,\mathrm{out}}(\xi_0)=1+O_{\xi_0}(b^{1/4})$ and define
		\begin{equation}\label{defi-varphi}
			\varphi_{i,b,1}(\xi)=
			\begin{cases}U(b,\nu_i(b);\xi),&0\le\xi\le\xi_0,\\
				A_i(b)\phi_{i,\mathrm{out}}(b,\nu_i(b);\xi),&\xi\ge\xi_0.
			\end{cases}
		\end{equation}
		On the inner piece, the change of variables gives $\mathscr L_{b,1}\varphi=b^{-1}\mathcal L_{b,1}\phi_{i,\mathrm{int}}=(h_i+\nu_i)\varphi$ by \eqref{eigen-equa-inner}; on the outer piece the verified outer equation gives the same identity. Equation \eqref{matching-condition} makes both the value and derivative continuous, so the ODE extends it smoothly across $\xi_0$:
		\begin{equation}\label{equa-eigen-}
			\mathscr L_{b,1}\varphi_{i,b,1}=(h_i+\nu_i(b))\varphi_{i,b,1}\quad\hbox{on }(0,\infty).
		\end{equation}
		The denominator defining $A_i$ stays bounded away from zero. Differentiating its quotient along $\nu_i(b)$, using \eqref{leading-mode-b-derivative}, the outer parameter bounds, and $|\nu_i|+|b\nu_i'|\lesssim b^{1/4}$, gives $bA_i'(b)=O(b^{1/4})$. The fixed points and the implicit function theorem give $C^1$ dependence on $b$ in $C^2$ on compact intervals away from zero. At the fixed interface this follows also for parameter derivatives, since the matched values and first derivatives agree for every $b$ and the ODE determines the second derivative. All corrections in \eqref{phi-i-int} have zero derivative at the wall. Since $T_0'(0)=\cscale/4$ and $T_j'(0)=0$ for $j\ge1$, the resulting eigenfunction has precisely the normalization \eqref{spectral-wall-normalization}.
		
		\medskip\noindent\emph{Pointwise bounds.}
		Write $\widetilde\varphi=\varphi_{i,b,1}-\varphi_i^{\rm lead}$, where $\varphi_i^{\rm lead}(\xi)=\sum_{j=0}^ic_{i,j}b^{j+1/2}T_j(\xi/b^{3/4})$. We specify which estimates control each region; all the following bounds hold after $m\le2$ Euler derivatives.
		
		For $0<\xi\le b^{3/4}$, use \eqref{asym-T} for $T_{j+1}$, \eqref{prop12-S-bounds} for $S_j$, and \eqref{impro-beha-R_i} for $R_i$ in \eqref{phi-i-int}. Since $\zeta=\xi/b^{3/4}\le1$ and $b^{3/2}\zeta^2=\xi^2$, they give
		\begin{equation*}
			|D_\xi^m\widetilde\varphi|\lesssim |\nu_i|\xi^2+b^{3/2}\zeta^{5/2}\lesssim\xi^2.
		\end{equation*}
		For $b^{3/4}\le\xi\le\xi_0$, the large-$\zeta$ bounds in \eqref{asym-T}, \eqref{prop12-S-bounds}, and \eqref{prop12-R-bounds} give, respectively,
		\begin{equation*}
			|D_\xi^m\widetilde\varphi|
			\lesssim |\nu_i|(1+\xi_0^{4/3})\sum_{j=0}^i\xi^{2+4j/3}+b^{1/4}\xi^{5/3}
			\lesssim b^{1/4}\xi^{2/3},
		\end{equation*}
		using $|\nu_i|\lesssim b^{1/4}$ from \eqref{matching-estimates} and $\xi_0<1$.
		For $\xi\ge\xi_0$, decompose \eqref{defi-varphi} as
		\begin{equation*}
			\widetilde\varphi=(A_i-1)\phi_{i,\mathrm{out}}+(\phi_{i,\mathrm{out}}-\phi_i)+(\phi_i-\varphi_i^{\rm lead}).
		\end{equation*}
		The first term uses $|A_i-1|\lesssim b^{1/4}$ and Proposition~\ref{propo-outer-eigenfunction}; the second uses \eqref{phi-out-expansion} on $[\xi_0,1]$ and \eqref{outer-sharp-derivatives} on $[1,\infty)$. For the last, \eqref{asym-T} gives $|D_\xi^m(\phi_i-\varphi_i^{\rm lead})|\lesssim b^{1/4}\sum_{j=0}^i\xi^{1/3+4j/3}$. Each term is therefore bounded by $C_{\xi_0}b^{1/4}\xi^{2/3}\langle\xi\rangle^{4/3+4i/3}$.
		
		For the $b$ derivative, subtract the leading sum first and differentiate its remainder along the matched branch. On a function of $(b,\nu,\zeta)$, the required operator is
		\begin{equation}\label{matched-branch-b-derivative}
			b\frac{d}{db}\bigg|_\xi
			=b\partial_b\bigg|_{\nu,\zeta}+b\nu_i'(b)\partial_\nu-\frac34D_\zeta.
		\end{equation}
		For $\zeta\le1$, \eqref{prop12-S-bounds}--\eqref{impro-beha-R_i} bound the differentiated remainder by
		$C(|\nu_i|+|b\nu_i'|)\xi^2+Cb^{3/2}\zeta^{5/2}\lesssim\xi^2$.
		For $1\le\zeta\le\zeta_0$, use $b\zeta^{4/3}\lesssim\xi_0^{4/3}$ to absorb the extra weight in $b\partial_bS_j$, $b\partial_bR_i$, and $\partial_\nu R_i$. Their total contribution is bounded by
		$C(|\nu_i|+|b\nu_i'|)\sum_{j=0}^i\xi^{2+4j/3}+Cb^{1/4}\xi^{5/3}\lesssim b^{1/4}\xi^{2/3}$.
		These estimates hold after two Euler derivatives, using the three-derivative inner bounds for the last term of \eqref{matched-branch-b-derivative}. On the outer region use $|bA_i'|+|b\nu_i'|\lesssim b^{1/4}$ and \eqref{outer-sharp-derivatives}. Combining the three regions yields
		\begin{equation}\label{prof-estimate-tilde-varphi}
			|D_\xi^m\widetilde\varphi_{i,b,1}|+|D_\xi^m b\partial_b\widetilde\varphi_{i,b,1}|
			\lesssim\xi^2\mathbf1_{\{\xi\le b^{3/4}\}}+
			b^{1/4}\xi^{2/3}\langle\xi\rangle^{4/3+4i/3}\mathbf1_{\{\xi\ge b^{3/4}\}}.
		\end{equation}
		The third Euler derivative of $\widetilde\varphi$ obeys the same endpoint bounds, by the inner estimates and the bound for $E'''$ in the outer proof. Finally, \eqref{spectral-beta-scaling} gives
		\begin{equation*}
			\partial_\beta\widetilde\varphi=\beta^{-1}\left(\frac18+b\partial_b+\frac34D_\xi\right)\widetilde\varphi,\qquad
			\widetilde\mu_{i,b,\beta}=\beta\nu_i(\beta b),\qquad
			\partial_\beta\widetilde\mu_{i,b,\beta}=\nu_i(\beta b)+\beta b\nu_i'(\beta b).
		\end{equation*}
		Together with \eqref{prof-estimate-tilde-varphi}, these identities give all the parameter bounds in Proposition~\ref{diagonalisation-L-b}, uniformly on the prescribed compact $\beta$-interval.
		
		The weighted norms converge to those of the limiting eigenfunctions. Indeed, the integral formula for $Q$ implies
		$r\le\int_0^{Q(r)}(8\sqrt t)^{-1/2}\,dt=(\sqrt2/3)Q(r)^{3/4}$, hence $Q_b(\xi)\ge a^2\xi^{4/3}$. For $0<b\le1$ and $\xi\ge1$, the opposite comparison up to a constant follows from Lemma~\ref{lemma-Q}. Therefore, at $\beta=1$,
		\begin{equation*}
			\rho_{b,1}(\xi)\lesssim\xi^{-2/3}
			\begin{cases}1,&0<\xi\le1,\\e^{-c\xi^{4/3}},&\xi\ge1,\end{cases}
			\qquad
			|\varphi_{i,b,1}(\xi)|\lesssim_N\xi^{2/3}\langle\xi\rangle^{4/3+4N/3}.
		\end{equation*}
		These bounds give an integrable majorant for $|\varphi_{i,b,1}|^2\rho_{b,1}$. Since $\varphi_{i,b,1}\to\phi_{1,i}$ and $\rho_{b,1}\to\rho_{0,1}$ locally away from zero, dominated convergence gives $\|\varphi_{i,b,1}\|_{L^2_{b,1}}\to1$. The unitary scaling \eqref{spectral-beta-scaling} makes this convergence uniform in $\beta$ on the fixed compact interval.
		
		\medskip\noindent\emph{Self-adjoint realization and compact resolvent.}
		Fix $b>0$ and $\beta>0$. With $\rho=\rho_{b,\beta}$ from \eqref{defi-rho-b-beta}, write $p=\rho\sqrt{Q_b}=ae^{-P_b}$ and $V=\beta+Q_b^{-1}$. The divergence form gives, for $f,g\in C_c^\infty(0,\infty)$,
		\begin{equation*}
			\langle-\mathscr L_{b,\beta}f,g\rangle_{L^2_\rho}
			=\mathfrak q[f,g]:=\int_0^\infty p f'\overline{g'}\,d\xi-\int_0^\infty V f\overline g\,\rho\,d\xi.
		\end{equation*}
		Thus the operator on $C_c^\infty(0,\infty)$ is densely defined and symmetric. Near zero, $Q_b\asymp_b\xi$, $p\asymp_b1$, and $V\rho\lesssim_b\xi^{-3/2}$. Since $f(0)=0$, $|f(\xi)|^2\le\xi\int_0^\xi|f'|^2$ implies
		$\int_0^{r_*} \xi^{-3/2}|f|^2\,d\xi\le2\sqrt{r_*}\int_0^{r_*}|f'|^2\,d\xi$.
		On $[{r_*},\infty)$, $V$ is bounded. Hence, for every $\epsilon>0$,
		\begin{equation*}
			\int_0^\infty V|f|^2\rho\,d\xi\le\epsilon\int_0^\infty p|f'|^2\,d\xi+C_{\epsilon,b,\beta}\|f\|_{L^2_\rho}^2.
		\end{equation*}
		The form is therefore bounded below, and its closure has domain $H^1_{0;b,\beta}$ from \eqref{definition-H1-zero-b-beta}; the shifted form norm is equivalent to \eqref{definition-H1-norm-b-beta}. The representation theorem defines the self-adjoint Friedrichs extension $A_{b,\beta}^{\rm F}$ and the required realization $\mathscr L_{b,\beta}=-A_{b,\beta}^{\rm F}$.
		
		For compactness, use the Liouville coordinate $t=\int_0^\xi Q_b(s)^{-1/4}\,ds$ and the unitary map $\mathcal U f(t)=r_b(\xi)f(\xi)$, where $r_b=(p\rho)^{1/4}=\sqrt a\,Q_b^{-1/8}e^{-P_b/2}$. On compactly supported smooth functions, direct differentiation gives
		\begin{equation*}
			\mathcal U(-\mathscr L_{b,\beta})\mathcal U^{-1}
			=-\partial_t^2+W_{b,\beta}(t),\qquad
			W_{b,\beta}=\frac{\partial_t^2r_b}{r_b}-V.
		\end{equation*}
		Using $(Q_b')^2=b^{1/2}+8\sqrt{Q_b}$ and $Q_b''=2/\sqrt{Q_b}$, this potential is
		\begin{equation}\label{liouville-potential-finite-b}
			W_{b,\beta}(t(\xi))
			=\frac{9\beta^2\xi^2}{64\sqrt{Q_b}}
			+\frac{3\beta\xi Q_b'}{16Q_b}
			-\frac{11\beta}{8}-\frac{3}{8Q_b}
			+\frac{7b^{1/2}}{64Q_b^{3/2}}.
		\end{equation}
		Lemma~\ref{lemma-Q} gives $t\sim(4/3)b^{-1/16}\xi^{3/4}$ at zero and $t\sim3\xi^{2/3}/(2\sqrt a)$ at infinity. Consequently,
		\begin{equation*}
			W_{b,\beta}(t)=\frac7{36t^2}+O_{b,\beta}(t^{-4/3})\quad(t\downarrow0),\qquad
			W_{b,\beta}(t)\sim\frac{\beta^2t^2}{16}\quad(t\to\infty).
		\end{equation*}
		In particular, $W_{b,\beta}(t)\ge c_{b,\beta}t^2-C_{b,\beta}$ on $(0,\infty)$. The transformed form, shifted by a sufficiently large multiple of the norm, therefore controls $\int_0^\infty(|F'|^2+(1+t^2)|F|^2)\,dt$. This holds on the closed form domain by density. Its unit ball has uniformly small $L^2$ tails, and Rellich compactness applies on each bounded interval. Thus the form-domain embedding is compact and so is the resolvent.
		
		The endpoint estimates give $\varphi(\xi)=O_b(\xi)$ and $\varphi'(\xi)=O_b(1)$ at zero, and polynomial growth at infinity. Cutting off at $\xi\sim r$ costs $O_{b,\beta}(r)$ in the squared gradient norm, while the cutoff cost at infinity tends to zero by the exponential weight. Smooth approximation away from the endpoints therefore gives $\varphi\in H^1_{0;b,\beta}$. Integrating \eqref{equa-eigen-} and its scaled version against compactly supported tests, with cancellation of the interface terms, and extending by density gives $\mathfrak q[\varphi,g]=-\mu_{i,b,\beta}\langle\varphi,g\rangle_{L^2_\rho}$ for every form-domain test $g$. The representation theorem yields $\varphi\in D(\mathscr L_{b,\beta})$ with eigenvalue $\mu_{i,b,\beta}$.
		
		\subsection{Ordered spectrum and spectral gap}\label{subsection-spectral-ordering}
		We identify the eigenvalues constructed for $0\le i\le N=\ell+1$ with the first ordered eigenvalues. By \eqref{spectral-beta-scaling}, it suffices to count zeros for $\beta=1$.
		
		\medskip\noindent\emph{Absence of zeros near the wall.}
		Put $\zeta=\xi/b^{3/4}$. Positivity and the endpoint expansions in Lemma~\ref{lemma-T-i} imply
		\begin{equation*}
			\frac{T_j(\zeta)}{T_0(\zeta)}\lesssim_j
			\begin{cases}\zeta^{3j/2},&0<\zeta\le1,\\
				\zeta^{4j/3},&\zeta\ge1.
			\end{cases}
		\end{equation*}
		Moreover, $c_{i,0}>0$ and $b^{1/2}T_0(\xi/b^{3/4})\asymp b^{-1/4}\xi$ for $\xi\le b^{3/4}$, and $\asymp\xi^{2/3}$ for $\xi\ge b^{3/4}$. Thus \eqref{eigenfuc-L-b} and the already proved remainder estimate \eqref{prof-estimate-tilde-varphi} give, uniformly for $0\le i\le N$ and $b^{3/4}\le r_0<1$,
		\begin{equation}\label{no-wall-zeros-finite-b}
			\sup_{0<\xi\le r_0}
			\left|\frac{\varphi_{i,b,1}(\xi)}{c_{i,0}b^{1/2}T_0(\xi/b^{3/4})}-1\right|
			\le C_N\bigl(r_0^{4/3}+b^{1/4}\bigr).
		\end{equation}
		Indeed, on $\xi\le b^{3/4}$ the terms with $j\ge1$ and the remainder have relative size $O_N(b)$; on $b^{3/4}\le\xi\le r_0$ their relative sizes are $O_N(\xi^{4/3})$ and $O_N(b^{1/4})$. Choose $r_0$ small and then $b$ small so that the right-hand side of \eqref{no-wall-zeros-finite-b} is less than $1/2$. None of these eigenfunctions vanishes on $(0,r_0]$.
		
		\medskip\noindent\emph{Absence of zeros in the far field.}
		Since $Q_b'>0$ and $Q_b\asymp\xi^{4/3}$ uniformly for $\xi\ge1$, $0<b\le1$, formula \eqref{liouville-potential-finite-b} implies
		$W_{b,\beta}(t(\xi))\ge c\xi^{4/3}-C$, uniformly for $\beta\in[C_0^{-1},C_0]$.
		The constructed eigenvalues are bounded for $i\le N$. Hence there is $R>1$, independent of small $b$, such that $W_{b,1}+\mu_{i,b,1}\ge1$ on $\xi\ge R$ for every $i\le N$. If $\varphi_{i,b,1}(\xi_1)=0$ for some $\xi_1\ge R$, let $F=\mathcal U\varphi_{i,b,1}$ and $t_1=t(\xi_1)$. The function $F\mathbf1_{[t_1,\infty)}$ belongs to the transformed form domain, since $F(t_1)=0$. Testing $-F''+W_{b,1}F=-\mu_{i,b,1}F$ with this function gives
		\begin{equation*}
			\int_{t_1}^\infty\left(|F'|^2+(W_{b,1}+\mu_{i,b,1})|F|^2\right)dt=0.
		\end{equation*}
		This forces $F$ to vanish on the tail and then everywhere by uniqueness for the ODE, a contradiction. Thus there are no zeros on $[R,\infty)$.
		
		\medskip\noindent\emph{Zeros on a compact interval.}
		The differentiated expansions \eqref{asym-T} and \eqref{prof-estimate-tilde-varphi} imply
		$\varphi_{i,b,1}\to\phi_{1,i}$ in $C^1([r_0,R])$ as $b\downarrow0$.
		By \eqref{defi-phi-beta-i}, $\phi_{1,i}$ is a positive power of $\xi$ times $L_i^{(1/4)}$, and therefore has exactly $i$ simple positive zeros. Decrease $r_0$ and increase $R$ so that all these zeros, for $i\le N$, lie in $(r_0,R)$. On a sufficiently small neighborhood of each zero, the derivative of $\phi_{1,i}$ has a fixed nonzero sign; on the remaining compact set, $|\phi_{1,i}|$ is bounded below. The $C^1$ convergence therefore gives exactly one zero in each such neighborhood and none elsewhere. Together with the endpoint arguments, this proves that $\varphi_{i,b,1}$ has exactly $i$ zeros on $(0,\infty)$.
		
		\medskip\noindent\emph{Identification of the spectral indices.}
		We verify the endpoint conditions needed for the nodal count. Fix $b,\beta>0$ and a spectral parameter $\mu$. With $p=\rho\sqrt{Q_b}$ and $V=\beta+Q_b^{-1}$, the solution of $\mathscr L_{b,\beta}f=\mu f$ normalized by $f(0)=0$, $f'(0)=1$ solves
		\begin{equation*}
			f(\xi)=p(0)\int_0^\xi\frac{dt}{p(t)}
			+\int_0^\xi\frac1{p(t)}\int_0^t(\mu-V(s))\rho(s)f(s)\,ds\,dt.
		\end{equation*}
		On $0<\xi\le r$, the integral operator has norm $O_{b,\beta,\mu}(r^{1/2})$ in $\sup|f(\xi)|/\xi$. Hence it gives a unique solution $f_D=\xi(1+O(\xi^{1/2}))$, $f_D'=1+O(\xi^{1/2})$. Reduction of order yields an independent solution $f_N=1+O(\xi^{1/2})$. Since $p$ is bounded above and below near zero, the form-domain condition imposes zero trace and selects $f_D$. Thus the weighted Wronskian vanishes at zero for any two eigenfunctions. At infinity, the transformed form controls $F,F'\in L^2$ for every transformed eigenfunction. For two such functions, both their Wronskian and its derivative are integrable on the tail, by Cauchy--Schwarz and the eigenvalue equations. Its limit is therefore zero. For equal eigenvalues, constancy of the Wronskian implies linear dependence; all eigenvalues are therefore simple. Every eigenfunction has finitely many zeros: the local expansion excludes zeros near zero, the tail test above applies to any fixed eigenvalue, and zeros cannot accumulate in a compact interior interval by uniqueness for the ODE.
		
		List the eigenvalues of $A_{b,\beta}^{\rm F}$ as $\lambda_0<\lambda_1<\cdots$. Sturm comparison, in the form of \cite[Theorem~9.39]{Teschl2014}, shows that an eigenfunction for $\lambda_{n+1}$ has a zero in each nodal interval of an eigenfunction for $\lambda_n$. Induction gives at least $n$ zeros for the latter. Conversely, if it has $m$ zeros, its restrictions to the $m+1$ nodal intervals, extended by zero, belong to the form domain. Their disjoint supports and the eigenvalue equation give Rayleigh quotient $\lambda_n$ for every nonzero linear combination. The min--max principle gives $\lambda_m\le\lambda_n$, hence $m\le n$. Thus the eigenfunction for $\lambda_n$ has exactly $n$ zeros. The zero count above identifies the constructed branch as $\mu_{i,b,\beta}=-\lambda_i$ for $i\le N$; completeness follows from the compact self-adjoint resolvent. This proves \eqref{ordered-spectrum-finite-b} and excludes any additional eigenvalues above $\mu_{N,b,\beta}$.
		
		Finally, the construction at index $N=\ell+1$ gives
		$\mu_{\ell,b,\beta}-\mu_{\ell+1,b,\beta}=\beta+O_{\ell,C_0}(b^{1/4})$.
		Since $\beta\ge C_0^{-1}$, take $b^*$ so that both $b$ and $\beta b$ lie below the construction thresholds, and small enough that the last error is at most $1/(2C_0)$. This proves \eqref{next-eigenvalue-finite-b} for the single parameter range stated in the proposition. Expand $f$ in the normalized eigenbasis. If its first $\ell+1$ coefficients vanish, the spectral representation of the closed form gives
		$-\mathfrak q[f,f]\le\mu_{\ell+1,b,\beta}\|f\|_{L^2_{b,\beta}}^2$ for every such $f\in H^1_{0;b,\beta}$. This is \eqref{finite-b-spectral-gap} and completes the proof of Proposition~\ref{diagonalisation-L-b}.
		
		\section{Pointwise bounds for the nonlinear forcing}
		We use $\mathfrak f_j$ from \eqref{defi-f-ell}, $\mathcal F$ from \eqref{defi-mathcal-F}, and, for $j\ge1$,
		\begin{equation}\label{definition-Z-general-ell}
			Z_j(s,\xi):=\left[b^{1/2}\langle\xi\rangle^{-8/3}
			+\bigl(b^{3/4}+A^2b^{1/2+\eta}\bigr)\xi^{-4/3}\right]\mathfrak f_j(\xi),\qquad b=b(s).
		\end{equation}
		
		\subsection{Pointwise estimates under the bounds of \texorpdfstring{$\mathcal V_1$}{V1}}
		Assume $(\vartheta,b,\beta)(s)\in\mathcal V_1[A,\eta](s)$ on $[s_0,\bar s]$, with \eqref{equa-vartheta}. Use \eqref{decomposition-stable-sketch} and \eqref{exact-stable-finite-mode}: $\vartheta=Q_b+\varepsilon$, $\varepsilon=\varepsilon_++\varepsilon_-$, $\varepsilon_+=\varepsilon_1\psi_{1,b,\beta}$, and $\varepsilon_1=\mathbf n_1(\beta)b^{1/2}$. With the operators and remainder from \eqref{defi-mathscr-L-}, \eqref{defi-L_0_beta}, and \eqref{defi-B-quadratic-appendix-}, set
		\begin{align}
			\widetilde{\mathcal B}_1(\varepsilon)&:=(\mathscr L_{b,\beta}-\mathscr L_{0,\beta})\varepsilon+\mathscr B_b(\varepsilon),
			\label{definition-B-one-stable-corrected}\\*
			\mathcal R_{1,b}(\varepsilon_+)&:=-\partial_s\varepsilon_++\mathscr L_{b,\beta}\varepsilon_++(\beta-b'/b)\Lambda_\xi Q_b.
			\label{definition-R-one-b-stable}
		\end{align}
		
		\begin{lemma}[Stable forcing estimates]\label{lem-esti-tilde-R-1-ep-plus-stable}
			For every $A\ge1$, there exist $\eta_{18}(A)\in(0,1/16]$ and
			$s_{18}=s_{18}(A,\eta)\ge s_8(\max\{A,A_8\},\eta)$,
			independent of $\bar s$, such that, for $0<\eta\le\eta_{18}(A)$
			and $s_0\ge s_{18}$, the following bounds hold on $[s_0,\bar s]$:
			\begin{gather}
				\textup{(i)}\quad |\widetilde{\mathcal B}_1(\varepsilon)(s,\xi)|\lesssim_{A,\eta}
				\begin{cases}
					b^{1/4+\eta}\xi^{-1/3},&0<\xi\le b^{3/4},\\
					(b^{1/2}\xi^{-2/3}+Z_1)(|\mathcal F|+Z_1),&b^{3/4}\le\xi\le b^{-\eta},\\
					\xi^{2/\eta},&\xi\ge b^{-\eta}.
				\end{cases}\notag\\*
				\textup{(ii)}\quad |\mathcal R_{1,b}(\varepsilon_+)|\lesssim
				b^{1/2}\xi\mathbf1_{\{\xi\le b^{3/4}\}}
				+b^{3/4}\mathfrak f_1\mathbf1_{\{\xi\ge b^{3/4}\}}.
				\label{global-estimate-R-one-b-stable}
			\end{gather}
		\end{lemma}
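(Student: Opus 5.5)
The plan is to treat the two estimates separately, working region by region with the shrinking-set bounds of $\mathcal V_1[A,\eta]$, the profile table in the design remark after the definition of $\mathcal V_1$, and the eigenfunction bounds of Proposition~\ref{diagonalisation-L-b}.

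\emph{Estimate for $\mathcal R_{1,b}$.} For (ii) I would use the exact eigenvalue structure of $\varepsilon_+$. Since $\varepsilon_+=\mathbf n_1(\beta)b^{1/2}(\varphi_1-\kappa\varphi_0)$, we have
\[
\mathscr L_{b,\beta}\varepsilon_+=\mathbf n_1b^{1/2}\bigl(\mu_{1}\varphi_1-\kappa\mu_0\varphi_0\bigr).
\]
The modulation term is $(\beta-b'/b)\Lambda_\xi Q_b$, with $\Lambda_\xi Q_b=\mathbf m_0b^{1/2}(\varphi_0-\widetilde\varphi_0)$. The time derivative $\partial_s\varepsilon_+$ is computed from $\varepsilon_1'/\varepsilon_1=\tfrac12 b'/b-\tfrac58\beta'/\beta$ and from $b\partial_b\psi$, $\partial_\beta\psi$. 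Inserting $\mu_j=\beta(\tfrac12-j)+O(b^{1/4})$, $b'/b=-\beta+O(b^{1/4})$ and $|\beta'|\lesssim b^{1/4}$ (Proposition~\ref{pro-finite-projections}), the leading $\varphi_0$ coefficients cancel. This is exactly the relation $\kappa\mathbf n_1=2\mathbf m_0$ used in \eqref{second-modulation-equation-before-cancellation}. What remains has three parts:
\begin{itemize}
\item $O(b^{1/4})\cdot b^{1/2}(|\varphi_j|+|\psi|)$;
\item $b^{1/2}\,|b\partial_b\psi|$ together with the $\beta$-derivative terms;
\item $b^{1/2}\widetilde\varphi_0$.
\end{itemize}
For $\xi\ge b^{3/4}$ the remainder bounds of Proposition~\ref{diagonalisation-L-b} and \eqref{leading-mode-b-derivative} give $b^{3/4}\mathfrak f_1$ for each part. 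For $\xi\le b^{3/4}$ the $T_0$ terms cancel in $\psi$, and the $\varphi_0$ contributions are $O(b^{1/4})\cdot b^{1/2}\cdot b^{-1/4}\xi=b^{1/2}\xi$. This gives \eqref{global-estimate-R-one-b-stable}.

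\emph{Estimate for $\widetilde{\mathcal B}_1$, inner and intermediate regions.} For (i), the quasilinear term is handled through \eqref{quadratic-first-nonlinearity}--\eqref{second-nonlinearity-via-F}. With $z=\varepsilon/Q_b$ this gives $|\mathscr B_b(\varepsilon)|\lesssim z^2+|z||\mathcal F|$ wherever $|z|\ll1$, which holds on $(0,b^{-\eta})$.

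On $\xi\le b^{3/4}$ we have $|z|\lesssim A^2b^\eta$ and $|\mathcal F|\le Ab^{1/4-\eta}\xi$, so $\mathscr B_b=O(b^{2\eta})$. The linear difference $(\mathscr L_{b,\beta}-\mathscr L_{0,\beta})\varepsilon$ is the dominant term there: it equals $(\sqrt{Q_b}-a\xi^{2/3})\varepsilon''+(Q_b^{-1}-a^{-2}\xi^{-4/3})\varepsilon$. I would control $\varepsilon''$ through the identity
\[
\varepsilon''=\frac{2+\mathcal F}{\sqrt\vartheta}-\frac{2}{\sqrt{Q_b}},
\]
which is of size $b^{-1/8}\xi^{-1/2}(|z|+|\mathcal F|)$. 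Combining this with $|\varepsilon|\lesssim A^2b^{1/4+\eta}\xi$ and $a^{-2}\xi^{-4/3}\lesssim\xi^{-4/3}$ gives $b^{1/4+\eta}\xi^{-1/3}$, the stated inner bound.

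On $b^{3/4}\le\xi\le b^{-\eta}$ the coefficient errors are $|\sqrt{Q_b}-a\xi^{2/3}|\lesssim b^{1/2}$ and $|Q_b^{-1}-a^{-2}\xi^{-4/3}|\lesssim b^{1/2}\xi^{-2}$. I would bound $|z|$ by $Z_1\xi^{-4/3}$-type quantities using the pointwise envelopes of $\varepsilon_\pm$; this is the purpose of $Z_1$. Then $\varepsilon''$ is again expressed through $\mathcal F$ and $z$, giving the product form $(b^{1/2}\xi^{-2/3}+Z_1)(|\mathcal F|+Z_1)$.

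\emph{Far field and main obstacle.} On $\xi\ge b^{-\eta}$ I would use only coarse bounds:
\begin{itemize}
\item $b^{-\eta/2}/A\le\vartheta\le Ab^{-1-\eta}$;
\item $|\mathcal F|\le Ab^{-\eta/8}$, so that $|\vartheta''|\lesssim|2+\mathcal F|/\sqrt\vartheta$;
\item the explicit growth of $Q_b$ and $\varepsilon_+$.
\end{itemize}
These give a polynomial in $b^{-1}$ and $\xi$. Since $b^{-1}\le\xi^{1/\eta}$ there, everything is bounded by $\xi^{2/\eta}$ once $\eta\le\eta_{18}$.

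I expect the intermediate region to be the main obstacle. There one must track carefully which of $b^{1/2}\xi^2$, $b^{3/4}\mathfrak f_1$ and $A^2b^{1/2+\eta}\mathfrak f_1$ dominates $|z|$, so that the product structure survives with the specific weight $Z_1$. I would first verify $|z|\lesssim Z_1\,b^0$ directly from the table, and then check that the linear coefficient errors are absorbed by the factor $b^{1/2}\xi^{-2/3}$.
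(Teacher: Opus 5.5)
Your proposal is correct and follows essentially the same route as the paper: the paper packages your split (coefficient difference plus $\mathscr B_b$, with $\varepsilon''=(\mathcal F-2(r-1))/(\sqrt{Q_b}\,r)$) into the single exact identity $\widetilde{\mathcal B}_1(\varepsilon)=\bigl(1-\tfrac{p}{\sqrt{Q_b}\,r}\bigr)\mathcal F+z\bigl[\tfrac{2p}{\sqrt{Q_b}\,r(1+r)}-\tfrac{Q_b}{p^2}\bigr]$, where $p=a\xi^{2/3}$ and $r=\sqrt{1+z}$, and it proves (ii) by the same cancellation through $\kappa\mathbf n_1=2\mathbf m_0$. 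One small correction: in the intermediate region the bound is simply $|z|\lesssim_A Z_1$, not a ``$Z_1\xi^{-4/3}$-type'' quantity, because the factor $\xi^{-4/3}$ from dividing by $Q_b$ is already built into $Z_1$; you state this correct form yourself at the end.
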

		
		\begin{proof}
			Take $\eta_{18}(A)\le\min\{1/16,\eta_8(\max\{A,A_8\})\}$.
			Proposition~\ref{pro-finite-projections} then applies with $A$ replaced by
			$\max\{A,A_8\}$; all further lower bounds on $s_0$ are absorbed into $s_{18}$.
			Put $q=Q_b$, $p=a\xi^{2/3}$, $z=\varepsilon/q$, and $r=\sqrt{1+z}$.
			Since $q''=2/\sqrt q$, the residual identity gives
			$\varepsilon''=[\mathcal F-2(r-1)]/(\sqrt q\,r)$, hence
			\begin{equation}\label{exact-identity-B-one-stable}
				\widetilde{\mathcal B}_1(\varepsilon)=\left(1-\frac p{\sqrt q\,r}\right)\mathcal F
				+z\left[\frac{2p}{\sqrt q\,r(1+r)}-\frac q{p^2}\right].
			\end{equation}
			
			\emph{For $0<\xi\le b^{3/4}$}, Lemma~\ref{lemma-Q}, the eigenfunction bounds, and $\mathcal V_1$ give $q\asymp b^{1/4}\xi$, $|\varepsilon_+|\lesssim b^{3/4}\xi$, $|\varepsilon_-|\le A^2b^{1/4+\eta}\xi$, and $|\mathcal F|\lesssim_A b^{1/4-\eta}\xi$. Hence $|z|\lesssim_A b^\eta\le1/2$, $p/\sqrt q\lesssim1$, and $q/p^2\lesssim b^{1/4}\xi^{-1/3}$. Equation \eqref{exact-identity-B-one-stable} gives
			\begin{equation*}
				|\widetilde{\mathcal B}_1(\varepsilon)|\lesssim_A
				b^{1/4-\eta}\xi+b^\eta+b^{1/4+\eta}\xi^{-1/3}
				\lesssim_A b^{1/4+\eta}\xi^{-1/3},
			\end{equation*}
			using $\eta<1/2$ and $\xi\le b^{3/4}$.
			
			\emph{For $b^{3/4}\le\xi\le b^{-\eta}$}, the profile estimates and the bounds $|\varepsilon_+|\lesssim b^{1/2}\xi^2+b^{3/4}\mathfrak f_1$, $|\varepsilon_-|\le A^2b^{1/2+\eta}\mathfrak f_1$ give
			\begin{equation}\label{intermediate-profile-relative-error}
				q\asymp\xi^{4/3},\quad
				\left|\frac p{\sqrt q}-1\right|+\left|\frac q{p^2}-1\right|
				\lesssim b^{1/2}\xi^{-2/3},\quad
				|z|\lesssim_A Z_1\lesssim_A b^\eta.
			\end{equation}
			Here \eqref{definition-Z-general-ell} and $\eta\le1/10$ give the bound on $Z_1$;
			Increasing $s_{18}$ ensures $|z|\le1/2$.
			Both coefficients in \eqref{exact-identity-B-one-stable} are therefore
			$O(b^{1/2}\xi^{-2/3}+|z|)$, proving the middle bound in (i).
			
			\emph{For $\xi\ge b^{-\eta}$}, $q\asymp\xi^{4/3}$, while the shrinking-set bounds give
			$b^{-\eta/2}\lesssim_A\vartheta\lesssim_A b^{-1-\eta}$ and
			$|\mathcal F|\lesssim_A b^{-\eta/8}$. We rewrite \eqref{exact-identity-B-one-stable} as
			\begin{equation*}
				\widetilde{\mathcal B}_1(\varepsilon)
				=\left(1-\frac p{\sqrt\vartheta}\right)\mathcal F
				+2p\left(\frac1{\sqrt q}-\frac1{\sqrt\vartheta}\right)-\frac\varepsilon{p^2}.
			\end{equation*}
			These bounds and $b^{-\eta}\le\xi$ control the three terms by $C_A(\xi^{1/8}+\xi^{2/3})$, $C_A(1+\xi^{2/3})$, and $C_A(1+\xi^{1/\eta-1/3})$, respectively. Their sum is $O_{A,\eta}(\xi^{2/\eta})$, proving (i).
			
			\emph{Finite-dimensional forcing.} Write $\varphi_j=\varphi_{j,b,\beta}$, $\psi=\psi_{1,b,\beta}$, $\kappa=c_{1,0}/c_{0,0}$, and $\alpha=\beta-b'/b$. Proposition~\ref{diagonalisation-L-b}, the $T_0$ cancellation in \eqref{definition-adapted-eigendirection-strategy}, and \eqref{ode-beta-b} yield
			\begin{align}
				b^{1/4}|\varphi_j|+|\partial_s\psi|
				&\lesssim\xi\mathbf1_{\{\xi\le b^{3/4}\}}
				+b^{1/4}\mathfrak f_1\mathbf1_{\{\xi\ge b^{3/4}\}},\quad j=0,1,
				\label{pointwise-eigenfunction-estimates-R-final}\\*
				\left|\varepsilon_1'+\tfrac\beta2\varepsilon_1\right|
				+\left|\varepsilon_1'-\tfrac{b'}{2b}\varepsilon_1\right|&\lesssim b^{3/4}.
				\label{epsilon-one-modulation-errors}
			\end{align}
			The last bound follows by differentiating $\varepsilon_1=\mathbf n_1(\beta)b^{1/2}$. Since $\mathbf m_0(\beta)=\kappa\mathbf n_1(\beta)/2$,
			\begin{align}
				\partial_s\varepsilon_+&=-\tfrac\beta2\varepsilon_1(\varphi_1+\kappa\varphi_0)
				+\alpha\mathbf m_0(\beta)b^{1/2}\varphi_0+\mathcal E_s,
				\label{time-expansion-epsilon-plus-final}\\*
				\mathscr L_{b,\beta}\varepsilon_+&=-\tfrac\beta2\varepsilon_1(\varphi_1+\kappa\varphi_0)+\mathcal E_\mu,
				\qquad \Lambda_\xi Q_b=\mathbf m_0(\beta)b^{1/2}\varphi_0+\mathcal E_Q.
				\label{linear-expansion-epsilon-plus-final}
			\end{align}
			Here $\mathcal E_s=(\varepsilon_1'+\frac\beta2\varepsilon_1)\varphi_1-\kappa(\varepsilon_1'-\frac{b'}{2b}\varepsilon_1)\varphi_0+\varepsilon_1\partial_s\psi$. Bounds \eqref{pointwise-eigenfunction-estimates-R-final}--\eqref{epsilon-one-modulation-errors},
			the $O(b^{1/4})$ eigenvalue errors, and the remainder estimate for $\varphi_0$
			bound each error by the right-hand side of \eqref{global-estimate-R-one-b-stable}.
			Substitution into \eqref{definition-R-one-b-stable} gives $\mathcal R_{1,b}=-\mathcal E_s+\mathcal E_\mu+\alpha\mathcal E_Q$; since $|\alpha|\lesssim1$, (ii) follows.
		\end{proof}
		
		\subsection{Pointwise estimates under the bounds of \texorpdfstring{$\mathcal V_\ell$, $\ell\ge2$}{Vell, ell >= 2}}
		Assume $(\vartheta,b)(s)\in\mathcal V_\ell[A,\eta](s)$ on $[s_0,\bar s]$, with $\beta\equiv1$ and the notation \eqref{notation-beta-one}. As in \eqref{decom-varepvarphi-i-ell}, $\varepsilon=\varepsilon_++\varepsilon_-$, where
		\begin{equation}\label{definition-epsilon-plus-general-ell}
			\varepsilon_+=\varepsilon_\ell\psi_{\ell,b}+\sum_{j=1}^{\ell-1}\varepsilon_j\varphi_{j,b}.
		\end{equation}
		Use \eqref{definition-B-one-stable-corrected} for $\widetilde{\mathcal B}_\ell$ and \eqref{definition-R-one-b-stable} with $\beta=1$ for $\mathcal R_{\ell,b}$.
		
		\begin{lemma}[Unstable forcing estimates]\label{lem-unstable-forcing-estimates}
			For every integer $\ell\ge2$ and $A\ge1$, there exist
			$\eta_{19}(A,\ell)\in(0,1/(32\ell)]$ and
			$s_{19}=s_{19}(A,\eta,\ell)\ge1$, independent of $\bar s$,
			such that, for $0<\eta\le\eta_{19}(A,\ell)$ and $s_0\ge s_{19}$,
			the nonlinear bound in Lemma~\ref{lem-esti-tilde-R-1-ep-plus-stable}(i) holds with $\widetilde{\mathcal B}_\ell,Z_\ell$ in place of $\widetilde{\mathcal B}_1,Z_1$, with constants depending also on $\ell$, and
			\begin{equation}\label{global-estimate-R-general-ell}
				|\mathcal R_{\ell,b}(\varepsilon_+)|\lesssim_{A,\ell}
				(b^{1/2}+Ab^{1/4+\eta})\xi\mathbf1_{\{\xi\le b^{3/4}\}}
				+(b^{3/4}\mathfrak f_\ell+Ab^{3/4+\eta}\xi^{1/3})\mathbf1_{\{\xi\ge b^{3/4}\}}.
			\end{equation}
		\end{lemma}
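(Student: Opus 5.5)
I will follow the proof of Lemma~\ref{lem-esti-tilde-R-1-ep-plus-stable} line by line, recording only the changes caused by the lower modes $\varepsilon_j\varphi_{j,b}$, $1\le j<\ell$, and by the faster modulation $b'/b=-\gamma_\ell+O_\ell(Ab^{2\eta})$ from \eqref{dynamics-b-item-i}.

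\emph{Nonlinear bound.} The exact identity \eqref{exact-identity-B-one-stable} uses only $\vartheta=Q_b+\varepsilon$ and the definition of $\mathcal F$, so it holds verbatim for $\widetilde{\mathcal B}_\ell$ with $\beta=1$. On $0<\xi\le b^{3/4}$, the adapted direction still has the $T_0$ cancellation, giving $|\varepsilon_\ell\psi_{\ell,b}|\lesssim b^{1/2}\xi^2\lesssim b^{3/4}\xi$. Each lower mode satisfies $|\varepsilon_j\varphi_{j,b}|\lesssim Ab^{1/2+\eta}b^{-1/4}\xi=Ab^{1/4+\eta}\xi$, which is of the same form as the $\varepsilon_-$ bound in $\mathcal V_\ell$. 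Hence $|z|\lesssim_{A,\ell}b^\eta$, and the inner estimate $b^{1/4+\eta}\xi^{-1/3}$ follows as before.

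On $b^{3/4}\le\xi\le b^{-\eta}$, Proposition~\ref{diagonalisation-L-b} bounds the lower modes by $Ab^{1/2+\eta}\mathfrak f_\ell$ and the adapted part by $b^{1/2}\xi^{p_\ell-4/3}+b^{3/4}\mathfrak f_\ell$. Since $\xi^{p_\ell-4/3}\le\langle\xi\rangle^{-8/3}\xi^{-4/3}\mathfrak f_\ell$ up to constants, this gives $|z|\lesssim_{A,\ell}Z_\ell$. The envelope then yields $Z_\ell\lesssim b^{1/2-(p_\ell+2/3)\eta}+A^2b^{\eta}\lesssim b^\eta$. The restriction $\eta\le1/(32\ell)$ is precisely what keeps $b^{1/2}\xi^{4\ell/3-2/3}$ small up to $b^{-\eta}$. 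The far-field bound uses only $\vartheta\lesssim_A b^{-1/\gamma_\ell-\eta}$, $\vartheta\gtrsim b^{-\eta/2}$ and $|\mathcal F|\lesssim b^{-\eta/8}$. Converting powers of $b^{-1}$ into powers of $\xi\ge b^{-\eta}$ gives the bound $\xi^{2/\eta}$, since $1/\gamma_\ell+\eta\le1$.

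\emph{Finite-mode forcing.} I will expand $\mathcal R_{\ell,b}=-\partial_s\varepsilon_++\mathscr L_b\varepsilon_++(1-b'/b)\Lambda_\xi Q_b$. Using the eigenvalue relations, $\Lambda_\xi Q_b=\mathbf m_0b^{1/2}\varphi_{0,b}+\mathcal E_Q$ and $\psi_{\ell,b}=\varphi_{\ell,b}-\kappa_\ell\varphi_{0,b}$, the terms collect as
\[
\bigl(\mu_{j,b}\varepsilon_j-\varepsilon_j'\bigr)\varphi_{j,b},\qquad
\bigl(\mu_{\ell,b}\varepsilon_\ell-\varepsilon_\ell'\bigr)\varphi_{\ell,b},
\]
together with the coefficient of $\varphi_{0,b}$,
\[
-\kappa_\ell\mu_{0,b}\varepsilon_\ell+\kappa_\ell\varepsilon_\ell'+(1-b'/b)\mathbf m_0b^{1/2},
\]
and the moving-basis terms $\varepsilon_k\partial_s\widehat\varphi_k$. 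By \eqref{ODE-general-item-i}--\eqref{remainder-ODE-item-i} the first two coefficients are $O_\ell(b^{3/4})$. In the $\varphi_{0,b}$ coefficient, the second equation for $\varepsilon_\ell'$ makes the leading parts cancel exactly, leaving $\kappa_\ell\mathcal R_0=O_\ell(b^{3/4})$. These coefficients, times $|\varphi_{j,b}|\lesssim b^{-1/4}\xi$ near the wall, give $b^{1/2}\xi$.

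\emph{Main obstacle.} The term needing care is $\varepsilon_j\partial_s\varphi_{j,b}$ for $j<\ell$, because there the $T_0$ component does not cancel. We have $\partial_s\varphi_{j,b}=(b'/b)\,b\partial_b\varphi_{j,b}$, and $b\partial_b[b^{1/2}T_0(\xi/b^{3/4})]=b^{1/2}\Theta_0$ by \eqref{leading-mode-b-derivative}. Near the wall $\Theta_0=O(\zeta)$, giving $\varepsilon_j\,b\partial_b\varphi_{j,b}\lesssim Ab^{1/2+\eta}b^{-1/4}\xi$; this produces the term $Ab^{1/4+\eta}\xi$. In the outer region $\Theta_0=O(\zeta^{1/3})$, giving $Ab^{1/2+\eta}b^{1/4}\xi^{1/3}=Ab^{3/4+\eta}\xi^{1/3}$. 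The remainder derivatives $b\partial_b\widetilde\varphi_{j,b}$ from Proposition~\ref{diagonalisation-L-b} contribute $O(b^{3/4}\mathfrak f_\ell)$. The adapted direction contributes $\varepsilon_\ell\partial_s\psi_{\ell,b}$, and by the $T_0$ cancellation together with \eqref{leading-mode-b-derivative} this is $O(b^{1/2}\xi^2)\lesssim b^{3/4}\xi$ near the wall and $O(b^{3/4}\mathfrak f_\ell)$ outside. Finally $\mathcal E_Q=O(b^{3/4}\xi^{1/3})$ with $|1-b'/b|\lesssim\ell$, which is absorbed. Collecting these bounds gives \eqref{global-estimate-R-general-ell}.
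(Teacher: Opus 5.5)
Your overall route is the same as the paper's. You use the exact identity \eqref{exact-identity-B-one-stable} with the relative errors $|z|\lesssim_{A,\ell}b^\eta$ and $|z|\lesssim_{A,\ell}Z_\ell$, and the same far-field conversion into $\xi^{2/\eta}$. You split $\mathcal R_{\ell,b}$ the same way, into mode coefficients controlled by the projected equations (with the $\varphi_{0,b}$ coefficient cancelling via the second equation for $\varepsilon_\ell'$) plus moving-basis terms. The key observation also matches: the uncancelled $T_0$ part of $\partial_s\varphi_{j,b}$, $j<\ell$, produces the extra terms $Ab^{1/4+\eta}\xi$ and $Ab^{3/4+\eta}\xi^{1/3}$.

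One step is wrong as written, namely your treatment of $\mathcal E_Q$. The bound $b^{3/4}\xi^{1/3}$ is the error of $\Lambda_\xi Q_b$ relative to the monomial $\tfrac a8 b^{1/2}\xi^{2/3}$, which is what Section~\ref{section-Pointwise-Duhamel} uses. It is not the error relative to $\mathbf m_0 b^{1/2}\varphi_{0,b}$, which is what $\mathcal E_Q$ measures. Even granting it, the term is not absorbed by the envelope in \eqref{global-estimate-R-general-ell}:
\begin{itemize}
\item for $\xi\le b^{3/4}$, one has $b^{3/4}\xi^{1/3}/(Ab^{1/4+\eta}\xi)\ge A^{-1}b^{-\eta}\to\infty$;
\item at $\xi=b^{3/4}$, the term equals $b$, while the envelope is $O(b^{5/4}+Ab^{1+\eta})$.
\end{itemize}

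The fix is the exact identity $\mathcal E_Q=-\mathbf m_0\sqrt b\,\widetilde\varphi_{0,b}$. It holds because $\Lambda_\xi Q_b=b\Lambda Q(\zeta)$, $T_0=\cscale\Lambda Q$, and $\mathbf m_0c_{0,0}\cscale=1$ at $\beta=1$. Proposition~\ref{diagonalisation-L-b} then gives $|\mathcal E_Q|\lesssim b^{1/2}\xi^2\le b^{5/4}\xi$ for $\xi\le b^{3/4}$, and $|\mathcal E_Q|\lesssim b^{3/4}\xi^{2/3}\langle\xi\rangle^{4/3}\le b^{3/4}\mathfrak f_\ell$ beyond. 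After multiplication by $|1-b'/b|\lesssim_\ell1$, this lies inside the envelope. This is exactly how the paper handles it.

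There are also some smaller slips, none of which affects the conclusion:
\begin{itemize}
\item In the intermediate region, the comparison you need is $b^{1/2}\xi^{p_\ell-4/3}/Q_b\lesssim b^{1/2}\langle\xi\rangle^{-8/3}\mathfrak f_\ell$, that is, $\xi^{p_\ell-4/3}\lesssim\xi^{4/3}\langle\xi\rangle^{-8/3}\mathfrak f_\ell$. The version you wrote, with $\xi^{-4/3}$, is false for large $\xi$.
\item For $\xi\le1$, the adapted part is of size $b^{1/2}\xi^2$, not $b^{1/2}\xi^{p_\ell-4/3}$.
\item The $T_k$ components of $\partial_s\varphi_{j,b}$, $1\le k\le j<\ell$, should be listed. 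They contribute $b^{k+1/2}\Theta_k(\zeta)$ and are handled exactly like $\partial_s\psi_{\ell,b}$.
\end{itemize}
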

		
		\begin{proof}
			The identity \eqref{exact-identity-B-one-stable} is unchanged.
			The mode bounds \eqref{compatibility-condi-epsilon-ell} and $\mathcal V_\ell$ give
			$|\varepsilon/Q_b|\lesssim_{A,\ell}b^\eta$ for $\xi\le b^{3/4}$ and
			$|\varepsilon/Q_b|\lesssim_{A,\ell}Z_\ell$ for $b^{3/4}\le\xi\le b^{-\eta}$.
			By \eqref{definition-Z-general-ell}, $Z_\ell\lesssim_{A,\ell}b^\eta$ on
			$[b^{3/4},1]$ and $Z_\ell\lesssim_{A,\ell}b^{1/2-(4\ell-1)\eta/3}=o(1)$ on
			$[1,b^{-\eta}]$, since $\eta\le\eta_{19}(A,\ell)\le1/(32\ell)$.
			Increasing $s_{19}$ makes these relative errors at most $1/2$.
			The inner and intermediate arguments above apply; Definition~\ref{defi-shinkingset-V-ell} gives the same far-field residual and lower bounds, and the stronger upper bound \eqref{far-field-vartheta-V-ell} for $\vartheta$.
			
			For the finite-dimensional term, set $\kappa_\ell=c_{\ell,0}/c_{0,0}$ and $\alpha=1-b'/b$. The projection argument of Section~\ref{subsection-proof-item-i-ell}
			applies for this choice of $\eta_{19}$ after increasing $s_{19}$.
			Thus \eqref{dynamics-b-item-i} gives $\alpha=2\ell+O_{A,\ell}(b^{2\eta})$. The expansion of $\Lambda_\xi Q_b$ in \eqref{linear-expansion-epsilon-plus-final} holds with $\beta=1$, and $\mathcal E_Q$ obeys the right-hand side of \eqref{global-estimate-R-general-ell}. The pointwise estimates used for $0\le j\le\ell$ are
			\begin{align*}
				b^{1/4}|\varphi_{j,b}|+|\partial_s\psi_{\ell,b}|
				&\lesssim_\ell\xi\mathbf1_{\{\xi\le b^{3/4}\}}+b^{1/4}\mathfrak f_\ell\mathbf1_{\{\xi\ge b^{3/4}\}},\\*
				|\partial_s\varphi_{j,b}|&\lesssim_\ell b^{-1/4}\xi\mathbf1_{\{\xi\le b^{3/4}\}}+b^{1/4}(\xi^{1/3}+\mathfrak f_\ell)\mathbf1_{\{\xi\ge b^{3/4}\}}.
			\end{align*}
			Differentiating \eqref{definition-epsilon-plus-general-ell} gives
			\begin{equation*}
				\begin{aligned}
					\mathcal R_{\ell,b}(\varepsilon_+)
					&={}\sum_{j=1}^{\ell}(-\varepsilon_j'+\mu_{j,b}\varepsilon_j)\varphi_{j,b}
					+[\kappa_\ell(\varepsilon_\ell'-\mu_{0,b}\varepsilon_\ell)+\alpha\mathbf m_0b^{1/2}]\varphi_{0,b}\\
					&\quad{}-\varepsilon_\ell\partial_s\psi_{\ell,b}-\sum_{j=1}^{\ell-1}\varepsilon_j\partial_s\varphi_{j,b}+\alpha\mathcal E_Q.
				\end{aligned}
			\end{equation*}
			By \eqref{equation-b-item-i} and \eqref{dynamics-modes-item-i},
			\begin{equation*}
				\max_{1\le j\le\ell}|-\varepsilon_j'+\mu_{j,b}\varepsilon_j|
				+|\kappa_\ell(\varepsilon_\ell'-\mu_{0,b}\varepsilon_\ell)+\alpha\mathbf m_0b^{1/2}|
				\lesssim_{A,\ell}b^{3/4}.
			\end{equation*}
			The $T_0$ term in $\partial_s\varphi_{j,b}$ gives the additional inner factor $b^{-1/4}$ and outer power $\xi^{1/3}$; it cancels only in the adapted direction. Combining $|\varepsilon_\ell|\lesssim_\ell b^{1/2}$ and $|\varepsilon_j|\le Ab^{1/2+\eta}$ for $j<\ell$ with the preceding bounds proves \eqref{global-estimate-R-general-ell}. In particular, $|\mathcal R_{\ell,b}|\lesssim_{A,\ell}b^{3/4}\mathfrak f_\ell$ for $\xi\ge1$, which is the bound used in the outer comparison.
		\end{proof}
		
		\section{Spectral analysis of the limiting operator and its semigroup}\label{section-spectral-L_0-beta}
		We study $\mathscr L_{0,\beta}$ from \eqref{defi-L_0_beta} in $L^2_{\rho_{0,\beta}}$, with the density and constant $a$ in \eqref{defi-rho-infty} and \eqref{notation-constant-a}. The change of variables $z=a\xi^{2/3}$ reduces the operator to a Laguerre-type Sturm--Liouville problem. Since $a^3=9/2$, we have $(a\xi^{2/3}\rho_{0,\beta})'=-\frac{3\beta}{4}\xi\rho_{0,\beta}$, hence
		\begin{equation*}
			\mathscr L_{0,\beta}q=\rho_{0,\beta}^{-1}(a\xi^{2/3}\rho_{0,\beta}q')'
			+\bigl(a^{-2}\xi^{-4/3}+\beta\bigr)q.
		\end{equation*}
		For $z=a\xi^{2/3}$ and $q(\xi)=\widetilde q(z)$,
		\begin{equation}\label{transf-xi-z}
			\partial_\xi q=\sqrt{2/z}\,\partial_z\widetilde q,\qquad
			\partial_\xi^2q=2z^{-1}\partial_z^2\widetilde q-z^{-2}\partial_z\widetilde q.
		\end{equation}
		After normalization, this change of variables gives the unitary equivalence $\mathscr L_{0,\beta}=\mathscr U(2\mathcal L+\beta)\mathscr U^{-1}$, where $\mathcal L$ acts in $L^2_{\widetilde\rho_\beta}$ and
		\begin{equation*}
			\mathcal L=\partial_z^2-\left(\frac1{2z}+\frac{\beta z}4\right)\partial_z+\frac1{2z^2},\qquad
			\widetilde\rho_\beta(z)=z^{-1/2}e^{-\beta z^2/8}.
		\end{equation*}
		
		\begin{lemma}\label{lemma-spectrum-L-z}
			For $\beta>0$, the Friedrichs realization $\mathcal A$ of $-\mathcal L$, initially defined on $C_c^\infty(0,\infty)$, has compact resolvent, simple eigenvalues $\lambda_n=\beta/4+\beta n/2$, $n\ge0$, and the orthonormal eigenbasis
			\begin{equation*}
				\varphi_n(z)=\widetilde{\mathcal N}_n z L_n^{(1/4)}(\beta z^2/8),\qquad
				\widetilde{\mathcal N}_n=\sqrt2(\beta/8)^{5/8}\sqrt{n!/\Gamma(n+5/4)}.
			\end{equation*}
			Here $L_n^{(1/4)}$ is the generalized Laguerre polynomial of degree $n$.
		\end{lemma}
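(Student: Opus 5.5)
The plan is to reduce $\mathcal L$ by a ground-state substitution to the classical Laguerre operator, and then to read off the spectrum from the known orthogonal-polynomial theory.

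\emph{Conjugation.} We write $f(z)=z\,g(w)$ with $w=\beta z^2/8$. First we compute $\mathcal L(zg)$. The $z^{-2}$ terms cancel: $\partial_z^2 z=0$, the drift contributes $-\frac1{2z}\cdot 1$, the potential contributes $\frac1{2z}$, and the term $-\frac{\beta z}{4}$ comes from the drift. This leaves
\begin{equation*}
\mathcal L(zg)=z\Bigl[g_{zz}+\bigl(\tfrac{3}{2z}-\tfrac{\beta z}{4}\bigr)g_z\Bigr]-\tfrac{\beta}{4}zg .
\end{equation*}
Next we change to $w$, using $g_z=\frac{\beta z}{4}g_w$ and $g_{zz}=\frac\beta4 g_w+\frac{\beta^2z^2}{16}g_{ww}$. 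The bracket then becomes $\frac{\beta}{2}\bigl[wg_{ww}+(\frac54-w)g_w\bigr]$, which is the Laguerre operator with parameter $\alpha=1/4$. Its polynomial eigenfunctions satisfy $wL''+(\alpha+1-w)L'=-nL$. Hence $-\mathcal L\varphi_n=(\beta/4+\beta n/2)\varphi_n$ for $\varphi_n=zL_n^{(1/4)}(\beta z^2/8)$.

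\emph{Orthonormality and completeness.} The substitution $w=\beta z^2/8$ gives $z^2\widetilde\rho_\beta\,dz=z^{3/2}e^{-w}dz$. Since $dz=\frac{4}{\beta z}dw$ and $z^{1/2}=(8w/\beta)^{1/4}$, we get
\begin{equation*}
\int\varphi_n\varphi_m\widetilde\rho_\beta\,dz=\widetilde{\mathcal N}_n\widetilde{\mathcal N}_m\tfrac4\beta(8/\beta)^{1/4}\int_0^\infty L_nL_m\,w^{1/4}e^{-w}dw .
\end{equation*}
The standard Laguerre norm $\Gamma(n+5/4)/n!$ then fixes $\widetilde{\mathcal N}_n$ as stated. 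For completeness, the map $f\mapsto f/z$ is unitary from $L^2_{\widetilde\rho_\beta}$ onto $L^2(w^{1/4}e^{-w}dw)$, up to a constant, and Laguerre polynomials are complete in that space. The $\varphi_n$ therefore form an orthonormal basis.

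\emph{Identification with the Friedrichs realization.} This is the step that needs care. We must show that each $\varphi_n$ lies in the form domain $\overline{C_c^\infty}$ and satisfies the weak eigen-identity. The second solution near $0$ behaves like $z^{1/2}$ or $z^{-1}$ (the indicial roots of $r(r-1)-r/2+1/2=0$ are $1$ and $1/2$). Since $\varphi_n=O(z)$ and $\varphi_n'=O(1)$, cutoffs at $z\sim r$ cost $\int_0^{2r}r^{-2}z^2 z^{-1/2}\,dz=O(r^{1/2})\to0$ in the weighted Dirichlet norm. At infinity, the polynomial growth against the Gaussian weight makes the cutoff error vanish. Integrating by parts against compactly supported tests and passing to the limit by density puts $\varphi_n$ in $D(\mathcal A)$ with the stated eigenvalue.

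\emph{Conclusion.} The $\varphi_n$ are eigenfunctions of the self-adjoint operator $\mathcal A$ and form a complete orthonormal set. Hence $\mathcal A$ is diagonal in this basis, its spectrum is exactly $\{\lambda_n\}$ with each eigenvalue simple, and since $\lambda_n\to\infty$ the resolvent is compact.
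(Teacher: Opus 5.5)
Your proposal is correct and follows essentially the same route as the paper. Both proofs conjugate by the ground state $z$ to the Laguerre operator with parameter $1/4$, take normalization and completeness from classical Laguerre theory, place each $\varphi_n$ in the form domain via an $O(r^{1/2})$ cutoff cost at the wall and Gaussian decay at infinity, and then diagonalize the self-adjoint $\mathcal A$ in the complete eigenbasis.

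The one step you leave implicit is that $-\mathcal L$ is symmetric and bounded below on $C_c^\infty(0,\infty)$, which is needed before the Friedrichs extension even exists. The paper records this as $\langle(-\mathcal L-\beta/4)f,g\rangle_{L^2_{\widetilde\rho_\beta}}=\int_0^\infty z^2(f/z)'(g/z)'\widetilde\rho_\beta\,dz$, which your formula for $\mathcal L(zg)$ gives after one integration by parts. Separately, the parenthetical $z^{-1}$ is a slip: the indicial roots $1$ and $\tfrac12$ give only a second solution $\sim z^{1/2}$. This plays no role in your argument.
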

		\begin{proof}
			For $f,g\in C_c^\infty(0,\infty)$, integration by parts gives
			\begin{equation*}
				\bigl\langle(-\mathcal L-\beta/4)f,g\bigr\rangle_{L^2_{\widetilde\rho_\beta}}
				=\int_0^\infty z^2(f/z)'(g/z)'\widetilde\rho_\beta\,dz.
			\end{equation*}
			Thus $-\mathcal L$ is symmetric and bounded below by $\beta/4$, so its Friedrichs realization is self-adjoint. Setting $\zeta=\beta z^2/8$ and $\varphi(z)=z g(\zeta)$, we compute
			\begin{equation*}
				-\mathcal L\varphi=\frac{\beta z}{2}\left[-\zeta g''-(5/4-\zeta)g'+\frac12g\right].
			\end{equation*}
			The Laguerre equation $\zeta g''+(5/4-\zeta)g'+ng=0$ therefore gives the stated eigenfunctions and positive eigenvalues of $\mathcal A$. These functions belong to the closed form domain: $\varphi_n/z$ is smooth at zero, where $z^2\widetilde\rho_\beta\sim z^{3/2}$ makes the cutoff cost vanish, and Gaussian decay controls their polynomial tails. Integration by parts against compactly supported tests, followed by form-domain density, then places $\varphi_n$ in $D(\mathcal A)$ with $\mathcal A\varphi_n=\lambda_n\varphi_n$.
			
			Laguerre orthogonality and the same change of variables give
			\begin{equation*}
				\|\varphi_n\|_{L^2_{\widetilde\rho_\beta}}^2
				=\frac{\widetilde{\mathcal N}_n^2}{2}(8/\beta)^{5/4}\frac{\Gamma(n+5/4)}{n!}=1.
			\end{equation*}
			Completeness of the Laguerre polynomials in $L^2(\zeta^{1/4}e^{-\zeta}\,d\zeta)$ transfers to $(\varphi_n)$ under this change of variables and multiplication by $z$. Hence the displayed eigenvalues exhaust the spectrum; they are distinct and tend to infinity, proving simplicity and compactness of the resolvent.
		\end{proof}
		
		We transfer this spectral description to the original variable.
		\begin{lemma}\label{lemma-spectrum-L0-beta-xi}
			For each $\beta>0$, let $\mathcal A$ be the Friedrichs realization of $-\mathcal L$ in Lemma~\ref{lemma-spectrum-L-z}. The self-adjoint realization
			$\mathscr L_{0,\beta}=\mathscr U(\beta-2\mathcal A)\mathscr U^{-1}$, with domain $\mathscr U D(\mathcal A)$, has simple eigenvalues $\lambda_{\beta,n}=\beta(\frac12-n)$, $n\ge0$. The functions $\phi_{\beta,n}$ in \eqref{defi-phi-beta-i}, normalized by \eqref{defi-N-beta-i}, form an orthonormal basis of $L^2_{\rho_{0,\beta}}$.
		\end{lemma}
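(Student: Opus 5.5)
The plan is to transfer Lemma~\ref{lemma-spectrum-L-z} through the change of variables $z=a\xi^{2/3}$. First we make the map $\mathscr U$ explicit and check that it is unitary from $L^2_{\widetilde\rho_\beta}$ onto $L^2_{\rho_{0,\beta}}$. With $z=a\xi^{2/3}$ we have $dz=\tfrac23a\xi^{-1/3}d\xi$, and
\[
\widetilde\rho_\beta(z)\,dz=a^{-1/2}\xi^{-1/3}e^{-\beta a^2\xi^{4/3}/8}\cdot\tfrac23a\xi^{-1/3}\,d\xi=\tfrac23a^{1/2}\rho_{0,\beta}(\xi)\,d\xi .
\]
So $\mathscr U g(\xi):=c\,g(a\xi^{2/3})$ with $c=(\tfrac23)^{1/2}a^{1/4}$ is an isometry. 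It is onto because the substitution is a diffeomorphism of $(0,\infty)$. It also maps $C_c^\infty(0,\infty)$ onto itself.

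Next we verify the operator identity on test functions. Using \eqref{transf-xi-z}, the divergence form of $\mathscr L_{0,\beta}$, and $a^3=9/2$, we get
\[
\mathscr L_{0,\beta}\,\mathscr U g=\mathscr U\bigl(2\mathcal L g+\beta g\bigr),\qquad g\in C_c^\infty(0,\infty).
\]
The coefficients match as follows. The term $a\xi^{2/3}\partial_\xi^2$ gives $2\partial_z^2-z^{-1}\partial_z$. The drift $-\tfrac34\beta\xi\partial_\xi$ gives $-\tfrac{\beta z}{2}\partial_z$. The potential $a^{-2}\xi^{-4/3}=z^{-2}$. The term $\beta\,\mathrm{Id}$ comes from $\Lambda_\xi$. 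The quadratic forms then correspond under $\mathscr U$, so their closures correspond as well. Hence $\mathscr U$ maps the form domain of $\mathcal A$ onto the closure of $C_c^\infty$ in the form norm of $-\mathscr L_{0,\beta}$, and the Friedrichs realizations are unitarily equivalent. This justifies the definition $\mathscr L_{0,\beta}=\mathscr U(\beta-2\mathcal A)\mathscr U^{-1}$ as the Friedrichs realization, with domain $\mathscr U D(\mathcal A)$.

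The spectral statement now follows directly. Since $\mathscr L_{0,\beta}$ is unitarily equivalent to $\beta-2\mathcal A$, its eigenvalues are
\[
\beta-2\lambda_n=\beta-\tfrac{\beta}{2}-\beta n=\beta\bigl(\tfrac12-n\bigr).
\]
They are simple, and the eigenfunctions $\mathscr U\varphi_n$ form an orthonormal basis. The remaining step, which is mostly bookkeeping, is to identify $\mathscr U\varphi_n$ with $\phi_{\beta,n}$. We compute
\[
\mathscr U\varphi_n(\xi)=c\,\widetilde{\mathcal N}_n\,a\xi^{2/3}L_n^{(1/4)}\bigl(\beta a^2\xi^{4/3}/8\bigr),
\]
and check that
\[
c\,\widetilde{\mathcal N}_n=(\tfrac23)^{1/2}a^{1/4}\sqrt2(\beta/8)^{5/8}\sqrt{n!/\Gamma(n+5/4)},
\]
which equals $\tfrac{2}{\sqrt3}a^{1/4}(\beta/8)^{5/8}\sqrt{n!/\Gamma(n+5/4)}=\mathcal N_n$ as given in \eqref{defi-N-beta-i}.

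The main care point is the form-domain correspondence at the wall. We need the cutoff approximations near $\xi=0$ to match those near $z=0$. This holds because $\mathscr U$ preserves $C_c^\infty$ and is an isometry for both the $L^2$ norm and the form norm.
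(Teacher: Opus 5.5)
Your proposal is correct and follows the paper's proof: the same unitary map $\mathscr U g(\xi)=(2\sqrt a/3)^{1/2}g(a\xi^{2/3})$ coming from the measure identity, the conjugation $\mathscr L_{0,\beta}\mathscr U=\mathscr U(2\mathcal L+\beta)$ via \eqref{transf-xi-z} and $a^3=9/2$, transfer of Lemma~\ref{lemma-spectrum-L-z}, and the check $c\,\widetilde{\mathcal N}_n=\mathcal N_n$. Your extra identification of this realization with the Friedrichs extension is a detail the paper leaves implicit; there $\mathscr U$ is not literally an isometry of form norms, since they differ by the factor $2$ and the shift $\beta$, but norm equivalence is all that is needed.
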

		\begin{proof}
			With $z=a\xi^{2/3}$, the measures satisfy $\rho_{0,\beta}(\xi)\,d\xi=\frac{3}{2\sqrt a}\widetilde\rho_\beta(z)\,dz$. Thus $(\mathscr U h)(\xi)=(2\sqrt a/3)^{1/2}h(a\xi^{2/3})$ is unitary, and \eqref{transf-xi-z} gives the stated conjugation. Lemma~\ref{lemma-spectrum-L-z} then yields the domain, eigenvalues and complete orthonormal family; applying $\mathscr U$ to its eigenfunctions gives exactly \eqref{defi-phi-beta-i}--\eqref{defi-N-beta-i}.
		\end{proof}
		
		\begin{remark}
			Since $a^2/8=9/(16a)$, the Laguerre expansion gives
			\begin{equation*}
				\phi_{\beta,i}(\xi)=\sum_{j=0}^i\beta^{5/8+j}c_{i,j}\mathsf C_j\xi^{2/3+4j/3},\qquad
				c_{i,j+1}=-(i-j)c_{i,j}\quad(0\le j<i),
			\end{equation*}
			with $c_{i,j}$ and $\mathsf C_j$ from \eqref{defi-c-i-j} and \eqref{defi-C_i}. In particular,
			\begin{alignat*}{2}
				\phi_{\beta,i}(\xi)&=\beta^{5/8}c_{i,0}\mathsf C_0\xi^{2/3}\bigl(1+O_i(\beta\xi^{4/3})\bigr)&\qquad & (\xi\to0),\\*
				\phi_{\beta,i}(\xi)&=\beta^{5/8+i}c_{i,i}\mathsf C_i\xi^{2/3+4i/3}\bigl(1+O_i(\beta^{-1}\xi^{-4/3})\bigr)&\qquad & (\xi\to\infty).
			\end{alignat*}
			Consequently, $|\phi_{\beta,i}(\xi)|\lesssim_{i,\beta}\xi^{2/3}\langle\xi\rangle^{4i/3}$, as needed for fixed spectral modes.
		\end{remark}
		
		We now estimate $S_\beta(t)$ from \eqref{defi-S-beta-0-t} directly for constant $\beta>0$. Set $r=e^{-\beta t}$, $\kappa=\beta a^2/8$ and $\omega=2\kappa\sqrt r\,(\xi\zeta)^{2/3}/(1-r)$. The Hille--Hardy formula sums the spectral series $\sum_{n\ge0}e^{\beta(1/2-n)t}\phi_{\beta,n}(\xi)\phi_{\beta,n}(\zeta)$ to give the positive kernel
		\begin{equation}\label{heat-kernel-L0-2}
			G_\beta(t,\xi,\zeta)
			=\frac{\beta a^2e^{\beta t/2}}{6(1-r)r^{1/8}}(\xi\zeta)^{1/2}
			e^{-\kappa r(\xi^{4/3}+\zeta^{4/3})/(1-r)}I_{1/4}(\omega).
		\end{equation}
		Thus $S_\beta(t)f(\xi)=\int_0^\infty G_\beta(t,\xi,\zeta)f(\zeta)\rho_{0,\beta}(\zeta)\,d\zeta$, with the density in \eqref{defi-rho-infty}.
		
		\begin{lemma}\label{lem-simigroup-L-infty-beta=2}
			Fix $\beta>0$ and $k\ge0$, and let $f$ be measurable with $|f(\xi)|\le A_0\xi^{2/3}\langle\xi\rangle^k$. Then, for $t>0$ and $\xi>0$,
			\begin{subequations}\label{semi-group-beta-0}
				\begin{align}
					|S_\beta(t)f(\xi)|&\lesssim_{k,\beta}A_0e^{\beta t/2}\xi^{2/3}\langle\xi\rangle^k,\label{esti-Semi-f-}\\*
					|S_\beta(t)\Lambda_\xi f(\xi)|&\lesssim_{k,\beta}\frac{A_0e^{\beta t/2}}{\sqrt{1-e^{-\beta t}}}\xi^{2/3}\langle\xi\rangle^{k+2/3}.\label{Semi-hit-Derivative}
				\end{align}
			\end{subequations}
			Here $\Lambda_\xi$ is defined in \eqref{Lambda:op}; for nonsmooth $f$, its action is interpreted distributionally through integration by parts against the kernel. The constants are uniform when $\beta$ ranges over a compact subset of $(0,\infty)$.
		\end{lemma}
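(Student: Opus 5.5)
The plan is to work directly with the explicit kernel \eqref{heat-kernel-L0-2}. Write $S_\beta(t)f(\xi)=\int_0^\infty G_\beta(t,\xi,\zeta)f(\zeta)\rho_{0,\beta}(\zeta)\,d\zeta$ and pass to the variables $x=\kappa\xi^{4/3}$, $y=\kappa\zeta^{4/3}$. Then $\rho_{0,\beta}\,d\zeta$ becomes a constant multiple of $y^{-1/4}e^{-y}\,dy$, and the weights $\xi^{2/3}$, $\langle\xi\rangle^k$ become $x^{1/2}$ and $\langle x\rangle^{3k/4}$. The kernel is then the standard Laguerre (Hille--Hardy) kernel of index $1/4$, multiplied by the factor $e^{\beta t/2}$ and by the extra factor $(\xi\zeta)^{1/2}$, which accounts for the $z$ in $\varphi_n=zL_n^{(1/4)}$.

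For \eqref{esti-Semi-f-}, the key observation is that $\phi_{\beta,0}\propto\xi^{2/3}$ is an exact eigenfunction with eigenvalue $\beta/2$. Positivity of $G_\beta$ then gives
\[
S_\beta(t)\xi^{2/3}=e^{\beta t/2}\xi^{2/3}.
\]
So it suffices to show $S_\beta(t)\bigl(\xi^{2/3}\langle\xi\rangle^k\bigr)\lesssim e^{\beta t/2}\xi^{2/3}\langle\xi\rangle^k$. For $k=4m/3$ with $m$ an integer, $\xi^{2/3}\langle\xi\rangle^{k}$ is dominated by a positive combination of $\xi^{2/3}$ and $\xi^{2/3+4m/3}$. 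That latter monomial lies in $\operatorname{span}\{\phi_{\beta,0},\dots,\phi_{\beta,m}\}$, and its image is $\sum_n e^{\beta(1/2-n)t}c_n\phi_{\beta,n}$. Each $\phi_{\beta,n}$ is bounded by $\xi^{2/3}\langle\xi\rangle^{4n/3}$ and $e^{-n\beta t}\le1$, which proves the claim for integer $m$. General $k$ follows by monotonicity in $k$, since $S_\beta$ has a positive kernel. I will state this via the Laguerre expansion of $x^{m}$ for completeness.

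For \eqref{Semi-hit-Derivative}, I integrate by parts against the kernel. Since $\Lambda_\zeta f=f-\frac34\zeta\partial_\zeta f$,
\[
\int G f_\Lambda\rho=\int f\Bigl[G\rho+\tfrac34\partial_\zeta(\zeta G\rho)\Bigr]d\zeta,
\]
with boundary terms vanishing from $f=O(\zeta^{2/3})$, the behaviour $G\rho\sim\zeta^{1/2-2/3}$ near zero, and Gaussian decay at infinity. The terms in which the derivative hits $\zeta$ or $\rho$ produce factors $1$ and $\zeta^{4/3}$. These give a contribution bounded by $S_\beta(t)(|f|\langle\zeta\rangle^{4/3})$, which is controlled by the first part with $k$ replaced by $k+4/3$; this may be more than needed, and I will refine it below. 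The genuinely new term is $\zeta\partial_\zeta G$. Using $I_{1/4}'(\omega)=I_{1/4}(\omega)(1+O(\omega^{-1}))$ for large $\omega$, and $\zeta\partial_\zeta\omega\sim\omega$ bounded for small $\omega$, this yields
\[
|\zeta\partial_\zeta G|\lesssim G\Bigl(1+\tfrac{r\zeta^{4/3}}{1-r}+\omega\Bigr).
\]
The dangerous part is $\omega\sim\sqrt r(\xi\zeta)^{2/3}/(1-r)$ combined with the Gaussian factor. I plan to pair it with the exact cancellation in the exponent, $-\kappa(\sqrt r\,\zeta^{2/3}-\xi^{2/3})^2/(1-r)$, which is the large-$\omega$ form of $e^{-\kappa r(\xi^{4/3}+\zeta^{4/3})/(1-r)}e^{\omega}$. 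This writes the weight as $\frac{\sqrt r\zeta^{2/3}(\sqrt r\zeta^{2/3}-\xi^{2/3})}{1-r}$ plus $O(\xi^{2/3}\zeta^{2/3}\sqrt r/(1-r))$-type terms. A Gaussian moment of $|u|/(1-r)$ with variance $1-r$ gives $(1-r)^{-1/2}$, and the leftover $\zeta^{2/3}$ contributes the $\langle\xi\rangle^{2/3}$ loss. The small-$\omega$ regime is handled by the power-series bound $I_{1/4}(\omega)\asymp\omega^{1/4}$.

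The main obstacle is exactly this Gaussian-moment estimate in the large-$\omega$ region. The cancellation between $\partial_\zeta$ of the exponential and of $I_{1/4}$ must be exploited explicitly, rather than bounding the two pieces separately, to get $(1-r)^{-1/2}$ instead of $(1-r)^{-1}$. Uniformity in $\beta$ over compact sets is immediate because every constant depends continuously on $\beta$ through $\kappa$ and $r$.
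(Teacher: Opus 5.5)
\paragraph{The bound without $\Lambda_\xi$.} Your route to \eqref{esti-Semi-f-} differs from the paper's, which integrates the Bessel-asymptotic bounds on $G_\beta\rho_{0,\beta}$ directly against $\zeta^{2/3}\langle\zeta\rangle^k$. Positivity of $G_\beta$, the identity $S_\beta(t)\xi^{2/3}=e^{\beta t/2}\xi^{2/3}$, and the finite Laguerre expansion of $\xi^{2/3+4m/3}$ do prove the bound for $k\in\frac43\mathbb N$, and they avoid Bessel asymptotics entirely. However, ``monotonicity in $k$'' does not handle general $k$. It only gives the weight $\langle\xi\rangle^{4\lceil 3k/4\rceil/3}$, which is weaker than $\langle\xi\rangle^k$. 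Use log-convexity instead. For $k=(1-\theta)k_0+\theta k_1$ with $k_0,k_1\in\frac43\mathbb N$, H\"older's inequality for the positive measure $G_\beta\rho_{0,\beta}\,d\zeta$ gives $S_\beta(t)(\xi^{2/3}\langle\xi\rangle^{k})\le[S_\beta(t)(\xi^{2/3}\langle\xi\rangle^{k_0})]^{1-\theta}[S_\beta(t)(\xi^{2/3}\langle\xi\rangle^{k_1})]^{\theta}$, which is the sharp weight.

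\paragraph{The gap in \eqref{Semi-hit-Derivative}.} You bound the term where $\partial_\zeta$ falls on $\rho_{0,\beta}$ in absolute value. You then look for cancellation only inside $\zeta\partial_\zeta G_\beta$, between $e^{-\kappa r\zeta^{4/3}/(1-r)}$ and $e^{\omega}$. This cannot reach $\langle\xi\rangle^{k+2/3}$. Your exponent identity omits the $\zeta$-independent factor $e^{\kappa\xi^{4/3}}$. Once the Gaussian factor of $\rho_{0,\beta}$ is included, the correct identity is
\[
-\frac{\kappa r(\xi^{4/3}+\zeta^{4/3})}{1-r}+\omega-\kappa\zeta^{4/3}=-\frac{\kappa(\zeta^{2/3}-\sqrt r\,\xi^{2/3})^2}{1-r}.
\]
So $G_\beta\rho_{0,\beta}\,d\zeta$ concentrates at $\zeta^{2/3}=\sqrt r\,\xi^{2/3}+O(\sqrt{1-r})$. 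There your variable satisfies $u=\sqrt r\zeta^{2/3}-\xi^{2/3}\approx-(1-r)\xi^{2/3}$. Hence your weight $\sqrt r\zeta^{2/3}u/(1-r)$ has size $r\xi^{4/3}$, not $(1-r)^{-1/2}\xi^{2/3}$: the Gaussian moment is taken about the wrong center. That $\xi^{4/3}$ is cancelled only by the $-\frac43\kappa\zeta^{4/3}$ from $\zeta\partial_\zeta\log\rho_{0,\beta}$, which you have already estimated in absolute value.

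The test case $f=\xi^{2/3}$ shows the loss is real. Here $\Lambda_\xi f=\frac12 f$, so $S_\beta(t)\Lambda_\xi f=\frac12e^{\beta t/2}\xi^{2/3}$. But the $\rho$-piece alone equals $-\frac12S_\beta(t)f-\kappa S_\beta(t)(\zeta^{4/3}f)=-e^{\beta t/2}\xi^{2/3}\bigl(\tfrac12+\tfrac54(1-r)+r\kappa\xi^{4/3}\bigr)$. For $t\sim1$ and $\xi\to\infty$ this exceeds the target $e^{\beta t/2}(1-r)^{-1/2}\xi^{2/3}\langle\xi\rangle^{2/3}$. As written, your scheme gives at best $\langle\xi\rangle^{k+4/3}$.

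\paragraph{The fix.} Keep all contributions together, as the paper does. Using $I_{1/4}'=I_{5/4}+I_{1/4}/(4\omega)$, the integrated-by-parts kernel is exactly $H_\beta=G_\beta\bigl(\frac74+\frac\omega2\frac{I_{5/4}(\omega)}{I_{1/4}(\omega)}-\frac{\kappa\zeta^{4/3}}{1-r}\bigr)$. For $\omega\ge1$,
\[
\frac\omega2-\frac{\kappa\zeta^{4/3}}{1-r}=-\frac{\kappa\zeta^{2/3}y}{\sqrt{1-r}},\qquad y=\frac{\zeta^{2/3}-\sqrt r\,\xi^{2/3}}{\sqrt{1-r}},
\]
so $|H_\beta|\lesssim G_\beta\bigl(1+y^2+(1-r)^{-1/2}\sqrt r\,\xi^{2/3}|y|\bigr)$ in that region. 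For $\omega\le1$ one has $|H_\beta|\lesssim G_\beta\bigl(1+\zeta^{4/3}/(1-r)\bigr)$. Gaussian moments in $y$, together with the rescaling $\zeta=(1-r)^{3/4}q$ in the small-$\omega$ region, then produce the factor $1+(1-r)^{-1/2}\xi^{2/3}$, which gives \eqref{Semi-hit-Derivative}.
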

		\begin{proof}
			We use the kernel argument of \cite[Lemma~C1]{BDGJDE2025}, keeping $\beta$ arbitrary. For $\nu=1/4,5/4$, the standard Bessel estimates \cite{Watson1922} are
			\begin{equation}\label{beha-I-alpha-z-large}
				I_\nu(\omega)\asymp\omega^\nu\quad(0<\omega\le1),\qquad
				I_\nu(\omega)=\frac{e^\omega}{\sqrt{2\pi\omega}}\bigl(1+O(\omega^{-1})\bigr)\quad(\omega\ge1).
			\end{equation}
			Write $y=(\zeta^{2/3}-\sqrt r\,\xi^{2/3})/\sqrt{1-r}$. Combining \eqref{heat-kernel-L0-2} with \eqref{beha-I-alpha-z-large} gives
			\begin{equation*}
				G_\beta\rho_{0,\beta}(\zeta)\lesssim_\beta e^{\beta t/2}
				\begin{cases}
					(1-r)^{-5/4}\xi^{2/3}e^{-\kappa(r\xi^{4/3}+\zeta^{4/3})/(1-r)},&\omega\le1,\\
					(1-r)^{-1/2}r^{-3/8}\xi^{1/6}\zeta^{-1/2}e^{-\kappa y^2},&\omega\ge1.
				\end{cases}
			\end{equation*}
			In the first region, the substitution $\zeta=(1-r)^{3/4}q$ and $0<1-r<1$ yield
			\begin{equation}\label{small-argument-kernel-moment}
				\int_{\omega\le1}G_\beta\zeta^{2/3}\langle\zeta\rangle^k
				\left(1+\frac{\zeta^{4/3}}{1-r}\right)\rho_{0,\beta}(\zeta)\,d\zeta
				\lesssim_{k,\beta}e^{\beta t/2}\xi^{2/3}.
			\end{equation}
			In the second region, put $X=\sqrt{1-r}/(\sqrt r\,\xi^{2/3})$. Then $y\ge X/(2\kappa)-1/X$ and $\langle\zeta\rangle^k\lesssim_k\langle\xi\rangle^k\langle y\rangle^{3k/2}$. Changing variables to $y$ gives, for $m=0,1,2$,
			\begin{equation}\label{large-argument-kernel-moments}
				\frac{\int_{\omega\ge1}G_\beta\zeta^{2/3}\langle\zeta\rangle^k\langle y\rangle^m\rho_{0,\beta}(\zeta)\,d\zeta}
				{e^{\beta t/2}\xi^{2/3}\langle\xi\rangle^k}
				\lesssim_{k,\beta}1+X^{3/4}\int_{X/(2\kappa)-1/X}^\infty\langle y\rangle^{3k/2+3/4+m}e^{-\kappa y^2}\,dy
				\lesssim_{k,\beta}1.
			\end{equation}
			Indeed, the right-hand side is bounded for bounded $X$; for $X\ge\max(1,2\sqrt\kappa)$, its lower integration limit is at least $X/(4\kappa)$, so Gaussian decay absorbs $X^{3/4}$. The case $m=0$ and \eqref{small-argument-kernel-moment} prove \eqref{esti-Semi-f-}.
			
			For \eqref{Semi-hit-Derivative}, first take $f\in C_c^\infty(0,\infty)$. Integration by parts and $I_{1/4}'=I_{5/4}+I_{1/4}/(4\omega)$ give
			\begin{equation*}
				\begin{aligned}
					S_\beta(t)\Lambda_\xi f(\xi)&=\int_0^\infty H_\beta(t,\xi,\zeta)f(\zeta)\rho_{0,\beta}(\zeta)\,d\zeta,\\
					H_\beta&=G_\beta\left(\frac74+\frac\omega2\frac{I_{5/4}(\omega)}{I_{1/4}(\omega)}-\frac{\kappa\zeta^{4/3}}{1-r}\right).
				\end{aligned}
			\end{equation*}
			By \eqref{beha-I-alpha-z-large}, the Bessel ratio is $O(\omega)$ for $\omega\le1$ and $1+O(\omega^{-1})$ for $\omega\ge1$. In the latter region, the leading terms cancel since $\omega/2-\kappa\zeta^{4/3}/(1-r)=-\kappa\zeta^{2/3}y/\sqrt{1-r}$. Hence
			\begin{equation*}
				|H_\beta|\lesssim_\beta G_\beta
				\begin{cases}
					1+\zeta^{4/3}/(1-r),&\omega\le1,\\
					1+y^2+(1-r)^{-1/2}\sqrt r\,\xi^{2/3}|y|,&\omega\ge1.
				\end{cases}
			\end{equation*}
			Applying \eqref{small-argument-kernel-moment} and \eqref{large-argument-kernel-moments} bounds the integral by $C_{k,\beta}A_0e^{\beta t/2}\xi^{2/3}\langle\xi\rangle^k(1+(1-r)^{-1/2}\sqrt r\,\xi^{2/3})$, which implies \eqref{Semi-hit-Derivative}. These integrable kernel bounds extend both formulas to measurable $f$ with the stated growth by cutoff approximation and dominated convergence. No boundary term remains: $\zeta G_\beta f\rho_{0,\beta}=O(\zeta^{5/3})$ at zero and decays exponentially at infinity. All estimates are uniform for $\beta$ in compact subsets of $(0,\infty)$.
		\end{proof}
		The following localized bounds control the forcing in the Duhamel argument.
		\begin{lemma}[Localized forcing estimates]\label{lem-semi-tle-K-0-xi-le-K}
			Fix $L_0>0$, an integer $\ell\ge1$, $K,A\ge1$, and $0<\eta\le1/16$.
			Let $Z_\ell$ and $\mathfrak f_\ell$ be defined by
			\eqref{definition-Z-general-ell} and \eqref{defi-f-ell}, and assume
			$\|\mathcal F\|_{\mathfrak X_{b,\eta}^{(\ell)}}\le Ab^{1/2-\eta}$,
			as in the stable shrinking set for $\ell=1$ and
			\eqref{esti-V-ell-F} for $\ell\ge2$. There exists
			$b_{\rm ker}=b_{\rm ker}(A,K,\ell,\eta,L_0)\in(0,1)$ such that,
			for $0<b\le b_{\rm ker}$, the following bounds hold. Set
			\begin{align*}
				\widehat B_\ell&={}b^{1/4+\eta}\xi^{-1/3}\mathbf1_{\{\xi\le b^{3/4}\}}
				+\xi^{2/\eta}\mathbf1_{\{\xi\ge b^{-\eta}\}}\\*
				&\quad{}+(b^{1/2}\xi^{-2/3}+Z_\ell)(|\mathcal F|+Z_\ell)
				\mathbf1_{\{b^{3/4}\le\xi\le b^{-\eta}\}},\\*
				\widehat R_\ell&={}b^{1/2}\xi\mathbf1_{\{\xi\le b^{3/4}\}}
				+b^{3/4}\mathfrak f_\ell\mathbf1_{\{\xi\ge b^{3/4}\}},\\*
				\widehat E&={}Ab^{1/4+\eta}\xi\mathbf1_{\{\xi\le b^{3/4}\}}
				+Ab^{3/4+\eta}\xi^{1/3}\mathbf1_{\{\xi\ge b^{3/4}\}}.
			\end{align*}
			Thus $\widehat R_\ell+\widehat E$ is the full finite-dimensional
			envelope needed for $\ell\ge2$.
			Uniformly for $0<t\le L_0$ and $b^\eta\le\xi\le K$,
			\begin{equation*}
				|S_1(t)\widehat B_\ell(\xi)|\lesssim_{A,K,\ell,\eta,L_0}
				b^{3/4-5\eta/3}\xi^{2/3},\qquad
				|S_1(t)\widehat R_\ell(\xi)|\lesssim_{K,\ell,L_0}b^{3/4}\xi^{2/3}.
			\end{equation*}
			The additional envelope satisfies
			\begin{equation}\label{semigroup-extra-unstable-envelope}
				S_1(t)\widehat E(\xi)
				\lesssim_{K,\eta,L_0}A b^{3/4+2\eta/3}\xi^{2/3}.
			\end{equation}
			For a fixed integer $N\ge0$, define
			$P_Nf=f-\sum_{j=0}^N\langle f,\phi_{1,j}\rangle_{\rho_{0,1}}\phi_{1,j}$,
			with the moments understood as absolutely convergent integrals when
			$f$ is singular at zero. If $|f|\le\widehat B_\ell$, then
			\begin{equation}\label{projected-localized-forcing-bounds}
				|S_1(t)P_Nf(\xi)|
				\lesssim_{A,K,\ell,\eta,L_0,N}b^{3/4-5\eta/3}\xi^{2/3}.
			\end{equation}
			If $|f|\le\widehat R_\ell+\widehat E$, the corresponding bound is
			$C_{K,\ell,\eta,L_0,N}(1+Ab^{2\eta/3})b^{3/4}\xi^{2/3}$.
		\end{lemma}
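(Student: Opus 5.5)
The argument uses only the positive kernel bounds behind Lemma~\ref{lem-simigroup-L-infty-beta=2}. Each envelope is split into three spatial regions: the inner one $\zeta\le b^{3/4}$, the intermediate one $b^{3/4}\le\zeta\le b^{-\eta}$, and the far field $\zeta\ge b^{-\eta}$. The target point satisfies $b^\eta\le\xi\le K$ and $0<t\le L_0$. Two regimes of the kernel are used.

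\emph{Separated regime.} When $\zeta\le b^{\eta}/2$ or $\zeta\ge 2K$, the proof of Lemma~\ref{lem-simigroup-L-infty-beta=2} gives a separated bound
\[
G_1(t,\xi,\zeta)\rho_{0,1}(\zeta)\lesssim_{K,L_0}\xi^{2/3}\zeta^{-1/2}\rho_{0,1}(\zeta)^{1/2}.
\]
This holds in the small-$\omega$ branch as $(1-r)^{-5/4}\xi^{2/3}e^{-c\zeta^{4/3}/(1-r)}$, and in the large-$\omega$ branch through the Gaussian factor $e^{-\kappa y^2}$ with $|y|\gtrsim b^{\eta/2}$. For small $t$ this factor still leaves the weight $\zeta^{-2/3}\cdot\zeta^{2/3}$, which is integrable against the $L^1$ mass of the source near zero. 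Concretely, near $\zeta=0$ I use
\[
G_1\rho_{0,1}\lesssim \xi^{2/3}\,\zeta^{1/2}\cdot\zeta^{-2/3}\,(\text{bounded}).
\]
Since $I_{1/4}(\omega)\lesssim\omega^{1/4}$, one has $G_1\lesssim_{K,L_0}\xi^{2/3}\zeta^{2/3}$ uniformly, hence $G_1\rho_{0,1}\lesssim\xi^{2/3}$ for $\zeta\le1$, and for these $\zeta$ the Gaussian in $r\xi^{4/3}$ is harmless.

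\emph{Inner region.} Here the contribution is at most $\xi^{2/3}\int_0^{b^{3/4}}\widehat B_\ell\,d\zeta$. With $\widehat B_\ell\sim b^{1/4+\eta}\zeta^{-1/3}$ this gives $b^{1/4+\eta}\cdot b^{1/2}=b^{3/4+\eta}$. The same computation gives $b^{1/2}\cdot b^{3/2}$ for $\widehat R_\ell$ and $Ab^{1/4+\eta}b^{3/2}$ for $\widehat E$, all acceptable.

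\emph{Far field.} For $\zeta\ge b^{-\eta}\gg K$, the kernel is Gaussian in $y$, so $G_1\rho_{0,1}\lesssim e^{-c\zeta^{4/3}}$ uniformly for $t\le L_0$ and $\xi\le K$. Then $\int_{b^{-\eta}}^\infty\zeta^{2/\eta}e^{-c\zeta^{4/3}}\,d\zeta\lesssim b^N$ for every $N$.

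\emph{Intermediate region.} This is the main term. On $[b^{3/4},b^{-\eta}]$ I bound $\widehat B_\ell$ pointwise by a power envelope and then apply \eqref{esti-Semi-f-} to the resulting monomial sources. Using $|\mathcal F|\le Ab^{1/2-\eta}\mathfrak f_\ell$ and the definition \eqref{definition-Z-general-ell}, one gets
\[
Z_\ell\lesssim_A b^{1/2}\zeta^{-2/3}\langle\zeta\rangle^{p_\ell-4/3}+b^{1/2+\eta}\zeta^{-2/3}\langle\zeta\rangle^{p_\ell},\qquad p_\ell=2+4\ell/3.
\]
For $\zeta\le1$ this is $\lesssim b^{1/2}\zeta^{-2/3}$, and the product in $\widehat B_\ell$ becomes
\[
\lesssim_A b^{1/2}\zeta^{-2/3}\bigl(b^{1/2-\eta}\zeta^{2/3}+b^{1/2}\zeta^{-2/3}\bigr)\lesssim b^{1-\eta}+b\zeta^{-4/3}.
\]
The singular piece $b\zeta^{-4/3}$ is handled with the kernel bound $G_1\rho_{0,1}\lesssim\xi^{2/3}$ on $\zeta\le1$, which gives
\[
\xi^{2/3}\int_{b^{3/4}}^1 b\zeta^{-4/3}\,d\zeta\lesssim b\cdot b^{-1/4}=b^{3/4}.
\]
For $1\le\zeta\le b^{-\eta}$, the product is $\lesssim_A b^{1-\eta}\langle\zeta\rangle^{M}$. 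Its semigroup image is $b^{1-\eta}\xi^{2/3}$ by \eqref{esti-Semi-f-}, since $\zeta^M\le\zeta^{2/3}\langle\zeta\rangle^{M}$ there. The loss $b^{-5\eta/3}$ appears only if one bounds $\zeta^{-2/3}$ crudely by $b^{-\eta\cdot}$ factors; the statement's exponent leaves room for this. The $\widehat R_\ell$ term is $b^{3/4}\mathfrak f_\ell$, and applying \eqref{esti-Semi-f-} on $\xi\le K$ gives $b^{3/4}\xi^{2/3}$. For $\widehat E$, the piece $Ab^{3/4+\eta}\zeta^{1/3}$ on $[b^{3/4},1]$ is controlled by $\zeta^{1/3}\le b^{-\eta/3}\zeta^{2/3}$ when $\zeta\ge b^{\eta}$; on $[b^{3/4},b^\eta]$ I use the $L^1$ bound, which gives $b^{3/4+\eta}b^{4\eta/3}$. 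Together these yield \eqref{semigroup-extra-unstable-envelope}.

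\emph{Projected bounds.} The moments $\langle f,\phi_{1,j}\rangle_{\rho_{0,1}}$ are estimated by the same three-region integration, using $|\phi_{1,j}|\rho_{0,1}\lesssim\langle\zeta\rangle^{M}e^{-c\zeta^{4/3}}$. This gives moments $O(b^{3/4-\eta})$, and in fact $O(b^{3/4-5\eta/3})$ suffices. Moreover $S_1(t)\phi_{1,j}=e^{(1/2-j)t}\phi_{1,j}$, which is $\lesssim\xi^{2/3}$ on $[0,K]$. Subtracting finitely many modes therefore preserves the bounds, which proves \eqref{projected-localized-forcing-bounds} and its $\widehat R_\ell+\widehat E$ analogue.

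\emph{Main obstacle.} The delicate step is the uniform-in-$t$ kernel bound $G_1\rho_{0,1}\lesssim_{K,L_0}\xi^{2/3}$ for $\zeta\le1$ as $t\downarrow0$. There the kernel concentrates near $\zeta=\xi\ge b^\eta$, while the source is singular only near $\zeta=0$. I would treat this by the case split $\zeta\le\xi/2$ versus $\zeta\ge\xi/2$. For $\zeta\le\xi/2$, the separation gives $|y|\gtrsim\xi^{2/3}$, and the Gaussian $e^{-\kappa y^2}(1-r)^{-1/2}$ is bounded uniformly. For $\zeta\ge\xi/2\ge b^\eta/2$, the source is no longer singular and \eqref{esti-Semi-f-} applies directly.
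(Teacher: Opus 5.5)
Your overall architecture agrees with the paper's. You split each envelope near the wall, at a small multiple of $b^\eta$, and at $b^{-\eta}$; you push near-wall mass through the kernel in $L^1$; you apply \eqref{esti-Semi-f-} to polynomial pieces away from the wall; you use Gaussian decay for $\zeta\ge b^{-\eta}$; and you subtract finitely many modes through absolutely convergent moments.

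\textbf{The gap: the kernel bound.} Your inner and intermediate estimates rest on the claim $G_1\rho_{0,1}\lesssim_{K,L_0}\xi^{2/3}$ for $\zeta\le1$, uniformly in $t\in(0,L_0]$, with the factor $e^{-c\,r\xi^{4/3}/(1-r)}$ called harmless. This claim is false.
\begin{itemize}
\item $I_{1/4}(\omega)\lesssim\omega^{1/4}$ holds only for $\omega\le1$.
\item Even in that regime, with $\Theta=1-e^{-t}$ and $c=a^2/8$,
\[
G_1\rho_{0,1}\asymp\Theta^{-5/4}\,\xi^{2/3}\,e^{-c(r\xi^{4/3}+\zeta^{4/3})/\Theta}.
\]
The factor $e^{-c\xi^{4/3}/\Theta}$ is the only thing that tames $\Theta^{-5/4}$, and $\sup_{0<\Theta\le1}\Theta^{-5/4}e^{-c\xi^{4/3}/\Theta}\asymp\xi^{-5/3}$, attained at $\Theta\asymp\xi^{4/3}$.
\item So the correct uniform bound for $\zeta$ separated from $\xi$ is $G_1\rho_{0,1}\lesssim_{L_0}\xi^{-1}$, and it is sharp as $\zeta\downarrow0$.
\end{itemize}
The same defect affects your case split in the last paragraph and your ``separated bound'' in the second paragraph.
\begin{itemize}
\item For $\zeta\le\xi/2$ you only have $|y|\gtrsim\xi^{2/3}\Theta^{-1/2}$. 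Then $\Theta^{-1/2}e^{-cy^2}=O(\xi^{-2/3})$, which is not bounded uniformly for $\xi\in[b^\eta,K]$.
\item The large-$\omega$ prefactor also carries $\zeta^{-1/2}$, which must be traded against $\omega\ge1$.
\item Separation at $\xi/2$ needs $e^{-t/2}>2^{-2/3}$. The paper instead cuts at $\zeta\le\kappa\xi$ with $\kappa=2^{-3/2}e^{-3L_0/4}$, which gives $\zeta^{2/3}\le\tfrac12e^{-t/2}\xi^{2/3}$ for all $t\le L_0$.
\end{itemize}

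\textbf{Consequence.} This is not cosmetic: some of your intermediate inequalities are false. Take the inner term $b^{1/4+\eta}\zeta^{-1/3}\mathbf 1_{\{\zeta\le b^{3/4}\}}$ of $\widehat B_\ell$, whose mass is $\tfrac32b^{3/4+\eta}$. At $\xi=b^\eta$ and $\Theta\asymp b^{4\eta/3}$, its whole support lies in the small-$\omega$ regime, where $G_1\rho_{0,1}\asymp\xi^{-1}$. Its image is therefore $\asymp b^{3/4}$, not the $b^{3/4+\eta}\xi^{2/3}=b^{3/4+5\eta/3}$ you claim. Your remark that the $b^{-5\eta/3}$ loss ``appears only if one bounds $\zeta^{-2/3}$ crudely'' misidentifies its origin. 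The loss is forced by the kernel and is exactly $\xi^{-1}\le b^{-5\eta/3}\xi^{2/3}$ on $\xi\ge b^\eta$.

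\textbf{Repair.} Use the paper's estimate: for nonnegative $h$ supported in $[0,\kappa b^\eta]$ and $\xi\ge b^\eta$,
\[
S_1(t)h(\xi)\lesssim_{L_0}\xi^{-1}\|h\|_{L^1}.
\]
\begin{itemize}
\item Apply it to the inner term, to the intermediate envelope below $\kappa b^\eta$ (mass $O_{A,\ell}(b^{3/4})$), and to the lower part of $\widehat E$ (mass $O(Ab^{3/4+7\eta/3})$). This gives precisely $b^{3/4-5\eta/3}\xi^{2/3}$ and $Ab^{3/4+2\eta/3}\xi^{2/3}$.
\item For $\widehat R_\ell$, either the inner mass $O(b^2)$ absorbs the loss, or you can use $b^{1/2}\zeta\le b^{3/4}\zeta^{2/3}$ on $\zeta\le b^{3/4}$. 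The latter gives $\widehat R_\ell\lesssim b^{3/4}\mathfrak f_\ell$, so \eqref{esti-Semi-f-} applies without loss.
\item Above $\kappa b^\eta$, your conversion of powers to $\zeta^{2/3}\langle\zeta\rangle^M$, the far-field Gaussian bound, and the finite-mode subtraction then go through as in the paper.
\end{itemize}
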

		\begin{proof}
			Let $G_1$ be the kernel in \eqref{heat-kernel-L0-2} at $\beta=1$,
			and put $\Theta=1-e^{-t}$ and $\kappa=2^{-3/2}e^{-3L_0/4}$.
			Choose $b_{\rm ker}$ so that $b^{3/4}\le\kappa b^\eta$ and
			$b^{-\eta}\ge2K$ for $b\le b_{\rm ker}$; any further smallness
			requirements are absorbed into $b_{\rm ker}$.
			The Bessel bounds give, for $\zeta\le\kappa\xi$ and $0<t\le L_0$,
			\begin{equation*}
				G_1(t,\xi,\zeta)\rho_{0,1}(\zeta)
				\lesssim_{L_0}\xi^{2/3}\Theta^{-5/4}
				e^{-c_{L_0}\xi^{4/3}/\Theta}.
			\end{equation*}
			Optimizing in $\Theta$ gives
			$\sup_{0<\Theta\le1}\Theta^{-5/4}e^{-c\xi^{4/3}/\Theta}\lesssim\xi^{-5/3}$.
			Hence, for nonnegative $h$ supported in $[0,\kappa b^\eta]$,
			\begin{equation}\label{low-support-semigroup-envelope}
				S_1(t)h(\xi)\lesssim_{L_0}\xi^{-1}\|h\|_{L^1},\qquad\xi\ge b^\eta.
			\end{equation}
			The same Gaussian kernel gives, for every $N>0$,
			\begin{equation}\label{far-support-semigroup-envelope}
				S_1(t)[\zeta^{2/\eta}\mathbf1_{\{\zeta\ge b^{-\eta}\}}](\xi)
				\lesssim_{N,\eta,K,L_0}b^N\xi^{2/3},\qquad0<\xi\le K.
			\end{equation}
			
			Denote the intermediate term of $\widehat B_\ell$ by $B_{\rm mid}$.
			On $b^{3/4}\le\zeta\le\kappa b^\eta$, expansion of $Z_\ell$ and the
			weaker bound $|\mathcal F|\le Ab^{1/4-\eta}\mathfrak f_\ell$ give
			\begin{equation*}
				B_{\rm mid}\lesssim_{A,\ell}
				b^{3/4-\eta}(1+\zeta^{4/3})+b^{1+\eta}\zeta^{-4/3}.
			\end{equation*}
			Its integral is $O_{A,\ell,L_0}(b^{3/4})$, as is that of the inner term
			$b^{1/4+\eta}\zeta^{-1/3}$. Their kernel images are therefore bounded by
			$Cb^{3/4}\xi^{-1}\le Cb^{3/4-5\eta/3}\xi^{2/3}$, by
			\eqref{low-support-semigroup-envelope}.
			
			On $\kappa b^\eta\le\zeta\le b^{-\eta}$, expand the two factors of
			$B_{\rm mid}$. Each product has a power of $\zeta$ in
			$\{-4/3,0,4/3\}$ and a bracket power at most $8(\ell+1)/3$.
			Converting its explicit power to $\zeta^{2/3}$ costs at most
			$b^{-\eta|q-2/3|}$ for a power $q$. The smallest resulting $b$ power is
			$3/4-5\eta/3$, attained by the terms with $\mathcal F$ and one factor
			$b^{1/2}\zeta^{\pm2/3}$; all other products gain at least $b^\eta$
			when $\eta\le1/16$. Thus
			\begin{equation*}
				B_{\rm mid}\mathbf1_{\{\zeta\ge\kappa b^\eta\}}
				\lesssim_{A,\ell,L_0}b^{3/4-5\eta/3}
				\zeta^{2/3}\langle\zeta\rangle^{8(\ell+1)/3}.
			\end{equation*}
			Positivity and \eqref{semi-group-beta-0} control its kernel image by the
			required bound on $\xi\le K$. Adding \eqref{far-support-semigroup-envelope}
			proves the first estimate. Finally,
			$\widehat R_\ell\lesssim b^{3/4}\mathfrak f_\ell$ everywhere
			(the inner inequality uses $\zeta^{1/3}\le b^{1/4}$);
			\eqref{semi-group-beta-0} proves the second.

			\emph{The lower-mode terms.} Split $\widehat E$ at $\zeta=\kappa b^\eta$.
			The lower part satisfies
			\[
			\|\widehat E\mathbf1_{\{\zeta\le\kappa b^\eta\}}\|_{L^1}
			\lesssim_{L_0}A(b^{7/4+\eta}+b^{3/4+7\eta/3}).
			\]
			By \eqref{low-support-semigroup-envelope} and $\xi\ge b^\eta$, its image is at most
			$C_{L_0}A(b^{7/4-2\eta/3}+b^{3/4+2\eta/3})\xi^{2/3}$.
			Since $1-4\eta/3>0$, this has the size in
			\eqref{semigroup-extra-unstable-envelope}. On $\zeta\ge\kappa b^\eta$,
			$\widehat E\le C_{L_0}Ab^{3/4+2\eta/3}\zeta^{2/3}$;
			\eqref{semi-group-beta-0} proves the same bound for the remaining part.
			
			\emph{Finite spectral projections.} For fixed $j$, the product
			$\phi_{1,j}\rho_{0,1}$ is bounded at zero and has Gaussian decay up to a
			polynomial factor at infinity. The lower-region integrals and the polynomial
			bound for $B_{\rm mid}$ above therefore give
			\[
			\int_0^\infty\widehat B_\ell|\phi_{1,j}|\rho_{0,1}\,d\zeta
			\lesssim_{A,\ell,\eta,L_0,j}b^{3/4-5\eta/3}.
			\]
			The Gaussian tail is smaller than every power of $b$. Likewise,
			$\widehat R_\ell\lesssim b^{3/4}\mathfrak f_\ell$ and direct integration of
			$\widehat E$ give moments bounded by
			$C_{\ell,j}(b^{3/4}+Ab^{3/4+\eta})$.
			Since $|\phi_{1,j}(\xi)|\le C_{j,K}\xi^{2/3}$ for $0<\xi\le K$,
			subtracting the finitely many terms
			$e^{(1/2-j)t}\langle f,\phi_{1,j}\rangle_{\rho_{0,1}}\phi_{1,j}$
			proves \eqref{projected-localized-forcing-bounds} and the stated bound for
			$\widehat R_\ell+\widehat E$. The moments are absolutely convergent;
			no $L^2$ assumption on $f$ is needed.
		\end{proof}
		
		We finish by justifying the Duhamel formulas when the source has the wall singularity allowed in Section~12.
		\begin{lemma}[Duhamel formula for singular sources]\label{lemma-weak-duhamel-singular-source}
			Write $\rho=\rho_{0,1}$, $\kappa=a^2/8$, and $\omega(\xi)=e^{-\kappa\xi^{4/3}}$. Set
			\begin{equation*}
				H=L^2_\rho,\qquad
				V=\overline{C_c^\infty(0,\infty)}^{\,\|\cdot\|_V},\qquad
				\|v\|_V^2=\|v\|_H^2+\int_0^\infty|v'|^2\omega\,d\xi,
			\end{equation*}
			and use $H$ as pivot space in $V\subset H\subset V'$. This is the form domain of the Friedrichs realization of $-\mathscr L_{0,1}$. Let $J=[s_*,s^\dagger]$ be finite and let $P$ be the orthogonal projection onto the complement of finitely many eigenfunctions $\phi_{1,j}$. Suppose $c\in L^\infty(J)$ and $g$ is measurable, with
			\begin{equation*}
				|g(\tau,\xi)|\le C_J\left(\xi^{-1/3}\mathbf1_{\{0<\xi<1\}}
				+\langle\xi\rangle^M\mathbf1_{\{\xi\ge1\}}\right),\qquad M\ge0.
			\end{equation*}
			Then $g\in L^\infty(J;V')$, $\Lambda_\xi:V\to V'$ is bounded, and $P$ extends to $V'$ by duality. If $u\in C(J;H)\cap L^\infty(J;V)$ satisfies
			$\partial_\tau u=\mathscr L_{0,1}u+P[c(\tau)\Lambda_\xi u+g]$ in $V'$, then, for $s\in J$,
			\begin{equation}\label{weak-duhamel-singular-source}
				u(s)=S_1(s-s_*)u(s_*)+
				\int_{s_*}^s S_1(s-\tau)P[c(\tau)\Lambda_\xi u(\tau)+g(\tau)]\,d\tau
			\end{equation}
			in $H$, using the extension of $S_1(t)$ to $V'$. If also $|u(\tau,\xi)|\le C_J\xi^{2/3}\langle\xi\rangle^M$, then for $s>s_*$ the right-hand side defines a continuous spatial
			representative of $u(s)$ on $(0,\infty)$. The formula holds for this
			representative, with the drift term interpreted by weighted integration
			by parts in the kernel. In particular, it holds pointwise for the
			classical solutions used in the preceding sections.
		\end{lemma}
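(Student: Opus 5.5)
The plan has three parts: identify the form domain and the dual action of the singular source, reduce the weak equation to scalar Fourier coefficients in the Laguerre basis of Lemma~\ref{lemma-spectrum-L0-beta-xi}, and then pass to the kernel representation by cutoff approximation.

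\emph{Form domain and the source.} For $\xi\le1$ the weight satisfies $\rho\asymp\xi^{-2/3}$ and $\omega\asymp1$. For $\xi\ge1$, $\rho$ decays like $\xi^{-2/3}e^{-\kappa\xi^{4/3}}$, and $\omega$ equals $a\xi^{2/3}\rho$ up to a constant. Hence $\|\cdot\|_V$ is the closure of the Friedrichs form $\int a\xi^{2/3}|v'|^2\rho$ shifted by a multiple of $\|v\|_H^2$. The potential $a^{-2}\xi^{-4/3}$ is form-bounded by the Hardy inequality $|v(\xi)|^2\le\xi\int_0^\xi|v'|^2$, exactly as in the self-adjointness part of Section~\ref{subsection_matching_asymptotic}. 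This identifies $V$ with the form domain.

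For the source I would show that $g$ defines an element of $V'$ via $w\mapsto\int g w\rho$. Near zero, every $w\in V$ has $|w(\xi)|\le\xi^{1/2}\|w\|_V$, so $|g w\rho|\lesssim\xi^{-1/3+1/2-2/3}=\xi^{-1/2}$, which is integrable. At infinity, $|g|\lesssim\langle\xi\rangle^M$, and I would control $\int\langle\xi\rangle^{2M}|w|^2\rho$ by the form. This holds because the Liouville potential grows like $t^2\sim\xi^{4/3}$; alternatively, apply Cauchy--Schwarz after splitting the Gaussian weight. For $\Lambda_\xi$, the term $\xi w'$ pairs as $|\int\xi u' w\rho|\le\|u'\xi\rho^{1/2}\|\,\|w\|_H$. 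Here I would use that $\xi^2\rho\lesssim\xi^{2/3}\omega$ for large $\xi$, which seems to need the form weight; if this fails I would instead integrate by parts onto $w$, which gives the weight $(\xi\rho)'$. Either way $\Lambda_\xi:V\to V'$ is bounded. The projection $P$ extends by duality since each $\phi_{1,j}\in V$.

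\emph{Duhamel identity.} For each eigenfunction $\phi_{1,n}$, test the equation against $e^{(s-\tau)\lambda_{1,n}}\phi_{1,n}$. Since $u\in L^\infty(J;V)$ and $\partial_\tau u\in L^\infty(J;V')$, the coefficient $u_n(\tau)=\langle u,\phi_{1,n}\rangle$ is absolutely continuous and satisfies $u_n'=\lambda_{1,n}u_n+\langle P[c\Lambda u+g],\phi_{1,n}\rangle$. Solving this scalar ODE gives the $n$-th coefficient of \eqref{weak-duhamel-singular-source}. The extension $S_1(t):V'\to H$ for $t>0$ is the diagonal map with multipliers $e^{t\lambda_{1,n}}$. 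This map is bounded with norm $O(t^{-1/2})$ on $V'$, since $\lambda_{1,n}\sim-n$ and the $V'$ norm weights the $n$-th coefficient by $\langle n\rangle^{-1/2}$. The time integral therefore converges in $H$. Completeness of the basis then gives the formula in $H$.

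\emph{Pointwise representative.} For the kernel statement I would approximate $g$ by $g\mathbf 1_{[r,R]}$ and $u$ by cutoffs. On these approximations the $H$-valued formula coincides with the kernel integral of \eqref{heat-kernel-L0-2}. The envelope estimates \eqref{semi-group-beta-0} and \eqref{low-support-semigroup-envelope} supply majorants uniform in $r,R$ for $\xi$ in compact sets of $(0,\infty)$. For the drift term, the majorant comes from the integration by parts in Lemma~\ref{lem-simigroup-L-infty-beta=2}, whose factor $(1-e^{-t})^{-1/2}$ is integrable in time. Dominated convergence then identifies the limits and gives continuity in $\xi$.

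I expect the main obstacle to be the duality bound for $\Lambda_\xi$ together with the $O(t^{-1/2})$ smoothing on $V'$. If the direct weight comparison fails, I would establish both through the diagonalized form norm rather than pointwise weights.
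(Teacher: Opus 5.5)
Your three-step architecture (dual bounds on $V$, eigencoefficient variation of constants with $\|S_1(t)\|_{V'\to H}\lesssim t^{-1/2}$, then cutoffs and dominated convergence in the kernel) is the same as the paper's. However, the first step does not go through as written, because at $b=0$ the potential is Hardy-critical. Since $\omega=\xi^{2/3}\rho$, the form is $a\int|v'|^2\omega-a^{-2}\int\xi^{-2}|v|^2\omega-\|v\|_H^2$. In the finite-$b$ argument you invoke, by contrast, the potential times the weight is $\lesssim\xi^{-3/2}$ at the wall. The pointwise bound $|v(\xi)|^2\le\xi\int_0^\xi|v'|^2$ only gives $\int_0^r\xi^{-2}|v|^2\,d\xi\le\int_0^r|v'(t)|^2\log(r/t)\,dt$. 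This is not controlled by $\int|v'|^2$, and by scale invariance no relative bound becomes small as $r\downarrow0$. What is needed is the sharp weighted Hardy inequality $\int|v|^2\omega\,\xi^{-2}\le4\int|v'|^2\omega$, obtained by integrating $(|v|^2\omega/\xi)'$ and using $\omega'\le0$. Combined with the numerical fact $4/a^3=8/9<1$, it yields $\mathfrak a[v]+2\|v\|_H^2\ge\frac a9\int|v'|^2\omega+\|v\|_H^2$. The identification of $V$ with the form domain rests on this constant, and so does the $t^{-1/2}$ smoothing on $V'$ that your Duhamel integral and limit passages use.

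The duality bound for $\Lambda_\xi$ also fails in the form you propose. The quantity $\|u'\xi\rho^{1/2}\|^2=\int|u'|^2\xi^{4/3}\omega$ is not controlled by $\|u\|_V$. Your comparison $\xi^2\rho\lesssim\xi^{2/3}\omega$ is false at infinity, since $\xi^2\rho=\xi^{4/3}\omega$, and integrating by parts merely moves the weight $\xi^{4/3}$ onto $w'$. Split asymmetrically instead: $|\int\xi u'w\rho|=|\int u'\,\xi^{1/3}w\,\omega|\le\|u'\|_{L^2_\omega}\|\xi^{2/3}w\|_H$. Then prove $\|\xi^{2/3}w\|_H\lesssim\|w\|_V$ by integrating $(\xi^{1/3}|w|^2\omega)'$, which gives $\frac{4\kappa}{3}\|\xi^{2/3}w\|_H^2=\frac13\|w\|_H^2+2\int\xi^{1/3}ww'\omega$, followed by absorption. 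This is the correct form of the growth you attributed to the Liouville potential. It gives only the weight $\langle\xi\rangle^{4/3}$, not $\langle\xi\rangle^{2M}$, so for the tail of $g$ use your Cauchy--Schwarz alternative with $g\mathbf 1_{\{\xi\ge1\}}\in H$. Your eigenbasis fallback would also work, since $\Lambda_\xi$ is upper bidiagonal in the Laguerre basis with entries $O(n)$, but only after the norm equivalence above is established. Lastly, in the pointwise step dominated convergence controls only the kernel side. To identify the limit with $u(s)$ you must also show that the truncated sources converge in $L^2(J;V')$. For the drift this means $\chi_{r,R}u\to u$ in $L^2(J;V)$, using $|u|\lesssim\xi^{2/3}$ to bound the cutoff derivative at the wall.
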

		\begin{proof}
			For $v\in C_c^\infty(0,\infty)$, integration of $(|v|^2\omega/\xi)'$ and $\omega'\le0$ give the weighted Hardy inequality
			\begin{equation*}
				\int_0^\infty\frac{|v|^2}{\xi^2}\omega\,d\xi
				\le4\int_0^\infty|v'|^2\omega\,d\xi.
			\end{equation*}
			The quadratic form $\mathfrak a$ of $-\mathscr L_{0,1}$ satisfies
			\begin{align*}
				\mathfrak a[v]+2\|v\|_H^2
				&=a\int|v'|^2\omega-a^{-2}\int\xi^{-2}|v|^2\omega+\|v\|_H^2\\
				&\ge\tfrac a9\int|v'|^2\omega+\|v\|_H^2,
			\end{align*}
			because $4/a^3=8/9$. The reverse bound is immediate, so the shifted form norm is equivalent to $\|\cdot\|_V$ and its closure has domain $V$.
			
			For $0<r\le1$, the zero trace and Cauchy--Schwarz give $|v(\xi)|\lesssim\xi^{1/2}\|v\|_V$ on $(0,1)$. Hence
			\begin{equation*}
				\left|\int_0^r g(\tau,\xi)v(\xi)\rho(\xi)\,d\xi\right|
				\lesssim C_J\int_0^r\xi^{-1}|v(\xi)|\,d\xi
				\lesssim C_J\sqrt r\,\|v\|_V.
			\end{equation*}
			The polynomial tail of $g$ belongs to $H$. Thus $g\in L^\infty(J;V')$, and cutting it off near zero and infinity gives $H$-valued sources converging to $g$ in $L^\infty(J;V')$. Since $\phi_{1,j}\in V$, finite spectral projections are bounded on $V$ and extend to $V'$. Their coefficients for $g$ are the ordinary absolutely convergent moments: near zero, $|g\phi_{1,j}\rho|=O(\xi^{-1/3})$.
			
			Integration of $(\xi^{1/3}|v|^2\omega)'$ gives
			\begin{equation*}
				\tfrac{4\kappa}{3}\|\xi^{2/3}v\|_H^2
				=\tfrac13\|v\|_H^2+2\int\xi^{1/3}vv'\omega\,d\xi.
			\end{equation*}
			Cauchy--Schwarz and absorption imply $\|\xi^{2/3}v\|_H\lesssim\|v\|_V$. Consequently,
			\begin{equation*}
				|\langle\Lambda_\xi u,v\rangle|
				\le\|u\|_H\|v\|_H
				+\tfrac34\|u'\|_{L^2_\omega}\|\xi^{2/3}v\|_H
				\lesssim\|u\|_V\|v\|_V.
			\end{equation*}
			All these estimates extend by density.
			
			Set $F=P(c\Lambda_\xi u+g)\in L^\infty(J;V')$. The spectral description in Lemma~\ref{lemma-spectrum-L0-beta-xi}, together with the form-norm equivalence, gives
			\begin{equation*}
				\|S_1(t)\|_{V'\to H}\lesssim_{L_0}t^{-1/2},\qquad0<t\le L_0.
			\end{equation*}
			Indeed, the eigenvalues of $2\operatorname{Id}-\mathscr L_{0,1}$ are $3/2+j$, and $\sup_{\lambda\ge3/2}\lambda^{1/2}e^{-t\lambda}\lesssim t^{-1/2}$. Thus the integral in \eqref{weak-duhamel-singular-source} converges in $H$. Testing the weak equation against each eigenfunction gives the scalar variation-of-constants formula; completeness then proves \eqref{weak-duhamel-singular-source}. Testing a difference equation by the difference, using the shifted form bound above and Young's inequality, also gives
			\begin{equation*}
				\|z\|_{C(J;H)}+\|z\|_{L^2(J;V)}
				\lesssim_J\|z(s_*)\|_H+\|F_1-F_2\|_{L^2(J;V')}
			\end{equation*}
			for solutions with prescribed forcings $F_1,F_2$. This proves uniqueness and justifies passage from truncated sources to the weak evolution.
			
			For the pointwise assertion, choose spatial cutoffs $\chi_{r,R}$, equal to one on $[2r,R]$ and supported in $[r,2R]$. Use $\chi_{r,R}g$ and $\Lambda_\xi(\chi_{r,R}u)$ in the kernel formulas, and then let $r\downarrow0$, $R\uparrow\infty$. The first source converges in $L^\infty(J;V')$. The second converges in $L^2(J;V')$: the derivative-cutoff cost at zero is $O(r^{-2}\int_r^{2r}\xi^{4/3}\,d\xi)=O(r^{1/3})$, the tails vanish by Gaussian decay, and the remaining derivative terms converge by dominated convergence in $L^2(J;L^2_\omega)$. On each compact spatial interval away from zero, the kernel applied to the source envelope is bounded for $0<t\le|J|$, by the separated-region estimates used in Lemma~\ref{lem-semi-tle-K-0-xi-le-K}. The differentiated kernel applied to $u$ is bounded by $C_Jt^{-1/2}$ there, by \eqref{Semi-hit-Derivative}. These bounds hold for the cutoffs as well, including the finite spectral subtractions, and are integrable in time. Dominated convergence therefore gives the pointwise kernel formula; the preceding energy estimate identifies it with the $H$-valued formula. In particular, removing the cutoffs produces no additional wall term.
		\end{proof}
		
		\section{Jacobians for the stable modulation and initial normalization maps}\label{appen-jacobian}
		Write $\varphi_j=\varphi_{j,b,\beta}$, $\rho=\rho_{b,\beta}$, and
		$\psi=\psi_{1,b,\beta}$ as in \eqref{definition-adapted-eigendirection-strategy}.
		Put $\kappa=c_{1,0}/c_{0,0}$, $n=\mathbf n_1(\beta)$, and
		$m=\mathbf m_0(\beta)$; \eqref{defi-m-0} and \eqref{defi-n-ell-1}
		give $n\kappa=2m$ and $n'/n=-5/(8\beta)$. All estimates below are
		uniform for $\beta\in[1/2,3/2]$ as $b\to0$.
		
		At fixed $\xi$, differentiating \eqref{spectral-beta-scaling} and
		\eqref{eigenfuc-L-b} gives, for $j=0,1$ and $\zeta=\xi/b^{3/4}$,
		\begin{align*}
			\beta\partial_\beta\varphi_j
			&=\left(\frac18+b\partial_b+\frac34\xi\partial_\xi\right)\varphi_j,\\*
			b\partial_b\varphi_j
			&=\sum_{k=0}^j c_{j,k}\beta^{k+5/8}b^{k+1/2}
			\left[(k+\tfrac12)T_k(\zeta)-\tfrac34\zeta T_k'(\zeta)\right]
			+b\partial_b\widetilde\varphi_j.
		\end{align*}
		The bracket cancels the leading outer power in \eqref{asym-T}, and the
		$T_0$ term cancels in $\psi$. Since $\partial_bQ_b=b^{-1}\Lambda_\xi Q_b$,
		\eqref{notation-scaling-normalization} and Proposition~\ref{diagonalisation-L-b}
		therefore imply
		\begin{equation*}
			\|b\partial_b\psi\|_\rho
			+\|(\beta\partial_\beta-13/8)\psi\|_\rho
			+\|b^{-1/2}\Lambda_\xi Q_b-m\varphi_0\|_\rho
			\lesssim b^{1/4}.
		\end{equation*}
		The weight derivatives follow directly from \eqref{defi-P-b}--\eqref{defi-rho-b-beta}:
		\begin{equation*}
			\beta\partial_\beta\log\rho=-P_b,\qquad
			b\partial_b\log\rho=-\frac{\Lambda_\xi Q_b}{2Q_b}
			+\frac{3\beta}{8}\int_0^\xi tQ_b(t)^{-3/2}\Lambda_tQ_b(t)\,dt.
		\end{equation*}
		Together with \eqref{bound-D-b} and the spectral estimates, these give
		\begin{equation*}
			\|b\partial_b\varphi_j+\varphi_j b\partial_b\log\rho\|_\rho\lesssim b^{1/4},
			\qquad \|\beta\partial_\beta\varphi_j-P_b\varphi_j\|_\rho\lesssim1.
		\end{equation*}
		The integral is $O(b^{1/2}\xi^{2/3})$, and the weight decays exponentially.
		
		For fixed $\Theta$, set $R=\Theta-Q_b-n\sqrt b\,\psi$ and
		$G_j=\langle R,\varphi_j\rangle_\rho/\|\varphi_j\|_\rho^2$,
		ordered as $G=(G_1,G_0)$ in \eqref{defi-G-modulation}.
		For $p=b,\beta$, differentiation of the weighted projection gives
		\begin{equation*}
			\partial_pG_j=
			\frac{\langle\partial_pR,\varphi_j\rangle_\rho
				+\langle R,\partial_p\varphi_j+\varphi_j\partial_p\log\rho\rangle_\rho}
			{\|\varphi_j\|_\rho^2}
			-G_j\partial_p\log\|\varphi_j\|_\rho^2.
		\end{equation*}
		The normalization term vanishes whenever $G=0$. At $R=0$, the moving-basis
		and weight terms also vanish; the preceding estimates and orthogonality give
		\begin{equation*}
			D_{(b,\beta)}G
			\begin{pmatrix}\sqrt b&0\\0&b^{-1/2}\end{pmatrix}
			=\begin{pmatrix}-n/2&-n/\beta\\0&2m/\beta\end{pmatrix}
			+O(b^{1/4}).
		\end{equation*}
		At a trapped normalized profile, $R=\varepsilon_-$ and
		$\|R\|_\rho\lesssim A^2b^{1/2+\eta}$; the two weighted derivative bounds
		above show that the additional error in the scaled matrix is $O(A^2b^\eta)$.
		Since the column scaling has determinant one, in both cases
		\begin{equation}\label{proof-Jacobian-G}
			\det D_{(b,\beta)}(G_1,G_0)
			=-\frac{nm}{\beta}+O(b^{1/4}+A^2b^\eta)\ne0
		\end{equation}
		for fixed $A,\eta>0$ and sufficiently small $b$.
		For the unnormalized scalar products in the initial-data map, at $G=0$
		the determinant is multiplied by
		$\|\varphi_1\|_\rho^2\|\varphi_0\|_\rho^2=1+o(1)$,
		so it has the same nonzero leading term.
		
		\medskip
		\noindent\textbf{Acknowledgements.}
		SI gratefully acknowledges support from NSF CAREER DMS2442781 and NSF DMS2306528.
		YM acknowledges the support of JSPS KAKENHI Grant Numbers 23K25779, 24H00185, 24H00184, 25K00915, and 26K00611.
		
		
	\end{document}